\def\arraystretch{1.2}
\theoremstyle{plain}
\newtheorem{theorem}{Theorem}[section]
\newtheorem{thm}[theorem]{Theorem}
\newtheorem{lem}[theorem]{Lemma}
\newtheorem{prop}[theorem]{Proposition}
\newtheorem{cor}[theorem]{Corollary}
\newtheorem{conj}[theorem]{Conjecture}
\newtheorem{thmABC}{Theorem}
\newtheorem{corABC}[thmABC]{Corollary}
\newtheorem*{property}{Property}
\theoremstyle{remark}
\newtheorem{remark}[theorem]{Remark}
\newtheorem{remark.star}[theorem]{Remark$\,{}^*$}
\theoremstyle{definition}
\newtheorem{defn}[theorem]{Definition}
\numberwithin{equation}{section}
\numberwithin{table}{section}
\numberwithin{figure}{section}
\renewcommand{\epsilon}{\varepsilon}
\renewcommand{\phi}{\varphi}
\renewcommand{\theta}{\vartheta}
\DeclareMathOperator{\Div}{Div}
\DeclareMathOperator{\simi}{sim}
\DeclareMathOperator{\consub}{CS}
\DeclareMathOperator{\gen}{gen}
\DeclareMathOperator{\cha}{char}
\DeclareMathOperator{\im}{im}
\DeclareMathOperator{\Ima}{Im}
\DeclareMathOperator{\cd}{cd}
\DeclareMathOperator{\Irr}{Irr}
\DeclareMathOperator{\In}{I}
\DeclareMathOperator{\Stab}{Stab}
\DeclareMathOperator{\stab}{stab}
\DeclareMathOperator{\rad}{rad}
\DeclareMathOperator{\Ad}{Ad}
\DeclareMathOperator{\ad}{ad}
\DeclareMathOperator{\tr}{tr}
\DeclareMathOperator{\Res}{Res}
\DeclareMathOperator{\Id}{Id}
\DeclareMathOperator{\diag}{diag}
\DeclareMathOperator{\Cay}{Cay}
\DeclareMathOperator{\cay}{cay}
\newcommand{\coloneqq}{\mathrel{\mathop:}=}
\newcommand{\N}{\mathbb{N}}
\newcommand{\Z}{\mathbb{Z}}
\newcommand{\R}{\mathbb{R}}
\newcommand{\Q}{\mathbb{Q}}
\newcommand{\C}{\mathbb{C}}
\newcommand{\F}{\mathbb{F}}
\newcommand{\T}{\mathbb{T}}
\newcommand{\lri}{\mathfrak{o}}
\newcommand{\Lri}{\mathfrak{O}}
\newcommand{\lfi}{\mathfrak{f}}
\newcommand{\Lfi}{\mathfrak{F}}
\newcommand{\gri}{\ensuremath{{\scriptstyle \mathcal{O}}}}
\newcommand{\Gri}{\ensuremath{\mathcal{O}}}
\newcommand{\ee}{\epsilon}     
\newcommand{\fp}{\mathfrak{p}} 
\newcommand{\fP}{\mathfrak{P}} 
\newcommand{\kk}{\mathbf{k}}   
\newcommand{\KK}{\mathbf{K}}  
\newcommand{\len}{\ell}
\newcommand{\gl}{\mathsf{gl}}
\newcommand{\fsl}{\mathsf{sl}}
\newcommand{\gu}{\mathsf{gu}}
\newcommand{\su}{\mathsf{su}}
\newcommand{\ag}{\mathsf{g}}
\newcommand{\ah}{\mathsf{h}}
\newcommand{\ab}{\mathsf{b}}
\newcommand{\as}{\mathsf{s}}
\newcommand{\UU}{A_{q,\len}(s)}
\newcommand{\VV}{B_{q,\len}(s)}
\newcommand{\WW}{C_{q,\len}(s)}
\newcommand{\XX}{D_{q,\len}(s)}
\newcommand{\aH}{\mathsf{H}}
\newcommand{\aG}{\mathsf{G}}
\newcommand{\aI}{\mathsf{I}}
\newcommand{\Heis}{\mathsf{Heis}}
\newcommand{\GL}{\mathsf{GL}}
\newcommand{\SL}{\mathsf{SL}}
\newcommand{\GU}{\mathsf{GU}}
\newcommand{\SU}{\mathsf{SU}}
\newcommand{\Cen}{\mathrm{C}}
\newcommand{\wt}{\widetilde}
\newcommand{\wh}{\widehat}
\newcommand{\Qgl}{\mathcal{Q}^{\gl_n}}
\newcommand{\Qgu}{\mathcal{Q}^{\gu_n}}
\newcommand{\real}{\textup{Re}}
\newcommand{\hlf}{\genfrac{}{}{0.1pt}{1}{1}{2}}
\newcommand{\third}{\genfrac{}{}{0.1pt}{1}{1}{3}}
\newcommand{\sixth}{\genfrac{}{}{0.1pt}{1}{1}{6}}
\newcommand{\Gsha}{\mathcal{G}}
\newcommand{\Lsha}{\mathcal{L}}
\newcommand{\Tasha}{{\mathcal{T}_1}}
\newcommand{\Tbsha}{{\mathcal{T}_2}}
\newcommand{\Tcsha}{{\mathcal{T}_3}}
\newcommand{\Nsha}{\mathcal{N}}
\newcommand{\Msha}{\mathcal{M}}
\newcommand{\Jsha}{\mathcal{J}}
\newcommand{\Kasha}{{\mathcal{K}_0}}
\newcommand{\Kbsha}{{\mathcal{K}_\infty}}
\newcommand{\Sh}{\mathfrak{Sh}}
\newcommand{\ShGn}{\Sh_{\GL_n(\lri)}}
\newcommand{\ShUn}{\Sh_{\GU_n(\lri)}}
\newcommand{\sh}{\mathrm{sh}}
\newcommand{\shg}{\sh_\gl}
\newcommand{\shG}{\sh_\GL}
\newcommand{\shGC}{\sh_\GL} 
\newcommand{\shu}{\sh_\gu}
\newcommand{\shU}{\sh_\GU}
\newcommand{\shUC}{\sh_\GU} 
\newcommand{\cA}{\mathcal A}
\newcommand{\cC}{\mathcal C}
\newcommand{\cCtilde}{\wt{\mathcal{C}}}
\newcommand{\cQ}{\mathcal Q}
\newcommand{\cS}{\mathcal S}
\newcommand{\cT}{\mathcal T}
\newcommand{\cV}{\mathcal V}
\newcommand{\fg}{\mathfrak{g}}
\newcommand{\fh}{\mathfrak{h}}
\newcommand{\fr}{\mathfrak{r}}
\newcommand{\fs}{\mathfrak{s}}
\newcommand{\fn}{\mathfrak{n}}
\newcommand{\Mat}[9]{
    \begin{bmatrix}
      #1 & #2 & #3 \\
      #4 & #5 & #6 \\
      #7 & #8 & #9
    \end{bmatrix}
}
\newcommand{\vI}{\ensuremath{\mathrm{v}_\mathrm{I}}}
\newcommand{\vII}{\ensuremath{\mathrm{v}_\mathrm{II}}}
\newcommand{\vIII}{\ensuremath{\mathrm{v}_\mathrm{III}}}
\newcommand{\artin}[3]{\left(\frac{#1 \,\vert\, #2}{#3}\right)}
\title[Similarity classes and representation zeta functions of groups
of type $\mathsf{A}_2$]{Similarity classes of integral $\fp$-adic
  matrices and representation zeta functions of groups of type
  $\mathsf{A}_2$}
\author{Nir Avni}
\address{Department of Mathematics, Harvard University, One Oxford
  Street, Cambridge MA 02138, USA} \email{avni.nir@gmail.com}
\author{Benjamin Klopsch} \address{Mathematisches Institut der
  Heinrich-Heine-Universit\"at, Universit\"atsstr.\ 1, 40225
  D\"usseldorf, Germany}\email{klopsch@math.uni-duesseldorf.de}
\author{Uri Onn}
\address{Department of Mathematics, Ben Gurion University of the
  Negev, Beer-Sheva 84105, Israel} \email{urionn@math.bgu.ac.il}
\author{Christopher Voll} \address{Fakult\"at f\"ur Mathematik,
  Universit\"at Bielefeld, 33501 Bielefeld, Germany}
\email{C.Voll.98@cantab.net}
\thanks{Avni was supported by NSF grant DMS-0901638.  Onn was
  supported by ISF grant 382/11.}
\keywords{Representation growth of groups, $p$-adic analytic groups,
  ad\`elic profinite groups, arithmetic groups, similarity classes of
  matrices, Kirillov orbit method, representation zeta function}
\subjclass[2010]{Primary 11M41, 15A21, 20C15, 20G05; Secondary 11C20,
  15A30, 15B33, 15B57, 20F69, 20G25, 20G35, 20H05}
\begin{document}

\maketitle

\centerline{\today}

\begin{abstract}
  We compute explicitly Dirichlet generating functions enumerating
  finite-dimen\-sio\-nal irreducible complex representations of
  various $p$-adic analytic and ad\`elic profinite groups of
  type~$\mathsf{A}_2$.  This has consequences for the representation
  zeta functions of arithmetic groups $\Gamma \subset \mathbf{H}(k)$,
  where $k$ is a number field and $\mathbf{H}$ a $k$-form of~$\SL_3$:
  assuming that $\Gamma$ possesses the strong Congruence Subgroup
  Property, we obtain precise, uniform estimates for the
  representation growth of~$\Gamma$.  Our results are based on
  explicit, uniform formulae for the representation zeta functions of
  the $p$-adic analytic groups $\SL_3(\lri)$ and $\SU_3(\lri)$, where
  $\lri$ is a compact discrete valuation ring of characteristic~$0$.
  These formulae build on our classification of similarity classes of
  integral $\fp$-adic $3\times3$ matrices in $\gl_3(\lri)$ and
  $\gu_3(\lri)$, where $\lri$ is a compact discrete valuation ring of
  arbitrary characteristic.  Organising the similarity classes by
  invariants which we call their \emph{shadows} allows us to combine
  the Kirillov orbit method with Clifford theory to obtain explicit
  formulae for representation zeta functions.  In a different
  direction we introduce and compute certain similarity class zeta
  functions.

  Our methods also yield formulae for representation zeta functions of
  various finite subquotients of groups of the form $\SL_3(\lri)$,
  $\SU_3(\lri)$, $\GL_3(\lri)$, and $\GU_3(\lri)$, arising from the
  respective congruence filtrations; these formulae are valid in case
  that the characteristic of $\lri$ is either $0$ or sufficiently
  large.  Analysis of some of these formulae leads us to observe
  $p$-adic analogues of `Ennola duality'.
\end{abstract}

\setcounter{tocdepth}{1}

\thispagestyle{empty}
\tableofcontents



\section{Introduction and discussion of main results}
Let $G$ be a group. For $n \in \N$, let $r_n(G)$ denote the number of
$n$-dimensional irreducible complex representations of $G$ up to
equivalence. If $G$ is a topological or an algebraic group, the
representations are assumed to be continuous or algebraic,
respectively. Following~\cite{BLMM}, we say that $G$ is
(\emph{representation}) \emph{rigid} if $r_n(G)$ is finite for
all~$n$. In this case one takes interest in the arithmetic function $n
\mapsto r_n(G)$.  Examples of such rigid groups include most
`semisimple' arithmetic and compact $p$-adic analytic groups. The
\emph{representation zeta function} of a rigid group $G$ is the
Dirichlet generating function
\begin{equation} \label{equ:defn.zeta} \zeta_G(s) =
  \sum_{n=1}^{\infty}r_n(G) n^{-s} \quad (s\in\C).
\end{equation}

The group $G$ is said to have \emph{polynomial representation growth}
if the growth of $N \mapsto R_N(G) = \sum_{n=1}^{N} r_n(G)$ is bounded
by some polynomial in~$N$.  In this case the Dirichlet series
$\zeta_G(s)$ converges absolutely in a complex half-plane of the form
$\{s \in \C \mid \real(s)>\alpha\}$ for some $\alpha \in \R$.  The
infimum of such numbers $\alpha$ is called the \emph{abscissa of
  convergence} of $\zeta_G(s)$ and denoted by~$\alpha(G)$.  If $G$
admits only finitely many irreducible complex representations, then
$\alpha(G)=-\infty$ and $\zeta_G(s)$ is holomorphic on the entire
complex plane.  Otherwise the abscissa of convergence $\alpha(G)$
satisfies
\[
\alpha(G) = \limsup_{N \to \infty} \frac{\log \left(R_N(G)
  \right)}{\log N}
\]
and thus gives the degree of polynomial growth.  For a range of
results on representation growth and zeta functions of arithmetic and
profinite groups see, for instance, \cite{LM,Avni,
  JZ,LL,S12,AizenbudAvni/13}.  The current paper forms part of a
series of papers on representation growth;
see~\cite{cr_AKOV/09,AKOVIII/11,AKOV1,AKOV4}.  For surveys,
see~\cite{Klopsch/13, Voll/14}.

\subsection{Analytic properties of zeta functions of ad\`elic and
  arithmetic groups}\label{subsec:main.results}
The arithmetic groups considered in this paper are of type
$\mathsf{A}_2$ and defined in characteristic~$0$.  Let $k$ be a number
field with ring of integers~$\Gri$.  Let $\mathbf{H}$ be a connected,
simply-connected absolutely almost simple algebraic group defined
over~$k$, with a fixed embedding into~$\GL_d$ for some~$d \in \N$.
For a place $v$ of $k$, we write $k_v$ for the completion of $k$
at~$v$ and, if $v$ is non-archimedean, $\Gri_v$ for the completion of
$\Gri$ at~$v$.  Let $S$ be a finite set of places of $k$, including
all the archimedean ones, and let $\Gri_S = \{x \in k \mid \forall v
\not\in S: x \in \Gri_v\}$ denote the ring of $S$-integers in~$k$.
The arithmetic group $\mathbf{H}(\Gri_S) = \mathbf{H}(k) \cap
\GL_d(\Gri_S)$ embeds diagonally into the $S$-ad\`ele group
$\mathbf{H}(\mathcal{A}_{k,S}) = \{ (g_v) \in \prod_{v \not\in S}
\mathbf{H}(k_v) \mid g_v \in \mathbf{H}(\Gri_v) \text{ for almost all
} v \}$.  By the Strong Approximation Theorem, the congruence
completion of $\mathbf{H}(\Gri_S)$ coincides with the open compact
subgroup $\mathbf{H}(\widehat{\Gri_S}) \simeq \prod_{v \not\in S}
\mathbf{H}(\Gri_v)$ of~$\mathbf{H}(\mathcal{A}_{k,S})$.

It was shown in~\cite{LM} that the congruence
completion~$\mathbf{H}(\widehat{\Gri_S})$ has polynomial
representation growth.  In~\cite[Theorem~C]{AKOV1} we quantified this
result for groups $\mathbf{H}$ of type~$\mathsf{A}_2$: in this case,
$\alpha(\mathbf{H}(\widehat{\Gri_S})) = 1$, in other words the
representation growth of $\mathbf{H}(\widehat{\Gri_S})$ is linear.
Our first main result establishes finer asymptotic properties of
$\zeta_{\mathbf{H}(\widehat{\Gri_S})}(s)$ for groups $\mathbf{H}$ of
type~$\mathsf{A}_2$.

\begin{thmABC} \label{thm:A} Let $\mathbf{H}(\widehat{\Gri_S})$ be an
  ad\`elic profinite group as above, where the algebraic group
  $\mathbf{H}$ is connected, simply-connected absolutely almost simple
  of type~$\mathsf{A}_2$.
  \begin{enumerate}
  \item The zeta function $\zeta_{\mathbf{H}(\widehat{\Gri_S})}(s)$
    can be meromorphically continued to the complex half-plane $\{s \in \C
    \mid \real(s) > 5/6 \}$.  The only pole of
    $\zeta_{\mathbf{H}(\widehat{\Gri_S})}(s)$ in this domain is a
    double pole at~$s=1$.
  \item There exists an invariant $c(\mathbf{H}(\widehat{\Gri_S})) \in
    \R_{>0}$ such that
    \[
    c(\mathbf{H}(\widehat{\Gri_S})) =
    \lim_{N\to\infty}\frac{R_N(\mathbf{H}(\widehat{\Gri_S}))}{N\log
      N}.
    \]
  \end{enumerate}
\end{thmABC}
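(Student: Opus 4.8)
The plan is to realise $\zeta_{\mathbf{H}(\widehat{\Gri_S})}(s)$ as an Euler product over the non-archimedean places of $k$, to feed in the explicit uniform formulae for the local zeta functions of $\SL_3(\lri_v)$ and $\SU_3(\lri_v)$ established in the body of the paper, and then to analyse the resulting infinite product of rational functions by the method of du Sautoy and Grunewald, comparing it with translates of the Dedekind zeta function of $k$; part~(2) will then follow from~(1) by a Tauberian theorem. By the Strong Approximation Theorem $\mathbf{H}(\widehat{\Gri_S}) \simeq \prod_{v \notin S} \mathbf{H}(\Gri_v)$, and since each $\mathbf{H}(\Gri_v)$ is perfect — hence has no nontrivial one-dimensional representations — the representation zeta function is multiplicative over this restricted direct product: $\zeta_{\mathbf{H}(\widehat{\Gri_S})}(s) = \prod_{v\notin S}\zeta_{\mathbf{H}(\Gri_v)}(s)$. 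Let $T \supseteq S$ be the finite set obtained by adjoining the places of residue characteristic $2$ or $3$, those at which the chosen model of $\mathbf{H}$ degenerates or $\mathbf{H}$ is anisotropic, and — when $\mathbf{H}$ is an outer form relative to a quadratic extension $E/k$ — those at which $E/k$ ramifies. For $v \notin T$ one has $\mathbf{H}(\Gri_v) \cong \SL_3(\lri_v)$ if $\mathbf{H}$ is an inner form or an outer form split at $v$, and $\mathbf{H}(\Gri_v) \cong \SU_3(\lri_v)$ if $\mathbf{H}$ is an outer form inert at $v$. The finitely many factors with $v \in T \setminus S$ are rational functions in $q_v^{-s}$ by \cite{JZ}, of abscissa of convergence at most $2/3$ by \cite{AKOV1}, hence holomorphic and non-vanishing on $\{\real(s) > 5/6\}$.

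For the infinite part, let $W_{\SL}, W_{\SU} \in \Q(X,Y)$ be the uniform rational functions with $\zeta_{\SL_3(\lri_v)}(s) = W_{\SL}(q_v, q_v^{-s})$ and $\zeta_{\SU_3(\lri_v)}(s) = W_{\SU}(q_v, q_v^{-s})$. Applying the du Sautoy--Grunewald technique to $\prod_v W_{\SL}(q_v,q_v^{-s})$ — respectively, for an outer form, to $\big(\prod_{v\ \mathrm{split}}W_{\SL}(q_v,q_v^{-s})\big)\big(\prod_{v\ \mathrm{inert}}W_{\SU}(q_v,q_v^{-s})\big)$, where by the Ennola-type relation between $W_{\SL}$ and $W_{\SU}$ the dominant split and inert local factors recombine into Euler products over \emph{all} places — one extracts a finite product of translated Dedekind zeta functions $\prod\zeta_k(bs-a)$, leaving a residual Euler product that converges absolutely and locally uniformly, hence defines a holomorphic, non-vanishing function, on $\{\real(s) > 5/6\}$. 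The translates $\zeta_k(bs-a)$ carrying a singularity into this half-plane are exactly those with $(a+1)/b = 1$; reading off from the explicit numerator and denominator data of $W_{\SL}$ and $W_{\SU}$ which pairs $(a,b)$ occur and with what multiplicity, one finds that their net contribution is a pole at $s=1$ of order precisely $2$, with the residual product non-zero there. Combined with the previous paragraph this proves~(1).

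Part~(2) follows from~(1) by the standard Tauberian theorem for Dirichlet series with non-negative coefficients whose abscissa of convergence is a pole of order $2$ past which the series continues meromorphically: the necessary polynomial bound for $\zeta_{\mathbf{H}(\widehat{\Gri_S})}(s)$ on vertical lines in the strip $5/6 < \real(s) \le 1$ holds because its continuation is a product of translates of the Dedekind zeta function of $k$ — which satisfies convexity bounds — with Euler products that are uniformly bounded on vertical lines there. One obtains $R_N(\mathbf{H}(\widehat{\Gri_S})) \sim c\, N\log N$ with $c = \lim_{s\to 1}(s-1)^2\,\zeta_{\mathbf{H}(\widehat{\Gri_S})}(s)$; this number is real because the $r_n$ are, and strictly positive because the pole at $s=1$ is genuinely of order $2$ while the $r_n$ are non-negative.

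The substantial input is the pair of explicit uniform formulae for $\zeta_{\SL_3(\lri_v)}$ and $\zeta_{\SU_3(\lri_v)}$, proved in the rest of the paper by organising similarity classes via their shadows and combining the Kirillov orbit method with Clifford theory; these I take as given. Granting them, the crux of the argument above is the bookkeeping in the second paragraph: one must determine, from the explicit shape of $W_{\SL}$ and $W_{\SU}$ — and, for outer forms, from the way their split and inert Euler factors interact — the complete set of candidate poles of the global Euler product, verify that the only one in $\{\real(s) > 5/6\}$ sits at $s = 1$ and is of order exactly $2$ (neither $1$ nor $3$), and check that every remaining candidate pole has real part at most $5/6$, so that the residual Euler product is holomorphic throughout the claimed half-plane.
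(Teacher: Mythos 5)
Your strategy coincides with the paper's: Euler product over $v \notin S$, the finitely many exceptional factors handled via the rationality and abscissa bounds of \cite{JZ} and \cite{AKOV1}, the uniform formulae for $\zeta_{\SL_3(\Gri_v)}(s)$ and $\zeta_{\SU_3(\Gri_v)}(s)$ fed into a du Sautoy--Grunewald comparison with translated Dedekind zeta functions, and the Tauberian theorem for part~(2). However, the decisive step is precisely the one you defer as ``bookkeeping'', and as written it is a genuine gap: you assert, but do not verify, that after extracting the Dedekind translates the residual Euler product converges, is non-vanishing on $\{\real(s)>5/6\}$, and leaves a pole at $s=1$ of order exactly $2$. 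Concretely, the paper clears the common denominator $(1-q_v^{1-2s})(1-q_v^{2-3s})$ of each local factor -- i.e.\ extracts exactly $\zeta_k(2s-1)\zeta_k(3s-2)$, whose two simple poles at $s=1$ give the double pole -- and then bounds the \emph{degree ratios} of the cleared local factors. The subtlety your outline hides is that several cleared terms still have degree ratio $1$: the contribution $q_v(q_v^2+\ee q_v+1)^{-s}$ of the level-zero characters of degree $q_v^2+\ee q_v+1$, and the three families of characters of degree roughly $q_v^3$ contributing $\tfrac16 q_v^2 + \tfrac12 q_v^2 + \tfrac13 q_v^2 = q_v^2$ in total. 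Only after pairing these with $-q_v^{1-2s}$ and $-q_v^{2-3s}$ and using binomial-type estimates such as
\[
\bigl\lvert q_v(q_v^2+\ee q_v+1)^{-s} - q_v^{1-2s} \bigr\rvert \le C\, q_v^{-2s},
\qquad
\Bigl\lvert \tfrac16 q_v^2\bigl((q_v^2+\ee q_v+1)(q_v+\ee)\bigr)^{-s} + \tfrac12 q_v^2 (q_v^3-\ee)^{-s} + \tfrac13 q_v^2\bigl((q_v+\ee)(q_v-\ee)^2\bigr)^{-s} - q_v^{2-3s} \Bigr\rvert \le C'\, q_v^{1-3s},
\]
do these drop to ratios $1/2$ and $2/3$; the number $5/6$ then emerges as the maximal ratio, attained by the torus-type shadows $\Tasha,\Tbsha,\Tcsha$, with all remaining shadow types checked to lie at or below $5/6$. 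Without exhibiting these cancellations, the naive estimate gives only abscissa $1$ for the residual product, and neither the half-plane $\real(s)>5/6$ nor the exact pole order is established.

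Two further remarks. The appeal to an ``Ennola-type recombination'' of split and inert factors is unnecessary: the cleared denominator $(1-q_v^{1-2s})(1-q_v^{2-3s})$ is the same for $\SL_3$ and $\SU_3$, so the extracted factors are translates of $\zeta_k$ over all places of $\cV_0$ regardless of the splitting behaviour, and the degree-ratio bounds hold uniformly in $\ee$. For part~(2), the Tauberian theorem used in the paper (du Sautoy--Grunewald) requires only holomorphic continuation up to the line $\real(s)=1$ apart from the pole, so your unverified claim about polynomial bounds on vertical lines can simply be dropped; in any case the residual Euler product is uniformly bounded on closed half-planes $\real(s)\ge 5/6+\delta$ by absolute convergence.
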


\begin{remark}
  In fact, the proof of Theorem~\ref{thm:A} works for a somewhat
  larger class of profinite groups, including groups of the form $H =
  \prod_{v \not\in S} H_v$, where $H_v$ is commensurable to a compact
  open subgroup of an absolutely almost simple $k_v$-algebraic group
  $\mathbf{H}_v(k_v)$ of type~$\mathsf{A}_2$ for each place $v$ and
  such that $H_v$ is equal to either $\SL_3(\Gri_v)$ or
  $\SU_3(\Gri_v)$ for almost all~$v$; see Section~\ref{sec:abscissa}
  for details.  A precise definition of the standard unitary group
  $\SU_3(\lri)$ over a discrete valuation ring $\lri$ is given in
  Section~\ref{sec:sim.gu}.
\end{remark}

The arithmetic group $\mathbf{H}(\Gri_S)$ has the \emph{weak
  Congruence Subgroup Property} (wCSP) if the congruence kernel
$\ker(\widehat{\mathbf{H}(\Gri_S)} \to \mathbf{H}(\widehat{\Gri_S}))$
of the natural projection from the profinite completion onto the
congruence completion is finite.  We say that $\mathbf{H}(\Gri_S)$ has
the \emph{strong Congruence Subgroup Property} (sCSP) if this
congruence kernel is trivial.  For instance, the solution of the
congruence subgroup problem for the Chevalley group schemes $\SL_n$,
$n \geq 3$, by Bass, Milnor, and Serre~\cite{BMS/67} implies that the
group $\SL_n(\Gri)$ always has the wCSP and that it fails to have the
sCSP if and only if $k$ is totally imaginary.  Theorem~\ref{thm:A}
leads to the following corollary.

\begin{corABC} \label{cor:B} Let $\mathbf{H}(\Gri_S)$ be an arithmetic
  group as above, where the algebraic group $\mathbf{H}$ is connected,
  simply-connected absolutely almost simple of type~$\mathsf{A}_2$,
  and suppose that $\mathbf{H}(\Gri_S)$ has the~wCSP.  Then
  $\mathbf{H}(\Gri_S)$ contains a finite index subgroup $\Gamma$ such
  that the following is true.
  \begin{enumerate}
  \item The zeta function $\zeta_{\Gamma}(s)$ can be meromorphically
    continued to the complex half-plane $\{s \in \C \mid \real(s) >
    5/6 \}$.  The only pole of $\zeta_{\Gamma}(s)$ in this domain is a
    double pole at~$s=1$.
  \item \label{item:second} There exists $c(\Gamma) \in \R_{>0}$ such
    that
    \[
    c(\Gamma) = \lim_{N\to\infty}\frac{R_N(\Gamma)}{N\log N}.
    \]
  \end{enumerate}
  Moreover, if $\mathbf{H}(\Gri_S)$ has the sCSP then one may take
  $\Gamma = \mathbf{H}(\Gri_S)$.
\end{corABC}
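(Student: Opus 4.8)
The plan is to reduce Corollary~\ref{cor:B} to Theorem~\ref{thm:A} and its Remark by means of the comparison, developed in \cite{LM,LL,AKOV1}, between an arithmetic group with the Congruence Subgroup Property and its congruence completion. First I would pass to a suitable finite-index subgroup. By the wCSP the congruence kernel $C = \ker(\widehat{\mathbf{H}(\Gri_S)} \to \mathbf{H}(\widehat{\Gri_S}))$ is finite, and it is known to be central in~$\widehat{\mathbf{H}(\Gri_S)}$. Since $\mathbf{H}$ is connected, simply connected and absolutely almost simple, the groups $\mathbf{H}(\Gri_v)$ (which equal $\SL_3(\Gri_v)$ or $\SU_3(\Gri_v)$ for almost all~$v$) have trivial abelianisation and trivial Schur multiplier for all but finitely many~$v$; consequently the central extension $1 \to C \to \widehat{\mathbf{H}(\Gri_S)} \to \prod_{v\notin S}\mathbf{H}(\Gri_v) \to 1$ splits over an open subgroup of the form $\prod_{v\in T}V_v \times \prod_{v\notin S\cup T}\mathbf{H}(\Gri_v)$, with $T$ finite and $V_v$ open in $\mathbf{H}(\Gri_v)$. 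Letting $U$ be the image of such a splitting, $U$ is open in $\widehat{\mathbf{H}(\Gri_S)}$ with $U\cap C = 1$, so $\Gamma \coloneqq U\cap \mathbf{H}(\Gri_S)$ has finite index in $\mathbf{H}(\Gri_S)$, has the sCSP, and its congruence completion is
\[
\widehat\Gamma \;\cong\; \textstyle\prod_{v\in T}V_v \,\times\, \prod_{v\notin S\cup T}\mathbf{H}(\Gri_v),
\]
a profinite group of exactly the kind covered by the Remark following Theorem~\ref{thm:A}. If $\mathbf{H}(\Gri_S)$ already has the sCSP then $C=1$ and one may take $\Gamma = \mathbf{H}(\Gri_S)$, whose congruence completion is $\mathbf{H}(\widehat{\Gri_S}) = \prod_{v\notin S}\mathbf{H}(\Gri_v)$, covered directly by Theorem~\ref{thm:A}.

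Next I would invoke the representation-theoretic comparison. By Margulis super-rigidity together with the sCSP for~$\Gamma$, every finite-dimensional irreducible complex representation of $\Gamma$ factors as an external tensor product of one inflated from $\widehat\Gamma$ and one coming, via strong approximation, from the archimedean Lie group $\prod_{v\mid\infty}\mathbf{H}(k_v)$; as worked out in \cite{LM,LL,AKOV1} this yields
\[
\zeta_\Gamma(s) \;=\; \zeta_{\widehat\Gamma}(s)\cdot W(s), \qquad W(s) \;=\; \textstyle\prod_{v\mid\infty}\zeta_{\mathbf{H}(k_v)}(s),
\]
where $\zeta_{\mathbf{H}(k_v)}(s)$ enumerates the continuous finite-dimensional irreducible representations of $\mathbf{H}(k_v)$. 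The latter are the restrictions of the algebraic representations of $\SL_3$, governed by the dimensions $(a+1)(b+1)(a+b+2)/2$ with $(a,b)\in\N^2$; hence $W(s)$ is a Dirichlet series with non-negative coefficients whose abscissa of convergence equals $\mathrm{rank}(\mathsf{A}_2)/|\Phi^+(\mathsf{A}_2)| = 2/3$ (cf.~\cite{LL}), which admits meromorphic continuation to all of~$\C$ with every pole in $\{\real(s)\le 2/3\}$, and which satisfies $W(1)>0$. In particular $W$ is holomorphic and non-vanishing on $\{\real(s)\ge 5/6\}$.

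Finally I would assemble the conclusion. By the Remark following Theorem~\ref{thm:A}, $\zeta_{\widehat\Gamma}(s)$ continues meromorphically to $\{\real(s) > 5/6\}$ with its only pole a double pole at~$s=1$, and $c(\widehat\Gamma) \coloneqq \lim_{N\to\infty}R_N(\widehat\Gamma)/(N\log N)$ exists in $\R_{>0}$. Multiplying by $W(s)$, holomorphic and non-zero on $\{\real(s)\ge 5/6\}$, gives the meromorphic continuation of $\zeta_\Gamma(s)$ claimed in part~(1). For part~(2), write $W(s) = \sum_{e\ge1}w_e\,e^{-s}$, so that $r_n(\Gamma) = \sum_{de=n}r_d(\widehat\Gamma)\,w_e$ and $R_N(\Gamma) = \sum_{e\ge1}w_e\,R_{\lfloor N/e\rfloor}(\widehat\Gamma)$; combining $R_M(\widehat\Gamma)\sim c(\widehat\Gamma)M\log M$ with the crude bound $R_M(\widehat\Gamma) = O(M\log M)$ and $\sum_e w_e/e = W(1) < \infty$, a dominated-convergence argument gives
\[
\lim_{N\to\infty}\frac{R_N(\Gamma)}{N\log N} \;=\; c(\widehat\Gamma)\cdot W(1) \;=\; c(\widehat\Gamma)\prod_{v\mid\infty}\zeta_{\mathbf{H}(k_v)}(1) \;\in\; \R_{>0},
\]
which is the invariant $c(\Gamma)$; the sCSP case is identical, with $\widehat\Gamma = \mathbf{H}(\widehat{\Gri_S})$ and Theorem~\ref{thm:A} replacing its Remark. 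The substantive input here is all upstream — Theorem~\ref{thm:A} itself and the arithmetic-to-ad\`elic machinery of \cite{LM,LL,AKOV1}; within this deduction the only point needing genuine care is the first step, namely arranging that the finite-index subgroup has a congruence completion which is a true \emph{direct product} of local groups of the type for which Theorem~\ref{thm:A} and its Remark are proved. This is what forces the cohomological vanishing statements for $\SL_3(\Gri_v)$ and $\SU_3(\Gri_v)$ at almost all~$v$ (centrality of $C$, together with vanishing of $H^1$ and $H^2$ with $C$-coefficients) to be invoked, and the super-rigidity factorisation to be checked compatible with this passage to a non-congruence finite-index subgroup.
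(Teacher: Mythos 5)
Your proposal is correct and follows essentially the same route as the paper: pass to a finite-index subgroup $\Gamma$ whose profinite completion is a direct product $\prod_{v\notin S}\Gamma_v$ of compact groups of the kind covered by Theorem~\ref{thm:A} and the remark following it, factor $\zeta_\Gamma(s)$ as that ad\`elic zeta function times archimedean Witten factors, use that the Witten factors are holomorphic and non-vanishing for $\real(s)>2/3$ (Larsen--Lubotzky), and conclude via a Tauberian-type argument; your convolution/dominated-convergence treatment of part~(2) is a valid, slightly more explicit substitute for applying the Tauberian theorem to $\zeta_\Gamma$ itself. Two caveats. First, your opening reduction is both heavier than necessary and, as written, not actually proved: centrality of the congruence kernel and ``trivial Schur multiplier'' of the local groups do not by themselves yield the splitting of the central extension over an open product subgroup (vanishing of the relevant continuous $\textup{H}^2$ for the profinite groups $\SL_3(\Gri_v)$, $\SU_3(\Gri_v)$ and compatibility with the infinite product would each need separate justification). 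The statement you are after needs only that the congruence kernel $C$ is finite: choose an open normal subgroup $U\leq\wh{\mathbf{H}(\Gri_S)}$ with $U\cap C=1$, shrink it so that its image in $\mathbf{H}(\wh{\Gri_S})$ is a basic open subgroup $\prod_{v\in T}V_v\times\prod_{v\notin S\cup T}\mathbf{H}(\Gri_v)$, and set $\Gamma=\mathbf{H}(\Gri_S)\cap U$; this is exactly the (standard) fact the paper invokes, with reference to Larsen--Lubotzky. Second, a minor bookkeeping point: the archimedean contribution is $\zeta_{\mathbf{H}(\C)}(s)^{\lvert k:\Q\rvert}$, so a complex place contributes two copies of the type-$\mathsf{A}_2$ Witten zeta function (your formula $\prod_{v\mid\infty}\zeta_{\mathbf{H}(k_v)}(s)$ is consistent with this if one counts all continuous finite-dimensional irreducibles of the complex group, but your description via the single Weyl dimension formula $(a+1)(b+1)(a+b+2)/2$ only covers the real places); this does not affect the abscissa $2/3$, the non-vanishing on $\real(s)>5/6$, or any of your conclusions.
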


\begin{remark}\label{rem:corB}
  Within the special class of groups that it covers,
  Corollary~\ref{cor:B} goes beyond a conjecture of Larsen and
  Lubotzky on the degrees of polynomial representation growth of
  arithmetic lattices in higher rank semisimple groups;
  see~\cite[Conjecture~1.5]{LL}.  In the same way, it refines the
  variant of this conjecture that was proved
  in~\cite[Theorem~1.2]{AKOV4}.  Indeed, Corollary~\ref{cor:B} asserts
  that, for the relevant arithmetic groups~$\Gamma$, not only the
  degree of representation growth but also the order of the pole of
  the meromorphically continued function at $s = \alpha(\Gamma)$, and
  thus the exponent of the log-$N$-term in \eqref{item:second}, are
  invariants of the type~$\mathsf{A}_2$.  Likewise meromorphic
  continuation can be achieved uniformly in a strip of width at
  least~$1/6$.  The value of the constant $c(\Gamma)$, in contrast,
  depends subtly on the specific group $\Gamma$; see
  Section~\ref{sec:abscissa} for details.

  Furthermore, it is not difficult to extend Theorem~\ref{thm:A} and
  Corollary~\ref{cor:B} to cover ad\`elic profinite groups arising
  from semisimple algebraic groups that are not absolutely almost
  simple, by using the multiplicativity of the representation zeta
  function, i.e.\ $\zeta_{H_1 \times H_2}(s) = \zeta_{H_1}(s)
  \zeta_{H_2}(s)$, for groups whose categories of finite-dimensional
  complex representations are semisimple.
\end{remark}

For simplicity, consider an arithmetic group of the form
$\mathbf{H}(\Gri_S)$ with the sCSP.  A key role in the study of the
representation zeta function $\zeta_{\mathbf{H}(\Gri_S)}(s)$ plays the
fact that it admits an Euler product decomposition.  Indeed, the
triviality of the congruence kernel implies that
\begin{equation}\label{equ:euler}
  \zeta_{\mathbf{H}(\Gri_S)}(s) =
  \zeta_{\mathbf{H}(\C)}(s)^{\lvert k:\Q \rvert} \, \prod_{v\not\in
    S}\zeta_{\mathbf{H}(\Gri_v)}(s);
\end{equation}
see \cite[Proposition~1.3]{LL}.  Here, each archimedean local factor
$\zeta_{\mathbf{H}(\C)}(s)$, known as the Witten zeta function of the
algebraic group $\mathbf{H}(\C)$, enumerates the irreducible rational
representations of~$\mathbf{H}(\C)$; see~\cite{Witten}.  The
non-archimedean local factors $\zeta_{\mathbf{H}(\Gri_v)}(s)$ are the
zeta functions of the rigid compact $p$-adic analytic groups
$\mathbf{H}(\Gri_v)$.  For places $v$ not dividing the prime~$2$, the
representation zeta functions of these groups are given by rational
functions; see~\cite{JZ}.  If $\mathbf{H}$ is absolutely almost simple
of type $\mathsf{A}_2$, then for all but finitely many places~$v$, the
groups $\mathbf{H}(\Gri_v)$ are isomorphic to $\SL_3(\Gri_v)$ or
$\SU_3(\Gri_v)$; cf.~\cite[Appendix~A]{AKOV1}.  Our second main
result, Theorem~\ref{thm:C} below, describes the rational functions
$\zeta_{\SL_3(\Gri_v)}(s)$ and $\zeta_{\SU_3(\Gri_v)}(s)$ explicitly,
apart from finitely many exceptions; see Corollary~\ref{cor:D} for a
concrete, self-contained formula.  Our proof of Theorem~\ref{thm:A} is
based on an analysis of this formula.

\subsection{Character shadows}\label{subsec:shadows.types}
The explicit formulae in Theorem~\ref{thm:C} below, giving the
representation zeta functions of various $p$-adic analytic groups of
type $\mathsf{A}_2$ and finite subquotients thereof, are organised in
terms of representation-theoretic invariants, which we call shadows, a
concept that we now explain.

Let $G$ be a group with a normal subgroup~$N$.  Suppose that the
category of finite-dimensional complex representations of $G$,
respectively $N$, is semisimple and that equivalence classes of
irreducible finite-dimensional complex representations are
parametrised by the corresponding characters.  Writing $\Irr(G)$ for
the set of irreducible complex characters of~$G$ and $\consub(G/N)$
for the set of conjugacy classes of subgroups of $G/N$, consider the
map
\[
\sh_{G,N} \colon \Irr(G) \rightarrow \consub(G/N), \quad \chi \mapsto
\{ \textrm{I}_G(\varphi)/N \mid \varphi \text{ an irred.\ constituent
  of } \Res^G_N(\chi) \},
\]
where $\In_G(\varphi)$ denotes the inertia subgroup of~$\varphi$
in~$G$.  We call $\sh_{G,N}(\chi)$ the (\emph{character})
\emph{shadow} of $\chi \in \Irr(G)$ with respect to~$N$.  If $G$ is
rigid then the map $\sh_{G,N}$ gives rise to a decomposition of the
representation zeta function:
\begin{equation}\label{equ:dec.shadows}
  \zeta_{G}(s) = \sum_{\sigma \in \,\im(\sh_{G,N})}\zeta^\sigma_G(s),
  \qquad \text{where $\zeta^\sigma_G(s) =
    \sum_{\substack{\chi\in\Irr(G) \\ \sh_{G,N}(\chi) = \sigma}} \chi(1)^{-s}$.}
\end{equation}
Albeit arguably too general to be of interest for rigid groups at
large, this decomposition allows us to give explicit, uniform formulae
for the zeta functions of selected classes of groups.

Specifically we consider $p$-adic analytic groups of the form $G =
\aG(\lri)$, respectively $G = \aH(\lri)$, where $\lri$ is a compact
discrete valuation ring of characteristic $0$ with (finite) residue
field $\kk$ and $\aG$ is one of the $\lri$-group schemes $\GL_3$ or
$\GU_3$, and $\aH$ is one of $\SL_3$ or $\SU_3$.  We set $N =
\aG^1(\lri)$, respectively $N = \aH^1(\lri)$, the $1$st principal
congruence subgroup, so that $G/N$ is isomorphic to $\aG(\kk)$
or~$\aH(\kk)$.  Here, the standard unitary $\lri$-group schemes
$\GU_3,\SU_3$ are defined with respect to the standard involution
based on the non-trivial Galois automorphism of an unramified
quadratic extension of~$\lri$; see \eqref{def:unitary} for details.
We define
\begin{equation} \label{equ:def.epsilon} \ee = \ee_\aG =\ee_\aH=
  \begin{cases}
    +1 & \text{if $\aG = \GL_3$ and $\aH = \SL_3$}, \\
    -1 & \text{if $\aG = \GU_3$ and $\aH = \SU_3$}.
  \end{cases}
\end{equation}
In this setup we describe in an explicit and uniform manner
\begin{itemize} \renewcommand{\labelitemi}{$\circ$}
\item the images of the maps $\sh_{G,N}$, i.e.\ the conjugacy classes
  of subgroups of the finite groups $\aG(\kk)$ and $\aH(\kk)$ arising
  as shadows and
\item their fibres, i.e.\ the sets of characters that have a given
  shadow;
\end{itemize}
this description is the key to our proof of Theorem~\ref{thm:C} below.
More precisely, let
\begin{equation}\label{equ:types-def}
  \T = \T_{\mathsf{A}_2} = \{\Gsha, \Lsha, \Jsha, \Tasha, \Tbsha, \Tcsha,
  \Msha, \Nsha, \Kasha, \Kbsha\}
\end{equation}
be a set of ten distinct labels, from now referred to as \emph{shadow
  types}, and set $\T^{(1)} = \T$ and $\T^{(-1)} =\T \smallsetminus
\{\Kasha,\Kbsha\}$.  It turns out that there exist $\kk$-forms
\begin{equation}\label{equ:shadows}
  \aI^{\cS}_\ee = \aI^{\cS}_{\mathsf{A}_2,\epsilon}, \quad \cS\in\T^{(\ee)},
\end{equation}
of algebraic subgroups of $\GL_3$ such that the following hold: if $p
\coloneqq \cha(\kk) > 3e+3$, where $e=e(\lri,\Z_p)$ denotes the
absolute ramification index of~$\lri$, then
\begin{itemize}
  \renewcommand{\labelitemi}{$\circ$} 
\item the image of $\sh_{\aG(\lri),\aG^1(\lri)}$ is represented by the
  groups $\aI^{\cS}_\ee(\kk)$, $\cS\in\T^{(\ee)}$,
\item the image of $\sh_{\aH(\lri),\aH^1(\lri)}$ is represented by the
  groups $\aH(\kk) \cap \aI^{\cS}_\ee(\kk)$, $\cS\in\T^{(\ee)}$.
\end{itemize}
In fact, for $\ee = 1$ the groups $\aI^{\cS}_\ee$ can be defined
uniformly over~$\Z$, while for $\ee = -1$ the groups $\aI^{\cS}_\ee$
can be defined uniformly over $\kk$ using the Galois automorphism of
the quadratic extension $\kk_2$ of $\kk$.  For simplicity, we call the
groups $\aI^\cS_\ee(\kk)$, rather than their respective conjugacy
classes, the (\emph{character}) \emph{shadows} of $\aG(\lri)$.  The
character shadows $\aI^{\cS}_\ee(\kk)$, $\cS\in \T^{(\ee)}
\smallsetminus \{\Kasha,\Kbsha\}$, turn out to be centralisers of
elements of~$\aG(\kk)$.  Table~\ref{tab:shadows} lists the isomorphism
types of the groups $\aI^\cS_\ee(\KK)$, where $\KK$ denotes an
algebraic closure of~$\kk$.  In the table, $\Heis$ stands for the
Heisenberg group of upper uni-triangular $3\times3$ matrices and
$\aG_\text{a}$ denotes the additive group.  Further details of the
character shadows $\aI^{\cS}_\ee(\kk)$ and their intersections with
$\aH(\kk)$, including their isomorphism types and orders, are compiled
in Tables~\ref{tab:shadows.GLGU} and~\ref{tab:shadows.SLSU}, where the
notation $\aI^{\cS}_\ee(\kk) = \sigma(\kk)$ and $\aH(\kk) \cap
\aI^{\cS}_\ee(\kk) = \sigma'(\kk)$ is used; compare
Section~\ref{subsec:sim.shadows}.  The representation zeta functions
of the finite groups $\sigma(\kk)$ and $\sigma'(\kk)$ are recorded in
Proposition~\ref{pro:zeta.shadows}.

\begin{table}[htb!]
  \centering
  \caption{Algebraic groups giving rise to shadows in $\aG(\kk)$
    of type $\cS$}
  \label{tab:shadows}
  \begin{tabular}{|c||l|l|l|l|}
    \hline
    Type &  Isomorphism type of
    $\aI^{\cS}_\epsilon(\mathbf{K})$ for an algebraic closure $\mathbf{K}$ of $\kk$ \\
    \hline
    $\Gsha$ & $\GL_3(\mathbf{K})$ \\
    $\Lsha$ & $\GL_1(\mathbf{K}) \times \GL_2(\mathbf{K})$ \\
    $\Jsha$ & $\Heis(\mathbf{K}) \rtimes
    (\GL_1(\mathbf{K}) \times \GL_1(\mathbf{K}))$ \\
    $\Tasha, \Tbsha, \Tcsha$ &    $\GL_1(\mathbf{K}) \times
    \GL_1(\mathbf{K}) \times \GL_1(\mathbf{K})$ \\
    $\Msha$ & $\GL_1(\mathbf{K}) \times
    \GL_1(\mathbf{K})\times
    \aG_\text{a}(\mathbf{K})$\\
    $\Nsha$ & $\GL_1(\mathbf{K}) \times
    \aG_\text{a}(\mathbf{K}) \times
    \aG_\text{a}(\mathbf{K})$ \\  \hline
    $\Kasha, \Kbsha$  & $\GL_1(\mathbf{K}) \times
    \aG_\text{a}(\mathbf{K}) \times
    \aG_\text{a}(\mathbf{K})$ \\\hline
\end{tabular}
\end{table}

The same analysis applies, mutatis mutandis, to the character shadows
of groups of the form $\aG(\lri_\len)$ and $\aH(\lri_\len)$,
$\len\in\N$, where $\lri_\len \coloneqq \lri / \fp^{\len}$ with $\fp$
denoting the valuation ideal of~$\lri$.  Whilst groups of the form
$\aG(\lri)$ are clearly not rigid, the representation zeta functions
of their finite quotients $\aG(\lri_\len)$ are of considerable
interest.  In this situation we may even assume that $\lri$ has
positive characteristic, provided that $p = \cha(\kk)$ is large
compared to~$\len$.

In Proposition~\ref{pro:xi.sigma.len} we introduce, for
$\cS\in\T^{(\ee)}$ and $\len\in\N_0$, certain Dirichlet polynomials
$\Xi^{\cS}_{\ee,q,\len}(s) = \Xi^{\cS}_{\mathsf{A}_2,\ee,q,\len}(s)
\in \Z[\sixth][q,q^{-s}]$.  Their formal limits $\Xi^{\cS}_{\ee,q}(s)$
as $\len \rightarrow \infty$ are given in
Corollary~\ref{cor:infinite.xi}.  Deferring precise definitions and
motivations of these functions for the moment, we now state our second
main result.

\begin{thmABC}\label{thm:C}
  Let $\lri$ be a compact discrete valuation ring of residue
  characteristic $p = \cha(\kk)$.  Let $\aG,\aH$ be either
  $\GL_3,\SL_3$ or $\GU_3,\SU_3$ as above and $\len\in\N$.  Assume
  that $p \geq \min\{3\len,3e+3\}$ if $\cha(\lri) = 0$, and $p \geq
  3\len$ if $\cha(\lri) = p$. Then the following hold:
  \begin{align}
    \label{equ:zeta.G.fin}
    \zeta_{\aG(\lri_\len)}(s) &= q^{\len-1} \sum_{\cS \in \T^{(\ee)}}
    [\aG(\kk) : \aI^\cS_\ee(\kk)]^{-1-s} \;
    \zeta_{\aI^\cS_\ee(\kk)}(s) \; \Xi^\cS_{\ee,q,\len-1}(s),\\
    \label{equ:zeta.H.fin} \zeta_{\aH(\lri_\len)}(s) &=
    \sum_{\cS\in\T^{(\ee)}} [\aH(\kk) : (\aH(\kk)\cap
    \aI^{\cS}_\ee(\kk))]^{-1-s} \; \zeta_{\aH(\kk) \cap
      \aI^{\cS}_\ee(\kk)}(s) \; \Xi_{\ee,q,\len-1}^{\cS}(s).
  \end{align}
  Moreover, if $\cha(\lri) = 0$ and $p > 3e+3$, then
  \begin{equation}\label{equ:zeta.H.infin}
    \zeta_{\aH(\lri)}(s) = \sum_{\cS\in\T^{(\ee)}} [\aH(\kk) :
    (\aH(\kk)\cap \aI^{\cS}_\ee(\kk))]^{-1-s} \;
    \zeta_{\aH(\kk)\cap \aI^{\cS}_\ee(\kk)}(s) \;
    \Xi_{\ee,q}^{\cS}(s).
  \end{equation}
\end{thmABC}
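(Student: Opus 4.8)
The plan is to obtain all three formulae from a single argument applied to the finite groups $\aG(\lri_\len)$ and $\aH(\lri_\len)$, taking $N$ to be the first principal congruence subgroup: decompose the zeta function by character shadow as in \eqref{equ:dec.shadows}, use Clifford theory with respect to $N$ to split off the residue-field quotient, and use the Kirillov orbit method on the pro-$p$ part $N$ --- fed by our classification of similarity classes of $3\times 3$ matrices over $\lri$ in $\gl_3$, resp.\ $\gu_3$ --- to enumerate the remaining deep characters. I describe the argument for \eqref{equ:zeta.G.fin}; formula \eqref{equ:zeta.H.fin} then follows with cosmetic changes, and \eqref{equ:zeta.H.infin} by passage to the limit $\len\to\infty$. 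Throughout, the hypothesis $p\ge\min\{3\len,3e+3\}$, resp.\ $p\ge 3\len$, is precisely what makes the Kirillov orbit method available for $N=\aG^1(\lri_\len)$, forces the vanishing of the Clifford $2$-cocycles below, and keeps the similarity-class bookkeeping over $\lri_{\len-1}$ uniform.

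\textbf{Clifford theory.}
Put $G=\aG(\lri_\len)$ and $N=\aG^1(\lri_\len)$, so $G/N\cong\aG(\kk)$. By the description of $\im(\sh_{G,N})$ recalled above, the assignment $\cS\mapsto\sigma_\cS$, the conjugacy class of $I\coloneqq\aI^\cS_\ee(\kk)\le\aG(\kk)$, is a bijection from $\T^{(\ee)}$ onto $\im(\sh_{G,N})$, so \eqref{equ:dec.shadows} reads $\zeta_G(s)=\sum_{\cS\in\T^{(\ee)}}\zeta^{\sigma_\cS}_G(s)$. Fix $\cS$. Each $\chi\in\Irr(G)$ with $\sh_{G,N}(\chi)=\sigma_\cS$ is induced from the inertia group $I_\varphi\coloneqq\In_G(\varphi)$ of a constituent $\varphi$ of $\Res^G_N\chi$, with $I_\varphi/N$ conjugate to $I$; writing $\chi=\mathrm{Ind}_{I_\varphi}^G\tilde\psi$ with $\tilde\psi\in\Irr(I_\varphi\mid\varphi)$ one has $\chi(1)=[\aG(\kk):I]\,\tilde\psi(1)$. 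The key point is that, in the stated range of $p$, the $2$-cocycle on $I_\varphi/N$ classifying extensions of $\varphi$ vanishes, so that $\Irr(I_\varphi\mid\varphi)\leftrightarrow\Irr(I_\varphi/N)\cong\Irr(I)$ via $\tilde\psi\mapsto\theta$ with $\tilde\psi(1)=\varphi(1)\theta(1)$; hence $\sum_{\tilde\psi\in\Irr(I_\varphi\mid\varphi)}\tilde\psi(1)^{-s}=\varphi(1)^{-s}\zeta_I(s)$. A short Clifford-theoretic computation --- summing the contributions of the $\chi$ attached to one $G$-orbit of $N$-constituents, then over all such orbits of type $\cS$, and recording that such an orbit contains $[\aG(\kk):I]$ characters $\varphi$ of equal degree --- then gives
\[
\zeta^{\sigma_\cS}_{G}(s)=[\aG(\kk):\aI^\cS_\ee(\kk)]^{-1-s}\;\zeta_{\aI^\cS_\ee(\kk)}(s)\;\sum_{\varphi}\varphi(1)^{-s},
\]
the last sum running over all $\varphi\in\Irr(N)$ of shadow type $\cS$, with $\zeta_{\aI^\cS_\ee(\kk)}(s)$ furnished by Proposition~\ref{pro:zeta.shadows}.

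\textbf{The deep sum and the polynomials $\Xi^\cS$.}
Since the Kirillov orbit method applies to the finite $p$-group $N$, one identifies $\Irr(N)$ with the coadjoint $N$-orbits on $\fn^\ast$, where $\fn$ is the associated Lie algebra, with $\varphi(1)=|\cO_\varphi|^{1/2}$, the conjugation action of $G$ corresponding to the coadjoint $G$-action, and the shadow type of $\varphi$ read off from the $\aG(\kk)$-similarity type of the reduction modulo $\fp$ of a matrix representing $\cO_\varphi$. Identifying $\fn^\ast\cong\gl_3(\lri_{\len-1})$ through the trace form --- the shift by one in the congruence filtration explaining the index $\len-1$, and the decomposition $\gl_3=\fsl_3\oplus\mathfrak{z}$ (legitimate since $3$ is invertible in $\lri_{\len-1}$, vacuously so when $\len=1$) peeling off the central line as a factor $q^{\len-1}=|\lri_{\len-1}|$ common to all types --- the deep sum $\sum_\varphi\varphi(1)^{-s}$ becomes $q^{\len-1}$ times the generating function that enumerates the $\aG(\lri_{\len-1})$-similarity classes in $\fsl_3(\lri_{\len-1})$ of shadow type $\cS$, together with the fine structure of their decompositions into coadjoint $N$-orbits, each such orbit weighted by $|\cO|^{-s/2}$. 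Evaluating this by means of our similarity-class classification, organised by shadow, produces exactly the Dirichlet polynomial $\Xi^\cS_{\ee,q,\len-1}(s)$ of Proposition~\ref{pro:xi.sigma.len}, and \eqref{equ:zeta.G.fin} follows. For \eqref{equ:zeta.H.fin} one repeats the argument with $G=\aH(\lri_\len)$, $N=\aH^1(\lri_\len)$: the shadows become $\aH(\kk)\cap\aI^\cS_\ee(\kk)$, the index and finite-group factors change accordingly, and the deep sum --- now over $\aH(\lri_{\len-1})$-similarity classes in $\fsl_3(\lri_{\len-1})$, resp.\ $\su_3(\lri_{\len-1})$ --- equals the same $\Xi^\cS_{\ee,q,\len-1}(s)$ with no factor $q^{\len-1}$, there being no central direction. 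For \eqref{equ:zeta.H.infin}, with $\cha\lri=0$ and $p>3e+3$, every $\chi\in\Irr(\aH(\lri))$ factors through some $\aH(\lri_\len)$, the shadow datum $\{\aI^\cS_\ee(\kk)\}$ does not depend on $\len$, and $\Xi^\cS_{\ee,q,\len-1}(s)\to\Xi^\cS_{\ee,q}(s)$ coefficientwise by Corollary~\ref{cor:infinite.xi}; hence \eqref{equ:zeta.H.fin} passes to the limit.

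\textbf{Main obstacle.}
The decisive difficulty is the third step: turning the abstract coadjoint-orbit sum into the closed-form polynomial $\Xi^\cS_{\ee,q,\len-1}(s)$. This requires the full strength of the classification of similarity classes of $3\times 3$ matrices over $\lri_{\len-1}$ in both $\gl_3$ and --- considerably more delicately, via Galois descent along the unramified quadratic extension --- $\gu_3$, arranged so that the two nested orbit layers (coadjoint $N$-orbits, and then the $\aG(\kk)$-action on them) are governed by the shadow invariant; controlling the orbit sizes as polynomials in $q$ and $q^{-s}$ across all ten shadow types, and especially in the unitary case, is where the bulk of the work resides. A secondary, genuinely $p$-dependent obstruction is the vanishing of the Clifford $2$-cocycles in the second step, which for the non-abelian shadow types --- notably $\Jsha$, whose shadow involves a Heisenberg group --- relies in an essential way on the lower bound for $p$.
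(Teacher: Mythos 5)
Your overall architecture is the same as the paper's: Clifford theory relative to the first principal congruence subgroup (Corollary~\ref{cor:gen.formula}), the Kirillov orbit method on the congruence kernel together with the trace-form duality identifying its character group with $\ag(\lri_{\len-1})$, and the shadow-organised similarity-class counts that produce the polynomials $\Xi^{\cS}_{\ee,q,\len-1}$ (Propositions~\ref{pro:zeta.H^m_l} and~\ref{pro:xi.sigma.len}, with Proposition~\ref{pro:zeta.shadows} and Lemma~\ref{lem:shadow.cap.H} for the $\aH$-variant), and the limit $\len\to\infty$ for \eqref{equ:zeta.H.infin}. The genuine gap is the step you dispose of in one line: the claim that, for $p$ in the stated range, the $2$-cocycle on $\In_G(\varphi)/N$ attached to $\varphi\in\Irr(N)$ vanishes. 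No lower bound on $p$ makes the obstruction group small: for shadow type $\Jsha$ the inertia quotient contains $\Heis(\kk)$, whose Schur multiplier has $p$-torsion for every odd $p$; for type $\Nsha$ in the linear case the quotient is $\GL_1(\kk[t]/(t^3))$, abelian but far from cyclic, with multiplier an elementary abelian $p$-group of large rank; and for type $\Gsha$ the quotient is all of $\aG(\kk)$. Worse, since $\varphi$ is a character of a pro-$p$ (or finite $p$-) group, the attached class is a $p$-element (Proposition~\ref{pro:extension.p.group}), so precisely the part of the obstruction that coprimality arguments cannot remove is the part that could survive; ``$p$ large'' by itself proves nothing here.

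What is missing is the paper's two-step extension mechanism. One first extends $\varphi$ from $N$ to the maximal normal pro-$p$ subgroup $R_\chi$ of the inertia group $S_\chi$, not by cohomology but Lie-theoretically: the existence of a \emph{shadow-preserving lift} of the matrix parametrising $\varphi$ (Lemma~\ref{lem:good.pair}, i.e.\ every vertex of the shadow graph carries a loop; for $\aH$ this also uses $p\neq 3$) yields $\fr=\fn+\stab_\fr(\theta)$ and hence an extension via the orbit method (Proposition~\ref{pro:character.extends}, Theorem~\ref{thm:extension.to.radical}); saturability/potency of $N$ and $R_\chi$, guaranteed by the hypothesis on $p$ (Proposition~\ref{prop:p.gt.n.e+1}, and the free nilpotent covers of Remark~\ref{rem:char-p-comments-5} when $\cha(\lri)=p$), is what makes this step legitimate. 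Only then does one extend from $R_\chi$ to $S_\chi$, where the cocycle is a $p$-element while the quotients $S_\chi/R_\chi$ that actually occur (tori, $\GL_2(\kk)$, $\GU_2(\kk)$, $\SL_3(\kk)$, $\SU_3(\kk)$) have Schur multipliers with no $p$-torsion for $p>3$ (Lemma~\ref{lem:cohomology}), so the obstruction dies. Without this intermediate extension through $R_\chi$ your Clifford step does not go through. By contrast, the part you single out as the decisive difficulty --- converting the coadjoint-orbit sum into $\Xi^{\cS}_{\ee,q,\len-1}$ --- is, once the classification of similarity classes and their shadows from Part~\ref{part:1} is in hand, the comparatively routine bookkeeping of Definition~\ref{def:xi} and Propositions~\ref{pro:sim.shadow.fin} and~\ref{pro:xi.sigma.len}.
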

For $\cha(\lri) = 0$, we record a self-contained formula for the zeta
functions $\zeta_{\aH(\lri)}(s)$ that can be read off from the
structural formulation in~\eqref{equ:zeta.H.infin}.  For this purpose
we define
\begin{equation}\label{def:iota}
  \iota(\aH,\kk)  = \iota(\ee,q) = \gcd(q-\ee,3), \qquad
  \text{where $q = \lvert \kk \rvert$.}
\end{equation}

\begin{corABC}\label{cor:D}
  Let $\lri$, $q$, $\aH$, and $\ee$ be as above.  Suppose that
  $\cha(\lri) = 0$ and $p > 3e+3$.  Then
  \begin{equation*}
    \zeta_{\aH(\lri)}(s) = \zeta_{\aH(\kk)}(s) + \psi_{\ee,q}(s),
  \end{equation*}
  where $\zeta_{\aH(\kk)}(s)$ is the zeta function of the finite group
  of Lie type $\SL_3(\kk)$ for $\ee=1$, respectively $\SU_3(\kk)$ for
  $\ee=-1$, given by the uniform formula
  \begin{equation}\label{equ:zeta.Lie.type.H}
    \begin{split}
      \zeta_{\aH(\kk)}(s) & = 1 + (q^2+\ee q)^{-s} + (q-1-\ee)
      (q^2 + \ee q+1)^{-s} \\
      & \quad + \hlf (q^2-q-1+\ee) (q^3-\ee)^{-s}
      + q^{-3s} + (q-1-\ee) (q^3+\ee q^2+q)^{-s} \\
      & \quad + \third (q^2+\ee q -2) \left( (q+\ee)(q-\ee)^2
      \right)^{-s}+ \genfrac{}{}{0.1pt}{1}{2}{3}\iota(\ee,q)^2
      \left( (q+\ee)(q-\ee)^2/\iota(\ee,q) \right)^{-s} \\
      & \quad + \sixth (q-\ee)(q-3-\ee) \left( (q^2+\ee q+1)(q+ \ee)
      \right)^{-s} \\
      & \quad + \third \iota(\ee,q)^2 \left( (q^2 + \ee
        q+1)(q+\ee)/\iota(\ee,q) \right)^{-s},
    \end{split}
  \end{equation}
  and
  \[
  \begin{split}
    \psi_{\ee,q}(s) & = \tfrac{1}{2} \frac{(q-1)(q - \ee) \left(2 + 2
        q^{-s} + (q-2) (q+1)^{-s} + q(q-1)^{-s} \right)}{1-q^{1-2s}} %
    \left(q^2(q^2 + \ee q + 1) \right)^{-s}
    \\
    & \quad + \frac{((q-\ee) + (q+\ee) \iota(\ee,q)^2 \left(
        (q-\ee)/\iota(\ee,q)\right)^{-s} + (q-1)(q-\ee)
      q^{-s})}{1-q^{1-2s}} \left((q^3-\ee)(q+\ee)\right)^{-s}
    \\
    & \quad + \tfrac{1}{6} \frac{(q-1) (q-\ee)^2 \left( q-2 + 2 q^{2-2s} - q^{1-2s}
      \right)}{(1-q^{1-2s})(1-q^{2-3s})}
    \left(q^3(q^2 + \ee q + 1)(q + \ee) \right)^{-s}
    \\
    &  \quad + \tfrac{1}{2} \frac{(q-1) (q^2-1)q
      (1-q^{-2s})}{(1-q^{1-2s})(1-q^{2-3s})} %
    \left(q^3(q^3 - \ee) \right)^{-s}
    \\
    &  \quad + \tfrac{1}{3} \frac{(q^2-1)(q^2 + \ee q + 1)}{1-q^{2-3s}} %
     \left( q^3(q^2-1) (q - \ee))\right)^{-s}
    \\
    &  \quad + \frac{(q-1) (q -\ee) q (1+q^{1-2s})}{(1-q^{1-2s})(1-q^{2-3s})} %
     \left(q^2 (q^3-\ee) (q+\ee) \right)^{-s}
    \\
    & \quad + \frac{(\iota(\ee,q)q)^2(1-q^{-2s})}{(1-q^{1-2s})(1-q^{2-3s})} %
     \left(q(q^3-\ee) (q^2-1)/\iota(\ee,q) \right)^{-s}
    \\
    & \quad + (\ee+1) \frac{(\iota(\ee,q)q^{1-s})^2}{(1-q^{1-2s})(1-q^{2-3s})} %
     \left((q^3-1) (q^2-1) q/\iota(\ee,q) \right)^{-s}.
  \end{split}
  \]
  Here the order of summation follows the ordering of the shadow types
  $\Lsha$, $\Jsha$, $\Tasha$, $\Tbsha$, $\Tcsha$, $\Msha$, $\Nsha$,
  and $\Kasha,\Kbsha$ in Table~\ref{tab:shadows}.
\end{corABC}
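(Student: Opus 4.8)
The plan is to derive Corollary~\ref{cor:D} directly from the structural formula \eqref{equ:zeta.H.infin} of Theorem~\ref{thm:C} by substituting the explicit data attached to each of the eight shadow types $\cS\in\T^{(\ee)}$. Concretely, for each type $\cS$ I would read off three ingredients: the index $[\aH(\kk):(\aH(\kk)\cap\aI^{\cS}_\ee(\kk))]$ as a polynomial in $q$ (and $\iota(\ee,q)$); the representation zeta function $\zeta_{\aH(\kk)\cap\aI^{\cS}_\ee(\kk)}(s)$ of the relevant finite subgroup, which is recorded in Proposition~\ref{pro:zeta.shadows} via the data of Tables~\ref{tab:shadows.GLGU} and~\ref{tab:shadows.SLSU}; and the formal limit $\Xi^{\cS}_{\ee,q}(s)$ from Corollary~\ref{cor:infinite.xi}. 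Multiplying the three and summing over $\cS$ is then a finite, if lengthy, algebraic manipulation. The split into $\zeta_{\aH(\kk)}(s)+\psi_{\ee,q}(s)$ corresponds to separating, within each summand, the part coming from the trivial shadow behaviour (characters that are already accounted for at residue-field level, giving the closed terms of \eqref{equ:zeta.Lie.type.H}) from the part carrying the genuine $p$-adic tail, i.e.\ the terms with denominators $1-q^{1-2s}$ and $1-q^{2-3s}$ collected in $\psi_{\ee,q}(s)$.

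The first concrete step is to verify the $\Gsha$-term: its shadow is $\aH(\kk)$ itself, the index is $1$, the $\Xi$-factor is $1$, and so this summand contributes exactly $\zeta_{\aH(\kk)}(s)$ — this already pins down \eqref{equ:zeta.Lie.type.H} and identifies it with the known zeta function of $\SL_3(\kk)$ or $\SU_3(\kk)$ (here one cross-checks against the character tables of these finite groups of Lie type, with the $\iota(\ee,q)=\gcd(q-\ee,3)$ bookkeeping handling the centre). The remaining seven types $\Lsha,\Jsha,\Tasha,\Tbsha,\Tcsha,\Msha,\Nsha$ then must produce $\psi_{\ee,q}(s)$, term by term, in the stated order. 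For each I would: (i) compute the index from the orders in Table~\ref{tab:shadows.SLSU}; (ii) substitute $\zeta_{\aH(\kk)\cap\aI^{\cS}_\ee(\kk)}(s)$ — for types $\Tasha,\Tbsha,\Tcsha$ this is a zeta function of an abelian group, hence a Dirichlet polynomial with all exponents accounted for by the number of linear characters, while for $\Jsha$ (a Heisenberg-by-torus group) and $\Msha,\Nsha$ one uses the standard Stone–von Neumann/Clifford description of the irreducible characters; (iii) multiply in $\Xi^{\cS}_{\ee,q}(s)$, whose denominators are precisely $1-q^{1-2s}$ and/or $1-q^{2-3s}$, and simplify. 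The factor $(\ee+1)$ in the last displayed line of $\psi_{\ee,q}(s)$ reflects that the types $\Kasha,\Kbsha$ occur only for $\ee=1$ (since $\T^{(-1)}=\T\smallsetminus\{\Kasha,\Kbsha\}$), so those two summands are merged into one term carrying the indicator $(\ee+1)$; one checks that for $\ee=1$ their combined contribution equals the displayed expression and for $\ee=-1$ it vanishes.

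The routine but error-prone part is the bookkeeping of the base-degree prefactors: in \eqref{equ:zeta.H.infin} the exponent on the index is $-1-s$ while inside $\zeta_{\aH(\kk)\cap\aI^{\cS}_\ee(\kk)}(s)$ and $\Xi^{\cS}_{\ee,q}(s)$ the character degrees are normalised at residue-field level, so the overall degree appearing in each $(\,\cdot\,)^{-s}$ of $\psi_{\ee,q}(s)$ is the product of the index and the degrees at the two lower levels — this is why powers such as $q^2$ and $q^3$ appear as multiplicative factors inside the bracketed degrees. I would set up a small table matching, for each $\cS$, the quadruple (index, degree pattern of $\zeta_{\aH(\kk)\cap\aI^{\cS}_\ee(\kk)}$, degree pattern of $\Xi^{\cS}_{\ee,q}$, resulting bracketed degree) and then just read off the claimed line. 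The main obstacle is not conceptual but organisational: keeping the $\iota(\ee,q)$-twists consistent across the index, the finite zeta function, and $\Xi^{\cS}$ for the three types ($\Tasha$ or $\Tbsha$ or $\Tcsha$, and the $\Kasha,\Kbsha$ pair for $\ee=1$) where the centre of $\aH$ genuinely intervenes — a sign error or a misplaced factor of $\iota$ there would corrupt several summands at once. Once \eqref{equ:zeta.H.infin} and the auxiliary tables are granted, no new ideas are needed; the proof is a verification.
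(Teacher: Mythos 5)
Your proposal is correct and follows essentially the same route as the paper: the paper's proof of Corollary~\ref{cor:D} is precisely the substitution of the index data from Table~\ref{tab:shadows.SLSU}, the shadow zeta functions of Proposition~\ref{pro:zeta.shadows}, and the limits $\Xi^{\cS}_{\ee,q}(s)$ of Corollary~\ref{cor:infinite.xi} into formula~\eqref{equ:zeta.H.infin}, with the $\Gsha$-term giving $\zeta_{\aH(\kk)}(s)$ and the remaining types giving $\psi_{\ee,q}(s)$ term by term (including your observation on the $(\ee+1)$-factor for $\Kasha,\Kbsha$). The only slip is cosmetic: the groups $\sigma'(\kk)$ for types $\Msha$ and $\Nsha$ are abelian, so no Clifford-type analysis is needed there, but since all those zeta functions are already recorded in Proposition~\ref{pro:zeta.shadows} this does not affect the argument.
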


\begin{remark}\label{rem:zeros.special.A2}
  Assume that $\cha(\lri) = 0$. For general reasons, the zeta
  functions $\zeta_{\aH(\lri)}(s)$ vanish at $s=-2$ for $p>2$;
  cf.~\cite[Corollary~2]{GonzalezJaikinKlopsch/14}. Assume further
  that $p$ is as in Corollary~\ref{cor:D}. Computations with the
  explicit formulae in Corollary~\ref{cor:D} suggest that then
  $\zeta_{\SL_3(\lri)}(s)$ has no further integral zeros. In contrast,
  $\zeta_{\SU_3(\lri)}(s)$ also vanishes at~$s=0$. In addition, it
  vanishes at $s=-1$ if and only if~$\iota(-1,q)= \gcd(q+1,3) = 1$.

  We further remark that the special values of the zeta functions of
  the finite groups $\zeta_{\GU_3(\lri_\len)}(s)$ -- as far as they
  are given by \eqref{equ:zeta.G.fin} -- at $s=-1$, i.e.\ the sum of
  the character degrees of these finite groups, yield the number of
  invertible symmetric matrices in $\GU_3(\lri_\len)$, viz.\
$$\zeta_{\GU_3(\lri_\len)}(-1) = (1+q^{-1})(1+q^{-3})q^{6\len}.$$
The corresponding assertion for groups of the form $\GL_3(\lri_\len)$
seems to hold only for $\len=1$, i.e.\ for $\GL_3(\kk)$. In this case,
the phenomenon is a special case of \cite[Corollary~5.2]{TV07},
concerning the sums of character degrees of unitary groups of the form
$\GU_d(\kk)$. This result, in turn, is a unitary analogue of results
of Gow and Klyachko for groups of the form $\GL_d(\kk)$; see
\cite[Section~5.2.2]{TV07}.
\end{remark}

Formulae for the representation zeta functions of principal congruence
subgroups of the groups considered in Theorem~\ref{thm:C} are provided
in Theorem~\ref{thm:J} below.

A key tool in the analysis of zeta functions of groups is the Kirillov
orbit method, describing the irreducible characters of suitable
pro-$p$ subgroups of $p$-adic analytic groups such as $\aG(\lri)$ in
terms of co-adjoint orbits in the duals of the corresponding $\Z_p$-Lie
lattices; see Section~\ref{sec:kom.clifford} for details.  This
approach leads naturally to the study of similarity classes of
$\fp$-adic matrices, where invariants called \emph{similarity class
  shadows} -- very much analogous to the character shadows of the
matrix groups considered in the present section -- play an important
role, as we explain next.

\subsection{Similarity classes and their
  shadows} \label{subsec:sim.shadows} Let $\lri$ be a compact discrete
valuation ring with valuation ideal~$\fp$ and residue field $\kk$ of
cardinality~$q$.  We impose no restriction on the characteristic
of~$\lri$. Recall that $\lri_\len = \lri / \fp^{\len}$ for
$\len\in\N$.  The problem of classifying and enumerating
\emph{similarity classes} in $\mathsf{Mat}_n(\lri_\len)$, or
equivalently orbits of the adjoint action of $\GL_n(\lri)$ on
$\gl_n(\lri_\len)$, has attracted much attention over the years.  In
the field case, i.e.\ for $\len=1$, a classification is achieved, for
instance, by the Frobenius normal form.  For the case $\len=2$ see,
for example, \cite{JamborPlesken/12, PrasadSinglaSpallone/13}. In
Theorem~\ref{thm:irredundant.list.for.M3x3} we give a complete and
irredundant list of representatives of the similarity classes in
$\gl_3(\lri_\len)$, for any~$\len\in\N$, building on and refining
results from \cite{APOV}.  We also study the analogous problem of
classifying and enumerating similarity classes of anti-hermitian
integral $\fp$-adic matrices, i.e.\ orbits of the adjoint action of
the unitary group $\GU_n(\lri)$ on the unitary Lie lattices
$\gu_n(\lri_\len)$. Here the residue characteristic of $\lri$ is
assumed to be odd, and the relevant objects are defined by means of
the non-trivial Galois automorphism of an unramified quadratic
extension of~$\lri$; see~\eqref{def:unitary} for details. In
Theorem~\ref{thm:representatives.for.GU3.action.gu3} we provide an
explicit list of matrices parametrising $\GU_3(\lri)$-similarity
classes in $\gu_3(\lri_\len)$, for any~$\len\in\N$.

\subsubsection{Similarity class shadows}
A fundamental idea of the current paper is to organise similarity
classes by invariants called shadows, which we now explain. Given
$\len\in\N$ and $A\in\gl_n(\lri_\len)$, the \emph{group centraliser
  shadow} $\sh_{\GL}(A)$ of $A$ is the image
$\overline{\Cen_{\GL_n(\lri)}(A)} \leq \GL_n(\kk)$ of the centraliser
$\Cen_{\GL_n(\lri)}(A)$ under reduction modulo~$\fp$.  Evidently,
similar matrices have conjugate group centraliser shadows.  Roughly
speaking, the \emph{shadow $\shGC(\cC)$ of a similarity class} $\cC
\subset \gl_n(\lri_\len)$ is the conjugacy class of $\sh_{\GL}(A)$
in $\GL_n(\kk)$, for any $A\in\cC$.  (More precisely, we also keep
track of Lie objects associated to the shadows; see
Definition~\ref{def.of.shadows} and the discussion following it.)  We
write $\Sh_{\GL_n(\lri)}$ for the set of shadows arising.  Similar to
the definitions for $\gl_n(\lri)$, we define shadows of similarity
classes of anti-hermitian integral $\fp$-adic matrices.  Broadly
speaking, these may be thought of as conjugacy classes of subgroups
in~$\GU_n(\kk)$; see Definition~\ref{def.of.unitary.shadows}.  We
write $\Sh_{\GU_n(\lri)}$ for the set of shadows in the unitary
setting.

In order to discuss the general linear and unitary scenarios for type
$\mathsf{A}_{n-1}$ in parallel, let $\aG$ be one of the $\lri$-group
schemes $\GL_n, \GU_n$ and, accordingly, let $\ag$ be one of the
$\lri$-Lie lattice schemes $\gl_n, \gu_n$.  As above, $\GU_n$ and
$\gu_n$ are defined over $\lri$ using the non-trivial Galois
automorphism of an unramified quadratic extension of $\lri$.  We
continue to use the parameter $\ee = \ee_\aG \in \{1,-1\}$ defined
in~\eqref{equ:def.epsilon}.  The following questions naturally present
themselves:
\begin{enumerate}
\item Describe the set $\Sh_{\aG(\lri)}$ of shadows. How does it vary
  with the ring $\lri$?
\item Let $\len\in\N$ and let $\cC$ be a similarity class in
  $\ag(\lri_\len)$.  Which shadows arise among the similarity classes
  $\wt{\cC}$ in $\ag(\lri_{\len+1})$ lifting $\cC$, and with what
  multiplicities?  What can be said about the cardinalities $\lvert
  \cC \rvert$ and $\lvert \wt{\cC} \rvert$?
\end{enumerate}
For $n=3$, i.e.\ groups and Lie lattices of type $\mathsf{A}_2$, we
answer these questions completely.  Let us restrict to this setting.
Theorems~\ref{thm:G.shad.graph} and \ref{thm:U.shad.graph}, two of the
paper's main technical results, yield:
\begin{enumerate}
\item The elements of $\Sh_{\aG(\lri)}$ are represented by the groups
  $\aI^{\cS}_\ee(\kk)$, $\cS\in\T^{(\ee)}$; cf.~\eqref{equ:shadows}.
\item Let $\sigma, \tau \in \Sh_{\aG(\lri)}$.  Given a similarity
  class $\cC$ in $\ag(\lri_\len)$ of shadow $\sigma$, the number of
  similarity classes $\wt{\cC}$ in $\ag(\lri_{\len+1})$ of shadow
  $\tau$ lifting $\cC$ is given by a rational polynomial in~$q$,
  depending only on the types of $\sigma$ and $\tau$, but not
  on~$\len$ or $\ee$.  The quotients $\lvert \wt{\cC} \rvert / \lvert
  \cC \rvert$ are given by integral polynomials in~$q$, depending only
  on the types of $\sigma$ and $\tau$ and mildly on~$\ee$, but not
  on~$\len$.
\end{enumerate}
Theorem~\ref{thm:G.shad.graph} and \ref{thm:U.shad.graph} deliver
these groups and polynomials explicitly; cf.\
Tables~\ref{tab:shadows.iso.GL} and \ref{tab:shadows.iso.GU} for the
shadows' isomorphism types and Table~\ref{tab:branch.rules.A2} for the
polynomial data.

Our results on shadows unveil a remarkable recursive structure on the
collection $\cQ^\ag_\lri \coloneqq \coprod_{\len\in\N_0}
\Ad(\aG(\lri)) \backslash \ag(\lri_\len)$ of similarity classes over
all $\len \in \N_0$.  Indeed, informally speaking we may view
$(\Ad(\aG(\lri)) \backslash \ag(\lri_\len))_{\len\in\N_0}$ as a
memory-less stochastic process with finite state space
$\Sh_{\aG(\lri)}$, indexed by $\len\in\N_0$: in order to enumerate,
for instance, similarity classes in $\ag(\lri_{\len+1})$ it suffices
to enumerate similarity classes in $\ag(\lri_\len)$, sorted by their
shadows, and process the `transition data' provided by
Table~\ref{tab:branch.rules.A2}.  Formally, we define on
$\cQ^\ag_\lri$ the structure of an infinite rooted \emph{similarity
  class tree}; see Definitions~\ref{def:graph.Qgl}
and~\ref{def:graph.Qgu}.  Remarkably, the tree's structure is
completely determined by local branching rules, given by the data
provided by Theorems~\ref{thm:G.shad.graph}
and~\ref{thm:U.shad.graph}.  This data may also be organised in a
finite \emph{shadow graph} $\Gamma^{(\ee)}$ with vertex set
$\T^{(\ee)}$; cf.\ Figure~\ref{fig:shadow.graph.A2}.

\subsubsection{Enumerating similarity classes}
Our first application of the concept of similarity class shadows is to
the enumeration of similarity classes of integral $\fp$-adic
$3\times3$ matrices.  As above, let $\aG$ be one of the $\lri$-group
schemes $\GL_3,\GU_3$ and accordingly $\ag$ one of the Lie lattice
schemes $\gl_3,\gu_3$; let $\ee = \ee_\aG \in \{1,-1\}$ as
in~\eqref{equ:def.epsilon}.  We write $\Sh$ for the shadow set
$\Sh_{\aG(\lri)}$.  In Proposition~\ref{pro:sim.shadow.fin} we give
explicit formulae for the \emph{partial similarity class zeta
  functions}
\[
\gamma^{\sigma}_\len(s) = \sum_{\stackrel{\cC \in \Ad(\aG(\lri))
    \backslash \ag(\lri_\len)}{\sh(\cC)=\sigma}} \lvert \cC
\rvert^{-s} \quad \text{for $\sigma \in \Sh$ and $\len \in \N_0$,}
\]
enumerating similarity classes in $\ag(\lri_\len)$ of shadow~$\sigma$;
cf.\ Definition~\ref{def:sim.class.zeta}.

These formulae and variants thereof appear throughout the paper.
Indeed, for our applications to representation zeta functions it is
useful to consider the related Dirichlet polynomials
\[
\xi_\len^\sigma(s) = [\aG(\kk) : \aI^{\cS}_\ee(\kk)]^{1+s/2} \;
q^{-\len} \; \gamma^\sigma_\len(s/2) \quad \text{for $\sigma\in\Sh$ of
  type $\cS$ and $\len\in\N_0$;}
\]
cf.\ Definition~\ref{def:xi}.  In Proposition~\ref{pro:xi.sigma.len}
we establish that the Dirichlet polynomials $\xi^\sigma_\len(s)$ are,
in fact, equal to the functions $\Xi^\cS_{\ee,q,\len}(s)$ featuring in
Theorem~\ref{thm:C}; the proposition provides explicit formulae for
these functions.

Our first application, however, of the \emph{similarity class zeta
  functions} 
\[
\gamma_\len(s) \coloneqq \sum_{\sigma\in\Sh}\gamma^\sigma_\len(s)
\]
is based on the observation that for, all $\len \in \N_0$,
\[
s_\len(\ag(\lri)) \coloneqq \gamma_\len(0) = \lvert \Ad(\aG(\lri))
\backslash \ag(\lri_\len) \rvert
\]
is just the total number of similarity classes in~$\ag(\lri_\len)$.

\begin{thmABC}\label{thm:sim.zeta.local}
  Let $\lri$, $\aG$, $\ag$, and $\ee = \ee_\aG$ be as above; if
  $\ee=-1$ suppose that $\lri$ has odd residue characteristic.  Then
  \begin{equation}\label{equ:sim.local}
    \zeta^{\mathrm{sc}}_{\ag(\lri)}(s) \coloneqq \sum_{\len=0}^\infty
    s_\len(\ag(\lri)) q^{-\len s} = \frac{1+\ee
      q^{2-2s}}{(1-q^{1-s})(1-q^{2-s})(1-q^{3-s})}.
  \end{equation}
\end{thmABC}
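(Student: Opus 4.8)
The plan is to compute the generating function $\zeta^{\mathrm{sc}}_{\ag(\lri)}(s)$ by summing the partial similarity class zeta functions over shadow types, using the explicit data furnished by Theorems~\ref{thm:G.shad.graph} and~\ref{thm:U.shad.graph}. The starting point is the decomposition
\[
s_\len(\ag(\lri)) = \sum_{\sigma\in\Sh} \gamma^\sigma_\len(0),
\]
where $\gamma^\sigma_\len(0)$ counts the similarity classes in $\ag(\lri_\len)$ of shadow $\sigma$. The key structural input is that the collection of all similarity classes $\cQ^\ag_\lri = \coprod_\len \Ad(\aG(\lri))\backslash\ag(\lri_\len)$ carries the structure of a rooted tree whose branching is governed by the shadow graph $\Gamma^{(\ee)}$: the number of similarity classes of shadow type $\cT$ in $\ag(\lri_{\len+1})$ lying above a given class of shadow type $\cS$ in $\ag(\lri_\len)$ is a polynomial $b_{\cS,\cT}(q) \in \Q[q]$ independent of $\len$ and $\ee$ (Table~\ref{tab:branch.rules.A2}).

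First I would set up the vector-valued generating function. Let $a^{\cS}_\len = \sum_{\sigma\in\Sh \text{ of type }\cS} \gamma^\sigma_\len(0)$ be the number of similarity classes in $\ag(\lri_\len)$ whose shadow has type $\cS\in\T^{(\ee)}$, and form the column vector $\mathbf{a}_\len = (a^{\cS}_\len)_{\cS}$. The tree structure gives a linear recursion $\mathbf{a}_{\len+1} = B\, \mathbf{a}_\len$, where $B = (b_{\cS,\cT}(q))$ is the branching matrix read off from Table~\ref{tab:branch.rules.A2}, with initial condition $\mathbf{a}_0$ recording the single class (the zero matrix) in $\ag(\lri_0) = \ag(\kk)$... wait, more carefully, $\ag(\lri_0)$ is trivial so $s_0 = 1$; one sets $\mathbf{a}_0$ to be the indicator of the root type $\Gsha$. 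Then
\[
\zeta^{\mathrm{sc}}_{\ag(\lri)}(s) = \sum_{\len=0}^\infty \mathbf{1}^{\mathsf{T}} \mathbf{a}_\len \, q^{-\len s} = \mathbf{1}^{\mathsf{T}} (I - q^{-s} B)^{-1} \mathbf{a}_0,
\]
a rational function in $q$ and $q^{-s}$ whose denominator divides $\det(I - q^{-s}B)$. The computation then reduces to evaluating this matrix expression: expand $(I - q^{-s}B)^{-1}$, or equivalently solve the linear system $(I - q^{-s}B)\mathbf{v} = \mathbf{a}_0$ and take $\mathbf{1}^{\mathsf{T}}\mathbf{v}$, and simplify. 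The claimed answer $\frac{1+\ee q^{2-2s}}{(1-q^{1-s})(1-q^{2-s})(1-q^{3-s})}$ predicts that after cancellation the denominator collapses to $(1-q^{1-s})(1-q^{2-s})(1-q^{3-s})$ — so the eigenvalues of $B$ relevant to $\mathbf{1}^{\mathsf{T}}(I-q^{-s}B)^{-1}\mathbf{a}_0$ are $q, q^2, q^3$ — and the $\ee$-dependence enters only through the numerator, consistent with the remark that the branching polynomials depend on $\ee$ only "mildly."

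The main obstacle is organisational rather than conceptual: the branching matrix $B$ is $10\times10$ (or $8\times8$ when $\ee=-1$), and one must correctly transcribe all its entries from Table~\ref{tab:branch.rules.A2} and from the isomorphism-type data, being careful about the types $\Tasha,\Tbsha,\Tcsha$ (which have the same abstract shadow but distinct branching behaviour) and about the split/non-split subtlety distinguishing $\Kasha,\Kbsha$ that only occurs for $\ee=+1$. Carrying out the matrix inversion symbolically and verifying that the extraneous factors in $\det(I-q^{-s}B)$ genuinely cancel against the numerator $\mathbf{1}^{\mathsf{T}}\mathrm{adj}(I-q^{-s}B)\mathbf{a}_0$ is the bulk of the work; a useful sanity check is to specialise $s=0$, where the identity should reproduce the known count $s_\len(\ag(\lri))$ from~\cite{APOV} and, via the partial fraction expansion of $\frac{1+\ee q^2}{(1-q)(1-q^2)(1-q^3)}$ in $q^{-s}$... rather, the $s=0$ substitution into \eqref{equ:sim.local} reproduces $\sum_\len s_\len q^{-\len\cdot 0}$ only in the formal sense, so instead one checks the first few coefficients $s_0=1$, $s_1 = \lvert\Ad(\aG(\kk))\backslash\ag(\kk)\rvert$, etc., against direct computation.
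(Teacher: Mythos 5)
Your strategy is sound and would prove the theorem; it is a genuine repackaging of the paper's argument rather than a reproduction of it. The paper proves the statement by first establishing closed formulae for the partial similarity class zeta functions $\gamma^\sigma_\len(s)$ for every $\len$ (Proposition~\ref{pro:sim.shadow.fin}, itself proved by induction from the recursion of Lemma~\ref{recurrence.gamma}), then setting $s=0$ to get $s_\len(\ag(\lri)) = \sum_\sigma \gamma^\sigma_\len(0)$ and resumming the resulting geometric-type series in $q^{-\len s}$; you instead stay at $s=0$ throughout, encode the level-by-level counts in a vector recursion $\mathbf{a}_{\len+1}=B\,\mathbf{a}_\len$ governed by the lift-count polynomials, and sum the matrix geometric series $\sum_\len q^{-\len s}B^\len$ applied to the indicator of the root type. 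What your route buys is a cleaner a priori explanation of rationality and of the candidate denominator: since the shadow graph is acyclic apart from loops, $B$ is triangular with diagonal entries $q$ (type $\Gsha$), $q^2$ (types $\Lsha,\Jsha$) and $q^3$ (the remaining types), so $\det(I-q^{-s}B)=(1-q^{1-s})(1-q^{2-s})^2(1-q^{3-s})^k$ and the task reduces to verifying the cancellation down to $(1-q^{1-s})(1-q^{2-s})(1-q^{3-s})$. What the paper's route buys is the stronger intermediate statement $\gamma^\sigma_\len(s)$, which it needs anyway for the representation zeta functions, after which the theorem is a one-line specialisation. Two cautions so that your computation comes out right: the matrix $B$ must be built from the polynomials the paper calls $a_{\sigma,\tau}(q)$ (the number of classes of type $\tau$ above a class of type $\sigma$), \emph{not} from the $b^{(\ee)}_{\sigma,\tau}(q)$ column of Table~\ref{tab:branch.rules.A2}, which records class-size ratios and plays no role here --- your notation $b_{\cS,\cT}$ invites exactly this mix-up; and since the $a_{\sigma,\tau}$ are literally the same for $\ee=\pm1$, the entire $\ee$-dependence of the answer (the numerator $1+\ee q^{2-2s}$) arises solely from deleting the $\Kasha,\Kbsha$ rows and columns when $\ee=-1$, not from any ``mild'' $\ee$-dependence of the branching data. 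Finally, the recursion is available from level $0$ onward (Lemma~\ref{recurrence.gamma} is stated for $\len\in\N_0$), so your initial vector $\mathbf{a}_0=$ indicator of $\Gsha$ is legitimate.
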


For $\ee=1$, this confirms the relevant part of
\cite[Theorem~5.2]{APOV}; for $\ee=-1$ the formula is new.  In any
case, the local results may be put in an ad\`elic context as
follows. Let $k$ be a number field with ring of integers $\Gri$.  Let
$\mathbf{G}$ be one of the $k$-algebraic groups $\GL_3$ or
$\GU_3(K,f)$, where the unitary group $\GU_3(K,f)$ is defined with
respect to the standard hermitian form $f$ associated to the
non-trivial Galois automorphism of a quadratic extension $K$ of~$k$.
Accordingly, let $\fg$ be one of the Lie algebra schemes $\gl_3$ or
$\gu_3(K,f)$.  Put $\ee_\mathbf{G} = 1$ if $\mathbf{G} = \GL_3$, and
$\ee_\mathbf{G} = -1$ if $\mathbf{G} = \GU_3(K,f)$.  For the ring of
$S$-integers $\Gri_S$, where $S$ is a finite set of places of $k$
including all the archimedean ones, $\cV^\infty_k \subset S$, we
consider the Dirichlet series
\begin{equation}\label{def:sim.global}
  \zeta^{\mathrm{sc}}_{\fg(\Gri_S)}(s)  = \sum_{n=1}^\infty
  s_n(\fg(\Gri_S)) n^{-s} \coloneqq \sum_{I \triangleleft \Gri_S} \lvert
  \Ad(\mathbf{G}(\wh{\Gri_S})) \backslash \fg(\Gri_S/I)
  \rvert \; [\Gri_S:I]^{-s},
\end{equation}
where -- in absence of the strong approximation property for
$\mathbf{G}$ -- we count adjoint orbits of the congruence completion
$\mathbf{G}(\wh{\Gri_S}) = \varprojlim_{I \triangleleft \Gri_S}
\mathbf{G}(\Gri_S/I)$ rather than $\mathbf{G}(\Gri_S)$.  As $\Gri_S$
is a Dedekind domain, this Dirichlet series admits the Euler product
\begin{equation}\label{equ:euler.zeta.sc}
  \zeta^{\textup{sc}}_{\fg(\Gri_S)}(s) = \prod_{v\not\in S}
  \zeta^{\textup{sc}}_{\fg(\Gri_v)}(s).
\end{equation}
Writing $\zeta_{k}(s) = \prod_{v \not\in \cV^\infty_k}
(1-\textrm{N}(\fp_v)^{-s})^{-1}$ for the Dedekind zeta function of the
number field~$k$, and $\zeta_{k,S}(s) = \prod_{v \not\in S}
(1-\textrm{N}(\fp_v)^{-s})^{-1}$ for the same product with the factors
indexed by non-archimedean places in $S$ omitted, we obtain the
following corollary.

\begin{corABC}\label{cor:F}
  Let $\Gri_S \subset k$ and $\mathbf{G}$, $\fg$,
  $\ee_\mathbf{G}$ be as above; if $\ee_\mathbf{G} = -1$ suppose that
  $S$ includes all dyadic places of $k$ as well as those places which
  ramify in the quadratic extension $K$ of $k$ defining $\mathbf{G} =
  \GU_3(K,f)$.  Then
  \begin{equation*}
    \zeta^{\mathrm{sc}}_{\fg(\Gri_S)}(s) =
    \begin{cases}
      \hfill \zeta_{k,S}(2s-2) \; \zeta_{k,S}(4s-4)^{-1}
      \prod_{i=1}^3 \zeta_{k,S}(s-i) & \text{if $\ee_\mathbf{G} =1$,}\\
      \zeta_{k,S}(2s-2)^{-1}\zeta_{K,S}(2s-2) \;
      \zeta_{k,S}(4s-4)^{-1} \prod_{i=1} ^3\zeta_{k,S}(s-i) & \text{if
        $\ee_\mathbf{G} = -1$.}
    \end{cases}
  \end{equation*}
  In particular, there exists an invariant
  $\delta(\ee_\mathbf{G},\Gri_S) \in \R_{>0}$ such that
  \[
  \delta(\ee_\mathbf{G},\Gri_S) = \lim_{N\rightarrow\infty}
  \frac{\sum_{n=1}^N s_n(\fg(\Gri_S))}{N^4}.
  \]
  For instance, if $\ee_\mathbf{G} = 1$ and $S = \cV_k^\infty$
  comprises just the archimedean places of~$k$, then
  \[
  \delta(1,\Gri) = \frac{\zeta_k(6)\zeta_k(3)\zeta_k(2)}{4
    \zeta_k(12)}.
  \]
\end{corABC}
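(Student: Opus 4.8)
\medskip
\noindent\emph{Proof strategy for Corollary~\ref{cor:F}.}
The plan is to substitute the local formula of Theorem~\ref{thm:sim.zeta.local} into the Euler product~\eqref{equ:euler.zeta.sc}, to recognise the outcome as a product of shifted Dedekind zeta functions of $k$ and, in the unitary case, of $K$, and then to read off the growth statement by a Tauberian argument; all identities will first be established where $\real(s)$ is large and everything converges absolutely, and then continued. So fix a non-archimedean place $v\notin S$ and put $q_v=\textrm{N}(\fp_v)$. If $\mathbf{G}=\GL_3$ then $\fg(\Gri_v)=\gl_3(\Gri_v)$ and Theorem~\ref{thm:sim.zeta.local} gives $\zeta^{\mathrm{sc}}_{\gl_3(\Gri_v)}(s)=(1+q_v^{2-2s})/\bigl((1-q_v^{1-s})(1-q_v^{2-s})(1-q_v^{3-s})\bigr)$. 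If $\mathbf{G}=\GU_3(K,f)$, I would distinguish the splitting behaviour of $v$ in $K/k$: the places ramifying in $K$ and the dyadic places lie in $S$ by hypothesis, hence are excluded; at an inert place $v$ the completion $K\otimes_k k_v$ is the unramified quadratic extension of $k_v$, so $\fg(\Gri_v)$ is the standard anti-hermitian Lie lattice $\gu_3(\Gri_v)$ to which Theorem~\ref{thm:sim.zeta.local} applies ($q_v$ being odd), yielding the same expression with $+q_v^{2-2s}$ replaced by $-q_v^{2-2s}$; and at a split place $v$ one has $\mathbf{G}(\Gri_v)\cong\GL_3(\Gri_v)$ and $\fg(\Gri_v)\cong\gl_3(\Gri_v)$, compatibly with the adjoint actions, so the local factor is again the $\GL_3$ one.

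Next I would reassemble the Euler product. The common denominators contribute $\prod_{v\notin S}\prod_{i=1}^{3}(1-q_v^{i-s})^{-1}=\prod_{i=1}^{3}\zeta_{k,S}(s-i)$. For the numerators, in the case $\ee_\mathbf{G}=1$ the identity $1+t=(1-t^2)/(1-t)$ with $t=q_v^{2-2s}$ gives $\prod_{v\notin S}(1+q_v^{2-2s})=\zeta_{k,S}(2s-2)\,\zeta_{k,S}(4s-4)^{-1}$, which produces the first displayed formula of the corollary. In the case $\ee_\mathbf{G}=-1$ the numerators give $\prod_{v\ \mathrm{split}}(1+q_v^{2-2s})\prod_{v\ \mathrm{inert}}(1-q_v^{2-2s})$; writing $w=2s-2$ and comparing with the Euler factorisation of $\zeta_{K,S}$---a split place of $k$ carries two primes of $K$ of norm $q_v$, an inert place one prime of norm $q_v^{2}$, and every ramified place lies in $S$---one checks the local identities $(1-q_v^{-w})^{-2}(1-q_v^{-w})(1-q_v^{-2w})=1+q_v^{-w}$ at split $v$ and $(1-q_v^{-2w})^{-1}(1-q_v^{-w})(1-q_v^{-2w})=1-q_v^{-w}$ at inert $v$, whence this product equals $\zeta_{K,S}(2s-2)\,\zeta_{k,S}(2s-2)^{-1}\,\zeta_{k,S}(4s-4)^{-1}$, producing the second formula. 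Since $\prod_{i=1}^{3}\zeta_{k,S}(s-i)$ converges precisely for $\real(s)>4$ while the remaining factors are holomorphic and non-zero there, the product formula identifies the abscissa of convergence of $\zeta^{\mathrm{sc}}_{\fg(\Gri_S)}(s)$ as $4$ and furnishes its meromorphic continuation to all of $\C$.

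For the asymptotic assertion I would apply a Tauberian theorem of Wiener--Ikehara/Delange type to $\sum_{n} s_n(\fg(\Gri_S))\,n^{-s}$, whose coefficients are non-negative. In the closed half-plane $\real(s)\ge4$ all factors of the product formula except $\zeta_{k,S}(s-3)$ are holomorphic, and the inverted factors $\zeta_{k,S}(4s-4)^{-1}$ and, in the unitary case, $\zeta_{k,S}(2s-2)^{-1}$ are in addition non-vanishing there (their arguments having real part at least $6$), while $\zeta_{k,S}(s-3)$ is holomorphic on $\real(s)\ge4$ apart from a simple pole at $s=4$. Hence $\zeta^{\mathrm{sc}}_{\fg(\Gri_S)}(s)$ is holomorphic on $\{\real(s)\ge4\}\smallsetminus\{4\}$ with a simple pole at $s=4$, and the Tauberian theorem gives $\sum_{n\le N}s_n(\fg(\Gri_S))\sim\delta(\ee_\mathbf{G},\Gri_S)\,N^{4}$ with $\delta(\ee_\mathbf{G},\Gri_S)=\tfrac14\,\Res_{s=4}\zeta^{\mathrm{sc}}_{\fg(\Gri_S)}(s)>0$, the positivity being read off from the product formula. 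In the special case $\ee_\mathbf{G}=1$ and $S=\cV_k^\infty$ one has $\zeta_{k,S}=\zeta_k$, the pole at $s=4$ is contributed solely by $\zeta_k(s-3)$, and evaluating the residue of the product at this pole yields the value of $\delta(1,\Gri)$ recorded in the statement.

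The step requiring the most care will be the unitary local analysis: correctly matching the split/inert/ramified trichotomy of places with the local contributions, verifying that the hypotheses on $S$ (containing all dyadic and all ramified places) remove exactly the places at which Theorem~\ref{thm:sim.zeta.local} is not directly available, and tracking the Euler factors of $\zeta_{K,S}$ against the mixed product over split and inert places. By contrast, once the product formula is in hand the Tauberian input is routine, since all poles in $\{\real(s)\ge4\}$ are manifest in it.
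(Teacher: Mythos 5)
Your proposal is correct and follows essentially the same route as the paper: the local factors from Theorem~\ref{thm:sim.zeta.local} (with the split/inert dichotomy deciding whether the $\gl_3$ or $\gu_3$ factor occurs, and the ramified/dyadic places removed by the hypothesis on $S$) are assembled via the Euler product \eqref{equ:euler.zeta.sc} into shifted Dedekind zeta functions of $k$ and $K$, and the asymptotic statement follows from the Tauberian Theorem~\ref{thm:tauber} applied at the simple pole at $s=4$ contributed by $\zeta_{k,S}(s-3)$. The only caveat concerns the final evaluation: computing $\tfrac14\Res_{s=4}$ of the product gives the displayed value of $\delta(1,\Gri)$ multiplied by $\Res_{w=1}\zeta_k(w)$, which equals $1$ only for $k=\Q$ — a discrepancy residing in the stated formula rather than in your argument.
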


We briefly return to the local setting.  Evaluating $\gamma_\len(s)$
in $s=0$ as above means, of course, to disregard most of the
information encoded in the similarity class zeta functions.  In our
second application, we retain this information and define suitable
limits (as $\len \rightarrow \infty$) and Euler products, which we now
explain.  In Proposition~\ref{pro:limit.gamma} we verify that the
normalised polynomials $q^{-\len} \gamma_\len^\sigma(s)$ converge
coefficientwise. Apart from the exceptional case that $p=\cha(\kk)$
divides $n$, the presence of scalar matrices implies that the
coefficients of the Dirichlet polynomials $\gamma_\len^\sigma(s)$ are
actually integers divisible by~$q^{\len}$, whence the normalised
polynomials $q^{-\len} \gamma_\len^\sigma(s)$ have integral
coefficients.  The limit functions $\lim_{\len\rightarrow\infty}
q^{-\len}\gamma_\len^\sigma(s)$ are recorded in
Corollary~\ref{cor:infinite.gamma}.  They, too, may be put in an
ad\`elic context, as follows.

Let $k$ be a number field with ring of integers $\Gri$.  As above, let
$\mathbf{G}$ be one of the $k$-algebraic groups $\GL_3$ or
$\GU_3(K,f)$ and, accordingly, let $\fg$ be one of the Lie algebra
schemes $\gl_3$ or $\gu_3(K,f)$.  Let $S$ be a finite set of places of
$k$ including all the archimedean ones, $\cV^\infty_k \subset S$.  For
any non-zero ideal $I \triangleleft \Gri_S$, consider the normalised
Dirichlet generating polynomial
\[
Z_{\fg(\Gri_S/I)}(s) \coloneqq [\Gri_S/I]^{-1} \sum_{\cC\in
  \Ad(\aG(\wh{\Gri_S})) \backslash \fg(\Gri_S/I)} \lvert \cC
\rvert^{-s}
\]
enumerating similarity classes in $\fg(\Gri_S/I)$ by their
cardinality.  We consider the Dirichlet series
\begin{equation}\label{equ:def.Z.global}
  Z_{\fg(\Gri_S)}(s) = \sum_{n=1}^\infty
  \simi_n(\fg(\Gri_S)) n^{-s} \coloneqq \lim_{I \triangleleft \Gri_S}
  Z_{\fg(\Gri_S/I)}(s).
\end{equation}
Informally speaking, $\simi_n(\fg(\Gri_S))$ is the number of
similarity classes of cardinality $n$ in $\fg(\Gri_S/I)$ modulo
scalars, for ideals $I$ such that the index $[\Gri_S:I]$ is divisible
by a `relatively large' power of~$n$.  By construction, the Dirichlet
generating function $Z_{\fg(\Gri_S)}(s)$ satisfies an Euler product
decomposition of the form
\begin{equation}\label{equ:euler.zeta.sim}
  Z_{\fg(\Gri_S)}(s) = \prod_{v\not\in S}
  \lim_{\len\rightarrow\infty} q^{-\len}\gamma_\len(s).
\end{equation}

\begin{thmABC}\label{thm:G}
  Let $\Gri_S \subset k$ and $\mathbf{G}$, $\fg$,
  $\ee_\mathbf{G}$ be as above; if $\ee_\mathbf{G} = -1$ suppose that
  $S$ includes all dyadic places of $k$ as well as those places which
  ramify in the quadratic extension $K$ of $k$ defining $\mathbf{G} =
  \GU_3(K,f)$.  Then the following hold:
  \begin{enumerate}
  \item The abscissa of convergence of $Z_{\fg(\Gri_S)}(s)$
    is equal to $1/2$.
  \item The zeta function $Z_{\fg(\Gri_S)}(s)$ has meromorphic
    continuation to the complex half-plane $\{s\in\C \mid \real(s) >
    2/5 \}$. The only pole of $Z_{\fg(\Gri_S)}(s)$ in this domain is a
    double pole at $s=1/2$.
  \item There exists an invariant
    $\delta'(\ee_\mathbf{G},\Gri_S)\in\R_{>0}$ such that
    \[
    \delta'(\ee_\mathbf{G},\Gri_S) =
    \lim_{N\rightarrow\infty}\frac{\sum_{n=1}^N
      \simi_n(\fg(\Gri_S))}{N^{1/2}\log N}.
    \]
  \end{enumerate}
\end{thmABC}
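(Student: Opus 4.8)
The plan is to deduce all three statements from an explicit closed form for $Z_{\fg(\Gri_S)}(s)$, obtained by first computing the local factors $\lim_{\len\to\infty} q^{-\len}\gamma_\len(s)$ and then multiplying them up in the Euler product~\eqref{equ:euler.zeta.sim}. First I would invoke Proposition~\ref{pro:limit.gamma} and Corollary~\ref{cor:infinite.gamma} to get, for each non-archimedean place $v\not\in S$ with residue field of size $q=\mathrm{N}(\fp_v)$, a rational function in $q$ and $q^{-s}$ for the normalised limit $\lim_{\len\to\infty} q^{-\len}\gamma_\len(s)$; this is the analogue, after dividing by the contribution of scalar matrices, of the generating function in Theorem~\ref{thm:sim.zeta.local}, but now retaining the dependence on class sizes rather than specialising at $s=0$. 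A direct inspection of that rational function identifies its `leading behaviour': the dominant factors as $q\to\infty$ should be of the form $(1-q^{1-2s})^{-2}$ up to corrections that converge for $\real(s)$ somewhat below $1/2$, the two factors of $(1-q^{1-2s})^{-1}$ coming from the two largest families of similarity classes (those of shadow types with two free additive parameters, i.e.\ $\Nsha$ and $\Kasha,\Kbsha$), whose class sizes grow like $q^{2\len}$ while there are $\asymp q^{\len}$ of them, so that after the $q^{-\len}$ normalisation each contributes a factor behaving like $\sum_\len q^{\len} q^{-2\len s}=\left(1-q^{1-2s}\right)^{-1}$.

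Next I would assemble the Euler product. Writing the $v$-factor as $(1-q^{1-2s})^{-a_v}(1-q^{2-3s})^{-b_v}\cdot R_v(q,q^{-s})$ with $a_v\in\{1,2\}$ essentially constant (equal to $2$ for almost all $v$) and $R_v$ a rational function that is $1+O(q^{-1-\epsilon'})$ uniformly for $\real(s)$ in a half-plane $\real(s)>1/2-\delta$, the product $\prod_{v\not\in S}(1-\mathrm{N}(\fp_v)^{1-2s})^{-a_v}$ is, up to finitely many Euler factors and a factor that is holomorphic and non-vanishing for $\real(s)>2/5$, equal to $\zeta_{k,S}(2s-1)^2$, which has a double pole at $s=1$... no: at $2s-1=1$, i.e.\ $s=1$; I must instead track $1-q^{1-2s}$, whose reciprocal is $\zeta_{k,S}(2s-1)$ with a simple pole where $2s-1=1$. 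Correcting the bookkeeping: the pole of $Z_{\fg(\Gri_S)}(s)$ sits where $q^{1-2s}=1$ has $\real(1-2s)=1$, i.e.\ $\real(s)=0$; so the dominant denominator must actually be $(1-q^{2-3s})$ paired with something producing the claimed abscissa $1/2$. The correct identification — which I would extract carefully from Corollary~\ref{cor:infinite.gamma} — is that the dominant factor is $\left(1-q^{2-4s}\right)$-type or, more likely, a product yielding $\zeta_{k,S}(4s-2)^2$ with a double pole at $s=1/2$; the precise exponents come from the class-size exponents in Table~\ref{tab:branch.rules.A2} halved (since $\gamma_\len^\sigma$ is evaluated at $s$, not $s/2$, here). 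Once the closed form
\[
Z_{\fg(\Gri_S)}(s) = W_{\ee_\mathbf{G},k,S}(s)\cdot \zeta_{k,S}(4s-2)^2
\]
is established, with $W_{\ee_\mathbf{G},k,S}(s)$ a finite product of Dedekind zeta values and shifted Dedekind zeta functions that is holomorphic and non-zero for $\real(s)>2/5$ (using the meromorphic continuation and non-vanishing of $\zeta_k$ and $\zeta_K$ near and to the left of $\real(s)=1$), parts (1) and (2) follow immediately: the abscissa is $1/2$, and the only singularity in $\real(s)>2/5$ is the double pole of $\zeta_{k,S}(4s-2)^2$ at $s=1/2$. Part (3) then follows from part (2) by a standard Tauberian theorem (Delange's theorem, as used throughout this circle of papers, e.g.\ in the proof of Corollary~\ref{cor:F}): a Dirichlet series with abscissa $\alpha=1/2$ whose only boundary singularity is a double pole at $s=\alpha$ has partial sums asymptotic to $c\, N^{1/2}\log N$ with $c=\delta'(\ee_\mathbf{G},\Gri_S)>0$ the appropriate residue constant.

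The main obstacle I anticipate is the bookkeeping in the second step: correctly reading off from Corollary~\ref{cor:infinite.gamma} which Euler factors are responsible for the abscissa and the order of the pole, and verifying the \emph{uniform} convergence of the `error' product $\prod_v R_v$ in the half-plane $\real(s)>2/5$ — this requires a clean estimate $R_v(q,q^{-s}) = 1 + O(q^{-c})$ with $c>1$ on that half-plane, which in turn forces one to check that no secondary factor of the shape $(1-q^{\beta-\gamma s})^{\pm 1}$ in the local formula has a pole line meeting $\real(s)\ge 2/5$ other than the one already accounted for (this is exactly the computation that pins down the width $2/5$ rather than something larger). A subsidiary point to handle with care is the $\ee_\mathbf{G}=-1$ case, where the quadratic extension $K/k$ introduces a factor $\zeta_{K,S}$ (as already visible in Corollary~\ref{cor:F}); one must check this factor is also holomorphic and non-zero in the relevant strip and does not contribute an extra pole, so that the pole structure and the constant $\delta'$ are as stated. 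The remaining steps — the Tauberian conclusion and the positivity of $\delta'$ — are routine given the explicit formula.
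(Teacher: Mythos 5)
Your overall architecture is the same as the paper's: start from Proposition~\ref{pro:limit.gamma} and Corollary~\ref{cor:infinite.gamma}, clear the common denominator of the local factors, show that the residual Euler product converges and is non-vanishing on $\{\real(s)>2/5\}$, and finish with the Tauberian Theorem~\ref{thm:tauber}. However, the central computational identification in your write-up is wrong, and you never resolve it. From Corollary~\ref{cor:infinite.gamma} the common denominator of $Z_{\fg(\Gri_v)}(s)=\lim_\len q_v^{-\len}\gamma_\len(s)$ is $(1-q_v^{1-4s})(1-q_v^{2-6s})$ (these are the $\Gamma^{\cS}_{\ee,q}$, evaluated at $s$; your guesses $(1-q^{1-2s})^{-2}$ and $\zeta_{k,S}(2s-1)^2$ belong to the $\Xi$-normalisation at $s/2$). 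Hence the pole-producing factor of the Euler product is $\zeta_{k,S}(4s-1)\,\zeta_{k,S}(6s-2)$: two \emph{distinct} Dedekind zeta factors whose simple poles coincide at $s=1/2$, giving the double pole. Your proposed closed form $W(s)\,\zeta_{k,S}(4s-2)^2$ is not merely imprecise: $\zeta_{k,S}(4s-2)$ has its pole at $s=3/4$, so that factorisation would contradict the statement you are proving. Your heuristic attribution of the pole to the classes of shadow types $\Nsha,\Kasha,\Kbsha$ is also off: the $\sum_\len q^{\len(1-4s)}$ geometric series comes from the classes with $4$-dimensional adjoint orbits (the $\UU$-type contributions, led by $\Lsha$ and $\Jsha$), and the $\sum_\len q^{\len(2-6s)}$ series from the regular classes with $6$-dimensional orbits; the types you single out only contribute lower-order terms.

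The second point is that no exact closed form of $Z_{\fg(\Gri_S)}(s)$ as a finite product of (shifted) Dedekind zeta functions is available or needed -- the numerators in Corollary~\ref{cor:infinite.gamma} are not of the shape $\prod(1-q^{a-bs})$ -- so the step you defer ("verify $R_v=1+O(q^{-c})$ uniformly for $\real(s)>2/5$") is in fact the entire substance of the proof, not a routine check. Concretely, after multiplying each local factor by $(1-q^{1-4s})(1-q^{2-6s})$ one must bound, summand by summand over the shadow types, the degree ratios $(\deg f+1)/\deg g$ of the resulting terms $f(q)g(q)^{-s}$ by $2/5$ (the bound $2/5$ is forced by the cross term $q^{3-10s}$ of the cleared denominator); and this only works after observing two cancellations: the term $-q^{1-4s}$ coming from the $\Gsha$-summand cancels, up to an admissible error, against the leading term $q\left((q^2+\ee q+1)q^2\right)^{-s}$ of the $\Lsha$-summand, and $-q^{2-6s}$ cancels against the sum of the leading terms of the $\Tasha$, $\Tbsha$, $\Tcsha$ summands. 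Without these cancellations the naive degree ratios are $1/2$, and you would only get continuation up to $\real(s)=1/2$, not $2/5$. Finally, your worry about an extra $\zeta_{K,S}$ factor in the unitary case is harmless here: the leading denominators are independent of $\ee$, and the $\ee$-dependence sits entirely in terms absorbed by the convergent error product, so the pole structure and the Tauberian conclusion (with $\alpha=1/2$, $\beta=2$) are as stated.
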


To put Theorem~\ref{thm:G} into perspective, we remark that
$Z_{\fg(\Gri_S)}(s/2)$ can be regarded as an `approximation' of the
non-achimedean part $\prod_{v \not\in S}
\zeta_{\mathbf{H}(\Gri_v)}(s)$ of the representation zeta
function~$\zeta_{\mathbf{H}(\Gri_S)}(s)$ in~\eqref{equ:euler}; cf.\
\eqref{equ:zeta.H.infin}.  The zeta functions $Z_{\fg(\Gri_S)}(s)$ may
well turn out to be more tractable than representation zeta functions
and thus serve as a tool for studying the latter.

\subsection{Character degrees: `Ennola duality' and estimates}
Our results -- or sometimes rather their proofs -- have a number of
consequences regarding the finer asymptotic and arithmetic properties
of character degrees of the groups under consideration.

Given a group $G$, we denote the collection of its \emph{irreducible
  character degrees} by
\[
\cd(G) = \{\chi(1) \mid \chi\in\Irr(G)\},
\]
and, for any prime $p$, we write $\cd(G)_{p'} = \{\chi(1)_{p'} \mid
\chi\in\Irr(G)\}$ for the prime-to-$p$ parts of the irreducible
character degrees of~$G$.  Let~$n\in\N$.  In the 1960s Ennola observed
an intriguing duality between the character tables of the finite
groups $\GL_n(\F_q)$ and~$\GU_n(\F_q)$.  In particular, he noted that
there exist a finite index set $I=I(n)$ and polynomials $g_i \in
\Z[t]$, $i\in I$, such that
\[
\cd(\GL_n(\F_q)) = \{g_i(q)\mid i\in I \} \qquad \text{and} \qquad
\cd(\GU_n(\F_q)) = \{(-1)^{\deg g_i}g_i(-q)\mid i\in I \};
\]
cf.~\cite{Green, Ennola/63} and \cite[Chapter~IV,
  Section~6]{Macdonald/95}.  This phenomenon, known as `Ennola
duality', was explained only later, culminating in work by Kawanaka;
cf.~\cite{K85} and also~\cite{TV07}.  While we cannot offer an
analogous theory for the character degrees of compact $p$-adic Lie
groups $\GL_n(\lri)$ and~$\GU_n(\lri)$, our approach allows us to
generalise Ennola's observation as follows.

Let $\lri$ be a compact discrete valuation ring of residue
characteristic~$p$ and residue cardinality~$q$.  If $\cha(\lri) = 0$,
let $e=e(\lri,\Z_p)$ denote the absolute ramification index of~$\lri$.

\begin{thmABC} \label{thm:H} Let $\lri$ be as above.  Let $\aG$ be one
  of the $\lri$-group schemes $\GL_3,\GU_3$, and let $\ee = \ee_\aG
  \in \{1,-1\}$ as in~\eqref{equ:def.epsilon}.  Let $\len\in\N$.
  Suppose that $p \geq \min\{3\len, 3e+3 \}$ if $\cha(\lri) = 0$, and
  $p \geq 3\len$ if $\cha(\lri) = p$.  Then the prime-to-$p$ parts of
  the character degrees of $\aG(\lri_\len)$ are as follows.
  \begin{equation*}
    \cd(\aG(\lri_\len))_{p'} = \begin{cases}\left\{1,q+\ee, q^2+\ee q
      +1, (q+\ee)(q^2+\ee q +1),q^3-\ee, (q-\ee)^2(q+\ee) \right\} &
      \textup{ for $\len=1$,}\\ \cd(\aG(\lri_1))_{p'} \cup \{
      (q^3-\ee)(q^2-1), (q^3-\ee)(q+\ee) \} & \textup{ for $\len\geq
        2$}.\end{cases}
  \end{equation*}
  Furthermore, for all $g \in \Z[t]$ and $\len\in\N$, 
  \begin{equation}\label{equ:ennola}
  g(q) \in \cd(\GL(\lri_\len)) \quad \text{if and only if} \quad
  (-1)^{\deg g}g(-q) \in \cd(\GU(\lri_\len)).
  \end{equation}
\end{thmABC}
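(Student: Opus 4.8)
The plan is to derive Theorem~\ref{thm:H} directly from the structural formula \eqref{equ:zeta.G.fin} for $\zeta_{\aG(\lri_\len)}(s)$ in Theorem~\ref{thm:C}, reading off the character degrees shadow type by shadow type. Recall that a character degree of $\aG(\lri_\len)$ contributing with shadow type $\cS$ is of the form $[\aG(\kk):\aI^\cS_\ee(\kk)]\cdot d\cdot m$, where $d$ runs through the character degrees of the finite shadow group $\aI^\cS_\ee(\kk)$ (appearing via $\zeta_{\aI^\cS_\ee(\kk)}(s)$) and $m$ runs through the degrees contributed by the Dirichlet polynomial $\Xi^\cS_{\ee,q,\len-1}(s) = \xi^\cS_{\len-1}(s)$, which by Proposition~\ref{pro:xi.sigma.len} is built from the similarity class data. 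The key observation is that $p = \cha(\kk)$ divides neither the indices $[\aG(\kk):\aI^\cS_\ee(\kk)]$ (these are products of factors of the form $q^j \pm 1$, i.e.\ cyclotomic-type polynomials in $q$ with no factor of $p$) nor the character degrees of the shadow groups $\aI^\cS_\ee(\kk)$ once we pass to $p'$-parts, while every power of $q$ that appears — coming from unipotent/Heisenberg factors in the shadows of types $\Jsha,\Msha,\Nsha,\Kasha,\Kbsha$, from the $q^{\len-1}$ prefactor, and from the $q$-powers inside $\Xi^\cS$ — is a power of $p$ and hence disappears upon taking $p'$-parts. Thus $\cd(\aG(\lri_\len))_{p'}$ is determined purely by the $p'$-parts of the indices times the $p'$-parts of the shadow-group character degrees times the $p'$-parts of the $m$'s.

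First I would tabulate, for each $\cS\in\T^{(\ee)}$, the $p'$-part of $[\aG(\kk):\aI^\cS_\ee(\kk)]$ using Table~\ref{tab:shadows.GLGU} (equivalently the orders recorded there): for $\Gsha$ this index is $1$; for $\Lsha$ it is $q^3-\ee$ up to a power of $q$; for $\Jsha$ it is $(q^2+\ee q+1)$ up to a power of $q$; for $\Tasha,\Tbsha,\Tcsha$ it is $(q+\ee)(q^2+\ee q+1)$ times a power of $q$; and for $\Msha,\Nsha,\Kasha,\Kbsha$ one gets indices whose $p'$-parts lie among $(q+\ee)(q^2+\ee q+1)$, $(q^3-\ee)$, and products thereof with $(q^2-1)$ or $(q+\ee)$ — the latter two products being exactly the two extra degrees $(q^3-\ee)(q^2-1)$ and $(q^3-\ee)(q+\ee)$ that appear only for $\len\geq2$. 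Then I would multiply by the $p'$-parts of $\cd(\aI^\cS_\ee(\kk))$: for $\Gsha = \GL_3$ or $\GU_3$ over $\kk$ this yields, via the known generic character degrees of $\GL_3(\F_q)/\GU_3(\F_q)$, precisely $\{1,\,q+\ee,\,q^2+\ee q+1,\,(q+\ee)(q^2+\ee q+1),\,q^3-\ee,\,(q-\ee)^2(q+\ee)\}$ (after dividing out the central $p$-powers); for the other shadow types the shadow groups are solvable with $p'$-character degrees equal to $1$, so they contribute nothing new beyond their index. Finally I would check that the degrees contributed by $\Xi^\cS_{\ee,q,\len-1}(s)$ are, up to powers of $q$, all among $\{1\}$ — i.e.\ the $\xi$-polynomials, which encode similarity-class sizes $\lvert\cC\rvert$ that are themselves powers of $q$ times the shadow index data, contribute only $p$-power factors and copies of the same index factors already listed; a short case check against Proposition~\ref{pro:xi.sigma.len}/Corollary~\ref{cor:infinite.xi} confirms this and pins down that for $\len=1$ (where $\Xi^\cS_{\ee,q,0}(s)$ is essentially trivial) one gets exactly the six-element set, and for $\len\geq2$ exactly the two additional products.

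For the Ennola statement \eqref{equ:ennola}, the point is uniformity in $\ee$. Inspecting the description just obtained, every element of $\cd(\aG(\lri_\len))$ — not merely its $p'$-part — is a product of: a power $q^a$ of $q$; index factors drawn from $\{q+\ee,\ q^2+\ee q+1,\ q^3-\ee,\ q-\ee\}$; shadow-group degree factors again of this shape (from the $\GL_3/\GU_3$ generic degrees) possibly divided by $\iota(\ee,q)=\gcd(q-\ee,3)$; and $\xi$-contributions of the same form. Crucially, the exponents $a$ and the multiset of index/degree factors used are the \emph{same} for $\aG = \GL_3$ and $\aG = \GU_3$; only the sign $\ee$ differs, as is manifest from the $\ee$-uniform formulation of Theorem~\ref{thm:C} and Tables~\ref{tab:shadows.GLGU}, \ref{tab:shadows.SLSU}. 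Writing each such degree as a polynomial $g(q)\in\Z[t]$ evaluated with $\ee=1$, the corresponding $\GU_3$-degree is obtained by the substitution $q\mapsto -q$, up to the overall sign $(-1)^{\deg g}$ that reconciles e.g.\ $q+1 \leftrightarrow q-1 = -((-q)+1)\cdot(-1)$ and $q^2+q+1\leftrightarrow q^2-q+1$, $q^3-1\leftrightarrow q^3+1 = -((-q)^3-1)$, and the $q^a$ prefactor contributes $(-1)^a$; one checks $\deg g$ matches the total exponent parity. Hence $g(q)\in\cd(\GL_3(\lri_\len))$ iff $(-1)^{\deg g}g(-q)\in\cd(\GU_3(\lri_\len))$, as claimed; and since $\iota(1,q)=\gcd(q-1,3)$ and $\iota(-1,q)=\gcd(q+1,3)$ likewise correspond under $q\mapsto -q$, the division by $\iota$ respects the duality.

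The main obstacle I anticipate is the bookkeeping in the third step: one must verify that the Dirichlet polynomials $\Xi^\cS_{\ee,q,\len-1}(s)$ genuinely contribute no new $p'$-degree factors, which requires a careful pass through the explicit formulae of Proposition~\ref{pro:xi.sigma.len} for all ten shadow types and, for the $p'$-part statement, confirmation that the hypothesis $p\geq\min\{3\len,3e+3\}$ (resp.\ $p\geq3\ell$) rules out any accidental cancellation making a nominally $p$-power factor fail to be $p'$-trivial — in particular that the small-denominator factors $\tfrac16,\tfrac13,\tfrac12$ and the $\iota(\ee,q)$'s do not introduce a degree divisible by $p$, which is exactly where $p>3$ (a consequence of the hypothesis once $\len\geq2$, and handled separately for $\len=1$ where one reads off $\cd(\GL_3(\kk))$, $\cd(\GU_3(\kk))$ directly) is used. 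Everything else is a finite verification driven by the tables already established.
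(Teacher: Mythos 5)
Your overall route is essentially the paper's: the paper also factorises every degree of $\aG(\lri_\len)$ as $\chi(1)=\varphi_\sigma(1)\,\psi_\sigma(1)\,[\aG(\kk):\sigma(\kk)]$ with $\varphi_\sigma(1)$ a power of $q$ (see \eqref{equ:dimension.product}), which is exactly the information you propose to read off from \eqref{equ:zeta.G.fin} and Proposition~\ref{pro:xi.sigma.len}, and then applies the multiplicativity of the Ennola transform to the $\ee$-uniform index and shadow-degree data. However, your tabulation step contains concrete errors that make the verification unsound as written. The $p'$-part of $[\aG(\kk):\aI^{\Lsha}_\ee(\kk)]$ is $q^2+\ee q+1$ (the index is $(q^2+\ee q+1)q^2$), not $q^3-\ee$; for $\Jsha$ the index is $(q^3-\ee)(q+\ee)$ with no $q$-power, not $q^2+\ee q+1$; for $\Tbsha$ and $\Tcsha$ the $p'$-parts are $q^3-\ee$ and $(q-\ee)^2(q+\ee)$, not $(q+\ee)(q^2+\ee q+1)$. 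More importantly, your claim that every shadow group other than type $\Gsha$ has $p'$-trivial character degrees is false: by Proposition~\ref{pro:zeta.shadows} the type-$\Lsha$ shadow $\GL_1(\kk)\times\GL_2(\kk)$ (resp.\ $\GU_1\times\GU_2$) has degrees $1,q,q+1,q-1$, and the type-$\Jsha$ shadow has degrees $1,\,q-\ee,\,q$. These nontrivial degrees are precisely where the two extra $p'$-degrees for $\len\geq 2$ come from: type $\Jsha$ contributes $(q^3-\ee)(q+\ee)$ and $(q^3-\ee)(q+\ee)\cdot(q-\ee)=(q^3-\ee)(q^2-1)$, alongside the indices of $\Msha$ and $\Nsha$ (the paper attributes the new terms to $\Jsha,\Msha,\Nsha$). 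With your data the final union happens to coincide with the stated answer, but only by accident; the ``careful pass through the tables'' you defer would, with the values you recorded, not reproduce the theorem.

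On \eqref{equ:ennola}, your argument is again the paper's, but it skips the one genuinely asymmetric point: the sum in Theorem~\ref{thm:C} runs over $\T^{(\ee)}$, and the types $\Kasha,\Kbsha$ occur only for $\ee=1$, so the $\ee$-uniformity is not literally ``manifest''. One must check that their indices and shadow groups agree with those of type $\Nsha$ and that the $q$-powers they contribute via $\Xi^{\Kasha}_{1,q,\len-1}$, $\Xi^{\Kbsha}_{1,q,\len-1}$ form a subset of those contributed by $\Nsha$, so they create no $\GL_3$-degrees without a $\GU_3$ counterpart; the paper makes this reduction explicitly. Finally, note that for $\aG\in\{\GL_3,\GU_3\}$ no division by $\iota(\ee,q)$ occurs in the shadow degrees (that only enters for $\SL_3,\SU_3$), so that part of your discussion is superfluous, though harmless.
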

It is of great interest to determine the precise scope of this
phenomenon, in the first place for the groups $\GL_n$ and $\GU_n$ for
$n>3$; see Section~\ref{subsec:ennola}.  We remark that whilst the
theorem addresses the character degrees' prime-to-$p$ parts, the
explicit formulae underpinning its proof would also allow for a
uniform, albeit somewhat technical description of the powers of $q$
entering into the character degrees.

Our next main result concerns the character degrees of groups of the
form $\SL_3(\lri)$ and $\SU_3(\lri)$.  Let $\aH$ denote one of the
$\lri$-group schemes $\SL_3,\SU_3$.  By convention, the \emph{level}
of an irreducible character $\chi \in \Irr(\aH(\lri))$ is equal to
$\len-1$, where $\len \in \N$ is minimal such that $\chi$ is trivial
on the $\len$-th principal congruence subgroup~$\aH^\len(\lri)$.  The
following theorem relates the degree of an irreducible character in
$\Irr(\aH(\lri))$ to its level.
\begin{thmABC} \label{thm:I} There exist absolute constants $C_1, C_2
  \in \R_{>0}$ such that the following holds.  Let $\lri$ be as above.
  Let $\aH$ be one of the $\lri$-group schemes $\SL_3,\SU_3$, and let
  $\ee = \ee_\aH \in \{1,-1\}$ as in~\eqref{equ:def.epsilon}.  Let
  $\len \in \N$.  Suppose that $p \geq \min\{3 \len, 3e+3 \}$ if
  $\cha(\lri) = 0$, and $p \geq 3\len$ if $\cha(\lri) = p$.  For every
  non-trivial $\chi \in \Irr(\aH(\lri))$ of level $\len-1$ the degree
  of $\chi$ is bounded by the inequalities:
  \[
  C_1 q^{2\len} < \chi(1) < C_2q^{3\len}.
  \]
\end{thmABC}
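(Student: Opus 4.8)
The plan is to reduce Theorem~\ref{thm:I} to the explicit structural formula for $\zeta_{\aH(\lri)}(s)$ furnished by Theorem~\ref{thm:C}, more precisely to the level-by-level decomposition hidden in the formulae $\zeta_{\aH(\lri_\len)}(s) = \sum_{\cS} [\ldots]^{-1-s} \zeta_{\aH(\kk)\cap\aI^\cS_\ee(\kk)}(s) \Xi^\cS_{\ee,q,\len-1}(s)$. The key point is that the set of degrees of characters of $\aH(\lri)$ of level exactly $\len-1$ coincides with the set of degrees appearing in $\zeta_{\aH(\lri_\len)}(s)$ but not in $\zeta_{\aH(\lri_{\len-1})}(s)$, and each such degree factors as a product of a "finite" contribution — the degree of a character of $\aH(\kk)\cap\aI^\cS_\ee(\kk)$, which is bounded above and below by absolute constants times powers of $q$ independent of $\len$ — times the index $[\aH(\kk):(\aH(\kk)\cap\aI^\cS_\ee(\kk))]$ (a fixed polynomial in $q$), times a $q$-power coming from the monomials of $\Xi^\cS_{\ee,q,\len-1}(s)$ that genuinely appear at level $\len-1$.

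The first step is therefore to extract, from Proposition~\ref{pro:xi.sigma.len} (and its reformulation via the partial similarity class zeta functions $\gamma^\sigma_\len$), the range of exponents of $q$ occurring as degrees within each $\Xi^\cS_{\ee,q,\len-1}(s)$; equivalently, I would use the shadow/branching-rule description of $\cQ^\ag_\lri$ in Theorems~\ref{thm:G.shad.graph}--\ref{thm:U.shad.graph} to see that a similarity class of shadow type $\cS$ at level $\len$ has cardinality $q^{a_\cS \len + O(1)}$ for an explicit slope $a_\cS$, and hence (after the normalisation $\xi^\sigma_\len(s) = [\aG(\kk):\aI^\cS_\ee(\kk)]^{1+s/2} q^{-\len}\gamma^\sigma_\len(s/2)$) that the relevant character degrees are $q^{b_\cS\len + O(1)}$ for explicit slopes $b_\cS$. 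The second step is to check the two inequalities: for the upper bound one takes the maximum over $\cS\in\T^{(\ee)}$ of the top slope $b_\cS$ and of the degree of $[\aH(\kk):(\aH(\kk)\cap\aI^\cS_\ee(\kk))]$ plus $\deg\zeta_{\aH(\kk)\cap\aI^\cS_\ee(\kk)}$; inspection of Table~\ref{tab:shadows} and the formula \eqref{equ:zeta.Lie.type.H} (together with the analogous data for the proper shadows) should show this maximum is exactly $3$, giving $\chi(1) < C_2 q^{3\len}$. For the lower bound one takes the \emph{minimum} over $\cS$ of the bottom slope together with the smallest exponent that can appear at a given level $\len-1$ (rather than $0$), and here the crucial structural input is that a character of level exactly $\len-1$ cannot be inflated from a strictly smaller quotient, which forces at least one genuinely new factor of size $\asymp q^{2\len}$ — this is where the "$2\len$" exponent comes from, and it matches the $q^{2-2s}$, $q^{2-3s}$ denominators visible in $\psi_{\ee,q}(s)$ of Corollary~\ref{cor:D}.

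Concretely I would argue as follows. Fix a non-trivial $\chi$ of level $\len-1$; by Clifford theory relative to $N=\aH^1(\lri)$ and the Kirillov orbit description (Section~\ref{sec:kom.clifford}), $\chi$ lies over a shadow $\cS$, and $\chi(1) = [\aH(\kk):(\aH(\kk)\cap\aI^\cS_\ee(\kk))]\cdot\chi_0(1)\cdot m$ where $\chi_0\in\Irr(\aH(\kk)\cap\aI^\cS_\ee(\kk))$ and $m$ is the degree contributed by the congruence part, i.e. a monomial $q^{j}$ occurring in $\Xi^\cS_{\ee,q,\len-1}(s)$. The finitely many possible values of $[\aH(\kk):(\aH(\kk)\cap\aI^\cS_\ee(\kk))]\chi_0(1)$ are, by Table~\ref{tab:shadows.SLSU} and Proposition~\ref{pro:zeta.shadows}, all of the form $q^{c}(1+o(1))$ with $0 \le c \le 3$ and with a nonzero bottom term, so they are sandwiched between absolute constants times $q^0$ and times $q^3$; it therefore suffices to bound the exponent $j$ of the congruence monomial $m=q^j$ by $2\len - O(1) \le j \le 3\len$. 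The upper bound $j \le 3(\len-1) + O(1)$ is immediate from the degree bound on $\xi^\sigma_\len(s)$ dictated by $|\cC| \le |\ag(\lri_\len)| = q^{3\len}$ (the Lie lattice has rank $3$ for $\aH=\SL_3,\SU_3$ once one accounts for the trace-zero condition, contributing a bounded correction). The lower bound is the main obstacle: I must rule out $m$ being too small, which happens precisely when the co-adjoint orbit "stabilises" at a level strictly below $\len-1$; the content of the shadow branching rules (Table~\ref{tab:branch.rules.A2}) is that once a similarity class has reached a terminal shadow type its lifts grow with a \emph{positive} slope at least $2$ per level, so a class genuinely living at level $\len$ has cardinality at least $q^{2\len - O(1)}$, whence $m \ge q^{2\len - O(1)}$. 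Assembling these estimates and absorbing all the $O(1)$'s and the finitely many finite-group degrees into the absolute constants $C_1,C_2$ (which can be chosen uniformly in $q$, $\len$, $\lri$, and $\ee$ because all the data are polynomial in $q$ with bounded degree and bounded number of terms) yields $C_1 q^{2\len} < \chi(1) < C_2 q^{3\len}$. The one subtlety to be careful about is the dependence on the residue field: I would note that $q \ge 2$ always and that a polynomial $q^c + (\text{lower order})$ with leading coefficient $1$ is bounded between $\tfrac12 q^c$ and $q^c$ for all $q\ge 2$ after possibly increasing $c$'s companion constant, so no uniformity is lost.
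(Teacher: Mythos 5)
Your overall strategy is the same as the paper's: decompose $\chi(1) = [\aH(\kk):\sigma'(\kk)]\,\psi_\sigma(1)\,\varphi_\sigma(1)$ via Clifford theory and the Kirillov orbit method, where $\varphi_\sigma$ is a level-$(\len-1)$ character of the congruence subgroup with shadow $\sigma$, and then bound the exponents shadow by shadow (the paper does this by computing $\partial\xi^\sigma_{\len-1}(s)=\xi^\sigma_{\len-1}(s)-\xi^\sigma_{\len-2}(s)$ explicitly and reading off the possible degrees, together with the observation that level exactly $\len-1$ forces the associated path in the shadow graph to leave $\Gsha$ at the very first step). However, your quantitative bookkeeping contains genuine errors that break the argument. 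The central one: the theorem asserts the \emph{exact} exponents $2\len$ and $3\len$ with absolute constants, and $q$ is unbounded, so an error of $q^{\pm O(1)}$ in the exponent cannot be ``absorbed into the constants $C_1,C_2$'' as you propose. Your accounting produces exactly such errors. Concretely, the finite part $[\aH(\kk):\sigma'(\kk)]\,\psi_\sigma(1)$ is \emph{not} of size $q^{c}$ with $0\le c\le 3$: the index alone is $\asymp q^{\dim\aG-\dim(\sigma)}$, i.e.\ $\asymp q^4$ for types $\Lsha,\Jsha$ and $\asymp q^6$ for the three-dimensional shadows, while the congruence degrees of level $\len-1$ range exactly over $q^{2\len-4},\dots,q^{3\len-6}$. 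The clean exponents $2\len$ and $3\len$ emerge only from a per-shadow matching: the shadows realising the minimal congruence degree $q^{2\len-4}$ with the smallest index are $\Lsha,\Jsha$, whose index is $\asymp q^4$, giving exactly $q^{2\len}$; the shadows realising the maximal congruence degree $q^{3\len-6}$ have index $\asymp q^6$ but \emph{abelian} groups $\sigma'(\kk)$, so $\psi_\sigma(1)=1$ there, giving exactly $q^{3\len}$. With your stated bounds ($j\le 3\len$, finite part up to $\asymp q^3$, lower bound $j\ge 2\len-O(1)$, finite part bounded below only by a constant) you would obtain $C q^{2\len-O(1)}<\chi(1)<C' q^{3\len+O(1)}$, which does not prove the statement.

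There are secondary slips feeding into this. You conflate orbit cardinality with character degree (the degree is the square root of the coadjoint orbit size; the branching data $b^{(\ee)}_{\sigma,\tau}$ of Table~\ref{tab:branch.rules.A2} give slope at least $4$ per level in the orbit size, hence at least $2$ in the degree), and the assertions ``$\lvert\ag(\lri_\len)\rvert=q^{3\len}$'' and ``the Lie lattice has rank $3$'' are false ($\dim\ag=9$, $\dim\ah=8$), so your upper bound for the congruence exponent is not justified as written. The opening claim that the degrees of level-$(\len-1)$ characters are precisely the degrees appearing in $\zeta_{\aH(\lri_\len)}$ but not in $\zeta_{\aH(\lri_{\len-1})}$ is also false in general (degrees can repeat across levels), though it is not needed once you pass to the Clifford/Kirillov decomposition. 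Finally, for the lower bound the correct structural input is not ``one genuinely new factor of size $\asymp q^{2\len}$ at the top level'' but rather: level exactly $\len-1$ means the path $\Delta(\varphi_\sigma)$ does not begin with the loop $(\Gsha,\Gsha)$, so every one of the $\len-1$ steps contributes at least $q^{2}$ to $\varphi_\sigma(1)$, and the remaining deficit of $q^{4}$ is supplied exactly by the index of the shadows $\Lsha,\Jsha$; this precise compensation is what your argument is missing.
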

In fact, our proof of Theorem~\ref{thm:I} yields slightly more precise
estimates. The constants $C_1$ and $C_2$, for instance, may be taken
arbitrarily close to $1$ at the cost of excluding finitely many values
of~$q$.  Note that the groups $\GL_3(\lri)$ and $\GU_3(\lri)$ have
$1$-dimensional representations of arbitrary level, namely those
factoring through the determinant map.  Therefore there is no
non-trivial lower bound for the irreducible character degrees of these
groups in relation to the level.  However, similar considerations as
in the proof of Theorem~\ref{thm:I} apply to these groups so that the
upper bound holds for them as well.  Bounds as in Theorem~\ref{thm:I}
are of interest, for instance, in the study of the `Gelfand-Kirillov
dimensions' of admissible smooth complex representations of the
locally compact group $\aH(\lfi)$, where $\lfi$ denotes the fraction
field of~$\lri$; cf.\ \cite[Remark~1.19]{CalegariEmerton/12}.

\subsection{Principal congruence subgroups}
Finally we record applications to principal congruence subgroups and
subquotients defined in terms of the congruence filtration.  As above,
let $\lri$ denote a compact discrete valuation ring of residue
characteristic~$p$ and residue cardinality~$q$.  If $\cha(\lri) = 0$,
let $e=e(\lri,\Z_p)$ denote the absolute ramification index of~$\lri$.
Let $\aG$ be one of the $\lri$-group schemes $\GL_3,\GU_3$ and,
accordingly, let $\aH$ be one of the $\lri$-group schemes
$\SL_3,\SU_3$, the choice being reflected in the value of the
parameter $\ee = \ee_\aG = \ee_\aH \in \{1,-1\}$;
see~\eqref{equ:def.epsilon}.  For $m \in \N$, let $\aG^m(\lri)$ and
$\aH^m(\lri)$ denote the $m$th principal congruence subgroups of
$\aG(\lri)$ and $\aH(\lri)$.  We put
\begin{equation*}\label{def:u}
  u_\ee(t) = \ee t^3 + t^2 - t -\ee - t^{-1} \in \Z[t,t^{-1}].
\end{equation*}
Our last main result generalises and yields an different approach
to~\cite[Theorem~E]{AKOV1} which, for $\cha(\lri)=0$ and $p > 3$,
implies that for all $m\in\N$ with $m \geq e/(p-2)$,
\begin{equation}\label{equ:duke}
\zeta_{\aH^m(\lri)}(s) = q^{8m} \frac{1 + u_\ee(q)q^{-3-2s} +
  u_\ee(q^{-1})q^{-2-3s} + q^{-5-5s}}{ (1 - q^{1-2s})(1 - q^{2-3s})}.
\end{equation}
Recall the notation introduced in Section~\ref{subsec:shadows.types}
, in particular the Dirichlet polynomials
$\Xi^{\cS}_{\ee,q,\len}(s)$ and their limits $\Xi^{\cS}_{\ee,q}(s)$,
first mentioned just before Theorem~\ref{thm:C}.

\begin{thmABC} \label{thm:J} Let $\lri$ and $\aG$, $\aH$, $\ee =
  \ee_\aG = \ee_\aH$ be as above.  Let $\len,m \in \N$ with $\len \geq
  m$.  Suppose that $p>3$; suppose further that $m \geq \min \{
  \len/p, e/(p-2) \}$ if $\cha(\lri) = 0$, and $m \geq \len/p$ if
  $\cha(\lri) = p$.  Then
  \begin{align}
    \zeta_{\aH^m(\lri)/\aH^\len(\lri)}(s) & =
    \begin{cases} q^{8(\len-m)}
      & \text{if  $\len \leq 2m$,} \\
      q^{8(m-1)}\sum_{\cS\in\T^{(\ee)}} \Xi^{\cS}_{\ee,q,\len-2m+1}(s)
      & \text{if $\len > 2m$,}
    \end{cases} \\
    \zeta_{\aG^m(\lri)/\aG^\len(\lri)}(s) & =
    q^{\len-m}\zeta_{\aH^m(\lri)/\aH^\len(\lri)}(s).
  \end{align}
  Moreover, if $\cha(\lri) = 0$ and $m \geq e/(p-2)$ then
  \begin{equation}\label{equ:zeta.princ.inf}
    \zeta_{\aH^m(\lri)}(s) = q^{8(m-1)}
    \sum_{\cS\in\T^{(\ee)}} \Xi^\cS_{\ee,q}(s).
  \end{equation}
\end{thmABC}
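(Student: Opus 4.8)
The plan is to establish the three displayed identities in turn, reducing the statement about $\aG^m(\lri)/\aG^\len(\lri)$ to the one about $\aH^m(\lri)/\aH^\len(\lri)$, and \eqref{equ:zeta.princ.inf} to the finite formulae by a limit argument. First I would settle the comparison between $\aG$ and $\aH$: since $p>3$ and $m\geq1$, every element of $1+\fp^m$ has a unique cube root in $1+\fp^m$, so placing such a root on the scalar diagonal splits the determinant $\det\colon\aG^m(\lri)\to\GL_1^m(\lri)$ (in the unitary case the determinant takes values in the norm-one subgroup $\GU_1^m(\lri)$, and the same splitting works). As $\mu_3\cap\aG^m(\lri)=\{1\}$, this gives a direct product decomposition $\aG^m(\lri)\cong\aH^m(\lri)\times Z^m(\lri)$, where $Z^m(\lri)$ is the abelian group of scalar matrices in $\aG^m(\lri)$, with $Z^m(\lri)/Z^\len(\lri)$ of order $q^{\len-m}$. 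Multiplicativity of the representation zeta function then yields $\zeta_{\aG^m(\lri)/\aG^\len(\lri)}(s)=q^{\len-m}\,\zeta_{\aH^m(\lri)/\aH^\len(\lri)}(s)$, so it remains to compute the zeta functions of the groups $\aH^m(\lri)/\aH^\len(\lri)$.

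If $\len\leq2m$ then $[\aH^m(\lri),\aH^m(\lri)]\subseteq\aH^{2m}(\lri)\subseteq\aH^\len(\lri)$, so the subquotient is abelian and its representation zeta function is just its order; since the $\lri$-Lie lattice $\ah$ of $\aH$ is a free $\lri$-module of rank $8$, this order equals $q^{8(\len-m)}$, matching the first case. Now assume $\len>2m$. Under the hypotheses of the theorem the finite $p$-group $N\coloneqq\aH^m(\lri)/\aH^\len(\lri)$ has nilpotency class $\lceil\len/m\rceil-1<p$ (this is where $m\geq\len/p$ is used; alternatively, if $m\geq e/(p-2)$ then $\aH^m(\lri)$ is saturable and the discussion applies to all of its finite quotients). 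Hence the truncated exponential map identifies $N$ with the finite nilpotent Lie ring $L=\fp^m\ah(\lri)/\fp^\len\ah(\lri)$, the irreducible characters of $N$ are parametrised by the co-adjoint $N$-orbits $\Omega$ in $L^\ast=\mathrm{Hom}(L,\C^\times)$, and $\chi_\Omega(1)=\lvert\Omega\rvert^{1/2}$, so that $\zeta_N(s)=\sum_\Omega\lvert\Omega\rvert^{-s/2}$. Using the trace form on $\ah$, which is non-degenerate over $\lri$ for $p>3$, also in the unitary case, together with a fixed additive character, I would fix an $\aH(\lri)$-equivariant identification of $L^\ast$ with $\ah(\lri_{\len-m})$; the co-adjoint $N$-orbit of the functional attached to $\tilde X\in\ah(\lri_{\len-m})$ has size governed by the centraliser of $\tilde X$, hence by the similarity-class shadow of $\tilde X$.

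The heart of the argument is to re-sum $\sum_\Omega\lvert\Omega\rvert^{-s/2}$ by sorting the co-adjoint orbits according to this shadow. Roughly speaking, the top $m-1$ ``layers'' of the parameter $\tilde X\in\ah(\lri_{\len-m})$ are free and do not influence orbit sizes, accounting for the prefactor $q^{8(m-1)}$, while the remaining data is precisely that of a similarity class in $\ah(\lri_{\len-2m+1})$ together with its shadow. Feeding in the branching rules for shadows of lifted similarity classes supplied by Theorems~\ref{thm:G.shad.graph} and~\ref{thm:U.shad.graph} — equivalently, the explicit formulae for the partial similarity class zeta functions $\gamma^\sigma_\len(s)$ of Proposition~\ref{pro:sim.shadow.fin} — and invoking Proposition~\ref{pro:xi.sigma.len} to identify the resulting Dirichlet polynomials $\xi^\sigma_{\len-2m+1}(s)$ with the functions $\Xi^\cS_{\ee,q,\len-2m+1}(s)$, one arrives at $\zeta_N(s)=q^{8(m-1)}\sum_{\cS\in\T^{(\ee)}}\Xi^\cS_{\ee,q,\len-2m+1}(s)$. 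For the infinite case, when $\cha(\lri)=0$ and $m\geq e/(p-2)$ every irreducible character of the profinite group $\aH^m(\lri)$ factors through some $\aH^m(\lri)/\aH^\len(\lri)$, so $\zeta_{\aH^m(\lri)}(s)$ is the coefficientwise limit of the finite zeta functions just computed; since by Proposition~\ref{pro:limit.gamma} the normalised polynomials $q^{-\len}\gamma^\sigma_\len(s)$, hence the $\Xi^\cS_{\ee,q,\len}(s)$, converge coefficientwise to the $\Xi^\cS_{\ee,q}(s)$ of Corollary~\ref{cor:infinite.xi}, passing to the limit yields \eqref{equ:zeta.princ.inf} (which, via the explicit $\Xi^\cS_{\ee,q}$, recovers \eqref{equ:duke}).

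I expect the re-summation in the previous paragraph to be the main obstacle. The co-adjoint $N$-orbits are strictly finer than $\aH(\lri)$-similarity classes — the reduction modulo $\fp$ of the orbit parameter is a full orbit invariant, not taken modulo conjugacy — so the identity $\sum_\Omega\lvert\Omega\rvert^{-s/2}=q^{8(m-1)}\sum_\cS\Xi^\cS_{\ee,q,\len-2m+1}(s)$ is a genuine enumerative coincidence rather than a bijection. What must be shown is that the degree-weighted number of co-adjoint orbits lying over a fixed reduction class equals the $\aG(\kk)$-index-normalised count of similarity classes of the corresponding shadow — exactly the quantity that the definition of $\Xi^\cS_{\ee,q,\len}$ through $\xi^\sigma_\len$, carrying the factor $[\aG(\kk):\aI^\cS_\ee(\kk)]^{1+s/2}$, is designed to encode — and that all the accompanying powers of $q$ (the prefactor $q^{8(m-1)}$, the factor $q^{-(\len-2m+1)}$ inside each $\Xi^\cS$, and the $+1$ shift in the truncation index $\len-2m+1$) come out correctly; carrying this bookkeeping through uniformly in $\ee$ is the technical core of the proof.
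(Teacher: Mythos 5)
Your plan coincides with the paper's own proof: the scalar direct-product splitting to pass between $\aG$ and $\aH$, the abelian case for $\len\leq 2m$, the Kirillov orbit method with a level-reduction step producing the prefactor $q^{8(m-1)}$, the sorting of co-adjoint orbits by shadows to obtain $\sum_{\cS}\Xi^{\cS}_{\ee,q,\len-2m+1}(s)$, and the coefficientwise limit for \eqref{equ:zeta.princ.inf} are exactly the route through Proposition~\ref{pro:zeta.H^m_l} (with Lemma~\ref{lem:Omega.sum} and Remark~\ref{rem:char-p-comments-4}), Proposition~\ref{pro:xi.sigma.len}, and Corollary~\ref{cor:infinite.xi}, the only cosmetic difference being that the paper runs the orbit method on $\aG^m$ and divides by $q^{\len-m}$ rather than working with $\aH^m$ directly. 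The re-summation you single out as the main obstacle is in fact a short regrouping: replacing the sum over orbits by a sum over elements and using $[G:\Stab_G(A)]=[\aG(\kk):\sigma(\kk)]\,[G^1:\Stab_{G^1}(A)]$ converts it immediately into $\sum_{\sigma}[\aG(\kk):\sigma(\kk)]^{1+s/2}\,\gamma^{\sigma}_{\len-2m+1}(s/2)=q^{\len-2m+1}\sum_{\sigma}\xi^{\sigma}_{\len-2m+1}(s)$, which is precisely how the paper closes the computation.
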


\begin{remark}
  The zeta functions $\zeta_{\aH^m(\lri)}(s)$ vanish at $s=-2$ for
  $p>2$; cf.~\cite[Corollary~2]{GonzalezJaikinKlopsch/14}. Inspection
  of the right hand side of \eqref{equ:duke} shows that it vanishes,
  in addition, at $s=-1$ if $\ee=1$ and at $s=0$ if $\ee=-1$, but not
  vice versa.
\end{remark}

\subsection{Outlook and conjectures}  The results discussed above
raise many interesting questions.  We highlight and discuss some of these.

\subsubsection{Analytic properties of zeta functions of arithmetic
  groups} It is of interest to investigate whether the assertions in
Corollary~\ref{cor:B} for $\Gamma$ hold more generally also for
arithmetic groups of type $\mathsf{A}_2$ satisfying just the~wCSP.
Let $\mathbf{H}(\Gri_S)$ be such a group and $\Gamma \leq
\mathbf{H}(\Gri_S)$ as in the corollary. That the abscissae of
convergence of $\zeta_\Gamma(s)$ and $\zeta_{\mathbf{H}(\Gri_S)}(s)$
coincide is well known (see, for instance, \cite[Corollary~4.5]{LL}),
but we do not know whether they also share the finer analytic
properties described in Corollary~\ref{cor:B}, such as meromorphic
continuation, pole order et cetera.  Note that subgroups of arithmetic
groups satisfying the sCSP also satisfy this property.

As we mentioned in Remark~\ref{rem:corB}, Corollary~\ref{cor:B}
transcends -- for the groups it covers -- general results for
arithmetic groups under base extension. It is interesting to decide
whether such uniformity also governs the analytic behaviour of
representation zeta functions of arithmetic groups of other types.

\subsubsection{Similarity classes of matrices}
Our results exhibit similarity class shadows and associated
combinatorial structures as an effective tool to analyse and enumerate
similarity classes of integral $\fp$-adic $3\times3$ matrices,
uniformly in the linear and unitary setting. Of particular relevance
is the shadows' capacity to uniformly describe the lifting behaviour
of similarity classes in Lie lattices such as $\gl_3(\lri_\len)$. It
is of great interest to investigate whether shadows of similarity
classes in more general Lie lattices, say of type $\mathsf{A}_{n-1}$
or the other classical types, also share this feature.  The (simpler)
case of type $\mathsf{A}_1$ is treated in Appendix~\ref{sec:A1}.

Another remarkable fact in type $\mathsf{A}_2$ is that the shadows are
represented by a finite number of algebraic subgroups of $\GL_3$; in
particular, their number is uniformly bounded independently of the
residue cardinality~$q$.  We do not know whether this is a general
phenomenon, even in type~$\mathsf{A}_{n-1}$; for type $\mathsf{A}_1$
see Appendix~\ref{sec:A1}.  It is worth exploring potential
connections between shadows in `semisimple' Lie lattices and
decomposition classes; cf.\ \cite{BorhoKraft/79, Broer/98}.

The ad\`elic results Corollary~\ref{cor:F} and Theorem~\ref{thm:G} are
phrased in such a way that the relevant Euler products
\eqref{equ:euler.zeta.sc} and \eqref{equ:euler.zeta.sim} extend over
places for which our results give precise formulae for the involved
Euler factors.  It seems reasonable to expect that global features
such as the ad\`elic zeta functions' abscissae of convergence,
meromorphic continuation, pole order et cetera remain unchanged in the
general case, in which the Euler products are enlarged by finitely
many `exceptional' factors.  In particular, it would be of interest to
set up a universal $\fp$-adic integration formalism that covers these
factors, too; cf.\ \cite[Theorem~B]{AKOV1} and~\cite{BDOP12}.

\subsubsection{Positive characteristic}
All our local results assume that the discrete valuation ring $\lri$
has characteristic $0$ or -- in the case of finite groups over rings
of the form $\lri_\len$ -- residue characteristic large in comparison
to~$\len$.  This restriction is owed to the limitations of the
linearisation techniques we use, which allow us to employ the Kirillov
orbit method. It is natural to ask for results in the remaining cases,
i.e.\ in `small' positive characteristic.  There are some indications
that the formulae we obtain could -- to a large extent -- be
characteristic-independent, just depending on the residue field.

This is, for instance, the case for the zeta functions of groups of
the form $\SL_2(\lri)$, where $\lri$ is an arbitrary compact discrete
valuation ring of odd residue characteristic.  In \cite[Section
7]{JZ}, Jaikin-Zapirain computed a uniform formula for the zeta
functions of such groups, which only depends on the residue field
of~$\lri$; see~\cite[Section~3.4]{AKOVIII/11} for a discussion of the
case of even residue characteristic.  In light of this, it would be
interesting to compute, for instance, the zeta functions of groups of
the form $\SL_3(\kk[\![x]\!])$ and $\SU_3(\kk[\![x]\!])$, where $\kk$
is a finite field with $\cha(\kk) \neq 3$, as well as their principal
congruence subquotients.  We expect that the resulting formulae
coincide with those given in Theorem~\ref{thm:C}.

The results in~\cite[Theorem C]{BDOP12} on `conjugacy class zeta
functions' -- enumerating the total numbers of irreducible characters
of principal congruence quotients, such as $\aH(\lri_\len)$, as
opposed to enumerating them by their degrees -- also point towards a
very general `characteristic independence' of representation zeta
functions associated to suitable group schemes.

\subsubsection{Uniformity}
All the explicit formulae of zeta functions for $p$-adic analytic
groups provided in this paper -- notably in Theorems~\ref{thm:C} and
\ref{thm:J} -- display a high degree of uniformity in the residue
field of the underlying compact discrete valuation ring: the character
degrees and their multiplicities for the groups in question are given
by (quasi-)polynomials in $q$, the residue field's cardinality, whose
coefficients only depend on the residue class of $q$ modulo some
small, well-understood modulus and, possibly, the splitting behavior
of the place determined by the local ring in some quadratic
extension. We speculate that these features are not specific to
type~$\mathsf{A}_2$.

Let $k$ be a number field with ring of integers $\Gri$, and
$\mathbf{H}$ a connected, simply-connected semisimple algebraic group
defined over $k$, with a fixed embedding into~$\GL_d$ for some $d \in
\N$.  It is natural to ask under which conditions on $\mathbf{H}$ the
following uniformity property holds.

\begin{property}
  There exist $N\in\N$, finite index sets $I$ and $J$, polynomials
  $f_{\tau, i},g_{\tau, i}\in \Q[t]$ for $(\tau,i) \in \{1,\ldots,N\}
  \times I$, non-negative integers $A_j, B_j$ for $j\in J$, and a
  finite set $S$ of places of~$k$, containing all archimedean ones, all
  depending on~$\mathbf{H}$, such that the following holds.

  If $v$ is a place of $k$ not in $S$ and the residue cardinality
  $q_v$ satisfies $q_v \equiv_N \tau$, then
  \begin{equation}\label{equ:conj.uniform}
    \zeta_{\mathbf{H}(\Gri_v)}(s) = \frac{\sum_{i \in
        I}f_{\tau,i}(q_v) \; g_{\tau,i}(q_v)^{-s}}{\prod_{j \in J}(1 -
      q_v^{A_{j}-B_{j}s})}.
  \end{equation}
\end{property}

Theorem~\ref{thm:C} establishes that $\mathbf{H} = \SL_3$ has this
property. Finite groups of Lie type -- giving rise to representations
of `level $0$'~ of~$\mathbf{H}(\Gri_v)$ -- satisfy an analogous
property; see~\cite[Theorem~1.7]{LS}.

In the following we formulate a more specific conjecture on the shape
of almost all local factors of an arithmetic group of type
$\mathsf{A}_{n-1}$, generalizing Theorem~\ref{thm:C}.  Suppose that
the group $\mathbf{H}$ is absolutely almost simple of
type~$\mathsf{A}_{n-1}$.  Then $\mathbf{H}$ is either an inner form,
i.e.\ of type~${}^1\!\mathsf{A}_{n-1}$, arising from a matrix algebra
over a central division algebra over $k$, or an outer form, i.e.\ of
type~${}^2\!\mathsf{A}_{n-1}$, arising from a matrix algebra over a
central division algebra over a quadratic extension $K$ of~$k$,
equipped with an involution and with reference to a suitable hermitian
form; see \cite[Propositions~2.17 and 2.18]{PlatonovRapinchuk/94}. For
almost all non-archimedean places $v$ of~$k$, the completed group
$\mathbf{H}(\Gri_v)$ is of the form $\SL_n(\Gri_v)$ or
$\SU_n(\Gri_v)$; compare \cite[Appendix~A]{AKOV1}.  The latter case
distinction -- which occurs infinitely many often if and only if
$\mathbf{H}$ is an outer form -- is, for all but finitely many places
$v$ of~$k$, described by the \emph{Artin symbol} $\ee(v) =
\artin{K}{k}{v} \in \{1,-1\}$, which dictates whether or not $v$ is
decomposed in $K \,\vert\, k$.  For each non-archimedean place $v$
of~$k$, we set $\artin{k}{k}{v}=1$ and we write $\iota(v) \coloneqq
\gcd(q_v -\ee(v),n)$.  For $\ee(v)=1$ the latter gives the number of
$n$th roots of unity in the residue field $\kk_v$ of $\Gri_v$; for
$\ee(v)=-1$ it gives the number of norm-$1$ elements in the residue
field extension $\KK_w \,\vert\, \kk_v$, associated to the induced
quadratic extension~$K_w \,\vert\, k_v$, whose order divides~$n$.  We
write $\Div(n) = \{m\in\N \mid m \!\mid\! n\}$ for the set of divisors
of~$n$.

\begin{conj}
  Let $n \in \N_{\geq 2}$.  There exist finite index sets $I$ and $J$,
  polynomials $f_{\iota,\ee,i},g_{\iota,\ee,i}\in \Q[t]$ for
  $(\iota,\ee,i) \in \Div(n) \times \{1,-1\} \times I$ and
  non-negative integers $A_j, B_j$ for $j\in J$, such that the
  following holds.

  Let $k$ be a number field with ring of integers $\Gri$, and
  $\mathbf{H}$ a connected, simply-connected absolutely almost simple
  $k$-algebraic group of type~$\mathsf{A}_{n-1}$.  If $\mathbf{H}$ is
  an outer form, let $K$ denote the quadratic extension of $k$
  appearing in the definition of $\mathbf{H}$; if $\mathbf{H}$ is an
  inner form, put $K=k$.  Then there exists a finite set of places $S$
  of $k$, containing all archimedean ones and depending on
  $\mathbf{H}$, such that for every place $v$ of $k$ not in $S$,
  \begin{equation}\label{equ:conj.uniform.An}
    \zeta_{\mathbf{H}(\gri_v)}(s) =
    \frac{\sum_{i \in I} f_{\iota(v),\ee(v),i}(q_v) \;
      g_{\iota(v),\ee(v),i}(q_v)^{-s}}{\prod_{j \in
        J}(1 - q_v^{A_j-B_{j}s})},
  \end{equation}
  where $q_v$ denotes the residue cardinality of $\Gri_v$.
\end{conj}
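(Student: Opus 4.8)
The plan is to follow the strategy that yields Theorem~\ref{thm:C} for type $\mathsf{A}_2$ and to attempt to run it uniformly in~$n$. The starting observation is that for all but finitely many places $v$ the group $\mathbf{H}(\Gri_v)$ is isomorphic to $\SL_n(\lri)$ or $\SU_n(\lri)$, with the two cases governed by the Artin symbol $\ee(v)$; so it suffices to treat $\aH(\lri)$ for $\aH \in \{\SL_n,\SU_n\}$, where $\lri$ is a compact discrete valuation ring of characteristic $0$ whose residue characteristic $p$ is large relative to $n$ and the ramification index. First I would split $\zeta_{\aH(\lri)}(s)$ according to the level of an irreducible character, i.e.\ the smallest $\len$ for which it is trivial on the principal congruence subgroup $\aH^\len(\lri)$. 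The level-$0$ contribution is $\zeta_{\aH(\kk)}(s)$, the zeta function of the finite group of Lie type $\SL_n(\kk)$ or $\SU_n(\kk)$; by \cite[Theorem~1.7]{LS} this already has the conjectured shape, with polynomial data depending only on $\ee(v)$ and on the residue of $q_v$ modulo a small modulus controlling $\iota(v)$.

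For characters of positive level, the Kirillov orbit method (cf.\ Section~\ref{sec:kom.clifford}) is the central tool: since $p$ is large, the exponential correspondence identifies, for suitable $m$, the irreducible characters of $\aH^m(\lri)/\aH^\len(\lri)$ with co-adjoint $\aH(\lri)$-orbits in the dual of the Lie lattice of $\aH$ over $\lri_{\len-m}$, a submodule of $\gl_n(\lri_{\len-m})$ or $\gu_n(\lri_{\len-m})$ according to $\ee$. Dualising the adjoint action, the analysis reduces to enumerating similarity classes in $\gl_n(\lri_\len)$ (resp.\ $\gu_n(\lri_\len)$), weighted by their cardinalities — the direct $n$-variable analogue of the similarity class zeta functions $\gamma_\len(s)$ studied here. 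One would then organise these classes by their centraliser shadows, the reductions modulo $\fp$ of the centralisers $\Cen_{\GL_n(\lri)}(A)$ (resp.\ $\Cen_{\GU_n(\lri)}(A)$), and establish, in the spirit of Theorems~\ref{thm:G.shad.graph} and~\ref{thm:U.shad.graph}: (i) a finite list of algebraic subgroups of $\GL_n$ representing all occurring shadows, uniformly in $q$ and $\ee$; and (ii) branching rules — the number of shadow-$\tau$ similarity classes in $\gl_n(\lri_{\len+1})$ (resp.\ $\gu_n(\lri_{\len+1})$) lifting a given shadow-$\sigma$ class in $\gl_n(\lri_\len)$, together with the ratios of cardinalities — given by polynomials in $q$ independent of $\len$. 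The only arithmetic of the residue field that intervenes is the structure of component groups and Schur multipliers of centralisers of semisimple elements, which is dictated by $\gcd$-conditions counting $n$th roots of unity or norm-$1$ elements of order dividing $n$; that is exactly the invariant $\iota(v)$, so the resulting data depend on the pair $(\iota(v),\ee(v))$ alone.

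To assemble the zeta function, Clifford theory relative to $\aH^1(\lri)$ writes each degree $\chi(1)$ as a product of an inertia index — read off from the shadow — and the degree of a character of a smaller, shadow-prescribed subgroup, which is handled recursively within the same framework. Summing the contributions over all shadows and all levels $\len$ turns the level variable into a finite collection of geometric progressions in $q^{-s}$, producing a rational function of the conjectured form with denominator $\prod_j(1-q^{A_j-B_js})$; the polynomials $f_{\iota,\ee,i}$, $g_{\iota,\ee,i}$ and the exponents $A_j,B_j$ come out of the shadow branching data combined with the Ennola-type uniformity of the character degrees of finite groups of Lie type exhibited in Theorems~\ref{thm:H} and~\ref{thm:C}. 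The main obstacle — and the reason this stays a conjecture — is step~(i): for $n>3$ it is not known that the similarity class shadows in $\gl_n(\lri)$ or $\gu_n(\lri)$ are represented by finitely many algebraic subgroups, nor that the branching behaviour stabilises in $\len$; the combinatorics of nilpotent orbits, their degenerations, and the centraliser structure in the non-semisimple strata grows quickly with $n$ and is not yet controlled. A secondary difficulty is keeping the Clifford-theoretic recursion compatible with the uniformity in $\iota(v)$, since the inner subgroups that arise are themselves groups of mixed linear and unitary type over residue field extensions.
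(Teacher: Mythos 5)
There is nothing to compare your proposal against: the statement you were asked to prove is stated in the paper as a \emph{conjecture} in the outlook section, and the paper offers no proof of it. What you have written is accordingly not a proof but a strategy sketch — essentially a faithful description of the machinery the paper uses to prove Theorem~\ref{thm:C} in the case $n=3$ (Kirillov orbit method for the principal congruence subgroups, organisation of co-adjoint orbits by similarity class shadows, Clifford-theoretic assembly via extensions of characters to inertia subgroups) together with the proposal to ``run it uniformly in $n$''. You yourself concede the decisive gap, and it is genuine: the two inputs you label (i) and (ii) — that the shadows in $\gl_n(\lri)$ and $\gu_n(\lri)$ are represented by finitely many algebraic subgroups uniformly in $q$, and that the branching multiplicities and class-size ratios are polynomials in $q$ independent of $\len$ — are only established in the paper for $n=3$ (Theorems~\ref{thm:G.shad.graph} and~\ref{thm:U.shad.graph}), and the paper explicitly records that it does not know whether either holds for larger $n$.

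Two further concrete obstructions deserve mention, because they show that specific steps of your outline would fail as stated rather than merely being unverified. First, the extension of characters from $\aH^1(\lri)$ to their inertia groups relies on the existence of shadow-preserving lifts (Lemma~\ref{lem:good.pair}, feeding into Theorem~\ref{thm:extension.to.radical}); by Remark~\ref{rem:hensel.lifts}, already in $\fsl_6(\C)$ the loci of constant centraliser dimension are unions of sheets of different dimensions, so the Hensel-type lifting argument breaks down and shadow-preserving lifts need not exist for general $n$. Second, the Clifford-theoretic step uses the vanishing of the Schur multipliers $\textup{H}^2(S_\chi/R_\chi,\C^\times)$, verified case by case for the small list of shadow quotients occurring when $n=3$ (Lemma~\ref{lem:cohomology}); for larger $n$ the shadows include bigger groups of mixed linear and unitary type for which such vanishing is not automatic, so characters need not extend and formula~\eqref{equ:gen.sum} cannot be invoked directly. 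In short, your proposal correctly identifies the intended route and the open points, but it does not — and, at the current state of knowledge, cannot — constitute a proof of the conjecture.
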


\subsubsection{Ennola duality}\label{subsec:ennola}
Let $n \in \N$, and let $\lri$ be a compact discrete valuation ring
with residue cardinality~$q$.  Examples suggest that a dependence of
the representation zeta function on the residue class of $q$ modulo
$N$ as in~\eqref{equ:conj.uniform} does not occur for general linear
groups.  Although the groups $\GL_n(\lri)$ do not have convergent zeta
functions one may consider the zeta functions of the finite principal
congruence quotients~$\GL_n(\lri_\len)$, $\len \in \N$.  In all known
cases these zeta functions are uniform in~$q$, i.e.\ both the
occurring character degrees and their multiplicities are given by
polynomials in $q$ with constant coefficients.  In the case $(n \in
\N,\len=1)$ this follows from \cite{Green}, the case $(n \in
\N,\len=2)$ appears in~\cite{Singla}, and the case $(n=2,\len \in \N)$
in~\cite{Onn}.  Theorem~\ref{thm:C} confirms that this is the case
also for $(n=3,\len \in \N)$, at least for sufficiently large~$p$; we
expect that these restrictions on the primes covered are limitations
of the methods we use rather than genuine exceptions.  We phrase the
following general conjecture.

\begin{conj}\label{con:ennola}
  Let $n \in \N$.  There exist
  \begin{itemize} \renewcommand{\labelitemi}{$\circ$}
  \item a finite index set $I$,
  \item polynomials $f^{(\ee)}_i \in \Z[\frac{1}{n!}][t]$ and
    $g^{(\ee)}_i \in \Z[t]$ for $i \in I$ and $\ee \in\{-1,1\}$,
  \item ascending chains of finite sets 
    $\mathcal{B}_{i,1} \subset \mathcal{B}_{i,2} \subset \ldots
    \subset \N$ for $i \in I$,
  \item non-negative integers $A^{(\ee)}_{ij}, B_{ij}$ for $(i,j) \in
    I \times \mathcal{B}_{i,\len}$ and $\ee \in \{-1,1\}$
  \end{itemize}
  such that the following hold.
  \begin{enumerate}
  \item Let $\lri$ be a compact discrete valuation ring with residue
    cardinality $q$, let $\aG$ be one of the $\lri$-group schemes
    $\GL_n,\GU_n$, and $\ee = \ee_\aG \in \{-1,1\}$ accordingly.  For
    every $\len\in\N$, the character degrees of the finite group
    $\aG(\lri_\len)$ are given by
    \begin{align*}
      \cd(\aG(\lri_\len)) & = \{g^{(\ee)}_{i}(q) \; q^{B_{ij}} \mid i \in
      I, j \in \mathcal{B}_{i,\len}\},
    \end{align*}
    and its representation zeta function is given by
    \begin{align*}
      \zeta_{\aG(\lri_\len)}(s) & = \sum_{i \in I} \sum _{j \in
        \mathcal{B}_{i,\len}} f^{(\ee)}_{i}(q) \; q^{A^{(\ee)}_{ij}}
      \left(g_i^{(\ee)}(q) \; q^{B_{ij}} \right)^{-s}.
    \end{align*}
  \item Ennola duality holds for character degrees: for all $i \in I$,
    \[
    g_i^{(-1)}(t) = (-1)^{\deg(g^{(1)}_i)}g^{(1)}_i(-t).
    \]
  \end{enumerate}
\end{conj}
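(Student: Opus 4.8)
The plan is to lift the entire framework developed above for type $\mathsf{A}_2$ -- character shadows, the Kirillov orbit method, similarity-class shadows and their branching rules -- to general type $\mathsf{A}_{n-1}$, and then to deduce Ennola duality from the \emph{uniformity in $\ee$} of the resulting data. The shadow decomposition $\zeta_{\aG(\lri_\len)}(s) = \sum_{\sigma \in \im(\sh_{\aG(\lri_\len),\aG^1(\lri_\len)})} \zeta^\sigma_{\aG(\lri_\len)}(s)$ of \eqref{equ:dec.shadows} is formal and costs nothing; the substance is the $n$-dimensional analogue of Theorem~\ref{thm:C}. Concretely, one would aim to show that there is a finite set $\T^{(\ee)}$ of shadow types together with $\Z$-forms (for $\ee = 1$), respectively $\kk_2\,\vert\,\kk$-forms (for $\ee = -1$), $\aI^\cS_\ee$ of subgroups of $\GL_n$ built out of Levi and unipotent pieces -- as in Table~\ref{tab:shadows} for $n = 3$ -- and Dirichlet polynomials $\Xi^\cS_{\ee,q,\len}(s) \in \Z[\frac{1}{n!}][q,q^{-s}]$, such that
\[
\zeta_{\aG(\lri_\len)}(s) = q^{(n-1)(\len-1)} \sum_{\cS\in\T^{(\ee)}} [\aG(\kk):\aI^\cS_\ee(\kk)]^{-1-s}\;\zeta_{\aI^\cS_\ee(\kk)}(s)\; \Xi^\cS_{\ee,q,\len-1}(s).
\]
By the orbit method the $\Xi^\cS$ should be assembled, as in Proposition~\ref{pro:xi.sigma.len}, from partial similarity-class zeta functions of $\gl_n(\lri_\len)$ and $\gu_n(\lri_\len)$ sorted by the similarity-class shadow $\sh(\cC)$; this in turn requires the $n$-dimensional version of Theorems~\ref{thm:G.shad.graph}--\ref{thm:U.shad.graph}: finiteness of the shadow set $\Sh_{\aG(\lri)}$, its independence of $q$ and $\lri$, and polynomiality in $q$ (uniform in $\len$) of the lifting multiplicities and of the ratios $\lvert \wt\cC \rvert / \lvert \cC \rvert$.

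Granting this, part~(2) of the conjecture follows by tracking the substitution $q \mapsto -q$ through each ingredient. For the level-$0$ block $\aG(\kk) = \GL_n(\F_q)$ versus $\GU_n(\F_q)$ this is classical Ennola duality (Green, Kawanaka; cf.\ \cite{Green, K85, TV07}), and it propagates to the shadow subgroups $\aI^\cS_\ee$ because their character degrees are products of cyclotomic-type factors in $q$ and powers of $q$, on each of which the $\GL/\GU$ substitution is understood. For the positive-level contribution one checks that the remaining data -- centraliser orders, the indices $[\aG(\kk):\aI^\cS_\ee(\kk)]$, and the handful of $\fp$-adic orbit integrals defining the $\Xi^\cS$ -- is given by the \emph{same} polynomials in $q$ for $\ee = 1$ and $\ee = -1$ up to $q \mapsto -q$; the only genuinely $\ee$-sensitive quantity is $\iota(\ee,q) = \gcd(q-\ee,n)$, which records the $n$-torsion in $\ker(\det)$ on a maximal torus and produces exactly the $\iota$-dependence already visible in Corollary~\ref{cor:D} and Theorem~\ref{thm:H}. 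This is why $I$ and the polynomials $g^{(\ee)}_i$ can be taken uniform and Ennola-dual, while the refined data $\mathcal{B}_{i,\len}$, $A^{(\ee)}_{ij}$, $B_{ij}$ carries the (non-dual) power-of-$q$ bookkeeping; the companion statement for $\SL_n, \SU_n$ in \eqref{equ:conj.uniform.An} would be extracted from the same formula by passing to the derived subgroup, as in \eqref{equ:zeta.H.infin}.

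The main obstacle -- and the reason the statement is a conjecture rather than a theorem -- is the $\mathsf{A}_{n-1}$ analogue of the similarity-class classification underlying Theorems~\ref{thm:C} and~\ref{thm:H}. Classifying $\GL_n(\lri)$-similarity classes in $\gl_n(\lri_\len)$ is a \emph{wild} problem for large $n$, and it is not known that the coarsening by shadows enjoys the three features that drive the $n = 3$ argument: finiteness, independence of $q$, and polynomial branching. A plausible route is to prove these stratum by stratum -- first for separable (regular semisimple) classes, where centralisers are maximal tori and the combinatorics is governed by the factorisation type of the characteristic polynomial, and then, by induction on $n$ via Levi subgroups $\GL_{n_1} \times \cdots \times \GL_{n_r}$ (in parallel in the linear and unitary settings), for the non-semisimple strata, arranging that every shadow that occurs is of the predicted Levi-by-unipotent shape. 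One would then need a uniform $\fp$-adic integration formalism, in the spirit of \cite[Theorem~B]{AKOV1} and of \eqref{equ:conj.uniform.An}, ensuring that the resulting orbit integrals evaluate to rational functions of the conjectured form with the cyclotomic behaviour separating cleanly into the $\iota$-dependent and the Ennola-dual parts. Controlling the finitely many `exceptional' primes and the places ramifying in $K\,\vert\,k$, so that they do not spoil the uniformity, is the last and technically heaviest ingredient.
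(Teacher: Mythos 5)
The statement you were asked about is Conjecture~\ref{con:ennola}: the paper does not prove it, and offers only the evidence cited around it (the cases $\len=1$ via Green, $\len=2$ via Singla, $n=2$ via Onn, and $n=3$ via Theorem~\ref{thm:C}). Your text is accordingly not a proof but a research programme, and you say so yourself; judged as a proof attempt it therefore has a genuine gap, namely that every load-bearing step is assumed rather than established. Concretely, you take for granted the $\mathsf{A}_{n-1}$ analogues of Theorems~\ref{thm:G.shad.graph} and~\ref{thm:U.shad.graph} (finiteness of $\Sh_{\aG(\lri)}$, independence of $q$ and $\lri$, polynomial branching and class-size ratios uniform in $\len$), the analogue of the extendability results of Section~\ref{sec:kom.clifford} (shadow-preserving lifts, which the paper itself warns can fail already for $\fsl_6$ in Remark~\ref{rem:hensel.lifts}, and vanishing of the Schur-multiplier obstructions for the inertia quotients that occur), and the resulting analogue of Theorem~\ref{thm:C}. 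These are exactly the open problems the paper flags; reducing the conjecture to them does not advance beyond what the paper already says.

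Two further points in your sketch need care even granting those inputs. First, your Ennola step ``the remaining data is given by the same polynomials up to $q\mapsto -q$'' ignores that the linear and unitary shadow sets need not match: already for $n=3$ the types $\Kasha,\Kbsha$ occur only for $\ee=1$, and the proof of Theorem~\ref{thm:H} has to argue separately that paths through $\Kasha,\Kbsha$ produce the same degrees as paths through $\Nsha$ before the index set $I$ can be taken common to both cases; for general $n$ no such matching of shadow data is known, so part~(2) does not simply ``propagate''. Also note that the conjecture asserts strict uniformity in $q$ for fixed $\ee$ (no dependence on $q \bmod N$), whereas your appeal to $\iota(\ee,q)=\gcd(q-\ee,n)$ concerns the $\SL_n/\SU_n$ statement \eqref{equ:conj.uniform.An}, not $\cd(\aG(\lri_\len))$ itself; conflating the two weakens the claim you are trying to support. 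Second, a small slip: the prefactor in your conjectural formula should be $q^{\len-1}$, not $q^{(n-1)(\len-1)}$, since it arises from the index of $\aH^1(\lri)$ in $\aG^1(\lri)$ modulo level $\len$, i.e.\ from $\dim\aG-\dim\aH=1$ independently of $n$ (compare \eqref{equ:zeta.G.fin} and Proposition~\ref{pro:zeta.H^m_l}).
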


\subsection{Notation and organisation}
Throughout, $\lri$ denotes a compact discrete valuation ring, with
valuation ideal $\fp$ and residue field~$\kk$.  We write $q = \lvert
\kk \rvert$ and $p = \cha(\kk)$.  Often, but not always, we assume
$\cha(\lri) = 0$.  In this case we write $e=e(\lri,\Z_p)$ for the
absolute ramification index of~$\lri$. Superscripts usually denote
cartesian powers. In other contexts they form part of our notation for
principal congruence subgroups, powers of maximal ideals, and
subgroups generated by powers. Table~\ref{tab:notation} summarises
some further frequently used notation.

\begin{table}[htb!]
  \caption{Some frequently used notation.}
  \label{tab:notation}
  \begin{tabular}{l|l}Notation & Description\\
    \hline $\aG$ &
    group scheme $\GL_n$ or $\GU_n$, depending on $\ee \in \{1,-1\}$ \\
    $\aH$ & group scheme $\SL_n$ or
    $\SU_n$, depending on $\ee \in \{1,-1\}$ \\  
    $G_\len$, $H_\len$ & $\lri_\len$-rational points $\aG(\lri_\len)$,
    $\aH(\lri_\len)$\\ 
    $G^m$, $H^m$ & principal congruence subgroups $\aG^m(\lri)$,
    $\aH^m(\lri)$ \\  
    $G^m_\len$, $H^m_\len$ & principal congruence subquotients
    $\aG^m(\lri)/\aG^\len(\lri)$, $\aH^m(\lri)/\aH^\len(\lri)$\\ 
    \hline  
    $\ag$ & Lie lattice scheme $\gl_n$ or $\gu_n$, depending on $\ee
    \in \{1,-1\}$ \\ 
    $\fg = \ag(\lri_\len)$ & $\lri_\len$-rational points $ \gl_n(\lri_\len)$,
    $\gu_n(\lri_\len)$\\ 
    $\fg^m = \ag^m(\lri)$ & principal congruence sublattices $\fp^m
    \gl_n(\lri)$, $\fp^m \gu_n(\lri)$\\ 
    $\fg^m_\len$ & principal congruence
    subquotient $\fg^m/\fg^\len$ \\ 
    \hline 
    $\Sh$ & set of shadows $\Sh_{\GL_3(\lri)}$ or $\Sh_{\GU_3(\lri)}$  \\ 
    $\T^{(\ee)}$ & set of shadow types for $\ee \in \{1,-1\}$ \\
    $\sigma(\kk) = \aI^{\cS}_{\ee}(\kk)$ & $\kk$-rational points of
    shadow $\sigma$ of type $\cS$ for $\ee \in \{1,-1\}$ \\
    $\shG(A)$, $\shU(A)$ & group centraliser shadows \\
    $\shg(A)$, $\shu(A)$ &  Lie centraliser shadows \\
    $\Gamma^{(\ee)}$ & shadow graph for $\ee \in \{1,-1\}$ \\ 
    \hline 
    $\Qgl_\lri$, $\Qgu_\lri$ & similarity class trees 
  \end{tabular}
\end{table}

The paper's broad organisation is as follows. Part~\ref{part:1}
relates to questions about the classification and enumeration of
similarity classes of integral $\fp$-adic $n \times n$ matrices, with
a focus on~$n=3$.  The main results are
Theorems~\ref{thm:G.shad.graph} and \ref{thm:U.shad.graph},
classifying shadows of similarity classes in $\gl_3(\lri_\len)$ and
$\gu_3(\lri_\len)$, respectively. In Section~\ref{sec:sim.zeta} these
two results are applied to compute similarity class zeta functions in
type $\mathsf{A}_2$.

In Part~\ref{part:2} we apply our results on similarity classes of
$3\times3$ matrices to representation zeta functions of groups of
type~$\mathsf{A}_2$.  To this end, methodology from $p$-adic Lie
theory, the Kirillov orbit method, and Clifford theory are prepared in
Section~\ref{sec:kom.clifford}.  Section~\ref{sec:zeta.A2} contains
explicit computations of (local) representation zeta functions of
various groups of type $\mathsf{A}_2$.  Results on global zeta
functions, obtained as Euler products of local ones, are proved in
Section~\ref{sec:abscissa}.

Complementing the paper's main ideas, we collect, in
Appendix~\ref{sec:A1}, a number of results in type $\mathsf{A}_1$ that
are analogous to and easier to derive than those obtained in type
$\mathsf{A}_2$.

Table~\ref{tab:roadmap} collects the locations of the proofs of the
main results stated in the introduction.

\begin{table}[htb!]
  \caption{Location of proofs.}
  \label{tab:roadmap}
  \begin{tabular}{l|c}
    Result & Proved in Section\\ \hline Theorem~\ref{thm:A},
    Corollary~\ref{cor:B} &
    \ref{subsec:rep.global}\\ Theorem~\ref{thm:C} &
    \ref{subsec:van.coh.gps}\\ Corollary~\ref{cor:D} &
    \ref{subsec:zeta.shadows}\\ Theorem~\ref{thm:sim.zeta.local},
    Corollary~\ref{cor:F} &
    \ref{subsec:sim.zeta.A2}\\ Theorem~\ref{thm:G} &
    \ref{subsec:sim.global}\\ Theorem~\ref{thm:H}, Theorem~\ref{thm:I}
    &\ref{subsec:ennola.duality} \\ Theorem~\ref{thm:J} &
    \ref{subsec:principal}
  \end{tabular}
\end{table}

\subsection{Acknowledgments}
We thank the Batsheva de Rothschild Fund, the DFG, the EPSRC, the
Mathematisches Forschungsinstitut Oberwolfach, the NSF, the ISF, and
the Nuffield Foundation.  We are grateful to Matthew Levy and
Alexander Stasinski for pointing out inaccuracies in a previous
version of the paper.


\part{Similarity classes of $\fp$-adic matrices}\label{part:1}


\section{Similarity classes of integral $\fp$-adic
  matrices} \label{sec:sim.gl}

Let $\lri$ be a compact discrete valuation ring, with valuation ideal
$\fp$ and finite residue field~$\kk$.  Put $p = \cha(\kk)$ and $q =
\lvert \kk \rvert$.  Let $\pi$ be a fixed uniformiser of $\lri$ so
that $\fp = \pi \lri$, and let $v \colon \lri \to \Z \cup \{\infty\}$
denote the valuation map on $\lri$.  In this section there is no
restriction on either $\cha(\lri)$ or $\cha(\kk)$.  In the simplest
cases, $\lri$ is the ring $\mathrm{Witt}(\kk)$ of Witt vectors over
$\kk$, i.e.\ the unique unramified extension of the $p$-adic integers
$\Z_p$ with residue field $\kk$, or the ring $\kk[\![x]\!]$ of formal
power series over~$\kk$.

For $\len \in \N_0$ let $\lri_\len$ denote the finite quotient ring
$\lri/\fp^\len$.  Let $n \in \N$, and let $\gl_n(\lri_\len)$ denote the
collection of $n \times n$ matrices over $\lri_\len$, with the
standard structure as an $\lri_\len$-Lie ring.

\begin{defn} \label{def:graph.Qgl} Let $\Qgl_{\lri,\len} =
  \Ad(\GL_n(\lri)) \backslash \gl_n(\lri_\len)$ denote the set of
  similarity classes in $\gl_n(\lri_\len)$, that is, orbits in
  $\gl_n(\lri_\len)$ under the adjoint action of~$\GL_n(\lri)$.  We
  endow
  \[
  \Qgl_\lri = \coprod_{\len=0}^\infty \Qgl_{\lri,\len}
  \]
  with the structure of a directed graph, induced by reduction modulo
  powers of~$\fp$, as follows.  Vertices $\cC \in \Qgl_{\lri,\len}$
  and $\cCtilde \in \Qgl_{\lri,\len+1}$ are connected by a directed
  edge $(\cC,\cCtilde)$ if the reduction of $\cCtilde$ modulo
  $\fp^\len$ is equal to~$\cC$, and we say that $\cCtilde$ lies
  above~$\cC$.  In this way $\Qgl_\lri$ becomes an infinite rooted
  tree, its root being the single element $\{0\}$ of~$\Qgl_{\lri,0}$.
  We refer to $\Qgl_\lri$ as the \emph{similarity class tree} in
  degree $n$ over~$\lri$.
\end{defn}

The aim in this section is to provide a framework for analysing the
structure of~$\Qgl_\lri$ and to apply this method in the concrete case
$n=3$.  In Section~\ref{subsubsec:shadows} we study centralisers and
introduce the concept of similarity class shadows for arbitrary
degree~$n$.  In Section~\ref{subsec:branching} we specialise to $n=3$
and state Theorem~\ref{thm:G.shad.graph}, concerning shadows and
branching rules, which plays a crucial role in the computation of
similarity class and representation zeta functions in the following
sections. The purpose of Section~\ref{appendix:similarity.new} is to
produce a complete set of representatives for the similarity classes
of $3\times3$ matrices over a discrete valuation ring; we emphasise
that the ring in question may be of positive characteristic.  The
proofs are technically involved and may be skipped at first reading.
Section~\ref{subsec:proof.main.SL} contains a proof of our main
result, Theorem~\ref{thm:G.shad.graph}.

\subsection{Centralisers and shadows}\label{subsubsec:shadows}
Let $\len \in \N_0$ and let $A \in \gl_n(\lri_\len)$.  The centraliser
$\Cen_{\GL_n(\lri)}(A)$ of $A$ in the group $\GL_n(\lri)$ is the
stabiliser of $A$ under the adjoint action of $\GL_n(\lri)$.  The
centraliser $\Cen_{\gl_n(\lri)}(A)$ of $A$ in the $\lri$-Lie lattice
$\gl_n(\lri)$ is the stabiliser of $A$ under the adjoint action of
$\gl_n(\lri)$.

\begin{defn}\label{def.of.shadows} Let $\len \in \N_0$.  The
  \emph{group centraliser shadow} $\shG(A)$ of an element $A \in
  \gl_n(\lri_\len)$ is the image $\overline{\Cen_{\GL_n(\lri)}(A)} \leq
  \GL_n(\kk)$ of $\Cen_{\GL_n(\lri)}(A)$ under reduction modulo~$\fp$.
  The \emph{Lie centraliser shadow} $\shg(A)$ of an element $A \in
  \gl_n(\lri_\len)$ is the image $\overline{\Cen_{\gl_n(\lri)}(A)} \leq
  \gl_n(\kk)$ of $\Cen_{\gl_n(\lri)}(A)$ under reduction modulo~$\fp$.

  For each similarity class $\cC$ in $\gl_n(\lri_\len)$ we define the
  (\emph{similarity class}) \emph{shadow}
  \[
  \shGC(\cC) = \{ (\shG(A),\shg(A)) \mid A \in\cC \},
  \]
  of $\cC$, and we denote the collection of all shadows by
  \begin{equation}\label{equ:Sh.GL}
    \ShGn = \{ \shGC(\cC) \mid \cC \in \Qgl_{\lri,\len}
    \text{ for some $\len \in \N_0$}\}.
  \end{equation}
  For $\sigma \in \ShGn$ we set
  \[
  \| \sigma \| = \lvert \shG(A) \rvert \qquad \text{and} \qquad
  \dim(\sigma) = \dim_\kk(\shg(A)),
  \]
  where $A \in \cC \in \Qgl_{\lri,\len}$, for some $\len \in \N_0$,
  with $\sigma = \shGC(\cC)$; furthermore, it is convenient to select
  one group centraliser shadow $\shG(A)$, where $A \in \cC \in
  \Qgl_{\lri,\len}$, for some $\len \in \N_0$, with $\sigma =
  \shGC(\cC)$, and to denote it by $\sigma(\kk)$.  We only use
  properties of $\sigma(\kk)$ that are independent of the arbitrary
  choice involved in its definition.
\end{defn}

The definition of a shadow reflects the idea that a shadow is a group
object together with an associated Lie structure, independently of the
choices for the parameters $\len$, $\cC$ and~$A$.  Indeed one may
think of a shadow $\sigma$ as a conjugacy class of subgroups of
$\GL_n(\kk)$, represented by~$\sigma(\kk)$, together with
corresponding Lie subalgebras of $\gl_n(\kk)$.  Formally, there is
some built-in redundancy.  Of course, every shadow $\sigma$ is
completely determined by any of its `representatives'
$(\shG(A),\shg(A))$.  Furthermore, the second coordinate~$\shg(A)$
suffices to pin down $\sigma$, as $\shG(A)$ simply consists of the
units of the ring~$\shg(A)$.  Similarly, the first coordinate
$\shG(A)$ determines $\shg(A)$, at least for $q>2$, by the next lemma.

\begin{lem}\label{lem:shadow-well-defined}
  Suppose that $q > 2$.  Let $\len \in \N_0$ and $A \in
  \gl_n(\lri_\len)$.  Then the Lie centraliser shadow $\shg(A)$ is
  equal to the additive span of the group centraliser shadow
  $\shG(A)$.
\end{lem}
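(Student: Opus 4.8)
The plan is to prove the two inclusions $\langle \shG(A)\rangle_{\text{add}} \subseteq \shg(A)$ and $\shg(A) \subseteq \langle \shG(A)\rangle_{\text{add}}$, where $\langle\,\cdot\,\rangle_{\text{add}}$ denotes the additive (equivalently $\kk$-linear) span inside $\gl_n(\kk)$. The first inclusion is the easy direction: if $g \in \Cen_{\GL_n(\lri)}(A)$, then $gA = Ag$ in $\gl_n(\lri_\len)$, so $g \in \Cen_{\gl_n(\lri)}(A)$ as well (viewing $g$ as an element of the Lie lattice). Reducing modulo $\fp$, every element of $\shG(A)$ lies in $\shg(A)$; since $\shg(A)$ is an additive subgroup of $\gl_n(\kk)$, it contains the additive span of $\shG(A)$. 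Note this direction does not use $q>2$.

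For the reverse inclusion, I would first reduce to a statement over $\kk$. The key observation is that $\Cen_{\gl_n(\lri)}(A)$ is a direct summand: writing $\fr = \Cen_{\gl_n(\lri)}(A)$, one shows that the reduction map $\fr \to \shg(A) = \overline{\fr}$ is surjective onto a $\kk$-subspace $\fs \leq \gl_n(\kk)$, and that $\fs$ is precisely the centraliser in $\gl_n(\kk)$ of the reduced matrix $\bar A$ \emph{when $\len \geq 1$} — more carefully, for $\len \geq 1$ one has $\shg(A) = \Cen_{\gl_n(\kk)}(\bar A)$ by a Hensel/smoothness argument (the centraliser scheme of a fixed matrix is defined by linear equations, hence the reduction of $\lri$-points surjects onto $\kk$-points), while for $\len = 0$ we have $A = 0$ and $\shg(A) = \gl_n(\kk) = \langle \GL_n(\kk)\rangle$ trivially. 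So it suffices to show: for any matrix $B \in \gl_n(\kk)$, the centraliser algebra $\Cen_{\gl_n(\kk)}(B)$ is the $\kk$-linear span of the unit group $\Cen_{\GL_n(\kk)}(B) = \Cen_{\gl_n(\kk)}(B)^\times$, i.e.\ that a finite-dimensional $\kk$-algebra with $\geq 3$ elements in its residue field (or rather: over a field with $q>2$) is spanned by its units.

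The heart of the matter is therefore the statement that a unital associative $\kk$-algebra $R$ which is finite-dimensional over $\kk$ — here $R = \Cen_{\gl_n(\kk)}(B)$ — is spanned as a $\kk$-vector space by $R^\times$, provided $q = |\kk| > 2$. This is a standard fact: for any $r \in R$, both $r - \lambda \cdot 1$ and $r - \mu \cdot 1$ are units for all but finitely many scalars (since $R$ is finite-dimensional, $r - \lambda\cdot 1$ is a non-unit only for $\lambda$ among the finitely many "eigenvalues"), so if $q$ is large enough we can pick two distinct scalars $\lambda \neq \mu$ making both units, and then $r = \frac{1}{\mu - \lambda}\big(\mu(r - \lambda\cdot 1) - \lambda(r - \mu \cdot 1)\big) - \lambda\mu \cdot (\mu-\lambda)^{-1}(\ldots)$ — more cleanly, $r = \frac{(r-\lambda)-(r-\mu)}{\mu-\lambda}\cdot(\text{unit combination})$; the cleanest phrasing is simply that $r$ is a $\kk$-combination of the units $1$, $r-\lambda\cdot 1$, $r - \mu\cdot 1$. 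The only subtlety is the bound on $q$: one must check that $q > 2$ already suffices rather than needing $q$ larger than the dimension. This follows because the set of $\lambda \in \kk$ with $r - \lambda\cdot 1 \notin R^\times$ is contained in the set of roots in $\kk$ of the (reduced) characteristic polynomial of left-multiplication-by-$r$; but in fact one only needs that this bad set is \emph{not all of $\kk$}, together with being able to find \emph{one} scalar $\lambda_0$ with $r - \lambda_0\cdot 1$ a unit, after which $r = (r - \lambda_0\cdot 1) + \lambda_0\cdot 1$ expresses $r$ using units $r - \lambda_0\cdot 1$ and $1$. So one genuinely only needs $|\kk| \geq 2$ to find a good $\lambda_0$ for each individual $r$ — wait, the bad set could have size $q-1$ when $q-1$ equals the degree, leaving only $\lambda_0 = 0$, which fails if $r$ itself is a non-unit. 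Hence the honest requirement: there must exist $\lambda_0 \neq 0$ with $r - \lambda_0\cdot 1$ a unit, i.e.\ the bad set (of size at most $\deg$, and $0$ may or may not be in it) must not contain $\kk^\times$ entirely; since the bad set contains $0$ when $r$ is a non-unit, this forces $q - 1 > |\text{bad set} \setminus\{0\}|$, and the worst case for $R = \Cen_{\gl_n(\kk)}(B)$ with $B$ having a single eigenvalue gives bad set $= \{$that eigenvalue$\}$, a single element — so $q > 2$ indeed suffices for the minimal polynomial having a unique root, and the general case (several roots) reduces to it by the usual argument that handles each $r$ via at most the number of distinct eigenvalues of $r$, which for a commutative-up-to-the-relevant-subalgebra situation is controlled.

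\textbf{Main obstacle.} The delicate point I expect to need real care is exactly this last bookkeeping: verifying that the hypothesis $q > 2$ (and not some larger bound depending on $n$) is sufficient. The way around it is \emph{not} to try to span $\Cen_{\gl_n(\kk)}(\bar A)$ all at once with a single choice of scalar, but to observe that it is enough to span each basis element, and that each element $r$ of the centraliser algebra satisfies a polynomial over $\kk$ whose distinct roots are few — and in the genuinely problematic configurations one can exhibit the spanning units explicitly (e.g.\ unipotent $\times$ scalar pieces). Alternatively, and perhaps most robustly, one reduces to the case that the centraliser algebra is \emph{local} (which happens when $\bar A$ has a single eigenvalue over $\kk$, possibly after noting the general centraliser is a product over eigenvalue-blocks), where the non-units form the unique maximal ideal $\fm$, a proper $\kk$-subspace; then for any $r$, at least two of $r$, $r - 1$, $r - 2, \dots$ among $\{r - \lambda\cdot 1 : \lambda \in \kk\}$ are units as soon as $|\kk| \geq 3$, since at most one coset representative $r - \lambda_0 \cdot 1$ can lie in $\fm$ — so $r = (r - \lambda_1\cdot 1)\cdot u_1^{-1} + \dots$ is a $\kk$-combination of the two units $r - \lambda_1\cdot 1$ and $r - \lambda_2\cdot 1$. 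This local reduction makes the bound $q > 2$ transparent and is the route I would write up.
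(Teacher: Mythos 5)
Your easy inclusion is fine, and the final idea (express an element as a scalar shift of a unit, using that $\shG(A)$ is exactly the unit group of the ring $\shg(A)$) is in the spirit of the paper's argument; but your reduction step is false. For $\len\geq 1$ it is \emph{not} true that $\shg(A)=\Cen_{\gl_n(\kk)}(\bar A)$: the Lie centraliser shadow is the reduction modulo $\fp$ of $\Cen_{\gl_n(\lri)}(A)=\{X\in\gl_n(\lri)\mid [X,A]\equiv 0 \bmod \fp^{\len}\}$, which depends on all of $A$ and not only on $\bar A$, and solutions modulo $\fp$ need not lift modulo $\fp^{\len}$ — the linear system has nontrivial elementary divisors (cf.\ Proposition~\ref{prop:centralisers.connected}), so there is no Hensel/smoothness surjectivity here. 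Concretely, for $A=\left[\begin{smallmatrix}0&\pi\\0&0\end{smallmatrix}\right]\in\gl_2(\lri_2)$ one computes $\shg(A)=\left\{\left[\begin{smallmatrix}a&b\\0&a\end{smallmatrix}\right]\mid a,b\in\kk\right\}$, of dimension $2$, whereas $\bar A=0$ gives $\Cen_{\gl_2(\kk)}(\bar A)=\gl_2(\kk)$; likewise the paper's matrices $A_0,A_\infty\in\gl_3(\lri_2)$ have $3$-dimensional shadows of types $\Kasha,\Kbsha$ while their reductions have $5$-dimensional centralisers. Indeed, the fact that shadows are strictly finer than centralisers of reductions is the whole point of the notion.

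The misidentification could be excised — the correct object to span is $R=\shg(A)$ itself, a unital $\kk$-subalgebra of $\gl_n(\kk)$ whose units are $\shG(A)$ — but then your argument still has a gap: $R$ is in general neither local nor visibly a product of local algebras, so the observation that in a local algebra at most one translate $r-\lambda\Id_n$ is a non-unit does not apply, and for a general element the set of bad scalars (its eigenvalues in $\kk$) can have up to $n$ elements, which without further input forces a bound like $q>n+1$ rather than $q>2$. The paper bridges exactly this point by decomposing the \emph{element} rather than the algebra: lift $\overline X$ to $X\in\Cen_{\gl_n(\lri)}(A)$, split the characteristic polynomial of $X$ over $\lri$ via Hensel's Lemma according to its coprime primary factors modulo $\fp$, and write $X=X_1+\cdots+X_r$ with each $X_i$ a polynomial in $X$ — so that $\overline{X_i}$ again lies in $\shg(A)$ — and with the $\kk$-eigenvalues of each $\overline{X_i}$ contained in $\{0,\alpha_i\}$ for at most one $\alpha_i\in\kk$. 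Then $q>2$ suffices to choose $a_i\in\kk^\times$ avoiding $\alpha_i$, giving $\overline{X_i}=\overline{X_i-a_i\Id_n}+\overline{a_i\Id_n}\in\shG(A)+\shG(A)$. Without this element-wise primary decomposition inside the centraliser (or an equivalent radical/Wedderburn-type argument applied to $R$ itself), your write-up does not justify the bound $q>2$.
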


\begin{proof}
  We only need to prove that $\shg(A)$ is contained in the additive
  span of $\shG(A)$, because the other inclusion is clear.  Let
  $\overline{X} \in \shg(A)$ be the image of $X \in
  \Cen_{\gl_n(\lri)}(A)$.  Illustrating the idea for the general case
  treated below, we observe that under the extra assumption $q>n$ we
  may choose $a \in \lri$ so that $\overline{X - a \Id_n}$ has no zero
  eigenvalues and therefore $\overline{X} = \overline{X - a \Id_n} +
  \overline{a \Id_n}$ lies in the additive span of $\shG(A)$.

  In general we may assume, by the Primary Decomposition Theorem, that
  \[
  \overline{X} = \diag(\overline{Y_1},\ldots,\overline{Y_r})
  =\overline{X_1} + \ldots + \overline{X_r},
  \]
  where the $\overline{Y_i}$ are block matrices and $\overline{X_i} =
  \diag(0,\ldots,0,\overline{Y_i},0,\ldots,0)$, for $1 \leq i \leq r$.
  The blocks correspond to the generalised eigenspaces of
  $\overline{X}$ or, equivalently, the factors in the factorization
  $f_{\overline{X}}(t) = \overline{f_X}(t) = \prod_{i=1}^r
  f_{\overline{X_i}}(t)^{e_i}$ of the characteristic polynomial into a
  product of pairwise coprime powers of irreducible
  polynomials. Applying Hensel's Lemma (cf.~\cite[Theorem~8.3]{Mat})
  to this factorization, we may lift the decomposition to
  $\gl_n(\lri)$, and hence assume that
  \[
  X = \diag(Y_1,\ldots,Y_r) = X_1 + \ldots + X_r,
  \]
  where the block matrices $Y_i \in \gl_n(\lri)$ are lifts of the
  block matrices $\overline{Y_i}$, and each matrix $X_i=
  \diag(0,\ldots,0,Y_i,0,\ldots,0)$ is a lift of $\overline{X_i}$, such
  that
  \begin{itemize} \renewcommand{\labelitemi}{$\circ$}
  \item each $X_i$ is a polynomial expression in~$X$, and
    hence $\overline{X_i} \in \shg(A)$,
  \item the minimal polynomial of $\overline{X_i}$ over $\kk$
    is a power of an irreducible polynomial.
  \end{itemize}

  For each $i \in \{1,\ldots,r\}$ the matrix $\overline{Y_i}$ has at
  most one eigenvalue in $\kk$.  Since $q>2$, we may choose
  $\overline{a_i} \in \kk \smallsetminus \{0\}$ so that
  $\overline{X_i} = \overline{X_i - a_i \Id_n} + \overline{a_i \Id_n}
  \in \shG(A) + \shG(A)$.  This shows that $\overline{X}$ can be
  expressed as the sum of at most $2n$ elements from $\shG(A)$.
\end{proof}

The next proposition shows that group centralisers can naturally be
identified as groups of $\kk$-rational points of certain algebraic
groups.  For $\cha(\lri)= 0$, the Greenberg transform of level~$\len$
associates to an $\lri_\len$-scheme $X$ of finite type a $\kk$-scheme
$\mathcal{X}$ of finite type in such a way that $X(\Lri_\len) \simeq
\mathcal{X}(\KK)$ for unramified finite extensions $\Lri$ of $\lri$
with residue field $\KK$; the construction makes use of Witt vectors;
see~\cite{Greenberg}.  For $\cha(\lri)>0$, the residue field $\kk$ can
be regarded as a subfield of $\lri_\len$ and Weil restriction
associates, in a similar but simpler way, to any
$\lri_\len$-scheme~$X$ a $\kk$-scheme $\mathcal{X}$ by `restriction of
scalars'.

\begin{prop}\label{prop:centralisers.connected}
  Let $\mathcal{X}_{n,\len}^\gl$ be the Greenberg transform of level
  $\len$ \textup{(}for $\cha(\lri)= 0$\textup{)} or the Weil
  restriction \textup{(}for $\cha(\lri)>0$\textup{)} of the
  $\lri_\len$-scheme $\gl_n$ to $\kk$-schemes so that $\gl_n(\lri_\len)
  \simeq \mathcal{X}_{n,\len}^\gl(\kk)$.  For $A \in \gl_n(\lri_\len)$
  there is a linear subvariety $\mathbf{V} \subset
  \mathcal{X}_{n,\len}^\gl$ such that
  \[
  \Cen_{\gl_n(\lri_\len)}(A) \simeq \mathbf{V}(\kk) \subset
  \mathcal{X}_{n,\len}^\gl(\kk).
  \]
  Furthermore, $\mathbf{V}$ contains a Zariski-open connected
  algebraic group $\mathbf{C}$ such that
  \[
  \Cen_{\GL_n(\lri_\len)}(A) \simeq \mathbf{C}(\kk).
  \]
\end{prop}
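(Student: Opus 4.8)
The plan is to describe the centraliser $\Cen_{\gl_n(\lri_\len)}(A)$ and $\Cen_{\GL_n(\lri_\len)}(A)$ directly in terms of the defining equations of the schemes $\mathcal{X}_{n,\len}^\gl$, and to exploit the fact that centralising conditions are linear. First I would observe that, whether via the Greenberg transform (when $\cha(\lri)=0$) or via Weil restriction (when $\cha(\lri)>0$), the ring $\gl_n(\lri_\len)$ is the $\kk$-points of an affine space scheme $\mathcal{X}_{n,\len}^\gl$ of dimension $n^2\len$, and crucially that the bracket $[X,A] = XA - AX$, viewed as a map $\gl_n(\lri_\len) \to \gl_n(\lri_\len)$, is $\lri_\len$-linear in $X$; after passing through the Greenberg/Weil functor this becomes a $\kk$-linear map of the underlying $\kk$-vector spaces. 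Hence its kernel $\{X : [X,A]=0\}$ is a $\kk$-linear subspace, and I would take $\mathbf{V}$ to be the corresponding linear subvariety of $\mathcal{X}_{n,\len}^\gl$, so that $\mathbf{V}(\kk) = \Cen_{\gl_n(\lri_\len)}(A)$ by construction. (One must check that the Greenberg transform carries a morphism of $\lri_\len$-schemes given by an $\lri_\len$-linear map to a morphism of $\kk$-schemes given by a $\kk$-linear map; this is routine from the functoriality and additivity properties recorded in~\cite{Greenberg}, and in the positive-characteristic case it is immediate since Weil restriction of scalars from $\lri_\len$ to $\kk$ is $\kk$-linear on the level of affine spaces.)

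Next I would produce the group $\mathbf{C}$. The centraliser $\Cen_{\GL_n(\lri_\len)}(A)$ consists exactly of those $X \in \Cen_{\gl_n(\lri_\len)}(A)$ that are units in $\gl_n(\lri_\len)$, i.e.\ that lie in the open subvariety cut out by the non-vanishing of the reduction of $\det(X)$ modulo $\fp$. So I would let $\mathbf{C} \subset \mathbf{V}$ be the Zariski-open subvariety where this determinant is a unit; it is then an algebraic group under matrix multiplication, since it is closed under products and inverses (the inverse of a unit commuting with $A$ again commutes with $A$ and is a unit), and it is nonempty as it contains the identity. By construction $\mathbf{C}(\kk) = \Cen_{\GL_n(\lri_\len)}(A)$.

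The only substantive point remaining — and the one I expect to be the main obstacle — is \emph{connectedness} of $\mathbf{C}$. For this I would use the following structural fact: the centraliser of $A$ in the full matrix ring $\gl_n(\lri_\len)$ is itself a ring (with unit $\Id_n$), namely the centraliser subalgebra $R = \Cen_{\gl_n(\lri_\len)}(A)$, and $\mathbf{C}$ is the unit group of $R$. The unit group of a finite-dimensional (possibly non-commutative, possibly non-reduced) algebra over a field, realised as the complement of a hypersurface inside the affine space of the algebra, is always a connected algebraic group: indeed, if $R = R_0 \oplus \fn$ where $\fn$ is the Jacobson radical (nilpotent) and $R_0$ is semisimple, then $1 + \fn$ is a connected unipotent subgroup (an affine space), $R^\times / (1+\fn) \cong R_0^\times$ is a product of general linear groups over the relevant (finite or, after base change, arbitrary) fields — but to get connectedness as an algebraic group over $\kk$ one passes to the algebraic-group structure, where each such factor is a (possibly Weil-restricted) $\GL_m$, which is connected — and an extension of a connected group by a connected group is connected. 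I would therefore spell out this decomposition: the point is that over $\KK$ the algebra $R \otimes_\kk \KK$ has $(R\otimes\KK)^\times$ connected because it is an iterated extension of $\GL_{m_i}$'s and a unipotent group, and a connected group over $\kk$ is recognised by the connectedness of its $\KK$-points together with the fact that $\GL_n(\lri_\len)$ itself is connected (its Greenberg transform is an extension of $\GL_n$ by unipotent groups). The delicate part is making sure the Wedderburn-type decomposition of the non-reduced, non-commutative algebra $R$ is available over $\lri_\len$ (or at least over $\KK$ after the functor), and that the radical filtration translates into a filtration of $\mathbf{C}$ by closed connected subgroups; I would handle this by reducing modulo $\fp$ first — where $\Cen_{\gl_n(\kk)}(\bar A)$ is a classical centraliser algebra whose unit group is connected by the finite-field theory of centralisers — and then noting that the kernel of $\mathbf{C} \to \Cen_{\GL_n(\kk)}(\bar A)$ is the intersection of $\mathbf{C}$ with the (unipotent, connected) first congruence subgroup scheme, hence connected, so $\mathbf{C}$ is an extension of two connected groups.
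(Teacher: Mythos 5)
There is a genuine gap, and it sits exactly at the point you flagged as ``routine'': the claim that the Greenberg transform carries an $\lri_\len$-linear map to a $\kk$-linear map is false when $\cha(\lri)=0$. Already for $\lri=\Z_p$, $\len=2$, multiplication by $p$ on $W_2(\kk)$ is $(a_0,a_1)\mapsto(0,a_0^{\,p})$: additive, but Frobenius-semilinear, and Witt addition itself is not coordinatewise linear. So the Greenberg transform of $X\mapsto [X,A]$ is only a homomorphism of the (Witt-type) group schemes, and its kernel is a priori just a closed subgroup scheme of $\mathcal{X}_{n,\len}^\gl$, not a \emph{linear} subvariety -- which is precisely what the proposition asserts and what is needed to make $\mathbf{V}$ irreducible. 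The paper's proof obtains linearity by a preliminary change of bases putting the matrix $B$ of $\ad(A)$ into elementary divisor form $\diag(\pi^{e_1},\dots,\pi^{e_N})$; then the centraliser becomes $\prod_i \fp^{\len-e_i}/\fp^{\len}$, and each factor corresponds under the Greenberg transform to the vanishing of the first $\len-e_i$ coordinates, i.e.\ to a genuine coordinate subspace. Once $\mathbf{V}$ is linear, connectedness of $\mathbf{C}$ is immediate: $\mathbf{V}$ is irreducible and $\mathbf{C}$ is a nonempty Zariski-open subset of it. (In positive characteristic $\lri_\len$ is a $\kk$-algebra, Weil restriction really is $\kk$-linear, and your ``unit group of a finite-dimensional algebra, open in its affine space'' argument is then the paper's argument; the problem is confined to the mixed-characteristic case, which is exactly the case for which the Greenberg transform is invoked.)

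Your fallback route to connectedness does not repair this. First, the reduction map you consider sends $\mathbf{C}$ into $\Cen_{\GL_n(\kk)}(\bar A)$, but its \emph{image} is only the (group centraliser) shadow of $A$, in general a proper subgroup -- compare the groups $H_0,H_\infty$ of type $\Kasha,\Kbsha$, which are $3$-dimensional inside the $5$-dimensional centraliser of $\bar A$. Connectedness of the target is irrelevant; for the extension argument you need connectedness of the image as an algebraic group, and that is not available at this stage (it is the sort of statement this proposition is designed to feed into, so invoking it is circular). Second, the connectedness of the kernel, i.e.\ of $\mathbf{C}$ intersected with the first congruence subgroup scheme, is asserted rather than proved: an intersection with a connected group need not be connected, and identifying this kernel with (a translate of) the additive centraliser at level $\len-1$ and showing \emph{that} is irreducible again requires exactly the Smith-normal-form linearity argument you skipped. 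So the proposal needs the elementary divisor step (or an equivalent) to be correct; with it, both halves of the proposition follow as in the paper.
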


\begin{proof}
  Upon choosing an $\lri_\len$-basis for the $\lri_\len$-module
  $\gl_n(\lri_\len)$ we may identify $\gl_n(\lri_\len)$
  with~$\lri_\len^{\, N}$, where $N = n^2$, and there exists a matrix
  $B\in\gl_N(\lri_\len)$ representing the $\lri_\len$-linear map
  $\ad(A)$. Then we may identify $\Cen_{\gl_n(\lri_\len)}(A)$ with the
  set
  \[
  C(A) = \{ \mathbf{x} \in \lri_\len^{\, N} \mid B
  \mathbf{x} \equiv_\fp 0 \}.
  \]
  After performing an $\lri_\len$-linear change of bases, if
  necessary, we may assume that $B$ is in standard elementary divisor
  form, that is
  \[
  B = \diag(\pi^{e_1},\ldots,\pi^{e_N})
  \]
  for suitable integers $0 \leq e_1 \leq e_2 \leq \ldots \leq e_N \leq
  \len$. Then
  \[
  C(A) = \{ (x_1,\ldots,x_N)^\mathrm{tr} \in \lri_\len^{\, N} \mid
  \forall i: v(x_i) \geq \len - e_i \}.
  \]
  Clearly, this corresponds to the set of $\kk$-rational points of a
  linear subvariety $\mathbf{V} \subset \mathcal{X}_{n,\len}^\gl$.
  The elements of $C(A)$ in bijection to elements of
  $\Cen_{\GL_n(\lri_\len)}(A)$ are determined by imposing the open
  condition that $\mathbf{x}$ corresponds to an invertible element
  in~$\gl_n(\lri_\len)$.  This defines a Zariski-open algebraic group
  $\mathbf{C}$ in~$\mathbf{V}$.  Being a linear variety, $\mathbf{V}$
  is irreducible and hence $\mathbf{C}$ is connected.
\end{proof}

\begin{prop}\label{pro:class.quot.GL}
  Let $\sigma, \tau \in \ShGn$.  Let $\len \in \N_0$ and
  suppose that $\cCtilde \in \Qgl_{\lri,\len+1}$ is a class with
  $\shGC(\cCtilde) = \tau$ which lies above a class $\cC
  \in \Qgl_{\lri,\len}$ with $\shGC(\cC) = \sigma$.  Then
  \[
  \frac{\lvert \cCtilde \rvert}{\lvert \cC \rvert} = q^{\dim \gl_n -
    \dim(\sigma)} \frac{\| \sigma \|}{\| \tau \|}.
  \]
  In particular, the ratio $\lvert \cCtilde \rvert/ \lvert \cC \rvert$
  depends only on the shadows $\sigma, \tau$ and not on $\len$, $\cC$
  or $\cCtilde$.
\end{prop}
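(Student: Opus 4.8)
The plan is to compute $\lvert\cCtilde\rvert$ and $\lvert\cC\rvert$ via the orbit–stabiliser formula and track the stabilisers through reduction. The adjoint action of $\GL_n(\lri)$ on $\gl_n(\lri_{\len+1})$, respectively $\gl_n(\lri_\len)$, has orbits $\cCtilde$, $\cC$, so
\[
\lvert\cCtilde\rvert = \frac{\lvert\GL_n(\lri_{\len+1})\rvert}{\lvert\overline{\Cen_{\GL_n(\lri)}(\wt A)}^{(\len+1)}\rvert},
\qquad
\lvert\cC\rvert = \frac{\lvert\GL_n(\lri_\len)\rvert}{\lvert\overline{\Cen_{\GL_n(\lri)}(A)}^{(\len)}\rvert},
\]
where $\wt A\in\cCtilde$ lies above $A\in\cC$ and the bars denote images of the (profinite) centraliser in $\GL_n(\lri_{\len+1})$, respectively $\GL_n(\lri_\len)$, the superscript recording the level. (Here I use that the adjoint action of $\GL_n(\lri)$ on $\gl_n(\lri_m)$ factors through $\GL_n(\lri_m)$ and is transitive on each similarity class, so the relevant stabiliser is the image of the full $\fp$-adic centraliser at that level.) Dividing, the factor $\lvert\GL_n(\lri_{\len+1})\rvert/\lvert\GL_n(\lri_\len)\rvert = q^{\dim\gl_n} = q^{n^2}$ appears, and it remains to understand the ratio of the centraliser images at consecutive levels.

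Next I would analyse the two-step reduction $\lri_{\len+1}\to\lri_\len\to\kk$ applied to $\Cen_{\GL_n(\lri)}(\wt A)$. Write $C = \Cen_{\GL_n(\lri)}(\wt A)$; since $\wt A$ reduces to $A$, we have $\Cen_{\GL_n(\lri)}(A)\supseteq C\cdot(\text{things congruent to }1\bmod\fp^\len)$-type relations, but more cleanly one uses Proposition~\ref{prop:centralisers.connected}: at each level the centraliser is the group of $\kk$-points of a Zariski-open connected subgroup $\mathbf{C}$ of a linear subvariety $\mathbf{V}$, and passing from level $\len+1$ to level $\len$ corresponds to a surjection of the associated affine group schemes with kernel an affine space of dimension $\dim_\kk\Cen_{\gl_n(\lri_{\len+1})/\fp^\len}(\cdot)$. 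The upshot should be that the kernel of $\overline{C}^{(\len+1)}\twoheadrightarrow\overline{C}^{(\len)}$ — equivalently the kernel of the reduction of $\Cen_{\GL_n(\lri_{\len+1})}(\wt A)$ to $\Cen_{\GL_n(\lri_\len)}(A)$ — has order $q^{\dim(\sigma)}$, governed precisely by the Lie centraliser shadow $\shg(\wt A)$, whose dimension is $\dim(\sigma)$ by definition of the shadow $\sigma = \shGC(\cC)$ (note $\wt A$ and $A$ have the same shadow's Lie dimension since $\shg$ at the top level surjects onto the bottom with kernel controlled by $\shg$, and $\dim(\sigma)$ is an invariant of the shadow). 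Meanwhile the image $\overline{C}^{(1)}$ in $\GL_n(\kk)$ is exactly the group centraliser shadow $\shG(\wt A)$, of order $\|\tau\|$, and likewise $\overline{\Cen_{\GL_n(\lri)}(A)}^{(1)}$ has order $\|\sigma\|$. Assembling: $\lvert\overline{C}^{(\len+1)}\rvert$ and $\lvert\overline{\Cen_{\GL_n(\lri)}(A)}^{(\len)}\rvert$ differ, after accounting for the common `deep' part, by the factor $q^{\dim(\sigma)}\cdot\|\tau\|/\|\sigma\|$. Substituting back gives
\[
\frac{\lvert\cCtilde\rvert}{\lvert\cC\rvert}
= q^{\dim\gl_n}\cdot\frac{\lvert\overline{\Cen_{\GL_n(\lri)}(A)}^{(\len)}\rvert}{\lvert\overline{C}^{(\len+1)}\rvert}
= q^{\dim\gl_n}\cdot\frac{1}{q^{\dim(\sigma)}}\cdot\frac{\|\sigma\|}{\|\tau\|},
\]
which is the claimed formula $q^{\dim\gl_n - \dim(\sigma)}\,\|\sigma\|/\|\tau\|$; the independence of $\len,\cC,\cCtilde$ is then immediate since $\dim\gl_n$, $\dim(\sigma)$, $\|\sigma\|$, $\|\tau\|$ depend only on the shadows.

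The main obstacle is the bookkeeping in the middle paragraph: one must show that the kernel of reduction $\Cen_{\GL_n(\lri_{\len+1})}(\wt A)\to\Cen_{\GL_n(\lri_\len)}(A)$ has order exactly $q^{\dim(\sigma)}$, i.e.\ that it is the `depth-$\len$ layer' of the $\fp$-adic centraliser and that this layer is an $\F_q$-vector space isomorphic to $\shg(\wt A)\otimes_\kk(\fp^\len/\fp^{\len+1})\cong\shg(A)$. This is where the explicit elementary-divisor description of $C(A)=\{\mathbf{x}: v(x_i)\ge\len-e_i\}$ from the proof of Proposition~\ref{prop:centralisers.connected} does the work: the depth-$\len$ part of $C(\wt A)$ consists of those $\mathbf{x}$ with $v(x_i)\ge\len$ for all $i$ with $e_i\ge 1$ — no condition forced at the very bottom — which is a $\kk$-vector space of dimension $\#\{i: e_i\ge 1\}$... more precisely of dimension equal to $\dim_\kk\shg(A)$, since $\shg(A)$ is cut out by the same divisibility conditions read modulo $\fp$. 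One then checks this layer lies inside $\GL_n$ (it is $1+$(something in $\fp^\len$), hence a unit) and intersects the centraliser, giving the asserted kernel. Once this local computation is in hand, the global orbit-counting identity follows formally.
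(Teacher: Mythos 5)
Your orbit--stabiliser set-up and the factor $q^{\dim\gl_n}$ coming from $\lvert\GL_n(\lri_{\len+1})\rvert/\lvert\GL_n(\lri_\len)\rvert$ are fine and agree with the paper's first step, but the middle step has a genuine gap. Write $C_{\len+1}=\Cen_{\GL_n(\lri_{\len+1})}(\wt A)$ and $C_\len=\Cen_{\GL_n(\lri_\len)}(A)$ (these are the stabilisers; the images of the $\fp$-adic centralisers coincide with them, as you say). The reduction $C_{\len+1}\to C_\len$ is \emph{not} surjective in general --- its failure to be surjective is precisely the source of the factor $\|\sigma\|/\|\tau\|$ --- so the argument via ``kernel of $\overline{C}^{(\len+1)}\twoheadrightarrow\overline{C}^{(\len)}$'' does not get off the ground. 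Moreover the kernel of this reduction consists of the elements $\Id_n+\pi^\len Y$ with $[\overline{Y},\overline{A}]=0$ in $\gl_n(\kk)$, so it has order $q^{\dim_\kk\Cen_{\gl_n(\kk)}(\overline{A})}$; in the elementary-divisor coordinates of Proposition~\ref{prop:centralisers.connected} this is $q^{\#\{i\,:\,e_i\geq 1\}}$, whereas $\dim(\sigma)=\#\{i\,:\,e_i=\len\}$. These agree when $\len=1$ but not in general: for a class of shadow type $\Kasha$ one has $\dim(\sigma)=3$ while $\Cen_{\gl_3(\kk)}(\overline{A})$ is $5$-dimensional. Your own count ``dimension $\#\{i:e_i\geq1\}$ \dots\ more precisely equal to $\dim_\kk\shg(A)$'' conflates exactly these two quantities; likewise $\shg(\wt A)$ has dimension $\dim(\tau)$, not $\dim(\sigma)$, and $\dim(\tau)<\dim(\sigma)$ occurs (e.g.\ $\sigma$ of type $\Gsha$, $\tau$ of type $\Lsha$). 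Your final ratio is the correct one, but it comes out only because the overestimated kernel and the unaccounted index $[C_\len:\mathrm{im}(C_{\len+1})]$ cancel, and nothing in the argument controls that index.

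The way to repair this --- and it is what the paper does --- is to compare each group centraliser with the Lie centraliser one \emph{full} level down, not with a single depth-$\len$ layer. Setting $A_{\len+1}=\wt A$, $A_\len=A$ and writing $A_{\len-1}$ for $A$ modulo $\fp^{\len-1}$, the map $X\mapsto\Id_n+\pi X$ induces a bijection $Z_{i-1}\simeq C_i\cap\GL_n^1(\lri_i)$, where $Z_{i-1}=\Cen_{\gl_n(\lri_{i-1})}(A_{i-1})$, so that $\lvert C_i\rvert=\lvert Z_{i-1}\rvert\cdot\lvert\shG(A_i)\rvert$ for $i\in\{\len,\len+1\}$; hence $\lvert C_\len\rvert/\lvert C_{\len+1}\rvert=\bigl(\lvert Z_{\len-1}\rvert/\lvert Z_\len\rvert\bigr)\,\|\sigma\|/\|\tau\|$. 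The analogous Lie-level exact sequence $0\to Z_\len\cap\gl_n^1(\lri_\len)\to Z_\len\to\shg(A_\len)\to 0$ together with the bijection $Z_{\len-1}\simeq Z_\len\cap\gl_n^1(\lri_\len)$ (via $X\mapsto\pi X$) gives $\lvert Z_\len\rvert/\lvert Z_{\len-1}\rvert=q^{\dim(\sigma)}$. Thus $\dim(\sigma)$ enters through the ratio of \emph{Lie} centralisers across one level, not as the kernel of the group-level reduction; with this two-step comparison your assembly goes through and yields the stated formula.
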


\begin{proof}
  Let $A \in \cC$, $\wt{A} \in \cCtilde$ such that $A \equiv
  \wt{A}$ modulo $\fp^\len$.  If $\len = 0$, then $\cC = \{0\}$
  and $\| \sigma \| = \lvert \GL_n(\kk) \rvert$ so that
  \[
  \frac{ \lvert \cCtilde \rvert}{ \lvert \cC \rvert} = \lvert \cCtilde
  \rvert = [\GL_n(\kk) : \Cen_{\GL_n(\kk)}(\wt{A})] = \frac{\|
    \sigma \|}{\| \tau \|}.
  \]

  Now suppose that $\len \geq 1$, and put $A_{\len+1} = \wt{A}$
  and $A_\len = A$.  For $i \in \{\len, \len+1 \}$ set $C_i =
  \Cen_{\GL_n(\lri_i)}(A_i)$.  Then $\lvert \GL_n(\lri_{\len+1})
  \rvert / \lvert \GL_n(\lri_\len) \rvert = [\GL_n^\len(\lri) :
  \GL_n^{\len+1}(\lri)] = q^{\dim \gl_n}$ implies that
  \begin{equation}\label{equ:reduction.to.C}
    \frac{\lvert \cCtilde \rvert}{\lvert \cC \rvert} =
    \frac{[\GL_n(\lri_{\len+1}) : C_{\len+1}]}{[\GL_n(\lri_\len) :
      C_\len]} = q^{\dim \gl_n} \frac{\lvert C_\len \rvert}{\lvert
      C_{\len+1} \rvert}.
  \end{equation}
  Writing $A_{\len-1}$ for $A$ modulo $\fp^{\len-1}$ and setting
  $Z_{i-1} = \Cen_{\gl_n(\lri_{i-1})}(A_{i-1})$ for $i \in \{\len,
  \len+1 \}$, we observe that the reduction map modulo $\fp$ yields
  exact sequences
  \begin{equation*}
    0 \rightarrow  Z_{i-1}  \cap \gl_n^1(\lri_{i-1}) \rightarrow
    Z_{i-1} \rightarrow \shg(A_{i-1}) \rightarrow 0,
  \end{equation*}
  \begin{equation*}
    1 \rightarrow C_i \cap \GL_n^1(\lri_i) \rightarrow C_i
    \rightarrow \shG(A_i) \rightarrow 1,
  \end{equation*}
  and that, furthermore, the maps
  \begin{equation} \label{equ:map}
    \begin{split}
      \gl_n(\lri) \rightarrow \gl_n^1(\lri) &, \quad X \mapsto \pi X,
      \\ \gl_n(\lri) \rightarrow \GL_n^1(\lri) &, \quad X \mapsto
      \Id_n + \pi X
    \end{split}
  \end{equation}
  induce bijections (of sets)
  \[
  Z_{i-1} \simeq Z_i \cap \gl_n^1(\lri_i) \qquad \text{and} \qquad
  Z_{i-1} \simeq C_i \cap \GL_n^1(\lri_i).
  \]
  Thus we conclude from \eqref{equ:reduction.to.C} that
  \begin{align*}
    \frac{ \lvert \cCtilde \rvert}{ \lvert \cC \rvert} & = q^{\dim
      \gl_n} \frac{ \lvert C_\len \cap \GL_n^1(\lri_\len) \rvert}{
      \lvert C_{\len+1} \cap \GL_n^1(\lri_{\len+1}) \rvert}
    \frac{\lvert \shG(A_\len) \rvert}{\lvert \shG(A_{\len+1}) \rvert} \\
    & = q^{\dim \gl_n} \frac{ \lvert Z_{\len-1} \rvert}{ \lvert
      Z_\len \rvert} \frac{\| \sigma \|}{\| \tau \|} \\
    & = q^{\dim \gl_n} \frac{ \lvert Z_\len \cap \gl_n^1(\lri_\len)
      \rvert}{ \lvert Z_\len \cap \gl_n^1(\lri_\len) \rvert \lvert
      \shg(A_\len) \rvert} \frac{\| \sigma \|}{\| \tau
      \|} \\
    & = q^{\dim \gl_n - \dim(\sigma)} \frac{\| \sigma \|}{\|
      \tau \|}. \qedhere
  \end{align*}
\end{proof}

Proposition~\ref{pro:class.quot.GL} highlights the relevance of the
shadows to the computation of the sizes of similarity classes, and
motivates the following definition.

\begin{defn}\label{def:b.GL} For $\sigma, \tau \in
  \ShGn$ let
  \begin{equation} \label{equ:def.b.GL} b^{(1)}_{\sigma,
      \tau}(q) = q^{\dim \gl_n - \dim(\sigma)} \frac{\|
      \sigma \|}{\| \tau \|}.
  \end{equation}
\end{defn}

\begin{remark}\label{rem:bees}
  The quantities $b^{(1)}_{\sigma, \tau}(q)$ which are relevant for us
  are the ones where $\sigma = \shGC(\cC)$ and $\tau =
  \shGC(\cCtilde)$ with $\cCtilde \in \Qgl_{\lri,\len+1}$ lying above
  $\cC \in \Qgl_{\lri,\len}$.  In the case $n=3$ these turn out to be
  integral polynomials in $q$, independent of $\lri$; see
  Table~\ref{tab:branch.rules.A2}.  It is an interesting open question whether
  this is true more generally.

  For a uniform treatment of integral $\fp$-adic matrices and
  anti-hermitian integral $\fp$-adic matrices, leading to a uniform
  description of the representation zeta functions of general/special
  linear groups and general/special unitary groups, it is convenient
  to write $b_{\sigma,\tau}^{(1)}$ and to use $b_{\sigma,\tau}^{(-1)}$
  to denote similar polynomials occurring in Section~\ref{sec:sim.gu},
  where we treat similarity classes of anti-hermitian matrices.
  Table~\ref{tab:branch.rules.A2} already incorporates the
  complementary information from Section~\ref{sec:sim.gu}.
\end{remark}


\subsection{Shadows and branching rules for $\gl_3(\lri_\len)$}
\label{subsec:branching}
In Table~\ref{tab:shadows.iso.GL} we list ten shadows
in~$\Sh_{\GL_3(\lri)}$, classified by (shadow) types;
compare~\eqref{equ:types-def}.  For $q>2$, all but the last two of
these types already arise from~$\len = 1$: they are conjugacy
classes of centralisers of elements $A \in \gl_3(\kk)$ and are
therefore classified by the shape of the minimal polynomials of such
$A$ over~$\kk$.  The last two shadows, of types $\Kasha$ and $\Kbsha$,
come from~$\len = 2$ and higher: they are the conjugacy classes
of the reductions modulo $\fp$ of the centralisers of the matrices
$A_0 = \left[ \begin{smallmatrix} 0 & \pi & 0 \\ 0 & 0 & 1 \\ 0 & 0 &
    0 \end{smallmatrix} \right]$ and $A_\infty =
\left[ \begin{smallmatrix} 0 & 0 & 0 \\ 0 & 0 & 1 \\ \pi & 0 &
    0 \end{smallmatrix} \right]$ in $\gl_3(\lri_2)$, as explained
below.  The types $\Kasha$ and $\Kbsha$ can be thought of as `shears'
of the type~$\Nsha$; see the proof of Theorem~\ref{thm:G.shad.graph}.
The third column of Table~\ref{tab:shadows.iso.GL} describes the
isomorphism types of the group centraliser shadows as algebraic
groups; compare Table~\ref{tab:shadows}.  In the table, we write
$\kk_2$ and $\kk_3$ for the quadratic and cubic extensions of the
finite field~$\kk$.  Furthermore, $\Heis$ stands for the Heisenberg
group of upper uni-triangular $3\times3$ matrices.

Let us take a closer look at the types $\Kasha$ and $\Kbsha$ that do
not arise for $\len = 1$.  The group centraliser shadows of the
matrices $A_0, A_\infty \in \gl_3(\lri_2)$ are equal to
\begin{equation*}
  H_0 = \left\{\left[\begin{smallmatrix}
        {t}  & 0&  s_1 \\
        0 & {t} & {s_3} \\
        0 & 0 & {t}
      \end{smallmatrix} \right] \mid s_1, s_3 \in \kk \text{ and } t
    \in \kk^\times \right\} \text{ and } H_\infty =
  \left\{\left[\begin{smallmatrix}
        {t}  & 0&  0 \\
        s_2 & {t} & {s_3} \\
        0 & 0 & {t}
      \end{smallmatrix} \right] \mid s_2, s_3 \in \kk \text{ and } t
    \in \kk^\times \right\};
\end{equation*}
see Proposition~\ref{shadows.case.v.new}, where the notation $A_0 =
E_2(1,0,0,0,0)$ and $A_\infty = E_2(\infty,\pi,0,0,0)$ is employed.
Comparing orders of groups, we see that the corresponding shadows
$\sigma_0$ and $\sigma_\infty$ cannot be of type~$\Gsha$, $\Lsha$,
$\Jsha$, $\Tasha$, $\Tbsha$, $\Tcsha$ or $\Msha$.  To rule out type
$\Nsha$, we observe that $(h - t\Id_3)^2 = 0$ for any $h \in H_0 \cup
H_\infty$ with diagonal entries $t$.  Thus neither $H_0$ nor
$H_\infty$ contains a matrix whose minimal polynomial is of degree~$3$
over~$\kk$.  Consequently, $\sigma_0$ and $\sigma_\infty$ cannot be of
type~$\Nsha$.

Finally, we verify that the subgroups $H_0$ and $H_\infty$ are not
conjugate in $\GL_3(\kk)$ so that $\sigma_0 \neq \sigma_\infty$.
Assume, for a contradiction, that $g = (g_{ij}) \in \GL_3(\kk)$
satisfies $g H_0 = H_\infty g$.  As scalar matrices are invariant
under conjugation, it suffices to compare matrices
\[
g \left[\begin{smallmatrix}
    {0}  & 0&  s_1 \\
    0 & {0} & {s_3} \\
    0 & 0 & {0}
  \end{smallmatrix} \right] \quad \text{and} \quad
\left[\begin{smallmatrix}
    {0}  & 0 & 0 \\
    \tilde s_2 & 0 & \tilde s_3 \\
    0 & 0 & {0}
  \end{smallmatrix} \right] g
\]
for $s_1, s_3, \tilde s_2, \tilde s_3 \in \kk$.  Inspecting the
$(1,3)$- and $(3,3)$-entries, we deduce that for all $s_1,s_3 \in
\kk$,
\[
g_{11}s_1+g_{12}s_3 = 0 \qquad \text{and} \qquad g_{31}s_1+g_{32}s_3 =
0.
\]
This implies that $g_{11} = g_{12} = g_{31} = g_{32} = 0$ and hence
$g$ cannot be invertible.  This completes the discussion of
Table~\ref{tab:shadows.iso.GL}.

\begin{table}[htb!]
  \centering
  \caption{Shadows $\sigma$ in $\GL_3(\kk)$}
  \label{tab:shadows.iso.GL}
  \begin{tabular}{|c||l|l|c|}
    \hline
    Type & Minimal polynomial in $\kk[t]$ & Isomorphism type of $\sigma(\kk)$
    & $\dim(\sigma)$ \\
    \hline
    $\Gsha$ & ~$t-\alpha$ \hfill $\alpha \in \kk$ & $\GL_3(\kk)$ & 9 \\
    $\Lsha$ & $(t-\alpha_1)(t-\alpha_2)$ \hfill $\alpha_1, \alpha_2
    \in \kk$ distinct & $\GL_1(\kk) \times \GL_2(\kk)$ & 5 \\
    $\Jsha$  &  $(t-\alpha)^2$ \hfill $\alpha \in \kk$ &
    $\mathsf{Heis}(\kk) \rtimes (\GL_1(\kk) \times \GL_1(\kk))$ & 5 \\
    $\Tasha$ &  $\prod_{i=1}^3 (t-\alpha_i)$ \hfill
    $\alpha_1,\alpha_2,\alpha_3 \in \kk$ distinct &
    $\GL_1(\kk) \times \GL_1(\kk) \times \GL_1(\kk)  $ & 3 \\
    $\Tbsha$ &  $(t-\alpha)f(t)$ \quad $\alpha \in \kk$, $f$ irred.\
    quadratic & $\GL_1(\kk) \times \GL_1(\kk_2)$ & 3 \\
    $\Tcsha$ &  $f(t)$ \hfill $f$ irred.\ cubic& $\GL_1(\kk_3)$ & 3 \\
    $\Msha$ &  $(t-\alpha_1)(t-\alpha_2)^2$ \hfill $\alpha_1, \alpha_2
    \in \kk$ distinct & $\GL_1(\kk)\times \GL_1(\kk[t]/(t^2))$ & 3 \\
    $\Nsha$ &  $(t-\alpha)^3$ \hfill $\alpha \in \kk$ &
    $\GL_1(\kk[t]/(t^3))$ & 3 \\
    \hline
    $\Kasha$ &  \qquad not applicable   & $\GL_1(\kk) \times \aG_\text{a}(\kk)
    \times \aG_\text{a}(\kk)$ & 3 \\
    $\Kbsha$ &  \qquad not applicable   & $\GL_1(\kk) \times \aG_\text{a}(\kk)
    \times \aG_\text{a}(\kk)$ & 3 \\
    \hline
  \end{tabular}
\end{table}

\begin{thm}[Classification of shadows and branching
  rules] \label{thm:G.shad.graph} \quad
  \begin{enumerate}
  \item \label{item:G.conj.graph.1} The set of shadows
    $\Sh_{\GL_3(\lri)}$ consists of ten elements, classified by the
    types
    \[
    \Gsha, \, \Lsha, \, \Jsha, \, \Tasha, \, \Tbsha, \, \Tcsha, \,
    \Msha, \, \Nsha, \, \Kasha, \, \Kbsha
    \]
    described in Table~\textup{\ref{tab:shadows.iso.GL}}.
  \item \label{item:G.conj.graph.2} For all $\sigma, \tau \in
    \Sh_{\GL_3(\lri)}$ there exists a polynomial $a_{\sigma, \tau} \in
    \Z[\sixth][t]$ such that the following holds: for every $\len \in
    \N$ and every $\cC \in \cQ_{\lri,\len}^{\gl_3}$ with $\shG(\cC) =
    \sigma$ the number of classes $\cCtilde \in
    \cQ_{\lri,\len+1}^{\gl_3}$ with $\shG(\cCtilde) = \tau$ lying
    above $\cC$ is equal to $a_{\sigma, \tau}(q)$.
  \end{enumerate}
\end{thm}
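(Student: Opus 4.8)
The plan is to establish both assertions at once by working through the complete, irredundant list of representatives of the similarity classes in $\gl_3(\lri_\len)$ furnished by Theorem~\ref{thm:irredundant.list.for.M3x3} (which refines \cite{APOV}), and, for each representative $A$, computing the group centraliser $\Cen_{\GL_3(\lri)}(A)$, its reduction $\shG(A)$ modulo $\fp$, and the action of $\Cen_{\GL_3(\lri_\len)}(A)$ on the fibre of classes above $\cC = \Ad(\GL_3(\lri))A$. For assertion~\eqref{item:G.conj.graph.1} I would first treat $\len=1$: here $A\in\gl_3(\kk)$, and $\Cen_{\GL_3(\kk)}(A)$ is determined up to conjugacy by the rational canonical form of $A$, hence by the shape of its minimal polynomial over $\kk$. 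The eight possible shapes — running from $t-\alpha$ (scalar) through $(t-\alpha)^3$ (a single Jordan block) — produce precisely the eight types $\Gsha,\Lsha,\Jsha,\Tasha,\Tbsha,\Tcsha,\Msha,\Nsha$ of Table~\ref{tab:shadows.iso.GL}, with the recorded isomorphism types and dimensions (for $q>2$ one recovers $\shg(A)$ from $\shG(A)$ by Lemma~\ref{lem:shadow-well-defined}). As prepared in the discussion preceding the theorem, the matrices $A_0,A_\infty\in\gl_3(\lri_2)$ of Proposition~\ref{shadows.case.v.new} contribute the two additional types $\Kasha,\Kbsha$, already seen there to be mutually non-conjugate and distinct from the first eight. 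That no further types occur, for any $\len$, will drop out of the branching analysis for~\eqref{item:G.conj.graph.2}, which shows the set of shadows realised in $\gl_3(\lri_\len)$ is closed under passing from $\len$ to $\len+1$.

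For~\eqref{item:G.conj.graph.2}, fix $\len\in\N$, a class $\cC\in\Qgl_{\lri,\len}$ of shadow $\sigma$, a representative $A\in\cC$ from the list and a lift $\widehat{A}\in\gl_3(\lri)$; write $\overline{A}\in\gl_3(\kk)$ for its reduction. Identifying $\fp^\len\gl_3(\lri_{\len+1})$ with $\gl_3(\kk)$, the lifts of $A$ form the coset $\widehat{A}+\gl_3(\kk)$, and the classes $\cCtilde\in\Qgl_{\lri,\len+1}$ lying above $\cC$ biject with the orbits of $\Cen_{\GL_3(\lri_\len)}(A)$ acting affinely on this coset. Since $(\Id_3+\pi^\len Z)\widehat{A}(\Id_3+\pi^\len Z)^{-1}\equiv\widehat{A}+\pi^\len[Z,\widehat{A}]\pmod{\fp^{\len+1}}$, the level-$\len$ congruence subgroup acts by translation through $\im(\ad\overline{A})$; quotienting out all the translations coming from the congruence kernel, one is left with an affine action of $\shG(A)=\sigma(\kk)$ on a finite-dimensional $\kk$-space. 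All the data entering this picture — the centraliser, the relevant translation subspace, and the linear-plus-cocycle action of $\sigma(\kk)$ — is determined by the local shape of $A$ in the representative list and is uniform in $\len$; and by Proposition~\ref{prop:centralisers.connected} the groups involved are $\kk$-points of connected algebraic groups, so their orders are polynomial in $q$. The shadow of a lift $\widehat{A}+\pi^\len Y$ is read off from its centraliser, and it is here that the \emph{shear} phenomenon intervenes: certain lifts of a class of type $\Nsha$ acquire shadow $\Kasha$ or $\Kbsha$ in place of $\Nsha$, which is exactly what forces the two extra types to be introduced.

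It then remains to verify that, for each ordered pair $(\sigma,\tau)$, the number of lifts of shadow $\tau$ is a polynomial in $q$ with coefficients in $\Z[\sixth]$; with the framework above this is bookkeeping. One stratifies the relevant $\kk$-space by the shadow type of the associated lift and counts $\sigma(\kk)$-orbits stratum by stratum, $\sigma(\kk)$ being one of the explicit groups of Table~\ref{tab:shadows.iso.GL} — a product of copies of $\GL_1$, $\GL_2$, $\Heis$ and $\aG_\text{a}$ over $\kk$, $\kk_2$ or $\kk_3$ — acting linearly up to the cocycle. Where the action has generically trivial point stabiliser, an orbit count is $($number of $\kk$-points of a constructible set$)/\lvert\sigma(\kk)\rvert$, a rational function of $q$; the only source of the denominators $2,3$ or $6$ — hence $\Z[\sixth][t]$ rather than $\Z[t]$ — is the presence of small symmetric groups, such as the $S_3$ permuting the three eigenvalues of a split regular semisimple lift, or the order-$2$ and order-$3$ symmetries appearing in the types $\Tasha,\Tbsha,\Tcsha,\Msha$, which one dispatches by a Burnside orbit-count whose fixed-point contributions are again polynomial in $q$. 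The genuine obstacle is not conceptual but the sheer volume and required uniformity of the case analysis: one must, for each of the ten types $\sigma$, exhibit a representative, compute its centraliser and $\im(\ad\overline{A})$, the induced action on the quotient, and the shadow of every lift, doing so uniformly enough in the residue field to extract honest polynomials — the output being precisely the entries of Table~\ref{tab:branch.rules.A2}, organised so as to run in parallel with the anti-hermitian computation carried out in Section~\ref{sec:sim.gu}.
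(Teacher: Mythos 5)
Your plan is sound and rests on the same backbone as the paper's argument -- the complete, irredundant representative list of Theorem~\ref{thm:irredundant.list.for.M3x3} together with the shadow computations of Section~\ref{appendix:similarity.new} -- but the mechanism you use for the branching count is different. You count lifted classes as orbits of an affine action of $\sigma(\kk)$ on the fibre $\widehat{A}+\pi^\len\gl_3(\kk)$ modulo translations, stratified by the shadow of the lift, with Burnside-type counts; the paper instead enumerates, for $\sigma$ of type $\Gsha$, $\Lsha$ or $\Jsha$, the explicit canonical representatives at level $\len+1$ lying above the chosen representative (so the coefficients in $\Z[\sixth]$ arise as counts of minimal/characteristic polynomials and of parameter tuples, rather than from Burnside averaging), and for the seven \emph{minimal} types ($\Tasha,\Tbsha,\Tcsha,\Msha,\Nsha,\Kasha,\Kbsha$) it avoids any orbit or parameter count altogether: since $\shg(A_\len)$ cannot properly contain a Lie centraliser shadow of another type, every lift has the same shadow, and Proposition~\ref{pro:class.quot.GL} with Definition~\ref{def:b.GL} gives $a_{\sigma,\sigma}(q)=q^{\dim\gl_3}/b^{(1)}_{\sigma,\sigma}(q)=q^{3}$ in one line -- a shortcut your scheme would do well to adopt, as it also delivers uniformity in $\len$ for free in those cases.

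Two caveats on your setup. First, the group acting on the fibre is (the image at level $\len+1$ of) the stabiliser of $A$ in $\GL_3(\lri)$, not $\Cen_{\GL_3(\lri_\len)}(A)$ itself: the affine cocycle $Z_g$ depends on $g$ modulo $\fp^{\len+1}$, and only after quotienting by the translation subgroup (which may be strictly larger than $\im(\ad\overline{A})$, since all of $\GL_3^1(\lri)\cap\Stab$ contributes translations) does the action descend to the shadow $\sigma(\kk)$; your phrasing glosses over this but your subsequent quotient repairs it. Second, ``the shadow of a lift is read off from its centraliser'' is where essentially all of the paper's hard work lives: the centraliser over $\lri$ of a level-$(\len+1)$ element is not visible from the fibre data alone, and identifying the shadow of each stratum -- in particular detecting the shear types $\Kasha,\Kbsha$ above $\Jsha$ and distinguishing them from $\Nsha$ -- is exactly the content of Proposition~\ref{shadows.case.v.new} and Theorem~\ref{substitute.for.AOPV.new}. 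Since those results precede Theorem~\ref{thm:G.shad.graph} in the paper and you implicitly invoke them through the representative list, this is not a gap, but calling the remaining verification ``bookkeeping'' understates that your stratification step is equivalent to redoing that classification; carried out in full, your count reproduces the entries of Table~\ref{tab:branch.rules.A2} and, as you note, the closure of the shadow set under lifting settles part~(1) for all $\len$.
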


\begin{table}[hbt!]
  \centering
  \caption{Branching rules for $\cQ^{\gl_3}_\lri$ ($\ee = 1$)
    and for $\cQ^{\gu_3}_\lri$ ($\ee = -1$)}
  \label{tab:branch.rules.A2}
  \renewcommand*\arraystretch{1.4}
  \begin{tabular}{|c|c|c||l|l|}
    \hline
    $\#$ & Type of $\sigma$ & Type of $\tau$ &
    $a_{\sigma,\tau}(q)$  & $b^{(\ee)}_{\sigma,\tau}(q)$ \\
    \hline
    $1$ & $\Gsha$ & $\Gsha$  & $q$               & $1$ \\
    $2$ & $\Gsha$ & $\Lsha$  & $(q-1)q$          & $(q^2 + \ee q + 1) q^2$ \\
    $3$ & $\Gsha$ & $\Jsha$  & $q$               & $(q^3-\ee)(q+\ee)$ \\
    $4$ & $\Gsha$ & $\Tasha$ & $\sixth (q-1)(q-2)q$
    & $(q^2 + \ee q + 1)(q + \ee)q^3$ \\
    $5$ & $\Gsha$ & $\Tbsha$ & $ \hlf (q-1)q^2$  & $(q^3-\ee)q^3$ \\
    $6$ & $\Gsha$ & $\Tcsha$ & $\third (q^2-1)q$ & $(q+\ee)(q-\ee)^2q^3$ \\
    $7$ & $\Gsha$ & $\Msha$  & $(q-1)q$          & $(q^3-\ee)(q+\ee)q^2$ \\
    $8$ & $\Gsha$ & $\Nsha$  & $q$               & $(q^3-\ee)(q^2-1)q$ \\
    \hline
    $9$  & $\Lsha$ & $\Lsha$  & $q^2$           & $q^4$ \\
    $10\phantom{^*}$ & $\Lsha$ & $\Tasha$ & $\hlf (q-1)q^2$ & $(q+\ee)q^5$ \\
    $11\phantom{^*}$ & $\Lsha$ & $\Tbsha$ & $\hlf (q-1)q^2$ & $(q-\ee)q^5$ \\
    $12\phantom{^*}$ & $\Lsha$ & $\Msha$  & $q^2$           & $(q^2-1)q^4$ \\
    \hline
    $13\phantom{^*}$ & $\Jsha$ & $\Jsha$ & $q^2$      & $q^4$ \\
    $14\phantom{^*}$ & $\Jsha$ & $\Msha$ & $(q-1)q^2$ & $q^6$ \\
    $15\phantom{^*}$ & $\Jsha$ & $\Nsha$ & $(q-1)q$   & $(q-\ee)q^5$ \\
    \hline
    \hline
    $16^*$ & $\Jsha$ & $\Kasha$  & $q$             & $(q-1)q^5$ \\
    $17^*$ & $\Jsha$ & $\Kbsha$  & $q$             & $(q-1)q^5$ \\
    \hline
    \hline
    $18^*$ & other & same as $\sigma$ & $q^3$ & $q^6$ \\
    \hline
    \multicolumn{5}{@{} c @{}}{${}^*$ Rows involving types
      $\Kasha$ and $\Kbsha$ only apply if $\ee = 1$.}
  \end{tabular}
\end{table}

\begin{remark}
  Of course, many of the polynomials $a_{\sigma,\tau}$ are simply
  zero.  The non-zero $a_{\sigma, \tau}$ give the local branching
  behaviour of the directed graph $\Qgl_\lri$, which we introduced in
  Definition~\ref{def:graph.Qgl}, and thus determine $\Qgl_\lri$
  completely.  Moreover, together with the corresponding polynomials
  $b^{(1)}_{\sigma, \tau}$, defined in~\eqref{equ:def.b.GL}, the
  $a_{\sigma,\tau}$ determine recursively the numbers and sizes of
  similarity classes in $\gl_3(\lri_\len)$ for all~$\len \in \N$; cf.\
  Section~\ref{subsec:sim.zeta.A2}.  We refer to these two sets of
  polynomials as \emph{branching rules} and record them in
  Table~\ref{tab:branch.rules.A2}.

  While the polynomials $a_{\sigma,\tau}$ are determined in the course
  of the proof of Theorem~\ref{thm:G.shad.graph}, the polynomials
  $b^{(1)}_{\sigma,\tau}$ can already be easily computed with the aid
  of Table~\ref{tab:shadows.iso.GL}.
\end{remark}

\begin{remark}
  One reads off rows (2) and (3) in Table~\ref{tab:branch.rules.A2}
  that exactly
  \[
  (q-1)q \cdot (q^2 + q + 1)q^2 + q\cdot (q^3-1)(q+1) = q(q-1) \lvert
  \mathbb{P}^2(\F_q) \rvert^2
  \]
  of the $q^9$ elements of $\gl_3(\F_q)$ have adjoint orbits of
  dimension $4$. The other elements are either scalar or have
  $6$-dimensional adjoint orbits.  This reflects the fact that $(q-1)
  \lvert \mathbb{P}^2(\F_q) \rvert^2$ of the $q^8-1$ non-zero elements
  of $\fsl_3(\F_q)$ elements are irregular in the sense
  of~\cite[Section~6.1]{AKOV1}.  Similar considerations hold for
  $\gu_3(\F_q)$.  Sorting matrices in $\gl_3(\F_q)$, respectively
  $\gu_3(\F_q)$, by their shadows therefore yields a partition
  refining the stratification by centraliser dimension or (in type
  $\mathsf{A}_2$ equivalently) by sheets; cf.\
  Remark~\ref{rem:hensel.lifts}.
\end{remark}

The proof of Theorem~\ref{thm:G.shad.graph} is given in
Section~\ref{subsec:proof.main.SL}.

\subsection{Similarity classes of $3\times3$
  matrices} \label{appendix:similarity.new} Let $\len \in \N$ be
fixed.  In preparation for the proof of Theorem~\ref{thm:G.shad.graph}
we introduce some notation and refine several results
from~\cite{APOV}.  For elements $a,b,c,d$ in $\lri$, or its finite
quotient $\lri_\len$, and $m \in \N \cup \{\infty\}$, let
\begin{equation*}
  D(a,b,c)=
  \begin{bmatrix}
    a & 0 & 0 \\
    0 & b & 0 \\
    0 & 0 & c
  \end{bmatrix}
  \quad \text{and} \quad
  E = E(m,a,b,c,d) =
  \begin{bmatrix}
    d & \pi^m & 0 \\
    0 & d & 1 \\
    a & b & c+d
  \end{bmatrix}.
\end{equation*}

A matrix $C \in \gl_n(\lri_\len)$ is called \emph{cyclic} if
$\lri_\len^{\, n}$ is cyclic as an $\lri_\len[C]$-module, i.e.\ if
there exists $v \in \lri_\len^{\, n}$ such that the $\lri_\len$-span
of $\{ C^iv \mid 0 \leq i < n \}$ is equal to $\lri_\len^{\, n}$.
Equivalently, a matrix is cyclic if it is similar to the companion
matrix of its characteristic polynomial.

For every $\nu \in \N_0$, fix a set of representatives for $\lri_\nu =
\lri / \fp^\nu$ in $\lri$,
\[
\varsigma(\lri_\nu) = \varsigma_\nu(\lri_\nu) \subset \lri,
\]
including $\pi^\nu$ as a representative for~$0$, so that $0 \leq
v(\varsigma(a)) \leq \nu$ for all $a \in \lri_\nu$.  The particular
choice $\varsigma(0) = \pi^\nu$, rather than the less pretentious
convention $\varsigma(0)=0$, plays a role in the subcase
($\text{III}_\infty$) in Theorem~\ref{thm:irredundant.list.for.M3x3}
below; otherwise it has no significance.  The valuation map naturally
extends to $\lri_\nu$ via $v(a) \coloneqq v(\varsigma(a))$.

In the formulation of Theorem~\ref{thm:irredundant.list.for.M3x3} and
some of the proofs below, we slightly abuse notation in two ways.
Firstly, we write $\varsigma(\lri_\nu) \subset \lri_\mu$ for $\nu <
\mu$ to denote the reduction of $\varsigma(\lri_\nu)$
modulo~$\fp^\mu$.  Secondly, we write $\pi^\nu \lri_{\len-\nu}
\subset \lri_\len$ for the reduction of $\pi^\nu
\varsigma(\lri_{\len-\nu})$ modulo $\pi^\len$.  These conventions are
also applied in the obvious way to expressions involving matrices.

The next theorem gives a complete description of the similarity
classes in $\gl_3(\lri_\len)$.  In addition it describes the resulting
shadows.

\begin{thm} \label{thm:irredundant.list.for.M3x3} Let $\mathcal{C}
  \subset \gl_3(\lri_\len)$ be a similarity class.  Then $\mathcal{C}$
  contains exactly one of the following matrices.
  \begin{enumerate}
  \item[(i)] $d\Id_3$, where $d \in \lri_\len$; the shadow $\shGC(\cC)$
    has type~$\Gsha$.
  \item[(ii)] $d\Id_3 + \pi^{i}D(a,0,0)$, where $0 \leq i<\len$, $\; d
    \in \lri_\len$ and $a \in \lri_{\len-i}^\times$; the shadow
    $\shGC(\cC)$ has type~$\Lsha$.
  \item[(iii)]  $\phantom{x}$ \vspace*{-0.7\baselineskip}
    \[
    d\Id_3 + \pi^i D(a,0,0) +\pi^j
    \begin{bmatrix}
      0 & 0 \\
      0 & C
    \end{bmatrix},
    \]
    where $0 \leq i < j < \len$, $\; d \in \varsigma(\lri_{j})$, $\; a
    \in \lri_{\len-i}^\times$ and $C \in \gl_2(\lri_{\len-j})$ a
    companion matrix; the shadow $\shGC(\cC)$ has type $\Tasha$,
    $\Tbsha$ or $\Msha$, depending on $C$.
  \item[(iv)] $d\Id_3+\pi^iC$, where $0\leq i<\len$, $\; d \in
    \varsigma(\lri_{i})$ and $C \in \gl_3(\lri_{\len-i})$ a companion
    matrix; the shadow $\shGC(\cC)$ has type $\Tasha$, $\Tbsha$,
    $\Tcsha$, $\Msha$ or $\Nsha$, depending on $C$.
  \item[(v)] $d'\Id_3+\pi^i E$, where $0\leq i<\len$, $\; d' \in
    \varsigma(\lri_{i})$ and $E \in \gl_3(\lri_{\len-i})$ is one of
    the following matrices:
    \begin{enumerate}
    \item[(I)] $E(\len-i,0,0,c,d)$, where $c,d \in \lri_{\len-i}$ with
      $v(c)>0$,
    \item[(II)] $E(\mu,a,b,c,d)$, where $1 \leq \mu < \len-i$,
      $\; a,b \in \lri_{\len-i}$ with $\mu = v(b) \leq v(a)$, $\; c
      \in \lri_{\len-i}$ with $v(c)>0$ and $d \in
      \varsigma(\lri_\mu)$,
    \item[($\text{III}_1$)] $E(\mu,a,b,c,d)$, where $1 \leq \mu <
      \len-i$, $\; a,b \in \lri_{\len-i}$ with $\mu = v(a) < v(b)$,
      $\; c \in \lri_{\len-i}$ with $v(c)>0$ and $d \in
      \varsigma(\lri_\mu)$,
    \item[($\text{III}_0$)] $E(\mu,a,b,c,d)$, where $1 \leq \mu <
      \len-i$, $\; a,b \in \lri_{\len-i}$ with $\mu < v(a)$ and $\mu <
      v(b)$, $\; c \in \lri_{\len-i}$ with $v(c)>0$ and $d\in
      \varsigma(\lri_\mu)$,
    \item[($\text{III}_\infty$)] $E(m,a,b,c,d)$, where $1 \leq \mu <
      \len-i$, $\; \mu < m \leq \len-i$, $\; a \in
      \varsigma(\lri_{\len-i-m+\mu})$ and $b \in \lri_{\len-i}$ with
      $\mu = v(a) < v(b)$, $\; c \in \lri_{\len-i}$ with $v(c)>0$ and
      $d \in \varsigma(\lri_\mu)$;
    \end{enumerate}
    the shadow $\shGC(\cC)$ in these subcases has type $\Jsha$,
    $\Msha$, $\Nsha$, $\Kasha$ and~$\Kbsha$, respectively.
  \end{enumerate}
\end{thm}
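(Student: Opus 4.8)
\emph{Proof plan.} The plan is to pass to the module picture, peel off a scalar and a power of~$\pi$, use Hensel's Lemma to split the part of the reduction with distinct eigenvalues, dispose of the cyclic pieces via companion normal forms, and then grind through the one genuinely hard case — a non-cyclic matrix whose reduction modulo~$\fp$ is nilpotent of Jordan type~$[2,1]$. A similarity class in $\gl_3(\lri_\len)$ is the same as an isomorphism class of $\lri_\len[t]$-modules free of rank~$3$ over~$\lri_\len$, with $t$ acting as the matrix. Given $A$, let $i = i(A)$ be the largest integer with $A \equiv d\Id_3 \pmod{\fp^{i}}$ for some $d \in \lri_\len$; this is a similarity invariant, and $i(A) \geq \len$ exactly when $A$ is scalar, case~(i). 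Otherwise pick such a $d$, normalise it into $\varsigma(\lri_i)$, and write $A = d\Id_3 + \pi^{i}C$ with $C \in \gl_3(\lri_{\len-i})$; by maximality of~$i$ the reduction $\overline{C} \in \gl_3(\kk)$ is non-scalar, and the only residual freedom is to replace $(d,C)$ by $(d+\pi^{i}\beta,\, C-\beta\Id_3)$ with $\beta \in \lri_{\len-i}$. It remains to classify $\GL_3(\lri)$-orbits of such $C$ up to adding $\lri_{\len-i}$-scalars, and to compute the reductions of their centralisers.

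\emph{Hensel splitting and the cyclic cases.} Factor the reduction $\overline{f}_C \in \kk[t]$ of the characteristic polynomial into pairwise coprime prime powers and lift this by Hensel's Lemma (\cite[Theorem~8.3]{Mat}) to a factorisation $f_C = \prod_j g_j$ into comaximal factors over $\lri_{\len-i}[t]$, so that $\lri_{\len-i}^{\,3} = \bigoplus_j \ker g_j(C)$ is a direct sum of $C$-stable free summands. On a block with $\deg\overline{g}_j \geq 2$ the summand is a cyclic $\lri_{\len-i}[t]/(g_j)$-module, hence $C$ restricts there to a companion matrix, unique up to its characteristic polynomial; on a block with linear $\overline{g}_j$ one shifts by a lift of its root (determined modulo~$\fp$) and re-enters the situation of nilpotent reduction, now in smaller size. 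Re-assembling the blocks, matching characteristic polynomials and comparing the amplitudes $\pi^{i}, \pi^{j}$ of the summands produces the representatives of cases~(ii)--(iv) in the stated ranges — a rank-$1$ block giving the $D(a,0,0)$-summand, a $2\times2$, resp.\ $3\times3$, companion block giving the $C$ of case~(iii), resp.~(iv) — and the fine shadow type ($\Lsha$, $\Tasha$, $\Tbsha$, $\Tcsha$, $\Msha$, $\Nsha$) is read off the factorisation type of the companion block over~$\kk$.

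\emph{The hard case.} What is left is $C$ with $\overline{C}$ nilpotent and nonzero; if $\overline{C} = 0$ one factors out another $\pi$ and recurses on~$\len$, which changes neither orbit counts nor shadows since $\Cen_{\GL_3(\lri)}(\pi C' \bmod \fp^{\len}) = \Cen_{\GL_3(\lri)}(C' \bmod \fp^{\len-1})$. If the Jordan type of $\overline{C}$ over~$\kk$ is~$[3]$, a Nakayama argument shows $C$ is cyclic, hence a companion matrix with characteristic polynomial $\equiv t^3$ — the last part of case~(iv), shadow type~$\Nsha$. If it is~$[2,1]$, then $C$ is \emph{not} cyclic, and this is the technical heart (building on and refining \cite{APOV}): one selects a basis adapted to the entanglement of the two-dimensional nilpotent direction and the one-dimensional direction, producing after conjugation and the allowed scalar shift a representative $d\Id_3 + \pi^{i}E(m,a,b,c,d)$, where $\mu = \min\{v(a),v(b)\}$ measures the defect of cyclicity, the integer $m \geq \mu$ records the valuation of the residual off-diagonal obstruction — with $m = \mu$ in subcases~(I), (II), $(\text{III}_1)$, $(\text{III}_0)$, and $m > \mu$ possible (hence recorded separately) precisely in $(\text{III}_\infty)$ — the condition $v(c)>0$ is imposed, the relative position of $v(a)$ and $v(b)$ distinguishes the remaining subcases, and the internal scalar $d \in \varsigma(\lri_\mu)$ is the part that can \emph{not} be absorbed into $d'$ or $\pi^i$. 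For each $E(m,a,b,c,d)$ the shadow is then obtained by writing down $\Cen_{\GL_3(\lri)}(E)$ explicitly and reducing it modulo~$\fp$ via Proposition~\ref{prop:centralisers.connected}: one finds type~$\Jsha$ in~(I), $\Msha$ in~(II), $\Nsha$ in~$(\text{III}_1)$, and the two types~$\Kasha$, $\Kbsha$ in~$(\text{III}_0)$, $(\text{III}_\infty)$, which are isomorphic as abstract groups but, exactly as in the computation with $H_0$, $H_\infty$ preceding Table~\ref{tab:shadows.iso.GL}, are \emph{not} conjugate in~$\GL_3(\kk)$.

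\emph{Irredundancy.} Completeness is the content of the previous steps; for irredundancy one checks that distinct members of the list carry distinct invariants. The pair consisting of the integer~$i$ and the outer scalar modulo~$\fp^{i}$, together with the characteristic polynomial, are similarity invariants that separate the coarse families; the shadow type separates most of the remainder; and the remaining discrete parameters — the valuations $v(a),v(b),v(c)$ and the integer~$m$ in the family $E(m,a,b,c,d)$ — are recovered from the elementary-divisor form of $\ad(A)$ used in the proof of Proposition~\ref{prop:centralisers.connected} and from the module's invariant-factor data, while the $\Kasha$-versus-$\Kbsha$ distinction is the non-conjugacy just noted. Almost all of these verifications refine the corresponding statements of~\cite{APOV}; the one point needing care for rings of positive characteristic is that none of the steps — Hensel lifting of coprime factorisations, the Nakayama cyclicity argument, the explicit normal-form manipulations, and the centraliser computations — involves division by~$p$, so the whole classification and the attached shadow bookkeeping are uniform in $\cha(\lri)$. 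I expect the $[2,1]$-nilpotent case to be the main obstacle: fixing the exact parameter ranges of the five $E$-subcases and proving their pairwise non-similarity, in particular separating $\Kasha$ from~$\Kbsha$.
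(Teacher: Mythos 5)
Your route for cases (i)--(iv) is essentially the paper's: reduce via the \cite{APOV} decomposition (Theorem~\ref{thm:sim.class.ring} and Lemma~\ref{lem:diagonal-form-shadow}) to scalar shifts of companion blocks, where characteristic polynomials separate classes and shadows are read off the reduction of the companion block. The problem is that the entire content of the theorem lies in case (v), and that is exactly the part your plan defers (``I expect the $[2,1]$-nilpotent case to be the main obstacle''). What is needed there, and what the paper supplies, is: the invariance of $\mu=\min\{m,v(a),v(b),\len\}$ and of $d$ modulo $\fp^\mu$ (Lemma~\ref{lem:mu-invariant}, proved via a characterisation of $\mu$ as a maximum over scalar shifts of minimal valuations of $2\times2$ minors -- note your $\mu=\min\{v(a),v(b)\}$ omits $m$ and is not an invariant, and is inconsistent with your own claim $m=\mu$ in subcase $(\text{III}_0)$); the normalisation of $d$ into $\varsigma(\lri_\mu)$ by explicit one-parameter subgroups (Proposition~\ref{action.on.parameters.J.new}), where in the regime $\mu=v(a)<m$ the conjugation changes $m$ to $v(\pi^m+by+acy^2-a^2y^3)$, a subtlety invisible in your sketch; and above all the explicit solution of $E_\len X=XE_\len'$ (Theorem~\ref{substitute.for.AOPV.new}, congruences \eqref{equ:matrix.entries.2}), which fixes the exact parameter ranges and pairwise non-similarity.

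Moreover, the irredundancy mechanism you do propose would not work. Within case (v) the characteristic polynomial determines only $b$, $c$ and $a\pi^m$ modulo $\fp^{\len-i}$, and the invariant-factor/elementary-divisor data add only $m$ and $v(a)$; together these pin down $a$ merely modulo $\fp^{\len-i-m}$. The theorem asserts strictly more: in subcases (II), $(\text{III}_1)$, $(\text{III}_0)$ the parameter $a$ is a class invariant modulo $\fp^{\len-i}$, while in $(\text{III}_\infty)$ it is an invariant exactly modulo $\fp^{\len-i-m+\mu}$ -- which is why the list truncates $a$ to $\varsigma(\lri_{\len-i-m+\mu})$ there and nowhere else. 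This finer-than-characteristic-polynomial (but, in $(\text{III}_\infty)$, coarser-than-full) information cannot be recovered from invariant factors, from the elementary divisors of $\ad(A)$, or from shadow types; in the paper it comes only from the entrywise congruence analysis in the proof of Theorem~\ref{substitute.for.AOPV.new}. So the proposal, while a sensible plan whose easy half matches the paper, has a genuine gap: the classification, parameter ranges and separation of the five $E$-families in case (v) are asserted rather than proved, and the separating invariants you name are insufficient to establish irredundancy there.
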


Similarity classes in $\gl_3(\lri_\len)$ were already studied
in~\cite{APOV}, where the following is proved.

\begin{thm}[{\cite[\S3 \S4]{APOV}}]\label{thm:sim.class.ring} Every matrix
  $A \in \gl_3(\lri_\len)$ is $\Ad(\GL_3(\lri))$-conjugate to the
  reduction modulo $\fp^\len$ of at least one matrix of one of five
  types described below.  Matrices of different types are not
  $\Ad(\GL_3(\lri))$-conjugate.

  The types \textup{(i)--(v)} consist of matrices of the following
  form, where $a,b,c,d \in \lri$:
  \begin{enumerate}
  \item[(i)] $d\Id_3$,
  \item[(ii)] $d\Id_3 + \pi^{i}D(a,b,b)$, where $0\leq i<\len$ and $a
    \not\equiv_\fp b$,
  \item[(iii)] $\phantom{x}$ \vspace*{-0.7\baselineskip}
    \[
    d\Id_3 + \pi^i D(a,b,b) +\pi^j
    \begin{bmatrix}
      0 & 0 \\
      0 & \varsigma(C)
    \end{bmatrix},
    \]
    where $0 \leq i < j < \len$, $\; a \not\equiv_\fp b$ and
    $C\in\gl_2(\lri_{\len-j})$ is cyclic,
  \item[(iv)] $d\Id_3+\pi^i \varsigma(C)$, where $0\leq i<\len$ and
    $C\in\gl_3(\lri_{\len-i})$ is cyclic,
  \item[(v)] $d\Id_3+\pi^iE(m,a,b,c,0)$, where $0 \leq i < \len$ and $m,
    v(a), v(b), v(c) > 0$.
  \end{enumerate}
\end{thm}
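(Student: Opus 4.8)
The plan is to pass to a module-theoretic picture and then run a finite case analysis driven by the reduction modulo~$\fp$. Since $\GL_3$ is smooth over $\lri$, the reduction map $\GL_3(\lri)\twoheadrightarrow\GL_3(\lri_\len)$ is surjective, so $\Ad(\GL_3(\lri))$-orbits on $\gl_3(\lri_\len)$ coincide with $\GL_3(\lri_\len)$-conjugacy classes, and these are in bijection with isomorphism classes of $\lri_\len[t]$-modules that are free of rank~$3$ over $\lri_\len$ (with $t$ acting as the given matrix). I would reformulate both assertions accordingly: every such module is isomorphic to the one attached to a matrix on the list, and modules coming from matrices of distinct types are non-isomorphic.

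First I would apply a primary decomposition. Reducing the characteristic polynomial $f_A$ modulo $\fp$, factoring it into powers of pairwise coprime irreducibles, and lifting the factorization by Hensel's lemma, one obtains $M=\bigoplus_i M_i$, each $M_i$ free over the local ring $\lri_\len$. As $M$ has rank~$3$, the ranks partition as $3$, $2+1$ or $1+1+1$. A component is cyclic over $\lri_\len[t]$ precisely when its reduction modulo~$\fp$ is cyclic over $\kk$ (Nakayama), which covers the cases where the residual irreducible factor has degree~$2$ or~$3$, or is a single regular nilpotent block; such a component is the companion matrix of its characteristic polynomial, and a sum of components with pairwise coprime supports is again cyclic. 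This yields the companion-matrix normal forms, i.e.\ type~(iv) and the $\pi^j\varsigma(C)$-part of type~(iii). A residual rank-$2$ component with scalar reduction, after subtracting a lift of its eigenvalue and clearing the largest power $\pi^j$ of $\fp$ dividing the remainder, is either zero (giving a diagonal form as in type~(ii)) or $\pi^j$ times a non-scalar $2\times 2$ matrix, which over a field is automatically cyclic, hence a companion matrix (type~(iii)). Along the way one records the outer scalar~$d$ and checks that it may be chosen in the prescribed set $\varsigma(\lri_\nu)$.

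The remaining, essential case is a rank-$3$ module with a single generalized eigenvalue over $\bar\kk$ that is not cyclic. After subtracting a lift $\hat\alpha\Id_3$ of the eigenvalue, the reduction $\bar N$ of $A-\hat\alpha\Id_3$ is nilpotent; if $\bar N=0$ one factors out the largest power $\pi^i$ of $\fp$ and recurses over $\lri_{\len-i}$, producing the shape $d\Id_3+\pi^i(\cdots)$ and the constraint $i<\len$. If $\bar N$ is regular nilpotent, $M$ is cyclic (type~(iv) again). Otherwise $\bar N$ has rank~$1$, i.e.\ Jordan type $(2,1)$: then $M$ is $2$-generated but need not split, and one must produce the normal form $d\Id_3+\pi^i E(m,a,b,c,0)$ by an explicit basis change — choosing a vector realizing the $(2,1)$ pattern, clearing the corresponding row and column with the residual freedom, and bookkeeping valuations to land in one of the sub-cases (the $\mathrm{I},\mathrm{II},\mathrm{III}_1,\mathrm{III}_0,\mathrm{III}_\infty$ of the refined theorem), while separating the decomposable configurations, which fall back to types (ii)/(iii), from the genuinely indecomposable ones of type~(v). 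This column-and-row reduction together with the valuation bookkeeping — essentially the content of \cite[\S3 \S4]{APOV}, which the present section refines — is the technical heart and, I expect, the main obstacle.

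Finally, to establish that matrices of different types are not conjugate, I would exhibit invariants of the $\Ad(\GL_3(\lri))$-orbit that separate the five cases: (a) the factorization type of $f_A$ modulo $\fp$ together with the $\fp$-adic valuations of the differences of its eigenvalue clusters, which read off $i$ and $j$ and in particular distinguish (i), (ii) and the companion cases; (b) the minimal number of generators of $M$ over $\lri_\len[t]$, equal to~$3$ exactly for type~(i), to~$1$ for the cyclic configurations occurring inside (iii) and (iv), and to~$2$ for (ii), (iii), (v); and (c) to tell type~(v) apart from (ii)/(iii), the property that $M$ admits no decomposition as a direct sum of two proper $\lri_\len[t]$-submodules, which is reflected in the isomorphism type of the centraliser $\Cen_{\gl_3(\lri)}(A)$ — precisely the shadow recorded in the theorem, which for the sub-cases of type~(v) takes the values $\Jsha,\Msha,\Nsha,\Kasha,\Kbsha$ and never the decomposable-module values. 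Verifying that (a)--(c) are orbit invariants and take distinct values on the five types completes the proof.
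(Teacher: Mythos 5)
The paper does not actually reprove this statement: it is quoted from \cite{APOV} (\S3--\S4), and Section~\ref{appendix:similarity.new} takes it as input and only refines case (v). Measured against that, your skeleton (orbits correspond to $\lri_\len[t]$-module structures, Hensel primary decomposition, cyclicity detected modulo $\fp$ via Nakayama, companion forms for the cyclic pieces) is sound and is indeed how such a classification is organised; but the one case carrying all the content --- that a matrix whose reduction modulo $\fp$ has minimal polynomial $(t-\alpha)^2$ can be brought to the very specific shape $d\Id_3+\pi^iE(m,a,b,c,0)$ with $m,v(a),v(b),v(c)>0$ --- is exactly the step you leave as ``column-and-row reduction together with valuation bookkeeping''. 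Carrying out that reduction over $\lri_\len$ (not over a field) \emph{is} the theorem; without it the existence claim for type (v) is unproven, and your proposal is at that point a plan rather than a proof. Your organisation of this case also contains an error: you route the ``decomposable configurations'' back to types (ii)/(iii), but those types require two residually distinct eigenvalues ($a\not\equiv_\fp b$). A matrix such as $\diag\bigl(\pi,\left[\begin{smallmatrix}0&1\\0&0\end{smallmatrix}\right]\bigr)$ gives a decomposable module whose reduction has minimal polynomial $t^2$, so it belongs to type (v), not to (ii)/(iii).

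The non-conjugacy half needs repair as well, because the invariants (a)--(c) do not separate the types. For (b): as soon as $i\geq 1$ the matrix is scalar modulo $\fp$, so $M/(\fp,t-d)M\cong\kk^3$ and $M$ needs three generators over $\lri_\len[t]$ for \emph{every} type, not only for (i); likewise the two-dimensional primary component in type (iii) is residually scalar (since $j\geq1$), so those modules need two generators, never one. For (c): type (v) contains decomposable modules (example above), so indecomposability cannot tell (v) from (ii)/(iii); and the shadows $\Msha$ and $\Nsha$ also occur for types (iii) and (iv) (see Theorem~\ref{thm:irredundant.list.for.M3x3}), so the centraliser shadow does not do it either. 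For (a): the blocks $(d+\pi^ib)\Id_2$ and $(d+\pi^ib)\Id_2+\pi^j\varsigma(C)$, with $C$ the companion matrix of $t^2$, have identical characteristic polynomials, so eigenvalue-cluster valuations cannot distinguish (ii) from (iii). What does work, and is the invariant implicit in \cite{APOV}: for non-scalar $A$ write $A=d\Id_3+\pi^iB$ with $\bar B$ non-scalar; then $i$, the class of $d$ modulo $\fp^i$, and the residual type of $\bar B$ (minimal polynomial with two distinct roots, cyclic, or $(t-\alpha)^2$) are conjugation invariants separating (ii)/(iii) from (iv) from (v), and finally (ii) and (iii) are separated by whether $t$ acts as a scalar on the two-dimensional primary component, an isomorphism invariant of the canonical decomposition you already constructed. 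Replacing (a)--(c) by this, and actually performing the normal-form reduction for the residual $(t-\alpha)^2$ case, would close the argument.
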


In each of the families (i)--(iv) it is easy to specify a set of
pairwise non-conjugate representatives, using
Lemma~\ref{lem:diagonal-form-shadow} below.  In contrast, it is less
clear how to manufacture a set of pairwise non-conjugate
representatives of the family~(v), and the problem remained unsolved
in~\cite{APOV}.  Theorem~\ref{substitute.for.AOPV.new} below removes
this stumbling block and leads to a proof of
Theorem~\ref{thm:irredundant.list.for.M3x3}.

\begin{lem} \label{lem:diagonal-form-shadow} Let $(n_1,\ldots,n_r)$ be
  a composition of $n \in \N$.  Let
  \[
  A = \diag(A_1,\ldots,A_r), \quad A' =
  \diag(A_1',\ldots,A_r') \in \gl_n(\lri_\len)
  \]
  be block diagonal matrices with blocks $A_i,A_i' \in
  \gl_{n_i}(\lri_\len)$ such that $A_i \equiv_\fp A'_i \equiv_\fp
  a_i\Id_{n_i}$, where $a_1, \ldots, a_r \in \lri_\len$ with $a_i
  \not\equiv_\fp a_j$ if $i \ne j$.  Let $X \in \gl_n(\lri_\len)$.

  Then $XA = A'X$ if and only if $X = \diag(X_1,\ldots,X_r)$ is block
  diagonal with blocks $X_i \in \gl_{n_i}(\lri_\len)$ satisfying $X_i
  A_i=A_i'X_i$ for $1 \leq i \leq r$.  In particular, $X \in
  \Cen_{\gl_n(\lri_\len)}(A)$ if and only if $X =
  \diag(X_1,\ldots,X_r)$ with $X_i \in
  \Cen_{\gl_{n_i}(\lri_\len)}(A_i)$ for $1 \leq i \leq r$.
\end{lem}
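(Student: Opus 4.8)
The plan is to expand $X$ in block form with respect to the composition $(n_1,\dots,n_r)$ and read off the equation $XA = A'X$ block by block. Write $X = (X_{ij})_{1\le i,j\le r}$, where $X_{ij}$ is an $n_i\times n_j$ matrix over $\lri_\len$. Since $A = \diag(A_1,\dots,A_r)$ and $A' = \diag(A_1',\dots,A_r')$, the $(i,j)$-block of $XA$ is $X_{ij}A_j$ and the $(i,j)$-block of $A'X$ is $A_i'X_{ij}$; hence $XA = A'X$ is equivalent to the system
\[
X_{ij}A_j = A_i'X_{ij} \qquad (1\le i,j\le r).
\]
The diagonal equations $X_{ii}A_i = A_i'X_{ii}$ are precisely the asserted condition on the diagonal blocks, so the whole statement reduces to proving that $X_{ij}=0$ whenever $i\ne j$.

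For a fixed pair $i\ne j$, consider the $\lri_\len$-linear operator $T\colon Y\mapsto A_i'Y - YA_j$ on the finite $\lri_\len$-module $M$ of $n_i\times n_j$ matrices; it suffices to show that $T$ is injective. Because $A_i'\equiv a_i\Id_{n_i}$ and $A_j\equiv a_j\Id_{n_j}$ modulo~$\fp$, we may write $A_i' = a_i\Id_{n_i} + \pi B_i'$ and $A_j = a_j\Id_{n_j} + \pi B_j$ with $B_i', B_j$ over $\lri_\len$, so that $T = (a_i-a_j)\cdot\mathrm{id}_M + \pi L$, where $L(Y) = B_i'Y - YB_j$. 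By hypothesis $a_i\not\equiv a_j$ modulo~$\fp$, so $a_i - a_j$ is a unit in the local ring $\lri_\len$; and since $\pi^\len = 0$ in $\lri_\len$ and $\pi$ is central, $(\pi L)^\len = \pi^\len L^\len = 0$, i.e.\ $\pi L$ is nilpotent on~$M$. Thus $T$ is a sum of an invertible scalar operator and a nilpotent operator commuting with it, hence invertible (its inverse is the finite Neumann series $(a_i-a_j)^{-1}\sum_{k=0}^{\len-1}\bigl(-(a_i-a_j)^{-1}\pi L\bigr)^k$). In particular $T(X_{ij}) = 0$ forces $X_{ij} = 0$.

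Combining the two steps: $XA = A'X$ holds if and only if every off-diagonal block of $X$ vanishes and each diagonal block satisfies $X_{ii}A_i = A_i'X_{ii}$, which is the first assertion; taking $A' = A$ (so $A_i' = A_i$ for all $i$) yields the statement about $\Cen_{\gl_n(\lri_\len)}(A)$. The only point requiring any care is the passage from the obvious congruence $X_{ij}\equiv 0\pmod{\fp}$ -- all that reduction modulo~$\fp$ gives at once -- to the exact identity $X_{ij} = 0$; this is exactly what the unit-plus-nilpotent decomposition of $T$ supplies, using that $\lri_\len$ is local with nilpotent maximal ideal. An equivalent alternative would be to induct on the $\fp$-adic valuation of $X_{ij}$, upgrading the congruence one power of $\pi$ at a time.
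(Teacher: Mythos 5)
Your proof is correct. The block expansion and the reduction to showing $X_{ij}=0$ for $i\ne j$ are exactly as in the paper; where you differ is in how that vanishing is obtained. The paper first reduces $XA=A'X$ modulo $\fp$ to get $(a_j-a_i)X_{ij}\equiv_\fp 0$, hence $X_{ij}\equiv_\fp 0$, and then repeats the same argument modulo $\fp^m$ for $2\le m\le \len$ -- i.e.\ precisely the ``induct on the $\fp$-adic valuation, upgrading one power of $\pi$ at a time'' route you mention at the end as an alternative. Your main argument instead packages everything into a single statement: the Sylvester-type operator $T(Y)=A_i'Y-YA_j$ on $n_i\times n_j$ matrices decomposes as $(a_i-a_j)\,\mathrm{id}+\pi L$, a unit scalar plus a commuting nilpotent (since $\pi^\len=0$ in $\lri_\len$), and is therefore invertible by a finite Neumann series. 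Both arguments are sound and rest on the same feature of $\lri_\len$ (local ring with nilpotent maximal ideal, $a_i-a_j$ a unit); yours is a one-shot structural argument that also immediately gives an explicit inverse of $T$, while the paper's successive-reduction version is slightly more elementary and is the form that generalises most directly to the profinite setting $\lri$ where one argues level by level.
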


\begin{proof} The \emph{if} part is clear.  For the \emph{only if}
  part, write $X$ as a block matrix $(X_{ij})_{ij}$ with $X_{ij} \in
  \gl_{n_i,n_j}(\lri_\len)$ for $1 \leq i,j \leq r$.  Reducing the
  equation $XA = A'X$ modulo $\fp$, and looking at the $(i,j)$-th
  block for $i \ne j$, we get $(a_j - a_i) X_{ij} \equiv_\fp 0$.  It
  follows that $X_{ij} \equiv_\fp 0$.  Repeating the same argument
  with a sequence of reductions modulo $\fp^m$ for $2 \leq m \leq
  \len$ gives $X_{ij}=0$.  The equality $XA=A'X$ now implies
  $X_{ii}A_i=A_i' X_{ii}$.
\end{proof}

\begin{proof}[Proof of Theorem~\textup{\ref{thm:irredundant.list.for.M3x3}}
  \textup{(}modulo Proposition~\textup{\ref{shadows.case.v.new}} and
  Theorem~\textup{\ref{substitute.for.AOPV.new})}]
  Applying Theorem~\ref{thm:sim.class.ring}, we single out unique
  representatives for similarity classes in each of the cases
  (i)--(v).  To determine the shadow types it suffices to pin down the
  Lie centraliser shadows of these representatives.

  \smallskip

  \noindent
  (i) The assertion for scalar matrices is immediate.

  \smallskip

  \noindent
  (ii) Any matrix of the form $d\Id_3+\pi^i D(a,b,b)$ with $0 \leq i <
  \len$ and $a \not\equiv_\fp b$ can be written as $d'\Id_3 +
  \pi^iD(a',0,0)$, where $d' \in \lri_\len$ and $a \in
  \lri_{\len-i}^\times$.  Moreover, two matrices of the latter form
  are conjugate if and only if they have the same eigenvalues, or
  equivalently, have the same parameters.  By
  Lemma~\ref{lem:diagonal-form-shadow}, the centraliser of such a
  matrix is block diagonal with blocks of sizes $1\times1$ and
  $2\times2$, modulo $\fp^{\len - i}$.  Hence the corresponding shadow
  is of type $\Lsha$; cf.\ Table~\ref{tab:shadows.iso.GL}.

  \smallskip

  \noindent
  (iii) Clearly, any matrix of the form
  \begin{equation}\label{eq:case.iii.red}
    d\Id_3 + \pi^i D(a,b,b) +\pi^j
    \begin{bmatrix}
      0 & 0 \\
      0 & C
    \end{bmatrix},
  \end{equation}
  where $0 \leq i < j < \len$, $\; a \not\equiv_\fp b$ and $C \in
  \gl_2(\lri_{\len-j})$ is cyclic, can be written as
  \begin{equation}\label{eq:case.iii}
    A = d'\Id_3 + \pi^i D(a',0,0) +\pi^j
    \begin{bmatrix}
      0 & 0 \\
      0 & C'
    \end{bmatrix}
  \end{equation}
  with $d' \in \varsigma(\lri_j)$, $\; a' \in \lri_{\len-i}^\times$ and
  $C' \in \gl_2(\lri_{\len-j})$ cyclic.  After a further conjugation by
  an appropriate block diagonal matrix, with blocks $1\times1$ and
  $2\times2$, we may assume that $C'$ is a companion matrix.  It
  follows that matrices of the form~\eqref{eq:case.iii}, with $C'$ a
  companion matrix, represent as many classes as matrices of the
  form~\eqref{eq:case.iii.red}.  At the same time they are pairwise
  non-conjugate by Lemma~\ref{lem:diagonal-form-shadow}.  The same
  lemma implies that the centraliser of any matrix of the
  form~\eqref{eq:case.iii} is block diagonal, modulo~$\fp^{\len-i}$.
  Combined with the cyclicity of the $2\times2$-block $C'$, we deduce
  that the Lie centraliser shadow $\shg(A)$ is of the form $\gl_1(\kk)
  \times \kk[\overline{C'}]$, because the algebra $\kk[\overline{C'}]$
  generated by the reduction $\overline{C'}$ modulo~$\fp$ is equal to
  the reduction modulo $\fp$ of the centraliser of $C'$ in
  $\gl_2(\lri)$.  Consequently, the shadow $\shGC(\cC)$ has type
  $\Tasha$, $\Tbsha$ or $\Msha$ according to $\overline{C'}$ being
  split semisimple, non-split semisimple or a scalar translate of a
  nilpotent matrix.

  \smallskip

  \noindent
  (iv) Any matrix of the form $d\Id_3+\pi^iC \in \gl_3(\lri_\len)$, with
  $0\leq i<\len$ and $C \in \gl_3(\lri_{\len-i})$ cyclic, can be
  written as $d'\Id_3 + \pi^i C'$ with $d' \in \varsigma(\lri_{i})$ and
  $C' \in \gl_3(\lri_{\len-i})$ again cyclic.  Such a matrix has a
  unique conjugate $A$ of the same form where the cyclic matrix is a
  companion matrix, proving the first part of the assertion.  The
  image of the centraliser of $A$ in $\gl_3(\lri_{\len-i})$ is the
  same as the centraliser of $C$.  Hence the shadow $\shGC(C)$ has
  type equal to one of $\Tasha$, $\Tbsha$, $\Tcsha$, $\Nsha$, $\Msha$,
  depending on $C$; for instance, see~\cite[Lemma~7.5]{AKOV1}.

  \smallskip

  \noindent
  (v) The assertion follows from Proposition~\ref{shadows.case.v.new}
  and Theorem~\ref{substitute.for.AOPV.new} below.
\end{proof}

It remains to establish Proposition~\ref{shadows.case.v.new} and
Theorem~\ref{substitute.for.AOPV.new} below: our goal is to produce a
complete and irredundant list of representatives for the similarity
classes in case (v) of Theorem~\ref{thm:sim.class.ring} and to compute
their shadows.  Interestingly, shadows will play a crucial role in
producing the list of representatives in the first place.

For $m \in \N \cup \{\infty\}$ and $a,b,c,d \in \lri$, we write
$E_\len(m,a,b,c,d)$ for the reduction of $E(m,a,b,c,d)$
modulo~$\fp^\len$.  Furthermore, it will be convenient to keep track
of the parameter~$\len$ by writing $E_\len$, or more generally
$A_\len$, for elements of $\gl_3(\lri_\len)$.  It follows from
\cite[Proposition~3.5]{APOV} that
\[
\mathcal{E}_\len \coloneqq \left\{E_\len(m,a,b,c,d) \mid m \in \N \text{ and }
  a,b,c,d \in \lri \text{ with } v(a), v(b), v(c) > 0 \right\} \subset
\gl_3(\lri_\len)
\]
is an exhaustive, but redundant set of representatives for the
similarity classes of all matrices in $\gl_3(\lri_\len)$ such that the
minimal polynomial of their reduction modulo $\fp$ takes the form $(X
- \alpha)^2$, $\alpha \in \kk$.  The parameter
\[
\mu_\len(m,a,b) = \min\{m,v(a),v(b),\len\} \in \{1,\ldots,\len\}
\]
allows us to partition the set $\mathcal{E}_\len$ into disjoint
subsets
\begin{align*}
  \mathcal{E}_\len^{\mathrm{I}} & = \{ E_\len(m,a,b,c,d) \in
  \mathcal{E}_\len \mid \mu_\len(m,a,b) = \len \}, \\
  \mathcal{E}_\len^{\mathrm{II}} & = \{ E_\len(m,a,b,c,d) \in
  \mathcal{E}_\len \mid \mu_\len(m,a,b) < \len \text{ and }
  \mu_\len(m,a,b) = v(b)
  \leq \min\{m,v(a)\} \}, \\
  \mathcal{E}_\len^{\mathrm{III}} & = \{ E_\len(m,a,b,c,d) \in
  \mathcal{E}_\len \mid \mu_\len(m,a,b) < \len \text{ and }
  \mu_\len(m,a,b) = \min\{m,v(a)\} < v(b) \}.
\end{align*}
The third set can be divided further into three disjoint subsets
\begin{align*}
  \mathcal{E}_\len^{\mathrm{III, 1}} & = \{ E_\len(m,a,b,c,d) \in
  \mathcal{E}_\len^{\mathrm{III}} \mid \mu_\len(m,a,b) = m = v(a) \}, \\
  \mathcal{E}_\len^{\mathrm{III, 0}} & = \{ E_\len(m,a,b,c,d) \in
  \mathcal{E}_\len^{\mathrm{III}} \mid \mu_\len(m,a,b) = m < v(a) \}, \\
  \mathcal{E}_\len^{\mathrm{III, \infty}} & = \{ E_\len(m,a,b,c,d) \in
  \mathcal{E}_\len^{\mathrm{III}} \mid \mu_\len(m,a,b) = v(a) < m \}.
\end{align*}
The division of $\mathcal{E}_\len$ into subsets according to the
parameter $\mu_\len(m,a,b)$ is motivated by the following observation.

\begin{lem} \label{lem:mu-invariant} Let $E_\len = E_\len(m,a,b,c,d)
  \in \mathcal{E}_\len$.  Then $\mu \coloneqq \mu_\len(m,a,b)$ and the
  reduction of~$d$ modulo $\fp^\mu$ are invariants of the similarity
  class of $E_\len$, i.e.\ if $E_\len(m,a,b,c,d)$ is similar to
  $E_\len(m',a',b',c',d')$ then $\mu_\len(m,a,b) = \mu_\len(m',a',b')$
  and $d \equiv_{\fp^\mu} d'$.
\end{lem}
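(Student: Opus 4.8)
The plan is to pin down both $\mu \coloneqq \mu_\len(m,a,b)$ and the class of $d$ in $\lri/\fp^\mu$ as functions of honest invariants of the $\Ad(\GL_3(\lri))$-conjugacy class of $E_\len \coloneqq E_\len(m,a,b,c,d)$. Two invariants will suffice: the unique eigenvalue of the reduction modulo $\fp$, and, for each lift $\tilde d \in \lri$ of that eigenvalue, the $\fp$-adic elementary-divisor type of the $\lri_\len$-module $\mathrm{coker}(E_\len - \tilde d\,\Id_3)$. First I would record that, because $v(a),v(b),v(c)>0$ and $m\geq 1$, the reduction of $E_\len$ modulo $\fp$ equals $\overline d\,\Id_3 + \overline N$, where $\overline N$ has a single non-zero entry, a $1$, in position $(2,3)$; hence $\overline{E_\len}$ has $\overline d$ as its only eigenvalue, so $\overline d$ is a conjugacy invariant and $\overline d = \overline{d'}$ whenever $E_\len(m,a,b,c,d)$ and $E_\len(m',a',b',c',d')$ are similar. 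This already settles the case $\mu = 1$ and fixes the common residue to which the rest of the argument refers.

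Next, for a lift $\tilde d$ of that residue put $\epsilon = \tilde d - d \in \fp$. Since $\overline{E_\len - \tilde d\,\Id_3} = \overline N$ has rank $1$, the cokernel of $E_\len - \tilde d\,\Id_3$ over $\lri_\len$ has the form $\lri_\len \oplus \lri_\len/\pi^{e_2}\lri_\len \oplus \lri_\len/\pi^{e_3}\lri_\len$ with $0 \le e_2 \le e_3 \le \len$, and I claim $e_2 = \min\{v(\epsilon),\mu\}$ (all valuations computed in $\lri_\len$). To see this, use that the $(2,3)$-entry of $E_\len - \tilde d\,\Id_3$ is a unit to eliminate the second generator of the cokernel; what remains is the $2\times 2$ presentation matrix $\left[\begin{smallmatrix} -\epsilon & a \\ \pi^m & b + \epsilon(c-\epsilon) \end{smallmatrix}\right]$ (up to units), whose smaller elementary divisor has exponent $e_2 = \min\{v(\epsilon),\,v(a),\,m,\,v(b+\epsilon(c-\epsilon))\}$. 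As $v(c)>0$ and $v(\epsilon)\geq 1$, one has $v(\epsilon(c-\epsilon)) \geq v(\epsilon)+1$, so $v(b+\epsilon(c-\epsilon)) \geq \min\{v(b),\,v(\epsilon)+1\}$; a one-line valuation estimate then shows this fourth term can never strictly lower the minimum below $\min\{v(\epsilon),v(a),m,v(b)\} = \min\{v(\epsilon),\mu\}$, while conversely that number bounds $e_2$ from below, giving the claimed formula.

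Finally I would read off the two invariants. For each fixed lift $\tilde d$ of the common residue $\overline d = \overline{d'}$, the number $e_2$ is a conjugacy invariant, so $\min\{v(\tilde d - d),\mu_\len(m,a,b)\} = \min\{v(\tilde d - d),\mu_\len(m',a',b')\}$ for every such $\tilde d$; taking the supremum over $\tilde d$ yields $\mu_\len(m,a,b) = \mu_\len(m',a',b') =: \mu$. Moreover this supremum equals $\mu$ and is attained exactly by the lifts $\tilde d$ with $v(\tilde d - d)\geq\mu$, i.e.\ with $\tilde d \equiv d \pmod{\fp^\mu}$; applying the same description to the primed data forces $d \equiv d' \pmod{\fp^\mu}$. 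I expect the only real work to be the Smith-normal-form bookkeeping behind the formula for $e_2$ --- in particular verifying that the parameter $c$, whose valuation is only known to be positive and may be far below $\mu$, enters solely through the high-valuation term $\epsilon(c-\epsilon)$ and hence never affects $e_2$ --- together with keeping the whole argument uniform in $\cha(\lri)$ and $\cha(\kk)$, no denominators being available.
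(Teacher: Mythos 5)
Your argument is correct and, at its core, rests on the same invariant as the paper's proof: the minimal valuation of the $2\times2$ minors of the scalar shifts of $E_\len$ (your $e_2$, since the first elementary divisor of $E_\len-\tilde d\,\Id_3$ is a unit), maximised over the shift $\tilde d$; the paper proves the first claim exactly this way, writing $\mu_\len(m,a,b)$ as the maximum over $\tilde d$ of $\min\big(\{v(\det M)\mid M\ \text{a } 2\times2\text{ submatrix of }E(m,a,b,c,\tilde d)\}\cup\{\len\}\big)$. Where you genuinely differ is the second claim: the paper reduces $E_\len$ modulo $\fp^\mu$ and identifies $d$ modulo $\fp^\mu$ as the unique repeated eigenvalue of that reduction, whereas you recover the coset $d+\fp^\mu$ as the locus of lifts $\tilde d$ at which $e_2(\tilde d)=\min\{v(\tilde d-d),\mu\}$ attains its maximal value $\mu$; this is a clean alternative that extracts both claims from one family of invariants. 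Two small slips, neither of which damages the argument: first, the cokernel of $E_\len-\tilde d\,\Id_3$ is $\lri_\len/\pi^{e_2}\oplus\lri_\len/\pi^{e_3}$ (the unit elementary divisor contributes a trivial, not a free, summand), so it does not have the shape $\lri_\len\oplus\lri_\len/\pi^{e_2}\oplus\lri_\len/\pi^{e_3}$ that you wrote -- but only $e_2$ enters your argument, and it is still the second invariant factor, hence a similarity invariant; second, in verifying $e_2=\min\{v(\epsilon),\mu\}$ you also need the upper bound $e_2\le\min\{v(\epsilon),\mu\}$ (your phrase ``bounds $e_2$ from below'' should read ``from above''), and in the case $v(b)<v(\epsilon)$ this uses the ultrametric equality $v(b+\epsilon(c-\epsilon))=v(b)$ rather than only the displayed inequality -- still the one-line estimate you promise.
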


\begin{proof}
  The first claim follows from the fact that
  \begin{multline*}
    \mu_\len(m,a,b) = \max \big\{ \min \{ m, v(a), v(b), v(\tilde d),
    v(\tilde d(c+ \tilde d)-b), \len \} \mid \tilde d \in \lri \big\} \\
    = \max \big\{ \min \big(\{ v(\det(M)) \mid M \text{ a
      $2\times2$-submatrix of } E(m,a,b,c,\tilde d) \} \cup
    \{\len\}\big) \mid \tilde d \in \lri \big\}.
  \end{multline*}
  As for the second claim, the reduction of $E_\len$ modulo $\fp^\mu$
  has eigenvalues congruent to $d$, $d$ and $c+d$ modulo $\fp^\mu$.
  Hence $d$ modulo $\fp^\mu$ is the unique eigenvalue of multiplicity
  at least $2$ of this matrix and consequently an invariant of the
  similarity class of~$E_\len$.
\end{proof}

In Section~\ref{sec:centrs.shadows} we determine the centralisers and
shadows of matrices in each of the sets
$\mathcal{E}_\len^{\mathrm{I}}$, $\mathcal{E}_\len^{\mathrm{II}}$,
$\mathcal{E}_\len^{\mathrm{III,1}}$,
$\mathcal{E}_\len^{\mathrm{III,0}}$ and
$\mathcal{E}_\len^{\mathrm{III,\infty}}$.
Corollary~\ref{cor:no.overlap} shows that the five sets cover disjoint
sets of similarity classes in~$\gl_3(\lri_\len)$.  In
Section~\ref{sec:reps} we extend Lemma~\ref{lem:mu-invariant} and
determine, in Theorem~\ref{substitute.for.AOPV.new}, for each of the
five sets, explicit representatives for the similarity classes covered
by that set.

\subsubsection{Centralisers and shadows} \label{sec:centrs.shadows}

\begin{prop} \label{prop:centraliser.of.E.new} Let $E_\len =
  E_\len(m,a,b,c,d) \in \mathcal{E}_\len$.  Then the centraliser
  $\Cen_{\gl_3(\lri)}(E_\len)$ consists of all matrices in
  $\gl_3(\lri)$ which are congruent modulo $\fp^\len$ to a matrix of
  the form
  \[
  F_{m,a,b,c}(t_1,t_2,s_1,s_2,s_3) = \Mat {t_1}{\pi^m {s_3}
    -c{s_1}}{s_1}{s_2}{t_2}{s_3}{a{s_3}}{\pi^m{s_2}+b{s_3}}{{t_2}+c{s_3}},
  \]
  where ${t_1},{t_2},{s_1},{s_2},{s_3} \in \lri$ satisfy the
  congruences
  \begin{equation}\label{eq:cong.new}
    \begin{split}
      a{s_1}  & \equiv_{\fp^\len} \pi^m {s_2},   \\
      b{s_1}  & \equiv_{\fp^\len}  \pi^m ({t_2}-{t_1}), \\
      b{s_2} & \equiv_{\fp^\len} a ({t_2}-{t_1}).
    \end{split}
  \end{equation}

  The centraliser $\Cen_{\GL_3(\lri)}(E_\len)$ consists of the same
  matrices subject to the additional condition that ${t_1},{t_2} \in
  \lri^\times$.
\end{prop}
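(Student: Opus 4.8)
The plan is to reduce the centraliser condition to an explicit system of scalar congruences modulo $\fp^\len$ and solve it by hand. Since $E_\len(m,a,b,c,d) = d\Id_3 + E_\len(m,a,b,c,0)$ and $d\Id_3$ is central, I may assume $d = 0$ throughout; write $E = E(m,a,b,c,0)$. The centraliser $\Cen_{\gl_3(\lri)}(E_\len)$ depends only on the reduction of $X$ modulo $\fp^\len$ and so is a union of cosets of $\fp^\len \gl_3(\lri)$; hence it suffices to describe its image modulo $\fp^\len$. For a general $X = (x_{ij}) \in \gl_3(\lri)$, expanding the requirement $XE \equiv_{\fp^\len} EX$ entrywise, using the explicit shape of $E$, produces nine scalar congruences modulo $\fp^\len$ in the $x_{ij}$ together with $a$, $b$, $c$ and $\pi^m$.

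The core of the argument is to sort these nine congruences into those that fix the \emph{shape} of $X$ and those that are genuine constraints. Put $s_1 = x_{13}$, $s_2 = x_{21}$, $s_3 = x_{23}$, $t_1 = x_{11}$, $t_2 = x_{22}$. Four of the congruences then express $x_{12}, x_{31}, x_{32}, x_{33}$ modulo $\fp^\len$ in terms of $a,b,c,\pi^m$ and these five parameters, and the resulting values are exactly the corresponding entries of $F_{m,a,b,c}(t_1,t_2,s_1,s_2,s_3)$. Substituting these expressions into the five remaining congruences, three of them reduce precisely to the relations in \eqref{eq:cong.new}; of the other two, one is literally the first relation of \eqref{eq:cong.new} again, while the last collapses to $c(as_1 - \pi^m s_2) \equiv_{\fp^\len} 0$, an automatic consequence of the relation $as_1 \equiv_{\fp^\len} \pi^m s_2$. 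Running this computation backwards shows that for any $t_1,\dots,s_3 \in \lri$ satisfying \eqref{eq:cong.new} the matrix $F_{m,a,b,c}(t_1,t_2,s_1,s_2,s_3)$ does centralise $E_\len$; lifting the parameters from $\lri_\len$ to $\lri$ is harmless, as the congruences only restrict residues modulo $\fp^\len$. This gives the description of $\Cen_{\gl_3(\lri)}(E_\len)$.

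For the group centraliser I would use that $X \in \Cen_{\GL_3(\lri)}(E_\len)$ exactly when $X \in \Cen_{\gl_3(\lri)}(E_\len)$ and the reduction $\overline{X}$ modulo $\fp$ lies in $\GL_3(\kk)$. Membership of $E_\len$ in $\mathcal{E}_\len$ forces $v(a), v(b), v(c) > 0$ and $m \geq 1$, so $a$, $b$, $c$ and $\pi^m$ all vanish modulo $\fp$; hence the reduction of $F_{m,a,b,c}(t_1,t_2,s_1,s_2,s_3)$ modulo $\fp$ is block upper triangular with determinant $\overline{t_1}\,\overline{t_2}^{\,2}$, the off-diagonal entries built from $a,b,c,\pi^m$ having disappeared. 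Consequently such a matrix is invertible over $\lri$ if and only if $t_1, t_2 \in \lri^\times$, which is exactly the stated refinement.

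The obstacle here is organisational rather than conceptual: the nine congruences are highly redundant, and the work lies in choosing the parameters $t_1,t_2,s_1,s_2,s_3$ and the pairing of four congruences with the determined entries $x_{12}, x_{31}, x_{32}, x_{33}$ so that the residual constraints emerge exactly as \eqref{eq:cong.new}, with no superfluous relation left over. The one slightly delicate point is verifying that the remaining congruence — the one coming from the $(3,2)$-entry of $XE = EX$ — cancels identically once \eqref{eq:cong.new} is imposed, which uses the first relation $as_1 \equiv_{\fp^\len} \pi^m s_2$.
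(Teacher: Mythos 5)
Your proposal is correct and follows essentially the same route as the paper: the paper's proof simply declares the entrywise comparison of $XE$ and $EX$ a ``straightforward computation'' (deferring the details to \cite[\S4.1]{APOV}) and, exactly as you do, obtains the unit condition on $t_1,t_2$ by reducing $F_{m,a,b,c}(t_1,t_2,s_1,s_2,s_3)$ modulo $\fp$. Your verification that the redundant congruences from the $(3,3)$- and $(3,2)$-entries collapse onto $a s_1 \equiv_{\fp^\len} \pi^m s_2$ is precisely the content that the cited computation supplies.
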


\begin{proof}
  This is a straightforward computation; see~\cite[\S4.1]{APOV}.  The
  additional condition for invertible matrices is obtained by
  considering $F_{m,a,b,c}(t_1,t_2,s_1,s_2,s_3)$ modulo~$\fp$.
\end{proof}

For the following proposition recall the shadow types listed in
Table~\ref{tab:shadows.iso.GL}.

\begin{prop}\label{shadows.case.v.new} Let $\cC$ be the similarity class
  of a matrix $E_\len = E_\len(m,a,b,c,d) \in \mathcal{E}_\len$.  Then
  the shadow $\shGC(\cC)$ is classified as follows.
  \begin{enumerate}
  \item If $E_\len \in \mathcal{E}_\len^{\mathrm{I}}$, then
    $\shGC(\cC)$ has type~$\Jsha$.
  \item If $E_\len \in \mathcal{E}_\len^{\mathrm{II}}$, then
    $\shGC(\cC)$ has type~$\Msha$.
  \item If $E_\len \in \mathcal{E}_\len^{\mathrm{III,1}}$, then
    $\shGC(\cC)$ has type~$\Nsha$.
  \item If $E_\len \in \mathcal{E}_\len^{\mathrm{III,0}}$, then
    $\shGC(\cC)$ has type~$\Kasha$.
  \item If $E_\len \in \mathcal{E}_\len^{\mathrm{III,\infty}}$, then
    $\shGC(\cC)$ has type~$\Kbsha$.
  \end{enumerate}
\end{prop}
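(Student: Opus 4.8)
The plan is to compute the Lie and group centraliser shadows of a representative $E_\len = E_\len(m,a,b,c,d) \in \mathcal{E}_\len$ explicitly, using Proposition~\ref{prop:centraliser.of.E.new}, and then to read off the type from Table~\ref{tab:shadows.iso.GL}. Since $E(m,a,b,c,d) = d\Id_3 + E(m,a,b,c,0)$ and scalar matrices are central, neither $\Cen_{\GL_3(\lri)}(E_\len)$ nor $\Cen_{\gl_3(\lri)}(E_\len)$ depends on $d$, and one may take $d = 0$. By Proposition~\ref{prop:centraliser.of.E.new}, $\Cen_{\gl_3(\lri)}(E_\len)$ consists of all matrices congruent modulo $\fp^\len$ to $F_{m,a,b,c}(t_1,t_2,s_1,s_2,s_3)$ with $t_i,s_j \in \lri$ subject to the three congruences~\eqref{eq:cong.new}, and $\Cen_{\GL_3(\lri)}(E_\len)$ of those with in addition $t_1,t_2 \in \lri^\times$. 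Writing $\alpha = v(a)$, $\beta = v(b)$ (so $\mu = \mu_\len(m,a,b) = \min\{m,\alpha,\beta,\len\}$), $a = \tilde a\pi^\alpha$, $b = \tilde b\pi^\beta$ with $\tilde a,\tilde b$ units, and using $m,\alpha,\beta,v(c) \geq 1$, the reduction of $F_{m,a,b,c}(t_1,t_2,s_1,s_2,s_3)$ modulo $\fp$ has the shape $\left[\begin{smallmatrix}\bar t_1 & 0 & \bar s_1\\ \bar s_2 & \bar t_2 & \bar s_3\\ 0 & 0 & \bar t_2\end{smallmatrix}\right]$. Hence $\shg(E_\len)$, respectively $\shG(E_\len)$, is exactly the set of matrices of this shape obtained as $(t_1,t_2,s_1,s_2,s_3)$ runs over the solutions of~\eqref{eq:cong.new} in $\lri^5$, respectively over those with $t_1,t_2 \in \lri^\times$; the problem is thus to determine which tuples $(\bar t_1,\bar t_2,\bar s_1,\bar s_2,\bar s_3) \in \kk^5$ arise.

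I would settle this case by case: in each case one divides the congruences~\eqref{eq:cong.new} by the relevant powers of $\pi$ (legitimate since $\mu < \len$ outside the case $\mathcal{E}_\len^{\mathrm{I}}$, so that $\len - \mu \geq 1$), reads off the induced $\kk$-linear relations, and conversely builds explicit lifts to confirm that every compatible tuple occurs. The expected outcomes are: for $E_\len \in \mathcal{E}_\len^{\mathrm{I}}$ (i.e.\ $\mu = \len$) all three congruences become vacuous modulo $\fp^\len$, so $\shg(E_\len)$ is the full $5$-dimensional space of matrices of the above shape; for $E_\len \in \mathcal{E}_\len^{\mathrm{II}}$ the third congruence gives $\bar s_2 = \overline{\tilde b^{-1}\tilde a}(\bar t_2 - \bar t_1)$ if $\alpha = \beta$ and $\bar s_2 = 0$ if $\alpha > \beta$, the second gives $\bar s_1 = \overline{\tilde b^{-1}}(\bar t_2 - \bar t_1)$ if $m = \beta$ and $\bar s_1 = 0$ if $m > \beta$, the first being a consequence, with $\bar t_1,\bar t_2,\bar s_3$ free; for $E_\len \in \mathcal{E}_\len^{\mathrm{III,1}}$ the second and third congruences force $\bar t_1 = \bar t_2$ and the first forces $\bar s_2 = \overline{\tilde a}\,\bar s_1$, with $\bar t_1,\bar s_1,\bar s_3$ free; for $E_\len \in \mathcal{E}_\len^{\mathrm{III,0}}$ one gets $\bar t_1 = \bar t_2$ and $\bar s_2 = 0$, with $\bar t_1,\bar s_1,\bar s_3$ free, so that $\shG(E_\len)$ equals the group $H_0$ displayed before Table~\ref{tab:shadows.iso.GL}; and for $E_\len \in \mathcal{E}_\len^{\mathrm{III,\infty}}$ one gets $\bar t_1 = \bar t_2$ and $\bar s_1 = 0$, with $\bar t_1,\bar s_2,\bar s_3$ free, so that $\shG(E_\len) = H_\infty$.

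It then remains to name the types. For $\mathcal{E}_\len^{\mathrm{I}}$: $\shg(E_\len) \subseteq \Cen_{\gl_3(\kk)}(\overline{E_\len})$, and as $\overline{E_\len}$ has minimal polynomial $t^2$ this centraliser is also $5$-dimensional; equality identifies $\shGC(\cC)$ as type $\Jsha$ via Table~\ref{tab:shadows.iso.GL}. For $\mathcal{E}_\len^{\mathrm{II}}$ and $\mathcal{E}_\len^{\mathrm{III,1}}$: the explicit $3$-dimensional subalgebras $\shg(E_\len) \subseteq \gl_3(\kk)$ are recognised as $\GL_3(\kk)$-conjugate to $\kk \times \kk[t]/(t^2)$, respectively $\kk[t]/(t^3)$ — in the latter case because the element with $\bar t_1 = 0$, $\bar s_1 = 1$ has minimal polynomial $t^3$, and in the former because $\shg(E_\len)$ contains both a nonzero idempotent distinct from $0$ and $1$ and a nonzero square-zero element, forcing the algebra type — so $\shGC(\cC)$ has type $\Msha$, respectively $\Nsha$. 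For $\mathcal{E}_\len^{\mathrm{III,0}}$ and $\mathcal{E}_\len^{\mathrm{III,\infty}}$: since $\shG(E_\len)$ equals $H_0$, respectively $H_\infty$, one invokes the discussion preceding Table~\ref{tab:shadows.iso.GL}, where $H_0$ and $H_\infty$ are shown to be non-$\GL_3(\kk)$-conjugate to each other and distinct in type from the other eight shadows; this gives types $\Kasha$ and $\Kbsha$.

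The step I expect to be the main obstacle is the valuation bookkeeping in this fivefold case analysis: one must check that clearing the powers of $\pi$ in the $\fp^\len$-congruences~\eqref{eq:cong.new} produces exactly the stated relations over $\kk$ and no hidden further ones, and that every compatible $\kk$-tuple genuinely lifts to an $\lri$-solution. Case $\mathcal{E}_\len^{\mathrm{II}}$ is the most delicate, as the reductions of the second and third congruences branch according to whether $\alpha = \beta$ and whether $m = \beta$; the remaining ingredients — Proposition~\ref{prop:centraliser.of.E.new} and the identification data around Table~\ref{tab:shadows.iso.GL}, including the non-conjugacy of $H_0$ and $H_\infty$ — are already available.
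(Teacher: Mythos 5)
Your proposal is correct and follows essentially the same route as the paper's proof: compute the centraliser via Proposition~\ref{prop:centraliser.of.E.new}, reduce the congruences~\eqref{eq:cong.new} modulo $\fp$ in each of the five cases, and match the resulting shadows against Table~\ref{tab:shadows.iso.GL} and the discussion of $H_0$ and $H_\infty$ in Section~\ref{subsec:branching}. The only divergences are cosmetic: the paper first normalises case $\mathcal{E}_\len^{\mathrm{II}}$ via \cite[Lemma~4.3]{APOV} (so $\mu=m=v(b)$) and treats case $\mathrm{III}$ uniformly through $(\bar\alpha:\bar\rho)\in\mathbb{P}^1(\kk)$, identifying types $\Msha$ and $\Nsha$ by exhibiting explicit conjugating matrices, whereas you run the valuation subcases directly and identify these types by the abstract algebra structure (a nontrivial idempotent together with a square-zero element, resp.\ a regular nilpotent) -- which does determine the $\GL_3(\kk)$-conjugacy class, but only after a short extra observation (the idempotent splits $\kk^3$ into summands of dimensions $1$ and $2$, resp.\ $\kk[X]=\Cen_{\gl_3(\kk)}(X)$ and all regular nilpotents are conjugate) that you should make explicit.
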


\begin{proof}
  We put $\mu = \mu_\len(m,a,b)$, and throughout we use
  Proposition~\ref{prop:centraliser.of.E.new}.

  \smallskip

  (1) Suppose that $E_\len \in \mathcal{E}_\len^{\mathrm{I}}$.  Then
  the congruences~\eqref{eq:cong.new} hold trivially and
  $\Cen_{\gl_3(\lri)}(E_\len)$ consists of all the matrices which are
  congruent modulo $\fp^\len$ to matrices of the form
  \[
  F_{\infty,0,0,c}(t_1,t_2,s_1,s_2,s_3) = \Mat
  {t_1}{-c{s_1}}{s_1}{s_2}{t_2}{s_3}{0}{0}{{t_2}+c{s_3}}.
  \]
  Since $v(c) > 0$, we deduce that the collection of the reductions
  modulo $\fp$ of these matrices, i.e.\ the Lie centraliser shadow
  $\shg(E_\len)$, is equal to the centraliser of the matrix
  $\left[\begin{smallmatrix}
      0 & 0 & 0 \\
      0 & 0 & 1 \\
      0 & 0 & 0
    \end{smallmatrix} \right] \in \gl_3(\kk)$.  Hence $\shGC(\cC)$ has
  type~$\Jsha$.

  \smallskip

  (2) Suppose that $E_\len \in \mathcal{E}_\len^{\mathrm{II}}$.
  By~\cite[Lemma 4.3]{APOV}, we may assume that $\mu = m = v(b) \leq
  v(a)$.  Setting $\alpha = a/\pi^\mu$ and $\beta = b/\pi^\mu$, we
  have $v(\alpha) \geq 0$ and $v(\beta)=0$.  Moreover, the
  congruences~\eqref{eq:cong.new} are equivalent to the conditions
  $\alpha s_1 \equiv_{\fp^{\len - \mu}} s_2$ and $\beta s_1 \equiv
  _{\fp^{\len - \mu}} t_2-t_1$.  From this we deduce that the Lie
  centraliser shadow of $E_\len$ is
  \[
  \shg(E_\len) = \left\{ \left[
      \begin{smallmatrix}
        t_1  & 0 & (t_2 - t_1)/ \bar{\beta} \\
        \bar\alpha (t_2 - t_1)/ \bar\beta & \, t_2 & s_3 \\
        0 & 0 & t_2
      \end{smallmatrix} \right] \mid t_1,t_2,s_3 \in \kk  \right\}.
  \]
  Left-conjugation by
  $\left[\begin{smallmatrix}
      1  & 0& -1 / \bar{\beta} \\
      \bar\alpha/ \bar\beta & 1 & 0 \\
      0 & 0 & 1
    \end{smallmatrix} \right]$ maps $\shg(E_\len)$ onto the
  centraliser of $\left[\begin{smallmatrix}
      0 & 0 & 0 \\
      0 & 1 & 1 \\
      0 & 0 & 1
    \end{smallmatrix} \right] \in \gl_3(\kk)$. Hence $\shGC(\cC)$
  has type~$\Msha$.

  \smallskip

  (3),(4),(5) Suppose $E_\len \in \mathcal{E}_\len^{\mathrm{III}}$ so
  that $\mu = \min \{m, v(a) \} < v(b)$.  Setting $\alpha =
  a/\pi^\mu$, $\beta = b/\pi^\mu$ and $\rho = \pi^m/\pi^\mu$, we have
  $\min \{ v(\alpha), v(\rho) \} = 0$ and $v(\beta)>0$.

  The congruences~\eqref{eq:cong.new} are equivalent to the conditions
  $\alpha s_1 \equiv_{\fp^{\len - \mu}} \rho s_2$, $\beta s_1
  \equiv_{\fp^{\len - \mu}} \rho (t_2-t_1)$ and $\beta s_2
  \equiv_{\fp^{\len - \mu}} \alpha(t_2-t_1)$.  From this we deduce
  that
  \[
  \shg(E_\len) = \left\{\left[
      \begin{smallmatrix}
        t  & 0 &  s_1 \\
        s_2 & t & {s_3} \\
        0 & 0 & t
      \end{smallmatrix} \right] \mid t, s_1, s_2, s_3 \in \kk \text{
      such that } \bar{\alpha} s_1 = \bar{\rho} s_2  \right\}.
  \]
  Consideration of $(\bar\alpha : \bar\rho) \in \mathbb{P}^1(\kk)$
  leads naturally to the distinction into subcases $E_\len \in
  \mathcal{E}_\len^{\mathrm{III,1}}$, $E_\len \in
  \mathcal{E}_\len^{\mathrm{III,0}}$ and $E_\len \in
  \mathcal{E}_\len^{\mathrm{III,\infty}}$.

  First suppose that $E_\len \in \mathcal{E}_\len^{\mathrm{III,1}}$.
  Then $(\bar{\alpha} : \bar{\rho}) \not\in \{ (0:1), (1:0) \}$ and
  left-conjugation by $\left[\begin{smallmatrix}
      0 & \bar{\alpha} / \bar{\rho} & 0 \\
      \bar{\alpha} / \bar{\rho} & 0 & 0 \\
      0 & 0 & 1
    \end{smallmatrix} \right]$ maps $\shg(E_\len)$ onto the
  centraliser of $\left[\begin{smallmatrix}
      0 & 1 & 0 \\
      0 & 0 & 1 \\
      0 & 0 & 0
    \end{smallmatrix} \right] \in \gl_3(\kk)$.  Hence $\shGC(\cC)$ has
  type~$\Nsha$.  Now suppose that $E_\len \in
  \mathcal{E}_\len^{\mathrm{III,0}} \cup
  \mathcal{E}_\len^{\mathrm{III,\infty}}$.  Then $(\bar{\alpha} :
  \bar{\rho}) \in \{ (0:1), (1:0) \}$, where $(0:1)$ matches the
  subscript $0$ and $(1:0)$ matches $\infty$, and $\shg(E_\len)$ is
  equal to
  \begin{equation*}
    \left\{\left[\begin{smallmatrix}
          {t}  & 0&  s_1 \\
          0 & {t} & {s_3} \\
          0 & 0 & {t}
        \end{smallmatrix} \right] \mid t, s_1, s_3 \in \kk \right\}
    \quad \text{or} \quad \left\{\left[\begin{smallmatrix}
          {t}  & 0&  0 \\
          s_2 & {t} & {s_3} \\
          0 & 0 & {t}
        \end{smallmatrix} \right] \mid t, s_2, s_3 \in \kk \right\}
  \end{equation*}
  accordingly.  The corresponding shadows are of types $\Kasha$
  and~$\Kbsha$; see Section~\ref{subsec:branching}.
\end{proof}

\begin{cor} \label{cor:no.overlap} Each similarity class of a matrix
  in $\mathcal{E}_\len$ intersects precisely one of the five sets
  $\mathcal{E}_\len^{\mathrm{I}}$, $\mathcal{E}_\len^{\mathrm{II}}$,
  $\mathcal{E}_\len^{\mathrm{III,1}}$,
  $\mathcal{E}_\len^{\mathrm{III,0}}$ and
  $\mathcal{E}_\len^{\mathrm{III,\infty}}$.
\end{cor}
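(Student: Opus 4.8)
The plan is to deduce the corollary from Proposition~\ref{shadows.case.v.new} together with the fact that the shadow, and hence the shadow type, of a similarity class does not depend on the chosen representative. First I would observe that the `at least one' half is immediate: by the very way they were introduced, the five sets $\mathcal{E}_\len^{\mathrm{I}}$, $\mathcal{E}_\len^{\mathrm{II}}$, $\mathcal{E}_\len^{\mathrm{III, 1}}$, $\mathcal{E}_\len^{\mathrm{III, 0}}$, $\mathcal{E}_\len^{\mathrm{III, \infty}}$ are pairwise disjoint with union $\mathcal{E}_\len$, so any similarity class meeting $\mathcal{E}_\len$ meets at least one of them.

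For the `at most one' half I would argue by contradiction: suppose a similarity class $\cC$ contains a matrix $E_\len$ in one of the five sets and a matrix $E_\len'$ in another. Since similar matrices have conjugate centralisers in $\GL_3(\lri)$, they have the same shadow, so the type of $\shGC(\cC)$ is forced simultaneously by $E_\len$ and by $E_\len'$. By Proposition~\ref{shadows.case.v.new} the five sets carry, in the order listed, the shadow types $\Jsha$, $\Msha$, $\Nsha$, $\Kasha$, $\Kbsha$; once these are known to be pairwise distinct, it follows that $E_\len$ and $E_\len'$ lie in the same set, a contradiction. This completes the argument.

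The only step requiring genuine care --- and the point I expect to be the main obstacle --- is the pairwise distinctness of these five shadow types. For $\Jsha$, $\Msha$, $\Nsha$ it is clear from the shape of the minimal polynomial of the reduction modulo $\fp$, equivalently from the isomorphism types recorded in Table~\ref{tab:shadows.iso.GL}. The delicate pair is $\Kasha$ versus $\Kbsha$, whose representative centralisers share the abstract isomorphism type $\GL_1(\KK) \times \aG_\text{a}(\KK) \times \aG_\text{a}(\KK)$; here I would simply quote the computation already carried out in the discussion of Table~\ref{tab:shadows.iso.GL} in Section~\ref{subsec:branching}, where the representative groups $H_0$ and $H_\infty$ are shown not to be conjugate in $\GL_3(\kk)$, so that $\Kasha$ and $\Kbsha$ are genuinely distinct shadows. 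With this in place the corollary follows formally.
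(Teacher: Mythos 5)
Your proposal is correct and matches the paper's intended argument: the corollary is stated immediately after Proposition~\ref{shadows.case.v.new} precisely because it follows from the disjoint decomposition of $\mathcal{E}_\len$ together with the fact that the five sets yield the pairwise distinct shadow types $\Jsha$, $\Msha$, $\Nsha$, $\Kasha$, $\Kbsha$, the shadow being a similarity-class invariant, with the distinctness of $\Kasha$ and $\Kbsha$ (and their distinctness from $\Nsha$) established in the discussion of Table~\ref{tab:shadows.iso.GL} in Section~\ref{subsec:branching}.
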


\subsubsection{Representatives} \label{sec:reps} We extend
Lemma~\ref{lem:mu-invariant} as follows.

\begin{prop}\label{action.on.parameters.J.new} Let $E_\len =
  E_\len(m,a,b,c,d) \in \mathcal{E}_\len$ and $\mu = \mu_\len(m,a,b)$.
  Then $\mu$ and the reduction of $d$ modulo $\pi^\mu$ are invariants
  of the similarity class of $E_\len$.  Moreover, $E_\len$ is similar
  to a matrix $E_\len' = E_\len(m',a',b',c',d')$ with $d' \in
  \varsigma(\lri_{\mu})$, and $m' = \mu$ if $E_\len \in
  \mathcal{E}_\len^{\mathrm{I}} \cup \mathcal{E}_\len^{\mathrm{II}}$.
\end{prop}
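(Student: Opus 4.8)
The plan is to build on two facts already available: Lemma~\ref{lem:mu-invariant}, which gives the invariance of $\mu = \mu_\len(m,a,b)$ and of $d \bmod \fp^\mu$, and Proposition~\ref{prop:centraliser.of.E.new}, which describes the centralisers and (implicitly) the conjugating elements. Since the first sentence of the statement is literally Lemma~\ref{lem:mu-invariant}, the real work is the ``moreover'' clause: exhibiting an explicit conjugate $E_\len' = E_\len(m',a',b',c',d')$ with $d'$ a chosen representative modulo $\fp^\mu$ and, in the cases $\mathrm{I}$ and $\mathrm{II}$, with $m' = \mu$. First I would handle the parameter $d$: conjugating $E_\len(m,a,b,c,d)$ by a suitable unipotent (a matrix differing from $\Id_3$ only in entries that shift $d$) adjusts $d$ by an element of $\fp^\mu$ without changing $\mu$, the type, or the congruence class of $d$ modulo~$\fp^\mu$; alternatively, one simply observes that subtracting a scalar multiple of $\Id_3$ is not allowed (it changes $c$, not $d$), so instead one uses the freedom in the normal form from Theorem~\ref{thm:sim.class.ring}(v) together with \cite[Lemma~4.3]{APOV}-type moves. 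Concretely, I expect a conjugation by $\diag(u,u,u)$ or by an elementary matrix $E_{13}(x)$, $E_{32}(y)$ etc.\ to replace $d$ by any prescribed lift in $\varsigma(\lri_\mu)$.

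Next I would treat the normalisation $m' = \mu$ in cases $\mathrm{I}$ and~$\mathrm{II}$. In case $\mathrm{I}$, by definition $\mu_\len(m,a,b) = \len$, so $m, v(a), v(b) \geq \len$, meaning that modulo $\fp^\len$ we have $\pi^m \equiv 0$ and $a \equiv b \equiv 0$; thus $E_\len(m,a,b,c,d)$ already coincides with $E_\len(\len, 0,0,c,d)$, i.e.\ we may take $m' = \len = \mu$. In case $\mathrm{II}$, where $\mu = v(b) \leq \min\{m, v(a)\} < \len$, the cited \cite[Lemma~4.3]{APOV} already allows one to arrange $m = v(b) = \mu$ (this is exactly the move used in the proof of Proposition~\ref{shadows.case.v.new}(2)); I would quote this and then note that after this move $a'$ may be replaced by a representative with $v(a') \geq \mu$ and $b'$ by $\pi^\mu\beta$ with $\beta$ a unit, and $c', d'$ likewise reduced — all by conjugations by block-diagonal or elementary matrices drawn from $\Cen_{\GL_3(\lri)}$ of the relevant reduced matrix, using Proposition~\ref{prop:centraliser.of.E.new}.

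The main obstacle, I expect, is bookkeeping rather than conceptual: one must check that the conjugations used to normalise $d$ do not disturb the defining inequalities of the set $\mathcal{E}_\len^{\mathrm{I}}$, $\mathcal{E}_\len^{\mathrm{II}}$, $\mathcal{E}_\len^{\mathrm{III},\bullet}$ to which $E_\len$ belongs — equivalently, that they preserve $\mu_\len(m,a,b)$ and the pattern of which of $m, v(a), v(b)$ realises the minimum. By Corollary~\ref{cor:no.overlap} the five sets partition the similarity classes, so it suffices to verify that the conjugating matrices lie in a subgroup that acts within the relevant stratum; the centraliser description in Proposition~\ref{prop:centraliser.of.E.new} makes the admissible conjugations transparent and should close this gap. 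I would end by remarking that the claim $m' = \mu$ genuinely fails for the type-$\mathrm{III}$ cases (where the distinction $\mathrm{III},0$ versus $\mathrm{III},\infty$ is precisely about whether $m$ or $v(a)$ is smaller), which is why the normalisation is asserted only for $\mathrm{I}$ and $\mathrm{II}$, and then invoke this proposition together with Proposition~\ref{shadows.case.v.new} to set up the full list of representatives in Theorem~\ref{substitute.for.AOPV.new}.
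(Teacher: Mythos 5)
Your overall skeleton is right where it is easy: the first sentence is indeed Lemma~\ref{lem:mu-invariant}, case I is trivial because $\mu=\len$ there, and in case II the paper does exactly what you suggest, invoking \cite[Lemma~4.3]{APOV} to arrange $m=v(b)=\mu$. The genuine gap is the mechanism for moving $d$ to the chosen representative of its class modulo $\fp^\mu$. You assert that ``a suitable unipotent'' or ``an elementary matrix'' will shift $d$ by an arbitrary element of $\fp^\mu$, but none of your concrete suggestions can work: conjugation by $\diag(u,u,u)$ is conjugation by a central element and does nothing; conjugation by elements of $\Cen_{\GL_3(\lri)}(E_\len)$ --- which is what Proposition~\ref{prop:centraliser.of.E.new} describes --- fixes $E_\len$ and so cannot renormalise any parameter; and conjugation by a single elementary transvection takes the matrix out of the family $\{E(m,a,b,c,d)\}$ (for instance, conjugating by $\Id_3+xE_{21}$ creates the $(2,1)$-entry $-x^2\pi^m$ and splits the two diagonal entries $d$). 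So the heart of the ``moreover'' clause --- producing similarity transformations that stay inside the $E$-family and shift $d$ by exactly the elements of $\fp^\mu$, which is also why $d\bmod\fp^\mu$ is the precise invariant --- is missing from your proposal, and in case III you do not address the $d$-normalisation at all beyond the generic claim.

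The paper supplies this mechanism by explicit one-parameter families adapted to which of $m,v(a)$ realises $\mu$. When $\mu=m$ (covering case II and part of case III) one conjugates by $X(x)$ with rows $(1,0,0)$, $(x,1,0)$, $(\pi^m x^2,\,2\pi^m x,\,1)$; the third row is precisely the correction your elementary-matrix guess lacks, and a direct computation gives $X(x)\,E(m,a,b,c,d)=E(m,a',b',c',\,d-\pi^m x)\,X(x)$, so $d$ moves by arbitrary multiples of $\pi^\mu$. When $\mu=v(a)<m$ one conjugates by a second (non-unipotent) family $Y(y)$, which yields $d'=d+ay$ and, notably, changes $m$ to $m'=v(\pi^m+by+acy^2-a^2y^3)$. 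Finally, the ``bookkeeping'' you identify as the main obstacle is in fact automatic: $\mu$, $d\bmod\fp^\mu$ and the stratum are already similarity invariants by Lemma~\ref{lem:mu-invariant} and Corollary~\ref{cor:no.overlap}, so nothing needs to be checked about preserving them --- the real work is the construction and verification of these conjugating families.
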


\begin{proof}
  The first part was proved in Lemma~\ref{lem:mu-invariant}.  It
  remains to show that $E_\len$ is similar to a matrix $E_\len' =
  E_\len(m',a',b',c',d')$ with $d' \in \varsigma(\lri_{\mu})$.  By
  Corollary~\ref{cor:no.overlap}, we may treat the cases $E_\len \in
  \mathcal{E}_\len^{\mathrm{I}}$, $E_\len \in
  \mathcal{E}_\len^{\mathrm{II}}$ and $E_\len \in
  \mathcal{E}_\len^{\mathrm{III}}$ one by one.

  \smallskip

  (1) Suppose that $E_\len \in \mathcal{E}_\len^{\mathrm{I}}$.  Then
  $\mu = \len$ and clearly we may assume that $d \in
  \varsigma(\lri_\mu)$.

  \smallskip

  (2) Suppose that $E_\len \in
  \mathcal{E}_\len^{\mathrm{II}}$. By~\cite[Lemma 4.3]{APOV}, we may
  assume that $\mu = m = v(b)$.  Consider the one-parameter
  subgroup
  \[
  X \colon \lri \rightarrow \GL_3(\lri), \quad x \mapsto X(x) =
  \begin{bmatrix}
    1  & 0 & 0 \\
    x & 1 & 0 \\
    \pi^mx^2 & 2\pi^mx & 1
  \end{bmatrix}
  \]
  A straightforward computation shows that for any $x \in \lri$ we
  have
  \[
  X(x) E(m,a,b,c,d) = E(m',a',b',c',d') X(x),
  \]
  where
  \[
  \begin{array}{llll}
    (1) \quad &a'= a-bx+\pi^{2m}x^3+c\pi^mx^2, & \quad (4) \quad &d'=
    d-\pi^{m}x, \\
    (2) \quad &b'= b-3\pi^{2m}x^2-2\pi^m c x, & \quad (5) \quad &m'= m. \\
    (3) \quad &c'= c+3\pi^mx,
  \end{array}
  \]
  Since $\mu = m$, we can choose $x$ such that $d' \in
  \varsigma(\lri_\mu)$ as claimed.

  \smallskip

  (3) Suppose that $E_\len \in \mathcal{E}_\len^{\mathrm{III}}$ so
  that $\mu = \min \{m,v(a)\}$. If $\mu = m$, then we can argue as
  in~(2).  Now suppose that $\mu = v(a) < m$.  Consider the
  one-parameter subgroup
  \[
  Y \colon \lri \rightarrow \GL_3(\lri), \quad y \mapsto Y(y) =
  \begin{bmatrix}
    e_0(y)^{-1}  & e_0(y)^{-1}(ay^2-cy) & e_0(y)^{-1} y \\
    0 & 1 & 0 \\
    0 & -ay & 1
  \end{bmatrix},
  \]
  where $e_0(y) = e(y) / \pi^{v(e(y))}$ with $e(y) = \pi^m + by +
  acy^2 - a^2 y^3$.  A straightforward computation shows that for any
  $y \in \lri$ we have
  \[
  Y(y) E(m,a,b,c,d) = E(m',a',b',c',d') Y(y),
  \]
  where
  \[
  \begin{array}{llll}
    (1) \quad &a'= a e_0(y), & \quad (4) \quad &d'=d+ay, \\
    (2) \quad &b'= b+2acy-3a^2y^2, & \quad (5) \quad &m'= v(e(y)). \\
    (3) \quad &c'= c-3ay,
  \end{array}
  \]
  Since $\mu = v(a)$, we can choose $y$ such that $d' \in
  \varsigma(\lri_\mu)$ as claimed.
\end{proof}

Proposition~\ref{action.on.parameters.J.new} has the following
immediate consequence.

\begin{cor} \label{cor:reduce.to.d.equal.0} Let $1 \leq \mu \leq
  \len$, and suppose that $\mathcal{R}$ is a complete and irredundant
  set of representatives for similarity classes intersected with $\{
  E_\len(m,a,b,c,0) \in \mathcal{E}_\len \mid \mu_\len(m,a,b) = \mu
  \}$.  Then
  \[
  \{ d' \Id_3 + R \mid d' \in \varsigma(\lri_\mu) \text{ and } R \in
  \mathcal{R} \}
  \]
  is a complete and irredundant set of representatives for
  similarity classes intersected with the set $\{ E_\len(m,a,b,c,d) \in
  \mathcal{E}_\len \mid \mu_\len(m,a,b) = \mu \}$.
\end{cor}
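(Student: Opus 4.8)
The plan is to deduce the corollary purely formally from Proposition~\ref{action.on.parameters.J.new}, using two elementary observations. First, by definition $E_\len(m,a,b,c,d) = d\Id_3 + E_\len(m,a,b,c,0)$, so translation by a scalar matrix only alters the last parameter; in particular $\mathcal{E}_\len$ imposes no condition on~$d$, so every matrix $d'\Id_3 + R$ with $R \in \mathcal{R}$ does belong to~$\mathcal{E}_\len$. Second, $\mu_\len(m,a,b) = \min\{m,v(a),v(b),\len\}$ does not involve~$d$, so $E_\len(m,a,b,c,0)$ and $E_\len(m,a,b,c,d)$ always lie in the same $\mu$-stratum. I will also use throughout that conjugation commutes with translation by scalars, $g(A+d\Id_3)g^{-1} = gAg^{-1} + d\Id_3$, so that two matrices are similar if and only if their translates by a fixed scalar matrix are similar.

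\emph{Completeness.} Let $E_\len(m,a,b,c,d) \in \mathcal{E}_\len$ with $\mu_\len(m,a,b) = \mu$. By the normalisation part of Proposition~\ref{action.on.parameters.J.new} it is similar to some $E_\len(m',a',b',c',d') \in \mathcal{E}_\len$ with $d' \in \varsigma(\lri_\mu)$; write this matrix as $d'\Id_3 + E_\len(m',a',b',c',0)$. Since $\mu$ is a similarity invariant we have $\mu_\len(m',a',b') = \mu$, and as $\mu_\len$ does not depend on the last parameter, $E_\len(m',a',b',c',0)$ lies in the set for which $\mathcal{R}$ is, by hypothesis, a complete set of representatives. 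Choosing $R \in \mathcal{R}$ similar to it and adding $d'\Id_3$ produces an element $d'\Id_3 + R$ of the proposed list which is similar to the original matrix.

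\emph{Irredundancy.} Suppose $d_1'\Id_3 + R_1$ and $d_2'\Id_3 + R_2$ are similar, with $d_i' \in \varsigma(\lri_\mu)$ and $R_i \in \mathcal{R}$. Each $R_i$ has $\mu$-parameter equal to~$\mu$ and last parameter~$0$, so $d_i'\Id_3 + R_i$ is a matrix in $\mathcal{E}_\len$ whose invariant given by the reduction of the last parameter modulo $\pi^\mu$ equals~$d_i'$. By the invariance part of Proposition~\ref{action.on.parameters.J.new}, $d_1' \equiv d_2' \pmod{\pi^\mu}$, and since $\varsigma(\lri_\mu)$ is a transversal for $\lri/\fp^\mu$ in $\lri$ this forces $d_1' = d_2'$. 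Subtracting the common scalar matrix $d_1'\Id_3 = d_2'\Id_3$ from both sides of the given similarity shows that $R_1$ is similar to~$R_2$, whence $R_1 = R_2$ by irredundancy of~$\mathcal{R}$.

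\emph{Main obstacle.} There is essentially none; the corollary is a formal consequence of Proposition~\ref{action.on.parameters.J.new}. The only points requiring (trivial) care are that the normalised matrix produced by that proposition still lies in the stratum $\{\mu_\len = \mu\}$, which is immediate from the conjugation-invariance of~$\mu$, and that translating an element of $\mathcal{R}$ by a scalar does not leave $\mathcal{E}_\len$, which holds because $\mathcal{E}_\len$ places no condition on~$d$.
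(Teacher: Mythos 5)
Your proof is correct and is essentially the argument the paper intends: the corollary is presented there as an immediate consequence of Proposition~\ref{action.on.parameters.J.new}, with no further proof given. Your completeness and irredundancy steps, using the invariance of $\mu$ and of $d$ modulo $\fp^\mu$ together with the compatibility of similarity with scalar shifts, are exactly the routine verification the paper leaves to the reader.
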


\begin{thm}\label{substitute.for.AOPV.new}
  A complete and irredundant set of representatives for the similarity
  classes intersected with the set $\mathcal{E}_\len$ is obtained as
  follows.
  \begin{enumerate}
  \item Every $E_\len \in \mathcal{E}_\len^{\mathrm{I}}$ is similar to
    a unique matrix of the form $E_\len' = E_\len(\len,0,0,c',d')$,
    where $c' \in \varsigma(\lri_\len)$ with $v(c')>0$ and $d' \in
    \varsigma(\lri_\len)$.
  \item Every $E_\len \in \mathcal{E}_\len^{\mathrm{II}}$ is similar
    to a unique matrix of the form $E_\len' =
    E_\len(\mu,a',b',c',d')$, where $1 \leq \mu < \len$, $\; a',b' \in
    \varsigma(\lri_\len)$ with $\mu = v(b') \leq v(a')$, $\; c' \in
    \varsigma(\lri_\len)$ with $v(c')>0$ and $d' \in
    \varsigma(\lri_\mu)$.
  \item Every $E_\len \in \mathcal{E}_\len^{\mathrm{III,1}}$ is
    similar to a unique matrix of the form $E_\len' =
    E_\len(\mu,a',b',c',d')$, where $1\leq \mu < \len$, $\; a',b' \in
    \varsigma(\lri_\len)$ with $\mu = v(a') < v(b')$, $\; c' \in
    \varsigma(\lri_\len)$ with $v(c')>0$ and $d' \in
    \varsigma(\lri_\mu)$.
  \item Every $E_\len \in \mathcal{E}_\len^{\mathrm{III,0}}$ is
    similar to a unique matrix of the form $E_\len' =
    E_\len(\mu,a',b',c',d')$, where $1 \leq \mu < \len$, $\; a',b' \in
    \varsigma(\lri_\len)$ with $\mu < v(a')$ and $\mu < v(b')$, $\; c'
    \in \varsigma(\lri_\len)$ with $v(c')>0$ and $d' \in
    \varsigma(\lri_\mu)$.
  \item Every $E_\len \in \mathcal{E}_\len^{\mathrm{III,\infty}}$ is
    similar to a unique matrix of the form $E_\len' =
    E_\len(m,a',b',c',d')$, where $1 \leq \mu < \len$, $\; \mu < m
    \leq \len$, $\; a' \in \varsigma(\lri_{\len-m+\mu})$ and $b' \in
    \varsigma(\lri_\len)$ with $\mu = v(a') < v(b')$, $\; c' \in
    \varsigma(\lri_\len)$ with $v(c')>0$ and $d' \in
    \varsigma(\lri_\mu)$.
  \end{enumerate}
\end{thm}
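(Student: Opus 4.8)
The plan is to reduce at once to the case $d = 0$ and then treat the five sets $\mathcal{E}_\len^{\mathrm{I}}$, $\mathcal{E}_\len^{\mathrm{II}}$, $\mathcal{E}_\len^{\mathrm{III,1}}$, $\mathcal{E}_\len^{\mathrm{III,0}}$, $\mathcal{E}_\len^{\mathrm{III,\infty}}$ one at a time. This is legitimate: by Corollary~\ref{cor:no.overlap} each similarity class meeting $\mathcal{E}_\len$ meets precisely one of these sets; by Lemma~\ref{lem:mu-invariant} the parameter $\mu = \mu_\len(m,a,b)$ and the residue $d \bmod \fp^\mu$ are similarity invariants; and by Corollary~\ref{cor:reduce.to.d.equal.0} it suffices, for each set and each value of $\mu$, to produce a complete irredundant system of representatives among the matrices $E_\len(m,a,b,c,0)$ with $\mu_\len(m,a,b) = \mu$, the general statement then following by adding $d'\Id_3$ with $d' \in \varsigma(\lri_\mu)$. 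As $E_\len(m,a,b,c,d) - d\Id_3 = E_\len(m,a,b,c,0)$, all conjugations below may be performed on these $d = 0$ shapes. For each set I will establish \emph{exhaustiveness} (every pertinent class contains a matrix of the asserted shape) and \emph{irredundancy} (two distinct such matrices lie in distinct classes).

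For exhaustiveness the essential conjugations are the one-parameter subgroups $X(x)$ and $Y(y)$ from the proof of Proposition~\ref{action.on.parameters.J.new}, together with the normalisations of \cite[\S4]{APOV}, in particular \cite[Lemma~4.3]{APOV}, which in case $\mathrm{II}$ lets one assume $\mu = m = v(b) \le v(a)$ and in case $\mathrm{III}$ lets one assume $\mu = \min\{m, v(a)\} < v(b)$. In case $\mathrm{I}$ one has $\mu = \len$, so $\pi^m \equiv a \equiv b \equiv 0 \pmod{\fp^\len}$ and the class already contains $E_\len(\len, 0, 0, c, 0)$ with $v(c) > 0$. In the remaining cases, having arranged $d = 0$ via $X(x)$ or $Y(y)$ as in Proposition~\ref{action.on.parameters.J.new}, the parameters $a, b, c \in \lri$ are simply replaced by their representatives in $\varsigma(\lri_\len)$; a short valuation check shows that the inequalities prescribed in the statement ($\mu = v(b') \le v(a')$ for $\mathrm{II}$; $\mu = v(a') < v(b')$ for $\mathrm{III,1}$; $\mu < v(a'), v(b')$ for $\mathrm{III,0}$; $\mu = v(a') < v(b')$ and $\mu < m \le \len$ for $\mathrm{III,\infty}$) survive all the moves used. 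The only subtle point is the claim in case $\mathrm{III,\infty}$ that $a'$ may be taken in $\varsigma(\lri_{\len - m + \mu})$: here one exhibits, starting from the transformation rules $a' = a\, e_0(y)$, $m' = v(e(y))$ with $e(y) = \pi^m + by + acy^2 - a^2 y^3$, a family of conjugations altering $a$ by $\fp^{\len - m + \mu}$ while preserving $m$ and $d$, so that $a$ is determined only to that precision.

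For irredundancy, fix two normal forms $E' = E_\len(m', a', b', c', 0)$ and $E'' = E_\len(m'', a'', b'', c'', 0)$ belonging to the same set and with $\mu_\len = \mu$, and suppose $g E' = E'' g$ for some $g \in \GL_3(\lri)$. Since $m', m'' \ge 1$ and $v(a'), v(b'), v(c') > 0$, reduction modulo $\fp$ forces $\bar g$ into the group of invertible matrices $X \in \gl_3(\kk)$ centralising $\left[\begin{smallmatrix} 0&0&0\\0&0&1\\0&0&0\end{smallmatrix}\right]$, i.e.\ with $X_{12} = X_{31} = X_{32} = 0$ and $X_{22} = X_{33}$. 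Lifting $g$ to a matrix of this shape plus $\pi\gl_3(\lri)$ and solving $g E' = E'' g$ successively modulo higher powers of $\fp$ — using the explicit centraliser of $E'$ from Proposition~\ref{prop:centraliser.of.E.new} to organise the residual freedom — one reads off in turn that $c' \equiv c''$ (already clear since $c = \tr E_\len(m,a,b,c,0)$ is a similarity invariant), that $m' = m''$, and that $a' \equiv a''$, $b' \equiv b''$ at the prescribed precision, hence $E' = E''$. In case $\mathrm{III,\infty}$ one first checks that $m$ (in addition to $\mu$ and $d \bmod \fp^\mu$) is a similarity invariant: this follows from the elementary-divisor structure of $E_\len(m,a,b,c,0)$ over $\lri_\len$ by an argument parallel to the one establishing Lemma~\ref{lem:mu-invariant}.

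The main obstacle is precisely this bookkeeping in the ``mixed'' cases $\mathrm{III,1}$ with $\mu = v(a) < m$ and, above all, $\mathrm{III,\infty}$: one must track which conjugations survive after $d$ has been normalised, control their coupled effect on $(m, a, b, c)$ through the rational functions $e(y)$ and $e_0(y)$, and determine exactly the $\fp$-adic precision to which each parameter is thereby pinned down — this is what produces the asymmetric ranges in the statement, notably the appearance of $\varsigma(\lri_{\len - m + \mu})$. A secondary complication is that the formulae for $X(x)$ and $Y(y)$ involve the integer $3$, so the argument must run without inverting $3$ in $\lri$; in residue characteristic $3$ certain moves degenerate and have to be replaced by combinations of the remaining ones, but the resulting normal forms are unaffected.
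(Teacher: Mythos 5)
Your scaffolding (reduction to $d=0$ via Corollary~\ref{cor:reduce.to.d.equal.0}, the invariance of $\mu$ and of $d \bmod \fp^\mu$, the invariance of $b$, $c$ and $a\pi^m$ modulo $\fp^\len$ coming from the characteristic polynomial, the invariance of $m$ and $v(a)$ in case III via elementary divisors, and the case-by-case treatment) coincides with the paper's. But the decisive step is missing: everything in cases II, III,1, III,0, III,$\infty$ reduces to the single question of \emph{to what $\fp$-adic precision the parameter $a$ is pinned down}, i.e.\ one must prove that $E_\len(m,a,b,c,0)$ and $E_\len(m,a',b,c,0)$ are similar if and only if $a'\equiv a \pmod{\fp^\len}$ when $\mu=m\le v(a)$, and if and only if $v(a')=v(a)=\mu$ and $a'\equiv a\pmod{\fp^{\len-m+\mu}}$ when $\mu=v(a)<m$. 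Note that the characteristic polynomial only pins $a$ modulo $\fp^{\len-m}$, so in cases II, III,1, III,0 irredundancy genuinely requires an argument beyond the listed invariants, and in case III,$\infty$ both directions are needed (necessity for irredundancy, sufficiency for completeness, since it is exactly this equivalence that justifies taking $a'\in\varsigma(\lri_{\len-m+\mu})$). Your proposal asserts these facts (``one reads off \dots at the prescribed precision'', ``one exhibits a family of conjugations altering $a$ by $\fp^{\len-m+\mu}$'') but does not establish them; this is precisely the content of the paper's proof, which writes out the full intertwining system $E_\len X = X E_\len'$ entry by entry, simplifies it to the system~\eqref{equ:matrix.entries.2}, and extracts the condition $k\ge\len$, respectively $k\ge\len-m+\mu$, on $k=v(a'-a)$, together with the converse construction of an intertwiner when the inequality holds.

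Moreover, the tools you name do not suffice as stated. The one-parameter families $X(x)$, $Y(y)$ of Proposition~\ref{action.on.parameters.J.new} simultaneously perturb $b$, $c$ and $d$; once you constrain them to preserve $b,c$ modulo $\fp^\len$ and $d$ modulo $\fp^\mu$, they do not visibly realise changes of $a$ at the critical precision $\len-m+\mu$, so the completeness of the III,$\infty$ list is not obtained this way. Likewise, the centraliser of $E'$ from Proposition~\ref{prop:centraliser.of.E.new} describes self-intertwiners; the set of intertwiners between two \emph{distinct} candidate normal forms is only a coset of it \emph{after} one knows it is nonempty, which is the very point at issue, so it cannot ``organise the residual freedom'' without redoing the full computation. (Your worry about residue characteristic~$3$ is a non-issue: the formulae for $X(x)$, $Y(y)$ multiply by $3$ but never divide by it, and the paper's intertwining argument does not invert $3$ anywhere.) In short, the proposal is a correct plan with the crux acknowledged but not executed; carrying out the intertwining computation, as in the paper, is what remains.
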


\begin{proof}
  By Corollary~\ref{cor:reduce.to.d.equal.0} it suffices to consider
  matrices $E_\len(m,a,b,c,d) \in \mathcal{E}_\len$ with $d=0$.  Fix
  $E_\len = E_\len(m,a,b,c,0) \in \mathcal{E}_\len$ and set $\mu =
  \mu_\len(m,a,b)$.  Without loss of generality we may assume that $m,
  v(a), v(b), v(c) \leq \len$.  We need to understand for which
  $m',a',b',c'$ the given matrix $E_\len$ is similar to $E_\len' =
  E_\len(m',a',b',c',0)$.  Lemma~\ref{lem:mu-invariant} shows that
  $\mu$ is an invariant of the similarity class of~ $E_\len$ and, by
  Corollary~\ref{cor:no.overlap}, we can investigate each of the five
  subsets $\mathcal{E}_\len^{\mathrm{I}}$,
  $\mathcal{E}_\len^{\mathrm{II}}$,
  $\mathcal{E}_\len^{\mathrm{III,1}}$,
  $\mathcal{E}_\len^{\mathrm{III,0}}$ and
  $\mathcal{E}_\len^{\mathrm{III,\infty}}$ separately.  The
  characteristic polynomial of $E_\len$ is $t^3 - c t^2 - b t - a
  \pi^m \in \lri_\len[t]$.  Hence, modulo~$\fp^\len$, the parameters
  $b$, $c$ and $a \pi^m$ are invariants of the similarity class
  of~$E_\len$.

  \smallskip

  (1) Suppose that $E_\len \in \mathcal{E}_\len^{\mathrm{I}}$.  Since
  $\mu = \min \{ m, v(a), v(b), \len \}$ and $c$ modulo $\fp^\len$ are
  invariants of the similarity class of $E_\len$, the claim follows.

  \smallskip

  (2) Suppose that $E_\len \in \mathcal{E}_\len^{\mathrm{II}}$.
  By~\cite[Lemma 4.3]{APOV}, we may assume that $\mu = m = v(b) \leq
  v(a)$ and we may restrict our attention to possible conjugates
  $E_\len' = E_\len(\mu,a',b,c,0)$, where $a' \in \lri$ with $v(a')
  \geq \mu$ and $v(a' - a) \geq \len -m$.  Part of the analysis for
  cases~(3),(4) below, which only requires $\mu = m \leq v(a)$, shows
  that --~in the present situation~-- for $E_\len'$ to be similar to
  $E_\len$ it is necessary that $a' \equiv_{\fp^\len} a$ and the claim
  follows.

  \smallskip

  (3),(4),(5) Suppose that $E_\len \in
  \mathcal{E}_\len^{\mathrm{III}}$.  We already observed that we can
  investigate the subsets $\mathcal{E}_\len^{\mathrm{III,1}}$,
  $\mathcal{E}_\len^{\mathrm{III,0}}$ and
  $\mathcal{E}_\len^{\mathrm{III,\infty}}$ separately.  Inspection
  shows that the elementary divisors of $E_\len$ are $1$, $\pi^m$ and
  $\pi^{v(a)}$.  Hence not only $\mu$, but even $m$ and $v(a)$ are
  invariants of the similarity class of $E_\len$.

  Thus we may restrict our attention to possible conjugates $E_\len' =
  E_\len(m,a',b,c,0)$, where $a' \in \lri$ with $v(a') = v(a)$ and
  $v(a' - a) \geq \len -m$.  Writing $a' = a + y \pi^k$ with $y \in
  \lri^\times$ and $k \geq \max \{0,\len-m\}$, we study the equation
  \begin{equation} \label{equ:EX.equals.XE} E_\len X = X E_\len',
    \qquad \text{for $X =(x_{ij}) \in \GL_3(\lri)$.}
  \end{equation}
  During the rest of the proof we abbreviate $\equiv_{\fp^\len}$ to
  $\equiv$; in all other congruences we continue to write the modulus
  explicitly.  Comparing individual matrix entries, as
  in~\cite[Section~4]{APOV}, we obtain the following collection of
  necessary and sufficient conditions for \eqref{equ:EX.equals.XE} to
  hold:
  \begin{equation*}
    \begin{array}{llll}
      (2,1) \quad & x_{31} \equiv (a + y \pi^k) x_{23}, & \quad (3,1) \quad & a
      x_{11} + b x_{21} + c x_{31} - (a + y \pi^k) x_{33} \equiv 0, \\
      (2,3) \quad & x_{33} \equiv x_{22} + c x_{23}, & \quad (1,2) \quad & \pi^m
      x_{11} \equiv \pi^m x_{22} - b x_{13}, \\
      (2,2) \quad & x_{32} \equiv \pi^m x_{21} + b x_{23}, & \quad (1,1) \quad &
      \pi^m x_{21} \equiv (a + y\pi^k) x_{13}, \\
      (3,3) \quad & x_{32} \equiv a x_{13} + b x_{23}, & \quad (3,2) \quad & a
      x_{12} + b x_{22} + c x_{32} - \pi^m x_{31} - b x_{33} \equiv 0, \\
      (1,3) \quad & x_{12} \equiv \pi^m x_{23} - c x_{13}, & \quad
      (X & \hspace*{-.6cm} \text{invertible}) \quad x_{11}, x_{22} \in
      \lri^\times.
    \end{array}
  \end{equation*}


  We claim that these conditions are equivalent to the modified
  conditions~\ref{equ:matrix.entries.2} below.  Indeed, using $(2,2)$,
  we can replace $(3,3)$ by $(3,3)'$.  Using $(3,3)'$, we can replace
  $(1,1)$ by $(1,1)'$.  Using $(2,1)$ and $(2,3)$, we can replace
  $(3,1)$ by $(3,1)'$.  Using $(2,1)$, $(2,2)$, $(2,3)$, $(1,3)$ and
  $(3,3)'$, we see that $(3,2)$ can replaced by $\pi^m y \pi^k x_{23}
  \equiv 0$ which holds automatically due to $k \geq \len - m$.
  \begin{equation} \label{equ:matrix.entries.2}
    \begin{array}{llll}
      (2,1) \quad & x_{31} \equiv (a + y \pi^k) x_{23}, & \quad (3,1)' \!\!\quad
      & a x_{11} \equiv (a + y \pi^k) x_{22} - b x_{21}, \\
      (2,3) \quad & x_{33} \equiv x_{22} + c x_{23}, & \quad (1,2) \quad & \pi^m
      x_{11} \equiv \pi^m x_{22} - b x_{13}, \\
      (2,2) \quad & x_{32} \equiv \pi^m x_{21} + b x_{23}, & \quad (1,1)'
      \!\!\quad & y \pi^k x_{13} \equiv 0, \\
      (3,3)' \!\!\quad & \pi^m x_{21} \equiv a x_{13}, &  \\
      (1,3) \quad & x_{12} \equiv \pi^m x_{23} - c x_{13}, & \quad
      (X & \hspace*{-.6cm} \text{invertible}) \quad x_{11}, x_{22} \in
      \lri^\times.
    \end{array}
  \end{equation}


  First suppose that $E_\len \in \mathcal{E}_\len^{\mathrm{III,1}}
  \cup \mathcal{E}_\len^{\mathrm{III,0}}$, corresponding to
  cases~(3),(4).  Then $\mu = m \leq \min \{v(a),v(b)\}$ and $(3,3)'$ is
  equivalent to
  \[
  (3,3)'' \quad x_{21} \equiv_{\fp^{\len-\mu}} a \pi^{-m} x_{13}.
  \]
  Multiplying $(1,2)$ by $a \pi^{-m}$ and using $(3,3)''$, we obtain
  \[
  a x_{11} \equiv a x_{22} - a \pi^{-m} b x_{13} \equiv a x_{22} - b x_{21}.
  \]
  Comparing with $(3,1)'$, we obtain the necessary condition $y \pi^k
  \equiv 0$, hence $k \geq \len$.  This means that $E_\len$ is similar
  to $E_\len' = E_\len(m,a',b,c,0)$ if and only if $a' \equiv a$.

  \smallskip

  Finally suppose that $E_\len \in
  \mathcal{E}_\len^{\mathrm{III,\infty}}$, corresponding to case~(5).
  Then $\mu = v(a) < \min \{ m,v(b) \}$ and $(3,3)'$ is equivalent to
  \[
  (3,3)''' \quad x_{13} \equiv_{\fp^{\len-\mu}} \pi^m a^{-1} x_{21}.
  \]
  Multiplying $(3,1)'$ by $\pi^m a^{-1}$ and using $(3,3)'''$, we
  obtain
  \[
  \pi^m x_{11} \equiv \pi^m x_{22} + \pi^m a^{-1} y \pi^k x_{22} - \pi^m
  a^{-1} b x_{21} \equiv \pi^m x_{22} - b x_{13} + \pi^m a^{-1} y \pi^k
  x_{22}.
  \]
  Comparing with $(1,2)$, we obtain the necessary condition $\pi^m
  a^{-1} y \pi^k \equiv 0$, hence
  \[
  k \geq \len - m + \mu.
  \]
  Conversely, if this inequality holds, then $(3,1)'$ implies $(1,2)$
  and $(3,3)'''$ implies $(1,1)'$.  The remaining conditions can
  easily be satisfied.  This means that $E_\len$ is similar to
  $E_\len' = E_\len(m,a',b,c,0)$ if and only if $v(a') = v(a) = \mu$
  and $a' \equiv_{\fp^{\len - m + \mu}} a$.
\end{proof}


\subsection{Proof of
  Theorem~\ref{thm:G.shad.graph}} \label{subsec:proof.main.SL}
Part~\eqref{item:G.conj.graph.1} of Theorem~\ref{thm:G.shad.graph}
follows from collecting the shadow types in
Theorem~\ref{thm:irredundant.list.for.M3x3}.  To prove
part~\eqref{item:G.conj.graph.2}, we consider a matrix $A_\len \in
\gl_3(\lri_\len)$ of the form given in
Theorem~\ref{thm:irredundant.list.for.M3x3}.  Let $\cC$ denote the
similarity class of $A_\len$.  Starting from the shadow $\sigma$ of
$\cC$, we determine the shadows $\tau$ associated to similarity
classes $\cCtilde$ of lifts of $A_\len$ to matrices
$\wt{A}_{\len+1} \in \gl_3(\lri_{\len+1})$.  We also keep track
of the multiplicities of such lifts.  The claim then follows from the
observation that in all cases the multiplicities depend only on the
shadows involved and are as listed in Table~\ref{tab:branch.rules.A2}.

\begin{list}{}{\setlength{\leftmargin}{0pt}
    \setlength{\labelwidth}{-10pt} \setlength{\itemsep}{2pt}
    \setlength{\parsep}{1pt}}
\item[($\Gsha$)] Suppose that $\sigma$ has type~$\Gsha$.  Then $A_\len
  = d\Id_3$ with $d \in \lri_\len$.  Let $\wt{A}_{\len+1} =
  \varsigma(d) \Id_3 +\pi^\len X \in \gl_3(\lri_{\len+1})$ be a lift
  of~$A_\len$ with $X \in \gl_3(\kk)$.  Then
  $\shG(\wt{A}_{\len+1}) = \shG(X)$ implies that the type of
  $\tau = \shGC(\cCtilde)$ is not $\Kasha$ or $\Kbsha$.  Indeed, it is
  one of $\Gsha$, $\Lsha$, $\Tasha$, $\Tbsha$, $\Tcsha$, $\Msha$,
  $\Nsha$, $\Jsha$, according to the shape of the minimal polynomial
  of~$X$.  For $\tau$ not of type $\Lsha$ the number
  $a_{\sigma,\tau}(q)$ of distinct lifts with shadow $\tau$ is the
  number of distinct minimal polynomials of the corresponding shape.
  For $\tau$ of type $\Lsha$ the number $a_{\sigma,\tau}(q)$ is the
  number of distinct minimal polynomials of the prescribed shape,
  paired with a compatible characteristic polynomial.  The numbers
  $a_{\sigma,\tau}(q)$ are easily computed from
  Table~\ref{tab:shadows.iso.GL} and can be found in
  Table~\ref{tab:branch.rules.A2}.
\item[($\Lsha$)] Suppose that $\sigma$ has type~$\Lsha$.  By
  Theorem~\ref{thm:irredundant.list.for.M3x3} we may assume that
  $A_\len$ is of the form $A_\len = d\Id_3 + \pi^i D(a,0,0)$ with $0
  \leq i < \len$, $\; d \in \lri_\len$ and $a \in
  \lri_{\len-i}^\times$.  Any lift of $A_\len$ is conjugate to a
  matrix of the form described in parts (ii) or (iii) of
  Theorem~\ref{thm:irredundant.list.for.M3x3}, that is, conjugate to a
  matrix of the form
  \[
  \wt{A}_{\len+1} = \varsigma(d)\Id_3 + \pi^i
  D(\varsigma(a),0,0) + \pi^{\len}
  \begin{bmatrix}
    f & 0 \\
    0 & F
  \end{bmatrix} \quad \text{with $f \in \kk$, $F \in \gl_2(\kk)$},
  \]
  where $F$ scalar corresponds to case (ii) and $F$ a companion matrix
  corresponds to case~(iii).  We classify the similarity classes
  depending on the form that $F$ takes.  The shadow $\tau =
  \shGC(\cCtilde)$ has one of four types:
  \begin{itemize} \renewcommand{\labelitemi}{$\circ$}
  \item $\tau$ has type $\Lsha$ if and only if $F$ is scalar.  There
    are $a_{\sigma,\tau}(q) = q^2$ choices for $(f,F)$.
  \item $\tau$ has type $\Tasha$ if and only if the characteristic
    polynomial of $F$ is separable and reducible over~$\kk$.  There
    are $a_{\sigma,\tau}(q) = \hlf (q-1)q^2$ choices for $(f,F)$.
  \item $\tau$ has type $\Tbsha$ if and only if the characteristic
    polynomial of $F$ is irreducible over~$\kk$.  There are
    $a_{\sigma,\tau}(q) = \hlf (q-1)q^2$ choices for $(f,F)$.
  \item $\tau$ has type $\Msha$ if and only if the minimal polynomial
    of $F$ is of the form $(x-\alpha)^2$ for some $\alpha \in \kk$.
    There are $a_{\sigma,\tau}(q) =q^2$ choices for $(f,F)$.
  \end{itemize}
\item[($\Jsha$)] Suppose that $\sigma$ has type~$\Jsha$.  In this case
  we may assume, by Theorem~\ref{thm:irredundant.list.for.M3x3}, that
  \[
  A_\len = d \Id_3 + \pi^i
  \begin{bmatrix}
    0 & 0 & 0 \\
    0 & 0 & 1 \\
    0 & 0 & c
  \end{bmatrix}, \quad \text{where $d \in \lri_\len$, $\; 0 \leq i <
    \len$ and $c \in \lri_{\len-i}$ with $v(c) > 0$.}
  \]
  Theorem~\ref{thm:irredundant.list.for.M3x3} yields a complete list
  of representatives for the similarity classes $\cCtilde$ of matrices
  in $\gl_3(\lri_{\len+1})$ lying above $A_\len$.  The shadow $\tau =
  \shGC(\cCtilde)$ has one of five types:
  \begin{itemize} \renewcommand{\labelitemi}{$\circ$}
  \item $\tau$ is of type $\Jsha$ if and only if the lift of $A_\len$
    is conjugate to
    \[
    \wt{A}_{\len+1} = d' \Id_3 + \pi^i
    \begin{bmatrix}
      0 & 0 & 0 \\
      0 & 0 & 1\\
      0 & 0 & c'
    \end{bmatrix},
    \]
    where $c' \in \lri_{\len-i+1}$, $\; d' \in \lri_{\len+1}$ are
    arbitrary lifts of $c$, $d$.  Consequently there are $a_{\sigma,
      \tau}(q) = q^2$ choices.
  \item $\tau$ is of type $\Msha$ if and only if the lift of $A_\len$
    is conjugate to
    \[
    \wt{A}_{\len+1} = \varsigma(d) \Id_3 + \pi^i
    \begin{bmatrix}
      0 & \pi^{\len-i} & 0 \\
      0 & 0 & 1\\
      a'\pi^{\len-i} & b'\pi^{\len-i} & c'
    \end{bmatrix},
    \]
    where $a' \in \kk$, $b' \in \kk^\times$, and $c' \in
    \lri_{\len-i+1}$ is an arbitrary lift of $c$.  There are
    $a_{\sigma, \tau}(q) = (q-1)q^2$ choices.
  \item $\tau$ is of type~$\Nsha$ if and only if the lift of $A_\len$
    is conjugate to
    \[
    \wt{A}_{\len+1} = \varsigma(d) \Id_3 + \pi^i
    \begin{bmatrix}
      0 & \pi^{\len-i} & 0 \\
      0 & 0 & 1 \\
      a'\pi^{\len-i} & 0 & c'
    \end{bmatrix},
    \]
    where $a' \in \kk^\times$ and $c' \in \lri_{\len-i+1}$ is an
    arbitrary lift of~$c$.  There are $a_{\sigma, \tau}(q) = (q-1)q$
    choices.
  \item $\tau$ is of type $\Kasha$ or $\Kbsha$ if and only if the lift
    $\wt{A}_{\len+1}$ of $A_\len$ is conjugate to a matrix of
    the form
    \[
    \wt{A}_{\len+1}(0) = \varsigma(d)\Id_3 + \pi^i
    \begin{bmatrix}
      0 & \pi^{\len-i} & 0 \\
      0 & 0 & 1 \\
      0 & 0 & c'
    \end{bmatrix}
    \quad \text{or} \quad \wt{A}_{\len+1}(\infty) =
    \varsigma(d)\Id_3 + \pi^i
    \begin{bmatrix}
      0 & 0 & 0 \\
      0 & 0 & 1\\
      \pi^{\len-i} & 0 & c'
    \end{bmatrix},
    \]
    where $c' \in \lri_{\len-i+1}$ is a lift of $c$; recall that
    $\pi^{\len-i} = \varsigma(0)$ for $0 \in \lri_{\len-i}$.  Matrices
    of the forms $\wt{A}_{\len+1}(0)$ and
    $\wt{A}_{\len+1}(\infty)$ are never conjugate and we have
    $a_{\sigma, \tau}(q) = q$; see Section~\ref{subsec:branching}.
  \end{itemize}
\item[($\Tasha,\Tbsha,\Tcsha,\Msha,\Nsha,\Kasha,\Kbsha$)] Suppose that
  $\sigma$ has type equal to one of $\Tasha$, $\Tbsha$, $\Tcsha$,
  $\Msha$, $\Nsha$, $\Kasha$, $\Kbsha$. From
  Table~\ref{tab:shadows.iso.GL} we observe that all these cases are
  minimal in the sense that $\shg(A_\len)$ cannot properly contain the
  Lie centraliser shadow of any other type.  This implies that the
  shadow associated to any lift $\wt{A}_{\len+1}$ of the matrix
  $A_\len$ satisfies $\shGC(\cCtilde) = \shGC(\cC)$.  Therefore, in
  all the cases under consideration
  Proposition~\ref{pro:class.quot.GL} and Definition~\ref{def:b.GL}
  yield
  \[
  a_{\sigma,\sigma}(q)=q^{\dim
    \gl_3}/b^{(1)}_{\sigma,\sigma}(q)=q^{\dim(\sigma)}=q^3.
  \]
\end{list}


\section{Similarity classes of anti-hermitian integral $\fp$-adic
  matrices}\label{sec:sim.gu}

Let $\lri$ be a compact discrete valuation ring, with valuation ideal
$\fp$ and finite residue field $\kk$ such that $p \coloneqq \cha(\kk)
\neq 2$.  Otherwise we impose no restriction on the characteristic of
$\lri$ or $\kk$.  Put $q = \lvert \kk \rvert$ and fix a uniformiser
$\pi$ of~$\lri$; we observe that~$q>2$.

Let $\Lri$ be an unramified quadratic extension of~$\lri$, with
valuation ideal $\fP$ and residue field~$\kk_2$, a quadratic extension
of~$\kk$.  Then $\Lri = \lri[\delta]$, where $\delta = \sqrt{\rho}$
for an element $\rho \in \lri$ whose reduction modulo $\fp$ is a
non-square in $\kk$, and $\fP = \pi \Lri$.  For $\len \in \N_0$, we
write $\delta_\len$, or even $\delta$, for the image of $\delta$
modulo~$\fp^\len$.  Let $\boldsymbol{\Lri}$ denote the integral
closure of $\Lri$ in some fixed algebraic closure of its fraction
field, and choose an $\lri$-automorphism $\circ$ of
$\boldsymbol{\Lri}$ restricting to the non-trivial Galois automorphism
of the quadratic extension $\Lri \,\vert\, \lri$.  In particular, $(a +
b\delta)^\circ = a - b \delta$ for all $a,b \in \lri$.

Let $n \in \N$.  We extend $\circ$ to obtain the \emph{standard
  $(\Lri,\lri)$-involution} `conjugate transpose' on the
$\Lri$-algebra $\mathsf{Mat}_n(\Lri)$, i.e.\
\begin{equation}\label{equ:def-std-inv}
A^\circ = \big( (a^{\; \circ}_{ij}) \big)^\text{tr} = (a^{\;
  \circ}_{ji})_{ij} \qquad \text{for $A = (a_{ij}) \in \gl_n(\Lri)$.}
\end{equation}
A matrix $A \in \gl_n(\Lri)$ is \emph{hermitian} if $A^\circ = A$ and
\emph{anti-hermitian} if $A^\circ = -A$.  The \emph{standard unitary
  group} over $\lri$ and the corresponding \emph{standard unitary
  $\lri$-Lie lattice} are
\begin{equation}\label{def:unitary}
  \GU_n(\lri) = \{A \in \GL_n(\Lri) \mid A^\circ
  A = \Id_n \} \quad \text{and} \quad
  \gu_n(\lri) = \{ A \in \gl_n(\Lri) \mid A^\circ + A = 0 \}.
\end{equation}
The associated \emph{special unitary group} and \emph{special unitary
  Lie lattice} are
\begin{equation*}
  \SU_n(\lri) = \GU_n(\lri) \cap \SL_n(\Lri) \quad \text{and} \quad
  \su_n(\lri) = \gu_n(\lri) \cap \fsl_n(\Lri).
\end{equation*}
For $\len \in \N$, we write $\lri_\len = \lri / \fp^\len$, $\Lri_\len
= \Lri / \fP^\len$ and correspondingly $\GU_n(\lri_\len)$,
$\gu_n(\lri_\len)$ et cetera.  A matrix $A \in \gl_n(\Lri_\len)$ is
called \emph{hermitian}, respectively \emph{anti-hermitian}, if it is
the image of a hermitian, respectively anti-hermitian matrix, modulo
$\fP^\len$.

Eigenvalues of matrices $A \in \gl_n(\Lri)$ are taken in the fixed
extension $\boldsymbol{\Lri}$ so that $\circ$ can be applied to them.
Throughout we shall also use $\circ$ to denote the induced action on
finite quotients $\gl_n(\Lri_\len)$ obtained by reduction modulo~$\fP^\len$.

\subsection{Preliminaries} We collect some auxiliary results regarding
hermitian and anti-hermitian matrices over discrete valuation rings,
starting with an analogue of
Proposition~\ref{prop:centralisers.connected}.

\begin{prop}\label{pro:unitary.connected}
  Let $\mathcal{X}_{n,\len}^\GU$ be the Greenberg transform of level
  $\len$ or the Weil restriction of the $\lri_\len$-scheme $\GU_n$ to
  $\kk$-schemes, depending on whether $\cha(\lri) = 0$ or $\cha(\lri)
  >0$, so that $\mathcal{X}_{n,\len}^\GU(\kk) \simeq
  \GU_n(\lri_\len)$.  Let $A \in \gu_n(\lri_\len) \subset
  \gl_n(\Lri_\len)$.  Then there exists a $\kk$-defined connected
  algebraic subgroup $\mathbf{C}$ of $\mathcal{X}_{n,\len}^\GU$ such
  that
  \[
  \mathbf{C}(\kk) \simeq \Cen_{\GU_n(\lri_\len)}(A).
  \]
\end{prop}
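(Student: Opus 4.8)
The plan is to follow the pattern of Proposition~\ref{prop:centralisers.connected}: first realise $\mathbf{C}$ as a naturally defined closed subgroup of $\mathcal{X}_{n,\len}^\GU$, and then prove that it is connected. The only genuine difficulty is the second step: unlike $\GL_n$, the group $\GU_n$ is not an open subscheme of affine space, so connectedness will not follow from an ``$(\text{open}) \cap (\text{linear})$'' description as in the general linear case. The idea for getting around this is to pass to the unramified quadratic extension $\Lri$ of $\lri$, over which the hermitian form splits, and thereby reduce the assertion to the general linear statement already settled in Proposition~\ref{prop:centralisers.connected}.

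For the setup: the conjugation action of $\GU_n$ on $\gu_n$ is defined over $\lri_\len$, hence induces a $\kk$-defined action of $\mathcal{X}_{n,\len}^\GU$ on the (linear) $\kk$-scheme $\mathcal{X}_{n,\len}^\gu$, and $A \in \gu_n(\lri_\len)$ corresponds to a $\kk$-rational point thereof. Let $\mathbf{C} \subset \mathcal{X}_{n,\len}^\GU$ be the scheme-theoretic centraliser of this point; it is a closed $\kk$-defined subgroup, and since $\GU_n(\lri_\len) = \mathcal{X}_{n,\len}^\GU(\kk)$ we have $\mathbf{C}(\kk) = \Cen_{\GU_n(\lri_\len)}(A)$ by construction. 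It remains to prove that $\mathbf{C}$ is connected.

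To this end I would base change along $\Lri \,\vert\, \lri$. Using $\Lri \otimes_\lri \Lri \cong \Lri \times \Lri$, on which $\circ$ acts by the coordinate swap, projection to the first factor furnishes compatible isomorphisms of $\Lri$-schemes $\GU_n \times_\lri \Lri \cong \GL_{n,\Lri}$ and $\gu_n \times_\lri \Lri \cong \gl_{n,\Lri}$, intertwining the respective conjugation actions; under them $A$ is carried to a matrix $A' \in \gl_n(\Lri_\len)$ and the centraliser of $A$ to the centraliser of $A'$. Checking that these identifications are mutually compatible — so that $\mathbf{C} \times_\kk \kk_2$ really is carried to the general linear centraliser — is the one point that requires a short verification, done functorially on $R$-points for $\Lri_\len$-algebras $R$ via $R \otimes_{\lri_\len} \Lri_\len \cong R \times R$; I regard this as the main, if modest, obstacle. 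Granting it, since the Greenberg transform (for $\cha(\lri) = 0$) and the Weil restriction (for $\cha(\lri) > 0$) are both compatible with the étale base change $\lri_\len \to \Lri_\len$ of the base ring, applying the relevant construction yields $\mathbf{C} \times_\kk \kk_2 \cong \mathbf{C}'$, where $\mathbf{C}'$ is the centraliser, inside the general linear Greenberg transform (resp.\ Weil restriction) over $\Lri$, of the matrix $A'$. By Proposition~\ref{prop:centralisers.connected} — whose proof goes through over an arbitrary compact discrete valuation ring, in particular over $\Lri$ with residue field $\kk_2$ — the scheme $\mathbf{C}'$ is a non-empty Zariski-open subscheme of a linear, hence irreducible, subvariety, and therefore connected. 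As connectedness descends along the finite separable field extension $\kk_2 \,\vert\, \kk$, the scheme $\mathbf{C}$ is connected, completing the proof.

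Alternatively, and avoiding the descent bookkeeping, one may argue directly via the Cayley transform $X \mapsto (\Id_n - X)(\Id_n + X)^{-1}$ — well-defined because $p \neq 2$ — which restricts to a birational isomorphism from the linear, hence irreducible, variety $\Cen_{\gu_n(\lri_\len)}(A)$ onto a dense open subscheme of $\mathbf{C}$ containing the identity; an algebraic group possessing a dense open irreducible subscheme is connected.
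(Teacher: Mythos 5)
Your main argument is correct and is essentially the paper's own proof: both obtain $\mathbf{C}$ from the Greenberg transform (resp.\ Weil restriction) and establish connectedness by identifying $\mathbf{C}$ over $\kk_2$ with the general linear centraliser group $\mathbf{C}^{\GL}$ over $\Lri$ furnished by Proposition~\ref{prop:centralisers.connected}, your $\Lri \otimes_\lri \Lri \simeq \Lri \times \Lri$ computation merely spelling out the paper's remark that $\mathbf{C} \simeq \mathbf{C}^{\GL}$ over $\kk_2$. One caution on your optional Cayley alternative only: that the Cayley image is \emph{dense} in $\mathbf{C}$, rather than just a dense open of the identity component, is precisely what would need justification there (e.g.\ by translating an arbitrary point into the generic locus by central scalars), so that route is incomplete as stated.
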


\begin{proof}
  Let $\mathcal{X}^{\GL}_{n,\len}$ and $\mathbf{C}^\GL$ denote the
  connected $\kk_2$-algebraic groups, supplied by the Greenberg
  functor (respectively Weil restriction) from $\Lri$-schemes to
  $\kk_2$-schemes, such that $\mathcal{X}^{\GL}_{n,\len}(\kk_2) \simeq
  \GL_n(\Lri_\len)$ and $\mathbf{C}^\GL(\kk_2) \simeq
  \Cen_{\GL_n(\Lri_\len)}(A)$; compare
  Proposition~\ref{prop:centralisers.connected}.

  The existence of a $\kk$-algebraic group $\mathbf{C}$ such that
  $\mathbf{C}(\kk) \simeq \Cen_{\GU_n(\lri_\len)}(A)$ is guaranteed by
  the general properties of the Greenberg transform (respectively Weil
  restriction) from $\lri$-schemes to $\kk$-schemes.  To see that
  $\mathbf{C}$ is connected, it suffices to observe that $\mathbf{C}
  \simeq \mathbf{C}^\GL$ over $\kk_2$.
\end{proof}

An important consequence of Proposition~\ref{pro:unitary.connected}
is the following.

\begin{prop} \label{pro:GL.conj.GU.conj} Let $A,B \in \gu_n(\lri)$ be
  similar, i.e.\ $\Ad(\GL_n(\Lri))$-conjugate.  Then $A,B$ are already
  $\Ad(\GU_n(\lri))$-conjugate.
\end{prop}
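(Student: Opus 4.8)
The plan is to reduce the statement about $\GU_n(\lri)$-conjugacy to a statement about $\kk$-rational points of algebraic groups, where the connectedness supplied by Proposition~\ref{pro:unitary.connected} does the work via Lang's theorem. First I would fix $A, B \in \gu_n(\lri)$ that are $\Ad(\GL_n(\Lri))$-conjugate, say $B = gAg^{-1}$ with $g \in \GL_n(\Lri)$. Since we are working modulo $\fP^\len$ in the finite quotients (the infinite case follows by taking inverse limits, using that compactness makes the orbit spaces behave well under passage to finite levels), it suffices to prove the corresponding statement in $\gu_n(\lri_\len)$ for every $\len$; so I would work with $A, B \in \gu_n(\lri_\len)$ and $g \in \GL_n(\Lri_\len)$ with $gAg^{-1} = B$.

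The key step is to analyse the transporter variety $T = \{x \in \GL_n(\Lri_\len) \mid xAx^{-1} = B\}$. Via the Greenberg transform (or Weil restriction, according to the characteristic), $\GL_n(\Lri_\len)$ is the group of $\kk_2$-points of a connected algebraic group, and $T$ is a right coset of $\Cen_{\GL_n(\Lri_\len)}(A)$; by Proposition~\ref{prop:centralisers.connected} (applied over $\Lri$) this centraliser is the $\kk_2$-points of a connected algebraic group $\mathbf{C}^\GL$, so $T$ is (the $\kk_2$-points of) a connected algebraic variety. The involution $\circ$ acts on everything, and because $A^\circ = -A = (-A)$ while $B^\circ = -B$, one checks that $\circ$ preserves $T$: if $xAx^{-1} = B$ then applying $\circ$ and inverting gives $(x^{-\circ}) A (x^{-\circ})^{-1} = B$ as well, so $x \mapsto x^{-\circ}$ is a well-defined map $T \to T$. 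The composite $\Theta\colon T \to T$, $x \mapsto x^{-\circ} \cdot x$ lands in $\{u \in \GL_n(\Lri_\len) \mid uAu^{-1} = A, \ u^\circ = u^{-1}\}$ — wait, more precisely one wants to solve $x^\circ x = 1$ within $T$, i.e.\ find $x \in T \cap \GU_n(\lri_\len)$. The standard trick: pick any $x_0 \in T$, set $c = x_0^\circ x_0 \in \Cen_{\GL_n(\Lri_\len)}(A)$ (it centralises $A$ since $x_0^\circ = x_0^{-1} \cdot (x_0^\circ x_0)$ and a short computation using $A^\circ=-A$ shows $c$ commutes with $A$); moreover $c^\circ = c$, so $c$ is a hermitian element of the connected group $\mathbf{C}^\GL(\kk_2)$. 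By a Lang–Steinberg / Hilbert~90 argument for the connected group $\mathbf{C}^\GL$ with the Frobenius-like automorphism $\mathrm{Frob} \circ (\ )^{\circ}$ — equivalently, since $H^1$ of a connected group over a finite field vanishes — one can write $c = y^\circ y$ for some $y \in \mathbf{C}^\GL(\kk_2) = \Cen_{\GL_n(\Lri_\len)}(A)$. Then $h \coloneqq x_0 y^{-1}$ satisfies $h A h^{-1} = B$ (as $y$ centralises $A$) and $h^\circ h = y^{-\circ} x_0^\circ x_0 y^{-1} = y^{-\circ} c y^{-1} = y^{-\circ} y^\circ y y^{-1} = 1$, so $h \in \GU_n(\lri_\len)$ and conjugates $A$ to $B$.

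I would make the $H^1$-vanishing step precise by invoking the connectedness from Propositions~\ref{prop:centralisers.connected} and \ref{pro:unitary.connected}: the relevant cohomology set $H^1$ of the Galois group $\Gal(\kk_2/\kk)$ (or of $\widehat{\Z}$ via the geometric Frobenius, in the function-field case) acting on the connected linear algebraic group $\mathbf{C}^\GL$ over the finite field $\kk$ is trivial — this is exactly the classical statement that a connected linear algebraic group over a finite field has trivial Galois cohomology (Lang's theorem). The class $[c] \in H^1$ measures precisely the obstruction to trivialising the hermitian form restricted to the centraliser, and its vanishing produces the desired $y$. The main obstacle is bookkeeping at finite level: $\mathbf{C}^\GL$ is only Zariski-open in a linear variety, not literally a reductive group, but Proposition~\ref{prop:centralisers.connected} already guarantees it is a connected algebraic group, and Lang's theorem applies to any connected linear algebraic group over a finite field, so no further input is needed. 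The passage from $\lri_\len$ to $\lri$ is then routine: $\Ad(\GL_n(\Lri))$-conjugacy of $A,B \in \gu_n(\lri)$ implies $\Ad(\GU_n(\lri_\len))$-conjugacy of their reductions for every $\len$, and a compactness argument (the inverse limit of the non-empty compact sets of $\GU_n(\lri)$-elements conjugating $A$ to $B$ modulo $\fp^\len$) yields an element of $\GU_n(\lri)$ doing the job.
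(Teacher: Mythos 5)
Your proof is correct and takes essentially the same route as the paper's: reduce to the finite quotients $\gu_n(\lri_\len)$ and combine the connectedness of the (unitary) centraliser supplied by Proposition~\ref{pro:unitary.connected} with Lang's theorem. Your hermitian-cocycle computation --- forming $c = x_0^\circ x_0$ in the centraliser and solving $c = y^\circ y$ via the vanishing of $H^1$ for the connected $\kk$-form of that centraliser --- is just an explicit unwinding of the orbit-transitivity form of the Lang--Steinberg theorem that the paper invokes from Geck.
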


\begin{proof}
  It suffices to prove the claim modulo $\fP^\len$, i.e.\ that the
  images $A_\len, B_\len \in \gu_n(\lri_\len)$ of $A,B$ are
  $\Ad(\GU_n(\lri))$-conjugate, for $\len \in \N$.  Let $\mathbf{G} =
  \mathcal{X}_{n,\len}^\GU$ and $\mathbf{C}$ be as in
  Proposition~\ref{pro:unitary.connected} so that $\mathbf{G}(\kk)
  \simeq \GU_n(\lri_\len)$ and $\mathbf{C}(\kk) \simeq
  \Cen_{\GU_n(\lri_\len)}(A)$.  Furthermore, let
  $\mathcal{X}_{n,\len}^\gu$ be the Greenberg transform of level
  $\len$ (for $\cha(\lri)= 0$) or the Weil restriction (for
  $\cha(\lri)>0$) of the $\lri_\len$-scheme $\gu_n$ to $\kk$-schemes
  so that $\mathcal{X}_{n,\len}^\gu(\kk) \simeq \gu_n(\lri_\len)$.
  Write $\mathsf{A}, \mathsf{B} \in \mathcal{X}_{n,\len}^\gu(\kk)$ for
  the elements corresponding to $A_\len, B_\len \in \gu_n(\lri_\len)$,
  and let $\lri^\text{unr}$ denote the maximal unramified extension of
  $\lri$.

  Let $\KK$ denote an algebraic closure of $\kk$. By definition,
  $\mathbf{G}(\KK) \simeq \GL_n(\lri^\text{unr}_\len)$ acts
  transitively via the adjoint action on the orbit
  $\Ad(\mathbf{G}(\KK)) \mathsf{A}$ in $\mathcal{X}_{n,\len}^\gu(\KK)
  \simeq \gl_n(\lri^\text{unr}_\len)$.  Furthermore, $\mathsf{B} \in
  \Ad(\mathbf{G}(\KK)) \mathsf{A} \cap \mathcal{X}_{n,\len}^\gu(\kk)$.
  As the stabiliser $\mathbf{C}$ of $\mathsf{A}$ is connected, the
  Lang--Steinberg theorem implies that $\Ad(\mathbf{G}(\kk)) \simeq
  \Ad(\GU_n(\lri_\len))$ acts transitively on $\Ad(\mathbf{G}(\KK))
  \mathsf{A} \cap \mathcal{X}_{n,\len}^\gu(\kk)$; see
  \cite[Proposition~4.3.2]{Geck}.  Whence there is $g \in
  \GU_n(\lri_\len)$ such that $\Ad(g) A_\len = B_\len$.
\end{proof}

\begin{lem}\label{Dieudonne1} Let $\Gamma \in \GL_n(\Lri)$. Then $\Gamma$ is
  hermitian if and only if there exists $g \in \GL_n(\Lri)$ such that
  $\Gamma = g^\circ g$.
\end{lem}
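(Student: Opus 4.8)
The plan is to prove both directions, of which the "if" direction is immediate and the "only if" direction is the substantive one. First, if $\Gamma = g^\circ g$ for some $g \in \GL_n(\Lri)$, then $\Gamma^\circ = (g^\circ g)^\circ = g^\circ (g^\circ)^\circ = g^\circ g = \Gamma$ using that $\circ$ is an anti-involution of the matrix algebra (so it reverses products) and an involution on scalars; hence $\Gamma$ is hermitian. This uses only the defining property~\eqref{equ:def-std-inv} of the standard involution.

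For the converse, suppose $\Gamma$ is hermitian. The key idea is that a hermitian matrix over $\Lri$ determines a nondegenerate hermitian form on the free $\Lri$-module $\Lri^n$, and the statement then amounts to saying that this form is \emph{equivalent to the standard hermitian form} $\Id_n$; the matrix $g$ realizing $\Gamma = g^\circ g$ is precisely a base-change matrix carrying the standard form to the form defined by $\Gamma$. So I would first reduce to diagonalizing $\Gamma$: since $\Gamma$ is hermitian, a standard Gram--Schmidt / symmetric Gaussian elimination argument over the local ring $\Lri$ (using that $2$ is a unit, as $p \neq 2$, and that $\Gamma \in \GL_n(\Lri)$ so some principal entry or $2\times 2$ minor is a unit at each stage) produces $h \in \GL_n(\Lri)$ with $h^\circ \Gamma h = D = \diag(d_1,\dots,d_n)$ where each $d_i \in \lri^\times$ (diagonal entries of a hermitian matrix lie in the fixed ring $\lri$, and invertibility of $\Gamma$ forces them to be units). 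It then suffices to write each unit $d_i \in \lri^\times$ as $d_i = c_i^\circ c_i = \operatorname{N}_{\Lri/\lri}(c_i)$ for some $c_i \in \Lri^\times$, i.e.\ to show every unit of $\lri$ is a norm from the unramified quadratic extension $\Lri$; then $g = \diag(c_1,\dots,c_n)\, h^{-1}$ works, since $g^\circ g = (h^{-1})^\circ \diag(c_i^\circ c_i) h^{-1} = (h^{-1})^\circ D h^{-1} = \Gamma$.

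The surjectivity of the norm map $\operatorname{N}_{\Lri/\lri}\colon \Lri^\times \to \lri^\times$ for an unramified extension of complete (or Henselian) discrete valuation rings is the crux, and this is the step I expect to be the main technical point, though it is classical: on residue fields the norm $\kk_2^\times \to \kk^\times$ is surjective (the norm of a finite field extension is always surjective), and one lifts using completeness — either via Hensel's Lemma applied to $X \cdot X^\circ - u$ for $u \in \lri^\times$ after correcting modulo $\fp$, or by the standard fact that for unramified extensions $\operatorname{N}(1 + \fP) = 1 + \fp$ together with a uniformizer argument (here a uniformizer $\pi$ of $\lri$ is also one of $\Lri$, and $\operatorname{N}(\pi) = \pi^2$, but we only need units). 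I would cite Dieudonné's classical result on hermitian forms over local rings, or Jacobson/Scharlau, for both the diagonalization and the norm surjectivity, since the lemma is named after Dieudonné; alternatively the whole statement over a complete discrete valuation ring with $p\neq 2$ follows from the analogous well-known classification of hermitian forms. The only mild subtlety to flag is making the Gram--Schmidt step work when all diagonal entries are non-units but an off-diagonal entry is a unit (a "hyperbolic pair" situation), which is handled by the usual change of variables $e_1 \mapsto e_1 + e_2$ type move before clearing, exactly as in the split-orthogonal case.
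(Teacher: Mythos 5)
Your proof is correct, but it takes a genuinely different route from the paper's. The paper does not diagonalize: it constructs $g$ by successive approximation directly on the matrix equation, taking $g_1$ from the classical theory of hermitian matrices over finite fields so that $\Gamma \equiv_{\fP} g_1^{\,\circ} g_1$, and then, writing $\Gamma - g_\len^{\,\circ} g_\len = \pi^\len \Delta$ with $\Delta$ hermitian, setting $g_{\len+1} = g_\len + \hlf \pi^\len (g_\len^{\,\circ})^{-1}\Delta$; this improves the congruence one level at a time (the factor $\hlf$ is where $p \neq 2$ enters) and completeness of $\lri$ gives the limit. Your argument instead establishes the stronger structural fact that the hermitian lattice $(\Lri^n,\Gamma)$ is isometric to the standard one: diagonalize over $\Lri$ (here $p \neq 2$ enters through the Gram--Schmidt/polarization step) and then use that every unit of $\lri$ is a norm from the unramified quadratic extension $\Lri$ --- a classical fact proved by the same residue-field-plus-lifting mechanism that the paper applies to the whole matrix at once. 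The paper's approach buys brevity and uniformity in $n$ (only the residue-field statement is quoted, no case analysis in the diagonalization); yours buys the explicit classification of the form and isolates the single arithmetic input, norm surjectivity for unramified extensions. Two small points to tighten: in the hyperbolic-pair step, the naive move $e_1 \mapsto e_1 + e_2$ can fail (e.g.\ if $\Gamma_{12} \equiv \delta$ its trace vanishes), so one should take $e_1 \mapsto e_1 + \lambda e_2$ with $\lambda$ chosen so that $\mathrm{Tr}_{\kk_2 \,\vert\, \kk}(\lambda \overline{\Gamma_{12}}) \neq 0$, using surjectivity of the trace; and Hensel's lemma does not literally apply to ``the polynomial $X X^\circ - u$'', so the norm surjectivity should be phrased via the filtration argument $1+\fP^i \to 1+\fp^i$ (which you also indicate) or via a two-variable smoothness/Hensel statement after writing $X = a + b\delta$. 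Neither issue is a genuine gap.
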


\begin{proof} If $\Gamma = g^\circ g$ for $g \in \GL_n(\Lri)$ then
  clearly $\Gamma^\circ = \Gamma$.  For the converse direction,
  suppose that $\Gamma$ is hermitian.  It suffices to construct
  recursively a sequence $g_\len \in \GL_n(\Lri)$, $\len \in \N$, such
  that $\Gamma \equiv_{\fP^\len} g_\len^\circ g_\len$ for every $\len
  \in \N$.  The existence of $g_1$ is guaranteed by the theory of
  hermitian matrices over finite fields; e.g.\
  see~\cite[p.~16]{Dieudonne/55}.  Now suppose that $g_\len$ has been
  constructed for some $\len \in \N$.  Then $\Gamma - g_\len^{\,
    \circ} g_\len = \pi^\len \Delta$, where $\Delta \in \gl_n(\Lri)$
  is hermitian.  Thus $g_{\len+1} = g_\len + \hlf \pi^\len (g_\len^{\,
    \circ})^{-1}\Delta$ satisfies
  \[
  \begin{split}
    g_{\len+1}^{\, \circ} g_{\len+1} & = \left(g_\len^{\, \circ} +
      \hlf \pi^\len \Delta g_\len^{-1} \right) \left(g_\len+\hlf
      \pi^\len (g_\len^{\, \circ})^{-1} \Delta \right) \\
    & \equiv_{\fP^{\len+1}} g_\len^{\, \circ} g_\len +\pi^\len \Delta
    = \Gamma.  \qedhere
  \end{split}
  \]
\end{proof}

\begin{prop}\label{Dieudonne2} Let $A \in \gl_n(\Lri_\len)$. Then $A$ is
  $\Ad(\GL_n(\Lri))$-conjugate to an anti-hermitian matrix if and only
  if there exists $\Gamma \in \GL_n(\Lri_\len)$ such that
  $\Gamma^\circ=\Gamma$ and $A^\circ\Gamma+\Gamma A=0$.
\end{prop}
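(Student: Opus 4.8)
The plan is to reduce the statement to Lemma~\ref{Dieudonne1}. Suppose first that $A \in \gl_n(\Lri_\len)$ is $\Ad(\GL_n(\Lri))$-conjugate to an anti-hermitian matrix, say $A = \Ad(h)B$ with $h \in \GL_n(\Lri)$ (slightly abusing notation, working with a lift of $A$ and reducing mod $\fP^\len$ at the end) and $B^\circ = -B$. Then I would set $\Gamma = h^\circ h$. By Lemma~\ref{Dieudonne1} applied to $g = h$, this $\Gamma$ is hermitian; it is clearly invertible. One then checks directly that $A^\circ \Gamma + \Gamma A = 0$: writing $A = h B h^{-1}$ gives $A^\circ = (h^{-1})^\circ B^\circ h^\circ = -(h^\circ)^{-1} B h^\circ$, whence
\[
A^\circ \Gamma + \Gamma A = -(h^\circ)^{-1} B h^\circ h^\circ h + h^\circ h \cdot h B h^{-1}
\]
— here I must be careful: the correct computation is $A^\circ \Gamma = -(h^\circ)^{-1} B^\circ{}^{-1}\!\cdots$, so I will instead substitute $B = h^{-1} A h$, $B^\circ = -B$, i.e.\ $h^\circ A^\circ (h^\circ)^{-1} = -h^{-1} A h$, which rearranges to $A^\circ h^\circ h = -h^\circ h A$, i.e.\ $A^\circ \Gamma + \Gamma A = 0$ with $\Gamma = h^\circ h$. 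This direction is a short formal manipulation.

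For the converse, suppose $\Gamma \in \GL_n(\Lri_\len)$ satisfies $\Gamma^\circ = \Gamma$ and $A^\circ \Gamma + \Gamma A = 0$. Lift $\Gamma$ to a hermitian invertible matrix over $\Lri$ (possible since hermitian-ness and the congruence $\Gamma^\circ \equiv \Gamma$ lift, and invertibility is detected modulo $\fP$), and lift $A$ arbitrarily, arranging $A^\circ \Gamma + \Gamma A \equiv 0 \pmod{\fP^\len}$; I will only need the congruence, not an exact identity, and will reduce mod $\fP^\len$ throughout the argument below. By Lemma~\ref{Dieudonne1} write $\Gamma = g^\circ g$ with $g \in \GL_n(\Lri)$. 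Put $B = g A g^{-1} \in \gl_n(\Lri_\len)$; then $A$ and $B$ are $\Ad(\GL_n(\Lri))$-conjugate, and I claim $B$ is anti-hermitian. Indeed, $B^\circ = (g^\circ)^{-1} A^\circ g^\circ$, so using $A^\circ = -\Gamma A \Gamma^{-1} = -g^\circ g A g^{-1}(g^\circ)^{-1}$ we get $B^\circ = -(g^\circ)^{-1} g^\circ g A g^{-1}(g^\circ)^{-1} g^\circ = -g A g^{-1} = -B$ (all modulo $\fP^\len$). Hence $B$ is the reduction of an anti-hermitian matrix, completing this direction.

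The main technical point to get right is the lifting: I need that a hermitian invertible $\Gamma \in \GL_n(\Lri_\len)$ lifts to a hermitian invertible matrix over $\Lri$. This is immediate — take any lift and average: if $\widetilde\Gamma$ is an arbitrary lift then $\hlf(\widetilde\Gamma + \widetilde\Gamma^\circ)$ is hermitian, reduces to $\Gamma$ (using $\Gamma^\circ = \Gamma$ and that $\hlf \in \lri$ since $p \neq 2$), and is invertible because its reduction modulo $\fP$ is. With this in hand, Lemma~\ref{Dieudonne1} supplies the Cholesky-type factorisation $\Gamma = g^\circ g$, and the rest is the formal computation above. No appeal to connectedness or the Lang--Steinberg theorem is needed here; the only nontrivial input is Lemma~\ref{Dieudonne1}, whose proof in turn rests on the finite-field case from \cite{Dieudonne/55} together with a Hensel-type successive approximation.
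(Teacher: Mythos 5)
Your converse direction is fine and is exactly the paper's argument, but your forward direction as written is incorrect. With your orientation $A=\Ad(h)B=hBh^{-1}$ and $B^\circ=-B$, the matrix $\Gamma=h^\circ h$ does \emph{not} in general satisfy $A^\circ\Gamma+\Gamma A=0$, and the rearrangement you assert --- that $h^\circ A^\circ(h^\circ)^{-1}=-h^{-1}Ah$ ``rearranges to'' $A^\circ h^\circ h=-h^\circ hA$ --- is not a legitimate manipulation. Starting from $h^\circ A^\circ(h^\circ)^{-1}=-h^{-1}Ah$, left-multiplying by $(h^\circ)^{-1}$ and right-multiplying by $h^{-1}$ gives $A^\circ (hh^\circ)^{-1}+(hh^\circ)^{-1}A=0$, so the correct Gram matrix is $\Gamma=(hh^\circ)^{-1}=(h^{-1})^\circ h^{-1}$, i.e.\ $g^\circ g$ with $g=h^{-1}$, not $h^\circ h$. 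Concretely, for $n=2$, $B=\diag(\delta,-\delta)$ and $h=\left[\begin{smallmatrix}1&1\\0&1\end{smallmatrix}\right]$ one computes $A^\circ(h^\circ h)+(h^\circ h)A=\left[\begin{smallmatrix}0&-4\delta\\4\delta&0\end{smallmatrix}\right]\neq 0$ (as $p\neq 2$), so the step genuinely fails. The repair is a one-line change: orient the conjugacy as $B=gAg^{-1}$ with $B^\circ+B=0$ and set $\Gamma=g^\circ g$; then $A^\circ\Gamma+\Gamma A=g^\circ B^\circ g+g^\circ Bg=g^\circ(B^\circ+B)g=0$, which is precisely the computation in the paper. (Conceptually, $\Gamma$ is the Gram matrix of the standard hermitian form transported by $h$, and that is $(h^{-1})^\circ h^{-1}$.)

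Apart from this slip, your proof takes the same route as the paper's: both directions are formal manipulations around the factorisation $\Gamma=g^\circ g$ supplied by Lemma~\ref{Dieudonne1}, and your computation $B^\circ=(g^\circ)^{-1}A^\circ g^\circ=-gAg^{-1}=-B$ is the paper's identity $B^\circ+B=(g^\circ)^{-1}(A^\circ\Gamma+\Gamma A)g^{-1}$ in a slightly different arrangement. Your extra care in lifting a hermitian invertible $\Gamma\in\GL_n(\Lri_\len)$ to a hermitian invertible matrix over $\Lri$ (averaging with $\hlf\in\lri$, invertibility detected modulo $\fP$) is a sensible way to justify the paper's implicit application of Lemma~\ref{Dieudonne1} over $\Lri_\len$; alternatively one can observe that hermitian matrices over $\Lri_\len$ are, by the paper's definition, reductions of hermitian matrices over $\Lri$. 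Similarly, your conclusion that $B^\circ+B=0$ over $\Lri_\len$ makes $B$ anti-hermitian in the paper's sense is fine, again because $p\neq 2$ allows the anti-hermitian lift $\hlf(\tilde B-\tilde B^\circ)$.
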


\begin{proof}
  First suppose that $g \in \GL_n(\Lri_\len)$ is such that $B = \Ad(g)
  A = gAg^{-1}$ is anti-hermitian.  Then $\Gamma = g^\circ g$ is
  hermitian and
  \[
  A^\circ\Gamma+\Gamma A=A^\circ g^\circ g+g^\circ g A=g^\circ B^\circ
  g+ g^\circ Bg=g^\circ (B^\circ + B)g=0.
  \]

  For the reverse implication, suppose that $\Gamma \in
  \GL_n(\Lri_\len)$ satisfies $\Gamma^\circ=\Gamma$ and
  $A^\circ\Gamma+\Gamma A=0$.  Then by Lemma~\ref{Dieudonne1} there
  exists $g \in \GL_n(\Lri_\len)$ such that $\Gamma = g^\circ g$, and
  hence $B = gAg^{-1}$ satisfies
  \[
  B^\circ + B = (g^\circ)^{-1}A^\circ g^\circ+gAg^{-1} =
  (g^\circ)^{-1} ( A^\circ\Gamma+\Gamma A ) g^{-1} = 0.
  \]
  Thus $B$ is anti-hermitian.
\end{proof}

\begin{lem}\label{cyclic.u} Let $A \in \gl_n(\Lri_\len)$ with
  characteristic polynomial $f_A = t^n + \sum_{i=0}^{n-1} c_i t^i \in
  \Lri_\len[t]$.  If $A$ is $\Ad(\GL_n(\Lri))$-conjugate to an
  anti-hermitian matrix, then $c_i^{\, \circ} = (-1)^{n-i} c_i$ for $0
  \le i < n$.  Conversely, if $A$ is cyclic then the latter condition
  on the coefficients of $f_A$ implies that $A$ is
  $\Ad(\GL_n(\Lri))$-conjugate to an anti-hermitian matrix.
\end{lem}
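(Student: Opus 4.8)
The statement splits into the necessity and the sufficiency of the coefficient condition, and only the second uses cyclicity.

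\emph{Necessity.} Since $f_A$ is an $\Ad(\GL_n(\Lri))$-invariant, I may assume $A$ is itself anti-hermitian, $A^\circ=-A$. Transposition does not change the characteristic polynomial and the entrywise $\circ$ turns $f_A$ into $\overline{f_A}$ (the polynomial obtained by applying $\circ$ to the coefficients), so $f_{A^\circ}=\overline{f_A}$; on the other hand $f_{-A}(t)=(-1)^nf_A(-t)$. Equating $\overline{f_A}(t)=(-1)^nf_A(-t)$ and comparing coefficients gives $c_i^\circ=(-1)^{n-i}c_i$ for $0\le i<n$.

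\emph{Sufficiency.} Assume $A$ is cyclic with $c_i^\circ=(-1)^{n-i}c_i$ for all $i$. By Proposition~\ref{Dieudonne2} it is enough to produce $\Gamma\in\GL_n(\Lri_\len)$ with $\Gamma^\circ=\Gamma$ and $A^\circ\Gamma+\Gamma A=0$, i.e.\ $\Gamma A\Gamma^{-1}=-A^\circ$. The first point is that $-A^\circ$ is again cyclic (conjugate-transpose, entrywise $\circ$, and negation each preserve cyclicity over $\Lri_\len$; for the transpose one uses that a companion matrix is similar to its transpose over any commutative ring) and has the same characteristic polynomial as $A$ --- this is exactly where the coefficient condition enters, since $f_{-A^\circ}(t)=(-1)^nf_{A^\circ}(-t)=(-1)^n\overline{f_A}(-t)$, which collapses to $f_A(t)$ once $c_i^\circ=(-1)^{n-i}c_i$ is inserted. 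Standard facts about cyclic matrices then give $g\in\GL_n(\Lri_\len)$ with $gAg^{-1}=-A^\circ$. Applying $\circ$ and using that it is an involutory anti-automorphism shows $g^\circ A(g^\circ)^{-1}=-A^\circ$ as well, so $u:=g^{-1}g^\circ$ lies in $R:=\Cen_{\gl_n(\Lri_\len)}(A)=\Lri_\len[A]\cong\Lri_\len[t]/(f_A)$ (cyclicity) and is a unit. The coefficient condition also makes $p(A)\mapsto\overline p(-A)$ a well-defined ring involution $\star$ of $R$ (it sends $(f_A)$ to $(\,\overline{f_A}(-t)\,)=((-1)^nf_A)=(f_A)$), one checks $r^\circ=g\,r^\star g^{-1}$ for $r\in R$, and feeding this into $g=(g^\circ)^\circ$ yields $u^\star u=1$.

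The final, and hardest, step is to take $\Gamma=g(w+w^\star u)$ for a suitable $w\in R$. For every $w$ one computes $\Gamma^\circ=\Gamma$ (using $g^\circ=gu$, $r^\circ=gr^\star g^{-1}$, that $\star$ is a ring homomorphism, and $u^\star u=1$) and $\Gamma A\Gamma^{-1}=gAg^{-1}=-A^\circ$ (as $w+w^\star u\in R$ commutes with $A$); what remains is to choose $w$ so that $v:=w+w^\star u$ is a unit, equivalently a unit modulo $\rad(R)$. This is a Hilbert~90--type argument on $R/\rad(R)\cong\prod_\alpha\kk_\alpha$, a product of finite fields each containing $\kk_2$, on which $\star$ permutes the factors and restricts to the nontrivial Galois automorphism of $\kk_2\,\vert\,\kk$ --- so $\star$ is nontrivial modulo every maximal ideal. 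On a $\star$-fixed factor the map $x\mapsto x+x^\star\bar u$ cannot vanish identically (else $\bar u=-1$ and $x^\star=-x$ for all $x$, impossible as $\star$ fixes $1\neq-1$ because $\cha(\kk)\neq2$); on a $\star$-swapped pair one takes $\bar w=(1,0)$. Choosing $\bar w$ orbit by orbit on the factors and lifting to $w\in R$ makes $\Gamma=g(w+w^\star u)\in\GL_n(\Lri_\len)$, and Proposition~\ref{Dieudonne2} finishes the argument. The main obstacle is precisely this invertibility: a hermitian intertwiner is produced for free by the averaging $w\mapsto w+w^\star u$, but making one of them a unit forces one to use the structure of the involution $\star$ on the centralising algebra $\Lri_\len[A]$, above all its nontriviality modulo each maximal ideal.
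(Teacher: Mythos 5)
Your proof is correct; the necessity half coincides with the paper's, but your sufficiency argument takes a genuinely different route. The paper reduces to the companion matrix of $f_A$ and writes the required hermitian intertwiner down explicitly: $\Gamma$ is the Gram matrix, in the power basis of $\Lri_\len[t]/(f_A)$, of the form $\langle g,h\rangle=$ coefficient of $\bar t^{\,n-1}$ in $g^\circ(-\bar t)h(\bar t)$, which is hermitian and non-degenerate (the coefficient condition is exactly what makes the twisted involution well defined modulo $f_A$), and multiplication by $\bar t$ is visibly skew-adjoint; then Proposition~\ref{Dieudonne2} finishes, with no case analysis. You instead argue by descent: $-A^\circ$ is cyclic with the same characteristic polynomial, hence similar to $A$ via some $g$; the cocycle $u=g^{-1}g^\circ$ lies in $R=\Cen_{\gl_n(\Lri_\len)}(A)=\Lri_\len[A]\cong\Lri_\len[t]/(f_A)$ with $u^\star u=1$ for the induced involution $\star$, and you repair $g$ to a hermitian invertible intertwiner $\Gamma=g(w+w^\star u)$ by a Hilbert-90-type choice of $w$ on $R/\rad(R)$, again concluding with Proposition~\ref{Dieudonne2}. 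Your route is more conceptual — it isolates the only real input, the nontriviality of $\star$ on the residue fields of the centralizer algebra, which is where $\cha(\kk)\neq 2$ and the unramifiedness of $\Lri\,\vert\,\lri$ enter — but it costs extra verifications (cyclicity of $-A^\circ$ via the Hankel similarity of a companion matrix with its transpose, the identification $\Cen(A)=\Lri_\len[A]$, and the unit-averaging), and in fact the paper's explicit $\Gamma$ is essentially the Gram matrix your Hankel step produces implicitly. One small slip: on a $\star$-fixed factor, if $x\mapsto x+x^\star\bar u$ vanished identically, then $x=1$ gives $\bar u=-1$ and the identity then forces $x^\star=x$ (not $-x$) for all $x$, i.e.\ $\star$ trivial on that factor, contradicting its nontriviality on $\kk_2$; the conclusion stands, but your stated contradiction carries the wrong sign.
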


\begin{proof} If $A$ is $\Ad(\GL_n(\Lri))$-conjugate to an
  anti-hermitian matrix $B$ then, denoting the characteristic
  polynomial of $B$ by $f_B$, we deduce from $f_A = f_B$ and $B^\circ
  + B =0$ that
  \[
  t^n + \sum_{i=0}^{n-1} c_i^{\, \circ} t^i = (f_B)^{\circ} =
  f_{B^\circ} = f_{-B} = (-1)^n f_B(-t) = t^n + \sum_{i=0}^{n-1}
  (-1)^{n-i} c_i t^i.
  \]

  Now suppose that $A$ is cyclic and that $c_i^\circ = (-1)^{n-i} c_i$
  for $0 \le i < n$.  Without loss of generality $A = (a_{ij})$ is a
  companion matrix for $f_A$, i.e.\ $a_{ij} = 1$ if $i=j+1$, $a_{ij} =
  -c_{i-1}$ if $j=n$, and $a_{ij} = 0$ in all other cases.  Define
  $\Gamma = (\gamma_{ij}) \in \gl_n(\Lri_\len)$ as follows:
  $\gamma_{ij}$ is the coefficient of ${\bar t}^{\, n-1}$ in the
  expression of $(- {\bar t})^{i-1} {\bar t}^{\, j-1}$ as an
  $\Lri_\len$-linear combination of ${\bar 1}, {\bar t}, \ldots, {\bar
    t}^{\, n-1}$ modulo~$f_A$.  A short computation shows that $\Gamma
  \in \GL_n(\Lri_\len)$ with $\Gamma^\circ = \Gamma$ and $A^\circ
  \Gamma + \Gamma A = 0$; thus $A$ is $\GL_n(\Lri)$-conjugate to an
  anti-hermitian matrix by Proposition~\ref{Dieudonne2}.

  Indeed, the free $\Lri_\len$-module $\Lri_\len[t] / f_A
  \Lri_\len[t]$ with $(\Lri_\len,\lri_\len)$-involution $\circ$ admits
  the non-degenerate $\circ$-hermitian form $\langle \cdot, \cdot
  \rangle$, where $\langle g, h \rangle$ is the coefficient of ${\bar
    t}^{\, n-1}$ in the expression of $g^\circ(-{\bar t}) h({\bar t})$
  as an $\Lri_\len$-linear combination of the basis ${\bar 1}, {\bar
    t}, \ldots, {\bar t}^{\, n-1}$.  The matrix $\Gamma$ is the
  structure matrix of this hermitian form and $A$ is the coordinate
  matrix of the endomorphism given by multiplication by~${\bar t}$,
  with respect to the basis ${\bar 1}, {\bar t}, \ldots, {\bar t}^{\,
    n-1}$.
\end{proof}

\subsection{Similarity class tree, centralisers, and unitary
  shadows} \label{subsec:unitary-shadows} The following concepts are
analogous to the ones introduced in Definitions~\ref{def:graph.Qgl}
and~\ref{def.of.shadows}.

\begin{defn} \label{def:graph.Qgu} For $\len \in \N_0$ let
  $\Qgu_{\lri,\len} = \Ad(\GU_n(\lri)) \backslash \gu_n(\lri_\len)$
  denote the set of $\Ad(\GU_n(\lri))$-orbits in $\gu_n(\lri_\len)$;
  by Proposition~\ref{pro:GL.conj.GU.conj}, this is the same as the
  collection of $\GL_n(\Lri)$-similarity classes in~$\gu_n(\lri_\len)
  \subset \gl_n(\Lri)$, obtained by intersection.  We endow
  \[
  \Qgu_\lri = \coprod_{\len=0}^\infty \Qgu_{\lri,\len}
  \]
  with the structure of a directed graph, induced by reduction modulo
  powers of~$\fP$: vertices $\cC \in \Qgu_{\lri,\len}$ and $\cCtilde
  \in \Qgu_{\lri,\len+1}$ are connected by a directed edge
  $(\cC,\cCtilde)$ if the reduction of $\cCtilde$ modulo $\fP^\len$ is
  equal to~$\cC$.  In this way $\Qgu_\lri$ becomes an infinite rooted
  subtree of $\Qgl_\Lri$.  We refer to $\Qgu_\lri$ as the
  \emph{anti-hermitian similarity class tree} in degree $n$
  over~$\lri$.
\end{defn}

Let $\len \in \N_0$ and let $A \in \gu_n(\lri_\len)$.  The centraliser
$\Cen_{\GU_n(\lri)}(A)$ of $A$ in the group $\GU_n(\lri)$ is the
stabiliser of $A$ under the adjoint action of $\GU_n(\lri)$.  The
centraliser $\Cen_{\gu_n(\lri)}(A)$ of $A$ in the $\lri$-Lie lattice
$\gu_n(\lri)$ is the stabiliser of $A$ under the adjoint action of
$\gu_n(\lri)$.

\begin{defn}\label{def.of.unitary.shadows} Let $\len \in \N_0$.  The
  \emph{group centraliser shadow} $\shU(A)$ of an element $A \in
  \gu_n(\lri_\len)$ is the image $\overline{\Cen_{\GU_n(\lri)}(A)}
  \leq \GU_n(\kk)$ of $\Cen_{\GU_n(\lri)}(A)$ under reduction
  modulo~$\fP$.  The \emph{Lie centraliser shadow} $\shu(A)$ of an
  element $A \in \gu_n(\lri_\len)$ is the image
  $\overline{\Cen_{\gu_n(\lri)}(A)} \leq \gu_n(\kk)$ of
  $\Cen_{\gu_n(\lri)}(A)$ under reduction modulo~$\fP$.

  For each similarity class $\cC$ in $\gu_n(\lri_\len)$ we define the
  \emph{unitary} (\emph{similarity class}) \emph{shadow}
  \[
  \shUC(\cC) = \{ (\shU(A),\shu(A)) \mid A \in\cC \},
  \]
  of $\cC$ and we denote the collection of all unitary shadows by
  \begin{equation}\label{equ:Sh.GU}
  \ShUn = \{ \shUC(\cC) \mid \cC \in \Qgu_{\lri,\len} ~\text{for some
    $\len \in \N_0$}\}.
  \end{equation}
  For $\sigma \in \ShUn$ we set
  \[
  \| \sigma \| = \lvert \shU(A) \rvert \qquad \text{and} \qquad
  \dim(\sigma) = \dim_\kk(\shu(A)),
  \]
  where $A \in \cC \in \Qgu_{\lri,\len}$ for some $\len \in \N_0$ with
  $\sigma = \shUC(\cC)$; furthermore, it is convenient to select one
  group centraliser shadow $\shU(A)$, where $A \in \cC \in
  \Qgl_{\lri,\len}$ with $\sigma = \shUC(\cC)$, and to denote it by
  $\sigma(\kk)$.  We only use properties of $\sigma(\kk)$ that are
  independent of the arbitrary choice involved in its definition.
\end{defn}

Comments similar to those in connection with
Definition~\ref{def.of.shadows} apply.  In order to see that a unitary
shadow $\sigma$ is, in fact, completely determined by the
$\GU_n(\kk)$-conjugacy class of $\sigma(\kk)$, or alternatively the
associated Lie algebra, we recall the Cayley maps.

\begin{defn}\label{def:cayley.map}
  For any subset $Y \subset \gl_n(\Lri)$, we denote by
  $Y_\textup{gen} \subset Y$ the set of elements that do not have an
  eigenvalue congruent to $-1$ modulo~$\fP$.  The \emph{Cayley maps}
  \[
  \cay \colon \GU_n(\lri)_\textup{gen} \rightarrow
  \gu_n(\lri)_\textup{gen} \qquad \text{and} \qquad \Cay \colon
  \gu_n(\lri)_\textup{gen} \rightarrow \GU_n(\lri)_\textup{gen}
  \]
  are both defined by the mapping rule
  \begin{equation}\label{equ:cayley}
    y \mapsto (\Id_n - y)(\Id_n + y)^{-1} = (\Id_n + y)^{-1} (\Id_n - y).
  \end{equation}
  The Cayley maps are easily seen to be mutual inverses of each other;
  see~\cite[II.10 and VI.2]{We97}.  Furthermore, they commute with the
  adjoint action and preserve congruence levels.  Thus they induce
  Cayley maps between the finite quotients
  $\GU_n(\lri_\len)_\textup{gen}$ and $\gu_n(\lri_\len)_\textup{gen}$
  for each $\len \in \N$.
\end{defn}

The next lemma can be regarded as a `unitary' version of
Lemma~\ref{lem:shadow-well-defined}; recall that throughout $\lri$
does not have residue characteristic $2$ so that, in particular, $q >
2$.

\begin{lem}\label{lem:unitary-shadow-well-defined}
  Let $\len \in \N_0$ and $A \in \gu_n(\lri_\len)$.  Then the group
  centraliser shadow $\shU(A)$ and the Lie centraliser shadow
  $\shu(A)$ determine one another in the following way:
  \begin{align}
    \shu(A) & = \langle \cay(\shU(A)_{\gen}) \cup  \{
    \overline{a \Id_n} \mid a \in \gu_1(\lri) \}
    \rangle_{\textup{$+$-span}}, \label{inclusion1}\\
    \shU(A) & = \langle \Cay(\shu(A)_{\gen}) \cup \{
    \overline{a \Id_n} \mid a \in \GU_1(\lri) \}
    \rangle. \label{inclusion2}
  \end{align}
\end{lem}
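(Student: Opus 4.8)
The plan is to reduce the statement to the analogous linear fact (Lemma~\ref{lem:shadow-well-defined}) by exploiting the Cayley maps and the structure of the centraliser as the $\kk$-points of a connected algebraic group (Proposition~\ref{pro:unitary.connected}). First I would establish the easy inclusions. For \eqref{inclusion1}: the Cayley map $\cay$ commutes with the adjoint action and preserves congruence levels (Definition~\ref{def:cayley.map}), so it carries $\Cen_{\GU_n(\lri)}(A)_{\gen}$ into $\Cen_{\gu_n(\lri)}(A)_{\gen}$, and reducing modulo $\fP$ shows $\cay(\shU(A)_{\gen}) \subseteq \shu(A)$; likewise the scalar anti-hermitian matrices $\overline{a\Id_n}$, $a \in \gu_1(\lri)$, visibly centralise $A$, so the additive span on the right of \eqref{inclusion1} is contained in $\shu(A)$. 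Symmetrically $\Cay$ gives the containment ``$\supseteq$'' in \eqref{inclusion2}, using that $\GU_1$ acts by scalars. So the content is the reverse inclusions.

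For the reverse inclusion in \eqref{inclusion1}, the key point is that $\Cen_{\GU_n(\lri)}(A) \to \shU(A)$ is surjective with kernel inside $\GU_n^1(\lri)$, and by Proposition~\ref{pro:unitary.connected} the group $\Cen_{\GU_n(\lri_1)}(A)$ is the $\kk$-points of a \emph{connected} algebraic group $\mathbf{C}$; hence $\shU(A) = \mathbf{C}(\kk)$. Over $\kk$ the generic locus $\mathbf{C}_{\gen}$ (elements with no eigenvalue $\equiv -1$) is a nonempty Zariski-open subset of the irreducible variety $\mathbf{C}$, so $\shU(A)_{\gen}$ generates $\shU(A)$ as a group — this is where $q>2$ (automatic here, since $p\neq2$) is used, to guarantee that the $\kk$-rational points of a dense open subset of a connected linear algebraic group generate the whole group of $\kk$-points; I would cite the standard fact or argue it via the usual ``two translates of a dense open set cover the group'' lemma applied over $\kk$ after passing to $\KK$ and using connectedness. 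Then it suffices to show $\cay$ maps $\shU(A)_{\gen}$ \emph{onto} a set whose additive span, together with the scalar anti-hermitian matrices, is all of $\shu(A)$. Now apply Lemma~\ref{lem:shadow-well-defined} in the ambient $\gl_n(\Lri)$: since $q = |\kk| > 2$ (indeed $|\kk_2| > 2$), the Lie centraliser shadow $\shg_{\Lri}(A) = \overline{\Cen_{\gl_n(\Lri)}(A)}$ equals the additive span of $\shG_{\Lri}(A) = \overline{\Cen_{\GL_n(\Lri)}(A)}$. Intersecting with the fixed space of $\circ$ (respectively the $-1$-eigenspace of $\circ$) and using that $\circ$ is a semilinear involution, one identifies $\shu(A)$ and $\shU(A)$ inside these ambient objects; the Cayley map intertwines the $\circ$-fixed structure on the group side with the anti-hermitian structure on the Lie side, since $\Cay$ sends hermitian-type data to unitary-type data (compare Lemma~\ref{Dieudonne1}, Proposition~\ref{Dieudonne2}). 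Running the argument of Lemma~\ref{lem:shadow-well-defined} — primary decomposition of a centralising element $X$, Hensel lifting the decomposition, and subtracting suitable scalars $\overline{a_i\Id_n}$ to move each block into $\shU(A)_{\gen}$ — but now performed $\circ$-equivariantly (the primary decomposition of an anti-hermitian $X$ can be taken $\circ$-stable, pairing blocks with eigenvalue $\alpha$ and $-\alpha^\circ$), yields that every $\overline{X} \in \shu(A)$ lies in the additive span of $\cay(\shU(A)_{\gen})$ together with the scalars $\overline{a\Id_n}$, $a \in \gu_1(\lri)$.

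Finally, \eqref{inclusion2} follows from \eqref{inclusion1} by applying $\Cay$: since $\Cay$ and $\cay$ are mutual inverses on generic loci, commute with the adjoint action, and $\Cay$ sends the additive-span structure (restricted to generic elements) into the group generated by the images, one gets $\shU(A) \subseteq \langle \Cay(\shu(A)_{\gen}) \cup \{\overline{a\Id_n} \mid a \in \GU_1(\lri)\}\rangle$, which combined with the easy reverse inclusion gives equality. The main obstacle I anticipate is making the $\circ$-equivariant version of the primary-decomposition / Hensel-lifting step in Lemma~\ref{lem:shadow-well-defined} precise: one must check that the idempotents (polynomial expressions in $X$) produced by Hensel's lemma can be chosen compatibly with $\circ$, so that each summand $X_i$ is again anti-hermitian (or that conjugate eigenvalue-blocks are grouped), and that the scalar shifts $\overline{a_i\Id_n}$ needed to reach the generic locus can themselves be taken in $\gu_1(\lri)$ rather than merely in $\gl_1(\Lri)$ — this uses that $\gu_1(\lri) = \{a \in \Lri : a^\circ = -a\} = \lri\delta$ is ``large enough'' modulo $\fp$, i.e.\ that $\overline{\kk\delta}$ still has more than $\ldots$ one element, which again reduces to $q > 2$.
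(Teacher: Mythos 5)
Your two opening reductions (the easy inclusions via $\cay,\Cay$, and the fact that the whole content is the reverse inclusions) match the paper. But the middle of your argument contains a flawed and, fortunately, unnecessary detour: you identify $\shU(A)$ with $\mathbf{C}(\kk)$ for the connected centraliser of the reduction of $A$. That identification is false in general: $\shU(A)$ is the image modulo $\fP$ of $\Cen_{\GU_n(\lri)}(A)$ for $A\in\gu_n(\lri_\len)$, and this is typically a \emph{proper} subgroup of $\Cen_{\GU_n(\kk)}(\overline{A})$ --- recording exactly this discrepancy is the point of the shadow formalism (compare the types $\Kasha,\Kbsha$ in the linear setting). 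Moreover the ``standard fact'' you invoke, that the $\kk$-points of a dense open subset of a connected group generate all $\kk$-points, fails over a fixed finite field: for $\mathbb{G}_a$ the complement of the nonzero rational points is open, dense and defined over $\F_q$, yet its rational points generate only the trivial subgroup. And even if $\shU(A)_{\gen}$ did generate $\shU(A)$ as a group, this would not yield the \emph{additive} statement \eqref{inclusion1}, since $\cay$ is not additive; indeed after this detour you simply restate the goal.

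The genuine gaps are in the two steps you leave unexecuted. For \eqref{inclusion1}, the $\circ$-equivariant decomposition you flag as ``the main obstacle'' is precisely where the work lies; the paper does it with a coarser split than a full primary decomposition: Hensel-factor $f_X=f_1f_2$ with $\overline{f_1}=\gcd(\overline{f_X},\overline{(t^2-1)^n})$, note that the roots of $f_1$ come in pairs $\mu,-\mu^\circ$ so $f_1(X)$ is skew-adjoint, hence $V=\ker f_1(X)\perp\Ima f_1(X)$ with the restricted forms again standard by Lemma~\ref{Dieudonne1}; this gives $X=X_1+X_2$ with $X_1,X_2$ polynomials in $X$ and the reduced eigenvalues of $X_1$ lying in $\{0,\pm1\}\subset\kk$, so the \emph{single} shift $X_1-\delta\Id_n$ is already generic because $\delta\notin\kk$ --- avoiding the per-block choice of admissible scalars your plan would have to count. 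More seriously, your deduction of \eqref{inclusion2} from \eqref{inclusion1} ``by applying $\Cay$'' does not work: $\Cay$ is not a homomorphism, non-generic elements of $\shU(A)$ are not in its image at all, and additive generation of $\shu(A)$ gives no direct multiplicative expression for such elements. The paper proves \eqref{inclusion2} by a separate, parallel multiplicative argument: given $B\in\Cen_{\GU_n(\lri)}(A)$, a $\circ$-compatible Hensel factorisation isolating the eigenvalues congruent to $-1$ (against $(t+1)^n$) splits $B=B_1B_2$ with $\overline{B_1}$ having eigenvalues in $\{\pm1\}$ and $\overline{B_2}$ generic, and one multiplies $B_1$ by $a^\circ\Id_n$ for some $a\in\GU_1(\lri)$ with $a\not\equiv_\fP\pm1$ (possible since $q\geq3$), so that $\overline{B}$ becomes a product of elements of $\Cay(\shu(A)_{\gen})$ and a scalar. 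Without an argument of this kind your proof of \eqref{inclusion2} is incomplete.
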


\begin{proof}
  A direct computation yields $\cay(\shU(A)_{\gen}) \subset \shu(A)$
  and $\Cay(\shu(A)_{\gen}) \subset \shU(A)$.  Thus the left-hand
  side contains the right-hand side in \eqref{inclusion1}
  and~\eqref{inclusion2}, and it suffices to prove the reverse
  inclusions.  First consider~\eqref{inclusion1}.  Let $\overline{X}
  \in \shu(A)$ be the image of $X \in \Cen_{\gu_n(\lri)}(A)$.  We
  argue below that, as in the proof of
  Lemma~\ref{lem:shadow-well-defined}, it suffices to consider the
  situation
  \begin{equation} \label{equ:x=x1+x2}
    X = \diag(Y_1,Y_2) = X_1 + X_2,
  \end{equation}
  where
  \begin{itemize} \renewcommand{\labelitemi}{$\circ$}
  \item $X_1 = \diag(Y_1,0), X_2 = \diag(0,Y_2) \in
    \Cen_{\gu_n(\lri)}(A)$ are anti-hermitian,
  \item the eigenvalues of $\overline{X_1} \in \shu(A)$ are in
    $\{0,1,-1\}$ and $\overline{X_2}$ does not have eigenvalue $-1$.
  \end{itemize}
  As $\delta \in \gu_1(\lri) \smallsetminus \{0\}$, we obtain
  \[
  \overline{X} = \overline{\cay(\Cay(X_1 - \delta \Id_n))} +
  \overline{\delta \Id_n} + \overline{\cay(\Cay(X_2))}.
  \]

  To justify \eqref{equ:x=x1+x2}, observe that $X$ acts, by left
  multiplication, as an anti-hermitian operator on $V = \Lri^n$,
  equipped with the standard hermitian form $\langle v_1,v_2 \rangle =
  v_1^{\, \circ} v_2$.  By Hensel's Lemma, the characteristic
  polynomial $f_X \in \Lri[t]$ factorises as a product $f_X = f_1 f_2$
  of coprime monic polynomials so that $\overline{f_1} =
  \gcd(\overline{f_X}, \overline{(t^2-1)^n})$.  The roots $\lambda_1,
  \ldots, \lambda_n$ of $f_X$ are, up to permutation, equal to
  $-\lambda_1^{\, \circ}, \ldots, -\lambda_n^{\, \circ}$.
  Consequently, the roots of $f_1$ come in pairs $\mu, -\mu^\circ$ so
  that $f_1(X)^\circ + f_1(X) = 0$, and $f_1(X)$ is skew-adjoint as an
  operator on~$V$.  Hence $V$ decomposes as a direct orthogonal sum of
  the $X$-invariant spaces $U = \ker f_1(X)$ and $W = \Ima f_1(X)$.
  The standard form $\langle \cdot, \cdot \rangle$ restricts to
  non-degenerate, hence standard forms on $U$ and $W$; see
  Lemma~\ref{Dieudonne1}.  Concatenating suitable bases for $U$ and
  $W$, we may assume that $X = X_1 + X_2$, where $X_1 = \diag(Y_1,0)$,
  $X_2 = \diag(0,Y_2)$ and the anti-hermitian matrices $Y_1, Y_2$
  describe the restrictions of $X$ to $U, W$.  Since $X_1,X_2$ can be
  expressed as polynomials in $X$ we deduce that $X_1, X_2 \in
  \Cen_{\gu_n(\lri)}(A)$.

  Next consider the pending inclusion in~\eqref{inclusion2}.  Let
  $\overline{B} \in \shU(A)$ be the image of $B \in
  \Cen_{\GU_n(\lri)}(A)$.  We argue below that it suffices to consider
  the situation
  \begin{equation} \label{equ:b=b1xb2}
    B = \diag(C_1,C_2) = B_1 B_2,
  \end{equation}
  where
  \begin{itemize} \renewcommand{\labelitemi}{$\circ$}
  \item $B_1 = \diag(C_1,\Id_{n-m}), B_2 = \diag(\Id_m,C_2) \in
    \Cen_{\GU_n(\lri)}(A)$ are unitary,
  \item the eigenvalues of $\overline{B_1} \in \shU(A)$ are in
    $\{1,-1\}$ and $\overline{B_2}$ does not have eigenvalue $-1$.
  \end{itemize}
  Choosing $a \in \GU_1(\lri)$ such that $a \not\equiv_\fP \pm 1$, we
  obtain
  \[
  \overline{B} = \overline{\Cay(\cay(B_1 \cdot a^\circ \Id_n))} \;
  \overline{(a \Id_n)} \; \overline{\Cay(\cay(B_2))}.
  \]

  It remains to justify~\eqref{equ:b=b1xb2}.  Observe that $B$ acts,
  by left multiplication, as a unitary operator on $V = \Lri^n$,
  equipped with the standard hermitian form $\langle v_1,v_2 \rangle =
  v_1^{\, \circ} v_2$.  By Hensel's Lemma, the characteristic
  polynomial $f_B \in \Lri[t]$ factorises as a product $f_B = f_1 f_2$
  of coprime monic polynomials so that $\overline{f_1} =
  \gcd(\overline{f_B}, \overline{(t+1)^n})$.  Suppose that $f_B =
  \prod_{i=1}^n (t-\lambda_i)$ and $f_1 = \prod_{i=1}^m
  (t-\lambda_i)$.  Then $\lambda_1^{\, \circ}, \ldots, \lambda_n^{\,
    \circ}$ are, up to permutation, equal to $\lambda_1^{-1}, \ldots,
  \lambda_n^{-1}$, and thus $\lambda_1^{\, \circ}, \ldots,
  \lambda_m^{\, \circ}$ are, up to permutation, equal to
  $\lambda_1^{-1}, \ldots, \lambda_m^{-1}$.  Putting $D =
  \prod_{i=1}^m \lambda_i^{\, \circ} \Id_n \in \GU_n(\lri)$, we deduce
  that
  \[
  f_1(B)^\circ = \prod\nolimits_{i=1}^m (B^\circ - \lambda_i^{\,
    \circ} \Id_n) = ( (-B^\circ)^m D) \prod\nolimits_{i=1}^m (B-\lambda_i) =
  ((-B^\circ)^m D) f_1(B).
  \]
  Hence $V$ decomposes as a direct orthogonal sum of the $B$-invariant
  spaces $U = \ker f_1(B) = \ker f_1(B)^\circ$ and $W = \Ima f_1(B)$.
  The standard form $\langle \cdot, \cdot \rangle$ restricts to
  non-degenerate, hence standard forms on $U$ and $W$; see
  Lemma~\ref{Dieudonne1}.  Concatenating suitable bases for $U$ and
  $W$, we may assume that $B = B_1 B_2$, where $B_1 =
  \diag(C_1,\Id_{n-m})$, $B_2 = \diag(\Id_m,C_2)$ and the unitary
  matrices $C_1, C_2$ describe the restrictions of $B$ to $U, W$.
  Since $B_1,B_2$ can be expressed as polynomials in $B$ we deduce
  that $B_1, B_2 \in \Cen_{\GU_n(\lri)}(A)$.
\end{proof}

The following result is analogous to
Proposition~\ref{pro:class.quot.GL}.

\begin{prop}\label{pro:class.quot.GU}
  Let $\sigma, \tau \in \ShUn$.  Let $\len \in \N_0$ and
  suppose that $\cCtilde \in \Qgu_{\lri,\len+1}$ is a class with
  $\shUC(\cCtilde) = \tau$ which lies above a class $\cC
  \in \Qgu_{\lri,\len}$ with $\shUC(\cC) = \sigma$.  Then
  \[
  \frac{ \lvert \cCtilde \rvert}{ \lvert \cC \rvert}=q^{\dim \gu_n -
    \dim(\sigma)}\frac{\|\sigma\|}{\|\tau\|}.
  \]
  In particular, the ratio $ \lvert \cCtilde \rvert/ \lvert \cC
  \rvert$ depends only on the shadows $\sigma, \tau$ and not on
  $\len$, $\cC$ or $\cCtilde$.
\end{prop}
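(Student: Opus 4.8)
The plan is to adapt the proof of Proposition~\ref{pro:class.quot.GL} essentially verbatim, with one modification: the ``exponential'' map $X \mapsto \Id_n + \pi X$ used in the linear case does not take anti-hermitian matrices into $\GU_n$, so it must be replaced by a composition involving the Cayley map of Definition~\ref{def:cayley.map}. Fix $A \in \cC$ and $\wt A \in \cCtilde$ with $A \equiv \wt A$ modulo $\fP^\len$. By Proposition~\ref{pro:GL.conj.GU.conj} the $\Ad(\GU_n(\lri))$-orbit of a matrix in $\gu_n(\lri_\len)$ is the same as its $\GU_n(\lri_\len)$-orbit, so $\lvert \cC \rvert = [\GU_n(\lri_\len) : C_\len]$ where $C_i = \Cen_{\GU_n(\lri_i)}(A_i)$ and $A_i$ denotes the reduction of $A$ modulo $\fP^i$. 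The case $\len = 0$ is handled exactly as before: then $\cC = \{0\}$, $\dim(\sigma) = \dim \gu_n$, $\| \sigma \| = \lvert \GU_n(\kk) \rvert$, and $\lvert \cCtilde \rvert = [\GU_n(\kk) : \Cen_{\GU_n(\kk)}(\wt A)] = \| \sigma \| / \| \tau \|$, which agrees with $q^{\dim \gu_n - \dim(\sigma)} \| \sigma \| / \| \tau \|$.

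For $\len \geq 1$, first note $\lvert \GU_n(\lri_{\len+1}) \rvert / \lvert \GU_n(\lri_\len) \rvert = [\GU_n^\len(\lri) : \GU_n^{\len+1}(\lri)] = q^{\dim \gu_n}$, since $\GU_n^\len(\lri)/\GU_n^{\len+1}(\lri) \cong \gu_n(\kk)$ (e.g.\ via the Cayley map, which preserves congruence levels). Hence $\lvert \cCtilde \rvert / \lvert \cC \rvert = q^{\dim \gu_n} \lvert C_\len \rvert / \lvert C_{\len+1} \rvert$. Reduction modulo $\fP$ yields, for $i \in \{\len, \len+1\}$, the exact sequences
\[
1 \to C_i \cap \GU_n^1(\lri_i) \to C_i \to \shU(A_i) \to 1, \qquad
0 \to Z_{i-1} \cap \gu_n^1(\lri_{i-1}) \to Z_{i-1} \to \shu(A_{i-1}) \to 0,
\]
where $Z_{i-1} = \Cen_{\gu_n(\lri_{i-1})}(A_{i-1})$. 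The key step is to produce, for each such $i$, a bijection $Z_{i-1} \simeq C_i \cap \GU_n^1(\lri_i)$. The map $X \mapsto \pi X$ gives a bijection $Z_{i-1} \simeq Z_i \cap \gu_n^1(\lri_i)$ (restricting the linear-case argument to anti-hermitian matrices, using $[\pi X, A_i] = \pi[X, A_i]$). Every element of $\gu_n^1(\lri_i)$ is $\equiv 0$ modulo $\fP$, hence lies in $\gu_n(\lri_i)_{\mathrm{gen}}$, so the Cayley map $\Cay$ is defined on all of it; since $\Cay$ commutes with the adjoint action and preserves congruence levels (Definition~\ref{def:cayley.map}), it restricts to a bijection $Z_i \cap \gu_n^1(\lri_i) \simeq C_i \cap \GU_n^1(\lri_i)$. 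Composing gives the desired bijection, and also the bijection $Z_{\len-1} \simeq Z_\len \cap \gu_n^1(\lri_\len)$.

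With these bijections in hand, the computation is identical to that in Proposition~\ref{pro:class.quot.GL}:
\[
\begin{aligned}
\frac{\lvert \cCtilde \rvert}{\lvert \cC \rvert}
&= q^{\dim \gu_n} \frac{\lvert C_\len \cap \GU_n^1(\lri_\len) \rvert}{\lvert C_{\len+1} \cap \GU_n^1(\lri_{\len+1}) \rvert} \cdot \frac{\| \sigma \|}{\| \tau \|}
= q^{\dim \gu_n} \frac{\lvert Z_{\len-1} \rvert}{\lvert Z_\len \rvert} \cdot \frac{\| \sigma \|}{\| \tau \|} \\
&= q^{\dim \gu_n} \frac{\lvert Z_\len \cap \gu_n^1(\lri_\len) \rvert}{\lvert Z_\len \cap \gu_n^1(\lri_\len) \rvert \, \lvert \shu(A_\len) \rvert} \cdot \frac{\| \sigma \|}{\| \tau \|}
= q^{\dim \gu_n - \dim(\sigma)} \frac{\| \sigma \|}{\| \tau \|}.
\end{aligned}
\]
The main (and only) point requiring care is the replacement of the linear ``$\Id_n + \pi X$'' map by the Cayley map; its adjoint-equivariance, level-preservation, and bijectivity on principal congruence parts — all already available under the standing odd-residue-characteristic hypothesis of Section~\ref{sec:sim.gu} — make the substitution routine, so no serious obstacle is expected.
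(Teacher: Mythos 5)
Your proposal is correct and follows essentially the same route as the paper: the paper's proof simply says to repeat the argument of Proposition~\ref{pro:class.quot.GL} with the map $X \mapsto \Id_n + \pi X$ replaced by $X \mapsto \Cay(\pi X)$, which is exactly the composition you use. Your additional checks (that $\Cay$ is defined on $\gu_n^1$ since such elements have no eigenvalue $\equiv -1$ for $p$ odd, and that it preserves centralisers and congruence levels) are precisely the "straightforward translations" the paper leaves implicit.
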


\begin{proof} The proof proceeds along the same lines as the proof of
  Proposition~\ref{pro:class.quot.GL}.  The second map
  in~\eqref{equ:map} is replaced by
  \begin{equation*}
    \gu_n(\lri) \rightarrow \GU_n^1(\lri), \quad X \mapsto
    \Cay(\pi X) = (\Id_n - \pi X)/(\Id_n + \pi X).
  \end{equation*}
  The other necessary translations are straightforward.
\end{proof}

In analogy with Definition~\ref{def:b.GL}, we introduce the
following functions.

\begin{defn}\label{def:b.GU} For $\sigma,\tau \in
  \ShUn$ let
  \begin{equation} \label{equ:def.b.GU} b^{(-1)}_{\sigma,\tau}(q) =
    q^{\dim \gu_n - \dim(\sigma)} \frac{\|\sigma\|}{\|\tau\|}.
  \end{equation}
\end{defn}

This variation of the earlier defined functions
$b^{(1)}_{\sigma,\tau}(q)$ was already hinted at in
Remark~\ref{rem:bees}.  Table~\ref{tab:branch.rules.A2} gives the
explicit values of~$b^{(-1)}_{\sigma,\tau}$ in the case~$n=3$, which
can be computed with the aid of Table~\ref{tab:shadows.iso.GU}.

\subsection{Unitary shadows and branching rules for $\gu_3(\lri_\len)$}
We list eight shadows in~$\Sh_{\GU_3(\lri)}$, classified by types;
compare~\eqref{equ:types-def}.  Recalling that $q>2$, one sees that
these are all unitary shadows arising from~$\len=1$, i.e.\ arising
from the centralisers of elements $A \in \gu_3(\kk)$.  These shadows
$\sigma$ and the isomorphism types of $\sigma(\kk)$ are easily
extracted from~\cite[Appendix~C]{AKOV1}.

\begin{table}[htb!]
  \centering
  \caption{Shadows in $\GU_3(\kk)$}
  \label{tab:shadows.iso.GU}
  \begin{tabular}{|c||l|l|c|}
    \hline
    Type & Minimal polynomial in $\kk_2[t]$ & Isomorphism type of
    $\sigma(\kk)$ & $\dim(\sigma)$ \\
    \hline
    $\Gsha$ & ~$t-\alpha$ \hfill $\alpha \in \gu_1(\kk)$ &
    $\GU_3(\kk)$ & 9 \\
    $\Lsha$ & $(t-\alpha_1)(t-\alpha_2)$ \hfill $\alpha_1,\alpha_2 \in
    \gu_1(\kk)$ distinct & $\GU_1(\kk) \times \GU_2(\kk)$ & 5 \\
    $\Jsha$ & $(t-\alpha)^2$ \hfill $\alpha \in \gu_1(\kk)$ &
    $\mathsf{Heis}(\kk) \rtimes (\GU_1(\kk) \times \GU_1(\kk))$ & 5 \\
    $\Tasha$ &  $\prod_{i=1}^{3}(t-\alpha_i)$ \hfill
    $\alpha_1,\alpha_2,\alpha_3 \in \gu_1(\kk)$ distinct &
    $\GU_1(\kk) \times \GU_1(\kk) \times \GU_1(\kk)$ & 3 \\
    $\Tbsha$ &  $\prod_{i=1}^{3} (t-\alpha_i)\,$ $\alpha_1 \in
    \gu_1(\kk), \alpha_2 = -\alpha_3^\circ$ distinct & $\GU_1(\kk)
    \times \kk_2^\times$ & 3 \\
    $\Tcsha$ &  $f(t)$ \hfill $f$ a suitable irreducible cubic$^*$
    & $\GU_1(\kk_3)$ & 3 \\
    $\Msha$ & $(t-\alpha_1)(t-\alpha_2)^2$ \hfill $\alpha_1,\alpha_2
    \in \gu_1(\kk)$ distinct & $\GU_1(\kk) \times \GU_1(\kk) \times
    \aG_\text{a}(\kk)$ & 3 \\
    $\Nsha$ & $(t-\alpha)^3$ \hfill $\alpha \in \gu_1(\kk)$ &
    $\GU_1(\kk) \times \aG_\text{a}(\kk) \times \aG_\text{a}(\kk)$ & 3 \\
    \hline
    \multicolumn{4}{@{} p{\textwidth} @{}}{${}^*$ We require: $f = t^3 +
      \sum_{i=0}^2 c_i t^i \in \kk_2[t]$ satisfies $c_i^{\, \circ} =
      (-1)^{i+1} c_i$ for $0 \leq i < 3$; cf.\ Lemma~\ref{cyclic.u}.
      The number of such polynomials is equal to $ \tfrac{1}{3} \lvert
      \gu_1(\kk_3) \smallsetminus \gu_1(\kk) \rvert = \tfrac{1}{3}(q^2-1)q$.}
  \end{tabular}
\end{table}

The following theorem is the counterpart of
Theorem~\ref{thm:G.shad.graph} for anti-hermitian matrices.

\newpage
\begin{thm}[Classification of unitary shadows and branching
  rules] \label{thm:U.shad.graph}\quad
  \begin{enumerate}
  \item \label{part-a} The set of shadows $\Sh_{\GU_3(\lri)}$ consists
    of eight elements, classified by the types
    \[
    \Gsha, \, \Lsha, \, \Jsha, \, \Tasha, \, \Tbsha, \, \Tcsha, \,
    \Msha, \, \Nsha
    \]
    described in Table~\textup{\ref{tab:shadows.iso.GU}}.
  \item \label{part-b} For all $\sigma, \tau \in \Sh_{\GU_3(\lri)}$
    there exists a polynomial ${a}_{\sigma,\tau} \in \Z[\sixth][t]$
    such that the following holds: for every $\len \in \N$ and every
    $\cC \in \cQ_{\lri,\len}^{\gl_3}$ with $\shU(\cC) = \sigma$ the
    number of classes $\cCtilde \in \cQ_{\lri,\len+1}^{\gl_3}$ with
    $\shU(\cCtilde) = \tau$ lying above $\cC$ is equal to
    ${a}_{\sigma,\tau}(q)$.
  \end{enumerate}
\end{thm}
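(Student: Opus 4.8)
The plan is to prove Theorem~\ref{thm:U.shad.graph} by mirroring, mutatis mutandis, the entire development carried out for $\gl_3$ in Sections~\ref{appendix:similarity.new} and~\ref{subsec:proof.main.SL}, exploiting the anti-hermitian analogues of the key structural results that are already in place. The backbone is an explicit irredundant list of representatives for $\Ad(\GU_3(\lri))$-orbits in $\gu_3(\lri_\len)$ in the spirit of Theorem~\ref{thm:irredundant.list.for.M3x3}. The starting point for producing such a list is the observation, via Proposition~\ref{pro:GL.conj.GU.conj}, that two anti-hermitian matrices are $\GU_3(\lri)$-conjugate if and only if they are $\GL_3(\Lri)$-similar; hence one may reuse the $\gl_3(\Lri_\len)$-classification of Theorem~\ref{thm:sim.class.ring} (applied over the unramified quadratic extension $\Lri$) and then cut down to anti-hermitian representatives. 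Lemma~\ref{cyclic.u} is the essential tool here: it pins down exactly which characteristic polynomials can occur (the coefficient constraints $c_i^\circ=(-1)^{n-i}c_i$), and for cyclic matrices it guarantees that such a polynomial is actually realised by an anti-hermitian companion-type matrix via the explicit structure matrix $\Gamma$ constructed in its proof. For the non-cyclic (Jordan-block type) cases one similarly adapts the $E(m,a,b,c,d)$-analysis of Section~\ref{appendix:similarity.new}, replacing $\lri$ by $\Lri$ and imposing the $\circ$-symmetry; Proposition~\ref{Dieudonne2} converts the existence of an anti-hermitian conjugate into the solvability of $A^\circ\Gamma+\Gamma A=0$ with $\Gamma^\circ=\Gamma$ invertible, which is the computational heart of that step.

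Granting the representative list, part~\eqref{part-a} follows by simply reading off the shadow types that occur, exactly as part~\eqref{item:G.conj.graph.1} of Theorem~\ref{thm:G.shad.graph} was deduced from Theorem~\ref{thm:irredundant.list.for.M3x3}. The crucial point is that in the unitary setting the ``shear'' types $\Kasha,\Kbsha$ do \emph{not} arise: these came from the two non-conjugate reductions of centralisers of $E_2(1,0,0,0,0)$ and $E_2(\infty,\pi,0,0,0)$, distinguished by the projective parameter $(\bar\alpha:\bar\rho)\in\mathbb{P}^1(\kk)$ landing on $(0:1)$ versus $(1:0)$; one must check that the anti-hermitian symmetry forces these degenerate values not to occur (or forces them to be $\GU_3(\kk)$-conjugate), leaving only the eight types $\Gsha,\Lsha,\Jsha,\Tasha,\Tbsha,\Tcsha,\Msha,\Nsha$ listed in Table~\ref{tab:shadows.iso.GU}. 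One also uses the hermitian-forms input $\kk$-rationally: over $\kk$ the unitary group acts with a single orbit on each congruence class of the relevant type (Lang--Steinberg, as in the proof of Proposition~\ref{pro:GL.conj.GU.conj}), so the shadow types are detected by minimal polynomials over $\kk_2$ subject to the $\circ$-constraints, which is precisely how Table~\ref{tab:shadows.iso.GU} is organised.

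For part~\eqref{part-b}, the branching analysis proceeds case-by-case on the type $\sigma$ of the class $\cC\subset\gu_3(\lri_\len)$, exactly as in Section~\ref{subsec:proof.main.SL}. For $\sigma$ of type $\Gsha$ (i.e.\ $A_\len$ scalar), a lift is $\varsigma(d)\Id_3+\pi^\len X$ with $X\in\gu_3(\kk)$, and the number of lifts with a prescribed shadow $\tau$ is the number of possible minimal polynomials of $X$ over $\kk_2$ of the appropriate shape satisfying the $\circ$-symmetry, paired with a compatible characteristic polynomial when $\tau$ has type $\Lsha$; these counts are the polynomials $a_{\sigma,\tau}(q)$ recorded in Table~\ref{tab:branch.rules.A2} (with $\ee=-1$), and they agree with the $\gl_3$ counts because passing to the quadratic extension and imposing $\circ$-symmetry cancels out. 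For $\sigma$ of type $\Lsha$ or $\Jsha$ one uses the unitary representative list to enumerate lifts by the shape of the relevant $1\times1$/$2\times2$ block data, just as in the $\Lsha$- and $\Jsha$-bullets of the $\gl_3$ proof, noting that in the $\Jsha$-case no $\Kasha,\Kbsha$ lifts appear. For $\sigma$ of any of the remaining ``minimal'' types $\Tasha,\Tbsha,\Tcsha,\Msha,\Nsha$, the Lie centraliser shadow $\shu(A_\len)$ is already minimal among the types, so every lift keeps the same shadow and Proposition~\ref{pro:class.quot.GU} together with Definition~\ref{def:b.GU} forces $a_{\sigma,\sigma}(q)=q^{\dim\gu_3}/b^{(-1)}_{\sigma,\sigma}(q)=q^{\dim(\sigma)}=q^3$. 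In all cases the multiplicity depends only on the two shadow types, yielding the polynomial $a_{\sigma,\tau}\in\Z[\sixth][t]$ as claimed.

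The main obstacle I expect is the unitary analogue of Theorem~\ref{substitute.for.AOPV.new}: producing a \emph{complete and irredundant} list of anti-hermitian representatives for the $\Jsha$/$\Msha$/$\Nsha$-type classes (those coming from $E(m,a,b,c,d)$-matrices), and verifying the absence of the degenerate $\Kasha,\Kbsha$ behaviour. This requires redoing the delicate entry-by-entry comparison of $EX=XE'$ from the proof of Theorem~\ref{substitute.for.AOPV.new} over $\Lri$ while tracking the $\circ$-symmetry, using Proposition~\ref{Dieudonne2} and the explicit conjugating one-parameter subgroups to normalise the parameter $d$; one must check that the symmetry constraints on $a,b,c$ eliminate precisely the split/degenerate sub-branches responsible for types $\Kasha$ and $\Kbsha$. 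Once this normalisation is in hand, the shadow computations are the same local linear-algebra arguments as in Proposition~\ref{shadows.case.v.new}, and the branching counts drop out uniformly.
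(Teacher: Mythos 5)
Your plan follows essentially the same route as the paper: one applies Theorem~\ref{thm:irredundant.list.for.M3x3} over $\Lri$ (it holds for any compact discrete valuation ring, so no re-derivation of the $EX=XE'$ analysis is needed), cuts the list down to classes meeting $\gu_3(\lri_\len)$ via Proposition~\ref{pro:GL.conj.GU.conj}, Proposition~\ref{Dieudonne2} and Lemma~\ref{cyclic.u} — the delicate step being the solvability analysis of $A^\circ\Gamma+\Gamma A=0$ in the $E(m,a,b,c,d)$-cases, which is exactly where the subcases $(\mathrm{III}_0)$ and $(\mathrm{III}_\infty)$ are shown to admit no anti-hermitian conjugate, so the types $\Kasha,\Kbsha$ drop out as you anticipated — and then part~(2) is the same case-by-case branching argument as for $\gl_3$, with the minimal types handled by Proposition~\ref{pro:class.quot.GU} and Definition~\ref{def:b.GU}. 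This matches Theorem~\ref{thm:representatives.for.GU3.action.gu3} and Section~\ref{subsec:proof.main.SU} of the paper.
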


\begin{remark}
  We emphasise that the types $\Kasha$ and $\Kbsha$ do not occur in
  the unitary setting and we refer to Table~\ref{tab:branch.rules.A2}
  for the explicit values of the polynomials~$a_{\sigma,\tau}$ which
  turn out to be the same as in the general linear case.
\end{remark}

Similar to the procedure in Section~\ref{sec:sim.gl}, we first
produce a complete parametrisation for the
$\Ad(\GU_3(\lri_\len))$-orbits in $\gu_3(\lri_\len)$.  The proof of
Theorem~\ref{thm:U.shad.graph} is given in
Section~\ref{subsec:proof.main.SU}.

\subsection{Similarity classes of anti-hermitian $3\times3$ matrices}
For $\len \in \N$, let $\mathcal{R}_3(\Lri_\len) \subset
\gl_3(\Lri_\len)$ denote the set of representatives for similarity
classes in $\gl_3(\Lri_\len)$ provided by
Theorem~\ref{thm:irredundant.list.for.M3x3}.  We use
Proposition~\ref{Dieudonne2} to check for each $A \in
\mathcal{R}_3(\Lri_\len)$ whether $A$ is $\GL_3(\Lri_\len)$-conjugate
to an anti-hermitian matrix.  In this way we obtain a parametrisation
$\mathcal{R}_3'(\Lri_\len) \subset \mathcal{R}_3(\Lri_\len)$ of the
set of similarity classes of anti-hermitian $3\times3$ matrices.  This
is enough for our purposes, but note that only in some cases
representatives $A \in \mathcal{R}_3'(\Lri_\len)$ are themselves
anti-hermitian.

For every $\nu \in \N_0$ construct a `strongly $\circ$-compatible' set
of representatives
\[
\varsigma(\Lri_\nu) \subset \Lri \qquad \text{for $\Lri_\nu = \Lri /
  \fP^\nu$}
\]
in the following way.  First choose a set of representatives
$\varsigma_0(\lri_\nu) \subset \lri$ for $\lri_\nu = \lri / \fp^\nu$
such that $\varsigma_0(-a) = - \varsigma_0(a)$ for $a \in \lri_\nu$.
Then extend $\varsigma_0$, by setting $\varsigma(a_1 + a_2 \delta_\nu)
= \varsigma_0(a_1) + \varsigma_0(a_2) \delta$ for $a_1, a_2 \in
\lri_\nu$.  In particular, this ensures that $\varsigma(a^\circ) =
\varsigma(a)^\circ$ for all $a \in \Lri_\nu$ and
$\varsigma(\gu_1(\lri_\nu)) \subset \gu_1(\lri)$.

In contrast to the convention favoured in
Section~\ref{appendix:similarity.new}, this means that $\varsigma(0)
=0$; as we will see, the types $\Kasha$ and $\Kbsha$ do not occur in
the unitary setting so that we do not run into any conflicts.  Similar
to the custom in Section~\ref{appendix:similarity.new}, we employ the
notation in a flexible way; e.g.\ we write $\varsigma(\Lri_\nu)
\subset \Lri_\mu$ for $\nu < \mu$ to denote the reduction of
$\varsigma(\Lri_\nu)$ modulo $\fP^\mu$, and we sometimes write
$\pi^\nu \Lri_{\len-\nu}$ rather than $\pi^\nu
\varsigma(\Lri_{\len-\nu})$.  These conventions are also applied to
matrices.

The next theorem gives a complete description of the similarity
classes in $\gu_3(\lri_\len)$ and their unitary shadows; it is the
counterpart of Theorem~\ref{thm:irredundant.list.for.M3x3} for
anti-hermitian matrices.

\begin{thm}\label{thm:representatives.for.GU3.action.gu3}
  The set
  \[
  \mathcal{R}_3'(\Lri_\len) = \{ A \in \mathcal{R}_3(\Lri_\len) \mid
  \text{$A$ is $\GL_3(\Lri_\len)$-conjugate to an anti-hermitian
    matrix} \},
  \]
  parametrising $\cQ^{\gu_3}_{\lri,\len}$, consists of the following
  matrices:
  \begin{enumerate}
  \item[(i)] $d\Id_3$, where $d \in \gu_1(\lri_\len)$; the associated
    unitary shadow has type~$\Gsha$.
  \item[(ii)] $d\Id_3 + \pi^{i} D(a,0,0)$, where $0 \leq i<\len$, $\;
    d \in \gu_1(\lri_\len)$ and $a \in \Lri_{\len-i}^\times$ with
    $a^\circ + a = 0$; the associated unitary shadow has type~$\Lsha$.
  \item[(iii)]  $\phantom{x}$ \vspace*{-0.7\baselineskip}
    \[
    d\Id_3 + \pi^i D(a,0,0) +\pi^j
    \begin{bmatrix}
      0 & 0 \\
      0 & C
    \end{bmatrix},
    \]
    where $0 \leq i < j < \len$, $\; d \in
    \varsigma(\gu_1(\lri_{j}))$, $\; a \in \Lri_{\len-i}^\times$ with
    $a^\circ + a = 0$, and $C \in \gl_2(\Lri_{\len-j})$ a companion
    matrix with characteristic polynomial $t^2 + b_1 t + b_0$ such
    that $b_1^{\, \circ} = -b_1$ and $b_0^{\, \circ} = b_0$; the
    associated unitary shadows have types $\Tasha$, $\Tbsha$ or
    $\Msha$, depending on~$C$.
  \item[(iv)] $d\Id_3+\pi^iC$, where $0 \leq i < \len$, $\; d \in
    \varsigma(\gu_1(\lri_{i}))$ and $C \in \gl_3(\Lri_{\len-i})$ a
    companion matrix with characteristic polynomial $t^3 + c_2 t^2 +
    c_1 t + c_0$ such that $c_k^\circ + (-1)^k c_k = 0$ for $k \in
    \{0,1,2\}$; the associated unitary shadows have types $\Tasha$,
    $\Tbsha$, $\Tcsha$, $\Msha$ or $\Nsha$, depending on $C$.
  \item[(v)] $d' \Id_3 +\pi^i E$, where $0 \leq i < \len$, $\; d' \in
    \varsigma(\gu_1(\Lri_{\len-i}))$ and $E \in \gl_3(\Lri_{\len-i})$
    is one of the following matrices:
    \begin{enumerate}
    \item[(I)] $E(\len-i,0,0,c,d)$, where $c,d \in
      \gu_1(\lri_{\len-i})$ with $v(c)>0$,
    \item[(II)] $E(\mu,a,b,c,d)$, where $1 \leq \mu < \len-i$, $\; a,b
      \in \Lri_{\len-i}$ with $\mu = v(b) \leq v(a)$ and $a^\circ + a
      = b^\circ - b = 0$, $\; c \in \gu_1(\lri_{\len-i})$ with $v(c)>0$
      and $d \in \varsigma(\gu_1(\lri_\mu))$,
    \item[($\text{III}_1$)] $E(\mu,a,b,c,d)$, where $1 \leq \mu <
      \len-i$, $\; a,b \in \Lri_{\len-i}$ with $\mu = v(a) < v(b)$ and
      $a^\circ + a = b^\circ - b$, $\; c \in \gu_1(\lri_{\len-i})$
      with $v(c)>0$ and $d \in \varsigma(\gu_1(\lri_\mu))$;
    \end{enumerate}
    the associated unitary shadows in these subcases have types
    $\Jsha$, $\Msha$ and~$\Nsha$.
  \end{enumerate}
\end{thm}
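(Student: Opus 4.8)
The plan is to deduce Theorem~\ref{thm:representatives.for.GU3.action.gu3} from the already-established classification in the general linear case, Theorem~\ref{thm:irredundant.list.for.M3x3}, by intersecting with the anti-hermitian locus. By Proposition~\ref{pro:GL.conj.GU.conj}, two anti-hermitian matrices in $\gu_3(\lri_\len)$ are $\Ad(\GU_3(\lri))$-conjugate if and only if they are $\GL_3(\Lri_\len)$-similar; hence $\cQ^{\gu_3}_{\lri,\len}$ is in bijection with the set of $\GL_3(\Lri)$-similarity classes in $\gl_3(\Lri_\len)$ that meet $\gu_3(\lri_\len)$, equivalently (by Proposition~\ref{Dieudonne2}) that contain a matrix $\GL_3(\Lri_\len)$-conjugate to an anti-hermitian one. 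So the task reduces to going down the list (i)--(v) of $\mathcal{R}_3(\Lri_\len)$ over the ring $\Lri_\len$ and determining, in each family, precisely which parameter values $d,a,b,c,m$ give a representative whose similarity class is anti-hermitian, then checking that the surviving representatives are still pairwise non-conjugate (this last point is automatic, since they are a subset of a complete irredundant list).

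First I would set up the criterion: by Lemma~\ref{cyclic.u}, for a \emph{cyclic} matrix $A$ over $\Lri_\len$ with characteristic polynomial $t^n+\sum c_i t^i$, being $\GL_n(\Lri)$-conjugate to an anti-hermitian matrix is equivalent to $c_i^{\,\circ}=(-1)^{n-i}c_i$ for all $i$; and for a scalar $d\Id_3$ it is simply the condition $d^\circ=-d$, i.e.\ $d\in\gu_1(\lri_\len)$. This immediately handles case (i) (giving $d\in\gu_1(\lri_\len)$, type $\Gsha$) and case (iv) (the cyclic $3\times3$ companion matrix $\pi^i C$, translated by the scalar $\varsigma(d)\Id_3$: one needs $d$ itself anti-hermitian and the coefficients of the char.\ polynomial of $C$ to satisfy $c_k^\circ+(-1)^kc_k=0$; the shadow type is read off from the shape of $C$ over $\kk_2$ exactly as in the $\GL_3$ case). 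For case (ii), $d\Id_3+\pi^iD(a,0,0)$ is block-diagonal $1\oplus 2$ with the $2$-block scalar; by Lemma~\ref{Dieudonne1} and the orthogonal-decomposition argument (as in the proof of Lemma~\ref{lem:unitary-shadow-well-defined}) one sees the anti-hermitian condition splits over the eigenspaces, forcing $d\in\gu_1$ and $a^\circ+a=0$. Case (iii) is handled the same way: the matrix is $1\oplus 2$ block-diagonal with distinct residual eigenvalues, the hermitian form decomposes orthogonally along the blocks, so $A$ is conjugate into $\gu_3$ iff each block is; this yields $d\in\varsigma(\gu_1(\lri_j))$, $a^\circ+a=0$, and the companion block $C$ anti-hermitian in the sense of Lemma~\ref{cyclic.u} ($b_1^\circ=-b_1$, $b_0^\circ=b_0$).

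The main work, and the main obstacle, is case (v): the matrices $d'\Id_3+\pi^i E(m,a,b,c,d)$, which are \emph{not} cyclic (their minimal polynomial modulo $\fp$ is $(t-\alpha)^2$), so Lemma~\ref{cyclic.u} does not apply directly and neither does an orthogonal-eigenspace splitting. Here I would apply Proposition~\ref{Dieudonne2} head-on: one must decide for which parameters there exists a hermitian $\Gamma\in\GL_3(\Lri_\len)$ with $E^\circ\Gamma+\Gamma E=0$ (after absorbing the scalar translation, which just contributes $d'\in\gu_1$). This is a linear system in the entries of $\Gamma$ together with the open hermitian-plus-invertible condition; solving it explicitly using the structured form of $E(m,a,b,c,d)$ — much in the spirit of the computation in Proposition~\ref{prop:centraliser.of.E.new} and the entry-by-entry analysis in the proof of Theorem~\ref{substitute.for.AOPV.new} — should pin down the constraints: $c\in\gu_1$, $d\in\varsigma(\gu_1(\lri_\mu))$, and, in the subcase $\mathcal{E}^{\mathrm{I}}_\len$, the additional relations as recorded (type $\Jsha$); in $\mathcal{E}^{\mathrm{II}}_\len$, $a^\circ+a=b^\circ-b=0$ (type $\Msha$); in $\mathcal{E}^{\mathrm{III},1}_\len$, $a^\circ+a=b^\circ-b$ up to the relevant precision (type $\Nsha$). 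Crucially one must also verify that the subcases $\mathcal{E}^{\mathrm{III},0}_\len$ and $\mathcal{E}^{\mathrm{III},\infty}_\len$ (types $\Kasha,\Kbsha$) admit \emph{no} solution $\Gamma$, so those shadow types genuinely disappear in the unitary world — I expect this to follow because the elementary divisors $1,\pi^m,\pi^{v(a)}$ are forced to coincide ($m=v(a)=\mu$) by a determinant/precision obstruction in the $\Gamma$-equation, collapsing III$_0$ and III$_\infty$ into III$_1$. Finally, irredundancy of $\mathcal{R}_3'(\Lri_\len)$ is inherited from that of $\mathcal{R}_3(\Lri_\len)$ (Theorem~\ref{thm:irredundant.list.for.M3x3}), and the shadow-type assignments are precisely those already computed in Proposition~\ref{shadows.case.v.new} and its $\gl_3$ analogues, now restricted to the anti-hermitian sublist; so no new shadow computation is needed beyond confirming that $\shUC$ agrees with $\shGC$ on these classes, which is immediate since $\GU_3(\lri)$-centralisers sit inside $\GL_3(\Lri)$-centralisers and the relevant algebraic groups are the $\kk$-forms of the same $\kk_2$-groups (Proposition~\ref{pro:unitary.connected}).
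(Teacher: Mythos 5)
Your overall route coincides with the paper's proof: run through the $\GL_3$ list of Theorem~\ref{thm:irredundant.list.for.M3x3} over $\Lri_\len$, use Proposition~\ref{Dieudonne2} (and Lemma~\ref{cyclic.u} for the cyclic/scalar pieces, block-splitting via Lemma~\ref{lem:diagonal-form-shadow} in (ii)--(iii)) to decide which classes meet $\gu_3(\lri_\len)$, and attack case (v) by solving the linear system $E^\circ\Gamma+\Gamma E=0$ for a hermitian invertible $\Gamma$. Your anticipated obstruction killing $\mathcal{E}^{\mathrm{III},0}_\len$ and $\mathcal{E}^{\mathrm{III},\infty}_\len$ is exactly the one the paper extracts from the $(1,2)$-entry of that system ($a^\circ w^\circ\equiv_{\fP^{\mu+1}}-\pi^m x$ with $x,w$ units forces $\mu=m=v(a)$), and Proposition~\ref{pro:GL.conj.GU.conj} justifies the reduction to $\GL$-similarity as you say. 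What your sketch defers -- exhibiting explicit hermitian forms $\Gamma_0$ in subcases (I), (II), (III$_1$) to prove existence, and deriving $d^\circ+d=0$ from the system -- is precisely the computational core of the paper's argument, but your plan for it is the right one.

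The genuine gap is your closing claim that the shadow-type assignments need "no new computation" because they are the $\GL$-shadows of Proposition~\ref{shadows.case.v.new} restricted to the anti-hermitian sublist, the agreement being "immediate". That principle is false: the unitary shadow is by definition (Definition~\ref{def.of.unitary.shadows}) the reduction mod $\fP$ of the $\GU_3(\lri)$-centraliser, and the $\GL$-shadow type does not determine it. Already for $\len=1$, any anti-hermitian matrix with three distinct eigenvalues has $\GL_3(\kk_2)$-shadow of type $\Tasha$ (a split torus over $\kk_2$), whereas its unitary shadow is of type $\Tasha$ or $\Tbsha$ according to whether all eigenvalues lie in $\gu_1(\kk)$ or include a pair $\alpha,-\alpha^\circ$; distinct $\kk$-forms of the same $\kk_2$-group carry different type labels, so Proposition~\ref{pro:unitary.connected} cannot settle this. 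Moreover $\sh_\GU(A)$ is only a priori \emph{contained} in the fixed locus of $\sh_\GL(A)$; that this containment has the expected dimension (and hence identifies the type in Table~\ref{tab:shadows.iso.GU}) is not formal, since reduction mod $\fP$ need not commute with intersecting centralisers with the unitary group. In case (v) this is where the paper's proof does most of its work: it constructs an explicit hermitian $\Gamma_0$ in each surviving subcase, computes $\Cen_{\gl_3(\Lri_{\len-i})}(E)$ intersected with the unitary Lie lattice for $\Gamma_0$, bounds the dimension of its reduction above by the resulting congruences and below by exhibiting explicit elements (e.g.\ $E$, $\delta\Id_3$, and a matrix $Y_0$ in subcase (II)), and only then reads off the types $\Jsha$, $\Msha$, $\Nsha$. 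Without this step your argument establishes the parametrising list but not the shadow assertions of the theorem.
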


\begin{proof}
  The proof is based on Theorem~\ref{thm:irredundant.list.for.M3x3}:
  we go through the cases described there and keep track of which
  similarity classes in $\gl_3(\Lri_\len)$ intersect
  $\gu_3(\lri_\len)$ non-trivially.  The latter is achieved by using
  the criterion provided by Proposition~\ref{Dieudonne2}.  On the way
  we pin down in each case the Lie centraliser shadow of $A \in
  \mathcal{R}_3'(\Lri_\len)$, with respect to a suitable hermitian
  form, to determine the unitary shadow associated to~$A$;
  see~Lemma~\ref{lem:unitary-shadow-well-defined}.

  Let~$A \in \mathcal{R}_3(\Lri_\len)$ be one of the matrices
  representing a similarity class in~$\gl_3(\Lri_\len)$.
  \begin{list}{}{\setlength{\leftmargin}{5pt}
      \setlength{\itemsep}{2pt} \setlength{\parsep}{1pt}}
  \item [(i)] The similarity class $\{ A \}$ of $A =d\Id_3 \in
    \gl_3(\Lri_\len)$ intersects $\gu_3(\lri_\len)$ if and only
    $d^\circ+d=0$, that is $d \in \gu_1(\lri_\len)$.  In this case the
    associated unitary shadow has type~$\Gsha$.
  \item [(ii)] Let $A = d\Id_3 + \pi^i D(a,0,0) \in \gl_3(\Lri_\len)$,
    with $0 \leq i < \len$, $\; d \in \Lri_\len$ and $a \in
    \varsigma(\Lri_{\len-i}^\times)$.  Then $A \in
    \mathcal{R}_3'(\Lri_\len)$ if and only if its eigenvalues
    $\lambda_1 = d+\pi^i a$ and $\lambda_2=\lambda_3=d$ are
    anti-hermitian, that is, $d + d^\circ = \pi^i(a+a^\circ) = 0$.
    Indeed, if $A \in \mathcal{R}_3'(\Lri_\len)$ then its eigenvalues
    satisfy: $\lambda_1^{\, \circ}, \lambda_2^{\, \circ},
    \lambda_3^{\, \circ}$ are equal to $-\lambda_1, -\lambda_2,
    -\lambda_3$, up to a permutation.  As $\lambda_1 \ne \lambda_2 =
    \lambda_3$, we deduce that $\lambda_k^{\, \circ} = -\lambda_k$ for
    $k \in \{1,2,3\}$.  For $A \in \mathcal{R}_3'(\Lri_\len)$ the
    associated unitary shadow has type~$\Lsha$.
  \item [(iii)] Let
    \[
    A = d \Id_3 + \pi^{i} D(a,0,0) + \pi^{j}
    \begin{bmatrix}
      0 & 0 \\
      0 & C
    \end{bmatrix}
    \in \gl_3(\Lri_\len),
    \]
    where $0 \leq i < j < \len$, $\; d \in \varsigma(\Lri_{j})$, $\; a
    \in \Lri_{\len-i}^\times$ and $C \in \gl_2(\Lri_{\len-j})$ a
    companion matrix.  Applying~(ii) to $A$ modulo~$\fP^j$, we see
    that $d \in \varsigma(\gu_1(\lri_j))$ whenever $A \in
    \mathcal{R}_3'(\Lri_\len)$.  Let us assume that this condition is
    satisfied.

    Proposition~\ref{Dieudonne2} shows that $A \in
    \mathcal{R}_3'(\Lri_\len)$ if and only if there exist $\Gamma \in
    \GL_3(\Lri_\len)$ such that $\Gamma^\circ = \Gamma$ and
    $A^\circ\Gamma+\Gamma A=0$. Applying
    Lemma~\ref{lem:diagonal-form-shadow}, for $A' = -A^\circ$ and
    $X=\Gamma$, one may further demand that $\Gamma$ is block diagonal
    for blocks of sizes $1\times1$ and $2\times2$.  Consequently
    $a^\circ + a = 0$ is a necessary condition and, as $C$ is a
    companion matrix, the full assertion follows from
    Lemma~\ref{cyclic.u}.

    Suppose that $A \in \mathcal{R}_3'(\Lri_\len)$.  Using
    Lemma~\ref{lem:diagonal-form-shadow}, the associated Lie
    centraliser shadow is built from the Lie centraliser shadow of
    $a$, i.e.\ $\gu_1(\kk)$, and the Lie centraliser shadow of the
    reduction of $C$ modulo~$\fP$; cf.~\cite[Proof of
    Corollary~7.7]{AKOV1}.  Table~\ref{tab:shadows.iso.GU} shows that
    the resulting unitary shadow has type $\Tasha$, $\Tbsha$ or
    $\Msha$, depending on $C$.
  \item [(iv)] Let $A = d\Id_3 + \pi^i C \in \gl_3(\Lri_\len)$, where
    $0\leq i<\len$, $\; d \in \varsigma(\Lri_i)$ and $C \in
    \gl_3(\Lri_{\len-i})$ is a companion matrix with characteristic
    polynomial $t^3 + c_2t^2 + c_1t + c_0$.  Applying~(i) to $A$
    modulo $\fP^i$, we see that a $d \in \varsigma(\gu_1(\lri_i))$ is
    a necessary condition for $A \in \mathcal{R}_3'(\Lri_\len)$. Let
    us assume that this condition is satisfied.  Then $A \in
    \mathcal{R}_3'(\Lri_\len)$ if and only if $C$ is similar to an
    anti-hermitian matrix.  Furthermore, $C$ is similar to an
    anti-hermitian matrix if and only if $c_k^\circ + (-1)^kc_k = 0$
    for $k \in \{0,1,2\}$, by Lemma~\ref{cyclic.u}.

    Suppose that $A \in \mathcal{R}_3'(\Lri_\len)$ so that $C$ is
    similar to an anti-hermitian matrix.  The associated Lie
    centraliser shadow is equal to the centraliser of the reduction of
    $C$ modulo~$\fP$; cf.~\cite[Proof of Corollary~7.7]{AKOV1}.
    Inspection of Table~\ref{tab:shadows.iso.GU} shows that the
    unitary shadows occurring are of type $\Tasha$, $\Tbsha$,
    $\Tcsha$, $\Msha$, $\Nsha$, corresponding to different kinds of
    minimal polynomials of degree~$3$.
 \item [(v)] Consider
    \[
    A= d\Id_3 + \pi^i E \in \gl_3(\Lri_\len),
    \]
    where $0 \le i<\len$, $\; d \in \Lri_\len$ and
    \[
    E = E(m,a,b,c,0) =
    \begin{bmatrix}
      0  & \pi^m & 0 \\
      0 & 0 & 1 \\
      a & b & c
    \end{bmatrix}
    \in \gl_3(\Lri_{\len - i})
    \]
    with $1 \leq m \leq \len - i$ and $a,b,c \in \Lri_{\len-i}$ such
    that $v(a),v(b),v(c) > 0$.  Writing $\mu = \mu_{\len-i}(m,a,b) =
    \min\{m, v(a),v(b),\len-i\}$, we may assume further that $d \in
    \varsigma(\Lri_{i+\mu})$; see
    Theorem~\ref{thm:irredundant.list.for.M3x3}.  According to
    Proposition~\ref{Dieudonne2}, one has $A \in
    \mathcal{R}_3'(\Lri_\len)$ if and only if there exists
    \[
    \Gamma =
    \begin{bmatrix}
      x & y & z \\
      y^\circ & u & w \\
      z^\circ & w^\circ & r
    \end{bmatrix}
    \in \GL_3(\Lri_\len), \quad \text{with $ x,u,r \in \lri_\len$,
      hence $\Gamma=\Gamma^\circ$,}
    \]
    such that $A^\circ \Gamma + \Gamma A = 0$.  Comparing matrix
    entries, we obtain the following equivalent system of equations
    over $\Lri_\len$:
    \begin{equation}\label{nightmare}
      \begin{split}
        (1,1) \qquad &0=(d^\circ+d)x+\pi^i(a^\circ z^\circ + az), \\
        (1,2) \qquad &0=(d^\circ+d)y+\pi^i(a^\circ w^\circ + \pi^m x + bz), \\
        (2,2) \qquad &0=(d^\circ+d)u+\pi^i(\pi^m y + \pi^m y^\circ +
        b^\circ w^\circ + bw), \\
        (1,3) \qquad &0=(d^\circ+d)z+\pi^i(a^\circ r + y + cz), \\
        (2,3) \qquad &0=(d^\circ+d)w+\pi^i(\pi^m z+b^\circ r + u + cw), \\
        (3,3) \qquad &0=(d^\circ+d)r+\pi^i (w + w^\circ + cr + c^\circ
        r).
      \end{split}
    \end{equation}
    Assume that such a matrix $\Gamma$ exists. Since $\Gamma$ is
    invertible, at least one of $x,y,u$ is invertible.  Reducing
    equations (1,1), (1,2), (2,2) modulo $\fP^{i+\mu}$, we deduce that
    $d^\circ+d = 0$.  This leads to the following observation.  By
    reducing equations (1,3), (2,3) and (3,3) modulo $\fP^{i+1}$ we
    deduce that $y$, $u$, and $w + w^\circ$ are $0$ modulo $\fP$.
    Consequently, $x$ is invertible, and $w,w^\circ$ are invertible;
    otherwise the second column of $\Gamma$ would be congruent to $0$
    modulo~$\fP$.

    Since $d^\circ + d = 0$, we deduce that $A \in
    \mathcal{R}_3'(\Lri_\len)$ if and only if $E$ is similar to an
    anti-hermitian matrix.  As the characteristic polynomial of $E$ is
    equal to $t^3 - ct^2 + bt +\pi^m a \in \Lri_{\len - i}[t]$,
    Lemma~\ref{cyclic.u} supplies necessary conditions for $E$ being
    similar to an anti-hermitian matrix:
    \begin{equation} \label{equ:nec.con.for.B} \pi^m (a^\circ + a) =
      b^\circ - b = c^\circ + c = 0.
    \end{equation}
    With these at hand, \eqref{nightmare} reduces to the following
    system of equations over~$\Lri_{\len-i}$:
    \begin{equation}\label{smallnightmare}
      \begin{array}{llll}
        (1,1) \quad & 0=a^\circ z^\circ + az, & \quad (1,3) \quad
        & 0=a^\circ r + y + cz, \\
        (1,2) \quad & 0=a^\circ w^\circ + \pi^m x + bz, & \quad (2,3)
        \quad & 0 =\pi^m z+b r + u + cw, \\
        (2,2) \quad & 0=\pi^m( y + y^\circ), & \quad (3,3) \qquad
        &0 = w + w^\circ. \\
      \end{array}
    \end{equation}
    Recall also that $x,u,r \in \lri_\len$ are $\circ$-invariant.
    From now on, all computations will be carried out
    over~$\Lri_{\len-i}$.  We strengthen the first of the necessary
    conditions~\eqref{equ:nec.con.for.B} to:
    \begin{equation} \label{equ:nec.con.for.B.2}
      a^\circ + a = 0 \qquad \text{if $v(b) \geq m$.}
    \end{equation}
    Indeed, from the equalities $(2,3)$ and $(3,3)$
    in~\eqref{smallnightmare} we deduce that $\pi^m z^\circ = \pi^m
    z$, hence $(bz)^\circ = bz$ if $v(b) \geq m$.  But then equalities
    $(1,2)$ and $(3,3)$ in~\eqref{smallnightmare}, together with $w
    \in \Lri_{\len-i}^\times$ imply $a^\circ w^\circ = a w$ and hence
    $a^\circ = -a$.  Below we will show that one can always arrange
    $v(b) \geq m$ so that the conclusion holds, in effect, unconditionally.

    It is time to pin down not only necessary but, also sufficient
    conditions for $E$ to be similar to an anti-hermitian matrix.
    From Proposition~\ref{prop:centraliser.of.E.new} we adapt the
    notation
    \[
    F_{m,a,b,c}(t_1,t_2,s_1,s_2,s_3) = \Mat {t_1}{\pi^m {s_3}
      -c{s_1}}{s_1}{s_2}{t_2}{s_3}{a{s_3}}{\pi^m{s_2}+b{s_3}}{{t_2}+c{s_3}}
    \in\gl_3(\Lri_{\len-i})
    \]
    for $t_1,t_2,s_1,s_2,s_3 \in \Lri_{\len-i}$.  Recall further that
    $\Lri = \lri[\delta]$.  Similar to the situation described in
    Theorem~\ref{thm:irredundant.list.for.M3x3}, we distinguish three
    subcases (I), (II), (III).

    \smallskip

    \begin{enumerate}
    \item [(\vI)] Suppose that $\mu = \len-i$, that is $\pi^{m} = a =
      b=0$.  Subject to the necessary condition $c^\circ + c = 0$ that
      we identified above, the matrix
      \[
      \Gamma_0 =
      \begin{bmatrix}
        1 & 0 & 0 \\
        0 & - c \delta & \delta \\
        0 & -\delta & 0
      \end{bmatrix}
      \in \GL_3(\Lri_{\len-i})
      \]
      is hermitian and satisfies~$E^\circ \Gamma + \Gamma E=0$.
      Proposition~\ref{Dieudonne2} shows that $E$ is similar to an
      anti-hermitian matrix.

      To determine the unitary shadow we compute the Lie centraliser
      of $E$ in
      \[
      \gu_3(\lri_{\len-i}; \Gamma_0) \coloneqq \{ Y \in \gl_n(\Lri) \mid
      Y^\circ \Gamma_0 + \Gamma_0 Y = 0 \},
      \]
      the unitary $\lri$-Lie lattice with respect to $\Gamma_0$, and
      subsequently reduce modulo~$\fP$.  By
      Proposition~\ref{prop:centraliser.of.E.new}, the centraliser
      $\Cen_{\gl_3(\Lri_{\len-i})}(E)$ consists of all matrices of the
      form
      \[
      Y = F_{\infty, 0,0,c}(t_1,t_2,s_1,s_2,s_3) = \Mat
      {t_1}{-c{s_1}}{s_1}{s_2}{t_2}{s_3}{0}{0}{{t_2}+c{s_3}}.
      \]
      The intersection $\mathfrak{u} = \gu_3(\lri_{\len-i}; \Gamma_0)
      \cap \Cen_{\gl_3(\Lri_{\len-i})}(E)$ is easily
      determined:
      \[
      \mathfrak{u} = \{F_{\infty,0,0,c}(t_1,t_2,s_1,s_2,s_3) \mid
      t_1^\circ + t_1 = t_2^\circ + t_2 = s_3^\circ + s_3 = s_1 +
      \delta s_2^\circ = 0\}.
      \]
      The reduction $\overline{\mathfrak{u}}$ modulo $\fP$ has
      dimension $5$ and thus coincides with the centraliser in
      $\gu_3(\kk;\overline{\Gamma_0})$ of the reduction $\overline{E}$
      modulo~$\fP$; see Table~\ref{tab:shadows.iso.GU}.  Moreover, one
      easily computes the isomorphism type of
      $\overline{\mathfrak{u}}$ and deduces that the associated
      unitary shadow has type~$\Jsha$; cf.\ \cite[Appendix~C]{AKOV1}.
    \item [(\vII)] Suppose that $1 \leq \mu < \len-i$ and $\mu = m =
      v(b) \le v(a)$.  Recalling the necessary
      conditions~\eqref{equ:nec.con.for.B} and
      \eqref{equ:nec.con.for.B.2}, we choose $\alpha \in
      \Lri_{\len-i}$ and $\beta, \gamma \in \Lri_{\len-i}^\times$ with
      $\alpha^\circ + \alpha = \beta^\circ - \beta = \gamma^\circ +
      \gamma = 0$ such that $a = \pi^\mu \alpha$, $b = \pi^\mu \beta$
      and $c = \pi^{v(c)} \gamma$.  Furthermore, we choose $e \in
      \lri_{\len-i}$ such that
      \[
      x_e = \beta f - (1+ \pi^\mu) \delta \alpha \in
      \lri_{\len-i}^\times, \qquad \text{where $f = e + \delta c \in
        \lri_{\len-i}$.}
      \]
      In particular, this implies $\pi^m x_e = bf - (1+\pi^\mu) \delta
      a$, and the matrix
      \[
      \Gamma_0 =
      \begin{bmatrix}
        x_e & c f & -f \\
        -c f & (1 + \pi^\mu) e - f & (1+\pi^\mu) \delta \\
        -f & - (1+\pi^\mu) \delta & 0
      \end{bmatrix}
      \in \GL_3(\Lri_{\len-i})
      \]
      is hermitian and satisfies~$E^\circ \Gamma_0 + \Gamma_0 E = 0$.
      Thus Proposition~\ref{Dieudonne2} shows that $E$ is similar to
      an anti-hermitian matrix.

      Next we determine the unitary shadow, using the same strategy as
      in case~(\vI).  By Proposition~\ref{prop:centraliser.of.E.new},
      the centraliser of $E$ in $\gl_3(\Lri_{\len-i})$ is given by
      \begin{multline}\label{equ:cen}
        \Cen_{\gl_3(\Lri_{\len-i})}(E) = \\
         \{F_{m,a,b,c}(t_1,t_2,s_1,s_2,s_3) \mid a s_1 = \pi^m s_2, b
        s_1 = \pi^m (t_2 - t_1), b s_2 = a (t_2 - t_1).\}
      \end{multline}
      We are interested in those $Y \in
      \Cen_{\gl_3(\Lri_{\len-i})}(E)$ that satisfy the
      $\lri_{\len-i}$-linear equation $Y^\circ \Gamma_0 + \Gamma_0 Y=
      0$.  A straightforward computation shows that parameters
      $(t_1,t_2,s_1,s_2,s_3)$ leading to such $Y$ must satisfy the
      following congruences modulo~$\fP$,
      \[
      t_1^\circ + t_1 \equiv_\fP t_2^\circ + t_2 \equiv_\fP 0, \qquad s_1
      \equiv_\fP \beta^{-1} (t_2-t_1), \qquad s_2 \equiv_\fP \alpha
      \beta^{-1} (t_2 - t_1).
      \]
      We deduce that the Lie centraliser shadow of $E$
      in~$\gu_3(\kk;\overline{\Gamma_0})$ is at most $3$-dimensional.

      Conversely, $E$ itself, the scalar matrix $\delta \Id_3$, and
      the matrix
      \[
      Y_0 = F_{m,a,b,c}(0,\beta \delta,\delta,\alpha \delta,0) =
      \begin{bmatrix}
        0 & - c \delta & \delta \\
        \alpha \delta & \beta \delta & 0 \\
        0 & a \delta & \beta \delta
      \end{bmatrix}
      \]
      centralise $E$ and satisfy the condition $Y^\circ \Gamma_0 +
      \Gamma_0 Y= 0$; to verify the assertion for $Y_0$, observe that
      $\pi^\mu c \delta^{-1} Y_0 = E^3 - (b+c^2) E - \pi^\mu a \Id_3$.
      Hence the Lie centraliser shadow of $E$
      in~$\gu_3(\kk;\overline{\Gamma_0})$ has dimension~$3$, and by
      inspection of Table~\ref{tab:shadows.iso.GU} we deduce that the
      unitary shadow associated to $E$ has type~$\Msha$.
    \item [(\vIII)] Suppose that $1 \leq \mu < \len-i$ and $\mu= \min
      \{m,v(a)\} < v(b) \le \len$.  Recall the necessary
      conditions~\eqref{equ:nec.con.for.B} and
      \eqref{equ:nec.con.for.B.2}.  In
      Theorem~\ref{thm:irredundant.list.for.M3x3} there is a
      subdivision into three cases
      \[
      \mathrm{(III_1)} \;\; \mu = m = v(a), \qquad \mathrm{(III_0)}
      \;\; \mu = m < v(a), \qquad \mathrm{(III_\infty)} \;\; \mu =
      v(a) < m.
      \]
      We claim that matrices corresponding to $\mathrm{(III_0)}$ and
      $\mathrm{(III_\infty)}$ are not similar to anti-hermitian
      matrices.  Indeed, equality $(1,2)$ in~\eqref{smallnightmare}
      yields $a^\circ w^\circ \equiv_{\fP^{\mu+1}} -\pi^m x$.  Since
      both $w^\circ$ and $x$ are already required to be invertible, a
      necessary condition for the solubility of this congruence is
      $\mu = m = v(a^\circ) = v(a)$.

      We now focus on $\mathrm{(III_1)}$; the procedure is similar to
      case~(\vII).  We choose $\alpha, \gamma \in
      \Lri_{\len-i}^\times$ and $\beta \in \Lri_{\len-i}$ with
      $\alpha^\circ + \alpha = \beta^\circ - \beta = \gamma^\circ +
      \gamma = 0$ such that $a = \pi^\mu \alpha$, $b = \pi^\mu \beta$
      and $c = \pi^{v(c)} \gamma$.  Furthermore, we put $\xi =
      \alpha^{-1}$ so that $\xi^\circ + \xi = 0$.  Subject to the
      necessary conditions collected above, the matrix
      \[
      \Gamma_0 =
      \begin{bmatrix}
        1 & -c & 1 \\
        c & \xi (1+\beta) c - \pi^\mu & -\xi(1+\beta) \\
        1 & \xi (1+\beta) & 0
      \end{bmatrix}
      \in \GL_3(\Lri_{\len-i})
      \]
      is hermitian and satisfies $E^\circ \Gamma_0 + \Gamma_0 E =
      0$. Thus Proposition~\ref{Dieudonne2} shows that $E$ is similar
      to an anti-hermitian matrix.

      To determine the unitary shadow, we look for solutions $Y \in
      \Cen_{\gl_3(\Lri_{\len-i})}(E)$, see \eqref{equ:cen}, to the
      equation $Y^\circ \Gamma_0 + \Gamma_0Y=0$.  A straightforward
      computation shows that parameters $(t_1,t_2,s_1,s_2,s_3)$
      leading to such $Y$ must satisfy the following congruences
      modulo $\fP$,
      \[
      t_1+t_1^\circ \equiv_\fP t_2-t_1 \equiv_\fP 0, \qquad
      s_1+s_1^\circ \equiv_\fP s_3-s_3^\circ \equiv_\fP 0, \qquad s_1
      \equiv_\fP \xi s_2.
      \]

      As in the case~(\vII) one shows that the Lie centraliser shadow
      of $E$ in~$\gu_3(\kk;\overline{\Gamma_0})$ is
      $3$\nobreakdash-dimensional.
%
      By inspection of Table~\ref{tab:shadows.iso.GU} we deduce that
      the unitary shadow associated to $B$ has type~$\Nsha$.
    \end{enumerate}
    \vspace*{-\baselineskip}
  \end{list}
\end{proof}

\subsection{Proof of
  Theorem~\ref{thm:U.shad.graph}}\label{subsec:proof.main.SU}
Part~(1) of Theorem~\ref{thm:U.shad.graph} follows from collecting the
types in Theorem~\ref{thm:representatives.for.GU3.action.gu3}.  To
prove part~(2), we proceed along the same lines as in the proof of
part~(2) of Theorem~\ref{thm:G.shad.graph}.  Interestingly, we get the
same polynomials $a_{\sigma,\tau}$ as in
Theorem~\ref{thm:G.shad.graph}, with the exception of types $\Kasha$
and $\Kbsha$ which do not occur in the present setting.

Let $A_\len \in \gl_3(\Lri_\len)$ be of one of the matrices specified
in Theorem~\ref{thm:representatives.for.GU3.action.gu3} and $\cC$ the
intersection of its similarity class with $\gu_3(\lri_\len)$.
Starting from the shadow $\sigma$ of $\cC$, we determine the shadows
$\tau$ associated to $\cCtilde$, the intersections with
$\gu_3(\lri_\len)$ of the similarity classes of lifts of $A_\len$ to
matrices $\wt{A}_{\len+1} \in \gl_3(\Lri_{\len+1})$.  We also
keep track of the multiplicities of such lifts: these depend only on
the shadows involved and the non-zero values $a_{\sigma,\tau}(q)$ are
as listed in Table~\ref{tab:branch.rules.A2}.

\begin{list}{}{\setlength{\leftmargin}{0pt}
    \setlength{\labelwidth}{-10pt} \setlength{\itemsep}{2pt}
    \setlength{\parsep}{1pt}}
\item[($\Gsha$)] Suppose that $\sigma$ has type~$\Gsha$.  Then $A_\len
  = d \Id_3$ with $d \in \gu_1(\lri)$.  Consider
  $\wt{A}_{\len+1} = d\Id_3 +\pi^\len X \in
  \gl_3(\Lri_{\len+1})$, with $X \in \gl_3(\kk_2)$.  Without loss of
  generality we may assume that $X \in \gu_3(\kk_2)$.  Then
  $\shU(\wt{A}_{\len+1}) = \shU(X)$ and
  $\shu(\wt{A}_{\len+1}) = \shu(X)$; furthermore, these shadows
  can be classified according to the shape of the minimal polynomial
  of $X$ as listed in Table~\ref{tab:shadows.iso.GU}.  The number
  $a_{\sigma,\tau}(q)$ of distinct lifts with shadow $\tau$ is the
  number of distinct minimal polynomials of the shape given in
  Table~\ref{tab:shadows.iso.GU}, paired with a compatible
  characteristic polynomial for type $\Lsha$.  Explicit formulae for
  the $a_{\sigma,\tau}(q)$ are given in
  Table~\ref{tab:branch.rules.A2}.
\item[($\Lsha$)] Suppose that $\sigma$ has type~$\Lsha$.  By
  Theorem~\ref{thm:representatives.for.GU3.action.gu3} we may assume
  that $A_\len = d\Id_3 + \pi^i D(a,0,0)$, where $0 \leq i < \len$,
  $\; d \in \gu_1(\lri_\len)$ and $a \in \Lri_{\len-i}^\times$ with
  $a^\circ + a = 0$.  Any lift of $A_\len$ that is conjugate to an
  anti-hermitian matrix is conjugate to a matrix of the form
  \[
  \wt{A}_{\len+1} =d\Id_3 + \pi^i D(a,0,0) + \pi^{\len}
  \begin{bmatrix}
    f & 0 \\
    0 & F
  \end{bmatrix} \quad \text{with $f \in \gu_1(\kk)$, $F \in
    \gl_2(\kk_2)$},
  \]
  where $F$ scalar corresponds to case (ii) and $F$ a companion matrix
  corresponds to case (iii).  By the analysis of cases (ii) and (iii)
  in the proof of
  Theorem~\ref{thm:representatives.for.GU3.action.gu3}, we classify
  the similarity classes depending on the minimal polynomial of~$F$.
  The shadow $\tau = \shUC(\cCtilde)$ has one of four types:
  \begin{itemize} \renewcommand{\labelitemi}{$\circ$}
  \item $\tau$ has type $\Lsha$ if and only if $F$ is an
    anti-hermitian scalar matrix.  There are $a_{\sigma,\tau}(q) =
    q^2$ choices for $(f,F)$.
  \item $\tau$ has type $\Tasha$ if and only if $F$ has a reducible
    separable minimal polynomial over~$\kk_2$ with anti-hermitian
    roots $c_1,c_2$.  There are $a_{\sigma,\tau}(q) = \hlf (q-1)q^2$
    choices for $(f,F)$.
  \item $\tau$ has type $\Tbsha$ if and only if $F$ has a reducible
    separable minimal polynomial over~$\kk_2$ with roots satisfying
    $c_1+c_2^\circ=0$.  There are $a_{\sigma,\tau}(q) = \hlf (q-1)q^2$
    choices for $(f,F)$.
  \item $\tau$ has type $\Msha$ if and only if $F$ has minimal
    polynomial $(x-\alpha)^2$ for some $\alpha \in \gu_1(\kk)$.  There
    are $a_{\sigma,\tau}(q) =q^2$ choices for $(f,F)$.
  \end{itemize}
\item[($\Jsha$)] Suppose that $\sigma$ has type~$\Jsha$. In this case
  we may assume, by
  Theorem~\ref{thm:representatives.for.GU3.action.gu3}, that
  \[
  A_\len = d \Id_3 + \pi^i
  \begin{bmatrix}
    0 & 0 & 0 \\
    0 & 0 & 1 \\
    0 & 0 & c
  \end{bmatrix},
  \]
  where $0 \leq i < \len$, $\; d \in \gu_1(\Lri_\len)$ and $c \in
  \gu_1(\lri_{\len-i})$ with $v(c)>0$.
  Theorem~\ref{thm:representatives.for.GU3.action.gu3} yields a
  complete parametrisation for the intersections $\cCtilde$ of
  $\gu_3(\lri_{\len+1})$ with similarity classes of matrices lying
  above $A_\len$ in $\gl_3(\Lri_{\len+1})$.  The shadow $\tau =
  \shGC(\cCtilde)$ has one of three types:
  \begin{itemize} \renewcommand{\labelitemi}{$\circ$}
  \item $\tau$ is of type $\Jsha$ if and only if the lift of $A_\len$
    is conjugate to
    \[
    \wt{A}_{\len+1} = d'\Id_3 + \pi^i
    \begin{bmatrix}
      0 & 0 & 0 \\
      0 & 0 & 1\\
      0 & 0 & c'
    \end{bmatrix},
    \]
    where $c'$ and $d'$ are arbitrary anti-hermitian lifts of $c$ and
    $d$, respectively.  Consequently there are $a_{\sigma, \tau}(q) =
    q^2$ choices.
  \item $\tau$ is of type $\Msha$ if and only if the lift of
    $A_\len$ is conjugate to
    \[
    \wt{A}_{\len+1} = \varsigma(d) \Id_3 + \pi^i
    \begin{bmatrix}
      0 & \pi^{\len-i} & 0 \\
      0 & 0 & 1 \\
      a' \pi^{\len-i} & b'\pi^{\len-i} & c'
    \end{bmatrix},
    \]
    where $a' \in \gu_1(\kk)$, $b' \in \kk^\times$ and $c' \in
    \gu_1(\lri_{\len-i+1})$ is a lift of $c$.  Therefore, there are
    $a_{\sigma, \tau}(q) = (q-1)q^2$ choices.
  \item $\tau$ is of type~$\Nsha$ if and only if the lift of
    $A_\len$ is conjugate to
    \[
    \wt{A}_{\len+1} = \varsigma(d) \Id_3 + \pi^i
    \begin{bmatrix}
      0 & \pi^{\len-i} & 0 \\
      0 & 0 & 1 \\
      a'\pi^{\len-i} & 0 & c'
    \end{bmatrix},
    \]
    where $a' \in \gu_1(\kk)$ and $c' \in \gu_1(\lri_{\len-i+1})$ is a
    lift of $c$. There are $a_{\sigma, \tau}(q) = (q-1)q$ choices.
  \end{itemize}
\item[($\Tasha,\Tbsha,\Tcsha,\Msha,\Nsha$)] Suppose that $\sigma$ has
  type equal to one of $\Tasha$, $\Tbsha$, $\Tcsha$, $\Msha$, $\Nsha$.
  From Table~\ref{tab:shadows.iso.GU} we observe that all these cases
  are minimal in the sense that $\shu(A_\len)$ cannot properly contain
  the Lie centraliser shadow of any other type.  This implies that the
  shadow associated to any lift $\wt{A}_{\len+1}$ of the matrix
  $A_\len$ satisfies $\shUC(\cCtilde) = \shUC(\cC)$.  Therefore, in
  all the cases under consideration
  Proposition~\ref{pro:class.quot.GU} and Definition~\ref{def:b.GU}
  yield
  \[
  a_{\sigma,\sigma}(q)=q^{\dim \gu_3} / b^{(-1)}_{\sigma,\sigma}(q) =
  q^{\dim(\sigma)}=q^3.
  \]
\end{list}


\section{Similarity class zeta functions} \label{sec:sim.zeta} Let
$\lri$ be a compact discrete valuation ring with valuation ideal $\fp$
and finite residue field $\kk$ of cardinality~$q$.  In the context of
anti-hermitian matrices we assume that $\cha(\kk) \neq 2$.  There is
no other restriction on the characteristic of $\lri$ or $\kk$.  In
this section we define similarity class zeta functions of the finite
spaces $\gl_n(\lri_\len)$ and $\gu_n(\lri_\len)$ for $\len \in \N_0$,
and suitable limit objects as $\len\rightarrow\infty$. We employ the
results from Sections~\ref{sec:sim.gl} and \ref{sec:sim.gu} to
compute, in Section~\ref{subsec:sim.zeta.A2}, all of these functions
for~$n=3$.  From these we deduce Theorem~\ref{thm:sim.zeta.local} and
Corollary~\ref{cor:F}.

\subsection{Similarity class zeta functions and shadow graphs}\label{subsec:sim.class.zeta.graphs}
Let $n\in\N$.  The two cases
\[
\ag = \gl_n \text{ and } \aG = \GL_n, \qquad \ag = \gu_n \text{ and }
\aG = \GU_n
\]
of pairs of $\lri$-schemes are very similar and we treat them in
parallel.  Fix $\len \in \N_0$.  We write $\cQ_\len$ for the finite
set of similarity classes $\cQ_{\lri,\len}^\ag = \Ad(\aG(\lri))
\backslash \ag(\lri_\len)$, introduced in
Definitions~\ref{def:graph.Qgl} and~\ref{def:graph.Qgu}, and let $\Sh$
stand for the shadow set $\Sh_{\aG(\lri)}$; cf.\ \eqref{equ:Sh.GL} and
\eqref{equ:Sh.GU}.

\begin{defn}\label{def:sim.class.zeta}
  The \emph{similarity class zeta function} of $\ag(\lri_\len)$ is the
  Dirichlet polynomial
  \[
  \gamma_\len(s) \coloneqq \sum_{\cC \in \cQ_\len} \lvert \cC
  \rvert^{-s}.
  \]
  For $\sigma\in \Sh$, we set
  \begin{equation}\label{def:gamma.sigma.fin}
    \gamma^\sigma_\len(s) = \sum_{\substack{\cC \in \cQ_\len \\ \sh(\cC)
        = \sigma}} \lvert \cC \rvert^{-s},
  \end{equation}
  yielding the natural decomposition $\gamma_\len(s) = \sum_{\sigma
    \in \Sh} \gamma_\len^\sigma(s)$.
\end{defn}

\begin{prop}\label{pro:limit.gamma}
  Let $\sigma\in \Sh$ and write $\gamma^\sigma_\len(s) =
  \sum_{m=1}^\infty c^\sigma_{\len,m} m^{-s}$.  For each $m \in \N$,
  the sequence $(q^{-\len}c^\sigma_{\len,m})_{\len\in\N_0}$ is
  eventually constant.  In particular, the normalised Dirichlet
  polynomials $q^{-\len}\gamma^\sigma_\len(s)$ converge
  coefficientwise to a Dirichlet series
  \[
  \gamma^\sigma(s) \coloneqq
  \lim_{\len\rightarrow\infty}q^{-\len}\gamma_\len^\sigma(s)
  \]
  with non-negative rational coefficients. If $\cha(\kk)$ does not
  divide $n$, then the numbers $c^\sigma_{\len,m}$ are all divisible
  by $q^\len$, whence $\gamma^\sigma(s)$ has integral
  coefficients.
\end{prop}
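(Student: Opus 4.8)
The plan is to exploit the branching rules of Theorems~\ref{thm:G.shad.graph} and~\ref{thm:U.shad.graph} together with the multiplicative behaviour of similarity-class sizes recorded in Propositions~\ref{pro:class.quot.GL} and~\ref{pro:class.quot.GU}. Concretely, for a similarity class $\cCtilde \in \cQ_{\len+1}$ of shadow $\tau$ lying above a class $\cC \in \cQ_\len$ of shadow $\sigma$, the size ratio is $|\cCtilde|/|\cC| = b^{(\ee)}_{\sigma,\tau}(q)$, which (reading off Table~\ref{tab:branch.rules.A2}) is always a \emph{monic integral power-of-$q$ times a polynomial}, hence in particular a positive integer, and depends only on the pair of types. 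First I would set up, for each shadow type $\cS$ and each $m \in \N$, the generating data $c^\sigma_{\len,m}$ as the number of classes in $\cQ_\len$ of shadow $\sigma$ and size $m$, and translate the transition from level $\len$ to level $\len+1$ into the recursion
\[
c^\tau_{\len+1,\, m\, b^{(\ee)}_{\sigma,\tau}(q)} = \sum_{\sigma \in \Sh} a_{\sigma,\tau}(q)\, c^\sigma_{\len,m},
\]
valid once $\len$ is large enough that the branching is `generic' (i.e.\ we are past the finitely many exceptional low levels where the types $\Kasha,\Kbsha$ first appear); the base cases for small $\len$ are finite in number and contribute nothing asymptotically.

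Next I would analyse this recursion coefficientwise in $m$. Fix $m$. Only finitely many pairs $(\len,\sigma)$ with a class of size $m$ can occur at all, because a class of size $m$ at level $\len+1$ lying above one at level $\len$ forces the intermediate size ratio to be $\geq 1$ and, whenever the shadow type stabilises (row $18$ of Table~\ref{tab:branch.rules.A2}, the generic `other $\to$ same' transition with $a_{\sigma,\sigma}(q)=q^3$ and $b^{(\ee)}_{\sigma,\sigma}(q)=q^6$), the size strictly increases by a factor $q^6$ per level. So for each fixed $m$ the sequence $(c^\sigma_{\len,m})_\len$ is supported on a bounded range of $\len$ \emph{except} along the finitely many `stable' branches where $b^{(\ee)}_{\sigma,\sigma}=q^6$ would already push the size beyond $m$ after one step — meaning in fact the $m$-th coefficient can only be fed by lifts with trivial size-growth, which by inspection of Table~\ref{tab:branch.rules.A2} never happens past the exceptional range. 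Thus $c^\sigma_{\len,m}$ is eventually $0$ in $\len$ for every fixed $m$ \emph{relative to the contribution of scalar-type growth}; the correct normalisation is therefore to divide by $q^\len$, because the `bulk' of classes at level $\len$ arises from repeatedly applying the stable transition, which multiplies the class \emph{count} by a polynomial in $q$ and the class \emph{size} by $q^6$, while the total cardinality $|\ag(\lri_\len)|$ grows like $q^{\len \dim \ag}$. I would make this precise by showing that $q^{-\len} c^\sigma_{\len,m}$ is, for $\len$ large, given by a fixed finite sum over `histories' in the shadow graph $\Gamma^{(\ee)}$ ending at $\sigma$ with accumulated size $m$, each history contributing $a$-weights divided by the appropriate power of $q$; since both the set of such histories and their weights are independent of $\len$ once $\len$ exceeds the length of the longest non-stable path plus $\log_q m$, the sequence stabilises.

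For the final integrality assertion when $p = \cha(\kk) \nmid n$: here every similarity class $\cC \subset \ag(\lri_\len)$ is a union of cosets of the group of scalar matrices $\{a \Id_n : a \in \lri_\len\}$ acting by translation (scalars commute with everything, and the adjoint action fixes them), and this action is free on $\ag(\lri_\len)$ in the relevant sense because $n$ is invertible mod $p$ guarantees the scalar part splits off; hence $q^\len = |\lri_\len|$ divides $|\cC|$ and also divides the number $c^\sigma_{\len,m}$ of classes of a given size (one can partition the classes of size $m$ and shadow $\sigma$ into orbits under translation by the $q^\len$ scalars, and no class is fixed, so $q^\len \mid c^\sigma_{\len,m}$). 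Dividing through, $q^{-\len}\gamma^\sigma_\len(s)$ has integer coefficients, and the limit inherits them. The \textbf{main obstacle} I anticipate is not any single estimate but bookkeeping: rigorously justifying that the per-$m$ coefficient sequences stabilise requires carefully separating the finitely many `low-level' and `non-stable' transitions from the eventually-periodic stable ones, and confirming uniformly (from Table~\ref{tab:branch.rules.A2}) that $q^{-\len}$ is exactly the right normalisation — i.e.\ that along every maximal stable branch the ratio $a_{\sigma,\sigma}(q)/q = q^2$ combines with the other branches to yield a \emph{finite} limit rather than divergence or collapse to zero. This is a finite check over the ten (resp.\ eight) types, but it is where all the content sits.
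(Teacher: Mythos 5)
Your two key ideas are exactly the ones the paper's proof uses: for a fixed class size $m$, the only classes whose lifts do not grow are the scalar ones, so classes of size $m$ at high level arise by sitting on scalars and branching off only in the last boundedly many steps, with the level‑independence of that branching giving stabilisation after normalising by the $q^\len$ scalar classes; and, for $p\nmid n$, integrality comes from translating classes by scalars. The genuine defect is that you route the stabilisation argument through Theorems~\ref{thm:G.shad.graph}, \ref{thm:U.shad.graph} and Table~\ref{tab:branch.rules.A2}, which the paper establishes only for $n=3$, whereas Proposition~\ref{pro:limit.gamma} is stated and proved for arbitrary $n$ (the hypothesis $\cha(\kk)\nmid n$ signals this, and the analogous branching data for $n\ge 4$ are explicitly left open, cf.\ Remark~\ref{rem:bees}). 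The paper's proof needs no table: Propositions~\ref{pro:class.quot.GL} and~\ref{pro:class.quot.GU} already give $\lvert\cCtilde\rvert/\lvert\cC\rvert>1$ unless $\cC$ is the class of a scalar matrix, and the branching process above a scalar class is independent of the level (translate by that scalar), which is all that is required. As written, your argument therefore only proves the case $n=3$.

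Even for $n=3$ several intermediate assertions need repair. The claim that $(c^\sigma_{\len,m})_\len$ has bounded support in $\len$ is false — these coefficients grow like $q^\len$ (see Proposition~\ref{pro:sim.shadow.fin}), which is precisely why one normalises by $q^{-\len}$; the growth comes from the loop $\Gsha\to\Gsha$ (each scalar class has $q$ scalar lifts of unchanged size), not from the ``other $\to$ same'' transitions you invoke, and your closing worry about divergence or collapse of the normalisation is moot because the limit is purely coefficientwise. In the integrality step, a similarity class is \emph{not} a union of scalar‑translation cosets and $q^\len$ does \emph{not} divide $\lvert\cC\rvert$ (scalar classes are singletons); translation by $d\,\Id_n$ sends a class to a \emph{different} class of the same size and shadow, and it is the resulting free action on the set of such classes — your parenthetical remark, justified by $\tr(A+d\,\Id_n)-\tr(A)=nd\neq 0$, which is the paper's argument — that yields $q^\len\mid c^\sigma_{\len,m}$; in the unitary case one must translate by $d\in\gu_1(\lri_\len)$, still of order $q^\len$. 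Finally, your displayed recursion is ill‑formed ($\sigma$ appears both in the left‑hand subscript and as the summation variable); the correct statement is Lemma~\ref{recurrence.gamma}.
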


\begin{proof}
  
  In principle, the coefficients $c^\sigma_{\len,m}$ can be computed
  by induction on $\len$; this requires consideration of other shadows
  $\tau$ and class sizes $m' \le m$.  (In the special case $n=3$ we
  can carry out the procedure effectively; see
  Lemma~\ref{recurrence.gamma} and
  Proposition~\ref{pro:sim.shadow.fin} below.)  To prove that
  $q^{-\len} c^\sigma_{\len,m}$ becomes constant as $\len \to \infty$,
  we observe that the $q^\len$ scalar matrices are the only matrices
  in $\ag(\lri_\len)$ whose lifts to matrices in $\ag(\lri_{\len+1})$
  do not all give rise to larger similarity classes.  Indeed,
  Propositions \ref{pro:class.quot.GL} and \ref{pro:class.quot.GU}
  imply that if $\cCtilde \subset \ag(\lri_{\len+1})$ is a class with
  $\sh_\aG(\cCtilde) = \wt{\tau}$ which lies above a class $\cC
  \subset \ag(\lri_\len)$ with $\sh_\aG(\cC) = \tau$, then the
  quotient $\lvert \cCtilde \rvert / \lvert \cC \rvert$ is greater
  than $1$ unless $\tau$ is of type~$\Gsha$, i.e.\ a scalar matrix.
  The branching process by which one arrives from a scalar matrix in
  $\ag(\lri_\len)$ to similarity classes of the given size $m$ and
  shadow $\sigma$ in $\ag(\lri_{\len'})$ for $\len' > \len$ is
  independent of~$\len$.  Due to the normalisation, the numbers
  $q^{-\len} c^\sigma_{\len,m}$ thus stabilise as $\len \to \infty$.

  Finally we argue for the integrality.  Let $m \in \N$.  For every
  similarity class $\cC \in \cQ_\len$ of size $m$ and shadow~$\sigma$,
  the scalar shifts $d \Id_n + \cC$, where $d \Id_n \in
  \ag(\lri_\len)$, form $q^\len$ similarity classes in $\cQ_\len$,
  each of size $m$ and shadow~$\sigma$. They are all distinct since
  the traces of any two such shifts by $d \Id_n $ and $d' \Id_n$, say,
  differ by $n(d-d')$ and $n$ is invertible in $\lri_\len$. Thus each
  coefficient $c^\sigma_{\len,m}$ is divisible by~$q^\len$.
\end{proof}

\begin{remark}
  Corollary~\ref{cor:infinite.gamma} below shows that the assumption
  $\cha(\kk)\nmid n$ may be necessary and that the limit functions
  $\gamma^\sigma(s)$, $\sigma \in \Sh$, are rational functions
  for~$n=3$. Proposition~\ref{pro:gamma.sigma.fin.A1} implies the
  analogous fact for $n=2$. It is an interesting question whether
  rationality also holds for $n \geq 4$. It is perceivable that the
  methods developed in the paper \cite{HMRC}, which proves rationality
  of zeta functions enumerating classes of certain definable
  equivalence relations, are applicable in this context.
\end{remark}

From now on let $n=3$ so that $\ag, \aG$ are either $\gl_3,\GL_3$ or
$\gu_3,\GU_3$.  It is convenient to use the parameter $\ee = \ee_\ag =
\ee_\aG \in \{1,-1\}$ to distinguish between the non-unitary and the
unitary setting; see~\eqref{equ:def.epsilon}.  Recall also the
definition of shadow types $\T^{(\ee)}$; see~\eqref{equ:types-def}.
For $3\times3$ matrices, it turned out that the similarity class trees
$\cQ^{\gl_3}_\lri$ and $\cQ^{\gu_3}_\lri$ have a structure that can be
described uniformly for different choices of~$\lri$.  This motivates
the following definition.

\begin{defn}\label{def:shadow.graph.A2}
  The \emph{shadow graph} associated to the scheme pair $(\ag, \aG)$
  is the finite directed graph $\Gamma= \Gamma^{(\ee)}$ with the
  following vertex and edge sets
  \begin{align*}
    V(\Gamma) & = \T^{(\ee)}, \\
    E(\Gamma) & = \{ (\cS,\cT) \in \T^{(\ee)} \times \T^{(\ee)} \mid
    \exists \sigma, \tau \in \Sh \text{ of types } \cS,\cT:
    a_{\sigma,\tau} \ne 0\}.
  \end{align*}
  In the following it is convenient to refer to $\dot{V}(\Gamma)
  \coloneqq \Sh$ and $\dot{E}(\Gamma) \coloneqq \{ (\sigma,\tau) \in
  \Sh \times \Sh \mid a_{\sigma,\tau} \ne 0\}$, suppressing the
  implicit dependency on $\lri$.
\end{defn}

\begin{remark}
  Recall that the polynomials $a_{\sigma,\tau}\in\Z[\sixth][t]$ are
  defined in Theorems~\ref{thm:G.shad.graph} and
  \ref{thm:U.shad.graph}, and tabulated in
  Table~\ref{tab:branch.rules.A2}.  The results in
  Sections~\ref{sec:sim.gl} and~\ref{sec:sim.gu} imply that $\Gamma$
  is naturally isomorphic to the directed graph $\dot{\Gamma}$ with
  vertex set $V(\dot{\Gamma}) = \dot{V}(\Gamma)$ and edge set
  $E(\dot{\Gamma}) = \dot{E}(\Gamma)$.  The graph $\dot{\Gamma}$ in
  turn is nothing but the quotient graph of the rooted tree $\cQ
  \coloneqq \cQ_{\lri}^\ag$ (cf.\ Definitions~\ref{def:graph.Qgl}
  and~\ref{def:graph.Qgu}) induced by the map $V(\cQ) \rightarrow
  \Sh$, $\cC \mapsto \sh(\cC)$.
\end{remark}

By Theorems~\ref{thm:G.shad.graph} and \ref{thm:U.shad.graph}, the
shadow graph $\Gamma$ and the data $a_\xi(q)$, $\xi \in
\dot{E}(\Gamma)$, determine the tree $\cQ$ completely, whereas the
graph $\Gamma$ and the data $b^{(\ee)}_\xi(q)$, $\xi \in
\dot{E}(\Gamma)$, determine the sizes of the similarity classes, which
correspond to the vertices of~$\cQ$.  Indeed, if $\cC \in \cQ_\len$,
then Propositions~\ref{pro:class.quot.GL} and~\ref{pro:class.quot.GU}
show that
\begin{equation*}\label{equ:sise.of.sim.class}
  \lvert \cC \rvert =  \prod_{i=0}^{\len-1}
  b^{(\ee)}_{\sh(\cC_i), \sh(\cC_{i+1})}(q),
\end{equation*}
where $\cC_i$ is the reduction of $\cC$ modulo~$\fp^i$.
Figure~\ref{fig:shadow.graph.A2} displays the shadow graph $\Gamma$
associated to the scheme pair $(\ag,\aG)$.  The edge labels in
Figure~\ref{fig:shadow.graph.A2} match with the row numbers in
Table~\ref{tab:branch.rules.A2}.

\begin{figure}[htb!]
 \centering
 \caption{The shadow graph $\Gamma^{(\ee)}$ for $\gl_3, \GL_3$ ($\ee=1$)
   and $\gu_3, \GU_3$ ($\ee = -1$)${}^*$}
 \label{fig:shadow.graph.A2}
 \begin{displaymath}
   \xymatrix@+10pt{ \boldsymbol{\Tcsha} \ar@(ul,ur)[]|{18} & &
     \boldsymbol{\Gsha} \ar@(ul,ur)[]|{1}  \ar[ddl]|{3}
     \ar[ll]|{6} \ar[dr]|{4} \ar[ddr]|{5} \ar[dddr]|{7} \ar[rr]|{2}
     \ar[dll]|{8} & & \boldsymbol{\Lsha} \ar@(ul,ur)|{9}\ar[dl]|{10}
     \ar[ddl]|{11} \ar[dddl]|{12} \\
     \boldsymbol{\Nsha} \ar@(dl,ul)[]|{18} & & & \boldsymbol{\Tasha}
     \ar@(ul,ur)[]|{18} \\
     \boldsymbol{\Kasha} \ar@(dl,ul)@{.>}[]|{18}& \boldsymbol{\Jsha}
     \ar@{.>}[dl]|{17} \ar@{.>}[l]|-{16} \ar[ul]|{15} \ar[drr]|{14}
     \ar@(dl,dr)[]|{13}
     &  & \boldsymbol{\Tbsha} \ar@(dl,dr)[]|{18}\\
     \boldsymbol{\Kbsha} \ar@(dl,dr)@{.>}[]|{18} & & &
     \boldsymbol{\Msha} \ar@(dl,dr)[]|{18} }
 \end{displaymath}
 \text{${}^*$ For $\ee = -1$ the vertices $\Kasha$ and $\Kbsha$ with
   the incident edges are to be omitted.}
\end{figure}

In particular, the graph $\Gamma$ together with the data $a_\xi(q)$
and $b^{(\ee)}_\xi(q)$, $\xi \in \dot{E}(\Gamma)$, provides recursive
formulae for the similarity class zeta functions of $\ag(\lri_\len)$,
$\len \in \N_0$.

\begin{lem}\label{recurrence.gamma}
  For $\len \in \N_0$ and $\tau \in \Sh$ we have
  \begin{equation*}\label{equ:gamma.rec}
    \gamma_{\len+1}^\tau(s) = \sum_{(\sigma,\tau) \in \dot{E}(\Gamma^{(\ee)})}
    a_{\sigma,\tau}(q) \, b^{(\ee)}_{\sigma,\tau}(q)^{-s} \, \gamma^\sigma_\len(s).
  \end{equation*}
\end{lem}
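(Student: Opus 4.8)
The plan is to prove the recurrence by decomposing the sum defining $\gamma_{\len+1}^\tau(s)$ according to which similarity class in $\ag(\lri_\len)$ a given class in $\ag(\lri_{\len+1})$ lies above. Concretely, every class $\cCtilde \in \cQ_{\len+1}$ with $\sh(\cCtilde) = \tau$ reduces modulo $\fp^\len$ (respectively $\fP^\len$ in the unitary case) to a unique class $\cC \in \cQ_\len$, and $\cCtilde$ lies above $\cC$ in the similarity class tree $\cQ$. Writing $\sigma = \sh(\cC)$, we group the classes $\cCtilde$ by the pair $(\sigma, \cC)$. First I would record that, by Theorem~\ref{thm:G.shad.graph}\eqref{item:G.conj.graph.2} (for $\ee = 1$) or Theorem~\ref{thm:U.shad.graph}\eqref{part-b} (for $\ee = -1$), for each fixed $\cC \in \cQ_\len$ of shadow $\sigma$ the number of classes $\cCtilde$ lying above $\cC$ with $\sh(\cCtilde) = \tau$ equals $a_{\sigma,\tau}(q)$, a quantity depending only on the two shadow types (hence vanishing unless $(\sigma,\tau) \in \dot{E}(\Gamma^{(\ee)})$).

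Next I would compute the contribution of these $a_{\sigma,\tau}(q)$ classes to the Dirichlet polynomial. By Proposition~\ref{pro:class.quot.GL} (for $\ee=1$) or Proposition~\ref{pro:class.quot.GU} (for $\ee=-1$), each such $\cCtilde$ satisfies $\lvert \cCtilde \rvert = b^{(\ee)}_{\sigma,\tau}(q) \, \lvert \cC \rvert$, again depending only on the shadow types via $b^{(\ee)}_{\sigma,\tau}(q) = q^{\dim\ag - \dim(\sigma)} \|\sigma\| / \|\tau\|$. Therefore
\[
\sum_{\substack{\cCtilde \in \cQ_{\len+1},\ \cCtilde \text{ above } \cC \\ \sh(\cCtilde) = \tau}} \lvert \cCtilde \rvert^{-s}
= a_{\sigma,\tau}(q) \, \bigl( b^{(\ee)}_{\sigma,\tau}(q) \, \lvert \cC \rvert \bigr)^{-s}
= a_{\sigma,\tau}(q) \, b^{(\ee)}_{\sigma,\tau}(q)^{-s} \, \lvert \cC \rvert^{-s}.
\]
Summing this over all $\cC \in \cQ_\len$ with $\sh(\cC) = \sigma$ gives $a_{\sigma,\tau}(q)\, b^{(\ee)}_{\sigma,\tau}(q)^{-s}\, \gamma^\sigma_\len(s)$ by the definition \eqref{def:gamma.sigma.fin} of $\gamma^\sigma_\len(s)$.

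Finally I would assemble the pieces: since every $\cCtilde \in \cQ_{\len+1}$ with $\sh(\cCtilde)=\tau$ lies above exactly one $\cC \in \cQ_\len$, and $\cQ_\len = \coprod_{\sigma \in \Sh} \{\cC : \sh(\cC) = \sigma\}$, summing the displayed identity over all $\sigma \in \Sh$ — equivalently over all edges $(\sigma,\tau) \in \dot{E}(\Gamma^{(\ee)})$, since the terms with $a_{\sigma,\tau} = 0$ contribute nothing — yields
\[
\gamma_{\len+1}^\tau(s) = \sum_{(\sigma,\tau) \in \dot{E}(\Gamma^{(\ee)})} a_{\sigma,\tau}(q)\, b^{(\ee)}_{\sigma,\tau}(q)^{-s}\, \gamma^\sigma_\len(s),
\]
as desired. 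The argument is essentially a bookkeeping exercise once the two structural inputs are in place; the only point requiring mild care is the edge case $\len = 0$, where $\cQ_0 = \{\{0\}\}$ consists of the single root class of shadow type $\Gsha$, and one checks that Proposition~\ref{pro:class.quot.GL}/\ref{pro:class.quot.GU} and the count $a_{\sigma,\tau}(q)$ are still valid there (both propositions explicitly treat the $\len = 0$ case, using $\|\sigma\| = \lvert \aG(\kk) \rvert$). I do not anticipate a genuine obstacle; the substance of the lemma has already been done in Theorems~\ref{thm:G.shad.graph} and~\ref{thm:U.shad.graph} and Propositions~\ref{pro:class.quot.GL} and~\ref{pro:class.quot.GU}.
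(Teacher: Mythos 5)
Your proposal is correct and is essentially identical to the paper's own proof: the paper likewise partitions the classes $\cCtilde \in \cQ_{\len+1}$ of shadow $\tau$ by the class $\cC \in \cQ_\len$ they lie above, invokes Theorem~\ref{thm:G.shad.graph}/\ref{thm:U.shad.graph} for the count $a_{\sigma,\tau}(q)$ and Proposition~\ref{pro:class.quot.GL}/\ref{pro:class.quot.GU} for the size ratio $b^{(\ee)}_{\sigma,\tau}(q)$, and sums. Your extra remark on the edge case $\len=0$ (where the branching from the root class of type $\Gsha$ is needed) is a sensible touch of care that the paper passes over silently.
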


\begin{proof}
  For $\ee = 1$, the claim follows from
  Proposition~\ref{pro:class.quot.GL} and
  Theorem~\ref{thm:G.shad.graph}; for $\ee=-1$, from
  Proposition~\ref{pro:class.quot.GU} and
  Theorem~\ref{thm:U.shad.graph}:
  \begin{equation*}
    \gamma^\tau_{\len+1}(s) = \sum_{\substack{\cCtilde \in
        \cQ_{\len+1} \\ \sh(\cCtilde) = \tau}} \lvert \cCtilde
    \rvert^{-s} = \sum_{\substack{\cC \in \cQ_\len \\
        \sh(\cC) = \sigma}} \, \sum_{\substack{(\sigma,\tau) \in \\
        \dot{E}(\Gamma^{(\ee)})}} a_{\sigma,\tau}(q) \, \big(
    b^{(\ee)}_{\sigma,\tau}(q) \lvert \cC \rvert \big)^{-s} =
    \sum_{\substack{(\sigma,\tau) \in \\ \dot{E}(\Gamma^{(\ee)})}} a_{\sigma,\tau}(q)
    b^{(\ee)}_{\sigma,\tau}(q)^{-s} \gamma^\sigma_\len(s).  \qedhere
  \end{equation*}
\end{proof}

\subsection{Explicit formulae for similarity class zeta functions for
  type $\mathsf{A}_2$}\label{subsec:sim.zeta.A2}
In order to state explicit formulae for the Dirichlet generating
function $\gamma^\sigma_\len(s)$ we define, for $r\in\N$, the
auxiliary polynomials
\begin{equation}\label{def:aux.count}
  f^{r}_\len(a_1,\ldots,a_r) \coloneqq \sum_{\substack{(j_1,\ldots,j_r)
      \in \N_0^r \\ \sum_{i=1}^r j_i \leq \len-r}} a_1^{j_1} \cdots
  a_r^{j_r} \in \Z[a_1,\ldots,a_r].
\end{equation}
We shall only make use of $f^1_\len$ and $f^2_\len$. Note that, as
rational functions in $\Q(a_1,\ldots,a_r)$,
\[
f^1_\len(a_1) = \frac{1-a_1^{\len}}{1-a_1} \qquad \text{and} \qquad
f^2_\len(a_1,a_2) =\frac{a_1a_2^{\len} - a_1^{\len}a_2 + a_1^{\len}
  - a_2^{\len} + a_2 - a_1}{(a_2-a_1)(1-a_1)(1-a_2)}.
\]
We set
\begin{equation}\label{equ:aux.ABC}
  \UU =f_\len^1(q^{1-4s}), \quad
  \VV =f_\len^1(q^{2-6s}), \quad
  \WW =f_\len^2(q^{1-4s},q^{2-6s}).
\end{equation}

\begin{prop} \label{pro:sim.shadow.fin} For $\sigma\in\Sh$ of type
  $\cS\in\T^{(\ee)}$ and $\len\in\N_0$,
  \[
  \gamma_\len^\sigma(s) = q^\len
  \Gamma^{\cS}_{\ee,q,\len}(s),
  \]
  where the function $\Gamma^{\cS}_{\ee,q,\len}(s) \coloneqq
  \Gamma^{\cS}_{\mathsf{A}_2,\ee,q,\len}(s)$ is defined as
  \begin{equation*}
   \begin{array}{ll}
     1 & \text{if $\cS = \Gsha$,} \\ 
     (q-1) \left((q^2+\ee q + 1)q^2\right)^{-s} \UU & \text{if
       $\cS = \Lsha$,} \\ 
     \left( (q - \ee)^3(q + \ee) \right)^{-s} \UU & \text{if $\cS =
       \Jsha$,} \\ 
     \sixth  (q-1) \left( (q+\ee)(q^2+\ee q + 1)q^3) \right)^{-s}
     \left[ (q-2) \VV + 3 (q-1) q^{1-4s} \WW \right] & \text{if $\cS =
       \Tasha$,} \\ 
     \hlf  (q-1) \left( (q^3 - \ee)q^3 \right)^{-s} \left[ q \VV +
       (q-1) q^{1-4s} \WW \right] & \text{if $\cS = \Tbsha$,} \\ 
     \third (q^2-1) \left( (q + \ee)(q - \ee)^2q^3 \right)^{-s} \VV &
     \text{if $\cS = \Tcsha$,} \\ 
     (q-1) \left( (q - \ee)^3(q + \ee)q^2 \right)^{-s} \left[ \VV +
       2q^{1-4s}\WW \right] & \text{if $\cS = \Msha$,} \\ 
     \left( (q^2 - 1)(q^3 - \ee)q \right)^{-s}
     \left[ \VV +(1-q^{-1}) q^{1-4s} \WW \right] & \text{if
       $\cS = \Nsha$,} \\ 
     \left( (q^2 - 1)(q^3 - \ee)q^5 \right)^{-s} \, \WW &
     \text{if $\cS \in \{ \Kasha, \Kbsha \}$.} 
    \end{array}
  \end{equation*}
\end{prop}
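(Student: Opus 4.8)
The strategy is to solve the recursion of Lemma~\ref{recurrence.gamma}
term by term along the shadow graph $\Gamma^{(\ee)}$ of
Figure~\ref{fig:shadow.graph.A2}, using the explicit branching data
$a_{\sigma,\tau}(q)$ and $b^{(\ee)}_{\sigma,\tau}(q)$ recorded in
Table~\ref{tab:branch.rules.A2}. First I would record the base case:
for $\len = 0$ the only similarity class is the single scalar class
$\{0\}$, of size $1$ and shadow of type~$\Gsha$, so
$\gamma_0^{\Gsha}(s) = 1$ and $\gamma_0^{\cS}(s) = 0$ for all other
types; this matches the claimed formula after setting
$A_{q,0}(s) = B_{q,0}(s) = C_{q,0}(s) = 0$ (empty sums). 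The
normalisation $\gamma_\len^\sigma(s) = q^\len\,
\Gamma^{\cS}_{\ee,q,\len}(s)$ is the natural one suggested by
Proposition~\ref{pro:limit.gamma}: each application of the recursion
picks up a factor coming from scalar shifts, which I would account for
by carrying the $q^\len$ prefactor throughout. Concretely, write
$\gamma_\len^\sigma(s) = q^\len \widehat\gamma_\len^{\cS}(s)$; since
$\gamma_{\len+1}^\tau(s) = \sum_\sigma a_{\sigma,\tau}(q)\,
b^{(\ee)}_{\sigma,\tau}(q)^{-s}\, \gamma_\len^\sigma(s)$ and the only
edge into $\Gsha$ is the loop $a_{\Gsha,\Gsha}=q$,
$b^{(\ee)}_{\Gsha,\Gsha}=1$, the hatted recursion reads
$\widehat\gamma_{\len+1}^{\cT}(s) = \sum_\sigma q^{-1}
a_{\sigma,\tau}(q)\, b^{(\ee)}_{\sigma,\tau}(q)^{-s}\,
\widehat\gamma_\len^{\cS}(s)$, which is exactly what makes the
$\Gsha$-row close up as a constant.

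The graph $\Gamma^{(\ee)}$ is essentially a DAG rooted at $\Gsha$ once
the self-loops are stripped off, so I would process the types in
topological order: $\Gsha$ first; then the ``level-one'' types
$\Lsha, \Jsha, \Tasha, \Tbsha, \Tcsha, \Msha, \Nsha$ (and, for
$\ee=1$, $\Kasha,\Kbsha$) reachable directly from $\Gsha$; with the
complication that $\Lsha$ feeds into $\Tasha, \Tbsha, \Msha$ and
$\Jsha$ feeds into $\Msha, \Nsha$ (and $\Kasha,\Kbsha$), and that
every non-scalar type carries a self-loop with
$a_{\sigma,\sigma}=q^3$, $b^{(\ee)}_{\sigma,\sigma}=q^6$ (row~18),
while the intermediate types $\Lsha$ and $\Jsha$ carry self-loops with
$a=q^2$, $b=q^4$ (rows~9 and~13). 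Each such self-loop, upon unrolling
the recursion $\len$ times, produces precisely a geometric-type sum:
the weight $q^{-1}\cdot q^3\cdot (q^6)^{-s} = q^{2-6s}$ for a
``heavy'' self-loop and $q^{-1}\cdot q^2\cdot(q^4)^{-s}=q^{1-4s}$ for
a ``light'' one. This is exactly why the polynomials
$f^1_\len(q^{1-4s}) = A_{q,\len}(s)$, $f^1_\len(q^{2-6s}) =
B_{q,\len}(s)$ and the two-variable $f^2_\len(q^{1-4s},q^{2-6s}) =
C_{q,\len}(s)$ of \eqref{def:aux.count}--\eqref{equ:aux.ABC} appear:
$A$ accumulates passes through a light loop, $B$ passes through a heavy
loop, and $C$ a path that first goes through $\Lsha$ or $\Jsha$ (light
loop) and then lands on a terminal type (heavy loop). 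For the terminal
types $\Tcsha$ (reachable only directly from $\Gsha$, via row~6) and
$\Kasha,\Kbsha$ (reachable only through $\Jsha$, via rows 16--17) the
formulas involve just $B$, respectively just $C$; for $\Nsha$ and
$\Msha$, which are reachable both directly from $\Gsha$ and via
$\Jsha$ (and $\Msha$ also via $\Lsha$), one gets a sum of a
$B$-contribution and a $C$-contribution, whose coefficients I would
read off from rows 8, 14, 15 (for $\Nsha$) and 7, 12, 14 (for $\Msha$)
of Table~\ref{tab:branch.rules.A2}; similarly $\Tasha$ and $\Tbsha$
combine a direct $\Gsha$-contribution (rows 4, 5) with an
$\Lsha$-routed one (rows 10, 11), giving the bracketed $[\,\cdots\,]$
expressions $(q-2)B + 3(q-1)q^{1-4s}C$ and $qB + (q-1)q^{1-4s}C$.

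In practice I would verify each formula by checking it satisfies the
hatted recursion: assume the claimed expressions for
$\widehat\gamma_\len^{\Gsha},\widehat\gamma_\len^{\Lsha},
\widehat\gamma_\len^{\Jsha}$ and for all terminal types at level
$\len$, substitute into $\widehat\gamma_{\len+1}^{\cT}(s) =
\sum_\sigma q^{-1} a_{\sigma,\tau}(q) b^{(\ee)}_{\sigma,\tau}(q)^{-s}
\widehat\gamma_\len^{\cS}(s)$, and confirm it reproduces the level-$(\len+1)$
expressions. The only non-mechanical inputs are the recurrences
$A_{q,\len+1} = 1 + q^{1-4s} A_{q,\len}$, $B_{q,\len+1} = 1 + q^{2-6s}
B_{q,\len}$ and $C_{q,\len+1} = B_{q,\len} + q^{1-4s} C_{q,\len} =
A_{q,\len} + q^{2-6s} C_{q,\len}$, which follow immediately from the
definition \eqref{def:aux.count} of $f^1_\len, f^2_\len$ by peeling off
the last summation index; these identities are what let the bracketed
terms propagate. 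The main obstacle is purely bookkeeping: assembling,
for the six reachable-by-two-routes types, the correct linear
combination of the incoming edge weights and matching constants,
signs, and powers of $q$ and $\ee$ against the closed forms in the
statement. There is no conceptual difficulty beyond
Theorems~\ref{thm:G.shad.graph} and \ref{thm:U.shad.graph} together
with Propositions~\ref{pro:class.quot.GL} and \ref{pro:class.quot.GU};
the $\ee$-independence of all the $a_{\sigma,\tau}$ (noted after
Theorem~\ref{thm:U.shad.graph}) and the uniform form of the
$b^{(\ee)}_{\sigma,\tau}$ are precisely what make the single uniform
formula $\Gamma^{\cS}_{\ee,q,\len}$ possible.
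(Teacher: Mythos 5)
Your proposal is correct and follows essentially the same route as the paper: the paper also proves the formulae by induction on $\len$, using the base case $\len=0$, the recursion of Lemma~\ref{recurrence.gamma} with the branching data $a_{\sigma,\tau}(q)$, $b^{(\ee)}_{\sigma,\tau}(q)$ from Table~\ref{tab:branch.rules.A2}, and exactly the peeling-off recurrences for $\UU$, $\VV$, $\WW$ that you isolate (the paper carries out the verification explicitly only for the representative types $\Gsha$, $\Lsha$, $\Tasha$). Your light-loop/heavy-loop path interpretation is a correct gloss on the same computation, so the only remaining work is the bookkeeping you describe (note in passing that the incoming edges for $\Nsha$ are rows 8 and 15, not 8, 14, 15).
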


\begin{proof}
  The proof is a straightforward induction on $\len$, using:
  Lemma~\ref{recurrence.gamma}, the explicit formulae for the
  polynomials $a_{\sigma,\tau}(q)$ and $b_{\sigma,\tau}^{(\ee)}(q)$
  from Table~\ref{tab:branch.rules.A2}, and the
  definitions~\eqref{equ:aux.ABC}.

  The case $\len=0$ is clear as $\Gamma^{\cS}_{\ee,q,0}(s) = 0$ unless
  $\cS=\Gsha$.  For the induction step we assume that the proposition
  is proved for $\len\in\N_0$.  We give exemplary proofs for the types
  $\Gsha$, $\Lsha$, and $\Tasha$, which are representative of the
  shadow graph's local complexities; cf.\
  Figure~\ref{fig:shadow.graph.A2}. The computations for the other
  types are similar.

  Let $\sigma, \tau, \upsilon \in \Sh$ be shadows of types $\Gsha,
  \Lsha, \Tasha$, respectively.  For $\sigma$ we have
  \[
  \gamma_{\len+1}^\sigma(s) = a_{\sigma,\sigma}(q) \,
  b^{(\ee)}_{\sigma,\sigma}(q)^{-s} \, \gamma_\len^\sigma(s) = q
  \gamma_\len^\sigma(s) = q^{\len+1}
  \]
  as claimed.  For $\tau$ we have
  \begin{align*}
    \gamma_{\len+1}^\tau(s) & = a_{\sigma,\tau}(q) \,
    b^{(\ee)}_{\sigma,\tau}(q)^{-s} \, \gamma_\len^\sigma(s) +
    a_{\tau,\tau}(q) \, b^{(\ee)}_{\tau,\tau}(q)^{-s} \,
    \gamma_\len^\tau(s) \\
    & = q^{\len+1}(q-1) \left( (q^2+\ee q + 1)q^2 \right)^{-s} +
    q^{1-4s}q^{\len+1} (q-1) \left((q^2+\ee q + 1)q^2 \right)^{-s}
    \sum_{j=0}^{\len-1} q^{(1-4s)j} \\
    & = q^{\len+1} (q-1) \left((q^2+\ee q + 1)q^2 \right)^{-s}
    \sum_{j=0}^{\len} q^{(1-4s)j}
  \end{align*}
  as claimed. We now argue for the shadow $\upsilon$. Set
  \begin{align*}
    u_{q,\len}(s) & = \sixth q^{\len}(q-1)(q-2) \left( (q^2+\ee
      q+1)(q+\ee)q^3 \right)^{-s} \sum_{j=0}^{\len-1}q^{(2-6s)j}, \\
    v_{q,\len}(s) & = \hlf q^{\len+1}(q-1)^2 \left( (q^2+\ee
      q+1)(q+\ee)q^7 \right)^{-s} \sum_{\substack{(j_1,j_2)\in\N_0^{\,2}\\
        j_1+j_2\leq\len-2}} q^{(1-4s)j_1 + (2-6s)j_2}
  \end{align*}
  so that
  \[
  \gamma_\len^\upsilon(s) = u_{q,\len}(s) + v_{q,\len}(s).
  \]
  Then
  \begin{align*}
    \gamma^\upsilon_{\len+1}(s) & = a_{\sigma,\upsilon}(q) \,
    b^{(\ee)}_{\sigma,\upsilon}(q)^{-s} \, \gamma_\len^\sigma(s) +
    a_{\tau,\upsilon}(q) \, b^{(\ee)}_{\tau,\upsilon}(q)^{-s} \,
    \gamma_\len^\tau(s) + a_{\upsilon,\upsilon}(q) \,
    b^{(\ee)}_{\upsilon,\upsilon}(q)^{-s}
    \, \gamma_\len^\upsilon(s) \\
    & = \sixth q(q-1)(q-2) \left( (q^2+\ee q+1)(q+\ee)q^3
    \right)^{-s} \gamma_\len^\sigma(s) \\
    & \quad +\hlf q^2(q-1) \left( (q+\ee)q^5 \right)^{-s}
    \gamma_\len^\tau(s) + q^{3-6s} \left(
      u_{q,\len}(s) + v_{q,\len}(s) \right) \\
    & = \sixth q^{\len+1}(q-1)(q-2) \left( (q^2+\ee q+1)(q+\ee)q^3
    \right)^{-s} + q^{3-6s} u_{q,\len}(s) \\
    & \quad + \hlf q^2(q-1) \left( (q+\ee)q^5 \right)^{-s} q^\len(q-1)
    \left( (q^2+\ee q+1)q^2 \right)^{-s} \sum_{j=0}^{\len-1}
    q^{(1-4s)j} + q^{3-6s} v_{q,\len}(s) \\
    & = u_{q,\len+1}(s) + v_{q,\len+1}(s).  \qedhere
  \end{align*}
\end{proof}

\begin{cor}\label{cor:infinite.gamma}
  For $\sigma \in \Sh$ of type $\cS\in\T^{(\ee)}$,
  \[
  \gamma^\sigma(s) =
  \lim_{\len\rightarrow\infty}q^{-\len}\gamma_\len^\sigma(s) =
  \Gamma^{\cS}_{\ee,q}(s), 
  \]
  where the function $\Gamma^{\cS}_{\ee,q}(s) \coloneqq
  \Gamma^{\cS}_{\mathsf{A}_2,\ee,q}(s)$ is given by
  \begin{equation*}
    \begin{array}{ll}
      1 & \text{if $\cS = \Gsha$,} \\ 
      (q-1) \left((q^2+\ee q + 1)q^2\right)^{-s} (1-q^{1-4s})^{-1} & \text{if
        $\cS = \Lsha$,} \\ 
      \left( (q - \ee)^3(q + \ee) \right)^{-s} (1-q^{1-4s})^{-1} &
      \text{if $\cS = \Jsha$,} \\ 
      \sixth  (q-1) \left( (q+\ee)(q^2+\ee q + 1)q^3) \right)^{-s}
      \frac{q-2+2q^{2-4s}-q^{1-4s}}{(1-q^{1-4s})(1-q^{2-6s})} & \text{if $\cS =
        \Tasha$,} \\ 
      \hlf  (q-1) \left( (q^3 - \ee)q^3 \right)^{-s} (q-q^{1-4s})\left( (1-q^{1-4s})(1-q^{2-6s}) \right)^{-1} &
      \text{if $\cS = \Tbsha$,} \\ 
      \third (q^2-1) \left( (q + \ee)(q - \ee)^2q^3 \right)^{-s}
      (1-q^{2-6s})^{-1} & 
      \text{if $\cS = \Tcsha$,} \\ 
      (q-1) \left( (q - \ee)^3(q + \ee)q^2 \right)^{-s}
      (1+q^{1-4s}) \left((1-q^{1-4s})(1-q^{2-6s}) \right)^{-1} & \text{if $\cS =
        \Msha$,} \\ 
      \left( (q^2 - 1)(q^3 - \ee)q \right)^{-s}
      (1-q^{-4s}) \left( (1-q^{1-4s})(1-q^{2-6s}) \right)^{-1} & \text{if
        $\cS = \Nsha$,} \\ 
      \left( (q^2 - 1)(q^3 - \ee)q^5 \right)^{-s}
      \left( (1-q^{1-4s})(1-q^{2-6s}) \right)^{-1} & 
      \hspace*{-1.25cm}\text{if $\cS \in \{ \Kasha, \Kbsha \}$.} 
     \end{array}
  \end{equation*}
\end{cor}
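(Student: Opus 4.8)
The plan is to deduce the corollary immediately from the closed formulae of Proposition~\ref{pro:sim.shadow.fin} by letting $\len \to \infty$. By that proposition $q^{-\len}\gamma_\len^\sigma(s) = \Gamma^{\cS}_{\ee,q,\len}(s)$, so nothing is needed beyond computing $\lim_{\len\to\infty}\Gamma^{\cS}_{\ee,q,\len}(s)$ for each type $\cS \in \T^{(\ee)}$, the limit being taken coefficientwise in $q^{-s}$; the existence of this limit is already part of Proposition~\ref{pro:limit.gamma}, so the only content is its explicit identification.

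First I would pass to the limit in the three auxiliary Dirichlet polynomials of~\eqref{equ:aux.ABC}. Using the rational-function expressions $f^1_\len(a) = (1-a^\len)/(1-a)$ and $f^2_\len(a_1,a_2) = (a_1a_2^\len - a_1^\len a_2 + a_1^\len - a_2^\len + a_2 - a_1)/((a_2-a_1)(1-a_1)(1-a_2))$ recorded just before~\eqref{equ:aux.ABC}, every monomial carrying a factor $a^\len$ — that is, $q^{(1-4s)\len}$ or $q^{(2-6s)\len}$ — involves $q^{-s}$ to an exponent tending to $\infty$ and hence drops out of the coefficientwise limit. This gives
\[
\lim_{\len\to\infty}\UU = \frac{1}{1-q^{1-4s}}, \qquad
\lim_{\len\to\infty}\VV = \frac{1}{1-q^{2-6s}}, \qquad
\lim_{\len\to\infty}\WW = \frac{1}{(1-q^{1-4s})(1-q^{2-6s})}.
\]

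Substituting these three limits into the eight cases of Proposition~\ref{pro:sim.shadow.fin} yields the asserted formulae. For the types $\Gsha$, $\Lsha$, $\Jsha$, $\Tcsha$, $\Kasha$ and $\Kbsha$ the substitution is instantaneous. For $\Tasha$, $\Tbsha$, $\Msha$ and $\Nsha$ a one-line simplification over the common denominator $(1-q^{1-4s})(1-q^{2-6s})$ is required; e.g.\ for $\Tasha$ one computes $(q-2)\lim_{\len\to\infty}\VV + 3(q-1)q^{1-4s}\lim_{\len\to\infty}\WW = \bigl((q-2)(1-q^{1-4s}) + 3(q-1)q^{1-4s}\bigr)/\bigl((1-q^{1-4s})(1-q^{2-6s})\bigr) = (q-2+2q^{2-4s}-q^{1-4s})/\bigl((1-q^{1-4s})(1-q^{2-6s})\bigr)$, and likewise $q\lim_{\len\to\infty}\VV + (q-1)q^{1-4s}\lim_{\len\to\infty}\WW = (q-q^{1-4s})/\bigl((1-q^{1-4s})(1-q^{2-6s})\bigr)$ for $\Tbsha$, $\lim_{\len\to\infty}\VV + 2q^{1-4s}\lim_{\len\to\infty}\WW = (1+q^{1-4s})/\bigl((1-q^{1-4s})(1-q^{2-6s})\bigr)$ for $\Msha$, and $\lim_{\len\to\infty}\VV + (1-q^{-1})q^{1-4s}\lim_{\len\to\infty}\WW = (1-q^{-4s})/\bigl((1-q^{1-4s})(1-q^{2-6s})\bigr)$ for $\Nsha$. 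There is no genuine obstacle here: the argument is a direct appeal to Proposition~\ref{pro:sim.shadow.fin} followed by elementary algebra, and the only point worth stating carefully is that $q^{-s}$ is treated as a formal variable, so that ``limit'' means coefficientwise limit, matching the framework already set up in Proposition~\ref{pro:limit.gamma}; the difference $\Gamma^{\cS}_{\ee,q,\len}(s) - \Gamma^{\cS}_{\ee,q}(s)$ lies in the ideal generated by $q^{(1-4s)\len}$ and $q^{(2-6s)\len}$, which makes this precise.
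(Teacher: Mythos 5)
Your proposal is correct and matches the paper's (implicit) argument: the corollary is obtained from Proposition~\ref{pro:sim.shadow.fin} by the coefficientwise limit $\UU \to (1-q^{1-4s})^{-1}$, $\VV \to (1-q^{2-6s})^{-1}$, $\WW \to \left((1-q^{1-4s})(1-q^{2-6s})\right)^{-1}$, exactly as the paper does for the analogous Corollary~\ref{cor:infinite.xi} and in the proof of Theorem~\ref{thm:sim.zeta.local}. Your numerator simplifications for the types $\Tasha$, $\Tbsha$, $\Msha$, $\Nsha$ are all correct.
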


We conclude this section with the proofs of two main results stated in
the introduction.

\begin{proof}[Proof of Theorem~\ref{thm:sim.zeta.local}]
  The claimed formula is a direct consequence of
  Proposition~\ref{pro:sim.shadow.fin}, upon noting that
  $s_\len(\ag(\lri)) = \gamma_\len(0) = \sum_{\sigma \in \Sh}
  \gamma_\len^\sigma(0)$ for $\len \in \N_0$.  Using
  \[
  \UU \to \frac{1}{1-q}, \quad \VV \to \frac{1}{1-q^2}, \quad \WW \to
  \frac{1}{(1-q)(1-q^2)} \qquad \text{as $\len \to \infty$, $s \to
    0$}
  \]
  the computation becomes routine.
\end{proof}

\begin{proof}[Proof of Corollary~\ref{cor:F}]
  The corollary is formulated in such a way that, given a place $v
  \not\in S$ of the number field~$k$, the Euler factor
  $\zeta^{\mathrm{sc}}_{\fg(\Gri_v)}(s)$ of
  $\zeta^{\mathrm{sc}}_{\fg(\Gri_S)}(s)$ in
  \eqref{equ:euler.zeta.sc} is equal to
  $\zeta^{\mathrm{sc}}_{\gu_3(\Gri_v)}(s)$ if $\ee_\mathbf{G} = -1$
  and $v$ is not decomposed in the quadratic extension of $K \,\vert\, k$
  defining~$\mathbf{G} = \GU_3(K,f)$; in all other cases
  $\zeta^{\mathrm{sc}}_{\fg(\Gri_v)}(s) =
  \zeta^{\mathrm{sc}}_{\gl_3(\Gri_v)}(s)$.  The claimed formula thus
  follow from \eqref{equ:sim.local} via the Euler product
  decomposition of the Dedekind zeta function~$\zeta_k(s)$, the fact
  that the abscissa of convergence of $\zeta_k(s)$ is~$1$, and the
  Tauberian Theorem~\ref{thm:tauber} stated in
  Section~\ref{sec:abscissa}.
\end{proof}


\part{Representation zeta functions of groups of type
  $\mathsf{A}_2$}\label{part:2}


\section{The Kirillov orbit method and Clifford
  theory} \label{sec:kom.clifford}

Let $\lri$ be a compact discrete valuation ring of residue
characteristic~$p$.  For the main part, we focus in this section on
the case $\cha(\lri) = 0$; we also exhibit analogues of some results
in positive characteristic.  Fix $n \in \N_{\geq 2}$ and let $\aG$ be
one of the four $\lri$-group schemes $\GL_n$, $\GU_n$, $\SL_n$,
$\SU_n$, assuming $p>2$ in the unitary cases.  Write $G = \aG(\lri)$
and $N = \aG^1(\lri)$, the $1$st principal congruence subgroup.  We
develop techniques to study the irreducible complex characters of~$G$,
in relation to the irreducible complex characters of~$N$.  Given a
character $\chi \in \Irr(N)$, we write $S_\chi = \textrm{I}_G(\chi)$
for the inertia group of $\chi$ and we denote by $R_\chi$ the maximal
normal pro-$p$ subgroup of~$S_\chi$.  Observe that $N \triangleleft
R_\chi \triangleleft S_\chi \leq G$.

Under suitable assumptions, relating $p$ to $n$ and $e =
e(\lri,\Z_p)$, the pro-$p$ groups $N$ and $R_\chi$ are guaranteed to
belong to the class of saturable and potent groups; see
Sections~\ref{subsec:sat} and~\ref{subsec:appl.to.matrices}.  This
makes them amenable to the Kirillov orbit method, a machinery to
describe the irreducible complex characters in terms of co-adjoint
orbits; cf.\ Section~\ref{subsec:kom}.  In
Section~\ref{subsec:basic.princ} we provide the setup to apply the
Kirillov orbit method to the principal congruence
subgroups~$\aG^m(\lri)$.  The transition from similarity class zeta
functions (see Section~\ref{sec:sim.zeta}) to representation zeta
functions of groups of the form $\aG^m(\lri)$ and their finite
quotients~$\aG^m(\lri)/\aG^\len(\lri)$ is set out in
Section~\ref{subsec:sim.irrep}.  In Section~\ref{subsec:ext.kom} we
discuss circumstances under which the character $\chi$ extends from
$N$ to the pro-$p$ group~$R_\chi$; in Section~\ref{subsec:ext.coh} we
provide cohomological criteria for the character to extend further,
from $R_\chi$ to~$S_\chi$.  By Clifford theory, any extension of
$\chi$ to its inertia group $S_\chi$ induces irreducibly to~$G$,
completing the transition from $\Irr(N)$ to $\Irr(G)$.

To a certain degree the procedure works also over compact discrete
valuation rings of positive characteristic; we will indicate the
necessary modifications on the way.  The corresponding starred remarks
can be skipped if one wants to focus on the main situation.

\subsection{Saturable $\Z_p$-Lie lattices and pro-$p$
  groups}\label{subsec:sat} We recall some results from
$p$\nobreakdash-adic Lie theory.  The notions we require originate
from Lazard's pioneering work~\cite{La65} and were put into a group
theoretic setting by Lubotzky and Mann; see~\cite{DDMS/99}.  They were
developed further and refined in~\cite{Kl05,Gonzalez2}.

Let $\fr$ be a $\Z_p$-Lie lattice.  A Lie sublattice $\fn$ of $\fr$ is
said to be \emph{PF-embedded} in $\fr$ if there exists a \emph{potent
  filtration} starting at $\fn$, i.e.\ a descending series of Lie
sublattices $\fn = \fn_1 \supseteq \fn_2 \supseteq \ldots$ with
$\bigcap_i \fn_i = 0$ such that (i) $[\fr,\fn_i] \subset \fn_{i+1}$
and (ii) $[\fr \,\, {}_{(p-1)}, \fn_i] \subset p \fn_{i+1}$ for all $i
\geq 1$.  Here and in the following we use right-normed Lie brackets
so that $[X \,\, {}_{(n)}, Y] = [X,[ \cdots [ X,[X,Y]] \cdots] ]$,
with $X$ occurring $n$ times.  Observe that every PF-embedded Lie
sublattice is a Lie ideal in~$\fr$.  By \cite[Theorem~4.1]{Gonzalez2},
the Lie lattice $\fr$ is \emph{saturable} in the sense of Lazard if
and only if it is PF-embedded in itself.  Given a saturable Lie
lattice $\fr$ one introduces, by means of the Hausdorff series
\[
\Phi_\text{Hd}(X,Y) = X + Y + \tfrac{1}{2} [X,Y] + \tfrac{1}{12} \big(
[X,[X,Y]] + [Y,[Y,X]] \big) + \ldots \in \Q \langle\!\langle X,Y
\rangle\!\rangle
\]
a group multiplication on the set~$\fr$.  This yields a
\emph{saturable} pro-$p$ group $R = \exp(\fr)$, again in the sense of
Lazard.  Moreover, the map $\fr \mapsto \exp(\fr)$ yields an
isomorphism between the category of saturable $\Z_p$-Lie lattices and
saturable pro-$p$ groups.  (Sometimes the term `saturable' is applied
to groups that are not necessarily finitely generated.  In this paper,
we agree that saturable pro-$p$ groups are by definition finitely
generated.)  We denote the inverse isomorphism by $\log$, writing $\fr
= \log(R)$.  We write $e^X = \exp(X)$ for $X \in \fr$ and $\log(x)$
for $x \in R$ to denote the corresponding elements in the associated
structure.  Conjugation in $R$ is linked to the \emph{adjoint action}
of $R$ on $\fr$ via
\[
\Ad(x) Y = \log(x e^Y x^{-1}) \qquad \text{for $x \in R$ and $Y \in \fr$.}
\]

\begin{lem} \label{lem:Hausdorff-expansion} Let $\fr$ be a saturable
  $\Z_p$-Lie lattice.  Let $\fn$ be a PF-embedded Lie ideal of~$\fr$,
  with potent filtration $\fn = \fn_1 \supseteq \fn_2 \supseteq
  \ldots$.  Let $X \in \fr$, $i \in \N$ and $Y \in \fn_i$.  Then there
  exists $Z \in \fn_{i+1}$ such that
  \[
  \Ad(e^X) Y = Y + \ad(X) Y + \ad(X) Z.
  \]
\end{lem}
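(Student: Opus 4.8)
\textbf{Proof plan for Lemma~\ref{lem:Hausdorff-expansion}.}
The identity to prove is $\Ad(e^X)Y = Y + \ad(X)Y + \ad(X)Z$ for some $Z \in \fn_{i+1}$, where $\Ad(e^X)Y = \log(e^X e^Y e^{-X})$. The natural starting point is the Baker--Campbell--Hausdorff formula in the form
\[
\Ad(e^X)Y = e^{\ad(X)}Y = Y + \ad(X)Y + \tfrac{1}{2}\ad(X)^2 Y + \tfrac{1}{6}\ad(X)^3 Y + \cdots = Y + \ad(X)\Big(\textstyle\sum_{k\geq 1} \tfrac{1}{k!}\ad(X)^{k-1}Y\Big).
\]
So formally the candidate is $Z = \sum_{k\geq 2}\tfrac{1}{k!}\ad(X)^{k-1}Y$ (equivalently $\tfrac{1}{2}\ad(X)Y + \tfrac{1}{6}\ad(X)^2Y + \cdots$); the entire content of the lemma is that this series converges to an element of the integral lattice $\fn_{i+1}$, not merely of $\fr \otimes \Q_p$. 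First I would record that, since $\fn = \fn_1 \supseteq \fn_2 \supseteq \cdots$ is a potent filtration with respect to $\fr$, property (i) gives $[\fr,\fn_j]\subseteq \fn_{j+1}$, so $\ad(X)^{k-1}Y \in \fn_{i+k-1} \subseteq \fn_{i+1}$ for all $k\geq 2$; thus each term of the tail lies in $\fn_{i+1}$ and the only issue is the denominators $k!$.

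The key step is therefore a $p$-adic convergence/integrality estimate for $\sum_{k\geq 2}\tfrac{1}{k!}\ad(X)^{k-1}Y$ inside $\fn_{i+1}$. Here property (ii) of the potent filtration is exactly what is needed: $[\fr\,\,{}_{(p-1)},\fn_j]\subseteq p\fn_{j+1}$ means that applying $\ad(X)$ a further $p-1$ times not only drops one level in the filtration but also picks up a factor of $p$. Iterating, $\ad(X)^{m(p-1)}$ applied to something in $\fn_j$ lands in $p^m \fn_{j+m(p-1)} \subseteq p^m\fn_{i+1}$ (using (i) to absorb the extra level shifts). Comparing this with the $p$-adic valuation $v_p(k!) \leq (k-1)/(p-1)$, one checks term by term that $\tfrac{1}{k!}\ad(X)^{k-1}Y \in \fn_{i+1}$: writing $k-1 = m(p-1)+r$ with $0\leq r < p-1$, the operator $\ad(X)^{k-1}$ contributes at least a factor $p^m$, while $v_p(k!) \leq \lfloor (k-1)/(p-1)\rfloor = m$ (for $k-1 = m(p-1) + r$ with $r<p-1$ one has $v_p(k!)\le m$; one should double-check the precise Legendre-type bound and adjust constants if necessary). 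Hence each summand is integral, and since $\fn_{i+1}$ is a $\Z_p$-submodule closed under the (topologically complete) filtration, the sum $Z := \sum_{k\geq 2}\tfrac{1}{k!}\ad(X)^{k-1}Y$ converges in $\fn_{i+1}$. This is the step I expect to be the main obstacle: getting the bookkeeping between $v_p(k!)$ and the number of ``potent'' blocks of length $p-1$ exactly right, particularly the boundary cases where $k-1$ is not a multiple of $p-1$, so that one genuinely lands in $\fn_{i+1}$ and not just in $p^{-1}\fn_{i+1}$ or in $\fr\otimes\Q_p$.

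Having established that $Z\in\fn_{i+1}$, it remains to justify that the BCH manipulation $\log(e^X e^Y e^{-X}) = e^{\ad(X)}Y$ is valid in this saturable setting. This is standard Lazard theory: by hypothesis $\fr$ is saturable, so $R = \exp(\fr)$ is a saturable pro-$p$ group with the Hausdorff multiplication, and the adjoint action satisfies $\Ad(e^X)Y = \log(e^X e^Y e^{-X})$ by definition (as recalled in the excerpt just before the lemma); the formal identity $\Phi_{\mathrm{Hd}}(X,\Phi_{\mathrm{Hd}}(Y,-X)) = e^{\ad(X)}Y$ holds in $\Q\langle\!\langle X,Y\rangle\!\rangle$ and all the series involved converge $p$-adically once we know, as above, that the relevant iterated brackets lie in the completed lattice. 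So I would cite the convergence results from \cite{La65} (or \cite{Gonzalez2,Kl05}) for the ambient identity and only supply the integrality argument for the tail, which is the new point. Finally I would reassemble: $\Ad(e^X)Y = Y + \ad(X)Y + \ad(X)Z$ with this $Z\in\fn_{i+1}$, which is exactly the claim. A brief sanity remark: the lemma does not ask for uniqueness of $Z$, so no further argument is needed there, and the hypotheses are used precisely as — (i) for containment in $\fn_{i+1}$, (ii) for the $p$-divisibility killing the factorials, and saturability for the BCH identity itself.
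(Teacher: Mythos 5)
Your proposal is correct and follows essentially the same route as the paper's proof: expand $\Ad(e^X)Y = e^{\ad(X)}Y$, take $Z = \sum_{k\geq 2} \ad(X)^{k-1}Y/k!$, write $k-1 = m(p-1)+r$, use condition (i) for the $r$ remaining brackets and condition (ii) to gain a factor $p^m$, and compare with $v_p(k!) \leq \lfloor (k-1)/(p-1)\rfloor = m$ so that each term lies in $\fn_{i+m+r} \subseteq \fn_{i+1}$ and the tail converges. The only blemish is your intermediate claim that $\ad(X)^{m(p-1)}$ maps $\fn_j$ into $p^m\fn_{j+m(p-1)}$ — iterating (ii) only yields $p^m\fn_{j+m}$ — but since you immediately weaken to $p^m\fn_{i+1}$ this does not affect the argument.
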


\begin{proof}
  The proof is similar to that of~\cite[Lemma~2.3(4)]{Gonzalez1}.  We
  have
  \[
  \Ad(e^X) Y = Y + \ad(X) Y + \sum_{j=2}^\infty \frac{\ad(X)^j Y}{j!}.
  \]
  Hence it suffices to check that $Z = \sum_{j=2}^\infty \ad(X)^{j-1}
  Y / j!$ is an element of~$\fn_{i+1}$.  Let $j \geq 2$.  Writing $j-1
  = (p-1)k + l$ with $k \geq 0$ and $0 \leq l \leq p-2$, we conclude
  that
  \[
  \ad(X)^{j-1} Y = \ad(X)^{(p-1)k} (\ad(X)^l Y) \in [\fr \,\,
  {}_{(p-1)k}, \fn_{i+l}] \subset p^k \fn_{i+k+l}.
  \]
  On the other hand, the $p$-valuation of $j!$ is at most $\lfloor
  (j-1)/(p-1) \rfloor = k$.  Thus $\ad(X)^{j-1} y / j! \in
  \fn_{i+k+l}$.  Since $k+l \geq 1$ tends to infinity with~$j$, the
  claim follows.
\end{proof}

\begin{lem}
  Let $\fr$ be a saturable $\Z_p$-Lie lattice with a PF-embedded Lie
  ideal~$\fn$.  Writing $N = \exp(\fn)$, we have $\log(e^X N) = X +
  \fn$ for every $X \in \fr$.
\end{lem}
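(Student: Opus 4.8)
The statement asserts that for a saturable $\Z_p$-Lie lattice $\fr$ with a PF-embedded Lie ideal $\fn$, and $N = \exp(\fn)$, the coset $e^X N$ — multiplied inside the group $R = \exp(\fr)$ — satisfies $\log(e^X N) = X + \fn$. The plan is to show the two inclusions $\log(e^X N) \subseteq X + \fn$ and $X + \fn \subseteq \log(e^X N)$ by direct computation with the Hausdorff series, using the previous two lemmas as the main input.

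First I would establish $\log(e^X N) \subseteq X + \fn$. Take $Y \in \fn$; then $e^X e^Y$ has logarithm $\Phi_{\text{Hd}}(X,Y)$, and I must check this lies in $X + \fn$. Since $\fn$ is a Lie ideal, every iterated bracket in $\Phi_{\text{Hd}}(X,Y)$ that involves $Y$ at least once lies in $\fn$; the only term not of this form is the leading $X$. The subtlety is convergence: the series $\Phi_{\text{Hd}}(X,Y) - X - Y = \tfrac12[X,Y] + \cdots$ must converge $p$-adically to an element of $\fn$ (not merely of $\fr$). Here I would lean on the potent-filtration estimates exactly as in Lemma~\ref{lem:Hausdorff-expansion}: the denominators in the Hausdorff series have controlled $p$-valuations, while high brackets $[\fr \,\, {}_{(p-1)k}, \fn_i] \subseteq p^k \fn_{i+k}$ gain powers of $p$ faster, so each tail term lands in $\fn_j$ with $j \to \infty$. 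Since $\fn$ is closed (being a $\Z_p$-sublattice) and $\bigcap_j \fn_j = 0$, the limit lies in $\fn$. This gives $\Phi_{\text{Hd}}(X,Y) \in X + \fn$, hence $\log(e^X N) \subseteq X + \fn$.

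Next I would prove the reverse inclusion $X + \fn \subseteq \log(e^X N)$, i.e.\ every element $X + W$ with $W \in \fn$ is of the form $\Phi_{\text{Hd}}(X,Y)$ for some $Y \in \fn$. Equivalently, writing $x = e^X$, I want to solve $x \cdot e^Y = e^{X+W}$ in $R$, i.e.\ $e^Y = x^{-1} e^{X+W} = e^{-X} e^{X+W}$. Since $\log(e^{-X} e^{X+W}) = \Phi_{\text{Hd}}(-X, X+W)$, and every bracket term here involves $X+W$ and hence, modulo $\fn$, reduces to a bracket in $X$ alone — but $\Phi_{\text{Hd}}(-X,X) = 0$ — the element $\Phi_{\text{Hd}}(-X,X+W)$ differs from $W$ by brackets each containing a factor from $\fn$, so it lies in $\fn$ by the same convergence argument as above. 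Setting $Y := \Phi_{\text{Hd}}(-X, X+W) \in \fn$ then gives $e^X e^Y = e^{X+W}$, so $X + W \in \log(e^X N)$. Combining the two inclusions yields $\log(e^X N) = X + \fn$.

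\textbf{Main obstacle.} The one genuinely delicate point is the $p$-adic convergence of the Hausdorff series \emph{into $\fn$} rather than merely into $\fr$ — one must verify that every nonlinear Hausdorff term involving the $\fn$-variable lands in successively deeper members $\fn_j$ of the potent filtration. This is precisely the bookkeeping carried out in the proof of Lemma~\ref{lem:Hausdorff-expansion} (balancing $p$-valuations of the rational coefficients against the $p$-power gains from $[\fr \,\, {}_{(p-1)}, \fn_i] \subset p\fn_{i+1}$), so I would either invoke that lemma directly or reproduce its estimate; everything else is formal manipulation of the group law $e^X e^Y = e^{\Phi_{\text{Hd}}(X,Y)}$ together with $\Phi_{\text{Hd}}(-X,X)=0$ and the ideal property $[\fr,\fn]\subseteq\fn$.
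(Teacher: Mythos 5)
Your argument is correct in substance, but your treatment of the reverse inclusion follows a genuinely different route from the paper's. The forward inclusion $\log(e^X N)\subset X+\fn$ is the same in both (the paper simply records it as an observation). For the converse, the paper runs a successive-approximation induction along the potent filtration: it proves $X+\fn\subset\{\Phi_\text{Hd}(X,Y)\mid Y\in\fn\}+\fn_i$ for every $i\in\N$, at each step absorbing an error $U\in\fn_{i-1}$ into the Hausdorff product via associativity, $\Phi_\text{Hd}(\Phi_\text{Hd}(X,Y),U)=\Phi_\text{Hd}(X,\Phi_\text{Hd}(Y,U))$, at the cost of a new error in $[\fr,\fn_{i-1}]\subset\fn_i$, and then concludes from $\bigcap_i\fn_i=0$ together with the closedness of the image set. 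You instead write down the exact solution $Y:=\Phi_\text{Hd}(-X,X+W)=\log(e^{-X}e^{X+W})$ and show $Y\in\fn$ by expanding multilinearly and using $\Phi_\text{Hd}(-X,X)=0$. Both arguments rest on the same Lazard-type estimate that Hausdorff terms containing a letter from the filtration land deep enough in it to beat the $p$-power denominators; the paper needs it only in the coarse per-step form that the degree-$\geq 2$ part of $\Phi_\text{Hd}(V,U)$ with $U\in\fn_{i-1}$ lies in $\fn_i$, whereas your route needs it for Hausdorff monomials with the $\fn$-letter in an arbitrary position. That is standard, but be aware that Lemma~\ref{lem:Hausdorff-expansion} as stated only handles the right-normed series $\sum_{j\geq 2}\ad(X)^{j-1}Y/j!$, so you cannot quote it verbatim: you must first rewrite general monomials as integral combinations of right-normed ones (Jacobi identity) before invoking the conditions $[\fr,\fn_i]\subset\fn_{i+1}$ and $[\fr\,\,{}_{(p-1)},\fn_i]\subset p\fn_{i+1}$, or redo the valuation bookkeeping for general bracket arrangements. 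With that caveat your proof closes; it buys an explicit formula for $Y$ and avoids the induction and limit step, at the price of heavier series bookkeeping, while the paper's induction gets by with a cruder estimate applied repeatedly.
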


\begin{proof}
  Let $\fn = \fn_1 \supseteq \fn_2 \supseteq \ldots$ be a potent
  filtration starting at~$\fn$, and fix $X \in \fr$.  We observe that
  $\log(e^X N) = \{ \Phi_\text{Hd}(X,Y) \mid Y \in \fn \} \subset X
  + \fn$.  To obtain the reverse inclusion, it suffices to show that
  $X + \fn \subset \{ \Phi_\text{Hd}(X,Y) \mid Y \in \fn \} + \fn_i$
  for all $i \in \N$.  We argue by induction.  Clearly, the claim is
  true for $i=1$, as $\Phi_\text{Hd}(X,0)=X$.  For the induction step,
  let $i \geq 2$ and consider an arbitrary element $Z \in \{
  \Phi_\text{Hd}(X,Y) \mid Y \in \fn \} + \fn_{i-1}$, that is $Z =
  \Phi_\text{Hd}(X,Y) + U$ with $Y \in \fn$ and $U \in \fn_{i-1}$.
  Then $Z = \Phi_\text{Hd}(\Phi_\text{Hd}(X,Y),U) + U' =
  \Phi_\text{Hd}(X,Y') + U'$ for suitable $U' \in [\fr,\fn_{i-1}]
  \subset \fn_i$ and $Y' = \Phi_\text{Hd}(Y,Z) \in \fn$.
\end{proof}

\begin{cor} \label{lem:grp.Lie.decomp} Let $\fr$ be a saturable
  $\Z_p$-Lie lattice with a PF-embedded Lie ideal $\fn$ and a
  saturable Lie sublattice~$\fh$.  Write $R = \exp(\fr)$, $N =
  \exp(\fn)$, and $H = \exp(\fh)$ for the corresponding saturable
  pro-$p$ groups.  Then $\fr = \fn + \fh$ if and only if $R = NH$.
\end{cor}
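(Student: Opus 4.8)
The plan is to reduce the whole statement to two facts already available: the preceding lemma, which for a PF-embedded Lie ideal $\fn$ of a saturable $\Z_p$-Lie lattice identifies $\log(e^XN)$ with the coset $X+\fn$ for every $X\in\fr$, and the fact that $\log\colon R\to\fr$ is a bijection inverting $\exp$. The core claim I would isolate and prove is the \emph{unconditional} identity $\log(NH)=\fn+\fh$ as subsets of $\fr$; granting this, the asserted equivalence is immediate.

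To establish that identity, first I would observe that $\fn$, being PF-embedded in $\fr$, is a Lie ideal, so $N=\exp(\fn)$ is normal in $R=\exp(\fr)$; and since $\fh$ is a saturable Lie sublattice, $H=\exp(\fh)$ is a closed subgroup of $R$ and the $\exp$/$\log$ correspondence restricts compatibly, so that $\exp$ maps $\fh$ bijectively onto $H$ and $\fn$ bijectively onto $N$ inside $R$ (functoriality of the correspondence quoted above). Writing a general element of $H$ as $e^Y$ with $Y$ ranging over $\fh$, normality of $N$ gives $Ne^Y=e^YN$, hence $NH=\bigcup_{Y\in\fh}e^YN$. Applying the preceding lemma to each such $Y\in\fr$ yields $\log(e^YN)=Y+\fn$, and therefore $\log(NH)=\bigcup_{Y\in\fh}(Y+\fn)=\fn+\fh$, the last step holding because $\fn$ is a $\Z_p$-submodule, so the sumset $\fn+\fh$ is exactly the union of the $\fn$-cosets of elements of~$\fh$.

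Finally, since $NH\subseteq R$ and $\fn+\fh\subseteq\fr$ always hold, and $\log$ is a bijection $R\to\fr$ carrying $NH$ onto $\fn+\fh$, we get $R=NH$ if and only if $\log(R)=\log(NH)$, that is, if and only if $\fr=\fn+\fh$; this proves both implications simultaneously. I do not expect a genuine obstacle here: the only point deserving a line of care is the mutual compatibility of the exponential maps of $\fn$, $\fh$ and $\fr$ (so that "$H=\exp(\fh)$ inside $R$" and "$N=\exp(\fn)$ inside $R$" are the intended subobjects), which is part of the categorical equivalence between saturable $\Z_p$-Lie lattices and saturable pro-$p$ groups, and the verification that the hypotheses of the coset lemma are met — they are, since $\fn$ is assumed to be a PF-embedded Lie ideal, which is precisely what that lemma requires.
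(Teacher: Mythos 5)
Your proposal is correct and is essentially the argument the paper intends: the corollary is stated as an immediate consequence of the preceding lemma $\log(e^XN)=X+\fn$, and your deduction — writing $NH=HN=\bigcup_{Y\in\fh}e^YN$ via normality of $N$ (PF-embedded implies ideal), applying the lemma to get $\log(NH)=\fn+\fh$, and using bijectivity of $\log$ — is exactly that intended deduction.
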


A pro-$p$ group $R$ is called \emph{potent} if~$\gamma_{p-1}(R)
\subset R^p$.  This notion is closely linked to saturability: if $R$
is finitely generated, torsion-free and potent then $R$ is saturable;
see~\cite[Corollary~5.4]{Gonzalez2}.  Conversely, a saturable pro-$p$
group need not be potent.

\subsection{Application to pro-$p$ subgroups of matrix
  groups} \label{subsec:appl.to.matrices}
Let $\lri$ be a compact discrete valuation ring of residue
characteristic~$p$.  Fix a uniformiser $\pi$ so that the valuation
ideal of $\lri$ takes the form $\fp = \pi \lri$, and let $\Lri$ be an
unramified quadratic extension of~$\lri$ with valuation ideal~$\fP =
\pi \Lri$.  For $p>2$ we write $\Lri = \lri[\delta]$, where $\delta =
\sqrt{\rho}$ for an element $\rho \in \lri$ whose reduction modulo
$\fp$ is a non-square in the residue field~$\lri/\fp$.

Let $n \in \N_{\geq 2}$.  In this section we consider the Sylow
pro-$p$ subgroups of $\GL_n(\lri)$ and $\SL_n(\lri)$, respectively
$\GU_n(\lri)$ and $\SU_n(\lri)$, as well as the corresponding
$\lri$-Lie lattices.  As before, we assume throughout that $p>2$ in
the unitary setting.  In particular, we are interested in the lower
central series of these groups and Lie lattices.

In order to arrive at a uniform description, it is convenient to work
with versions of the unitary groups and the unitary Lie lattices that are
different from those used in Section~\ref{sec:sim.gu}.  Let $W =
(w_{ij}) \in \GL_n(\Lri)$ denote the matrix corresponding to the
longest element in the Weyl group of permutation matrices, i.e.\ let
$w_{ij} = \delta_{i,n+1-j}$ using the Kronecker delta.  We equip the
$\Lri$-algebra $\mathsf{Mat}_n(\Lri)$ with the
$(\Lri,\lri)$-involution
\[
A^\star = W A^\circ W^{-1} \qquad \text{for $A \in
  \mathsf{Mat}_n(\Lri)$,}
\]
where $\circ$ denotes the standard $(\Lri,\lri)$-involution `conjugate
transpose' as in~\eqref{equ:def-std-inv}.  Then
\[
\GU_n^\star(\lri)=\left\{A \in \GL_n(\Lri) \mid A^\star A =\Id_n
\right\} \quad \text{and} \quad \gu_n^\star(\lri)=\left\{A \in
  \gl_n(\Lri) \mid A^\star + A =0 \right\}
\]
are isomorphic to $\GU_n(\lri)$ and $\gu_n(\lri)$; cf.\
Lemma~\ref{Dieudonne1}.  Similarly, we are interested in
\[
\SU_n^\star(\lri) = \GU_n^\star(\lri) \cap \SL_n(\Lri) \quad
\text{and} \quad \su_n^\star(\lri) = \gu_n^\star(\lri) \cap
\fsl_n(\Lri),
\]
which are isomorphic to $\SU_n(\lri)$ and $\su_n(\lri)$.

Observe that $A = (a_{ij}) \in \gl_n(\Lri)$ belongs to
$\gu_n^\star(\lri)$ if and only if its entries satisfy the conditions
$a_{ij} + a_{n+1-j,n+1-i}^\circ = 0$ for $1 \leq i,j \leq n$.  From
this, one constructs natural $\lri$-bases for the Lie lattices
$\gl_n(\lri)$ and~$\gu_n(\lri)$.  Denoting by $E_{ij}$ the elementary
$n \times n$ matrix with entry $1$ in the $(i,j)$-position and entries
$0$ elsewhere, we define
\begin{equation} \label{equ:def.e.epsilon.i.j} 
  \begin{split}
    E_{ij}^{(1)} & = E_{ij}, \\
    E_{ij}^{(-1)} & =
    \begin{cases}
      E_{ij} - E_{n+1-j,n+1-i} & \text{if $i+j < n+1$}, \\
      \delta E_{ij} & \text{if $i+j = n+1$}, \\
      \delta E_{ij} + \delta E_{n+1-j,n+1-i}& \text{if $i+j > n+1$}.
    \end{cases}
  \end{split}
\end{equation}
Using the parameter $\ee \in \{1, -1\}$ to distinguish between the
general linear and unitary settings, and defining
\begin{equation} \label{equ:G-H-epsilon}
  \begin{split}
    & \aG = \aG^{(\ee)} =
    \begin{cases}
      \GL_n& \\ \GU_n^\star&
    \end{cases}
    \text{and} \quad \ag = \ag^{(\ee)} =
    \begin{cases}
      \gl_n & \quad \text{for $\ee =1$,} \\
      \gu_n^\star & \quad \text{for $\ee =-1$,}
    \end{cases} \\
    & \aH = \aH^{(\ee)} =
    \begin{cases}
      \SL_n & \\ \SU_n^\star&
    \end{cases}
    \text{and} \quad \ah = \ah^{(\ee)} =
    \begin{cases}
      \fsl_n & \quad \text{for $\ee =1$,} \\
      \su_n^\star & \quad \text{for $\ee = -1$,}
    \end{cases}
  \end{split}
\end{equation} 
we see that the matrices $E_{ij}^{(\ee)}$, $1 \leq i,j \leq n$, form a
basis for the $\lri$-Lie lattice scheme~$\ag$.

For $m \in \Z$, the $\lri$-submodule schemes
\begin{align*}
  \ab_m = \ab_m^{(\ee)} & = \ah^{(\ee)} \cap \mathrm{span} \langle
  E^{(\ee)}_{i,j} \mid j-i  \ge m \rangle \\
  & = \{ (x_{ij}) \in \ah^{(\ee)} \mid x_{ij} = 0 \text{ for } j-i < m
  \}
\end{align*}
form a filtration 
\[
\{ 0 \} = \ldots = \ab_n \subset \ab_{n-1} \subset \ldots
\subset \ab_{1-n}= \ldots = \ah.
\]
Moreover, identifying $\Lri \otimes_\lri \gl_n(\lri)$ with
$\gl_n(\Lri)$ via the basis $E_{ij}$, $1 \leq i,j \leq n$, one checks
easily that, for each $m \in \Z$, the $\Lri$-submodule schemes $\Lri
\otimes_\lri \ab^{(1)}_m$ and $\Lri \otimes_\lri \ab^{(-1)}_m$ are
equal as subschemes of the $\Lri$-Lie lattice scheme $\fsl_n = \Lri
\otimes_\lri \fsl_n = \Lri \otimes_\lri \su_n^\star$.  Consequently,
the resulting $\lri$-submodule filtrations of $\ah^{(1)}(\lri)$ and
$\ah^{(-1)}(\lri)$ produce, under extension of scalars, the same
$\Lri$-submodule filtration $\ab_m^{(1)}(\Lri) = \ab_m^{(-1)}(\Lri)$,
$m \in \Z$, of $\fsl_n(\Lri) = \ah^{(1)}(\Lri) = \ah^{(-1)}(\Lri)$.
This means that properties that are `stable' under extension of
scalars can be derived uniformly for both cases, $\ee=1$ and $\ee=-1$.

Next we record an auxiliary lemma describing certain types of
commutators of terms of the filtration described above, based on
explicit matrix identities.

\begin{lem} \label{lem:filtration} With the above notation, the
  following hold:
  \begin{enumerate}
  \item $\ab_m \cdot \ab_{m'} \subset \ab_{m + m'}$, in particular
    $[\ab_m,\ab_{m'}] \subset \ab_{m+m'}$ for all $m,m' \in \Z$;
  \item if $p>2$ or $n \ge 3$ then $[\ab_1,\ab_m]=\ab_{m+1}$ for $m
    \in \Z$ with $m \ge 1-n$;
  \item if $p>2$ or $n \ge 3$ then $[\ab_0,\ah] = \ah$.
  \end{enumerate}
\end{lem}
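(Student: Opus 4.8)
The statement concerns the filtration $\ab_m = \ab_m^{(\ee)}$ of $\ah^{(\ee)}$ by ``upper-triangular-type'' bands, and the three claims (1)--(3) are purely Lie-theoretic identities about commutators of matrices. Following the remark in the excerpt that properties stable under extension of scalars can be checked uniformly, the plan is to reduce everything to the split case $\ee=1$, i.e.\ to $\fsl_n$ (or $\gl_n$) over $\Lri$, since $\Lri\otimes_\lri\ab_m^{(1)} = \Lri\otimes_\lri\ab_m^{(-1)}$ as submodules of $\fsl_n(\Lri)$. All three statements are equalities or inclusions of $\lri$-submodules; an inclusion $X\subset Y$ (resp.\ equality) of $\lri$-submodules of a free module holds if and only if it holds after the faithfully flat base change $\lri\to\Lri$. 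Hence it suffices to prove (1)--(3) for $\ab_m^{(1)}(\Lri)$ inside $\fsl_n(\Lri)$, where $\ab_m^{(1)}(\Lri)$ is spanned by the $E_{ij}$ with $j-i\geq m$ (together with the diagonal part $\sum (E_{ii}-E_{i+1,i+1})$-type combinations when $m\leq 0$, i.e.\ the relevant piece of the Cartan).

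For (1): this is immediate from the multiplication rule $E_{ij}E_{kl} = \delta_{jk}E_{il}$. If $j-i\geq m$ and $l-k\geq m'$ then the only surviving product has $j=k$, giving $E_{il}$ with $l-i = (l-k)+(j-i)\geq m+m'$; the diagonal correction terms (when $m$ or $m'$ is $\leq 0$) are handled the same way, noting that multiplying a band-$\geq m$ matrix by a diagonal matrix stays in band $\geq m$. The bracket inclusion follows. For (2): the inclusion $[\ab_1,\ab_m]\subset\ab_{m+1}$ is a special case of (1), so the content is the reverse inclusion $\ab_{m+1}\subset[\ab_1,\ab_m]$. One shows every basis matrix $E_{ij}$ with $j-i\geq m+1$ lies in $[\ab_1,\ab_m]$: write $E_{ij} = [E_{i,i+1}, E_{i+1,j}]$ when $i+1\neq j$, with $E_{i,i+1}\in\ab_1$ and $E_{i+1,j}\in\ab_m$ since $j-(i+1) = (j-i)-1\geq m$; when $i+1=j$ (so $m+1\leq 1$, i.e.\ $m\leq 0$) one instead realizes $E_{i,i+1}$ as $[E_{i,i+1},H]$ for a suitable diagonal $H\in\ab_0\subset\ab_m$ with $[E_{i,i+1},H] = c\,E_{i,i+1}$, $c\in\lri^\times$ — this is where the hypothesis $p>2$ or $n\geq 3$ enters, since for $n=2$, $p=2$ the relevant structure constant $2$ fails to be a unit. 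One also needs to hit the diagonal band elements when $m+1\leq 0$: here $E_{i,i+1}$ and $E_{i+1,i}$ bracket to a Cartan element, and for $n\geq 3$ or $p>2$ enough such brackets span the required diagonal piece of $\ah$. For (3): again $[\ab_0,\ah]\subset\ah$ trivially ($\ah$ is a Lie subalgebra and $\ab_0\subset\ah$... more precisely one uses (1) with the convention $\ab_m=\ah$ for $m\leq 1-n$); the reverse inclusion $\ah\subset[\ab_0,\ah]$ follows by combining (2) across all relevant $m$ (every $E_{ij}$ with $i\neq j$ is a bracket of something in $\ab_0$ with something in $\ah$, via the same commutator identities) together with the fact that the Cartan part of $\fsl_n$ is spanned by brackets $[E_{ij},E_{ji}]$ when $n\geq 3$ or $p>2$.

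The main obstacle is bookkeeping at the ``boundary'' of the filtration, i.e.\ the low values of $m$ where $\ab_m$ picks up the Cartan subalgebra and where the convention $\ab_m=\ah$ for $m\leq 1-n$ must be used carefully; in those cases one cannot simply write $E_{ij}=[E_{i,i+1},E_{i+1,j}]$ and must instead produce the off-diagonal and the diagonal generators separately, which is precisely the point at which the small-rank small-characteristic exclusion ($n\geq 3$ or $p>2$) is needed. I would organize the proof by first doing the split case over $\Lri$ as a clean matrix computation, handling (1) in one line, then (2) by the commutator identities with the $p,n$ caveat isolated into a short sublemma about the Cartan, then deducing (3) from (2); and finally invoking faithfully flat descent to transfer each statement back to $\lri$ and to both values of $\ee$ simultaneously.
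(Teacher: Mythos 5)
Your plan is essentially the paper's proof: reduce to the split case $\ee=1$ over $\Lri$ by extension of scalars (the paper checks the reverse inclusions modulo $\pi$ and compares dimensions, you invoke faithfully flat descent along $\lri\to\Lri$ — both are fine), then verify (1) by the multiplication rule for elementary matrices and (2),(3) by explicit commutator identities, with the hypothesis $p>2$ or $n\geq 3$ entering exactly through $E_{i,i+1}=\tfrac{1}{2}[E_{ii}-E_{i+1,i+1},E_{i,i+1}]$ versus $[E_{ii}-E_{kk},E_{i,i+1}]$ for $k\notin\{i,i+1\}$. One bookkeeping slip: your blanket identity $E_{ij}=[E_{i,i+1},E_{i+1,j}]$ breaks down when $i=n$ (there is no $E_{n,n+1}$), which happens for lower-triangular entries in the last row when $m+1<0$; the paper patches this with the alternative identity $E_{ij}=[E_{i,j-1},E_{j-1,j}]$ for $j<i$, and observes that the remaining case $(i,j)=(n,1)$ never occurs in $\ab_{m+1}$ because $m\geq 1-n$. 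Also, no caveat is actually needed for the diagonal part, since $E_{ii}-E_{i+1,i+1}=[E_{i,i+1},E_{i+1,i}]$ exactly; the restriction on $p$ and $n$ is only required for the off-diagonal (super/subdiagonal) generators.
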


\begin{proof} (1) Let $X=(x_{ij}) \in \ab_m$ and $Y = (y_{ij}) \in
  \ab_{m'}$ for $m, m' \in \Z$.  Suppose that the $(i,k)$-entry of
  $XY$ is non-zero, i.e.\ $\sum_{j=1}^{n} x_{ij} y_{jk} \ne 0$.  Then
  $x_{ij} \ne 0$ and $y_{jk} \ne 0$ for some index $j$, and
  consequently $k-i = (j-i)+(k-j) \geq m+m'$.

  (2) Let $m \in \Z$ with $m \geq 1-n$.  By (1), it remains to show
  that $\ab_{m+1} \subset [\ab_1,\ab_m]$.  This can be checked
  modulo~$\pi$, and since extension of scalars preserves the dimension
  of vector spaces, it is enough to show that $\ab_{m+1}(\Lri) \subset
  [\ab_1(\Lri),\ab_m(\Lri)]$.  Thus we may assume without loss of
  generality that~$\ee=1$; see the remark preceding the lemma.

  The $\lri$-lattice $\ab_{m+1}$ is spanned by elements of the form
  \begin{enumerate}
  \item[(i)] $E_{ij}$, where $1 \leq i,j \leq n$ with $i \ne j$ and
    $j-i \geq m+1$,
  \item[(ii)] $E_{ii} - E_{i+1,i+1}$, where $1 \le i < n$, if $m \le
    -1$.
  \end{enumerate}
  First consider $E_{ij}$ of type~(i).  If $j-i \geq 2$ or $j-i < 0$,
  we use the identities
  \[
  E_{ij} =
  \begin{cases}
    [E_{i,i+1},E_{i+1,j}] & \text{if $i<i+1<j$ or $j < i< i+1 \leq n$}, \\
    [E_{i,j-1},E_{j-1,j}] & \text{if $1 \le j-1<j<i$}
  \end{cases}
  \]
  to deduce that $E_{ij} \in [\ab_1,\ab_m]$; the case $(i,j) = (n,1)$
  does not arise, as $j-i \ge m+1 \ge 2-n$.  It remains to consider
  the case $j-i=1$, i.e.\ $j=i+1$. Then $m \le 0$ and we use the
  identities
  \begin{equation} \label{equ:elem-mat-identity}
  E_{ij} =
  \begin{cases}
    \hlf [E_{ii} - E_{jj},E_{ij}] &
    \text{if $p > 2$}, \\
    [E_{ii}-E_{kk},E_{ij}] & \text{if $k \not \in \{i,j\}$}
    \end{cases}
  \end{equation}
  for $j = i+1$ to deduce that $E_{i,i+1} \in [\ab_1,\ab_m]$.

  Finally, for $m \leq -1$ and $1 \leq i < n$ we see that
  $E_{ii} - E_{i+1,i+1} = [E_{i,i+1},E_{i+1,i}] \in [\ab_1,\ab_m]$.

  (3) Similar to part (2) we may assume without loss of generality
  that $\ee=1$ and, clearly, it suffices to show that $\ah \subset
  [\ab_0,\ah]$.  For $1 \leq i,j \leq n$ with $i \ne j$ we use the
  identities~\eqref{equ:elem-mat-identity} to see that $E_{ij} \in
  [\ab_0,\ah]$.  For $1 \leq i < n$ we have $E_{ii} - E_{i+1,i+1} =
  [E_{i,i+1},E_{i+1,i}] \in [\ab_0,\ah]$.
\end{proof}

We are interested in the $\lri$-Lie lattice scheme
\begin{equation}\label{equ:def.of.sylow}
\as = \as^{(\ee)}=\pi \ah + \ab_1^{(\ee)},
\end{equation}
which is defined so that, under suitable assumptions detailed below,
$\as(\lri)$ is a saturable Lie lattice yielding a Sylow pro-$p$
subgroup $\exp(\as(\lri))$ of $\aH(\lri)$.

\begin{lem} \label{p-sylow-Lie} Suppose that $p>2$ or $n \geq 3$.  The
  terms of the lower central series of the Lie lattice scheme $\as$
  are:
  \[ 
  \gamma_{i+jn}(\as) = \pi^j \gamma_{i}(\as) = \pi^{j+2}\ah +
  \pi^{j+1} \ab_{i-n} + \pi^j \ab_i,
  \]
  where $1 \leq i \leq n$ and $j \geq 0$.
\end{lem}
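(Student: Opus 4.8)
The plan is to prove the formula for $\gamma_{i+jn}(\as)$ by a double induction: first establish the base case $j=0$, i.e.\ that $\gamma_i(\as) = \pi^2\ah + \pi\ab_{i-n} + \pi\ab_i$ for $1 \le i \le n$, by induction on $i$; then use the fact that $\gamma_{i+n}(\as) = [\as, \gamma_{i+n-1}(\as)]$ together with Lemma~\ref{lem:filtration} to show that passing from $\gamma_k(\as)$ to $\gamma_{k+n}(\as)$ simply multiplies everything by $\pi$, which yields the general formula $\gamma_{i+jn}(\as) = \pi^j\gamma_i(\as)$.

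First I would record the definitions: $\as = \pi\ah + \ab_1$, and since $\ah = \ab_{1-n}$ and $\ab_1 \subset \ab_{1-n}$, one checks $\as = \pi\ab_{1-n} + \ab_1$. For the base case, $\gamma_1(\as) = \as = \pi^2\ah + \pi\ab_{1-n} + \ab_1$ — here the claimed formula with $i=1$ reads $\pi^2\ah + \pi\ab_{1-n} + \pi\ab_1$; note $\pi\ab_{1-n} = \pi\ah \supset \pi\ab_1$, so $\pi\ab_{1-n} + \pi\ab_1 = \pi\ah$, and likewise $\pi^2\ah + \pi\ah = \pi\ah$, so the formula correctly collapses to $\as = \pi\ah + \ab_1$. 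For the inductive step on $i$ (with $1 \le i < n$), I compute
\[
\gamma_{i+1}(\as) = [\as, \gamma_i(\as)] = [\pi\ah + \ab_1, \pi^2\ah + \pi\ab_{i-n} + \pi\ab_i].
\]
Expanding the bracket bilinearly and using Lemma~\ref{lem:filtration}(1) ($\ab_m\ab_{m'}\subset\ab_{m+m'}$, hence the bracket lands in $\ab_{m+m'}$), Lemma~\ref{lem:filtration}(2) ($[\ab_1,\ab_m] = \ab_{m+1}$ when $m \ge 1-n$), and Lemma~\ref{lem:filtration}(3) ($[\ab_0,\ah]=\ah$, and since $\ab_1 \subset \ab_0$ also $\pi[\ab_1,\ah] \subset \pi\ah$), the dominant contributions are: $[\ab_1,\pi\ab_i] = \pi\ab_{i+1}$, $[\ab_1, \pi\ab_{i-n}] = \pi\ab_{i-n+1}$, $[\pi\ah, \pi\ab_{i-n}] \subset \pi^2\ab_{i-n} \subset \pi^2\ah$ (absorbed), $[\pi\ah, \pi\ab_i] \subset \pi^2\ah$, $[\ab_1, \pi^2\ah] \subset \pi^2\ah$, $[\pi\ah,\pi^2\ah]\subset\pi^3\ah$ (absorbed). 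Collecting, $\gamma_{i+1}(\as) = \pi^2\ah + \pi\ab_{i-n+1} + \pi\ab_{i+1} = \pi^2\ah + \pi\ab_{(i+1)-n} + \pi\ab_{i+1}$, as required. One must check the reverse inclusions, i.e.\ that $\pi\ab_{i+1}$ and $\pi\ab_{(i+1)-n}$ are genuinely contained in the bracket; this is exactly where Lemma~\ref{lem:filtration}(2),(3) supply equalities rather than mere inclusions, and is the reason for the hypothesis $p>2$ or $n\ge3$.

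Then for the induction on $j$, I take $i=n$ in the base case to get $\gamma_n(\as) = \pi^2\ah + \pi\ab_0 + \pi\ab_n = \pi^2\ah + \pi\ab_0$ (using $\ab_n = \{0\}$), and compute $\gamma_{n+1}(\as) = [\as, \gamma_n(\as)] = [\pi\ah + \ab_1, \pi^2\ah + \pi\ab_0]$. The key brackets are $[\ab_1, \pi\ab_0] = \pi[\ab_1,\ab_0] = \pi\ab_1$ (by Lemma~\ref{lem:filtration}(2) with $m=0 \ge 1-n$), $[\pi\ah, \pi\ab_0] \subset \pi^2[\ah,\ab_0] = \pi^2\ah$ (by (3), giving equality on the relevant part), and $[\ab_1,\pi^2\ah]\subset\pi^2\ah$, $[\pi\ah,\pi^2\ah]\subset\pi^3\ah$ absorbed; so $\gamma_{n+1}(\as) = \pi^2\ab_{1-n}+\pi\ab_1 = \pi(\pi\ah + \ab_1) = \pi\as = \pi\gamma_1(\as)$, matching $\gamma_{1+1\cdot n}(\as) = \pi^1\gamma_1(\as)$. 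More generally, assuming $\gamma_{k}(\as) = \pi^j \gamma_i(\as)$ for $k = i+jn$, one has $\gamma_{k+1}(\as) = [\as, \pi^j\gamma_i(\as)] = \pi^j[\as,\gamma_i(\as)] = \pi^j\gamma_{i+1}(\as)$ as long as $i < n$, and when $i = n$ the wraparound $\gamma_{(j+1)n+1}(\as) = [\as, \pi^j\gamma_n(\as)] = \pi^j \cdot \pi\gamma_1(\as) = \pi^{j+1}\gamma_1(\as)$ by the computation just performed, scaled by $\pi^j$. This completes the induction and yields $\gamma_{i+jn}(\as) = \pi^j\gamma_i(\as) = \pi^{j+2}\ah + \pi^{j+1}\ab_{i-n} + \pi^j\ab_i$ for all $1 \le i \le n$, $j \ge 0$.

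The main obstacle is bookkeeping the inclusions in both directions while keeping the $\pi$-powers straight: the $\supseteq$ direction forces careful appeal to the equalities in Lemma~\ref{lem:filtration}(2),(3) (which is precisely where $p>2$ or $n\ge3$ enters), and one must verify that all the "higher" terms like $[\pi\ah,\pi\ab_i]\subset\pi^2\ab_i$ are genuinely absorbed into $\pi^2\ah$ — harmless here because $\ab_i \subset \ah$ and $\pi^2\ah$ is already present, but it needs to be said. The only subtlety worth flagging explicitly is the collapse phenomenon in the formula (that $\pi^{j+1}\ab_{i-n} = \pi^{j+1}\ah$ when $i \le n$, so the three listed summands are not in reduced form), which I would note once at the outset so the reader sees the stated formula is consistent with $\gamma_i(\as) = \pi\ah + \ab_1$ at $i=1$.
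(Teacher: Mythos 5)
Your overall strategy is the paper's: compute $\gamma_i(\as)$ for $1\le i\le n$ by induction on $i$ using Lemma~\ref{lem:filtration}, observe $\gamma_{n+1}(\as)=\pi\as$, and deduce the general case by pulling out powers of $\pi$. However, the formula you actually drive through the induction is not the one in the lemma, and it is false. For $j=0$ the lemma asserts $\gamma_i(\as)=\pi^2\ah+\pi\ab_{i-n}+\ab_i$, with the last summand \emph{unscaled}; you state and use $\pi^2\ah+\pi\ab_{i-n}+\pi\ab_i$ throughout. This already breaks your base case: with the extra $\pi$ the right-hand side at $i=1$ collapses to $\pi\ah$, not to $\as=\pi\ah+\ab_1$, so your claim that it ``correctly collapses to $\as$'' is inconsistent with the formula you wrote. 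It also makes the statement you prove by induction wrong for $1\le i\le n-1$: for instance $\ab_2=[\ab_1,\ab_1]\subset\gamma_2(\as)$, but $\ab_2\not\subset\pi^2\ah+\pi\ab_{2-n}+\pi\ab_2\subset\pi\ah$ when $n\ge 3$. Your inductive step inherits the same defect: starting from the correct $\gamma_i(\as)$ the decisive new term is $[\ab_1,\ab_i]=\ab_{i+1}$ (Lemma~\ref{lem:filtration}(2)), not $[\ab_1,\pi\ab_i]=\pi\ab_{i+1}$.

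The repair is purely to restore the missing unit: carry $\ab_i$ unscaled in the induction hypothesis, so that $[\as,\gamma_i(\as)]=[\pi\ah+\ab_1,\pi^2\ah+\pi\ab_{i-n}+\ab_i]$ produces $\ab_{i+1}$, $\pi\ab_{i+1-n}$ (from $[\ab_1,\ab_{i-n}]$, again Lemma~\ref{lem:filtration}(2), as $i-n\ge 1-n$), and $\pi^2\ah$; for the last, the reverse inclusion should be made explicit via $\pi^2[\ah,\ab_1+\ab_{i-n}]\supset\pi^2[\ah,\ab_0]=\pi^2\ah$, using $\ab_0\subset\ab_1+\ab_{i-n}$ for $i\le n$ and Lemma~\ref{lem:filtration}(3) (this is where the hypothesis $p>2$ or $n\ge 3$ enters, as you say). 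Note that your wrap-around computation $\gamma_{n+1}(\as)=\pi^2\ah+\pi\ab_1=\pi\as$ happens to be unaffected only because $\ab_n=\{0\}$ hides the discrepancy at $i=n$. With these corrections your argument coincides with the proof in the paper.
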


\begin{proof} 
  Note that $\ah = \ab_{1-n}$.  For $(i,j) = (1,1)$, the formula on
  the right-hand side equals
  \[
  \pi^{1+2} \ah + \pi^{1+1} \ab_{1-n} + \pi^1 \ab_1 = \pi (\pi \ah +
  \ab_1) = \pi \as.
  \]
  Hence it suffices to prove the formula up to the $(n+1)$th term of
  the lower central series.  Clearly, $\gamma_1(\as) = \pi^2 \ah + \pi
  \ab_{1-n} + \ab_1 = \as$ holds true.  Now suppose that $1 \leq i
  \leq n$.  By induction and using Lemma~\ref{lem:filtration}~(1), we
  have
  \[
  \begin{split}
    \gamma_{i+1}(\as) = [\as,\gamma_i(\as)] & = [\pi \ah +
    \ab_1,\pi^2 \ah + \pi \ab_{i-n} + \ab_i] \\
    & = \pi^3 [\ah,\ah] + \pi^2 [\ah,\ab_1 + \ab_{i-n}] + \pi
    ([\ah,\ab_i] + [\ab_1,\ab_{i-n}])
    + [\ab_1,\ab_i] \\
    & \subset \pi^2 \ah + \pi \ab_{i+1-n} + \ab_{i+1}.
  \end{split}
  \]
  Moreover, for $i = n$, we note that the last term indeed equals
  $\pi^2 \ah + \pi \ab_1 = \pi \as$, as $\ab_{n+1} = \{0\}$.

  The required reverse inclusions
  \[
  [\ah,\ab_1 + \ab_{i-n}] \supset \ah, \quad [\ah,\ab_i] +
  [\ab_1,\ab_{i-n}] \supset \ab_{i+1-n}, \quad [\ab_1,\ab_i] \supset
  \ab_{i+1}.
  \] 
  are obtained from Lemma~\ref{lem:filtration}~(2) and (3), upon noting
  that $\ab_0 \subset \ab_1 + \ab_{i-n}$ and $i-n,i \ge 1-n$.
\end{proof}

\begin{prop} \label{prop:p.gt.n.e+1} Let $\lri$ be a compact discrete
  valuation ring with $\cha(\lri) = 0$ and residue characteristic~$p$.
  Let $n \in \N_{\geq 2}$ and put $e = e(\lri,\Z_p)$.  Suppose that $p
  > en+n$. Let $\aG$ be $\GL_n$ or $\GU_n$, and accordingly let
  $\aH$ be $\SL_n$ or $\SU_n$.  Put $G = \aG(\lri)$ and
  $N=\aG^1(\lri)$, or $G = \aH(\lri)$ and $N=\aH^1(\lri)$.  Let $R$ be
  any pro-$p$ subgroup of $G$ containing $N$.

  Then $N$ and $R$ are potent and saturable, $\fn = \log(N)$ is
  PF-embedded in~$\fr = \log(R)$, and $\fn$ is naturally isomorphic to
  $\ag^1(\lri)$ or $\ah^1(\lri)$, respectively.
\end{prop}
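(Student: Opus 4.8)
The plan is to verify saturability and potency by producing an explicit potent filtration, exploiting the concrete basis and the lower-central-series computation prepared in Lemmas~\ref{lem:filtration} and~\ref{p-sylow-Lie}. First I would reduce to the case $\aG = \GL_n$ or $\GU_n$: once $N = \aG^1(\lri)$ and $\fn = \log(N)$ are handled, any pro-$p$ subgroup $R$ with $N \leq R \leq G$ satisfies $\fr = \log(R) \supseteq \fn$, and it suffices to exhibit a potent filtration of $\fr$ starting at~$\fn$ (which simultaneously shows $\fn$ is PF-embedded in $\fr$ and, taking $\fr = \fn$, that $\fn$ is saturable; saturability of $\fr$ follows by the same filtration started one step earlier, or by \cite[Corollary~5.4]{Gonzalez2} once potency is in hand). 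By the remark preceding Lemma~\ref{lem:filtration} and the fact that PF-embeddedness and saturability are stable under the unramified scalar extension $\lri \rightsquigarrow \Lri$ (they are detected by the lower central series and divisibility by $p$, which are preserved), I may assume $\ee = 1$ throughout the Lie-theoretic estimates, i.e. work with $\gl_n$ and $\fsl_n$.

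Next I would identify the candidate filtration. The natural choice is the refinement of the congruence filtration by the terms $\ab_m$: set, for the group $N = \aG^1(\lri)$, $\fn = \pi \ag + \ab_1$ (using that $\ag^1(\lri) \cong \fn$ as Lie lattices, which is standard once $p$ is large relative to $e$ and $n$ — the Hausdorff series converges because the relevant denominators have $p$-valuation bounded by $\lfloor (k-1)/(p-1)\rfloor$ while the corresponding Lie-bracket term lies in $\pi^{\ge k'}$ with $k' $ growing, and $p > en + n$ guarantees the gain outpaces the loss), and take the descending chain $\fn_i$ obtained by interleaving the $\ab$-filtration with powers of $\pi$, exactly as in Lemma~\ref{p-sylow-Lie}: $\fn_{i+jn} = \pi^{j}\bigl(\pi \ag + \ab_i\bigr)$ suitably truncated. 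Lemma~\ref{lem:filtration}(1) gives $[\fr,\fn_i] \subset \fn_{i+1}$ (condition~(i)); this is where the multiplicative bound $\ab_m \cdot \ab_{m'} \subset \ab_{m+m'}$ does all the work, together with $\fr \subset \ag(\lri) = \ab_{1-n}$ after scaling, and the fact that $\fr \supseteq \fn$ forces $[\fr, \pi^j\ag] \subset \pi^j \ab_{?}$ with the right shift. The potency condition~(ii), $[\fr \,\,{}_{(p-1)}, \fn_i] \subset p\,\fn_{i+1}$, is then a counting argument: each bracket with an element of $\fr$ moves one step down the $\ab$-filtration, so after $p-1$ brackets one has descended by $p-1$ steps, i.e. gained a factor $\pi^{\lceil (p-1)/n \rceil}$ at least; since $p > en + n$ we have $\lceil(p-1)/n\rceil \ge e+1$, hence this power of $\pi$ lies in $p\,\lri = \pi^{e}\lri \cdot (\text{unit})$ with room to spare, giving divisibility by $p$ even after accounting for the single step of $\fn_{i+1}$ still owed.

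The main obstacle I anticipate is bookkeeping at the ``seam'' where the $\ab$-filtration wraps around — i.e. passing from $\ab_n = \{0\}$ back up to $\ab_{1-n} = \ah$ while multiplying by $\pi$ — and making sure the interleaved indexing $i + jn$ produces a genuine descending series with $\bigcap_i \fn_i = 0$ and with $\fn_1 = \fn$ exactly (not merely up to commensurability). Lemma~\ref{p-sylow-Lie} already does precisely this computation for $\as = \pi\ah + \ab_1$, so the honest work is to transcribe that argument from $\ah$ to $\ag$ (the only change is that $\ag = \ab_{1-n}$ is replaced by $\ag$ which additionally contains the scalar direction $E_{11} + \cdots + E_{nn}$, a central term that is harmless for all the bracket identities) and to check the single numerical inequality $p > en + n \Rightarrow \lceil (p-1)/n \rceil \ge e+1$. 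The remaining assertions — that $N$ and $R$ are potent pro-$p$ groups (immediate from condition~(ii) with $i=1$ via the $\exp$/$\log$ correspondence and \cite[Corollary~5.4]{Gonzalez2}), that $\fn$ is PF-embedded in $\fr$ (that is the content of the filtration just built), and that $\fn \cong \ag^1(\lri)$ resp. $\ah^1(\lri)$ (the Hausdorff-series convergence argument above, cf. \cite[Theorem~4.1]{Gonzalez2} and the discussion in Section~\ref{subsec:sat}) — then follow formally.
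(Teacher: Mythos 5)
There is a genuine gap, and it sits exactly where the paper has to work hardest: you write $\fr=\log(R)$ for an arbitrary pro-$p$ subgroup $N\le R\le G$ before anything guarantees that such a logarithm exists. Your route to saturability of $R$ is circular: to read off group potency $\gamma_{p-1}(R)\subseteq R^p$ from a Lie-level filtration you need an $\exp$--$\log$ correspondence for $R$, and to have that you need $R$ saturable, which is what \cite[Corollary~5.4]{Gonzalez2} is supposed to deliver \emph{from} potency. The paper breaks this circle through the Sylow pro-$p$ subgroup, which your proposal never mentions: one first shows that the Sylow pro-$p$ subgroup $S\le G$ (matrices upper uni-triangular modulo $\pi$) is saturable, with $\log(S)$ identified with $\fs=\as(\lri)$ (plus the central direction in the $\aG$-case), using Lazard's matrix exponential and logarithm and, in the unitary case, the extension of \cite[Proposition~2.5]{Kl05}; this is where $p>en+1$ enters. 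Potency of every intermediate $R$ is then obtained purely group-theoretically: by \cite[Theorem~B]{Gonzalez2} and Lemma~\ref{p-sylow-Lie}, $\gamma_{n+en}(S)=\mathrm{Exp}(\gamma_{n+en}(\fs))=\mathrm{Exp}(p\gamma_n(\fs))\subseteq\mathrm{Exp}(p\fn)=N^p\subseteq R^p$, and $p-1\ge n+en$ gives $\gamma_{p-1}(R)\subseteq R^p$; only after invoking \cite[Corollary~5.4]{Gonzalez2} does $\fr=\log(R)$ make sense. Without this step your argument says nothing about a general $R$.

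Even at the Lie level your verification of the filtration conditions is not correct as justified. You derive $[\fr,\fn_i]\subseteq\fn_{i+1}$ and the count ``each bracket moves one step down the $\ab$-filtration'' from the containment $\fr\subseteq\ag(\lri)$ alone (together with $\fr\supseteq\fn$, which gives no upper bound on $\fr$). Bracketing with a lower-triangular element moves you the wrong way: $[E_{21},\ab_m]\not\subseteq\ab_{m+1}$, and for instance $[E_{12},[E_{21},\pi E_{12}]]=2\pi E_{12}$ with $\pi E_{12}\in\fn$, so iterated brackets with elements of $\ag(\lri)$ need never gain powers of $\pi$; neither condition (i) nor the potency estimate can hold for such an ambient lattice (for $\fr=\ag(\lri)$, corresponding to the non-pro-$p$ group $G$ itself, no potent filtration starting at $\fn$ exists at all). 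What is actually needed is $\fr\subseteq\fs=\pi\ag+\ab_1$, i.e.\ the pro-$p$ hypothesis on $R$ converted, via containment in the Sylow subgroup, into strict upper triangularity modulo $\pi$ --- a hypothesis your sketch never uses. Once that is in place, your interleaved filtration is essentially the paper's $\fn_i=\gamma_i(\fs)\cap\fn$, and Lemma~\ref{p-sylow-Lie} (built on Lemma~\ref{lem:filtration}) does precisely the ``seam'' bookkeeping and gives $\fn_1=\fn$, the point you flag as a worry; the identification $\fn\simeq\ag^1(\lri)$, resp.\ $\ah^1(\lri)$, then comes from $\mathrm{Exp}(\fn)\subseteq N$ plus a Haar-measure comparison, and the passage from $\aH$ to $\aG$ from splitting off the centre using $p>n$, much as you indicate.
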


\begin{proof} 
  In the unitary setting, it is convenient to work with $\aG =
  \GU_n^\star$ and $\aH = \SU_n^\star$.  Hence let $\aG, \ag$ and
  $\aH, \ah$ be as in~\eqref{equ:G-H-epsilon}, parametrised implicitly
  by $\ee \in \{1,-1\}$.  Without loss of generality we may assume
  that $R$ is contained in a Sylow pro-$p$ subgroup of our choice.
  Observe that
  \[
  \tilde{S} = \left\{ A \in \GL_n(\Lri) \mid A \text{ is upper
      uni-triangular modulo $\pi$} \right\}
  \]
  is a Sylow pro-$p$ subgroup of $\GL_n(\Lri)$ and that $S = \tilde{S}
  \cap G$ is a Sylow pro-$p$ subgroup of~$G$.  The inequality $p>
  en+1$ guarantees that $\tilde{S}$ is saturable; moreover,
  $\tilde{S}$ embeds naturally into the associative algebra
  $\mathsf{Mat}_n(\Lri')$, where $\Lri'$ is a finite extension
  of~$\Lri$, such that the Lie lattice $\tilde{\fs} = \log(\tilde{S})$
  can be identified with a Lie sublattice of $\gl_n(\Lri')$ and the
  $\exp$-$\log$ correspondence is achieved by applying the $p$-adic
  exponential and logarithm series to matrices over $\Lri'$;
  see~\cite[III (3.2.7)]{La65}.  In~\cite[Proposition~2.5]{Kl05}, the
  argument is extended to show that Sylow pro-$p$ subgroups of
  automorphism groups of $p$-adic vector spaces equipped with a
  bilinear form are saturable.  The proof given there covers, mutatis
  mutandis, also hermitian forms, and we conclude that, as $p > en+1$,
  in all cases considered here, the group $S$ is saturable.
  Furthermore the corresponding Lie lattice $\log(S)$ can be
  identified with the matrix Lie lattice
  \[
  \fs = \{ X \in \fg \mid X \text{ is strictly upper triangular modulo
    $\pi$} \},
  \]
  where $\fg = \ag(\lri)$ or $\fg = \ah(\lri)$.  Moreover applying the
  $p$-adic exponential and logarithm maps, defined by the series
  $\mathrm{Exp}(Z) = \sum_{j=0}^\infty Z^j/j!$ and $\mathrm{Log}(1+Z)
  = \sum_{j=1}^\infty (-1)^{j+1} Z^j/j$, one can translate between $S$
  and $\fs$ in place of the Hausdorff series construction.

  It is convenient to deal first with the case $G = \aH(\lri)$ and
  $\fg = \ah(\lri)$.  We observe that $\fs = \as(\lri)$, where $\as$
  is as in~\eqref{equ:def.of.sylow}.  We claim that the Lie sublattice
  $\fn \coloneqq \ah^1(\lri)$ is PF-embedded in~$\fs$.  More
  precisely, we claim that
  \[
  \fn_i \coloneqq \gamma_i(\fs) \cap \fn, \quad i \in \N,
  \]
  forms a potent filtration of $\fn$ in~$\fs$.  Indeed, from
  Lemma~\ref{p-sylow-Lie} we obtain $\gamma_n(\fs) \subset \fn$ and
  $\gamma_{n+en}(\fs) \subset p \gamma_n(\fs)$.  In particular, $\fn_i
  = \gamma_i(\fs)$ for $i \geq n$, and it suffices to observe that $p
  > en+n$ implies
  \[
  [\fs \,\, {}_{(p-1)},\fn] \subset [\fs \,\, {}_{(en)},[\fs \,\,
  {}_{(n-1)},\fn]] \subset [\fs \,\, {}_{(en)}, \gamma_n(\fs)] =
  \gamma_{n+en}(\fs) = p\gamma_n(\fs) \subset p\fn.
  \]

  Furthermore, $\mathrm{Exp}(\fn) \subset N$ and, comparing Haar
  measures, we deduce that $\mathrm{Exp}(\fn) = N$ so that $\fn$ is
  naturally identified with the Lie lattice $\log(N)$ corresponding to
  the saturable group~$N$.

  By~\cite[Theorem~B]{Gonzalez2}, the terms of the lower central
  series of $S$ and $\fs$ correspond to one another via the
  $\exp$-$\log$ correspondence.  From this we observe that
  \[
  \gamma_{n+en}(S) = \mathrm{Exp}(\gamma_{n+ne}(\fs)) = \mathrm{Exp}(p
  \gamma_n(\fs)) \subset \mathrm{Exp}(p \fn) = N^p.
  \]
  Since $p > en+n$, we deduce that for the given subgroup $R$ of
  $S$,
  \[
  \gamma_{p-1}(R) \subset \gamma_{n+ne}(S) \subset N^p \subset R^p.
  \]
  Thus $R$ is finitely generated, torsion-free, and potent, hence
  saturable.

  It remains to treat the case $G = \aG(\lri)$ and $\fg = \ag(\lri)$.
  From $p>n$ we see that $\fg = \ah(\lri) + \mathfrak{z}$, where
  $\mathfrak{z}$ denotes the centre of~$\fg$, and hence $\fs =
  \as(\lri) + \pi \mathfrak{z}$.  This implies that $\gamma_i(\fs) =
  \gamma_i(\as(\lri))$ for $i \geq 2$.  Using this observation, it is
  easy to extend the arguments provided for~$\aH(\lri)$ and
  $\ah(\lri)$ to conclude the proof.
\end{proof}

\begin{remark}
  For the purpose of extending characters from $N$ to~$G$,
  Proposition~\ref{prop:p.gt.n.e+1} is only applied to pro-$p$
  subgroups $R = R_\chi$ that arise as maximal normal pro-$p$
  subgroups of inertia subgroups $S_\chi = \textrm{I}_G(\chi)$, where
  $\chi \in \Irr(N)$.  A priori this specific situation requires
  control over much fewer groups $R$ and it is possible that with
  extra work the restrictions on~$p$ can be eased.
\end{remark}

\begin{remark.star} \label{rem:char-p-comments-1} Let $n, \len \in \N$
  with $n \geq 2$.  Suppose now that $\lri$ has arbitrary
  characteristic and residue characteristic~$p \geq n\len$.  As in the
  proposition, let $G = \aG(\lri)$ and $N=\aG^1(\lri)$, or $G =
  \aH(\lri)$ and $N=\aH^1(\lri)$.  In addition, we write $M =
  \aG^\len(\lri)$ or $M = \aH^\len(\lri)$.  Let $R$ be any pro-$p$
  subgroup of $G$ containing~$N$.  Then $R/M$ has nilpotency class at
  most $p-1$, as $p \geq n\len$; cf.~\cite[Appendix~A]{Kl05}.

  Consequently, we may argue similarly as
  in~\cite[Section~6.4]{Gonzalez1}. There is a surjective homomorphism
  $\eta$ from the free nilpotent pro-$p$ group $\wt{R}$ of class $p-1$
  on a certain number of generators onto~$R/M$.  We observe that both
  $\wt{R}$ and the pre-images $\wt{N} \coloneqq \eta^{-1}(N)$ and
  $\wt{M} \coloneqq \eta^{-1}(M)$ are potent and saturable.  Moreover,
  $\log(\wt{N})$ is PF-embedded in $\log(\wt{R})$.  Finally, the
  finite Lie ring $\log(\wt{N}) / \log(\wt{M})$ is naturally
  isomorphic to $\ag^1(\lri) / \ag^\len(\lri)$ or~$\ah^1(\lri) /
  \ah^\len(\lri)$.
\end{remark.star}

\subsection{The Kirillov orbit method}\label{subsec:kom}
The Kirillov orbit method, as described below, applies to potent
saturable pro-$p$ groups and yields a description of their irreducible
complex characters in terms of co-adjoint orbits; for details
see~\cite{Gonzalez1}.

Recall that the \emph{Pontryagin dual} $\mathfrak{a}^\vee$ of a
locally compact, abelian group~$\mathfrak{a}$ consists of all
continuous homomorphisms from $\mathfrak{a}$ to the circle group $\{z
\in \C \mid \lvert z \rvert = 1\}$.  Let $G$ be a saturable pro-$p$
subgroup and $\fg = \log(G)$ the corresponding $\Z_p$-Lie lattice.
The adjoint action of $G$ on $\fg$ induces an action of $G$ on the
Pontryagin dual $\fg^{\vee}$ of (the additive group)~$\fg$.  We call
this action the \emph{co-adjoint action} and denote it by~$\Ad^*$.  In
concrete terms, this action is given by
\[
(\Ad^*(g) \omega)(X) = \omega (\Ad(g^{-1}) X) = \omega (\log(g^{-1}
e^X g)) \quad \text{for $g \in G$, $\omega \in \fg^\vee$, and $X \in
  \fg$}.
\]

\begin{thm} \label{thm:kom} Let $G$ be a potent
  saturable pro-$p$ group and let $\fg = \log(G)$.  Then there is a
  one-to-one correspondence $\Ad^*(G) \backslash \fg^\vee \rightarrow
  \Irr(G)$, $\Omega \mapsto \chi_\Omega$ between $\Ad^*(G)$-orbits in
  $\fg^{\vee}$ and irreducible characters of~$G$.  Furthermore, the
  following hold.
  \begin{enumerate}
  \item \label{cl:Kirillov.1} For every $\Ad^*(G)$-orbit $\Omega$ the
    character $\chi_\Omega$ is given by
    \[
    \chi_\Omega(g) = \frac{1}{\lvert \Omega \rvert^{1/2}} \sum_{\omega
      \in \Omega} \omega(\log (g)) \qquad \text{for $g \in G$.}
    \]
    In particular, the degree of the character $\chi_\Omega$ is equal
    to~$\lvert \Omega \rvert^{1/2}$.
  \item \label{cl:Kirillov.2} Suppose that $H$ is a potent open
    subgroup of $G$ and let $\fh = \log(H)$.  Let $\Omega = \Ad^*(H)
    \omega$ and $\Theta = \Ad^*(G) \theta$ be co-adjoint orbits of
    $\omega \in \fh^{\vee}$ and $\theta \in \fg^{\vee}$.  Then
    $\chi_\Omega$ is a constituent of $\Res^G_H(\chi_\Theta)$ if and
    only if there exists $g \in G$ such that $\omega = (\Ad^*(g)
    \theta) \vert_\fh$.
  \end{enumerate}
\end{thm}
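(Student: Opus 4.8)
\textbf{Proof plan for Theorem~\ref{thm:kom}.}
The statement is, essentially, Kirillov's orbit method in the guise developed for potent saturable pro-$p$ groups, so the plan is to \emph{cite the relevant results from the literature and indicate how each clause follows}, rather than to reprove the machinery. The foundational reference is González-Sánchez's paper~\cite{Gonzalez1}, where the orbit method is established for this class of groups. First I would recall that for a potent saturable pro-$p$ group $G$ the exponential map $\exp\colon\fg\to G$ is a homeomorphism compatible with the filtrations, and that the Hausdorff series converges on $\fg$; this is precisely the content of the saturability assumption combined with Lazard theory (cf.\ Section~\ref{subsec:sat} and~\cite{La65,Gonzalez2}). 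The bijection $\Ad^*(G)\backslash\fg^\vee\to\Irr(G)$, $\Omega\mapsto\chi_\Omega$, together with the character formula in part~\eqref{cl:Kirillov.1}, is then quoted directly from~\cite[Theorem~A]{Gonzalez1} (or its analogue), which asserts exactly that $\chi_\Omega(g)=\lvert\Omega\rvert^{-1/2}\sum_{\omega\in\Omega}\omega(\log g)$ defines an irreducible character of degree $\lvert\Omega\rvert^{1/2}$ and that every irreducible character arises uniquely this way.

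For part~\eqref{cl:Kirillov.2}, the approach is to invoke Frobenius reciprocity together with the explicit character formula. Given $\omega\in\fh^\vee$ and $\theta\in\fg^\vee$ with orbits $\Omega=\Ad^*(H)\omega$ and $\Theta=\Ad^*(G)\theta$, one computes the inner product $\langle\Res^G_H\chi_\Theta,\chi_\Omega\rangle_H$ by substituting the formulae from~\eqref{cl:Kirillov.1} and using the orthogonality of the additive characters $\omega'(\log(\cdot))$, $\omega'\in\fh^\vee$, on the finite quotients through which they factor. The cross terms vanish unless some $\Ad^*(g)\theta$, restricted to $\fh$, lies in the $\Ad^*(H)$-orbit of $\omega$; when this happens the corresponding contribution is positive. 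This is the standard argument (see~\cite[Section~5]{Gonzalez1}), and the key input is that $H$ is itself potent saturable — guaranteed here by hypothesis — so that the orbit method applies to $H$ as well and $\log$ is simultaneously available on $\fg$ and $\fh=\log(H)$, with $\log(H)$ a Lie sublattice of $\log(G)$.

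The main obstacle, such as it is, is purely bookkeeping: one must be careful that the Pontryagin duality is taken at a finite level at which both $\Omega$ and $\Theta$ (equivalently $\fg$ modulo a suitable open ideal) are visible, so that the sums defining the characters are genuinely finite and the orthogonality relations for additive characters of finite abelian groups apply verbatim. Since $G$ is finitely generated and $\fg$ is a $\Z_p$-lattice of finite rank, every $\omega\in\fg^\vee$ factors through some $\fg/p^k\fg$, and co-adjoint orbits are finite; choosing $k$ large enough to accommodate both $\theta$ and $\omega$ reduces everything to the finite-group orbit method, which is classical. Therefore the theorem follows by assembling~\cite[Theorem~A]{Gonzalez1} for part~\eqref{cl:Kirillov.1} and the Frobenius-reciprocity computation of~\cite[Section~5]{Gonzalez1} for part~\eqref{cl:Kirillov.2}, with no new ideas required beyond verifying the potence and saturability hypotheses, which is done in Section~\ref{subsec:appl.to.matrices}.
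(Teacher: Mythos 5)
Your proposal follows essentially the same route as the paper: part~(1) is quoted from~\cite{Gonzalez1}, and part~(2) is established by computing the inner product $\langle \chi_\Omega, \Res^G_H(\chi_\Theta)\rangle$ from the character formula and the orthogonality of the $1$-dimensional characters $\omega'$ and $\theta'\vert_\fh$ of $\fh$, exactly as the paper does (it integrates over $\fh$ against Haar measure via the measure-preserving bijection $\exp\colon\fh\to H$, which is equivalent to your reduction to finite quotients). The only cosmetic difference is that the paper explicitly notes that the potent open subgroup $H$ is saturable by~\cite[Corollary~5.4]{Gonzalez2} before applying the orbit method to it, a point you describe as ``guaranteed by hypothesis''.
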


\begin{proof}
  The proof of the first half of the theorem is given
  in~\cite{Gonzalez1}.  We now justify part~\eqref{cl:Kirillov.2}.
  Observe that $H$ is saturable by~\cite[Corollary~5.4]{Gonzalez2}.
  The multiplicity of $\chi_\Omega$ in $\Res_H^G(\chi_\theta)$ is
  given by the inner product of the two characters.  Since the map
  $\exp \colon \fh \to H$ is a measure-preserving bijection, we deduce
  from part~\eqref{cl:Kirillov.1} that
  \begin{align*}
    \langle \chi_\Omega, \Res_H^G(\chi_\Theta) \rangle & = \int_H
    \chi_\Omega (h) \cdot \overline{\Res_H^G(\chi_\Theta)(h)} \, d\mu(h) \\
    & = \frac{1}{\lvert \Omega \rvert^{1/2} \lvert \Theta
      \rvert^{1/2}} \sum _{\omega' \in \Omega} \sum _{\theta' \in
      \Theta} \int_{\fh} \omega'(X) \cdot \overline{\theta'
      \vert_\fh(X)} \, d\mu(X).
  \end{align*}
  All the terms $\omega'$ and $\theta' \vert_\fh$ in the above sum
  represent $1$-dimensional characters of~$\fh$.  Hence by the
  orthogonality of characters we deduce that
  \[
  \int _\fh \omega'(X) \cdot \overline{\theta' \vert_\fh (X)} \,
  d\mu(X) =
  \begin{cases}
    1 & \textrm{if $\omega' = \theta' \vert_\fh$,} \\
    0 & \textrm{if $\omega' \neq \theta' \vert_\fh$.}
  \end{cases}
  \]
  The claim follows immediately from this.
\end{proof}

\begin{cor} \label{cor:kom} Let $G$ be a potent
  saturable pro-$p$ group and let $N$ be a potent open normal subgroup
  of~$G$.  Let $\fg = \log(G)$ and $\fn = \log(N)$.  Then the Kirillov
  orbit map induces a one-to-one correspondence $\Ad^*(G/N) \backslash
  (\fg/\fn)^\vee \rightarrow \Irr(G/N)$, $\Omega \mapsto \chi_\Omega$.
\end{cor}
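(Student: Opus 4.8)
The plan is to deduce the corollary from Theorem~\ref{thm:kom} applied to~$G$, by restricting the orbit--character correspondence to the part of $\fg^\vee$ that is trivial on~$\fn$; one should \emph{not} try to apply the orbit method to $G/N$ itself, which is a finite group with torsion.

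First I would set up the Lie-theoretic dictionary. Since $N$ is a potent open normal subgroup of the potent saturable group~$G$, it is saturable by \cite[Corollary~5.4]{Gonzalez2}, so $\fn = \log(N)$ is a Lie sublattice of~$\fg$ with $\exp(\fn)=N$, i.e.\ $\{\log(n) \mid n\in N\} = \fn$. From $N\triangleleft G$ one gets $\Ad(g)\fn = \{\log(gng^{-1}) \mid n\in N\} = \fn$ for all $g\in G$, and a standard feature of the Lazard correspondence for saturable groups then yields that $\fn$ is a Lie ideal of~$\fg$ (cf.\ the Hausdorff-series manipulations behind Lemma~\ref{lem:Hausdorff-expansion} and the discussion preceding Corollary~\ref{lem:grp.Lie.decomp}). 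I would then identify $(\fg/\fn)^\vee$ with the closed subgroup $\fn^\perp = \{\omega\in\fg^\vee \mid \omega\vert_\fn = 1\}$ of~$\fg^\vee$. The $\Ad(G)$-invariance of $\fn$ makes $\fn^\perp$ stable under the co-adjoint action $\Ad^*$ (if $\omega\vert_\fn=1$ and $Y\in\fn$ then $\Ad(g^{-1})Y\in\fn$, so $(\Ad^*(g)\omega)(Y)=1$), and the ideal property gives that $N$ acts trivially on $\fn^\perp$: for $n\in N$ every correction term in the Hausdorff expansion of $\Ad(n^{-1})Y$ lies in $[\fn,\fg]\subseteq\fn$, so $\Ad(n^{-1})Y\equiv Y\pmod{\fn}$ and hence $(\Ad^*(n)\omega)(Y)=\omega(Y)$. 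Thus the $G$-action on $\fn^\perp$ descends to the action named $\Ad^*(G/N)$ in the statement, and $\Ad^*(G)\backslash\fn^\perp = \Ad^*(G/N)\backslash(\fg/\fn)^\vee$.

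Next I would pin down which Kirillov characters of $G$ are trivial on~$N$. Given an orbit $\Theta = \Ad^*(G)\theta$ with $\theta\in\fg^\vee$ and associated character $\chi_\Theta\in\Irr(G)$, the formula in Theorem~\ref{thm:kom}\,\eqref{cl:Kirillov.1} gives $\chi_\Theta(n) = \lvert\Theta\rvert^{-1/2}\sum_{\omega\in\Theta}\omega(\log n)$ for $n\in N$; this is a sum of $\lvert\Theta\rvert$ numbers of modulus~$1$, so it equals $\chi_\Theta(1) = \lvert\Theta\rvert^{1/2}$ exactly when $\omega(\log n)=1$ for every $\omega\in\Theta$. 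Since $\{\log n \mid n\in N\}=\fn$, this means $\chi_\Theta$ is trivial on~$N$ if and only if $\Theta\subseteq\fn^\perp$. (The same follows from Theorem~\ref{thm:kom}\,\eqref{cl:Kirillov.2} with $H=N$: $\Res^G_N(\chi_\Theta)$ is a multiple of $1_N=\chi_{\{0\}}$ iff $(\Ad^*(g)\theta)\vert_\fn = 0$ for some, equivalently by $\Ad(G)$-invariance of $\fn$ for all, $g$, i.e.\ iff $\theta\vert_\fn=0$.) As co-adjoint orbits lie entirely inside or entirely outside $\fn^\perp$, those with $\Theta\subseteq\fn^\perp$ constitute exactly $\Ad^*(G)\backslash\fn^\perp$.

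Finally I would assemble the argument: $\Irr(G/N)$ is the set of inflations of irreducible characters of $G$ trivial on~$N$, which by Theorem~\ref{thm:kom} and the previous step is bijectively parametrised by $\Ad^*(G)\backslash\fn^\perp$ via $\Theta\mapsto\chi_\Theta$; translating through $\Ad^*(G)\backslash\fn^\perp = \Ad^*(G/N)\backslash(\fg/\fn)^\vee$ produces the asserted correspondence $\Omega\mapsto\chi_\Omega$, and rewriting the character formula under $\fn^\perp=(\fg/\fn)^\vee$ confirms that $\chi_\Omega(gN) = \lvert\Omega\rvert^{-1/2}\sum_{\omega\in\Omega}\omega(\log(g)+\fn)$ is well defined on~$G/N$. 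I expect the only genuinely delicate point to be the bookkeeping in the first step — that $\fn$ is an $\Ad(G)$-invariant ideal, and that the co-adjoint action therefore factors through $G/N$ — but even that is immediate once $[\fg,\fn]\subseteq\fn$ is in hand; beyond Theorem~\ref{thm:kom} no new ingredient is needed.
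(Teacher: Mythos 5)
Your proposal is correct and takes essentially the same route as the paper's proof: apply Theorem~\ref{thm:kom} to $G$ and identify $\Irr(G/N)$ with the Kirillov characters of $G$ that are trivial on $N$, i.e.\ with the co-adjoint orbits contained in the annihilator of $\fn$ in $\fg^\vee$. The additional bookkeeping you supply (the identification of this annihilator with $(\fg/\fn)^\vee$ and the descent of the co-adjoint action to $G/N$) is left implicit in the paper's short argument, so nothing further is needed.
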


\begin{proof}
  By Theorem~\ref{thm:kom} the irreducible
  characters of $G$ are of the form $\chi_\Omega$, where $\Omega$ runs
  through the $\Ad^*(G)$-orbits in $\fg^\vee$.  Irreducible characters
  of $G/N$ correspond to characters $\chi_\Omega$ with $\chi_\Omega(g)
  = \chi_\Omega(1) = \lvert \Omega \rvert^{1/2}$ for $g \in N$.  For
  $\omega \in \Omega$ this condition is equivalent to $\omega(\log(g))
  = 1$ for $g \in N$, i.e.\ $\omega(X) = 1$ for all $X \in \fn$.
\end{proof}

\begin{remark.star} \label{rem:char-p-comments-2} Continuing in the setup
  and with the notation of Remark~\ref{rem:char-p-comments-1}, we
  observe that the characters of the finite $p$-groups $R/M$ and $N/M$
  can be described in terms of the Kirillov orbit method applied to
  the potent saturable pro-$p$ groups $\wt{R}$, $\wt{N}$, and
  $\wt{M}$, using Corollary~\ref{cor:kom}.  This relies on the fact
  that the natural isomorphism between the finite group and Lie
  lattice sections is equivariant under the adjoint actions.
  Theorem~\ref{thm:kom} can be applied mutatis
  mutandis.
\end{remark.star}

\subsection{Principal congruence subgroups}\label{subsec:basic.princ}
Let $\lri$ be a compact discrete valuation ring of characteristic $0$,
with residue field $\kk$ of cardinality $q$, and put $p = \cha(\kk)$
and $e = e(\lri,\Z_p)$.  Let $\pi$ be a uniformiser of~$\lri$ and let $\Lri \supset \lri$ be an unramified
quadratic extension and fix $n \in \N$.  Let $\aG$ be one of the
$\lri$-group schemes $\GL_n, \GU_n$, assuming $p>2$ in the unitary
case, and let $\ag$ denote the corresponding $\lri$-Lie lattice scheme
$\gl_n, \gu_n$.  Write $\Sh$ for the shadow set~$\Sh_{\aG(\lri)}$; see
Definitions~\ref{def.of.shadows} and~\ref{def.of.unitary.shadows}.

Let $\len, m \in \N$ with~$\len \geq m$.  Let $G=\aG(\lri)$ and let
$G^m = \aG^m(\lri)$ denote its $m$th principal congruence subgroup.
We write $G^m_\len$ for the quotient~$G^m/G^\len$.  Put $\fg =
\ag(\lri)$, and let $\fg^m = \ag^m(\lri)$ denote the $m$th principal
congruence Lie sublattice.  Write $\fg_\len = \fg / \fg^\len$ and
$\fg^m_\len = \ag^m / \ag^\len$.  

To ensure that the group $G^m$ is potent and saturable, we assume that
\[
p>2 \quad \text{and} \quad m \ge e/(p-2);
\]
cf.\ \cite[Proposition~2.3]{AKOV1}.  Saturability gives a Lie
correspondence between $G^m$ and $\fg^m \simeq \log(G^m)$.
Let $\Lfi$ denote the fraction field of $\Lri$.  We fix a non-trivial
character of the additive group of $\Lfi$
\begin{equation} \label{equ:Tate-character} \phi \colon \Lfi
  \xrightarrow{\mathrm{Tr}_{\Lfi \,\vert\, \Q_p}} \Q_p \longrightarrow
  \Q_p/\Z_p \overset{\simeq}{\longrightarrow} \mu_{p^\infty} \subset
  \C^\times.
\end{equation}
One obtains an isomorphism
\[
\Lfi \rightarrow \Lfi^\vee, \quad x \mapsto \phi_x, \qquad
\text{where} \quad \phi_x \colon \Lfi \rightarrow \C^\times, \,
\phi_x(y) = \phi(\pi^{-\nu}xy).
\]
Here $\nu$ is the valuation of a generator of the different $\mathfrak{D}_{\mathfrak{F}|\Q_p}$, which is introduced in order to maintain the self-duality upon descent to finite quotients. For $A \in \gl_n(\Lri_\len)$ we
consider the character
\[
\omega_A \colon \gl_n(\Lri_\len) \to \C^\times, \quad \omega_A(X)=\phi
(\pi^{-\len}\tr(AX) )
\]
and its restriction to $\fg_\len$ which we also denote by~$\omega_A$
for simplicity.

\begin{lem}\label{lem:duality.finite}
  The map $\fg_\len \rightarrow \fg_\len^{\vee}$, $A \mapsto \omega_A$
  is an isomorphism of finite abelian groups that is $G$-equivariant
  with respect to the adjoint action $\Ad$ on $\fg_\len$ and the
  co-adjoint action $\Ad^*$ on $\fg_\len^{\, \vee}$.
\end{lem}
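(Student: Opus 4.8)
The plan is to verify the three asserted properties of the map $A \mapsto \omega_A$ separately: that it is a homomorphism of abelian groups, that it is bijective, and that it intertwines the adjoint and co-adjoint actions. The first is immediate from the additivity of the trace form and the fact that $\phi$ is a character of the additive group of $\Lfi$: indeed $\omega_{A+B}(X) = \phi(\pi^{-\len}\tr((A+B)X)) = \phi(\pi^{-\len}\tr(AX))\,\phi(\pi^{-\len}\tr(BX)) = \omega_A(X)\omega_B(X)$. Since both $\fg_\len$ and $\fg_\len^\vee$ are finite of the same order, bijectivity follows once we show the map is injective, i.e.\ that $\omega_A$ is trivial only if $A = 0$ in $\fg_\len$. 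This reduces to the non-degeneracy of the pairing $(A,X) \mapsto \phi(\pi^{-\len}\tr(AX))$ on $\fg_\len \times \fg_\len$, which in turn follows from the non-degeneracy of the trace form $\tr(AX)$ together with the self-duality built into the choice of the Tate character $\phi$ and the shift by the valuation $\nu$ of the different $\mathfrak{D}_{\Lfi|\Q_p}$.

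\textbf{Key steps.}
First I would recall that $\phi$ restricted to $\Lri$ is trivial precisely on the inverse different $\mathfrak{D}_{\Lfi|\Q_p}^{-1}$, by the standard property of the trace pairing on a local field; hence $\phi_x$ annihilates $\Lri$ iff $x \in \mathfrak{D}_{\Lfi|\Q_p}$, and more precisely the map $x \mapsto (y \mapsto \phi(\pi^{-\nu}xy))$ identifies $\Lri/\fP^\len$ with its Pontryagin dual. Second, I would note that the trace form $\tr \colon \gl_n(\Lri_\len) \times \gl_n(\Lri_\len) \to \Lri_\len$, $(A,X) \mapsto \tr(AX)$, is a perfect pairing of free $\Lri_\len$-modules (the matrix units $E_{ij}$ being dual, up to permutation, to themselves), and it restricts to a perfect pairing on $\fg_\len = \ag(\Lri_\len)$ when $\ag = \gl_n$; for $\ag = \gu_n$ one checks that the real trace form $(A,X) \mapsto \tr(AX)$ (landing in $\lri_\len$ via $\mathrm{Tr}_{\Lri|\lri}$, up to the $\circ$-structure) remains non-degenerate on the anti-hermitian lattice, using the explicit $\lri$-basis $E_{ij}^{(-1)}$ of \eqref{equ:def.e.epsilon.i.j}. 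Composing these two perfect pairings yields injectivity of $A \mapsto \omega_A$, hence the isomorphism. Third, for equivariance I would compute, for $g \in G$, $A \in \fg_\len$, $X \in \fg_\len$,
\[
\omega_{\Ad(g)A}(X) = \phi\!\left(\pi^{-\len}\tr(gAg^{-1}X)\right) = \phi\!\left(\pi^{-\len}\tr(A\,g^{-1}Xg)\right) = \omega_A(\Ad(g^{-1})X),
\]
using the conjugation-invariance (cyclicity) of the trace, and observe that the right-hand side is by definition $(\Ad^*(g)\omega_A)(X)$; this is exactly the formula for $\Ad^*$ recalled just before the lemma. The unitary case needs the additional remark that $\Ad(g)$ for $g \in \GU_n(\lri)$ preserves $\gu_n$ and that conjugation by $g$ still satisfies $\tr(gAg^{-1}X) = \tr(A g^{-1}Xg)$ over $\Lri_\len$, which is formal.

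\textbf{Main obstacle.}
The only genuinely delicate point is the bookkeeping of the different and the normalising factor $\pi^{-\nu}$ (resp.\ $\pi^{-\len-\nu}$-type shifts) needed to make the self-duality descend correctly to the finite quotients $\Lri_\len$, and to make it respect the $\lri$-structure in the unitary case — one must be sure the integral character $\phi$ on $\Lfi$ identifies $\fP^{-\nu}/\Lri \cong (\Lri)^\vee$ and not merely $\Lfi \cong \Lfi^\vee$ topologically, so that the restriction to congruence quotients is still a perfect pairing and the trace pairing on the anti-hermitian lattice descends to a perfect $\lri_\len$-valued pairing. Everything else — the homomorphism property, the counting argument for bijectivity, and the equivariance — is a short formal computation once the non-degeneracy of the trace form over $\Lri_\len$ (and its $\lri_\len$-variant for $\gu_n$) is in hand.
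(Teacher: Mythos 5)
Your proposal is correct and follows essentially the same route as the paper: non-degeneracy of the trace pairing checked on the elementary matrices $E_{ij}$ (resp.\ the basis $E_{ij}^{(-1)}$ in the unitary case, where $\tr(AB)^\circ=\tr(AB)$ puts the values in $\lri_\len$), combined with cyclicity of the trace for the $G$-equivariance. You are merely more explicit than the paper about the Pontryagin-duality bookkeeping (the character $\phi$, the different shift $\pi^{-\nu}$, and the descent to the finite quotients), which the paper compresses into its choice of normalisation.
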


\begin{proof}
  The essential observation is that the trace induces a non-degenerate
  $\Lri_\len$-bilinear form
  \[
  \beta \colon \gl_n(\Lri_\len) \times \gl_n(\Lri_\len)\to\Lri_\len,
  \quad (A,B) \mapsto \tr(AB).
  \] 
  Indeed, for every $A\in\gl_n(\Lri_\len)$, the $\Lri_\len$-linear map
  $B \mapsto \tr(AB)$ is the zero map if and only if~$A=0$ as one
  sees, for instance, by evaluating it on elementary
  matrices. Restriction of the form $\beta$ to $\gl_n(\lri_\len)
  \times \gl_n(\lri_\len)$ establishes the isomorphism
  $\gl_n(\lri_\len) \simeq \gl_n(\lri_\len)^{\vee}$.

  Similarly, restriction of $\beta$ to $\gu_n(\lri_\len) \times
  \gu_n(\lri_\len)$ establishes the desired isomorphism in the unitary
  case.  For this one notes that the image of the restriction lies in
  $\lri_\len$ as $\tr(AB)^\circ=\tr(AB)$ for $A,B \in
  \gu_n(\lri_\len)$.  Furthermore, evaluation of the
  $\Lri_\len$-linear map $B \mapsto \tr(AB)$ on matrices of the form
  $E_{ij}^{(-1)}$ as described in~\eqref{equ:def.e.epsilon.i.j} --
  which form a generating set of $\gu_n(\lri_\len)$ as an
  $\lri_\len$-module -- shows that this map is the zero map if and
  only if~$A=0$. 

  In both cases, the $G$-equivariance is immediate.
\end{proof}

We also use the isomorphism
\begin{equation} \label{equ:pont.dual} \fg_{\len - m}
  \rightarrow (\fg^m_\len)^{\vee} , \quad A \mapsto \omega_A^m,
  \qquad \text{where} \quad \omega_A^m \colon \fg^m_\len \rightarrow
  \C^\times, \, \omega_A^m(X) = \omega_A(\pi^{-m}X).
\end{equation}

As $G^m$ is potent, the Kirillov orbit method sets up a correspondence
between $\Irr(G^m)$ and $\Ad^*(G^m) \backslash (\fg^m)^\vee$.  As
$(\fg^m)^\vee = \lim_{\len \to \infty} (\fg_\len^m)^\vee$, the
isomorphisms~\eqref{equ:pont.dual} lead us to consider the finite
orbit spaces $\Ad(G^m) \backslash \fg_{\len-m}$ for $\len > m$.  The
analysis of these spaces in the next section utilises the similarity
class zeta functions from Section~\ref{sec:sim.zeta}.

\begin{remark.star} \label{rem:char-p-comments-3} The setup above can
  be adapted to study the characters of the groups $G^m_\len$ in the
  case that $\cha(\lri)$ is arbitrary, provided $p \geq \len/m$.  The
  latter condition ensures that the nilpotency class of the finite
  $p$-group $G^m_\len$ is at most~$p-1$.  Write $N = G^m$ and $M =
  G^\len$.  Similar to Remark~\ref{rem:char-p-comments-1}, there is a
  homomorphism $\eta$ from a free nilpotent pro-$p$ group $\wt{N}$ of
  class $p-1$ on a certain number of generators onto~$N/M$.  Both
  $\wt{N}$ and $\wt{M} \coloneqq \eta^{-1}(M)$ are potent and
  saturable.  The finite Lie ring $\log(\wt{N})/\log(\wt{M})$ is
  naturally isomorphic to~$\fg^m_\len$.  If $\lri$ has positive
  characteristic~$p$, the non-trivial character
  \[
  \phi \colon \Lfi \xrightarrow{\mathrm{Tr}_{\Lfi \,\vert\,
      \F_p(\!(t)\!)}} \F_p(\!(t)\!) \xrightarrow{\mathrm{Res}_0} \F_p
  \overset{\simeq}{\longrightarrow} \mu_{p} \subset \C^\times,
  \]
  replaces the character described in~\eqref{equ:Tate-character}, with $\nu=0$ as the different in this case is trivial.
  Here the residue map $\mathrm{Res}_0$ picks out the coefficient
  of~$t^{-1}$.
\end{remark.star}

\subsection{From similarity class zeta functions to representation
  zeta functions} \label{subsec:sim.irrep} We continue to use the
notation set up in Section~\ref{subsec:basic.princ}.  In addition, we
write $\aH$ for the $\lri$-group scheme $\SL_n$ or~$\SU_n$, according
to whether $\aG$ is $\GL_n$ or $\GU_n$.  Put $H = \aH(\lri)$ and, for
$m \in \N$, let $H^m = \aH^m(\lri)$ denote its $m$th principal
congruence subgroup.  For $\len \in \N$ with~$\len \geq m$ write
$H^m_\len = H^m/H^\len$.  In
Section~\ref{subsec:sim.class.zeta.graphs} we introduced the
similarity class zeta functions $\gamma_\len^\sigma(s)$, $\sigma \in
\Sh$, and discussed the limit of their normalizations
$q^{-\len}\gamma^\sigma_\len(s)$ as $\len \rightarrow \infty$;
cf.\ Proposition~\ref{pro:limit.gamma}.  The following variants of
these functions play a central role in our derivation of formulae for
representation zeta functions.

\begin{defn}\label{def:xi}
 Let $\sigma \in \Sh$.  For $\len\in\N_0$ we set
 \[
 \xi_\len^\sigma(s) = [\aG(\kk) : \sigma(\kk)]^{1+s/2}q^{-\len}
 \gamma^\sigma_\len(s/2),
 \]
 and we define $\xi^\sigma(s) = \lim_{\len\rightarrow\infty}
 \xi^\sigma_\len(s) = [\aG(\kk) : \sigma(\kk)]^{1+s/2}
 \gamma^\sigma(s/2)$; see Proposition~\ref{pro:limit.gamma}.
\end{defn}

In Proposition~\ref{pro:zeta.H^m_l} below we provide formulae for the
zeta functions of groups of the form $G^m_\len$, $H^m_\len$, and $H^m$
in terms of the functions $\xi^\sigma_\len$ and $\xi^\sigma$.  For
this we require the following lemma.

\begin{lem} \label{lem:Omega.sum} Let $\len, m \in \N$ with $\len
  \geq m$.  Then
  \begin{equation*} \sum_{\Omega \in \Ad^*(G^m)
      \backslash ({\fg}^m_\len)^\vee} \lvert \Omega \rvert^{-s/2} =
    \begin{cases}
      q^{(\len-m) \dim \aG} & \text{if $\len \leq 2m$,} \\
      q^{(m-1)\dim \aG} \sum_{\Omega \in \Ad^*(G^1) \backslash
        ({\fg}^1_{\len-2m+2})^\vee} \lvert \Omega \rvert^{-s/2} &
      \text{if $\len > 2m$.}
    \end{cases}
  \end{equation*}
\end{lem}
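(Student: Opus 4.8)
The statement compares a sum over co-adjoint orbits in $(\fg^m_\len)^\vee$ with one over $(\fg^1_{\len-2m+2})^\vee$. My plan is to exploit the isomorphism~\eqref{equ:pont.dual}, which identifies $(\fg^m_\len)^\vee$ with $\fg_{\len-m}$ as $G$-sets, so that $\Ad^*(G^m)\backslash(\fg^m_\len)^\vee$ corresponds to $\Ad(G^m)\backslash\fg_{\len-m}$, and under this correspondence $\lvert\Omega\rvert = \lvert\cC\rvert$ for the similarity class $\cC$ matching $\Omega$. Thus the left-hand sum becomes $\sum_{\cC\in\Ad(G^m)\backslash\fg_{\len-m}}\lvert\cC\rvert^{-s/2}$. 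The key point is then to relate the $G^m$-action on $\fg_{\len-m}$ to the full $G$-action (i.e.\ to the similarity classes counted by the $\gamma_\len^\sigma$), and to deal with the rescaling $A\mapsto \pi^{-m}A$ implicit in the passage between congruence levels.

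\textbf{The two regimes.} For $\len\le 2m$, i.e.\ $\len-m\le m$, every element $A\in\fg_{\len-m}$ satisfies $\pi^m A \equiv 0$, which forces $[\pi^m\fg, A]\equiv 0 \pmod{\fp^{\len-m}}$; by Lemma~\ref{lem:Hausdorff-expansion} (applied to the saturable lattice $\fg^m$ with PF-embedded ideal, via Proposition~\ref{prop:p.gt.n.e+1}) the adjoint action of $G^m = \exp(\fg^m)$ on $\fg_{\len-m}$ is then trivial. Hence each orbit is a singleton, there are $q^{(\len-m)\dim\aG}$ of them, and the first case follows. For $\len > 2m$, I would observe that the $G^m$-orbits on $\fg_{\len-m}$ depend only on the action of $G^m$ modulo $G^\len$, and more precisely --- again by Lemma~\ref{lem:Hausdorff-expansion}, since $\Ad(\exp(\pi^m X))A = A + \pi^m[X,A] + \pi^m[X,Z]$ with the correction lying deeper --- the $G^m$-orbit of $A$ is determined by the $G^1$-orbit of a suitably shifted representative. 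The combinatorial bookkeeping is: scaling by $\pi^{m-1}$ sends $\fg_{\len-m}$ onto $\pi^{m-1}\fg_{\len-m} \subset \fg_{\len-1}$, and the $G^m$-orbit structure on the former matches the $G^1$-orbit structure on $\fg^1_{\len-2m+2} = \fg_{\len-2m+1}$ up to a global multiplicative factor counting the cosets; this factor is exactly $q^{(m-1)\dim\aG}$, accounting for the $G^1/G^m$-worth of translates (scalar shifts) that all become $G^m$-fixed.

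\textbf{Carrying it out.} Concretely I would argue that the reduction map $\fg_{\len-m}\to\fg_{\len-2m+1}$ (reducing modulo $\fp^{\len-2m+1}$, legitimate since $\len-2m+1 \le \len-m$) is $G^m$-equivariant once the target carries the $G^1$-action, because conjugation by $\exp(\pi^m X)$ acts on the bottom $\fp^{m-1}$-layer exactly as conjugation by $\exp(\pi X)$ after the rescaling; the fibres of this map over each $G^1$-orbit in $\fg^1_{\len-2m+2}$ break into $q^{(m-1)\dim\aG}$ many $G^m$-orbits of equal size, whence $\lvert\Omega\rvert$ on the source equals $\lvert\Omega'\rvert$ on the target and the Dirichlet-sum identity drops out. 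The main obstacle I anticipate is pinning down this equivariant fibration cleanly: one must check that the higher-order Hausdorff corrections in Lemma~\ref{lem:Hausdorff-expansion} really do land in the kernel of the reduction at every relevant level, and that no orbits merge or split unexpectedly across the fibres --- this is where the hypothesis $p>2$, $m\ge e/(p-2)$ (ensuring $\fg^m$, $\fg^1$ are saturable and potent) and the identity $\dim\fg^m_\len = (\len-m)\dim\aG$ are used. Once the orbit-counting bijection is established, the identity is immediate.
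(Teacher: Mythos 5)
Your reduction via \eqref{equ:pont.dual} and your treatment of the case $\len\le 2m$ match the paper's argument. The gap lies in the case $\len>2m$: everything hinges on the claim that for $A\in\fg_{\len-m}$ the $\Ad(G^m)$-orbit of $A$ has the same cardinality as the $\Ad(G^1)$-orbit of its reduction $A_{\len-2m+1}\in\fg_{\len-2m+1}$, and your justification of this does not work. The reduction map $\fg_{\len-m}\to\fg_{\len-2m+1}$ is equivariant only for the $G^m$-action on both sides (through $G^m\le G^1$); it does not intertwine the $G^m$-action upstairs with the $G^1$-action downstairs, and the image of a $G^m$-orbit is in general a proper subset of the $G^1$-orbit of the reduction (for instance, if $\len-2m+1\le m$ that image is a single point while the $G^1$-orbit below can be large). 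So there is no equivariant fibration of the kind you describe; only the cardinality statement survives, and it must be proved by a stabiliser computation, which your sketch never carries out. The paper does exactly this: $\lvert\Ad(G^m)A\rvert=[G^m:\Stab_{G^m}(A)]=[\fg^m:\Cen_{\fg^m}(A)]$ via the elementary bijection $G^m\to\fg^m$, $\Id_3+\pi^m X\mapsto\pi^m X$ (replaced by the Cayley map $\cay$ when $\aG=\GU_3$, a case your sketch does not address), and then $[\fg^m:\Cen_{\fg^m}(A)]=[\fg^1:\Cen_{\fg^1}(A_{\len-2m+1})]$ via the scaling $\pi^m X\mapsto\pi X$, the point being that for $\len>2m$ both centraliser conditions amount to $[X,A]\equiv 0$ modulo $\fp^{\len-2m}$. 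With this identity in hand your bookkeeping is correct (the preimage of a $G^1$-orbit of size $b$ has $b\,q^{(m-1)\dim\aG}$ elements and decomposes into $G^m$-orbits of size $b$), and it is equivalent to the paper's device of rewriting the orbit sum as $\sum_{A\in\fg_{\len-m}}\lvert\Ad(G^m)A\rvert^{-1-s/2}$ and summing over the fibres of the reduction.

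A secondary point: routing the argument through $\exp$, Lemma~\ref{lem:Hausdorff-expansion} and saturability is both unnecessary and too restrictive. The lemma is stated without hypotheses on $p$ or $e$, and the paper uses only the elementary bijection above precisely so that the statement survives in positive characteristic (cf.\ Remark~\ref{rem:char-p-comments-4}); note also that saturability of $G^1$ would require $e\le p-2$, which is not implied by $m\ge e/(p-2)$. Finally, the factor $q^{(m-1)\dim\aG}$ is the cardinality of the kernel of the reduction $\fg_{\len-m}\to\fg_{\len-2m+1}$, not a count of ``scalar shifts'' or of cosets of $G^m$ in $G^1$; the numerical coincidence with $[G^1:G^m]$ is not what drives the computation.
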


\begin{proof}
  If $\len \leq 2m$, the co-adjoint action of $G^m$ on
  $(\fg^m_\len)^\vee$ is trivial and the sum over the singletons
  equals $\lvert \fg^m_\len \rvert = q^{(\len-m) \dim \aG}$, as
  claimed.  Now suppose that $\len > 2m$.  Using the $G$-equivariant
  isomorphism~\eqref{equ:pont.dual} between $\fg_{\len -m}$
  and $(\fg^m_\len)^\vee$, and similarly between $\fg_{\len -2m+1}$
  and $(\fg^1_{\len-2m+2})^\vee$, it suffices to work with
  $\Ad(G^m)$-orbits in $\fg_{\len -m}$ on the left hand side and
  $\Ad(G^1)$-orbits in $\fg_{\len-2m+1}$ on the right hand side.

  Suppose that $A \in \fg$, and for $k \in \N$ let $A_k \in \fg_k$ be
  the image of $A$ under the natural projection.  Consider the orbit
  $\mathcal{A} = \Ad(G^m) A_{\len-m} \in \Ad(G^m) \backslash
  \fg_{\len-m}$.  If $\aG = \GL_n$, then
  \begin{equation} \label{equ:silly.map} G^m \rightarrow \fg^m, \quad
    \Id_3 + \pi^m X \mapsto \pi^m X
  \end{equation}
  is a measure-preserving bijection, mapping the stabiliser
  $\Stab_{G^m}(A_k)$ onto the Lie centraliser $\Cen_{\fg^m}(A_k)$ for
  every $k \in \N$.  Furthermore, as $\len > 2m$, the map
  \[
  \fg^m \rightarrow \fg^1, \quad \pi^m X \mapsto \pi X
  \]
  is an isomorphism of abelian groups, mapping
  $\Cen_{\fg^m}(A_{\len-m})$ onto
  $\Cen_{\fg^1}(A_{\len-2m+1})$.  If $\aG = \GU_n$, we reach the
  same conclusions by using the Cayley map $\cay$ (cf.\
  Definition~\ref{def:cayley.map}) in place of \eqref{equ:silly.map};
  compare with the proof of Proposition~\ref{pro:class.quot.GU}.  Thus
  in each case we deduce that
  \[
  \lvert \mathcal{A} \rvert = \lvert G^m : \Stab_{G^m}(A_{\len-m})
  \rvert = \lvert \fg^m : \Cen_{\fg^m}(A_{\len-m}) \rvert =
  \lvert \fg^1 : \Cen_{\fg^1}(A_{\len-2m+1}) \rvert,
  \]
  and, using~\eqref{equ:silly.map}, we get
  \begin{align*}
    \sum_{\mathcal{A} \in \Ad(G^m) \backslash \fg_{\len-m}} \lvert
    \mathcal{A} \rvert^{-s/2} & = \sum_{A_{\len-m} \in \fg_{\len-m}}
    \lvert \fg^1 : \Cen_{\fg^1}(A_{\len-2m+1}) \rvert^{-1-s/2} \\
    & = q^{(m-1) \dim \fg} \sum_{A_{\len-2m+1} \in \fg_{\len-2m+1} }
    \lvert G^1 : \Stab_{G^1}(A_{\len-2m+1})
    \rvert^{-1-s/2} \\
    & = q^{(m-1) \dim \aG} \sum_{\mathcal{A} \in \Ad(G^1) \backslash
      \fg_{\len-2m+1}} \lvert \mathcal{A} \rvert^{-s/2}. \qedhere
  \end{align*}
\end{proof}

\begin{prop}\label{pro:zeta.H^m_l}
  Let $\len, m \in \N$ with $\len \geq m$, and suppose that $p \nmid
  2n$ and $m \geq e/(p-2)$.  Then
  \begin{equation}\label{equ:zeta.H^m_l}
    \zeta_{H^m_\len}(s) =    \frac{\zeta_{G_\len^m}(s)}{q^{\len-m}} =
    \begin{cases}
      q^{(\len-m) \dim \aH} & \text{if $\len \leq 2m$,} \\
      q^{(m-1) \dim \aH} \sum_{\sigma \in \Sh}
      \xi_{\len-2m+1}^\sigma(s) & \text{if $\len > 2m$.}
    \end{cases}
  \end{equation}
  Moreover,
  \begin{equation}\label{eqn:zeta.H^m}
    \zeta_{H^m}(s) = q^{(m-1) \dim \aH} \sum_{\sigma \in
      \Sh}\xi^\sigma(s).
  \end{equation}
\end{prop}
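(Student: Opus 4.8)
The plan is to establish the three asserted identities by combining the Kirillov orbit method (Theorem~\ref{thm:kom}, Corollary~\ref{cor:kom}) with the orbit-counting reduction of Lemma~\ref{lem:Omega.sum} and the translation from orbit sizes to shadow data provided by Definition~\ref{def:xi} and Proposition~\ref{pro:sim.shadow.fin}/Corollary~\ref{cor:infinite.gamma}. First I would treat $H^m_\len$. By the saturability hypothesis $p>2$, $m\ge e/(p-2)$ (and $p\nmid 2n$ so that $2$ is invertible and the Kirillov correspondence applies via $\chi(1)=\lvert\Omega\rvert^{1/2}$), the group $H^m$ is potent and saturable with $\log(H^m)\simeq\ah^m(\lri)$; the same holds for $H^\len$, so by Corollary~\ref{cor:kom} applied to $H^m/H^\len$ one has a bijection between $\Irr(H^m_\len)$ and $\Ad^*(H^m)$-orbits on $(\ah^m_\len)^\vee$, with $\chi_\Omega(1)=\lvert\Omega\rvert^{1/2}$. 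Hence $\zeta_{H^m_\len}(s)=\sum_{\Omega}\lvert\Omega\rvert^{-s/2}$, and the same reasoning with $\aG$ in place of $\aH$ gives $\zeta_{G^m_\len}(s)=\sum_{\Omega\in\Ad^*(G^m)\backslash(\fg^m_\len)^\vee}\lvert\Omega\rvert^{-s/2}$. The relation $\zeta_{H^m_\len}(s)=\zeta_{G^m_\len}(s)/q^{\len-m}$ I would get from the fact that $\fg=\ah\oplus\mathfrak z$ with $\mathfrak z$ the centre of dimension $1$ (as $p\nmid n$), so that $(\fg^m_\len)^\vee$ decomposes $G$-equivariantly as $(\ah^m_\len)^\vee\times(\mathfrak z^m_\len)^\vee$ with trivial co-adjoint action on the central factor, contributing a factor $\lvert\mathfrak z^m_\len\rvert=q^{\len-m}$ of fixed points each of size $1$; I would need to check that $\dim\aG=\dim\aH+1$ matches up in all the exponents.

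Next I would invoke Lemma~\ref{lem:Omega.sum}: for $\len\le 2m$ the co-adjoint action is trivial so the orbit sum is $\lvert\ah^m_\len\rvert=q^{(\len-m)\dim\aH}$, giving the first branch; for $\len>2m$ it reduces to $q^{(m-1)\dim\aH}\sum_{\Omega\in\Ad^*(H^1)\backslash(\ah^1_{\len-2m+2})^\vee}\lvert\Omega\rvert^{-s/2}$. (Note Lemma~\ref{lem:Omega.sum} as stated is for $\aG$; I would point out it applies verbatim to $\aH$, or alternatively deduce the $\aH$-version from the $\aG$-version using the central-factor splitting just mentioned.) Now the inner sum over $\Ad^*(H^1)$-orbits on $(\ah^1_{\len-2m+2})^\vee$ must be rewritten in terms of the $\aG$-data. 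Using the isomorphism~\eqref{equ:pont.dual} it equals $\sum_{\mathcal A\in\Ad(H^1)\backslash\ah_{\len-2m+1}}\lvert\mathcal A\rvert^{-s/2}$, and I would convert this into a sum over $\Ad(\aG(\kk))$-orbits — equivalently similarity classes in $\ag(\lri_{\len-2m+1})$ — organised by shadow $\sigma$, bringing in the index factors $[\aG(\kk):\sigma(\kk)]$ that relate the $\SL/\SU$ and $\GL/\GU$ orbit sizes. Precisely, I expect each $\GL$-class of shadow $\sigma$ to split, upon intersection with $\fsl$/restriction of the co-adjoint action, in a controlled way so that $\sum_{\mathcal A\text{ of shadow }\sigma}\lvert\mathcal A\rvert^{-s/2}=[\aG(\kk):\sigma(\kk)]^{1+s/2}q^{-(\len-2m+1)}\gamma^\sigma_{\len-2m+1}(s/2)$, which is exactly $\xi^\sigma_{\len-2m+1}(s)$ by Definition~\ref{def:xi}. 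Summing over $\sigma\in\Sh$ gives the stated second branch. The case $\len\le 2m$ also matches since then $\len-2m+1\le 1$ and $\xi^\sigma_{\le 0}$ handles the boundary, though I would state the two branches explicitly as in the proposition.

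Finally, for $\zeta_{H^m}(s)$ I would take the limit $\len\to\infty$ in~\eqref{equ:zeta.H^m_l}: since $H^m=\varprojlim H^m_\len$ and every irreducible character of $H^m$ has finite level, $\zeta_{H^m}(s)=\lim_{\len\to\infty}\zeta_{H^m_\len}(s)$ coefficientwise, and by Definition~\ref{def:xi} and Proposition~\ref{pro:limit.gamma} we have $\xi^\sigma_\len\to\xi^\sigma$; this yields $\zeta_{H^m}(s)=q^{(m-1)\dim\aH}\sum_{\sigma\in\Sh}\xi^\sigma(s)$. The main obstacle I anticipate is the bookkeeping in the passage from $\GU$/$\GL$ to $\SU$/$\SL$ orbit sums: one must verify carefully that passing from $\ag$ to $\ah$ (i.e.\ quotienting by the central line, or intersecting with the trace-zero sublattice) interacts with the co-adjoint action and with the index $[\aG(\kk):\sigma(\kk)]$ exactly so as to produce the normalisation baked into $\xi^\sigma_\len$, including the correct power of $q$ from~\eqref{equ:pont.dual} and from the reindexing $\len\mapsto\len-2m+1$; everything else is a routine assembly of results already proved in the excerpt.
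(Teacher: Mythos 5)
Your proposal is correct and uses the same core machinery as the paper -- the Kirillov orbit method, the level reduction of Lemma~\ref{lem:Omega.sum}, the conversion of congruence-subgroup orbits into $\Ad(G)$-orbits weighted by $[\aG(\kk):\sigma(\kk)]^{1+s/2}$, and the limit $\len\to\infty$ -- but it is organised differently. You apply the orbit method to $H^m$ and work throughout in the trace-zero lattice $\fh$, getting the relation $\zeta_{H^m_\len}(s)=\zeta_{G^m_\len}(s)/q^{\len-m}$ from the equivariant splitting $\fg=\fh\oplus\mathfrak{z}$ on the dual side; this obliges you to check in addition that $H^m$ is potent and saturable, that the trace form remains non-degenerate on $\fh$ when $p\nmid n$ (so that $(\fh^1_{\len'})^{\vee}\simeq\fh_{\len'-1}$), and that Lemma~\ref{lem:Omega.sum} holds for $\aH$ -- here your fallback deduction from the $\aG$-version via the central factor is the one to use, since the bijection $\Id_n+\pi^m X\mapsto \pi^m X$ in its proof does not respect $\SL/\fsl$. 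The paper never invokes the orbit method for $H^m$ at all: it computes $\zeta_{G^m_\len}(s)$ entirely on the $\aG$-side, where \eqref{equ:pont.dual} and Lemma~\ref{lem:Omega.sum} are literally available, passes from $\Ad(G^1)$-orbits to $\Ad(G)$-orbits by the index identity \eqref{equ:orbit.ratio}, and obtains the division by $q^{\len-m}$ from the elementary group-theoretic decomposition $G^m_\len=H^m_\len\times \textup{S}(G^m_\len)$, valid because $p\nmid n$ makes the scalars surject onto $\det(G^m_\len)$. The step you flag as the main obstacle is exactly where the two routes meet: each $\Ad(G)$-class of shadow $\sigma$ in $\fh_{\len-2m+1}$ splits into $[\aG(\kk):\sigma(\kk)]$ many $H^1$-orbits of equal size because $G^1$ is the direct product of $H^1$ with the central scalar congruence subgroup (again using $p\nmid n$) together with \eqref{equ:orbit.ratio}, while the $q^{-(\len-2m+1)}$ in Definition~\ref{def:xi} is accounted for by the fact that $\Ad(G)$-classes in $\fg_{\len-2m+1}$ are precisely the scalar shifts of those in $\fh_{\len-2m+1}$. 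Both arguments are sound; the paper's detour through $G$ buys a shorter proof with no Lie-theoretic verifications on the $\SL/\SU$ side, whereas your route keeps the computation for $H^m_\len$ self-contained at the cost of these extra checks. The final limit argument for $\zeta_{H^m}(s)$ coincides with the paper's.
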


\begin{proof}
  Observe that $H^m_\len$ is the kernel of the determinant map on
  $G^m_\len$.  Since $p \nmid n$, every element in the $p$-group
  $\det(G^m_\len)$ admits an $n$th root.  Thus the central subgroup
  $\textup{S}(G^m_\len)$ of scalar matrices maps onto
  $\det(G^m_\len)$, and $G^m_\len$ decomposes as a direct product
  $G^m_\len = H^m_\len \times \textup{S}(G^m_\len)$. This yields the
  first equality in~\eqref{equ:zeta.H^m_l}.

  The assumption $m \geq e/(p-2)$ guarantees that the pro-$p$ group
  $G^m$ is potent and saturable (see \cite[Proposition~2.3]{AKOV1}) so
  that the orbit method can be used to parametrise irreducible
  characters of the finite quotient~$G^m_\len$; see
  Corollary~\ref{cor:kom}.  For $\len \leq 2m$ the
  second equality in~\eqref{equ:zeta.H^m_l} follows directly from the
  observation that $G^m_\len$ is abelian and $\dim \aG = \dim \aH +1$.
  For $\len > 2m$, we apply the orbit method,
  Lemma~\ref{lem:Omega.sum}, the
  correspondence~\eqref{equ:pont.dual}, and the fact
  \begin{equation}\label{equ:orbit.ratio}
  [ G : \Stab_G(A) ] = [\aG(\kk) : \sigma(\kk)] [ G^1 : \Stab_{G^1}(A)
  ] \quad \text{for $A \in \fg_{\len-2m+1}$ with $\sh_{\aG}(A) =
    \sigma$}
  \end{equation}
  to obtain
  \begin{align*}
    \zeta_{G^m_\len}(s) & = \sum_{\Omega \in \Ad^*(G^m) \backslash
      (\fg^m_\len)^\vee} \lvert \Omega \rvert^{-s/2} \\
    & = q^{(m-1)\dim \aG} \sum_{\Omega \in \Ad^*(G^1)
      \backslash(\fg^1_{\len-2m+2})^\vee} \lvert \Omega
    \rvert^{-s/2} \\
    & = q^{(m-1) \dim \aG} \sum_{\sigma\in\Sh} \,\, \sum_{
      \substack{\mathcal{A} \in \Ad(G^1) \backslash \fg_{\len -2m+1} \\
        \forall A \in \mathcal{A}: \; \sh_{\aG}(A)=\sigma}} \lvert
    \mathcal{A} \rvert^{-s/2}\\
    & = q^{(m-1)\dim \aG} \sum_{\sigma\in\Sh} \,\,
    \sum_{\substack{\mathcal{A} \in \Ad(G) \backslash \fg_{\len-2m+1}
        \\ \forall A \in \mathcal{A}: \; \sh_{\aG}(A) = \sigma}}
    \lvert \mathcal{A} \rvert^{-s/2} \, [\aG(\kk) : \sigma(\kk)]^{1+s/2} \\
    & = q^{\len-m} q^{(m-1)\dim \aH} \sum_{\sigma\in \Sh}
    \xi^\sigma_{\len-2m+1}(s).
  \end{align*}
  As for equation~\eqref{eqn:zeta.H^m}, every continuous character of
  $H^m$ factors through $H^m_\len$ for sufficiently large~$\len$.
  Therefore
  \[
  \zeta_{H^m}(s)=\lim_{\len \to \infty} \zeta_{H^m_\len}(s) = q^{(m-1)
    \dim \aH} \sum_{\sigma \in \Sh}\xi^\sigma(s).  \qedhere
  \]
\end{proof}

\begin{remark} \label{rem:also-for-l=2m} In fact, the second formula
  on the right-hand side of~\eqref{equ:zeta.H^m_l} also holds for
  $\len = 2m$, because $\sum_{\sigma \in \Sh} \xi_1^\sigma(s) =
  q^{\dim \aH}$.
\end{remark}

We record, as a byproduct of the proof of
Proposition~\ref{pro:zeta.H^m_l}, an explicit formula for the degrees
of irreducible characters of the principal congruence subgroup
quotients $G^m_\len$.

\begin{cor} \label{cor:dim.as.func.shadows} Let $\len,m \in \N$ with
  $\len \geq 2m$.  Suppose that $A \in \fg_{\len-m}$ corresponds, via
  the isomorphism~\eqref{equ:pont.dual}, to a character $\omega_A^m
  \in (\fg_\len^m)^\vee$.  Set $\cA = \Ad(G) A \in \Ad(G) \backslash
  \fg_{\len-m}$ and, for $1 \leq i \leq \len-2m+1$, denote by $\cA_i$
  the reduction of $\cA$ modulo~$\pi^i$.  Then the degree of the
  character $\chi_{\Ad^*(G^m) \omega_A^m} \in \Irr(G^m_\len)$
  associated to the co-adjoint orbit of $\omega_A^m$ is equal
  to
  \begin{equation*}
    \chi_{\Ad^*(G^m) \omega_A^m}(1) = q^{\frac{1}{2} \left( (\len - 2m) \dim\aG
        - \sum_{i=1}^{\len-2m} \dim(\sh_\aG(\cA_i) )\right)}.
  \end{equation*}
\end{cor}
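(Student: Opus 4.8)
The statement is an immediate corollary of the mechanism underlying the proof of Proposition~\ref{pro:zeta.H^m_l}, so the plan is to isolate the single orbit contribution rather than to sum over all orbits. First I would recall that, since $m \geq e/(p-2)$ and $p \nmid 2n$, the group $G^m$ is potent and saturable, so by Theorem~\ref{thm:kom}\eqref{cl:Kirillov.1} the degree of the character $\chi_{\Ad^*(G^m)\omega_A^m}$ equals $\lvert \Ad^*(G^m)\,\omega_A^m \rvert^{1/2}$. Via the $G$-equivariant isomorphism~\eqref{equ:pont.dual} between $\fg_{\len-m}$ and $(\fg^m_\len)^\vee$, this co-adjoint orbit size equals the size of the adjoint orbit $\Ad(G^m)\,A_{\len-m}$, where $A_{\len-m}$ is the image of $A$ in $\fg_{\len-m}$.

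Next I would reduce the $\Ad(G^m)$-orbit count to an $\Ad(G^1)$-orbit count exactly as in Lemma~\ref{lem:Omega.sum}: because $\len \geq 2m$, composing the measure-preserving bijection $G^m \to \fg^m$ (the map~\eqref{equ:silly.map}, or the Cayley map $\cay$ in the unitary case) with the abelian-group isomorphism $\fg^m \to \fg^1$, $\pi^m X \mapsto \pi X$, identifies $\Stab_{G^m}(A_{\len-m})$ with $\Stab_{G^1}(A_{\len-2m+1})$, hence
\[
\lvert \Ad(G^m)\,A_{\len-m} \rvert = [\,G^1 : \Stab_{G^1}(A_{\len-2m+1})\,] = \lvert \cA \rvert,
\]
where $\cA = \Ad(G)A$ is computed in $\fg_{\len-2m+1}$ — note the index $[\aG(\kk):\sigma(\kk)]$ in~\eqref{equ:orbit.ratio} does not enter here because we are working inside $G^1$, not $G$. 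Thus $\chi_{\Ad^*(G^m)\omega_A^m}(1) = \lvert \cA \rvert^{1/2}$ with $\cA \in \Ad(G)\backslash \fg_{\len-2m+1}$.

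It remains to evaluate $\lvert \cA \rvert$ in terms of shadow dimensions. Writing $\cA_i$ for the reduction of $\cA$ modulo $\pi^i$, I would use the telescoping formula $\lvert \cA \rvert = \prod_{i=0}^{\len-2m} \lvert\cA_{i+1}\rvert/\lvert\cA_i\rvert$ and apply Propositions~\ref{pro:class.quot.GL} and~\ref{pro:class.quot.GU}, which give $\lvert\cA_{i+1}\rvert/\lvert\cA_i\rvert = q^{\dim\aG - \dim(\sh_\aG(\cA_i))}\,\lVert\sh_\aG(\cA_i)\rVert / \lVert\sh_\aG(\cA_{i+1})\rVert$; the $\lVert\cdot\rVert$-factors telescope, and $\sh_\aG(\cA_{\len-2m+1})$, being the shadow of a class in $\fg_{\len-2m+1}$, has $\lVert\cdot\rVert = 1$ relative to the bottom term (as $\cA_0 = \{0\}$ contributes $\lVert\sh_\aG(\cA_0)\rVert = \lvert\aG(\kk)\rvert$ and this cancels against the overall normalisation — more precisely one checks the product collapses cleanly because $\sh_\aG(\cA_0)$ is scalar). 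This yields $\lvert\cA\rvert = q^{(\len-2m)\dim\aG - \sum_{i=1}^{\len-2m}\dim(\sh_\aG(\cA_i))}$, and taking the square root gives the claimed formula. The only mild subtlety — the ``main obstacle'' — is bookkeeping the $\lVert\cdot\rVert$ and the index-of-ranges so that the telescoping produces precisely the stated exponent with the sum running $i=1,\dots,\len-2m$; this is a routine but careful check using that $\dim \aG - \dim(\text{scalar shadow}) = \dim\aG - \dim\ag$... one uses instead directly that $\lvert\cA_1\rvert/\lvert\cA_0\rvert$ is handled by the $\len=0$ base case of Proposition~\ref{pro:class.quot.GL}/\ref{pro:class.quot.GU}, matching $\dim(\sh_\aG(\cA_0)) = \dim\aG$ so that the $i=0$ term vanishes and the sum effectively starts at $i=1$.
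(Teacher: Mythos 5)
Your route is essentially the paper's: the Kirillov orbit method for $G^m$, the reduction of the $\Ad(G^m)$-orbit of $A_{\len-m}$ to the level-one stabiliser $\Stab_{G^1}(A_{\len-2m+1})$ as in the proof of Lemma~\ref{lem:Omega.sum}, and a telescoping evaluation of the orbit size via Propositions~\ref{pro:class.quot.GL} and~\ref{pro:class.quot.GU}. However, as written the argument contains two false intermediate claims which happen to cancel, so the proof is not valid even though the final formula is correct. First, the asserted equality $[G^1:\Stab_{G^1}(A_{\len-2m+1})]=\lvert\cA\rvert$, with $\cA=\Ad(G)A$ reduced modulo $\pi^{\len-2m+1}$, is wrong, and the index you explicitly dismiss is precisely the content of \eqref{equ:orbit.ratio}: the $\Ad(G)$-class $\cA_{\len-2m+1}$ splits into $[\aG(\kk):\sigma(\kk)]$ many $\Ad(G^1)$-orbits, where $\sigma=\sh_\aG(\cA_{\len-2m+1})$, so the correct statement is $\chi_{\Ad^*(G^m)\omega^m_A}(1)^2=[\aG(\kk):\sigma(\kk)]^{-1}\lvert\cA_{\len-2m+1}\rvert$. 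Second, in the telescoping the $\|\cdot\|$-factors do not collapse to $1$: they telescope to $\|\sh_\aG(\cA_0)\|/\|\sh_\aG(\cA_{\len-2m+1})\|=\lvert\aG(\kk)\rvert/\lvert\sigma(\kk)\rvert=[\aG(\kk):\sigma(\kk)]$, since the shadow of the zero class is the full group $\aG(\kk)$ (of size $\lvert\aG(\kk)\rvert$, not $1$), whence
\[
\lvert\cA_{\len-2m+1}\rvert=[\aG(\kk):\sigma(\kk)]\;
q^{(\len-2m)\dim\aG-\sum_{i=1}^{\len-2m}\dim(\sh_\aG(\cA_i))}.
\]

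The two omitted factors are mutually inverse, which is why your end result agrees with the corollary; but each of the two steps, as you state them, is incorrect, and the parenthetical explanation of the cancellation (that $\sh_\aG(\cA_0)$ is ``scalar'' and cancels against an ``overall normalisation'') does not identify the actual mechanism -- there is no normalisation left once you have dropped the index in the first step. The repair is exactly the paper's bookkeeping: retain the factor $[\aG(\kk):\sigma(\kk)]^{-1}$ coming from \eqref{equ:orbit.ratio} when passing from the $G^1$-orbit to the $\Ad(G)$-class $\cA_{\len-2m+1}$, and let it cancel the surviving ratio $\|\sh_\aG(\cA_0)\|/\|\sh_\aG(\cA_{\len-2m+1})\|$ produced by the telescoping.
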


\begin{proof}
  For $\len = 2m$ the degree is indeed~$1$.  Suppose that $\len > 2m$
  and set $\sigma = \sh_\aG(\cA_{\len-2m+1})$. Then
  \begin{align*}
    \chi_{\Ad^*(G^m) \omega_A^m}(1)^2 & = [ \aG(\kk):\sigma(\kk)]^{-1}
    \lvert \cA_{\len-2m+1} \rvert \\
    & = [\aG(\kk):\sigma(\kk)]^{-1} \prod_{i=1}^{\len-2m} q^{\dim\aG -
      \dim(\sh_\aG(\cA_i))} \frac{\| \sh_\aG(\cA_{i}) \|}{\|
      \sh_\aG(\cA_{i+1})\|} \\
    & = \prod_{i=1}^{\len-2m}q^{\dim\aG - \dim(\sh_\aG(\cA_i))} \\
    & = q^{(\len - 2m )\dim\aG -
      \sum_{i=1}^{\len-2m}\dim(\sh_\aG(\cA_i))}.  \qedhere
  \end{align*}
\end{proof}

\begin{remark.star} \label{rem:char-p-comments-4} As in the previous
  sections we comment on how to modify the results for $G^m_\len$ and
  $H^m_\len$ in case of arbitrary characteristic, provided that $p \geq
  \len/m$.  Definition~\ref{def:xi} does not change.
  Lemma~\ref{lem:Omega.sum} and the first equality
  in~\eqref{equ:zeta.H^m_l} hold as stated, also in positive
  characteristic.  To obtain the second equality
  in~\eqref{equ:zeta.H^m_l} and
  Corollary~\ref{cor:dim.as.func.shadows}, we use the Kirillov orbit
  method for the finite $p$-groups $G^m_\len$ and $H^m_\len$ as
  indicated in Remark~\ref{rem:char-p-comments-3}.  This relies on the
  condition $p \geq \len/m$.
 \end{remark.star}

 \subsection{Extensions of characters via the orbit
   method}\label{subsec:ext.kom}
 Let $G$ be a topological group, and let $N$ be a normal open subgroup
 of~$G$.  The group $G$ acts on $N$ by conjugation, and hence acts on
 the set $\Irr(N)$ of irreducible complex characters of~$N$:
 \[
 ^g \chi(h) = \chi(g^{-1} h g) \qquad \text{for $g \in G$, $\chi \in
   \Irr(N)$ and $h \in N$.}
 \]
 The \emph{inertia group} $\In_G(\chi)$ of $\chi \in \Irr(N)$ in $G$ is
 the stabiliser of $\chi$ in~$G$.

 Suppose further that $N$ is a potent saturable pro-$p$ group, and
 write and $\fn = \log(N)$.  Fix $\omega \in \fn^\vee$ with co-adjoint
 orbit $\Omega = \Ad^*(N) \omega$, and recall that $\chi_\Omega \in
 \Irr(N)$ denotes the character corresponding to $\Omega$ via the orbit
 method.

 \begin{lem}\label{lemma:R.and.G1} With the notation as above, the
   following hold:
   \begin{enumerate}
   \item [(a)] $\In_G(\chi_\Omega) = \Stab_G(\Omega) = N
     \Stab_G(\omega)$,
   \item [(b)] $\Ad^*(\In_G(\chi_\Omega)) \omega = \Omega$.
   \end{enumerate}
 \end{lem}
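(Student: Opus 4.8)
The plan is to prove Lemma~\ref{lemma:R.and.G1} by combining the explicit orbit–character correspondence of Theorem~\ref{thm:kom} with elementary properties of the adjoint and co-adjoint actions. First I would observe that the conjugation action of $G$ on $N$ is intertwined, via the $\log$ map, with the adjoint action of $G$ on $\fn$: for $g \in G$ and $h = e^X \in N$ one has $g^{-1} h g = \exp(\Ad(g^{-1})X)$, since $N$ is normal in $G$ so $\Ad(g)$ preserves $\fn$. Consequently, for $g \in G$ and $\omega' \in \fn^\vee$, the character $\chi_{\Ad^*(N)\omega'}$ is sent by conjugation by $g$ to $\chi_{\Ad^*(N)(\Ad^*(g)\omega')}$; this follows from the formula $\chi_{\Omega'}(h) = |\Omega'|^{-1/2}\sum_{\omega' \in \Omega'}\omega'(\log h)$ in Theorem~\ref{thm:kom}\eqref{cl:Kirillov.1}, together with a change of variables in the sum. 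The upshot is that $G$-conjugation on $\Irr(N)$ matches the $G$-action on $\Ad^*(N)\backslash\fn^\vee$ induced by $\Ad^*$, and in particular $\In_G(\chi_\Omega) = \Stab_G(\Omega)$, where $\Omega = \Ad^*(N)\omega$ is regarded as an element of the orbit space. This gives the first equality of~(a).

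Next I would establish $\Stab_G(\Omega) = N\,\Stab_G(\omega)$. The inclusion $\supseteq$ is immediate: $N$ stabilises $\Omega$ by definition of an orbit, and any $g$ fixing $\omega$ certainly fixes $\Omega$. For $\subseteq$, suppose $g \in G$ satisfies $\Ad^*(g)\Omega = \Omega$, i.e. $\Ad^*(g)\omega \in \Omega = \Ad^*(N)\omega$. Then there is $u \in N$ with $\Ad^*(g)\omega = \Ad^*(u)\omega$, whence $\Ad^*(u^{-1}g)\omega = \omega$, so $u^{-1}g \in \Stab_G(\omega)$ and $g = u(u^{-1}g) \in N\,\Stab_G(\omega)$. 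This proves~(a).

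Finally, part~(b) asserts $\Ad^*(\In_G(\chi_\Omega))\omega = \Omega$. Using~(a), $\In_G(\chi_\Omega) = N\,\Stab_G(\omega)$, so $\Ad^*(\In_G(\chi_\Omega))\omega = \Ad^*(N\,\Stab_G(\omega))\omega = \Ad^*(N)\bigl(\Ad^*(\Stab_G(\omega))\omega\bigr) = \Ad^*(N)\omega = \Omega$, since $\Stab_G(\omega)$ fixes $\omega$ and $\Ad^*$ is a (left) action so that $\Ad^*(N\,\Stab_G(\omega)) = \Ad^*(N)\,\Ad^*(\Stab_G(\omega))$ as sets of maps. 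I do not anticipate a serious obstacle here; the only point requiring minor care is the bookkeeping in the change-of-variables argument showing that $G$-conjugation on characters corresponds to the $\Ad^*$-action on orbits — one must check that $\Ad(g)$ genuinely preserves $\fn$ (which uses normality of $N$ in $G$, not merely that $N$ is a subgroup) and that the normalisation factor $|\Omega'|^{-1/2}$ is unchanged, which it is because $|\Ad^*(g)\Omega'| = |\Omega'|$. With that in hand the rest is formal.
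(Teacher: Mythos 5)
Your proof is correct and follows essentially the same route as the paper: the first equality is read off from the orbit–character correspondence of Theorem~\ref{thm:kom} (you just make the change-of-variables and the use of normality of $N$ explicit, and implicitly the injectivity of the correspondence), the equality $\Stab_G(\Omega)=N\Stab_G(\omega)$ is proved by exactly the same two-line argument, and (b) is deduced from (a) exactly as the paper does.
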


 \begin{proof}
   From Theorem~\ref{thm:kom} we see that
   $\In_G(\chi_\Omega) = \Stab_G(\Omega)$.  Clearly, one has
   $\Stab_G(\Omega) \supseteq N \Stab_G(\omega)$.  For the reverse
   inclusion, assume that $g \in \Stab_G(\Omega)$.  Then there exists
   $h \in N$ such that $\Ad^*(g) \omega = \Ad^*(h) \omega$ so that
   $h^{-1} g \in \Stab_G(\omega)$.  Thus $g \in N \Stab_G(\omega)$.
   This proves part~(a), and (b) is a direct consequence of~(a).
 \end{proof}

 In addition to the notation fixed already, suppose that $N \leq R \leq
 G$, where $R$ is a potent saturable pro-$p$ group, and set $\fr =
 \log(R)$.

 \begin{lem}\label{lemma:reps.2.char} With the notation as above, the
   character $\chi_\Omega$ extends from $N$ to~$R$ if and only if there
   exists $\theta \in \fr^\vee$, with co-adjoint orbit $\Theta =
   \Ad^*(R) \theta$, such that $\theta \vert_\fn = \omega$ and $\lvert
   \Theta \rvert = \lvert \Omega \rvert$; in this case $\chi_\Theta \in
   \Irr(R)$ is an extension of $\chi_\Omega$.
 \end{lem}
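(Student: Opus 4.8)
The plan is to relate extensions of $\chi_\Omega$ from $N$ to $R$ with characters $\chi_\Theta$ of $R$ whose restriction to $N$ contains $\chi_\Omega$, and to pin down the extension condition via dimension counting. Since both $N$ and $R$ are potent saturable pro-$p$ groups, the Kirillov orbit method (Theorem~\ref{thm:kom}) applies to both, and in particular part~\eqref{cl:Kirillov.2} of that theorem, with $H = N$, $G = R$, governs when $\chi_\Omega = \chi_{\Ad^*(N)\omega}$ appears in $\Res^R_N(\chi_\Theta)$ for $\Theta = \Ad^*(R)\theta$: this happens exactly when there exists $r \in R$ with $\omega = (\Ad^*(r)\theta)\vert_\fn$. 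Replacing $\theta$ by $\Ad^*(r)\theta$, which does not change the orbit $\Theta$, we may assume $\theta\vert_\fn = \omega$.

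First I would record the standard Clifford-theoretic fact: $\chi_\Omega$ extends to $R$ if and only if there is some $\psi \in \Irr(R)$ with $\Res^R_N(\psi) = \chi_\Omega$, equivalently with $\psi(1) = \chi_\Omega(1)$ and $\chi_\Omega$ a constituent of $\Res^R_N(\psi)$. By the previous paragraph any such $\psi$ is of the form $\chi_\Theta$ with $\theta\vert_\fn = \omega$, and conversely any $\chi_\Theta$ with $\theta\vert_\fn = \omega$ restricts to a sum of $R$-conjugates of $\chi_\Omega$ (all of the same degree $\lvert\Omega\rvert^{1/2}$, since $N \triangleleft R$), so that $\chi_\Theta(1) = \chi_\Omega(1)$ forces $\Res^R_N(\chi_\Theta)$ to be a single irreducible constituent, namely an $R$-conjugate of $\chi_\Omega$ — but as $R$ stabilises $\omega\vert$-restricted data in the sense that $\theta$ itself restricts to $\omega$, one checks the constituent is $\chi_\Omega$ itself rather than merely a conjugate. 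Then I would translate $\chi_\Theta(1) = \chi_\Omega(1)$ through Theorem~\ref{thm:kom}\eqref{cl:Kirillov.1}, which says $\chi_\Theta(1) = \lvert\Theta\rvert^{1/2}$ and $\chi_\Omega(1) = \lvert\Omega\rvert^{1/2}$, to obtain precisely the condition $\lvert\Theta\rvert = \lvert\Omega\rvert$.

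For the converse direction, assuming $\theta \in \fr^\vee$ with $\theta\vert_\fn = \omega$ and $\lvert\Theta\rvert = \lvert\Omega\rvert$: by Theorem~\ref{thm:kom}\eqref{cl:Kirillov.2} (with $r = 1$) $\chi_\Omega$ is a constituent of $\Res^R_N(\chi_\Theta)$; every constituent of this restriction is an $R$-conjugate of $\chi_\Omega$ and has degree $\lvert\Omega\rvert^{1/2}$; comparing degrees, $\chi_\Theta(1) = \lvert\Theta\rvert^{1/2} = \lvert\Omega\rvert^{1/2}$ equals the common degree of the constituents, so $\Res^R_N(\chi_\Theta)$ is irreducible and therefore equal to $\chi_\Omega$, exhibiting $\chi_\Theta$ as the desired extension. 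The main obstacle I anticipate is the bookkeeping needed to ensure that the single constituent one obtains is genuinely $\chi_\Omega$ and not just some $R$-conjugate; here the hypothesis $\theta\vert_\fn = \omega$ (as opposed to merely $\theta\vert_\fn$ lying in the $R$-orbit of $\omega$) is exactly what is needed, since restriction of $\theta$ to $\fn$ literally produces $\omega$, and one invokes Theorem~\ref{thm:kom}\eqref{cl:Kirillov.2} with $g = 1$. A secondary point to handle carefully is that $N$ and $R$ being saturable with $\fn$ PF-embedded in $\fr$ (guaranteed in our applications by Proposition~\ref{prop:p.gt.n.e+1}) is what licenses the simultaneous use of the orbit correspondence for both groups and the compatibility of the $\exp$–$\log$ maps; this is where Lemma~\ref{lem:Hausdorff-expansion} and Corollary~\ref{lem:grp.Lie.decomp} enter implicitly, but no new computation is required.
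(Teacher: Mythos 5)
Your proposal is correct and follows essentially the same route as the paper: the paper's (very terse) proof likewise observes that $\chi_\Theta$ extends $\chi_\Omega$ iff $\chi_\Omega$ is a constituent of $\Res^R_N(\chi_\Theta)$ and the degrees agree, and then translates both conditions via Theorem~\ref{thm:kom}(1) and (2) into $\theta\vert_\fn=\omega$ (after moving $\theta$ within its orbit) and $\lvert\Theta\rvert=\lvert\Omega\rvert$. Your worry about the lone constituent being merely an $R$-conjugate of $\chi_\Omega$ is superfluous: since $\theta\vert_\fn=\omega$ makes $\chi_\Omega$ itself a constituent (Theorem~\ref{thm:kom}(2) with $g=1$), equality of degrees already forces $\Res^R_N(\chi_\Theta)=\chi_\Omega$.
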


 \begin{proof}
   Let $\Theta \subset \fr^\vee$ be an $\Ad^*(R)$-orbit.  Then
   $\chi_\Theta$ extends $\chi_\Omega$ if and only if $\chi_\Omega$ is
   a constituent of $\Res^R_N(\chi_\Theta)$ and $\chi_\Theta(1) =
   \chi_\Omega(1)$.  By Theorem~\ref{thm:kom}, these
   conditions are equivalent to: there exists $\theta \in \Theta$ such
   that $\theta \vert_\fn = \omega$ and $\lvert \Theta \rvert = \lvert
   \Omega \rvert$.
 \end{proof}

 \begin{cor}\label{cor:criterion.for.extendability}
   With the notation as above, suppose that $R \subset
   \In_G(\chi_\Omega)$.  Then the character $\chi_\Omega$ extends
   to~$R$ if and only if $\omega$ extends to $\theta \in \fr^\vee$ such
   that $\lvert \Ad^*(R) \omega \rvert = \lvert \Theta \rvert$, where
   $\Theta = \Ad^*(R) \theta$.  In this case $\chi_\Theta \in \Irr(R)$
   is an extension of $\chi_\Omega$.
 \end{cor}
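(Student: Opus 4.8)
The plan is to derive this corollary directly from Lemma~\ref{lemma:reps.2.char} by exploiting the extra hypothesis $R \subset \In_G(\chi_\Omega)$, which is the only new ingredient. First I would recall the content of Lemma~\ref{lemma:reps.2.char}: the character $\chi_\Omega$ extends from $N$ to $R$ if and only if there exists $\theta \in \fr^\vee$ with co-adjoint orbit $\Theta = \Ad^*(R)\theta$ such that $\theta\vert_\fn = \omega$ \emph{and} $\lvert \Theta \rvert = \lvert \Omega \rvert$, and in this case $\chi_\Theta$ is the desired extension. So the task reduces to showing that, under the assumption $R \subset \In_G(\chi_\Omega)$, the two conditions in Lemma~\ref{lemma:reps.2.char} can be re-packaged as: $\omega$ extends to some $\theta \in \fr^\vee$ with $\lvert \Ad^*(R)\omega \rvert = \lvert \Ad^*(R)\theta \rvert$. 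The point is that when $R$ lies in the inertia group of $\chi_\Omega$, the orbit $\Ad^*(R)\omega$ is already a well-defined finite set (a subset of $\fn^\vee$, on which $R$ acts via the co-adjoint action since $N \trianglelefteq R$), and its cardinality is a genuine invariant against which the size of $\Theta$ is to be compared.

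The key step is the following comparison of orbit sizes. Given any $\theta \in \fr^\vee$ with $\theta\vert_\fn = \omega$, restriction of characters gives an $R$-equivariant map $\Theta = \Ad^*(R)\theta \to \Ad^*(R)\omega$ sending $\Ad^*(r)\theta \mapsto (\Ad^*(r)\theta)\vert_\fn = \Ad^*(r)\omega$ (using that restriction commutes with the co-adjoint action of $R$, since $\fn$ is $R$-invariant). This map is surjective, so in general $\lvert \Theta \rvert \geq \lvert \Ad^*(R)\omega \rvert$, with equality precisely when the map is a bijection, i.e.\ precisely when distinct $R$-translates of $\theta$ have distinct restrictions to $\fn$. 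Thus $\lvert \Theta \rvert = \lvert \Omega \rvert$ combined with $\theta\vert_\fn = \omega$ is equivalent to: $\theta$ extends $\omega$ and $\lvert \Ad^*(R)\theta \rvert = \lvert \Ad^*(R)\omega \rvert$. Here I would use the hypothesis $R \subset \In_G(\chi_\Omega) = \Stab_G(\Omega) = N\Stab_G(\omega)$ from Lemma~\ref{lemma:R.and.G1}(a): by Lemma~\ref{lemma:R.and.G1}(b) applied with $G$ replaced by $R$ (note $N \leq R \leq \In_G(\chi_\Omega)$ forces $\In_R(\chi_\Omega) = R$, so $\Ad^*(R)\omega = \Ad^*(\In_R(\chi_\Omega))\omega = \Ad^*(N)\omega = \Omega$), one gets $\lvert \Ad^*(R)\omega \rvert = \lvert \Omega \rvert$, so the comparison target is exactly $\lvert \Omega \rvert$ and the reformulation matches the statement of the corollary verbatim.

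Putting these together: $\chi_\Omega$ extends to $R$ $\iff$ (by Lemma~\ref{lemma:reps.2.char}) there is $\theta \in \fr^\vee$ with $\theta\vert_\fn = \omega$ and $\lvert \Theta\rvert = \lvert\Omega\rvert$ $\iff$ (by the orbit-size comparison and $\lvert\Ad^*(R)\omega\rvert=\lvert\Omega\rvert$) $\omega$ extends to $\theta \in \fr^\vee$ with $\lvert \Ad^*(R)\omega\rvert = \lvert \Theta\rvert$, and in either case $\chi_\Theta \in \Irr(R)$ is the extension, again by Lemma~\ref{lemma:reps.2.char}. The only genuinely delicate point — and the one I would write out carefully — is the verification that the restriction map $\Theta \to \Ad^*(R)\omega$ is well-defined and $R$-equivariant, which boils down to the identity $(\Ad^*(r)\theta)\vert_\fn = \Ad^*(r)(\theta\vert_\fn)$; this is immediate from the defining formula $(\Ad^*(g)\eta)(X) = \eta(\Ad(g^{-1})X)$ together with $R$-invariance of $\fn$ (so that $\Ad(r^{-1})$ preserves $\fn$), but it is worth stating explicitly since it is what makes the orbit $\Ad^*(R)\omega$ meaningful in the first place. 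Everything else is bookkeeping, so I do not anticipate a real obstacle beyond getting this equivariance statement cleanly recorded.
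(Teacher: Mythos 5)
Your argument is correct and is exactly the paper's intended route: the paper's proof is simply that the corollary "follows directly from the previous two lemmata," namely Lemma~\ref{lemma:R.and.G1} (giving $\Ad^*(R)\omega=\Omega$, hence $\lvert\Ad^*(R)\omega\rvert=\lvert\Omega\rvert$, once $N\leq R\subseteq\In_G(\chi_\Omega)$) and Lemma~\ref{lemma:reps.2.char}, which is precisely how you argue. The restriction-map/orbit-size comparison you include is harmless but not needed once $\lvert\Ad^*(R)\omega\rvert=\lvert\Omega\rvert$ is in hand.
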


 \begin{proof}
   The claim follows directly from the previous two lemmata.
 \end{proof}

 Next we develop a criterion for applying
 Corollary~\ref{cor:criterion.for.extendability}.  Let $\fr$ be a
 $\Z_p$-Lie lattice, let $\fn$ be a Lie ideal of $\fr$ and let $\omega
 \in \fn^\vee$.  The \emph{radical} of $\omega$ in $\fr$ is
 \[
 \rad_\fr(\omega) \coloneqq \{ X \in \fr \mid \forall Y \in \fn :
 \omega([X,Y]) = 1 \};
 \]
 see~\cite{Gonzalez1}, but note the difference in notation.  We observe
 that $\rad_\fr(\omega)$ is equal to
 \[
 \stab_\fr (\omega) = \{ X \in \fr \mid \ad^*(X) \omega = 1 \},
 \]
 the \emph{stabiliser} of $\omega$ under the co-adjoint action given by
 \[
 (\ad^*(X) \omega)(Y) = \omega(-\ad(X)Y) \qquad \text{for $X \in \fr$,
   $\omega \in \fn^\vee$, and $Y \in \fn$.}
 \]

 \begin{lem} \label{lem:exp.stab.=.Stab} Let $\fr$ be a saturable
   $\Z_p$-Lie lattice, and let $R = \exp(\fr)$.  Let $\fn$ be an open
   Lie ideal that is PF-embedded in~$\fr$.  Let $\omega \in \fn^\vee$.
   Then $\stab_\fr(\omega)$ is saturable and
   \[
   \exp(\stab_\fr(\omega)) = \Stab_R(\omega).
   \]
 \end{lem}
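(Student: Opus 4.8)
The plan is to show the two inclusions $\exp(\stab_\fr(\omega)) \subseteq \Stab_R(\omega)$ and $\Stab_R(\omega) \subseteq \exp(\stab_\fr(\omega))$, after first checking that $\stab_\fr(\omega)$ is a saturable Lie sublattice of~$\fr$. For the latter, I would use that $\stab_\fr(\omega) = \rad_\fr(\omega)$ is clearly a Lie sublattice (closed under the bracket since if $\omega([X,Y])=1$ and $\omega([X',Y])=1$ for all $Y \in \fn$, then by the Jacobi identity $\omega([[X,X'],Y]) = \omega([X,[X',Y]]) \omega([X',[X,Y]])^{-1} = 1$, using that $[X',Y],[X,Y] \in \fn$ because $\fn$ is a Lie ideal). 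Saturability of $\stab_\fr(\omega)$ should follow from the fact that $\stab_\fr(\omega)$ is an isolated (i.e.\ $p$-saturated, closed under division) submodule: if $p^k X \in \stab_\fr(\omega)$ then $\omega([p^k X, Y]) = \omega([X,Y])^{p^k} = 1$ for all $Y \in \fn$, but since $\omega$ takes values in a group isomorphic to $\mathbb{Q}_p/\mathbb{Z}_p$ (no $p$-torsion-freeness issue arises at the level of a single value; one argues on the finite image) one must be a little careful — the clean route is that $\stab_\fr(\omega)$ is the kernel of the $\mathbb{Z}_p$-linear map $\fr \to \Hom(\fn,\mathbb{Q}_p/\mathbb{Z}_p)$, $X \mapsto (Y \mapsto \omega([X,Y]))$, and kernels of $\mathbb{Z}_p$-module homomorphisms into a $\mathbb{Z}_p$-module are isolated submodules of a $\mathbb{Z}_p$-lattice, hence are themselves $\mathbb{Z}_p$-lattices that are saturable by \cite[Corollary~5.4]{Gonzalez2} once one knows they are potent; but more directly, an isolated Lie sublattice of a saturable Lie lattice is saturable by \cite[Theorem~4.1]{Gonzalez2} applied to the restricted potent filtration.

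For the inclusion $\exp(\stab_\fr(\omega)) \subseteq \Stab_R(\omega)$, I would take $X \in \stab_\fr(\omega)$ and compute $(\Ad^*(e^X)\omega)(Y) = \omega(\Ad(e^{-X})Y)$ for $Y \in \fn$. Applying Lemma~\ref{lem:Hausdorff-expansion} (with the potent filtration of $\fn$ in $\fr$ supplied by the PF-embedding), we get $\Ad(e^{-X})Y = Y - \ad(X)Y - \ad(X)Z$ for some $Z \in \fn_{i+1}$ when $Y \in \fn_i$; writing this as a telescoping argument over the filtration of $\fn$, one sees $\Ad(e^{-X})Y = Y + W$ where $W$ is an infinite sum of terms each lying in $[\fr,\fn] \subseteq \fn$ and, crucially, each expressible via iterated brackets $\ad(X)(\cdots)$ — hence $\omega(W) = 1$ because $X \in \stab_\fr(\omega) = \rad_\fr(\omega)$ annihilates $\omega \circ [X,-]$ on all of $\fn$. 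The main subtlety here is convergence and the interchange of $\omega$ with the infinite sum: $\omega$ is continuous and $\fn$ is profinite, so $\omega(\sum_j W_j) = \prod_j \omega(W_j)$, and each $\omega(W_j) = 1$ since $W_j \in \ad(X)(\fn) + \text{(iterated brackets)}$, all of which lie in the kernel of the homomorphism $Y \mapsto \omega([X,Y])$ composed appropriately. So $\Ad^*(e^X)\omega = \omega$, giving $e^X \in \Stab_R(\omega)$.

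For the reverse inclusion $\Stab_R(\omega) \subseteq \exp(\stab_\fr(\omega))$: take $x \in \Stab_R(\omega)$, write $x = e^X$ with $X = \log(x) \in \fr$. I would argue that $X \in \stab_\fr(\omega)$ by induction down the potent filtration $\fn = \fn_1 \supseteq \fn_2 \supseteq \cdots$. From $\Ad^*(e^X)\omega = \omega$ and the expansion $\Ad(e^{-X})Y \equiv Y - \ad(X)Y \pmod{\fn_{i+1}}$ for $Y \in \fn_i$ (again Lemma~\ref{lem:Hausdorff-expansion}), we get $\omega(\ad(X)Y) = 1$ for all $Y \in \fn_i$ modulo the contribution of $\fn_{i+1}$, and by induction on $i$ (the intersection being trivial) we conclude $\omega([X,Y]) = 1$ for all $Y \in \fn$, i.e.\ $X \in \rad_\fr(\omega) = \stab_\fr(\omega)$. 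This matches the structure of the proof of the analogous statement in \cite{Gonzalez1}, and I expect the only real work to be the bookkeeping of the filtration-by-filtration annihilation argument in both directions; the convergence/continuity point in the first inclusion is where I would be most careful, since one must genuinely use that $\omega$ has open kernel and that the tail terms $W_j$ eventually land inside that kernel.
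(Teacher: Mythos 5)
Your two inclusions follow essentially the paper's own route: the containment $\exp(\stab_\fr(\omega)) \subseteq \Stab_R(\omega)$ is immediate from Lemma~\ref{lem:Hausdorff-expansion}, which already packages the entire tail of the Hausdorff expansion into a single term, giving $\Ad(e^X)Y = Y + \ad(X)(Y+Z)$ with $Y+Z \in \fn$, so no telescoping or interchange of $\omega$ with an infinite sum is needed there; and the reverse containment is exactly the filtration induction you sketch, producing for each $j$ an element $Y_j \in \fn_j$ with $\omega(\ad(X)Y) = \omega(\ad(X)Y_j)$ and then letting $j \to \infty$. Note, though, that the continuity/open-kernel input you flag is what is needed in this \emph{reverse} inclusion (to know $\omega(\ad(X)\fn_j) = 1$ for large $j$), not in the forward one.

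The genuine gap is your saturability argument. You claim $\stab_\fr(\omega)$ is isolated (closed under division by $p$) because it is the kernel of the $\Z_p$-linear map $X \mapsto \big(Y \mapsto \omega([X,Y])\big)$ into $\fn^\vee$. But $\fn^\vee$ is a torsion module (a finite power of $\Q_p/\Z_p$), and kernels of homomorphisms into torsion modules need not be isolated; the relation $\omega([pX,Y]) = \omega([X,Y])^p = 1$ only says $\omega([X,Y])$ is a $p$th root of unity — precisely the difficulty you noticed and then re-introduced in disguise. Concretely, take $p>2$, let $\fr = \fn$ be the Heisenberg Lie lattice over $\Z_p$ with basis $X,Y,Z$, $[X,Y] = Z$ central (this is saturable and PF-embedded in itself), and let $\omega$ be trivial on $X,Y$ and of order $p$ on $Z$; then $\stab_\fr(\omega) = p\Z_p X + p\Z_p Y + \Z_p Z$, which contains $pX$ but not $X$, so it is not isolated. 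Hence the route ``isolated sublattice of a saturable lattice, hence saturable'' is unavailable (the implication itself, via restricting a potent filtration, would be fine if the premise held — but it fails in general). The paper does not prove saturability up front at all: it first establishes the set-theoretic equality $\exp(\stab_\fr(\omega)) = \Stab_R(\omega)$ by the two inclusions, and only then deduces saturability of $\stab_\fr(\omega)$ from the fact that its exponential is the closed subgroup $\Stab_R(\omega)$ of $R$. So the fix is to reorder your argument: prove the equality first, and obtain saturability as a consequence of it, rather than from an isolation property that $\stab_\fr(\omega)$ does not in general possess.
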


 \begin{proof}
   Suppose that $X \in \fr$ such that $e^X \in \Stab_R(\omega)$, i.e.\
   such that
   \[
   \omega(\Ad(e^X) Y) = \omega(Y) \qquad \text{for all $Y \in \fn$.}
   \]
   Let $\fn = \fn_1 \supseteq \fn_2 \supseteq \ldots$ be a potent
   filtration for $\fn$.  We claim that $\omega(\ad(X) Y) \in
   \omega(\ad(X) \fn_j)$ for all $j \in \N$.  Since $\omega(\ad(X)
   \fn_j) = 1$ for sufficiently large $j$, this will imply that $X \in
   \stab_\fr(\omega)$.  Clearly, one has $\omega(\ad(X) Y) \in
   \omega(\ad(X) \fn_1)$.  Now let $j \geq 2$ and suppose inductively
   that $\omega(\ad(X) Y) = \omega(\ad(X) Y_{j-1})$ for some $Y_{j-1}
   \in \fn_{j-1}$.  Then by Lemma~\ref{lem:Hausdorff-expansion} we have
   \[
   \Ad(e^X) Y_{j-1} = Y_{j-1} + \ad(X) Y_{j-1} - \ad(X) Y_j,
   \]
   where $Y_j \in \fn_j$.  This yields
   \[
   \omega(\ad(X) Y) = \omega(\ad(X) Y_{j-1}) = \omega(\Ad(e^X) Y_{j-1}
   - Y_{j-1}) \omega(\ad(X) Y_j) = \omega(\ad(X) Y_j).
   \]

   Conversely, suppose that $X \in \stab_\fr(\omega)$, i.e.\ that
   $\omega(\ad(X) Y) = 1$ for all $Y \in \fn$.  Then by
   Lemma~\ref{lem:Hausdorff-expansion} we have
   \[
   \omega(\Ad(e^X) Y) = \omega(Y) \omega(\ad(X) (Y+Z)) = \omega(Y)
   \]
   for some $Z \in \fn$.  Hence $e^X \in \Stab_R(\omega)$.

   It follows that $\stab_\fr(\omega)$ is saturable and
   $\exp(\stab_\fr(\omega)) = \Stab_R(\omega)$.
 \end{proof}

 \begin{prop} \label{pro:character.extends} Let $\fr$ be a potent
   saturable $\Z_p$-Lie lattice with a potent open Lie ideal $\fn$ that
   is PF-embedded in~$\fr$.  Put $R = \exp(\fr)$, $N = \exp(\fn)$.  Let
   $\theta \in \fr^\vee$, $\Theta = \Ad^*(R) \theta$ and $\omega =
   \theta \vert_\fn \in \fn^\vee$, $\Omega = \Ad^*(N) \omega$.  Suppose
   that $\fr = \fn + \stab_\fr(\theta)$.  Then
   \begin{enumerate}
   \item $\stab_\fr(\theta) = \stab_\fr(\omega)$;
   \item $\Stab_R(\theta) = \Stab_R(\omega)$;
   \item $\chi_\Omega \in \Irr(N)$ extends to $\chi_\Theta \in
     \Irr(R)$.
   \end{enumerate}
 \end{prop}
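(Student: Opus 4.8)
The three claims are tightly linked, and I would prove them in the order (1), (2), (3), since each feeds into the next. For (1), the inclusion $\stab_\fr(\theta) \subseteq \stab_\fr(\omega)$ is immediate: if $X \in \fr$ satisfies $\omega([X,Y]) = 1$ for all $Y \in \fn$ and in fact $\theta([X,Z]) = 1$ for all $Z \in \fr$, then restricting to $Z = Y \in \fn$ gives the radical condition for $\omega$ (recall $\rad_\fr = \stab_\fr$ by the remark preceding Lemma~\ref{lem:exp.stab.=.Stab}, and $[X,\fn] \subseteq \fn$ since $\fn$ is an ideal). The reverse inclusion is the crux. Suppose $X \in \stab_\fr(\omega)$; I want $\theta([X,Z]) = 1$ for all $Z \in \fr$. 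Using the hypothesis $\fr = \fn + \stab_\fr(\theta)$, write $Z = Y + W$ with $Y \in \fn$ and $W \in \stab_\fr(\theta)$. Then $\theta([X,Z]) = \theta([X,Y]) \, \theta([X,W])$. The second factor: since $X \in \fr$ and $W \in \stab_\fr(\theta)$, bilinearity of the bracket and skew-symmetry give $\theta([X,W]) = \theta(-[W,X]) = (\ad^*(W)\theta)(X)^{-1} \cdot$(sign bookkeeping)$\,= 1$ because $\ad^*(W)\theta$ is trivial. The first factor: $[X,Y] \in \fn$ (as $\fn$ is an ideal), so $\theta([X,Y]) = \omega([X,Y]) = 1$ since $X \in \stab_\fr(\omega)$. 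Hence $X \in \stab_\fr(\theta)$, completing~(1). I should double-check the multiplicative-versus-additive conventions for $\omega$ and the sign in $\ad^*$, but no genuine obstacle lies here.

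For (2), I would invoke Lemma~\ref{lem:exp.stab.=.Stab} twice. That lemma requires the Lie ideal to be PF-embedded in a saturable Lie lattice; here $\fn$ is PF-embedded in $\fr$ and $\fr$ is saturable (indeed potent saturable) by hypothesis, so $\Stab_R(\omega) = \exp(\stab_\fr(\omega))$. For $\theta$, the relevant ideal is $\fr$ itself viewed inside $\fr$ — but actually $\Stab_R(\theta)$ concerns the action of $R$ on $\fr^\vee$, i.e.\ $\theta$ should be regarded as living on the ideal $\fr \trianglelefteq \fr$, which is trivially PF-embedded in the saturable lattice $\fr$. Thus $\Stab_R(\theta) = \exp(\stab_\fr(\theta))$. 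Combining with~(1) and the fact that $\exp$ is a bijection between saturable lattices and saturable groups gives $\Stab_R(\theta) = \Stab_R(\omega)$. A small point to verify: that $\stab_\fr(\theta)$ and $\stab_\fr(\omega)$ are themselves saturable, which is part of the conclusion of Lemma~\ref{lem:exp.stab.=.Stab}.

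For (3), I would apply Corollary~\ref{cor:criterion.for.extendability}, which says that $\chi_\Omega$ extends to $R$ precisely when $\omega$ extends to some $\theta' \in \fr^\vee$ with $|\Ad^*(R)\omega| = |\Ad^*(R)\theta'|$, provided $R \subseteq \In_G(\chi_\Omega)$. We already have a natural candidate extension, namely the given $\theta$. It remains to check the orbit-size equality $|\Theta| = |\Omega|$ and the inertia containment. For the sizes: $|\Theta| = [R : \Stab_R(\theta)]$ and, since $N$ acts on $\fn^\vee$, we have $|\Omega| = [N : \Stab_N(\omega)] = [N : N \cap \Stab_R(\omega)]$. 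Using~(2), $\Stab_R(\theta) = \Stab_R(\omega)$, and using the hypothesis $\fr = \fn + \stab_\fr(\theta)$ together with Corollary~\ref{lem:grp.Lie.decomp} (the group-level decomposition $R = N\,\Stab_R(\theta)$), a standard index computation gives $[R : \Stab_R(\theta)] = [N : N \cap \Stab_R(\theta)] = [N : N \cap \Stab_R(\omega)] = |\Omega|$. The inertia containment $R \subseteq \In_G(\chi_\Omega)$: by Lemma~\ref{lemma:R.and.G1}(a), $\In_G(\chi_\Omega) = N\,\Stab_G(\omega) \supseteq N\,\Stab_R(\omega) = N\,\Stab_R(\theta) = R$. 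Then Corollary~\ref{cor:criterion.for.extendability} delivers that $\chi_\Theta \in \Irr(R)$ restricts to $\chi_\Omega$, i.e.\ $\chi_\Omega$ extends to $R$.

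\textbf{Main obstacle.} I expect the only real subtlety to be the bookkeeping around parts~(1) and~(2): making sure the radical/stabiliser identification $\rad_\fr = \stab_\fr$ is applied with the correct ideal in each instance (for $\omega$ the ideal is $\fn$; for $\theta$ it is $\fr$ itself), and that the PF-embeddedness hypothesis of Lemma~\ref{lem:exp.stab.=.Stab} is satisfied in both applications — the second being essentially automatic since any Lie lattice is PF-embedded (indeed potently) in itself when it is saturable. Once~(1) and~(2) are secured, part~(3) is a formal consequence of the extension criterion and the index computation, with Corollary~\ref{lem:grp.Lie.decomp} handling the passage from the additive decomposition $\fr = \fn + \stab_\fr(\theta)$ to the multiplicative one $R = N\,\Stab_R(\theta)$.
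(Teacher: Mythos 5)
Your proposal is correct and follows essentially the same route as the paper's proof: part~(1) via the decomposition $\fr = \fn + \stab_\fr(\theta)$ and the ideal property of $\fn$, part~(2) by applying Lemma~\ref{lem:exp.stab.=.Stab} to both $\omega$ and $\theta$, and part~(3) by combining $R = N\,\Stab_R(\theta)$ (Corollary~\ref{lem:grp.Lie.decomp}) with Lemma~\ref{lemma:R.and.G1} and Corollary~\ref{cor:criterion.for.extendability}. The only cosmetic difference is that you verify $\lvert\Theta\rvert = \lvert\Omega\rvert$ by an index computation, whereas the paper compares the two $R$-stabilisers directly via~(2).
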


 \begin{proof}
   (1) Clearly, $\stab_\fr(\theta) \subset \stab_\fr(\omega)$.  For the
   reverse inclusion, let $X \in \stab_\fr(\omega)$.  As $\fr = \fn +
   \stab_\fr(\theta)$ it suffices to show that $\theta(-\ad(X)Y)= 1$ for
   $Y \in \fn \cup \stab_\fr(\theta)$.  If $Y \in \fn$ then
   $\theta(-\ad(X)Y) = \omega(-\ad(X)Y) = 1$.  If $Y \in
   \stab_\fr(\theta)$ then $\theta(-\ad(X)Y) = \theta(\ad(Y)X) = 1$.

   (2) The claim follows immediately from (1) and
   Lemma~\ref{lem:exp.stab.=.Stab}.

   (3) From $\fr = \fn + \stab_\fr(\theta)$,
   Lemma~\ref{lem:exp.stab.=.Stab} and
   Corollary~\ref{lem:grp.Lie.decomp} we deduce that $R = N
   \Stab_R(\theta)$.  From (2) we deduce that $\In_R(\chi_\Omega) = R$,
   and (2) with Corollary~\ref{cor:criterion.for.extendability} shows
   that $\chi_\Theta$ extends $\chi_\Omega$.
 \end{proof}

 \subsection{Cohomological criteria for extendability}\label{subsec:ext.coh}
 Let $G$ be a group, $N$ a finite-index normal subgroup of $G$ and
 $\chi \in \Irr(N)$.  Define $\Irr(G \,\vert\, \chi)$ to be the set of
 irreducible characters $\psi$ of $G$ such that $\chi$ is an
 irreducible constituent of $\Res^G_N(\psi)$.  The \emph{relative
   representation zeta function} of $G$ with respect to $\chi$ is
 defined as
 \begin{equation}\label{def:relative.zeta.function}
   \zeta_{G \,\vert\, \chi}(s) = \sum_{\psi \in \Irr(G \,\vert\, \chi)}
   \left( \psi(1) / \chi(1) \right)^{-s}.
 \end{equation}
 In the notation introduced in~\cite[Section~7.2.1]{AKOV1}, we have
 $\zeta_{G \,\vert\, \chi}(s) = [ G : \In_G(\chi) ]^{-s}
 \zeta_{G,\chi}(s)$.  Clifford theory yields the following proposition;
 see~\cite[Corollary~6.17]{Is}.

 \begin{prop}
   Let $G$ be a profinite group, $N$ an open normal subgroup of~$G$,
   and $\chi \in \Irr(N)$.  Suppose that $\chi$ extends to a character
   of $\In_G(\chi)$.  Then
   \[
   \zeta_{G \,\vert\, \chi}(s) = [G : \In_G(\chi)]^{-s} \,
   \zeta_{\In_G(\chi)/N}(s).
   \]
 \end{prop}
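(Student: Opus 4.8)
The statement to prove is the Clifford-theoretic formula
\[
\zeta_{G \,\vert\, \chi}(s) = [G : \In_G(\chi)]^{-s} \, \zeta_{\In_G(\chi)/N}(s)
\]
under the hypothesis that $\chi$ extends to $\In_G(\chi)$. The plan is to combine the standard correspondence of Clifford theory (for which we may invoke \cite[Corollary~6.17]{Is}) with a bookkeeping argument that tracks character degrees. First I would set $T = \In_G(\chi)$ and recall the three-step Clifford correspondence: (i) induction $\Ind_T^G$ gives a bijection between $\Irr(T \,\vert\, \chi)$ and $\Irr(G \,\vert\, \chi)$, preserving the relevant degree ratios up to the factor $[G:T]$; (ii) since $\chi$ extends to some $\widehat\chi \in \Irr(T)$, Gallagher's theorem furnishes a bijection $\Irr(T/N) \to \Irr(T \,\vert\, \chi)$, $\beta \mapsto \beta \widehat\chi$ (viewing $\beta$ as a character of $T$ inflated from $T/N$); and (iii) under this bijection the degree satisfies $(\beta\widehat\chi)(1) = \beta(1)\,\chi(1)$.

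The key steps, in order, are then as follows. I would first observe that for $\psi = \Ind_T^G(\eta)$ with $\eta \in \Irr(T \,\vert\, \chi)$ one has $\psi(1) = [G:T]\,\eta(1)$, so that $\psi(1)/\chi(1) = [G:T]\cdot \eta(1)/\chi(1)$. Summing over $\Irr(G \,\vert\, \chi)$ and using the bijection from (i) converts the sum defining $\zeta_{G \,\vert\, \chi}(s)$ into $[G:T]^{-s}$ times $\sum_{\eta \in \Irr(T \,\vert\, \chi)} (\eta(1)/\chi(1))^{-s}$. Next, applying the Gallagher bijection from (ii)–(iii), the inner sum becomes $\sum_{\beta \in \Irr(T/N)} \beta(1)^{-s}$, which is exactly $\zeta_{T/N}(s)$. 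Assembling these identifications yields the claimed formula. Care is needed only to note that these are finite sums (or at worst absolutely convergent Dirichlet series, since $T/N$ is finite here as $G$ is profinite and $N$ open), so rearrangement is unproblematic.

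I do not expect a genuine obstacle: the content is essentially a restatement of the cited Clifford-theoretic facts in the language of relative zeta functions, and the only thing to be careful about is matching the normalisations — specifically, that $\zeta_{G \,\vert\, \chi}(s)$ is defined with the degree ratio $\psi(1)/\chi(1)$ rather than $\psi(1)$, which is precisely what makes the factor $[G:T]^{-s}$ (rather than a messier expression) come out, and that the inflation $\Irr(T/N) \hookrightarrow \Irr(T)$ correctly identifies $\zeta_{T/N}(s)$ with $\sum_{\beta} \beta(1)^{-s}$ over the Gallagher fibre. The mildest subtlety is checking that Gallagher's theorem applies verbatim in the profinite setting with $N$ open and normal in $G$; this is routine since everything factors through the finite quotient $T/M$ for a suitable open normal $M \leq N$, or one simply cites \cite[Corollary~6.17]{Is} directly as indicated in the statement.
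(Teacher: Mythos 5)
Your argument is correct and coincides with the paper's: the paper simply invokes \cite[Corollary~6.17]{Is}, which packages exactly the Clifford correspondence plus Gallagher's theorem that you spell out, and the degree bookkeeping $\psi(1)=[G:\In_G(\chi)]\,\beta(1)\,\chi(1)$ gives the stated formula. Your remark that everything factors through finite quotients (since $N$ is open) is the same routine reduction implicit in the paper's citation.
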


 \begin{cor} \label{cor:gen.formula} Using the same notation as in
   the proposition and supposing that every $\chi \in \Irr(N)$ extends
   to a character of $\In_G(\chi)$, we have
   \begin{equation}\label{equ:gen.sum}
     \zeta_G(s) = \sum_{\chi \in \Irr(N)} \lvert G : \In_G(\chi)
     \rvert^{-1-s} \, \zeta_{\In_G(\chi)/N}(s) \, \chi(1)^{-s}.
   \end{equation}
 \end{cor}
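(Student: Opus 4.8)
The plan is to obtain the formula by summing the relative representation zeta functions $\zeta_{G\,\vert\,\chi}(s)$ supplied by the preceding proposition over all $\chi\in\Irr(N)$ and recognising the outcome as $\zeta_G(s)$ via a routine piece of Clifford-theoretic bookkeeping. I treat the asserted identity as one of formal Dirichlet series (equivalently, of holomorphic functions on a common half-plane of absolute convergence, where all series have non-negative coefficients), so that interchanging the order of summation below is unproblematic.

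First I would recall the shape of Clifford's theorem in the present setting: since $N$ is an open normal subgroup of finite index, for every $\psi\in\Irr(G)$ the restriction $\Res^G_N(\psi)$ is a sum of the $G$-conjugates of a single irreducible constituent $\chi_\psi\in\Irr(N)$, all occurring with a common multiplicity, and the $G$-orbit $\{{}^g\chi_\psi\mid g\in G\}$ has cardinality $[G:\In_G(\chi_\psi)]$. Hence $\psi\in\Irr(G\,\vert\,\chi)$ if and only if $\chi$ lies in this orbit, the index $[G:\In_G(\chi)]$ is finite (because $[G:N]<\infty$) and constant as $\chi$ ranges over $\{\chi\in\Irr(N)\mid\psi\in\Irr(G\,\vert\,\chi)\}$, with common value $[G:\In_G(\chi_\psi)]$, and moreover $\Irr(G)=\bigcup_{\chi\in\Irr(N)}\Irr(G\,\vert\,\chi)$.

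Next I would apply the proposition to each $\chi\in\Irr(N)$; its hypothesis is exactly the standing assumption that $\chi$ extends to a character of $\In_G(\chi)$. Combining $\zeta_{G\,\vert\,\chi}(s)=[G:\In_G(\chi)]^{-s}\,\zeta_{\In_G(\chi)/N}(s)$ with the definition \eqref{def:relative.zeta.function} of $\zeta_{G\,\vert\,\chi}(s)$ as $\sum_{\psi\in\Irr(G\,\vert\,\chi)}(\psi(1)/\chi(1))^{-s}$ yields
\[
\lvert G:\In_G(\chi)\rvert^{-1-s}\,\zeta_{\In_G(\chi)/N}(s)\,\chi(1)^{-s}
= \lvert G:\In_G(\chi)\rvert^{-1}\sum_{\psi\in\Irr(G\,\vert\,\chi)}\psi(1)^{-s}.
\]
Summing over $\chi\in\Irr(N)$ and interchanging the two summations, the coefficient of $\psi(1)^{-s}$ in the right-hand side of the asserted formula becomes $\sum_{\chi}\lvert G:\In_G(\chi)\rvert^{-1}$, where $\chi$ runs over the $G$-orbit identified in the previous paragraph; since that orbit has size $[G:\In_G(\chi_\psi)]$ and $[G:\In_G(\chi)]$ is constant on it, this coefficient equals $[G:\In_G(\chi_\psi)]\cdot[G:\In_G(\chi_\psi)]^{-1}=1$. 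Therefore the right-hand side equals $\sum_{\psi\in\Irr(G)}\psi(1)^{-s}=\zeta_G(s)$, as claimed.

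There is no real obstacle here; the only points deserving a word of care are the legitimacy of reordering the double sum — for $\real(s)$ large the inner sums are finite and all terms are positive, so the iterated sum is independent of the order — and the finiteness and orbit-constancy of $[G:\In_G(\chi)]$, both of which follow immediately from $[G:N]<\infty$ together with the fact that conjugate subgroups have equal index. Everything else is a direct substitution.
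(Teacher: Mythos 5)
Your argument is correct and is exactly the Clifford-theoretic bookkeeping the paper has in mind: the corollary is stated as an immediate consequence of the proposition, and the intended justification is precisely your computation that, for each $\psi\in\Irr(G)$, the characters $\chi$ with $\psi\in\Irr(G\,\vert\,\chi)$ form a single $G$-orbit of size $[G:\In_G(\chi)]$, so the weights $\lvert G:\In_G(\chi)\rvert^{-1}$ sum to $1$ and each $\psi(1)^{-s}$ is counted exactly once.
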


 Whether a character extends can be studied in the framework of the
 second cohomology group of the inertia quotient.  Let $S$ be a group
 with a finite-index normal subgroup $R \triangleleft S$, and let
 $\chi \in \Irr(R)$.  Clearly, a necessary condition for the
 extendability of $\chi$ to $S$ is that $S$ fixes the character~$\chi$,
 i.e.\ that $\In_S(\chi) = S$.  Assuming this, one constructs an
 element in the second cohomology group $\textup{H}^2(S/R,\C^\times)$,
 also known as the Schur multiplier, as follows;
 see~\cite[Chapter~11]{Is}.

 Let $M$ be a left $R$-module affording the character~$\chi$.  Choose a
 left transversal $T$ for $R$ inside $S$ such that $1 \in T$.  For
 every $t \in T$, the $R$-modules $M$ and $tM$ are isomorphic, because
 $t \in \In_S(\chi)$.  For each $t \in T$, we choose an isomorphism
 $P_t \colon M \rightarrow tM$, selecting the identity $P_1 = \Id$
 for~$t=1$.  Every element of $S$ can be written uniquely as $th$,
 where $t \in T$ and $h \in R$.  We define $P_{th} \colon M \rightarrow
 tM$ by $P_{th}(m) = P_t(h \cdot m)$.  It can be easily checked that
 for every pair $g_1,g_2 \in S$, the operator
 \[
 P_{g_1 g_2}^{-1} \circ P_{g_1} \circ P_{g_2} \colon M \rightarrow M
 \]
 is a non-zero endomorphism of the $R$-module~$M$; because $M$ is
 irreducible, this morphism is multiplication by a scalar
 $\alpha(g_1,g_2) \in \C^\times$, say.  Note that the value of
 $\alpha(g_1,g_2)$ depends only on the cosets $g_1R$ and $g_2R$.  The
 function $\alpha$ is a $2$-cocycle and, although it generally depends
 on the particular choices for $T$ and $P_t$, the cohomology class
 $\beta \in \textup{H}^2(S/R,\C^\times)$ that it represents does not.
 By \cite[Theorem~11.7]{Is}, the character $\chi$ extends to $S$ if and
 only if $\beta$ is trivial.

 In our setting $S$ is a profinite group and $R$ an open normal pro-$p$
 subgroup of~$S$.  In this case all finite-dimensional representations
 of $R$ factor through finite $p$-groups.

 \begin{prop} \label{pro:extension.p.group} Let $S$ be a profinite
   group, $R \triangleleft S$ be an open normal pro-$p$ subgroup of
   $S$ and $\chi \in \Irr(R)$ such that $\In_S(\chi) = S$.  If $\beta
   \in \textup{H}^2(S/R,\C^\times)$ is the cohomology class attached to
   $(S,R,\chi)$, then $\beta$ is a $p$-element in
   $\textup{H}^2(S/R,\C^\times)$.
 \end{prop}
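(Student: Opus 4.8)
The plan is to decompose the finite abelian group $\textup{H}^2(S/R,\C^\times)$ into its $p$-part and its $p'$-part, and to show that the $2$-cocycle $\alpha$ constructed above takes values that, after adjusting by a coboundary, can be assumed to lie in the group of $p$-power roots of unity. Write $Q = S/R$; since $R$ is open in the profinite group $S$, the quotient $Q$ is finite, so $\textup{H}^2(Q,\C^\times)$ is a finite abelian group and decomposes canonically as $\textup{H}^2(Q,\C^\times) = \textup{H}^2(Q,\C^\times)_p \oplus \textup{H}^2(Q,\C^\times)_{p'}$. It therefore suffices to show that the image of $\beta$ in the $p'$-component vanishes.

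First I would observe that the restriction map $\textup{H}^2(Q,\C^\times) \to \textup{H}^2(P,\C^\times)$ to a Sylow $p$-subgroup $P \leq Q$ is injective on the $p$-part, while on the $p'$-part one should instead use restriction to a Hall $p'$-subgroup (or, more robustly, the fact that for any prime $\ell \neq p$ the $\ell$-primary component of $\textup{H}^2(Q,\C^\times)$ injects into $\textup{H}^2(Q_\ell,\C^\times)$ for a Sylow $\ell$-subgroup $Q_\ell$, by the standard transfer/corestriction argument). So the claim reduces to: for every prime $\ell \neq p$, the image of $\beta$ under restriction to $\textup{H}^2(Q_\ell,\C^\times)$ is trivial. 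Lift $Q_\ell$ to a subgroup $S_\ell$ of $S$ containing $R$ with $S_\ell/R \cong Q_\ell$; the restricted cohomology class is precisely the obstruction class attached to the triple $(S_\ell, R, \chi)$. Now $R$ is a pro-$p$ group and $S_\ell/R$ is an $\ell$-group with $\ell \neq p$; by a Schur–Zassenhaus type argument the extension $1 \to R \to S_\ell \to Q_\ell \to 1$ splits (the relevant cohomology $\textup{H}^2(Q_\ell, R)$ vanishes since $|Q_\ell|$ and the pro-order of $R$ are coprime), so we may pick the transversal $T$ inside $S_\ell$ to be an actual subgroup isomorphic to $Q_\ell$.

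With $T \leq S_\ell$ a genuine complement, the extendability of $\chi$ from $R$ to $S_\ell$ can be decided directly: one forms the external tensor-type construction, or alternatively one notes that $\chi$ extends to $S_\ell$ because every representation of a finite (or profinite) group $R$ that is invariant under a coprime group $Q_\ell$ of automorphisms extends to the semidirect product — this is a classical result (see e.g.\ \cite[Corollary~8.16 and Theorem~8.28]{Is}, using that $\gcd(|Q_\ell|,|R/\ker\chi|)=1$). Since $\chi$ extends, its obstruction class in $\textup{H}^2(Q_\ell,\C^\times)$ is trivial. As this holds for every $\ell \neq p$, the $p'$-component of $\beta$ vanishes, so $\beta$ is a $p$-element. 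The main obstacle is organising the reduction to Sylow/Hall subgroups cleanly — in particular verifying that the obstruction class behaves compatibly under restriction to $S_\ell$ and that one may take the transversal to be a subgroup there — but once the extension over the coprime group is seen to split, the coprime-extendability theorem for characters finishes the argument with no further computation.
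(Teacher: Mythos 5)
Your argument is correct, but it is genuinely different from the one in the paper. The paper's proof is a short determinant computation: after normalising the isomorphisms $P_t$ to have determinant $1$, taking determinants in the defining relation gives $\alpha(t_1,t_2)^{\dim M} = \det\rho(h)$; since $\rho$ factors through a finite $p$-group, $\det\rho(h)$ is a $p$-power root of unity and $\dim M$ is a $p$-power, so every value $\alpha(t_1,t_2)$ lies in $\mu_{p^\infty}$ and the class $\beta$ has $p$-power order. This is self-contained and even produces a representative cocycle with values in $p$-power roots of unity. You instead use the primary decomposition of the finite group $\textup{H}^2(S/R,\C^\times)$, the standard $\mathrm{cor}\circ\mathrm{res}$ argument to detect the $\ell$-part on a Sylow $\ell$-subgroup $Q_\ell$ for each $\ell \neq p$, the compatibility of the obstruction class with passage to the preimage $S_\ell$, and the coprime-index extension theorem: since $\ker\chi$ is open and normal in $S_\ell$ (by $S$-invariance of $\chi$), one works in the finite group $S_\ell/\ker\chi$, where $\chi(1)$ and $o(\det\chi)$ are $p$-powers and $\lvert S_\ell : R\rvert$ is an $\ell$-power, so $\chi$ extends and the restricted class dies. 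This is heavier machinery but more conceptual, and it shows directly that $\beta$ restricts trivially to every subgroup of $S/R$ of order prime to $p$.

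Two small remarks. First, your Schur--Zassenhaus step is dispensable: Isaacs' extension criterion for invariant characters with $(\chi(1)\,o(\chi),\lvert G:N\rvert)=1$ needs no complement to $R$ in $S_\ell$, so you can skip the splitting entirely. Second, if you do keep it, the justification ``$\textup{H}^2(Q_\ell,R)$ vanishes'' is not quite right for nonabelian $R$; the correct reference is the profinite Schur--Zassenhaus theorem (splitting of extensions with coprime (super)natural orders), which does apply here.
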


 \begin{proof}
   We continue to use the notation set up above.  Let $\rho$ denote a
   representation associated with the $R$-module~$M$.  Fix volume forms
   on the $R$-modules $tM$, $t \in T$.  Let $t_1, t_2 \in T$ and
   suppose that $h \in R$ is such that $t_1 t_2 h \in T$.  Taking
   determinants, we get
   \[
   \alpha(t_1,t_2)^{\dim(M)} = \det(P_{t_1t_2h})^{-1} \det(P_{t_1})
   \det(P_{t_2}) \det(\rho(h)).
   \]
   We may assume that the isomorphisms $P_t$, $t \in T$, have
   determinant~$1$.  Since $\rho$ is an irreducible representation of
   the pro-$p$ group~$R$, its dimension $\dim(M)$ is a power of $p$,
   and $\det \rho(h)$ is a $p^n$th root of unity for some~$n \in \N$.
   Therefore $\alpha(t_1,t_2)$ is a $p^m$-root of unity for some~$m \in
   \N$.  It follows that the order of $\beta$ is a power of~$p$.
 \end{proof}

 \subsection{Extension of characters from $N$ to their stabiliser in $G$}
 From now on consider again $G = \aG(\lri)$, where $\aG$ denotes one
 of the $\lri$-group schemes $\GL_n, \GU_n, \SL_n, \SU_n$, with $1$st
 principal congruence subgroup~$N = G^1 = \aG^1(\lri)$ and $\cha(\lri)
 = 0$, as discussed at the beginning of
 Section~\ref{sec:kom.clifford}.  Let $\chi \in \Irr(N)$ with
 stabiliser $S_\chi = \textrm{I}_G(\chi)$, and let $R_\chi$ denote a
 maximal normal pro-$p$ subgroup of~$S_\chi$.  Assume that $p > en+n$,
 and let $\fn = \log(N)$, $\fr = \log(R)$ denote the $\Z_p$-Lie
 lattice associated to~$N$, $R$; see
 Proposition~\ref{prop:p.gt.n.e+1}.  Writing $\ag$ for the $\lri$-Lie
 lattice scheme $\gl_n, \gu_n, \fsl_n$, or $\su_n$, according to our
 choice of~$\aG$, we have $\fn = \ag^1(\lri)$; furthermore we put $\fg
 = \ag(\lri)$.  For $\len,m \in \N$ with $m \leq \len$, we write
 $\fg_\len = \fg / \fg^\len$ and $\fg^m_\len = \fg^m / \fg^\len$.  We
 also put $\fn_\len = \fg^1_{\len +1}$.  Similarly to the situation in
 Section~\ref{subsec:basic.princ}, the character $\phi$
 in~\eqref{equ:Tate-character} induces an isomorphism
 \begin{equation}\label{Pont.duality.for.h0}
   \fg_\len \rightarrow \fg_\len^{\vee}, \quad A \mapsto \omega_A,
   \quad \text{where} \quad \omega_A \colon \fg_\len \rightarrow \C^\times, \,
   \omega_A(X) = \phi \left(\pi^{-\len}\tr(AX)\right),
 \end{equation}
 of finite abelian groups which is $G$-equivariant with respect to the
 adjoint action $\Ad$ on $\fg_\len$ and the co-adjoint action $\Ad^*$
 on $\fg_\len^\vee$. For the general linear and unitary cases this is
 Lemma~\ref{lem:duality.finite}. The special linear/unitary cases
 follow along the same lines noting that $p$ does not divide $n$.

 In
 addition there is, similar to \eqref{equ:pont.dual}, an
 isomorphism
 \begin{equation*}
   \fg_\len \rightarrow (\fn_\len)^\vee, \quad A \mapsto
   \omega_A^1, \quad \text{where} \quad \omega_A^1 \colon
   (\fn_\len)^\vee \rightarrow \C^\times, \, \omega_A^1(X) = \omega_A(\pi^{-1}X).
 \end{equation*}

 \begin{defn}\label{def:shad.pres.lift} Let $G$ and $\fg$ be as above.
   Let $A\in\fg_\len$ and $\wt{A} \in \fg_{\len+1}$.  We say
   that $\wt{A}$ is a \emph{shadow-preserving lift} of $A$ if $A
   \equiv_{\pi^{\len}} \wt{A}$ and $\overline{\Cen_G(A)} =
   \overline{\Cen_{G}(\wt{A})}$.
 \end{defn}

 \begin{thm} \label{thm:extension.to.radical} Let $G$, $N$ and $\fg$
   be as above.  Suppose that $p > en+n$. Let $\chi \in \Irr(N)$,
   corresponding to the co-adjoint orbit $\Ad^*(N) \omega_A^1$ for $A
   \in \fg_\len$, and put $S_\chi = \In_G(\chi)$ with maximal normal
   pro-$p$ subgroup~$R_\chi$.  If
   \begin{itemize} \renewcommand{\labelitemi}{$\circ$}
   \item there exists a shadow-preserving lift
     $\wt{A}\in\fg_{\len+1}$ of $A$ and
   \item $\textup{H}^2(S_\chi / R_\chi, \C^\times) = 1$,
   \end{itemize}
   then $\chi$ extends to $S_\chi$.
 \end{thm}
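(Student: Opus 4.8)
The strategy is to extend $\chi$ in two stages, first from $N$ to $R_\chi$ via the orbit method, and then from $R_\chi$ to $S_\chi$ via the vanishing of the Schur multiplier. Since $p>en+n$, Proposition~\ref{prop:p.gt.n.e+1} guarantees that $N$, $R_\chi$ and $S_\chi$ have the structure required by $p$-adic Lie theory: $N$ and $R_\chi$ are potent and saturable, and $\fn = \log(N)$ is PF-embedded in $\fr = \log(R_\chi)$. The character $\chi$ corresponds, via Corollary~\ref{cor:kom} and the isomorphism $\fg_\len \to (\fn_\len)^\vee$, to the co-adjoint orbit of $\omega_A^1$ for some $A \in \fg_\len$. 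The first step is to produce a lift of $\omega_A^1$ to a character $\theta$ of $\fr$ whose co-adjoint $R_\chi$-orbit has the \emph{same} cardinality as the $N$-orbit of $\omega_A^1$; by Corollary~\ref{cor:criterion.for.extendability} (using that $R_\chi \subseteq S_\chi = \In_G(\chi)$ and Lemma~\ref{lemma:R.and.G1}), this is precisely what is needed for $\chi$ to extend to $R_\chi$, the extension being the Kirillov character $\chi_\Theta$.

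The crucial point, where the shadow-preserving lift hypothesis enters, is verifying the orbit-size condition. The plan is to use the existence of a shadow-preserving lift $\wt{A} \in \fg_{\len+1}$ of $A$, meaning $A \equiv_{\pi^\len} \wt{A}$ and $\overline{\Cen_G(A)} = \overline{\Cen_G(\wt{A})}$, to manufacture a compatible lift $\theta \in \fr^\vee$ of $\omega_A^1 \in (\fn_\len)^\vee$. Concretely, one uses $\wt{A}$ (reduced appropriately) to define a character $\theta$ on the larger lattice $\fr$ extending $\omega_A^1$, and then checks that $\stab_\fr(\theta) + \fn = \fr$. This is where the equality of the reductions of the centralisers is used: the stabiliser $\stab_\fr(\theta)$ should, up to a subgroup of $\fn$, coincide with the reduction of $\Cen_G(\wt{A})$, which by hypothesis equals the reduction of $\Cen_G(A)$, which controls $\stab_\fn(\omega_A^1)$; this forces the decomposition $\fr = \fn + \stab_\fr(\theta)$. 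Once that decomposition holds, Proposition~\ref{pro:character.extends} applies directly and yields $\stab_\fr(\theta) = \stab_\fr(\omega)$, $\Stab_{R_\chi}(\theta) = \Stab_{R_\chi}(\omega)$, and — crucially — that $\chi = \chi_\Omega$ extends to $\chi_\Theta \in \Irr(R_\chi)$.

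For the second stage, we have by construction that $\chi_\Theta$ is an $S_\chi$-invariant (since it restricts irreducibly from $R_\chi$-orbit data compatible with the $S_\chi$-action, and $R_\chi$ is normal in $S_\chi$; one should check $\In_{S_\chi}(\chi_\Theta) = S_\chi$ directly, using that $S_\chi$ fixes $\chi$ and $R_\chi$ is the maximal normal pro-$p$ subgroup) character of the normal subgroup $R_\chi$. The obstruction to extending $\chi_\Theta$ from $R_\chi$ to $S_\chi$ is a class $\beta \in \textup{H}^2(S_\chi/R_\chi, \C^\times)$, which by the hypothesis $\textup{H}^2(S_\chi/R_\chi,\C^\times) = 1$ is trivial; hence by \cite[Theorem~11.7]{Is}, $\chi_\Theta$ extends to a character $\psi$ of $S_\chi$. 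Restricting $\psi$ back to $N$ gives $\chi$, so $\chi$ extends to $S_\chi$, as claimed. (Proposition~\ref{pro:extension.p.group} shows that $\beta$ is always a $p$-element, so in applications it suffices to know the Schur multiplier has no $p$-torsion; but here the full vanishing is assumed.)

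\textbf{Main obstacle.} The delicate step is the first stage: constructing the lift $\theta \in \fr^\vee$ from the shadow-preserving lift $\wt{A}$ and verifying $\fr = \fn + \stab_\fr(\theta)$. One must translate the group-theoretic statement $\overline{\Cen_G(A)} = \overline{\Cen_G(\wt{A})}$ into a statement about Lie stabilisers at the appropriate congruence level, keeping careful track of how the isomorphisms $\fg_\len \to \fg_\len^\vee$ and $\fg_\len \to (\fn_\len)^\vee$ interact with the adjoint and co-adjoint actions, and how passing from level $\len$ to level $\len+1$ enlarges the lattice on which the character is defined. The matching of $\stab_\fr(\theta)$ with the centraliser data requires Lemma~\ref{lem:exp.stab.=.Stab} to pass between Lie and group stabilisers, and a dimension/index count to conclude that the sum $\fn + \stab_\fr(\theta)$ exhausts $\fr$. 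Once this is in place, the rest is an assembly of Proposition~\ref{pro:character.extends}, Clifford theory, and the cohomological vanishing hypothesis.
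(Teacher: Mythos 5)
Your two-stage plan is the route the paper itself takes: it sets $\theta = \omega_{\wt{A}}\vert_{\fr}$, deduces $\fr = \fn + \stab_\fr(\theta)$, invokes Proposition~\ref{pro:character.extends} to obtain the extension $\chi_\Theta \in \Irr(R_\chi)$ of $\chi$, and then uses the vanishing of $\textup{H}^2(S_\chi/R_\chi,\C^\times)$. One remark on your first stage: no dimension or index count is needed for the decomposition. Since $S_\chi = \In_G(\chi) = \Stab_G(\omega_A^1)N = \Stab_G(A)N$ (Lemma~\ref{lemma:R.and.G1} and the equivariant duality) and the shadow-preserving hypothesis gives $\Stab_G(A)N = \Stab_G(\wt{A})N$, the sandwich $N \leq R_\chi \leq S_\chi$ immediately yields $R_\chi = \Stab_{R_\chi}(\wt{A})N \subseteq \Stab_{R_\chi}(\theta)N$, and Lemma~\ref{lem:exp.stab.=.Stab} together with Corollary~\ref{lem:grp.Lie.decomp} converts this group factorisation into $\fr = \fn + \stab_\fr(\theta)$.

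The one genuine gap is the step you relegate to a parenthesis: showing $\In_{S_\chi}(\chi_\Theta) = S_\chi$. The justification you sketch --- that $S_\chi$ fixes $\chi$ and that $R_\chi$ is the maximal normal pro-$p$ subgroup --- does not suffice: for $g \in S_\chi$ the conjugate ${}^g\chi_\Theta$ is again an extension of $\chi$ to $R_\chi$, but extensions of $\chi$ to $R_\chi$ are in general not unique (they may differ by linear characters of $R_\chi/N$), so invariance of $\chi$ does not transfer to invariance of the chosen extension, and maximality of $R_\chi$ plays no role in this point. The correct argument reuses the shadow-preserving lift a second time:
\[
\In_{S_\chi}(\chi) = \Stab_{S_\chi}(\omega_A^1)N = \Stab_{S_\chi}(A)N
= \Stab_{S_\chi}(\wt{A})N \subseteq \Stab_{S_\chi}(\theta)N \subseteq \In_{S_\chi}(\chi_\Theta),
\]
whence $\In_{S_\chi}(\chi_\Theta) = \In_{S_\chi}(\chi) = S_\chi$; only then does the hypothesis $\textup{H}^2(S_\chi/R_\chi,\C^\times) = 1$ (via Section~\ref{subsec:ext.coh}) produce an extension of $\chi_\Theta$, hence of $\chi$, to $S_\chi$. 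With this inserted, your plan coincides with the paper's proof.
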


 \begin{proof}
   As $\wt{A}$ is a shadow-preserving lift of $A$, we have that
   \[
   S_\chi / N = \Stab_G(A) N / N = \Stab_G(\wt{A}) N / N.
   \]
   As $N \leq R_\chi \leq S_\chi$, this implies that
   \[
   R_\chi = \Stab_{R_\chi}(A) \, N = \Stab_{R_\chi}(\wt{A}) \, N,
   \]
   and thus
   \[
   R_\chi = \Stab_{R_\chi}(\omega_A^1) \, N = \Stab_{R_\chi}
   (\omega_{\wt{A}}) \, N.
   \]
   Setting $\theta = \omega_{\wt{A}}\vert_{\fr}$, we apply
   Lemma~\ref{lem:grp.Lie.decomp} to deduce that $\fr = \fn +
   \stab_{\fr}(\theta)$.  We conclude from
   Proposition~\ref{pro:character.extends} that the character
   $\chi_\Theta \in \Irr(R)$ associated to $\Theta = \Ad^*(R) \theta$
   extends $\chi$.

   To show that $\chi_\Theta$ extends to $S_\chi$, we first show that
   $\In_{S_\chi}(\chi_\Theta) = \In_{S_\chi}(\chi)$.  The inclusion
   $\In_{S_\chi}(\chi_\Theta) \subset \In_{S_\chi}(\chi)$ is clear
   and the reverse inclusion follows from
   \begin{equation*}
     \In_{S_\chi} (\chi) = \Stab_{S_\chi}(\omega_A^1) N =
     \Stab_{S_\chi}(A) N = \Stab_{S_\chi}({\wt{A}}) N \subset
     \Stab_{S_\chi}(\theta) N \subset
     \In_{S_\chi}(\chi_\Theta).
   \end{equation*}
   Since $\textup{H}^2(S_\chi/R_\chi,\C^\times) = 1$, the character
   $\chi_\Theta$ extends to an irreducible character of~$S_\chi$; cf.\
   Section~\ref{subsec:ext.coh}.
 \end{proof}

 \begin{remark.star} \label{rem:char-p-comments-5} We indicate with
   what changes the results, in particular
   Theorem~\ref{thm:extension.to.radical}, remain true in case of
   arbitrary characteristic, provided that $p$ is sufficiently large.
   Definition~\ref{def:shad.pres.lift} remains the same in positive
   characteristic.  Using all the previous remarks in
   Section~\ref{sec:kom.clifford}, one sees that the conclusion of
   Theorem~\ref{thm:extension.to.radical}, for $\chi \in \Irr(N)$,
   holds true if the level $\len-1$ of $\chi$ satisfies $p \geq
   n\len$.  Here, the level $\len-1$ of $\chi$ is the minimal value of
   $\len-1$ for $\len \in \N$ such that $\chi$ is trivial on the
   $\len$-th principal congruence subgroup;
   cf.\ Definition~\ref{def:level}.
 \end{remark.star}


\section{Zeta functions of groups of type
  $\mathsf{A}_2$}\label{sec:zeta.A2}

In this section we apply some of the machinery developed in previous
sections in the special case of groups of type $\mathsf{A}_2$. In
particular, we prove Theorem~\ref{thm:J} in
Section~\ref{subsec:principal}, Theorem~\ref{thm:C} in
Section~\ref{subsec:van.coh.gps}, Corollary~\ref{cor:D} in
Section~\ref{subsec:zeta.shadows}, and Theorems~\ref{thm:H} and \ref{thm:I} in
Section~\ref{subsec:ennola.duality}.

We use the notation introduced in Section~\ref{sec:kom.clifford},
focusing now on the special case $n=3$.  In summary, $\lri$ denotes a
compact discrete valuation ring with valuation ideal $\fp$ and finite
residue field~$\kk$, where $p \coloneqq \cha(\kk)$ and $q \coloneqq
\lvert \kk \rvert$.  The letter $\aG$, accordingly $\aH$, stands for
one of the $\lri$-group schemes $\GL_3, \GU_3$, accordingly $\SL_3,
\SU_3$, assuming $p>2$ in the unitary cases.  Write $G = \aG(\lri)$,
$H = \aH(\lri)$, and $G^m, H^m, G_\len^m, H_\len^m$ for the principal
congruence subgroups and finite subquotients.  The corresponding
$\lri$-Lie lattice schemes, respectively Lie lattices, are denoted by
$\ag, \ah$, $\fg = \ag(\lri), \fh = \ah(\lri)$, $\fg_\len^m,
\fh_\len^m$ et cetera.  The parameter $\ee = \ee_\aG = \ee_\aH \in
\{1,-1\}$ facilitates the parallel treatment of the (general) linear
and (general) unitary setting; compare~\eqref{equ:def.epsilon}.  For
$\len \in \N_0$, we use the abbreviated notation $\cQ_\len \coloneqq
\cQ_{\lri,\len}^\ag$ (cf.\ Definitions~\eqref{def:graph.Qgl}
and~\eqref{def:graph.Qgu}) and $\Sh \coloneqq \Sh_{\aG(\lri)}$ (cf.\
\eqref{equ:Sh.GL} and \eqref{equ:Sh.GU}).

\subsection{Principal congruence subgroups} \label{subsec:principal}
Proposition~\ref{pro:zeta.H^m_l} provides general formulae for the
representation zeta functions of (i)~the finite groups $G^m_\len,
H^m_\len$ in terms of the functions $\xi^\sigma_\len$ and (ii)~the
infinite groups $H^m$ in terms of the limit functions $\xi^\sigma =
\lim_{\len\rightarrow \infty}\xi^\sigma_\len$;
cf.\ Definition~\ref{def:xi}.  The definition of the functions
$\xi^\sigma_\len$ has two ingredients: firstly the indices
$[\aG(\kk):\sigma(\kk)]$ and secondly the partial similarity class
zeta function $\gamma_\len^\sigma(s)$;
cf.\ Definition~\ref{def:sim.class.zeta}.  The former are tabulated in
Table~\ref{tab:shadows.GLGU} while explicit formulae for the latter
are provided in Proposition~\ref{pro:sim.shadow.fin}.  We record
expressions for the functions~$\xi^\sigma_\len$, using the auxiliary
functions $\UU, \VV, \WW$ introduced in~\eqref{equ:aux.ABC}.

\begin{prop}\label{pro:xi.sigma.len}
  For $\sigma \in \Sh$ of type $\cS\in\T^{(\ee)}$ and $\len \in
  \N_0$,
  \[
  \xi^\sigma_\len(s) = \Xi^{\cS}_{\ee,q,\len}(s),
  \]
  where the function $\Xi^{\cS}_{\ee,q,\len}(s) \coloneqq
  \Xi^{\cS}_{\mathsf{A}_2,\ee,q,\len}(s)$ is defined as
 \begin{equation*}
    \begin{array}{ll}
      1 & \text{if $\cS = \Gsha$,} \\
      (q-1)(q^2 + \ee q + 1) q^2 A_{q,\len}(s/2) & \text{if
        $\cS = \Lsha$,} \\
      (q^3-\ee)(q+\ee) A_{q,\len}(s/2) & \text{if $\cS = \Jsha$,} \\
      \sixth (q-1)(q^2 + \ee q + 1)(q + \ee)q^3
      \left[ (q-2)B_{q,\len}(s/2) +3(q-1)q^{1-2s}C_{q,\len}(s/2)\right] &
      \text{if $\cS = \Tasha$,} \\
      \hlf (q-1)(q^3-\ee)q^3 \left[ q B_{q,\len}(s/2) + (q-1) q^{1-2s}
        C_{q,\len}(s/2) \right] & \text{if $\cS = \Tbsha$,} \\
      \third (q^2-1)(q+\ee)(q-\ee)^2q^3 B_{q,\len}(s/2) & \text{if
        $\cS = \Tcsha$,} \\
      (q-1)(q^3-\ee)(q+\ee)q^2 \left[ B_{q,\len}(s/2) + 2q^{1-2s}
        C_{q,\len}(s/2) \right] & \text{if $\cS = \Msha$,} \\
      (q^3-\ee)(q^2-1) \left[ qB_{q,\len}(s/2) + (q-1) q^{1-2s}
        C_{q,\len}(s/2) \right] & \text{if $\cS = \Nsha$,}\\
      (q^3-\ee)(q^2-1) q^{1-2s}C_{q,\len}(s/2) &
      \hspace*{-1.2cm}\text{if $\cS \in \{
        \Kasha, \Kbsha \}$.}
    \end{array}
  \end{equation*}
\end{prop}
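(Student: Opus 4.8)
\textbf{Proof strategy for Proposition~\ref{pro:xi.sigma.len}.}
The plan is to combine Definition~\ref{def:xi}, which sets $\xi^\sigma_\len(s) = [\aG(\kk):\sigma(\kk)]^{1+s/2} q^{-\len} \gamma^\sigma_\len(s/2)$, with the explicit formulae for $\gamma^\sigma_\len(s)$ from Proposition~\ref{pro:sim.shadow.fin}, which assert $\gamma^\sigma_\len(s) = q^\len \Gamma^\cS_{\ee,q,\len}(s)$. Plugging these in, the factor $q^\len$ from Proposition~\ref{pro:sim.shadow.fin} cancels the normalising $q^{-\len}$, so we are left with
\[
\xi^\sigma_\len(s) = [\aG(\kk):\sigma(\kk)]^{1+s/2} \; \Gamma^\cS_{\ee,q,\len}(s/2).
\]
It therefore suffices to verify, type by type, that the right-hand side equals the claimed $\Xi^\cS_{\ee,q,\len}(s)$.

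The verification amounts to two bookkeeping tasks. First, I would read off the indices $[\aG(\kk):\sigma(\kk)]$ from Table~\ref{tab:shadows.GLGU} (equivalently, from the orders of the shadow groups $\sigma(\kk)$ as recorded there, together with $|\GL_3(\kk)|$ or $|\GU_3(\kk)|$); by the Ennola-uniform conventions these indices are the integral polynomials in $q$ and $\ee$ such as $(q^3-\ee)(q+\ee)$ for type $\Jsha$, $(q-1)(q^2+\ee q+1)q^2$ for type $\Lsha$, and so on. Second, I would substitute $s \mapsto s/2$ in the expression for $\Gamma^\cS_{\ee,q,\len}(s)$ from Proposition~\ref{pro:sim.shadow.fin}: this turns each argument $\left((\cdots)q^k\right)^{-s}$ there into $\left((\cdots)q^k\right)^{-s/2}$, and it turns the auxiliary polynomials. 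Recalling from~\eqref{equ:aux.ABC} that $A_{q,\len}(s) = f^1_\len(q^{1-4s})$, $B_{q,\len}(s) = f^1_\len(q^{2-6s})$, $C_{q,\len}(s) = f^2_\len(q^{1-4s},q^{2-6s})$, the substitution $s\mapsto s/2$ replaces $q^{1-4s}$ by $q^{1-2s}$ and $q^{2-6s}$ by $q^{2-3s}$, which is exactly the shape of the auxiliary terms appearing in the stated $\Xi^\cS_{\ee,q,\len}$. The remaining point is that multiplying $\Gamma^\cS_{\ee,q,\len}(s/2)$ by $[\aG(\kk):\sigma(\kk)]^{1+s/2}$ promotes each bracketed Dirichlet argument $\left(\text{(poly)}\,q^k\right)^{-s/2}$ to $\left(\text{(poly)}\,q^k/[\aG(\kk):\sigma(\kk)]\right)^{-s/2}$ while contributing one extra factor $[\aG(\kk):\sigma(\kk)]$ with no $s$-dependence; checking that the resulting numerical prefactors and the arguments of the powers match those in the displayed table for $\Xi^\cS_{\ee,q,\len}$ is then a direct comparison.

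I do not expect a genuine obstacle here: the statement is essentially a repackaging of Proposition~\ref{pro:sim.shadow.fin} under the rescaling $s\mapsto s/2$ and the twist by the shadow index. The only mildly delicate point is purely clerical, namely keeping track of how the normalisation $[\aG(\kk):\sigma(\kk)]^{1+s/2}$ distributes over the sum of two terms (e.g.\ the $B$- and $C$-terms for types $\Tasha$, $\Tbsha$, $\Msha$, $\Nsha$) and confirming that the coefficients $\tfrac16$, $\tfrac12$, $\tfrac13$ survive unchanged. In the writeup I would present the $\Gsha$ case (where $\Gamma^\Gsha_{\ee,q,\len}=1$ and $[\aG(\kk):\sigma(\kk)]=1$, so $\xi^\Gsha_\len=1$) as a trivial base, then do $\Lsha$ and $\Tasha$ in full as representative single-term and two-term cases, and remark that the remaining types follow by the same substitution and are left to the reader.
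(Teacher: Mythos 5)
Your strategy coincides with the paper's: Proposition~\ref{pro:xi.sigma.len} carries no separate proof precisely because it is meant as the direct combination of Definition~\ref{def:xi}, the indices $[\aG(\kk):\sigma(\kk)]$ read off from Table~\ref{tab:shadows.GLGU}, and the formulae for $\gamma^\sigma_\len$ in Proposition~\ref{pro:sim.shadow.fin}, with the substitution $s\mapsto s/2$ turning $q^{1-4s},q^{2-6s}$ into $q^{1-2s},q^{2-3s}$ exactly as you say.

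One concrete caution before you declare the comparison ``direct''. The cancellation you anticipate is complete precisely because, for each type $\cS\notin\{\Kasha,\Kbsha\}$, the index $[\aG(\kk):\sigma(\kk)]$ equals the class-size factor $b^{(\ee)}_{\Gsha,\cS}(q)$ that appears as the Dirichlet argument in $\Gamma^{\cS}_{\ee,q,\len}$; for $\Kasha,\Kbsha$ the argument exceeds the index by $q^4$, and the residual $q^{-2s}$ is what produces the $q^{1-2s}$ in the stated formula, so do not expect full cancellation there. More importantly, for types $\Jsha$ and $\Msha$ the displayed lines of Proposition~\ref{pro:sim.shadow.fin} carry the factor $(q-\ee)^3(q+\ee)$, whereas the index and Table~\ref{tab:branch.rules.A2} give $(q^3-\ee)(q+\ee)$; these polynomials are not equal, so a verbatim substitution would \emph{not} reproduce the stated $\Xi^{\Jsha}$ and $\Xi^{\Msha}$. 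The correct value is $(q^3-\ee)(q+\ee)$: by Propositions~\ref{pro:class.quot.GL} and~\ref{pro:class.quot.GU} the relevant class size is $\|\Gsha\|/\|\Jsha\| = (q^2-1)(q^3-\ee)/(q-\ee) = (q^3-\ee)(q+\ee)$, in agreement with row~3 of Table~\ref{tab:branch.rules.A2}. So in your write-up take the Dirichlet arguments from the branching data $b^{(\ee)}_{\sigma,\tau}$ rather than transcribing those two lines, and the prefactors (including $\sixth,\hlf,\third$) then come out as claimed. (Also note a small slip in your prose: the index for type $\Lsha$ is $(q^2+\ee q+1)q^2$; the extra factor $(q-1)$ in $\Xi^{\Lsha}$ comes from the prefactor of $\Gamma^{\Lsha}$, not from the index.)
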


\begin{cor} \label{cor:infinite.xi} For $\sigma \in \Sh$ of type
  $\cS\in\T^{(\ee)}$,
  \[
  \xi^\sigma(s) = \Xi^{\cS}_{\ee,q}(s),
  \]
  where the function $\Xi^{\cS}_{\ee,q}(s) \coloneqq
  \Xi^{\cS}_{\mathsf{A}_2,\ee,q}(s)$ is given by
  \begin{equation*}
    \begin{array}{ll}
      1 & \text{if $\cS = \Gsha$,} \\ 
      (q-1) (q^2 + \ee q + 1) q^2 \; (1 - q^{1-2s})^{-1} & \text{if
        $\cS = \Lsha$,} \\ 
      (q^3 - \ee)(q + \ee) \; (1 - q^{1-2s})^{-1} & \text{if $\cS =
        \Jsha$,} \\ 
      \tfrac{1}{6} (q-1) (q^2 + \ee q + 1) (q + \ee)
      q^3  \; (q-2+2q^{2-2s} - q^{1-2s}) \left((1 - q^{1-2s}) (1 -
        q^{2-3s})\right)^{-1} & \text{if $\cS = \Tasha$,} \\ 
      \hlf (q-1) (q^3 - \ee) (1 - q^{-2s}) q^4 \; \left((1 - q^{1-2s})(1 -
        q^{2-3s})\right)^{-1} & \text{if $\cS = \Tbsha$,} \\ 
      \third (q^2 -1) (q + \ee) (q-\ee)^2 q^3 \; (1 - q^{2-3s})^{-1} &
      \text{if $\cS = \Tcsha$,} \\ 
      (q^3 - \ee) (q-1) (q + \ee) q^2 \; (1 + q^{1-2s}) \left( (1 -
        q^{1-2s}) (1 - q^{2-3s}) \right)^{-1} & \text{if $\cS =
        \Msha$,} \\ 
      (q^3 - \ee) (q^3 - q) \; (1 - q^{-2s}) \left((1 - q^{1-2s})(1 -
        q^{2-3s}) \right)^{-1} & \text{if $\cS = \Nsha$,} \\ 
      (q^3 - 1) (q^2 - 1) q^{1-2s} \; \left((1 - q^{1-2s}) (1 -
        q^{2-3s}) \right)^{-1} & \hspace*{-1.2cm}\text{if $\cS \in \{
        \Kasha, \Kbsha \}$.} 
    \end{array}
  \end{equation*}
\end{cor}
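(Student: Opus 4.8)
\textbf{Proof plan for Corollary~\ref{cor:infinite.xi}.}
The plan is to derive the formulae for $\Xi^{\cS}_{\ee,q}(s)$ as the coefficientwise limits
$\xi^\sigma(s) = \lim_{\len\to\infty} \xi^\sigma_\len(s)$, using the explicit finite-level
formulae just established in Proposition~\ref{pro:xi.sigma.len}. By Definition~\ref{def:xi}
we have $\xi^\sigma_\len(s) = [\aG(\kk):\sigma(\kk)]^{1+s/2}\,q^{-\len}\gamma^\sigma_\len(s/2)$,
and the normalised partial similarity class zeta functions $q^{-\len}\gamma^\sigma_\len(s)$
converge coefficientwise as $\len\to\infty$ by Proposition~\ref{pro:limit.gamma}
(here $p = \cha(\kk) \neq 3$ is harmless since $n = 3$ but the relevant divisibility by
$q^\len$ has already been dealt with; alternatively one simply takes the formal limit of the
Dirichlet polynomials). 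Thus the only task is to pass to the limit in the bracketed
expressions of Proposition~\ref{pro:xi.sigma.len}, which are built from the auxiliary
polynomials $\UU = f^1_\len(q^{1-4s})$, $\VV = f^1_\len(q^{2-6s})$ and $\WW = f^2_\len(q^{1-4s},q^{2-6s})$.

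First I would record the limits of these three building blocks. From the rational-function
identities for $f^1_\len$ and $f^2_\len$ stated just before~\eqref{equ:aux.ABC}, one has,
as formal limits of Dirichlet polynomials in $q^{-s}$,
\[
A_{q,\len}(s/2) \longrightarrow \frac{1}{1-q^{1-2s}}, \qquad
B_{q,\len}(s/2) \longrightarrow \frac{1}{1-q^{2-3s}}, \qquad
C_{q,\len}(s/2) \longrightarrow \frac{1}{(1-q^{1-2s})(1-q^{2-3s})},
\]
where I have substituted $s \mapsto s/2$ (so $q^{1-4s}\mapsto q^{1-2s}$ and
$q^{2-6s}\mapsto q^{2-3s}$), matching the normalisation in the definition of $\xi^\sigma_\len$.
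Each of these is legitimate coefficientwise: the partial sums $\sum_{j=0}^{\len-1} x^j$ and
$\sum_{j_1+j_2\le\len-2} x^{j_1}y^{j_2}$ stabilise in each coefficient as $\len\to\infty$,
converging to the geometric series $\frac{1}{1-x}$ and $\frac{1}{(1-x)(1-y)}$ respectively.
The factor $q^{-\len}$ is already absorbed into $\xi^\sigma_\len$, and the
index prefactor $[\aG(\kk):\sigma(\kk)]^{1+s/2}$ is independent of $\len$, so it survives the limit unchanged.

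Next I would substitute these limits into each of the nine cases of
Proposition~\ref{pro:xi.sigma.len} and simplify. For the minimal types $\Gsha$ this is
immediate; for $\Lsha$ and $\Jsha$ one directly gets the single-factor form
$(\cdots)(1-q^{1-2s})^{-1}$. For $\Tasha$, $\Tbsha$, $\Msha$, $\Nsha$ and $\Kasha,\Kbsha$
one combines $(q-2)B + 3(q-1)q^{1-2s}C$, $qB + (q-1)q^{1-2s}C$, $B + 2q^{1-2s}C$,
$qB + (q-1)q^{1-2s}C$ and $q^{1-2s}C$ respectively over the common denominator
$(1-q^{1-2s})(1-q^{2-3s})$; for instance the $\Tasha$-bracket becomes
$\frac{(q-2)(1-q^{1-2s}) + 3(q-1)q^{1-2s}}{(1-q^{1-2s})(1-q^{2-3s})}
= \frac{q-2+2q^{2-2s}-q^{1-2s}}{(1-q^{1-2s})(1-q^{2-3s})}$, the $\Tbsha$- and $\Nsha$-brackets
both simplify to $\frac{q-q^{1-2s}}{(1-q^{1-2s})(1-q^{2-3s})} = \frac{q(1-q^{-2s})}{(1-q^{1-2s})(1-q^{2-3s})}$
(absorbing the leading $q$ into the prefactor where the table does so), and the
$\Msha$-bracket becomes $\frac{1+q^{1-2s}}{(1-q^{1-2s})(1-q^{2-3s})}$.
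For $\Tcsha$ only $\VV$ appears, giving the lone factor $(1-q^{2-3s})^{-1}$. Matching these
against the prefactors $(q^3-\ee)q^3 \cdot \hlf(q-1)$ etc.\ and collecting the resulting
polynomial coefficients in $q$ and $q^{-s}$ against the stated right-hand sides completes
the verification. Since everything here is bookkeeping with geometric series and elementary
rational-function algebra, there is no genuine obstacle — the only point requiring a word
of care is the legitimacy of the coefficientwise (formal) passage to the limit in the
Dirichlet polynomials, which is exactly what Proposition~\ref{pro:limit.gamma} (and the
closed forms for $f^1_\len, f^2_\len$) guarantees.
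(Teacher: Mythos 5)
Your proposal is correct and follows essentially the same route as the paper: the paper's proof of Corollary~\ref{cor:infinite.xi} consists precisely of taking the formal limit of the formulae in Proposition~\ref{pro:xi.sigma.len} via the substitutions $A_{q,\len}(s/2)\to(1-q^{1-2s})^{-1}$, $B_{q,\len}(s/2)\to(1-q^{2-3s})^{-1}$, $C_{q,\len}(s/2)\to\bigl((1-q^{1-2s})(1-q^{2-3s})\bigr)^{-1}$. Your additional remarks on coefficientwise convergence (via Proposition~\ref{pro:limit.gamma} and Definition~\ref{def:xi}) and the case-by-case simplifications are accurate and merely make explicit what the paper leaves implicit.
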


\begin{proof}
  We may regard $\xi^\sigma(s)$ as a limit of $\xi^\sigma_\len(s)$ as
  $\len \to \infty$. Taking the formal limit
  \[
  \Xi^{\cS}_{\ee,q}(s) = \lim_{\len \to
    \infty}\Xi^{\cS}_{\ee,q,\len}(s)
  \]
  amounts to employing the substitutions
  \[
  A_{q,\len}(s/2) \to \frac{1}{1-q^{1-2s}}, \quad B_{q,\len}(s/2) \to
  \frac{1}{1-q^{2-3s}}, \quad C_{q,\len}(s/2) \to
  \frac{1}{(1-q^{1-2s}) (1-q^{2-3s})}
  \]
  to the formulae provided by the proposition.
\end{proof}

\begin{proof}[Proof of Theorem~\ref{thm:J}]
  The explicit formulae for the representation zeta functions of the
  finite groups $G^m_\len,H^m_\len$ as well as the infinite groups
  $H^m$ follow directly from Proposition~\ref{pro:zeta.H^m_l},
  Remark~\ref{rem:char-p-comments-4},
  Proposition~\ref{pro:xi.sigma.len}, and
  Corollary~\ref{cor:infinite.xi}.
\end{proof}

We now give an alternative proof of
Corollary~\ref{cor:infinite.xi}. It is based on
Lemma~\ref{recurrence.gamma} and finite recursion equations, bypassing
the computation of the functions~$\xi^\sigma_\len$. Recall the
Definition~\ref{def:shadow.graph.A2} of the shadow graph $\Gamma =
\Gamma^{(\ee)}$.

\begin{prop}\label{pro:recurrence.zeta}
  Let $\tau\in\Sh$ of type $\cS\in\T^{(\ee)}$.  Then $\xi^{\tau}(s)
  \equiv 1$ if $\cS=\Gsha$. Otherwise,
  \begin{equation*}
    \xi^{\tau}(s) = \sum_{\substack{(\sigma,\tau)\in \dot{E}(\Gamma)
        \\ \sigma \ne \tau}} \frac{ q^{-1} \, [\sigma(\kk):\tau(\kk)]
      \, a_{\sigma,\tau}(q) \, q^{-\frac{1}{2}(\dim \aG
        -\dim(\sigma))s} \, \xi^\sigma(s)}{1 - q^{-1} \,
      a_{\tau,\tau}(q) \, b^{(\ee)}_{\tau,\tau}(q)^{-s/2}}.
  \end{equation*}
\end{prop}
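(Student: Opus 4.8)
\textbf{Proof plan for Proposition~\ref{pro:recurrence.zeta}.}
The plan is to combine the recursion for the similarity class zeta functions (Lemma~\ref{recurrence.gamma}) with the definition of the normalised functions $\xi^\sigma_\len$ (Definition~\ref{def:xi}) and pass to the limit $\len\to\infty$. First I would record that, by Proposition~\ref{pro:sim.shadow.fin}, $\gamma^{\Gsha}_\len(s)=q^\len$ for the unique shadow $\sigma$ of type $\Gsha$, so $\xi^{\Gsha}_\len(s)=[\aG(\kk):\aG(\kk)]^{1+s/2}q^{-\len}q^\len=1$ for all $\len$, hence $\xi^{\Gsha}(s)\equiv 1$; this disposes of the exceptional case. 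For the remaining types, the key point is that the type $\Gsha$ (scalar matrices) is the unique source in the shadow graph $\Gamma$ in the sense that every edge into a non-$\Gsha$ vertex $\tau$ originates either from $\Gsha$ or from a vertex $\sigma$ with $\sigma\neq\tau$, plus the loop at $\tau$ itself. Reading off Lemma~\ref{recurrence.gamma} for a fixed non-scalar type $\cS$ of $\tau$,
\[
\gamma^{\tau}_{\len+1}(s) = \sum_{\substack{(\sigma,\tau)\in\dot{E}(\Gamma)\\ \sigma\neq\tau}} a_{\sigma,\tau}(q)\, b^{(\ee)}_{\sigma,\tau}(q)^{-s}\,\gamma^{\sigma}_\len(s) + a_{\tau,\tau}(q)\, b^{(\ee)}_{\tau,\tau}(q)^{-s}\,\gamma^{\tau}_\len(s).
\]

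Next I would renormalise. Dividing by $q^{\len+1}$ and using $q^{-\len-1}\gamma^\sigma_{\len+1}\to\gamma^\sigma$ as well as $q^{-\len}\gamma^\sigma_\len\to\gamma^\sigma$ (Proposition~\ref{pro:limit.gamma}), the limit $\len\to\infty$ of the displayed identity yields
\[
\gamma^{\tau}(s) = \sum_{\substack{(\sigma,\tau)\in\dot{E}(\Gamma)\\ \sigma\neq\tau}} q^{-1}\,a_{\sigma,\tau}(q)\, b^{(\ee)}_{\sigma,\tau}(q)^{-s}\,\gamma^{\sigma}(s) + q^{-1}\,a_{\tau,\tau}(q)\, b^{(\ee)}_{\tau,\tau}(q)^{-s}\,\gamma^{\tau}(s),
\]
where one must check that the relevant series converge in the region under consideration, which follows from the explicit rational-function descriptions in Corollary~\ref{cor:infinite.gamma} (or, more economically, from the geometric-series estimate $a_{\tau,\tau}(q)=q^{\dim(\tau)}$ and $b^{(\ee)}_{\tau,\tau}(q)=q^{\dim(\ag)-\dim(\tau)}$, so $q^{-1}a_{\tau,\tau}(q)b^{(\ee)}_{\tau,\tau}(q)^{-s}$ has absolute value $<1$ for $\real(s)$ large). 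Solving the above for $\gamma^\tau(s)$ gives
\[
\gamma^{\tau}(s) = \frac{\sum_{\substack{(\sigma,\tau)\in\dot{E}(\Gamma)\\ \sigma\neq\tau}} q^{-1}\,a_{\sigma,\tau}(q)\, b^{(\ee)}_{\sigma,\tau}(q)^{-s}\,\gamma^{\sigma}(s)}{1 - q^{-1}\,a_{\tau,\tau}(q)\, b^{(\ee)}_{\tau,\tau}(q)^{-s}}.
\]

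Finally I would translate back to the $\xi$-functions. By Definition~\ref{def:xi}, $\xi^\sigma(s)=[\aG(\kk):\sigma(\kk)]^{1+s/2}\gamma^\sigma(s/2)$, so I multiply the identity for $\gamma^\tau(s/2)$ by $[\aG(\kk):\tau(\kk)]^{1+s/2}$ and insert $\gamma^\sigma(s/2)=[\aG(\kk):\sigma(\kk)]^{-1-s/2}\xi^\sigma(s)$ inside the sum. The index quotient collapses as
\[
[\aG(\kk):\tau(\kk)]^{1+s/2}\,[\aG(\kk):\sigma(\kk)]^{-1-s/2} = [\sigma(\kk):\tau(\kk)]^{1+s/2} = [\sigma(\kk):\tau(\kk)]\,[\sigma(\kk):\tau(\kk)]^{s/2}.
\]
Now by Proposition~\ref{pro:class.quot.GL} (respectively \ref{pro:class.quot.GU}), the edge weight $b^{(\ee)}_{\sigma,\tau}(q)=q^{\dim\aG-\dim(\sigma)}\|\sigma\|/\|\tau\| = q^{\dim\aG-\dim(\sigma)}[\sigma(\kk):\tau(\kk)]$, so $b^{(\ee)}_{\sigma,\tau}(q)^{-s/2}[\sigma(\kk):\tau(\kk)]^{s/2} = q^{-\frac{1}{2}(\dim\aG-\dim(\sigma))s}$, exactly the power of $q$ appearing in the statement; in the denominator the self-loop term $q^{-1}a_{\tau,\tau}(q)b^{(\ee)}_{\tau,\tau}(q)^{-s/2}$ is unchanged since $[\tau(\kk):\tau(\kk)]=1$. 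Substituting these simplifications produces the asserted formula. The main obstacle, such as it is, is bookkeeping: verifying that the only loop contributing to the recursion for $\gamma^\tau$ is the one at $\tau$ itself (i.e.\ that $\Gamma$ has no other cycles through a non-$\Gsha$ vertex), which one reads off Figure~\ref{fig:shadow.graph.A2} and Table~\ref{tab:branch.rules.A2}, and correctly tracking the $s$ versus $s/2$ substitution together with the index factors so that the $b^{(\ee)}_{\sigma,\tau}$-weights combine with the index quotient into a clean power of $q$.
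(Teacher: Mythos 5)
Your proposal is correct and follows essentially the same route as the paper's proof: apply Lemma~\ref{recurrence.gamma}, normalise by $q^{\len+1}$, pass to the limit $\len\to\infty$, substitute the defining formula for $b^{(\ee)}_{\sigma,\tau}$ so that the index quotient and the weight collapse to $[\sigma(\kk):\tau(\kk)]\,q^{-\frac{1}{2}(\dim\aG-\dim(\sigma))s}$, and solve for $\xi^\tau(s)$ (the paper merely multiplies by $[\aG(\kk):\tau(\kk)]^{1+s/2}$ before taking the limit rather than after, which is immaterial). One inessential slip: in your parenthetical convergence remark, $a_{\tau,\tau}(q)=q^{\dim(\tau)}$ is false for types $\Lsha$ and $\Jsha$ (there $a_{\tau,\tau}(q)=q^2$, not $q^5$, by Table~\ref{tab:branch.rules.A2}), though since $a_{\tau,\tau}(q)\le q^3 < q\,b^{(\ee)}_{\tau,\tau}(q)^{s/2}$ for $\real(s)$ large (or since the limits are coefficientwise limits of Dirichlet series), the argument is unaffected.
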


\begin{proof}
  Note that the only edge in the shadow graph $\Gamma =
  \Gamma^{(\ee)}$ with target $\Gsha$ is~$(\Gsha,\Gsha)$.  Applying
  Lemma~\ref{recurrence.gamma} to the special case $\tau$ of type
  $\Gsha$ and consulting Table~\ref{tab:branch.rules.A2}, we use
  induction to find that $\gamma^\tau_\len(s) = q^\len$, and hence
  $\xi^\tau(s) \equiv 1$ for $\tau$ of type~$\Gsha$.

  Now let $\tau$ be a shadow of type different from $\Gsha$.
  Lemma~\ref{recurrence.gamma} shows that for $\len \in \N_0$ we have
  \[
  \frac{\gamma_{\len+1}^\tau(s/2)}{q^{\len+1}} = q^{-1} \,
  a_{\tau,\tau}(q) \, b^{(\ee)}_{\tau,\tau}(q)^{-s/2} \,
  \frac{\gamma^\tau_\len(s/2)}{q^\len} +
  \sum_{\substack{(\sigma,\tau)\in \dot{E}(\Gamma) \\ \sigma \ne
      \tau}} q^{-1} \, a_{\sigma,\tau}(q) \,
  b^{(\ee)}_{\sigma,\tau}(q)^{-s/2} \,
  \frac{\gamma^\sigma_\len(s/2)}{q^\len}.
  \]
  Multiplying both sides by $[\aG(\kk):\tau(\kk)]^{1+s/2}$ and taking
  the limit as $\len \to \infty$, we obtain
  \[
  \xi^\tau(s) = q^{-1} \, a_{\tau,\tau}(q) \,
  b^{(\ee)}_{\tau,\tau}(q)^{-s/2} \, \xi^\tau(s) +
  \sum_{\substack{(\sigma,\tau)\in \dot{E}(\Gamma) \\ \sigma \ne
      \tau}} q^{-1} \, a_{\sigma,\tau}(q) \,
  b^{(\ee)}_{\sigma,\tau}(q)^{-s/2}
  \frac{[\aG(\kk):\tau(\kk)]^{1+s/2}}{[\aG(\kk):\sigma(\kk)]^{1+s/2}}
  \xi^\sigma(s).
  \]
  Substituting in the defining value for $b^{(\ee)}_{\sigma,\tau}(q)$
  from \eqref{equ:def.b.GL} resp.~\eqref{equ:def.b.GU}, using
  $[\sigma(\kk):\tau(\kk)] = \Vert \sigma \Vert / \Vert \tau \Vert$,
  and solving for $\xi^\tau(s)$ yields the desired formula.
\end{proof}

\subsection{Expressions for zeta functions of groups of type
  $\mathsf{A}_2$ } \label{subsec:van.coh.gps} For groups of type
$\mathsf{A}_2$ we are in a position to apply the sufficient criteria
developed in Section~\ref{sec:kom.clifford} for the extension of
characters from the principal congruence subgroup $G^1$, respectively
$H^1$, to $G$, respectively~$H$.  Indeed, the next two lemmata
establish the existence of shadow-preserving lifts and the vanishing
of the relevant cohomology groups.

\begin{lem} \label{lem:good.pair} Let $\len\in\N_0$. Every $A \in
  \fg_\len$ has a shadow-preserving lift $\wt{A} \in \fg_{\len+1}$.
  Furthermore, if $p \ne 3$ then every $A \in \fh_\len$ has a
  shadow-preserving lift $\wt{A} \in \fh_{\len+1}$.
\end{lem}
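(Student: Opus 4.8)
The statement asserts that every $A \in \fg_\len = \ag(\lri_\len)$ lifts to some $\wt A \in \fg_{\len+1}$ with $\overline{\Cen_G(A)} = \overline{\Cen_G(\wt A)}$, and similarly in $\fh_\len$ provided $p \ne 3$. The natural approach is to use the explicit classification of similarity classes: by Theorem~\ref{thm:irredundant.list.for.M3x3} (for $\ee = 1$) and Theorem~\ref{thm:representatives.for.GU3.action.gu3} (for $\ee = -1$), we may assume $A$ is one of the listed normal forms, and by $G$-invariance of the centraliser shadow it suffices to produce a shadow-preserving lift for such a representative. The key combinatorial input is the shadow graph $\Gamma^{(\ee)}$ and the branching data in Table~\ref{tab:branch.rules.A2}: a lift $\wt A$ of $A$ is shadow-preserving precisely when $\sh_\aG(\wt A)$ equals $\sh_\aG(A)$ (and moreover the Lie centraliser shadow is preserved, but by Lemma~\ref{lem:shadow-well-defined} and Lemma~\ref{lem:unitary-shadow-well-defined} the two shadows determine one another, so it is enough to match the group centraliser shadow up to conjugacy — actually we need the reductions of the centralisers themselves to agree, not merely up to conjugacy, but for a chosen normal form the obvious lift keeps the same centraliser on the nose).

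\textbf{Key steps.}
First I would dispose of the cases where $\sigma = \sh_\aG(A)$ is of one of the ``minimal'' types $\Tasha, \Tbsha, \Tcsha, \Msha, \Nsha, \Kasha, \Kbsha$: by the discussion in Section~\ref{subsec:branching} (last bullet of the proof of Theorem~\ref{thm:G.shad.graph}) and Section~\ref{subsec:proof.main.SU}, for these types \emph{every} lift $\wt A$ of $A$ satisfies $\shGC(\wt A) = \shGC(A)$, so any lift is shadow-preserving and there is nothing to do. This leaves the three non-minimal types $\Gsha$, $\Lsha$, $\Jsha$. For $\sigma$ of type $\Gsha$, i.e. $A$ scalar, take $\wt A = \varsigma(A)$ the scalar lift: its centraliser is all of $\GL_3$ (resp.\ $\GU_3$), again of type $\Gsha$. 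For type $\Lsha$, with $A = d\Id_3 + \pi^i D(a,0,0)$ as in Theorem~\ref{thm:irredundant.list.for.M3x3}(ii) (resp.\ Theorem~\ref{thm:representatives.for.GU3.action.gu3}(ii)), take the ``same-shape'' lift $\wt A = \varsigma(d)\Id_3 + \pi^i D(\varsigma(a),0,0)$ in $\fg_{\len+1}$; by Lemma~\ref{lem:diagonal-form-shadow} the centraliser of $\wt A$ over $\lri$ is again block-diagonal of shape $1 + 2$, and reduction modulo $\fp$ gives the same subgroup $\GL_1(\kk) \times \GL_2(\kk)$ (resp.\ $\GU_1(\kk) \times \GU_2(\kk)$) — one must check the $2\times 2$ block does not accidentally pick up eigenvalue structure that enlarges the shadow, which it cannot since $a \in \lri_{\len-i}^\times$ forces $d$ and $d + \pi^i a$ to stay distinct mod $\fp$ when $i = 0$, and when $i \geq 1$ the relevant reduction is scalar on both blocks anyway. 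For type $\Jsha$, with $A = d\Id_3 + \pi^i \left[\begin{smallmatrix} 0&0&0\\0&0&1\\0&0&c\end{smallmatrix}\right]$ and $v(c)>0$, take $\wt A = \varsigma(d)\Id_3 + \pi^i \left[\begin{smallmatrix} 0&0&0\\0&0&1\\0&0&\varsigma(c)\end{smallmatrix}\right]$; this is precisely the first bullet in the $(\Jsha)$-case of the proof of Theorem~\ref{thm:G.shad.graph} (resp.\ Theorem~\ref{thm:U.shad.graph}), where it is shown that such lifts again have shadow of type $\Jsha$, so by Proposition~\ref{prop:centraliser.of.E.new} the reduction of the centraliser is unchanged.

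\textbf{Where the $p \ne 3$ hypothesis enters, and the main obstacle.}
For the group case $\fg$ no restriction on $p$ is needed because the scalar shifts are always available and the constructions above are purely matrix-theoretic. For the special case $\fh = \ah(\lri_\len)$ the difficulty is that the normal forms of Theorem~\ref{thm:irredundant.list.for.M3x3} involve a free scalar parameter $d$ which must now be adjusted so that the trace condition defining $\fsl_3$ (resp.\ $\su_3$) is met; subtracting the appropriate scalar requires dividing the trace by $3$, which is exactly where $p \ne 3$ is used. Concretely, given $A \in \fh_\len$ I would lift it first to $\wt A_0 \in \fg_{\len+1}$ shadow-preservingly as above, then correct by $-\tfrac{1}{3}\tr(\wt A_0)\Id_3 \in \fg_{\len+1}$ to land in $\fh_{\len+1}$; since a scalar shift commutes with conjugation it changes neither $\Cen_G(\wt A_0)$ nor, intersecting with $H$, the centraliser in $H$, hence the $H$-shadow is preserved. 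The main obstacle I anticipate is bookkeeping: one has to verify that the centraliser \emph{in $\aH(\kk)$} (not just in $\aG(\kk)$) of the lift equals that of $A$, i.e.\ that intersecting the matching group centraliser shadows with $\aH(\kk)$ respects the equality — but this is automatic once $\overline{\Cen_G(A)} = \overline{\Cen_G(\wt A)}$, since $\Cen_H(X) = \Cen_G(X) \cap H$ for any $X$, and reduction modulo $\fp$ commutes with this intersection. So the real content is the case-by-case construction for types $\Gsha, \Lsha, \Jsha$, all of which reduce to lifts already exhibited in the proofs of Theorems~\ref{thm:G.shad.graph} and~\ref{thm:U.shad.graph}; I expect the write-up to be short.
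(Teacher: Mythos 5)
Your treatment of the $\fg$-part is correct but takes a longer route than the paper: the paper simply notes that the existence of a shadow-preserving lift of every $A\in\fg_\len$ is equivalent to the shadow graph having a loop at every vertex, i.e.\ $a_{\sigma,\sigma}(q)\neq 0$ for all $\sigma\in\Sh$, which is immediate from Theorems~\ref{thm:G.shad.graph} and~\ref{thm:U.shad.graph} (Table~\ref{tab:branch.rules.A2}); your case-by-case construction on the normal forms of Theorems~\ref{thm:irredundant.list.for.M3x3} and~\ref{thm:representatives.for.GU3.action.gu3} amounts to exhibiting those loops explicitly, and your scalar correction $\wt{A}=\wt{B}-\tfrac{1}{3}\tr(\wt{B})\Id_3$ is exactly the paper's construction for the $\fh$-part.

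The genuine gap is in your last step. You claim that $\overline{\Cen_H(A)}=\overline{\Cen_H(\wt{A})}$ is ``automatic'' from $\overline{\Cen_G(A)}=\overline{\Cen_G(\wt{A})}$ because ``reduction modulo $\fp$ commutes with'' intersecting with $H$. That commutation is not automatic, and it is precisely where the paper's proof does its work and where $p\neq 3$ enters a second time (not only in dividing the trace by $3$). Concretely, one must prove $H^1\Cen_H(A)\subseteq H^1\Cen_H(\wt{A})$: given $h\in\Cen_H(A)$, the $G$-level hypothesis only yields $h=gy$ with $g\in G^1$ and $y\in\Cen_G(\wt{A})$, and $y$ need not lie in $H$; one only knows $\det y\equiv 1$ modulo $\fp$. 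To repair this one multiplies $y$ by a central scalar $s\in G^1$ with $s^3=\det y$, so that $s^{-1}y\in\Cen_H(\wt{A})$ and $gs\in G^1\cap H=H^1$; such a cube root exists in the pro-$p$ group $\GL_1^1(\lri)$ (for $\ee=1$), respectively $\GU_1^1(\lri)$ (for $\ee=-1$), exactly because $p\neq 3$. When $p=3$ the cube map on $1+\fp$ is not surjective and this adjustment -- hence the asserted commutation of reduction with $\cap\, H$ -- breaks down. Your proposal nowhere supplies this scalar-adjustment argument, so the equality of the reduced $H$-centralisers of $A$ and $\wt{A}$ (which is what the definition of a shadow-preserving lift in $\fh_\len$ requires) is left unproved.
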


\begin{proof}
  The assertion for $\fg_\len$ is equivalent to the claim that in the
  shadow graph $\Gamma = \Gamma^{(\ee)}$, see
  Figure~\ref{fig:shadow.graph.A2}, there is a loop on every vertex
  or, equivalently, that $a_{\sigma,\sigma}(q)\neq 0$ for all
  $\sigma\in\Sh$. That this is the case follows from
  Theorems~\ref{thm:G.shad.graph} and~\ref{thm:U.shad.graph}; cf.\
  Table~\ref{tab:branch.rules.A2}.

  Assume now that $p\neq3$ and consider $A \in \fh_\len \leq
  \fg_\len$.  Let $\wt{B} \in \fg_{\len+1}$ be a shadow-preserving
  lift of $A$ within $\fg_{\len+1}$, i.e.\ suppose that $A
  \equiv_{\pi^{\len}} \wt{B}$ and $G^1 \Cen_G(A) = G^1
  \Cen_G(\wt{B})$.  Put $\wt{A} = \wt{B} -
  \frac{\tr(\wt{B})}{3}\Id_3$.  Then $\wt{A} \in \fh_{\len+1}$ and
  $G^1 \Cen_G (A) = G^1 \Cen_G(\wt{A})$.

  We claim that $H^1 \Cen_{H}(A) = H^1 \Cen_{H}(\wt{A})$.  The
  inclusion $\supset$ is obvious.  For the reverse inclusion, let $h
  \in \Cen_{H}(A)$.  There is $g \in G^1$ and $y \in \Cen_G(\wt{A})$
  such that $h = g y$.  From $\det h = 1$ and $\det g \equiv_\pi 1$ we
  deduce $\det y \equiv_\pi 1$.  As $p\neq3$, the pro-$p$ group
  $\GL_1^1(\lri)$ for $\ee = 1$, or $\GU_1^1(\lri)$ for $\ee = -1$,
  contains a cube root of $\det y$.  Hence there is a scalar matrix $s
  \in G^1$ such that $s^{-1} y \in \Cen_H(\wt{A})$ and $g s \in G^1
  \cap H = H^1$.  It follows that $h = (g s) (s^{-1} y) \in H^1
  \Cen_H(\wt{A})$.
\end{proof}

\begin{remark}\label{rem:hensel.lifts}
  The proof of the existence of shadow-preserving lifts of matrices in
  type $\mathsf{A}_2$ in Lemma~\ref{lem:good.pair} resorts to the
  shadow graph $\Gamma$. We sketch here a geometric point of view on
  the existence of shadow-preserving lifts, which also pertains to
  other `semisimple' Lie rings, say of type $\mathsf{A}_{n-1}$, $n\geq
  4$, or other classical types, where we currently do not know of an
  equally uniform description of the lifting behaviour of similarity
  classes.

  In \cite[Section~3.2]{AKOV1} we presented zeta functions of groups
  such as $\SL_n^m(\lri)$ in terms of $\fp$-adic integrals. The latter
  are, in general, defined in terms of polynomial ideals defining the
  rank varieties of certain matrices of linear forms. In
  \cite[Section~5]{AKOV1} we described a link between these rank
  varieties and stratifications of the (complexification of the)
  associated semisimple Lie algebras by quasi-affine varieties
  comprising elements of constant centraliser dimension,
  $\cV_{i}\setminus\cV_{i+1}$ in the parlance
  of~\cite{AKOV1}.

  It can be shown that shadow-preserving lifts exist if and only if,
  for all~$\len\in\N$ and any such variety, every point modulo
  $\fp^\len$ has a lift to a point modulo~$\fp^{\len+1}$. The latter
  holds -- essentially by Hensel's Lemma -- if the relevant varieties
  are all smooth. In general, they are disjoint unions of finitely
  many subvarieties called \emph{sheets}; see~\cite{Borho/82}.  The
  complex Lie algebra $\fsl_3(\C)$, for instance, is the union of
  three sheets, consisting of elements of centraliser dimensions $8$
  (the null sheet), $6$ (the subregular sheet), and $4$ (the regular
  sheet), respectively. But even if the sheets are all smooth, as they
  are in type $\mathsf{A}_{n-1}$, some of their unions might not
  be. In $\fsl_4(\C)$ and $\fsl_5(\C)$, the varieties of elements of
  constant centraliser dimension each consist of a single sheet; hence
  they are smooth, and shadow-preserving lifts exist.  Already in
  $\fsl_6(\C)$, however, the varieties of elements of centraliser
  dimension $17$ and $11$ both are the union of two sheets,
  respectively, of different dimensions. For a further discussion,
  also regarding the dimensions of sheets in other semisimple Lie
  algebras, see \cite{Moreau/08}.
\end{remark}

\begin{lem} \label{lem:cohomology} Suppose that $p>3$ is prime and
  that $q$ is a power of~$p$.  Then
  \begin{enumerate}
  \item[(a)] If $T \leq \GL_3(\F_q)$ or $T \leq \GU_3(\F_q)$ is
    contained in a torus, then $\lvert \textup{H}^2(T,\C^\times)
    \rvert$ is prime to $p$,
  \item[(b)] $\textup{H}^2(\SL_3(\F_q),\C^\times) = 1$ and
    $\textup{H}^2(\SL_2(\F_q),\C^\times) = 1$,
  \item[(c)] $\textup{H}^2(\SU_3(\F_q),\C^\times) = 1$ and
    $\textup{H}^2(\SU_2(\F_q),\C^\times) = 1$,
  \item[(d)] $\textup{H}^2(\GL_2(\F_q),\C^\times) = 1$,
  \item[(e)] $\textup{H}^2(\GU_2(\F_q),\C^\times) = 1$.
  \end{enumerate}
\end{lem}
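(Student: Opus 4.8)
\textbf{Proof plan for Lemma~\ref{lem:cohomology}.}
The plan is to read off each statement from the known structure of the Schur multipliers of finite groups of Lie type, together with elementary facts about Schur multipliers of subgroups of tori. For part~(a), if $T$ is contained in a maximal torus of $\GL_3(\F_q)$ or $\GU_3(\F_q)$, then $T$ is abelian of order dividing $(q-\ee)^3 / \text{(suitable factor)}$; more precisely its order divides $(q^3-\ee)(q^2-1)(q-\ee)$ up to the relevant normalisations, and in any case $\lvert T \rvert$ is prime to $p$ because tori in finite groups of Lie type in characteristic $p$ have order prime to $p$. The Schur multiplier of a finite abelian group $T$ is $\bigwedge^2 T$, whose order divides $\lvert T \rvert^2$; hence $\lvert \textup{H}^2(T,\C^\times) \rvert$ is prime to~$p$. (It is convenient to phrase this as: $\textup{H}^2(T,\C^\times)$ has order prime to $p$ whenever $T$ is a finite abelian $p'$-group, and then invoke that tori have $p'$-order.)

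For parts~(b)--(e), I would quote the classification of Schur multipliers of the finite quasi-simple groups of Lie type; see, e.g., \cite[Theorem~7.1.1]{Geck} or the tables in the ATLAS. Specifically: the Schur multiplier of $\SL_n(\F_q) = \mathrm{SL}_n(q)$, which already is the universal ($=$ simply connected) central extension of $\mathrm{PSL}_n(q)$ except for a short list of exceptions, is trivial unless $(n,q)$ lies in that exceptional list. For $n=3$ the only exceptional prime is $p=3$ (the triple cover of $\mathrm{PSL}_3(4)$ lives over characteristic $2$, not relevant here; the relevant exceptional pair for $\mathrm{SL}_3$ is $q=4$, again $p=2$), so for $p>3$ one gets $\textup{H}^2(\mathrm{SL}_3(q),\C^\times)=1$. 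For $\mathrm{SL}_2(q)=\mathrm{SU}_2(q)$ the exceptional cases are $q=4$ and $q=9$, i.e.\ $p\in\{2,3\}$, so again $p>3$ forces triviality. Similarly $\mathrm{SU}_3(q)$ has trivial Schur multiplier except for $q=2$ (where $p=2$), so $p>3$ suffices. For $\GL_2(\F_q)$ and $\GU_2(\F_q)$ one uses the decomposition $\GL_2(q) = \mathrm{SL}_2(q) \rtimes$ (a cyclic group realised by $\diag(t,1)$), respectively the analogue for $\GU_2(q)$: since $\textup{H}^2(\mathrm{SL}_2(q),\C^\times)=1$ for $p>3$ and the relevant extensions split, an inflation-restriction (Lyndon--Hochschild--Serre) argument, using that $\textup{H}^i$ of a cyclic group with $\C^\times$-coefficients vanishes in the appropriate degrees after accounting for the split structure, gives $\textup{H}^2(\GL_2(\F_q),\C^\times)=1$ and $\textup{H}^2(\GU_2(\F_q),\C^\times)=1$. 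Alternatively, one can quote directly that $\mathrm{GL}_2(q)$ and $\mathrm{GU}_2(q)$ have trivial Schur multiplier for $q>3$.

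The main obstacle, such as it is, is bookkeeping rather than mathematics: one must (i) correctly identify which small values of $q$ are genuine exceptions in the Schur-multiplier tables for $\mathrm{SL}_3$, $\mathrm{SU}_3$, $\mathrm{SL}_2=\mathrm{SU}_2$, and verify that all of them have residue characteristic $\leq 3$, so that the hypothesis $p>3$ excludes them; and (ii) handle the passage from the quasi-simple groups to $\GL_2$, $\GU_2$ (which are not perfect) via the split-extension structure and the Lyndon--Hochschild--Serre spectral sequence, checking that no obstruction survives. Part~(a) is genuinely elementary once one records that finite tori have order prime to~$p$. I would present the proof as a short paragraph per item, citing \cite{Geck} (or a standard reference for Schur multipliers of groups of Lie type) for (b)--(e) and giving the two-line $\bigwedge^2 T$ argument for~(a).
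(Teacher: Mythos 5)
Your proposal follows essentially the same route as the paper: part (a) by an elementary order argument (the paper only uses that every prime dividing $\lvert\textup{H}^2(T,\C^\times)\rvert$ divides $\lvert T\rvert$, so even abelianness is not needed), parts (b) and (c) by quoting the known Schur multipliers of the quasi-simple groups of Lie type, and parts (d) and (e) via the Lyndon--Hochschild--Serre spectral sequence for $\SL_2(\F_q)\trianglelefteq\GL_2(\F_q)$, together with $\SU_2(\F_q)\simeq\SL_2(\F_q)$ for the unitary case. The one detail you should make explicit in (d)/(e) is the vanishing of the middle term $\textup{H}^1(\F_q^\times,\textup{H}^1(\SL_2(\F_q),\C^\times))$ on the $E_2$-page: this is exactly where one uses that $\SL_2(\F_q)$ is perfect for $q>3$, so that $\textup{H}^1(\SL_2(\F_q),\C^\times)=\mathrm{Hom}(\SL_2(\F_q),\C^\times)$ is trivial, while the remaining two terms are handled precisely as you indicate (triviality of $\textup{H}^2(\SL_2(\F_q),\C^\times)$ and of the Schur multiplier of a cyclic group).
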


\begin{proof}
 The assertion (a) holds because every prime that divides the order of
 $\textup{H}^2(T,\C^\times)$ must also divide the order of $T$, which
 is not divisible by $p$.  Assertions (b) and (c) are well known;
 cf.\ \cite{Griess/73, Steinberg/81} as well as
 \cite[p.~246]{Karpilovsky/87} and~\cite[Table~5]{Atlas/85}.

  Next we consider (d).  From the Lyndon--Hochschild--Serre spectral
  sequence
  \[
  \textup{H}^a(G/N,\textup{H}^b(N,M)) \Rightarrow \textup{H}^{a+b}(G,M),
  \]
  applied to $a+b=2$, $G = \GL_2(\F_q)$, $N = \SL_2(\F_q)$ and $M =
  \C^\times$, we deduce that the order of
  $\textup{H}^2(\GL_2(\F_q),\C^\times)$ divides
  \[
  \lvert \textup{H}^0(\F_q^\times,\textup{H}^2(\SL_2(\F_q),\C^\times))
  \rvert\cdot \lvert
  \textup{H}^1(\F_q^\times,\textup{H}^1(\SL_2(\F_q),\C^\times)) \rvert
  \cdot \lvert
  \textup{H}^2(\F_q^\times,\textup{H}^0(\SL_2(\F_q),\C^\times))
  \rvert.
  \]
  From (b) we see that $\textup{H}^2(\SL_2(\F_q),\C^\times)$ is
  trivial and hence the first factor is $1$.  Since $\SL_2(\F_q)$ is
  perfect for $q>3$, also $\textup{H}^1(\SL_2(\F_q),\C^\times)$ is
  trivial and the second factor is $1$.  Finally, the third factor
  equals $ \lvert \textup{H}^2(\F_q^\times,\C^\times) \rvert = 1$,
  because $\F_q^\times$ is cyclic.

  Noting that $\SU_2(\F_q) \simeq \SL_2(\F_q)$ (e.g., see
  \cite[II.8.8]{Huppert/67}), a similar argument yields~(e).
\end{proof}

We have collected all the necessary information to deal with
characters of~$G$.  For $H$ we prove an additional lemma, enabling us
to employ the theory of shadows also in this context.

\begin{lem} \label{lem:shadow.cap.H}
  With the notation as above, suppose that the Kirillov orbit method
  is applicable to~$H^1$ and that~$p\neq 3$.  Let $\chi \in \Irr(H^1)$
  and $\omega \in \fh^{\vee}$ a representative of the corresponding
  co-adjoint orbit.  Let $S_\chi = \textrm{I}_H(\chi)$ be the inertia
  subgroup. Then
  \[
  S_\chi / H^1 = \Stab_H(\omega)H^1/H^1 \simeq
  \left(\Stab_{G}(\omega)G^1 / G^1 \right) \cap \aH(\kk),
  \]
  where $\Stab_{G}(\omega)G^1 / G^1$ is identified with its image in
  $G/ G^1 \simeq \aG(\kk)$.
\end{lem}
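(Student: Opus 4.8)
\textbf{Proof plan for Lemma~\ref{lem:shadow.cap.H}.}
The plan is to untangle the two identifications in the statement and verify them separately. The first, $S_\chi/H^1 = \Stab_H(\omega)H^1/H^1$, is immediate from the Kirillov orbit method: by Lemma~\ref{lemma:R.and.G1}(a), applied with $G$ replaced by $H$ and $N$ by $H^1$, we have $\In_H(\chi) = \Stab_H(\chi) = H^1\Stab_H(\omega)$. So the content is in the second identification, an isomorphism of subgroups of $G/G^1 \simeq \aG(\kk)$, respectively $\aH(\kk)$. The natural map here is the one induced by the inclusion $H \hookrightarrow G$, which sends $S_\chi/H^1$ into $G/G^1$ with image landing in $\aH(\kk)$ (since $H^1 = H \cap G^1$). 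We must show this map is injective with image exactly $\left(\Stab_G(\omega)G^1/G^1\right)\cap\aH(\kk)$.

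Injectivity is clear, since $H^1 = H \cap G^1$ means the map $H/H^1 \to G/G^1$ is injective to begin with. For the image, one inclusion is immediate: $\Stab_H(\omega)H^1/H^1$ maps into $\Stab_G(\omega)G^1/G^1$ because $\Stab_H(\omega) \subset \Stab_G(\omega)$, and visibly it lands in $\aH(\kk)$. The substantive direction is the reverse inclusion. So suppose $g \in \Stab_G(\omega)$ with image $\bar g \in \aH(\kk)$, i.e.\ $\det g \equiv_{\fp} 1$. The goal is to produce $h \in \Stab_H(\omega)$ with $\bar h = \bar g$ in $\aG(\kk)$. This is precisely the argument already used in the proof of Lemma~\ref{lem:good.pair}: because $p \neq 3$, the pro-$p$ group $\GL_1^1(\lri)$ (for $\ee=1$) or $\GU_1^1(\lri)$ (for $\ee=-1$) contains a cube root of $\det g$; equivalently, the central subgroup $\mathrm{S}(G^1)$ of scalar matrices in $G^1$ surjects onto $\det(G^1)$. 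Hence there is a scalar matrix $s \in G^1$ with $\det s = \det g$, and then $h \coloneqq s^{-1} g$ satisfies $\det h = 1$, so $h \in H$; moreover $h$ still stabilises $\omega$ since $s$ is central and scalar (so $\Ad(s) = \mathrm{id}$ fixes every element of $\fg^\vee$, a fortiori $\omega$); and $\bar h = \bar s^{-1}\bar g = \bar g$ because $s \in G^1$ reduces to the identity modulo~$\fp$. Finally $h \in H$ with $h \in G^1\cdot g$ and $g$ normalising... but more directly: $h \in S_\chi$'s preimage because $h \in \Stab_H(\omega)$, and $hH^1/H^1$ maps to $\bar g$. This closes the reverse inclusion.

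\emph{Anticipated obstacle.} The only delicate point is bookkeeping around which stabiliser ($\Stab_G(\omega)$, $\Stab_G(\Omega)$, $S_\chi$) is being used and making sure the co-adjoint stabiliser of the single functional $\omega$ — rather than of its orbit — is the right object to relate across $H$ and $G$; this is exactly what Lemma~\ref{lemma:R.and.G1} licenses, so invoking it cleanly is essential. One should also be slightly careful that the hypothesis ``the Kirillov orbit method is applicable to $H^1$'' is what guarantees $S_\chi/H^1 = \Stab_H(\omega)H^1/H^1$ via Lemma~\ref{lemma:R.and.G1}, and that $p \neq 3$ is used \emph{only} in the cube-root step (it is not needed for injectivity or for the easy inclusion). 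Otherwise the proof is a short diagram chase plus the scalar-matrix trick, essentially a reprise of the last paragraph of the proof of Lemma~\ref{lem:good.pair}.
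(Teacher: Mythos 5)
Your proposal is correct and follows essentially the same route as the paper: the equality of $S_\chi/H^1$ with $\Stab_H(\omega)H^1/H^1$ via Lemma~\ref{lemma:R.and.G1}, and then surjectivity of the natural inclusion onto $\left(\Stab_G(\omega)G^1/G^1\right)\cap\aH(\kk)$ by correcting $g \in \Stab_G(\omega)$ with a central scalar from $G^1$ obtained from a cube root of $\det g$ in the pro-$p$ group $\GL_1^1(\lri)$ or $\GU_1^1(\lri)$ (this is where $p\neq 3$ enters), exactly as in the proof of Lemma~\ref{lem:good.pair}. Your extra remarks on injectivity and the easy inclusion are fine and only make explicit what the paper leaves implicit.
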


\begin{proof}
  The equality follows from Lemma~\ref{lemma:R.and.G1}, and we argue
  for the isomorphism.  It suffices to show that every coset on the
  right-hand side is the image of a coset on the left-hand side under
  the natural inclusion map.  Let $g \in \Stab_G(\omega)$ such that
  the reduction of $g$ modulo $G^1$ is unimodular.  Since $p \ne 3$,
  there is a scalar matrix $u \in G^1$ such that $h \coloneqq g u \in
  \Stab_H(\omega)$; compare the proof of Lemma~\ref{lem:good.pair}.
  It follows that $h H^1$ maps to $gG^1$.
\end{proof}

\begin{proof}[Proof of Theorem~\ref{thm:C}]
  Consider one of the groups $\GL_3(\lri), \GU_3(\lri), \SL_3(\lri),
  \SU_3(\lri)$, and its $1$st principal congruence subgroup~$N$.
  Based upon $p \geq 3e+3$, we check the hypotheses in
  Theorem~\ref{thm:extension.to.radical} regarding characters $\chi
  \in \Irr(N)$.  Lemma~\ref{lem:good.pair} guarantees the existence of
  shadow-preserving lifts.  Lemma~\ref{lem:cohomology}, in conjunction
  with Tables~\ref{tab:shadows.GLGU} and \ref{tab:shadows.SLSU} as
  well as Lemma~\ref{lem:shadow.cap.H}, shows that the relevant
  cohomology groups vanish.  Thus
  Theorem~\ref{thm:extension.to.radical} implies that every $\chi \in
  \Irr(N)$ extends to its inertia group $S_\chi$ and
  Corollary~\ref{cor:gen.formula} is applicable.

  Regarding the infinite groups $\SL_3(\lri), \SU_3(\lri)$,
  formula~\eqref{equ:zeta.H.infin} follows now directly by collecting
  the summands in~\eqref{equ:gen.sum} according to shadows.

  Regarding the finite groups $\GL_3(\lri_\len), \GU_3(\lri_\len),
  \SL_3(\lri_\len), \SU_3(\lri_\len)$, formulae~\eqref{equ:zeta.G.fin}
  and \eqref{equ:zeta.H.fin} are obtained by restricting the relevant
  sums to characters factoring over the $\len$th principal congruence
  subgroup.  Once more, Lemma~\ref{lem:shadow.cap.H} is used to deal
  with the special linear/unitary groups.
\end{proof}

\subsection{Zeta functions of the shadows}\label{subsec:zeta.shadows}
In order to derive from Theorem~\ref{thm:C} explicit formulae such as
the one in Corollary~\ref{cor:D} we need, in addition to the functions
$\xi^{\sigma}_{\len-1} = \Xi^{\cS}_{\ee,q,\len-1}$ and $\xi^\sigma =
\Xi^{\cS}_{\ee,q}$ given in Proposition~\ref{pro:xi.sigma.len} and
Corollary~\ref{cor:infinite.xi}, the respective shadows' indices and
zeta functions.  In fact, the former may be expressed in terms of the
latter, because the order of any finite group is equal to the value of
its zeta function at~$s=-2$.  The isomorphism classes of the groups
whose zeta functions we need to compute are listed in
Tables~\ref{tab:shadows.GLGU} and~\ref{tab:shadows.SLSU}. As before,
$\kk_m$ denotes a degree $m$ extension of~$\kk$.  For $m \mid n$, we
write $N_{\kk_n \,\vert\, \kk_m} \colon \kk_n^\times \rightarrow
\kk_m^\times$ for the norm map.  Furthermore, $\ee=1$ for $\aG =
\GL_3, \aH = \SL_3$ and $\ee=-1$ for $\aG = \GU_3, \aH = \SU_3$; we
write $q = \lvert \kk \rvert$ and $\iota(\ee,q) =
\gcd(q-\ee,3)$.

\begin{table}[htb!]
  \centering
  \caption{Shadows in $\GL_3(\kk)$, for $\ee=1$, and $\GU_3(\kk)$, for
    $\ee=-1$}
  \label{tab:shadows.GLGU}
  \begin{tabular}{|c||l|l|l|}
    \hline
    Type & $\sigma(\kk) \subset \GL_3(\kk)$ &
    $\sigma(\kk)\subset\GU_3(\kk)$ & Order $\lvert \sigma(\kk) \rvert$
    \\ \hline $\Gsha$ & $\GL_3(\kk)$ & $\GU_3(\kk)$ &
    $(q-\ee)(q^2-1)(q^3-\ee) q^3$ \\
    $\Lsha$& $\GL_1(\kk) \times \GL_2(\kk)$ & $\GU_1(\kk) \times
    \GU_2(\kk)$  & $(q^2-1)(q-\ee)^2q$\\
    $\Jsha$  &  $\Heis(\kk) \rtimes \GL_1(\kk)^{2} $ &
    $\Heis(\kk) \rtimes \GU_1(\kk)^2 $ & $(q-\ee)^2q^3$ \\
    $\Tasha$ &   $\GL_1(\kk)^3$   & $\GU_1(\kk)^3 $ & $(q-\ee)^3$\\
    $\Tbsha$ &   $\GL_1(\kk_2) \times \GL_1(\kk)$ &  $\GL_1(\kk_2)
    \times \GU_1(\kk)$ &  $(q^2-1)(q-\ee)$\\
    $\Tcsha$ &   $\GL_1(\kk_3)$     &  $\GU_1(\kk_3)$ & $q^3-\ee$\\
    $\Msha$ &  $\GL_1(\kk[x]/(x^2)) \times \GL_1(\kk)$ &
    $\GU_1(\kk)\times\GU_1(\kk) \times \aG_a(\kk)$ & $(q-\ee)^2q$\\
    $\Nsha$ &  $\GL_1(\kk[x]/x^3)$ & $\GU_1(\kk) \times \aG_a(\kk)\times \aG_a(\kk)$
    & $(q-\ee)q^2$\\
    \hline
    $\Kasha, \Kbsha^*$ & $\GL_1(\kk) \times \aG_a(\kk) \times \aG_a(\kk)$ & --- &
    $(q-\ee)q^2$\\
    \hline
    \multicolumn{4}{@{} c @{}}{${}^*$ 
Only applies if $\ee=1$.}
  \end{tabular}
\end{table}

\begin{defn}\label{def:shadow.rep}
  Let $\sigma \in \Sh_{\aG(\lri)}$.  Recalling that $\sigma(\kk)$
  denotes a subgroup of $\aG(\kk)$ representing the shadow $\sigma$,
  we put
  \[
  \sigma'(\kk) \coloneqq \sigma(\kk) \cap \aH(\kk).
  \]
\end{defn}
\begin{table}[htb!]
  \centering
  \caption{Shadows in $\SL_3(\kk)$, for $\ee=1$, and $\SU_3(\kk)$, for
    $\ee=-1$}
  \label{tab:shadows.SLSU}
  \begin{tabular}{|c||l|l|l|}
    \hline
    Type & $\sigma'(\kk) = \sigma(\kk) \cap \SL_3(\kk)$ &
    $\sigma'(\kk) = \sigma(\kk) \cap \SU_3(\kk)$ & Order
    $|\sigma'(\kk)|$ \\
    \hline
    $\Gsha$ & $\SL_3(\kk)$ & $\SU_3(\kk)$ &
    $(q^2-1)(q^3-\ee)q^3$ \\
    $\Lsha$& $\GL_2(\kk)$ & $\GU_2(\kk)$  & $(q^2-1)(q-\ee)q$\\
    $\Jsha$  &  $\Heis(\kk) \rtimes \GL_1(\kk) $ &
    $\Heis(\kk) \rtimes \GU_1(\kk) $ & $(q-\ee)q^3$ \\
    $\Tasha$ &   $\GL_1(\kk) \times \GL_1(\kk)  $   & $\GU_1(\kk) \times \GU_1(\kk) $ & $(q-\ee)^2$\\
    $\Tbsha$ &   $\GL_1(\kk_2)$ &  $\GL_1(\kk_2)$ &  $q^2-1$\\
    $\Tcsha$ &   $\ker(N_{\kk_3 \,\vert\, \kk})$     &  $\ker(N_{\kk_6
      \,\vert\, \kk_3})
    \cap \ker(N_{\kk_6 \,\vert\, \kk_2})$ & $q^2+\ee q+1$\\
    $\Msha$ &  $\GL_1(\kk[x]/(x^2))$ &  $\GU_1(\kk) \times \aG_a(\kk)$ & $(q-\ee)q$\\
    $\Nsha$ &  $\mu_3(\kk) \times \aG_a(\kk) \times \aG_a(\kk)$ & $(\mu_3(\kk_2) \cap \ker
    (N_{\kk_2 \,\vert\, \kk})) \times (\aG_a(\kk))^2$  & $\iota(\ee,q)\, q^2$\\
    \hline
    $\Kasha, \Kbsha^*$ & $\mu_3(\kk) \times \aG_a(\kk) \times \aG_a(\kk)$ & ---&
    $\iota(\ee,q)\, q^2$\\ 
    \hline
    \multicolumn{4}{@{} c @{}}{${}^*$ 
Only applies if $\ee=1$.}
  \end{tabular}
\end{table}

\begin{prop}\label{pro:zeta.shadows}
  Let $\sigma \in \Sh$ be of type $\cS\in\T^{(\ee)}$.  Then
  \begin{align*}
    \zeta_{\sigma(\kk)}(s) & = (q-\ee) Z^{\cS}_{\ee,1,q}(s),
    \\ \zeta_{\sigma'(\kk)}(s) & = Z^{\cS}_{\ee,\iota(\ee,q),q}(s),
  \end{align*}
  where $Z^{\cS}_{\ee,i,q} \coloneqq Z^{\cS}_{\mathsf{A}_2,\ee,i,q}$
  for $i \in \{1,3\}$ is given by
  \begin{equation*}
    \begin{split}
      Z^{\Gsha}_{\ee,i,q}(s) & = 1 + (q^2+\ee q)^{-s} + (q-1-\ee)
      (q^2 + \ee q+1)^{-s} \\
      & \quad + \hlf (q^2-q-1+\ee) (q^3-\ee)^{-s} + q^{-3s} +
      (q-1-\ee) (q^3+\ee q^2+q)^{-s} \\
      & \quad + \third (q^2+\ee q -2) ((q+\ee)(q-\ee)^2)^{-s}+
      \genfrac{}{}{0.1pt}{1}{2}{3}i^2 ((q+\ee)(q-\ee)^2/i)^{-s} \\
      & \quad + \sixth (q-\ee)(q-3-\ee) ((q^2+\ee q+1)(q+
      \ee))^{-s}+\third i^2 ((q^2 + \ee q+1)(q+\ee)/i)^{-s},
    \end{split}
  \end{equation*}
  if $\cS = \Gsha$, and in the remaining cases $\cS \ne \Gsha$ defined
  as
  \begin{equation*}
    \begin{array}{ll}
      (q-\ee) \left( 1+q^{-s} + \hlf (q-2) (q+1)^{-s} + \hlf
        q(q-1)^{-s} \right) & \text{if $\cS = \Lsha$,} \\
        (q-\ee) + (q+\ee) i^{2} ((q-\ee)/i)^{-s}
        +(q-1)(q-\ee) q^{-s} & \text{if $\cS = \Jsha$,}\\
        (q-\ee)^2 & \text{if $\cS = \Tasha$,} \\
        q^2-1 & \text{if $\cS = \Tbsha$,} \\
        q^2+\ee q+1,  & \text{if $\cS = \Tcsha$,} \\
        q(q-\ee)  & \text{if $\cS = \Msha$,} \\
        i q^2  & \text{if $\cS \in \{\Nsha, \Kasha, \Kbsha \}$.}
    \end{array}
  \end{equation*}
\end{prop}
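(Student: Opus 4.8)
The plan is to compute each of the group-theoretic invariants in Tables~\ref{tab:shadows.GLGU} and~\ref{tab:shadows.SLSU} directly, one shadow type at a time, using the explicit descriptions of the groups $\sigma(\kk)$ and $\sigma'(\kk) = \sigma(\kk) \cap \aH(\kk)$ recorded there, together with the standard character theory of the small classical groups involved. First I would observe that almost every $\sigma(\kk)$ in the non-unitary case is a product of a torus $\GL_1(\kk) = \kk^\times$ (or a higher $\GL_1$ of a field extension, or a group of the form $\GL_1(\kk[x]/(x^i))$, or an additive group $\aG_\text{a}(\kk)$), together with at most one factor isomorphic to $\GL_2(\kk)$ or $\Heis(\kk)$, and dually in the unitary case with $\GU$ in place of $\GL$; the multiplicativity of the representation zeta function, $\zeta_{A \times B}(s) = \zeta_A(s)\zeta_B(s)$, then reduces the computation to the zeta functions of these building blocks. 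The Witten-type zeta function of a finite abelian group $C$ is just $\lvert C\rvert$ (all degrees equal $1$), the zeta functions of $\GL_2(\kk)$, $\GU_2(\kk)$, $\SL_2(\kk)$, $\SU_2(\kk)$ are classical and recorded, e.g., in the references cited earlier in the paper, and the zeta function of $\Heis(\kk)$ (order $q^3$, with $q^2$ linear characters and $q-1$ characters of degree $q$) is $q^2 + (q-1)q^{-s}$. Ennola duality between $\GL$ and $\GU$ amounts to the substitution $q \mapsto -q$ with an overall sign adjustment, which accounts for the single parameter $\ee$ in $Z^{\cS}_{\ee,i,q}$.

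After assembling the building blocks, the main case $\cS = \Gsha$, where $\sigma(\kk) = \aG(\kk)$ itself is $\GL_3(\kk)$ or $\GU_3(\kk)$ and $\sigma'(\kk) = \SL_3(\kk)$ or $\SU_3(\kk)$, requires the full character-degree data of these finite groups of Lie type of type $\mathsf{A}_2$. I would extract this from Green's work and its unitary analogue \cite{Green, Ennola/63}, or equivalently from the Deligne--Lusztig parametrisation; the list of degrees is $1$, $q^2+\ee q$, $q^2+\ee q+1$, $q^3-\ee$, $q^3$, $q^3 + \ee q^2 + q$, $(q+\ee)(q-\ee)^2$, and $(q^2+\ee q+1)(q+\ee)$, with multiplicities that are (quasi-)polynomial in $q$ and depend on $\iota(\ee,q) = \gcd(q-\ee,3)$ precisely through the count of semisimple classes whose centralisers are tori of type $(q-\ee)^2$ or $q^2+\ee q+1$ — this is exactly where the terms involving $i^2$ and the rational coefficients $\tfrac13, \tfrac23$ enter, and where $\zeta_{\sigma(\kk)}$ (for $\GL_3$/$\GU_3$) differs from $\zeta_{\sigma'(\kk)}$ (for $\SL_3$/$\SU_3$) by the factor $q-\ee$ coming from the centre together with a shift $i^2 \mapsto (q-\ee)$ in the torus counts. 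The factor $(q-\ee)$ in front of $Z^{\cS}_{\ee,1,q}$ for all $\cS\neq\Gsha$ likewise reflects the extra central $\GL_1(\kk)$ or $\GU_1(\kk)$ present in $\sigma(\kk)$ but not in $\sigma'(\kk)$; for $\cS \in \{\Tasha,\Tbsha,\Tcsha,\Msha,\Nsha,\Kasha,\Kbsha\}$ both groups are abelian (or, for $\Msha$, $\Nsha$, $\Kasha$, $\Kbsha$, a product of an abelian group with an additive group, hence still has all degrees $1$), so the zeta function is simply the order, as Tables~\ref{tab:shadows.GLGU} and~\ref{tab:shadows.SLSU} confirm.

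The main obstacle I anticipate is the careful bookkeeping in the $\Gsha$ case: matching the multiplicities of each character degree of $\GL_3(\kk)$/$\GU_3(\kk)$ and $\SL_3(\kk)$/$\SU_3(\kk)$ — including the subtle split of certain degrees into an $\iota$-independent bulk term and an $\iota^2$-correction term — against the stated polynomials, and verifying that the unitary case is obtained uniformly by $q \mapsto -q$ (i.e.\ by flipping $\ee$) without sign errors. This is a finite, if tedious, verification. For $\cS = \Jsha$ a secondary subtlety arises: $\sigma(\kk) = \Heis(\kk) \rtimes (\GL_1(\kk) \times \GL_1(\kk))$ is nonabelian but its zeta function is still easily computed via Clifford theory over the normal Heisenberg subgroup — the $q-1$ characters of $\Heis(\kk)$ of degree $q$ fuse under the torus action, while the $q^2$ linear ones spread out — and in the special linear case $\sigma'(\kk) = \Heis(\kk) \rtimes \GL_1(\kk)$ the count of degree-$1$ characters contributes the term $(q-\ee) + (q+\ee) i^2 ((q-\ee)/i)^{-s} + (q-1)(q-\ee)q^{-s}$, again with an $\iota$-dependence tracking cube roots of unity in $\kk^\times$. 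All other cases are routine applications of multiplicativity and the known $\SL_2/\SU_2/\GL_2/\GU_2$ data.
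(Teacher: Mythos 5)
Your proposal is correct and essentially reproduces the paper's own proof: the abelian shadow types contribute just their orders, types $\Gsha$ and $\Lsha$ are read off from the known character degree data of $\GL_3(\kk)/\GU_3(\kk)$, $\SL_3(\kk)/\SU_3(\kk)$ and $\GL_2(\kk)/\GU_2(\kk)$ (Steinberg, Ennola, Frame--Simpson, Digne--Michel), and the only genuine computation is type $\Jsha$, which the paper also handles by Clifford/Mackey theory over the normal subgroup $\Heis(\kk)$. One small correction to your sketch of that case: the $q-1$ degree-$q$ characters of $\Heis(\kk)$ do not fuse --- the torus acts trivially on the centre, so each is stabilised and, since $q$ is coprime to the torus order, extends, producing the $(q-1)(q-\ee)q^{-s}$ term --- whereas it is the $q^2$ linear characters that break into torus orbits and, via Mackey, yield the linear characters together with the $\iota$-dependent characters of degree $(q-\ee)/\iota(\ee,q)$.
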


\begin{proof}
  The isomorphism types of the groups in question appear in
  Tables~\ref{tab:shadows.GLGU}~and~\ref{tab:shadows.SLSU}.  The
  formula for $\zeta_{\aG(\kk)}(s)$ is extracted from the character
  tables in~\cite{Steinberg}, for $\ee=1$, and~\cite[\S 7]{Ennola/63},
  for~$\ee=-1$.  The formulae for $\zeta_{\aH(\kk)}(s)$ are obtained
  from the data provided in~\cite{Frame-Simpson}.  For groups of type
  $\Lsha$ the formula follows, for example, from
  \cite[\S~15.9]{DigneMichel}, for~$\ee=1$, and \cite[\S
  6]{Ennola/63}, for~$\ee=-1$.  It remains to discuss shadows of type
  $\Jsha$, the only other non-abelian cases.

  Groups in the shadow $\sigma \in \Sh_{\aG(\lri)}$ of type $\Jsha$
  are isomorphic to $J_\ee = E_\ee \rtimes D_\ee$, where $E_\ee \simeq
  \Heis(\kk)$ is given explicitly by
  \[
  E_1 \coloneqq \left\{
    \begin{bmatrix}
      1  & s_1& z \\
      0 & 1 & s_2 \\
      0 & 0 & 1
    \end{bmatrix}
    \mid s_1,s_2,z\in \kk \right\} \quad \text{and} \quad E_{-1}
  \coloneqq \left\{
    \begin{bmatrix}
      1  & s & z \\
      0 & 1 & s^\circ \\
      0 & 0 & 1
    \end{bmatrix}
    \mid s,z \in \kk_2, s^\circ s =z+z^\circ \right\}.
  \]
  This representation of the shadow $\Jsha$ uses the centraliser of
  the elementary matrix $e_{13}$ that has its non-zero entry in the
  $(1,3)$-position, highlighting the appearance of the Heisenberg
  group.  The convention in the rest of the present paper, using the
  centraliser of $e_{23}$, is consistent with~\cite{APOV}; the
  centraliser of $e_{12}$ is used in~\cite{AKOV1}.  The groups $D_\ee$
  are given explicitly by
\begin{align*}
  D_1 \coloneqq \{\diag(u,v,u) \mid u, v \in \kk^\times\} &\simeq \GL_1(\kk) \times \GL_1(\kk), 
  \\ D_{-1} \coloneqq \{\diag(u,v,u) \mid u, v \in
  \ker(N_{\kk_2 \,\vert\, \kk})\} & \simeq \GU_1(\kk) \times \GU_1(\kk).
\end{align*}
Furthermore, the intersection of $J_\ee$ with $\aH(\kk)$ is equal to
$J_\ee' = E_\ee \rtimes D'_\ee$, where
  \[
  D_\ee' \coloneqq D_\ee \cap \aH(\kk)=\{(u,v,u) \in D_\ee \mid
  v=u^{-2}\} \simeq 
\begin{cases}
      \GL_1(\kk) & \text{if $\ee =1$,} \\
     \GU_1(\kk) & \text{if $\ee=-1$.}
    \end{cases}  
  \]
  Let
  \[
  Z_\ee \coloneqq \left\{ \left[
    \begin{smallmatrix}
      1 & 0 & z \\
      0 & 1 & 0 \\
      0 & 0 & 1
    \end{smallmatrix}
    \right] \mid z\in \kk \right\}
  \]
  denote the centre of~$E_\ee$.  The group $E_\ee \simeq \Heis(\kk)$
  has $q-1$ irreducible characters of degree~$q$ which correspond
  bijectively to the non-trivial characters of the centre, and $q^2$
  linear characters factoring through its abelianisation~$Q_\ee
  \coloneqq E_\ee/Z_\ee \simeq \kk \times \kk$.

  The group $D_\ee$ acts trivially on $Z_\ee$ and hence stabilises all
  the $q$-dimensional irreducible characters of~$E_\ee$.  As $q$ is
  prime to $\lvert D_\ee \rvert = (q-\ee)^2$, they all extend to
  irreducible characters of~$J_\ee$.  We get $(q-\ee)^2(q-1)$ distinct
  $q$-dimensional irreducible characters of~$J_\ee$, and similarly
  $(q-\ee)(q-1)$ such characters of~$J_\ee'$.

  The remaining irreducible characters of $J_\ee$,
  respectively~$J_\ee'$, factor through its quotient by~$Z_\ee$, viz.\
  $Q_\ee \rtimes D_\ee$, respectively $Q_\ee \rtimes D_\ee'$.  We
  consider separately the cases $\ee =1$ and $\ee =-1$.

  \smallskip

  First suppose that $\ee =1$.  It is convenient to identify $Q_1$ and
  its dual $Q_1^{\,\vee}$ with the additive group~$\kk \times \kk$.  With this
  identification, the action of $\diag(u,v,u) \in D_1$ on
  $Q_1^{\,\vee}$ is given by $(s_1,s_2) \mapsto
  (u^{-1}v{s_1},uv^{-1}{s_2})$.  We use Mackey's method for
  semi-direct products; cf.~\cite[Section~8.2]{Serre-LinRep}.  The
  orbits of $D_1$, respectively $D_1'$, on $Q_1^{\,\vee} \simeq \kk \times \kk$
  are classified as follows.  For $D_1$ we obtain
  \begin{center}
    \begin{tabular}{l|l|l}
      {Orbit} & Parameter& {Stabiliser in }$D_1$\\ \hline
      $[0,0]$      & --- & $\GL_1(\kk)\times \GL_1(\kk)$  \\
      $[0,\kk^{\times}]$   & --- & $\GL_1(\kk)$  \\
      $[\kk^{\times}, 0]$   & --- & $\GL_1(\kk)$ \\
      $\kk^{\times}\cdot [s_1,s_2]$  &$(s_1,s_2) \in \kk^{\times}
      \times_{\kk^{\times}} \kk^{\times}$  &  $\GL_1(\kk)$
    \end{tabular}
  \end{center}
  yielding $\lvert \GL_1(\kk) \times \GL_1(\kk) \rvert = (q-1)^2$
  linear characters of~$J_1$ lying above the trivial orbit $[0,0]$ and
  $q+1$ irreducible characters of degree $q-1$ lying above the
  remaining orbits.  Similarly, for $D_1'$ we obtain
  \begin{center}
    \begin{tabular}{l|l|l}
      {Orbit} & Parameter& {Stabiliser in }$D_1'$\\ \hline
      $[0,0]$       & --- & $\GL_1(\kk)$ \\
      $[0,(\kk^{\times})^3\cdot s_2]$ & $s_2 \in
      \kk^{\times}/(\kk^{\times})^3$ & $\mu_3(\kk)$ \\
      $[(\kk^{\times})^3\cdot s_1, 0]$ & $s_1 \in
      \kk^{\times}/(\kk^{\times})^3$ & $\mu_3(\kk)$ \\
      $(\kk^{\times})^3 \cdot [s_1,s_2]$& $(s_1,s_2)
      \in \kk^{\times} \times_{(\kk^{\times})^3} \kk^{\times}$ &
      $\mu_3(\kk)$
    \end{tabular}
  \end{center}
  yielding $\lvert \GL_1(\kk) \rvert=q-1$ linear characters and $(
  \lvert \kk \rvert+1) \lvert \kk^\times/(\kk^\times)^3 \rvert \lvert
  \mu_3(\kk) \rvert = (q+1) \iota(\ee,q)^2$ irreducible characters of
  degree $\lvert \kk^{\times}/\mu_3(\kk) \rvert =(q-1)/\iota(\ee,q) $
  of~$J_1'$.

  \smallskip

  Now suppose that $\ee=-1$.  In this case we identify
  $Q_{-1}$ and its dual $Q_{-1}^{\,\vee}$ with the additive
  group~$\kk_2$.  The action of $\diag(u,v,u) \in D_{-1}$ is given by
  $s \mapsto u^{-1}vs$.  To use Mackey's method for semi-direct
  products we classify the orbits of $D_{-1}$, respectively $D_{-1}'$,
  on $Q_{-1}^{\,\vee} \simeq \kk_2$.  For $D_1$ we obtain
  \begin{center}
    \begin{tabular}{l|l|l} {Orbit} & Parameter& Stabiliser in
      $D_{-1}$\\\hline
      $[0]$     & --- & $\GU_1(\kk) \times \GU_1(\kk)$ \\
      $[s]$ &$s \in \kk_2^{\times}/\GU_1(\kk)$ & $\GU_1(\kk)$
    \end{tabular}
  \end{center}
  yielding $ \lvert \GU_1(\kk) \times \GU_1(\kk) \rvert =(q+1)^2$
  linear characters and $\lvert \kk_2^{\times}/\GU_1(\kk) \rvert
  \lvert \GU_1(\kk) \rvert=(q^2-1)$ irreducible characters of degree
  $\lvert \GU_1(\kk)\rvert=(q+1)$ of~$J_{-1}$.  Similarly for $D_1'$
  we obtain
  \begin{center}
    \begin{tabular}{l|l|l} {Orbit} & Parameter& Stabiliser in
      $D_{-1}'$\\\hline
      $[0]$       & --- & $\GU_1(\kk)$ \\
      $[s]$ & $s \in \kk_2^{\times}/\GU_1(\kk)^3$ &$\mu_3(\kk_2) \cap
      \GU_1(\kk)$
    \end{tabular}
  \end{center}
  yielding $\lvert \GU_1(\kk) \rvert=q+1$ linear characters and
  $\lvert \kk_2^{\times}/\GU_1(\kk)^3 \rvert \lvert \mu_3(\kk_2) \cap
  \GU_1(\kk) \rvert=(q-1)\iota(\ee,q)^2$ irreducible characters of
  degree $\lvert \GU_1(\kk)/\mu_3(\kk_2) \cap \GU_1(\kk)
  \rvert=(q+1)/\iota(\ee,q)$ of~$J_{-1}'$.

  \smallskip

  In summary, for $\sigma$ of type $\Jsha$ we showed that
  \[
  \begin{split}
    \zeta_{\sigma(\kk)}(s) & = (q-\ee) \left( (q-\ee) +
      (q+\ee)(q-\ee)^{-s} +
      (q-1)(q-\ee)q^{-s} \right), \\
    \zeta_{\sigma'(\kk)}(s) & = (q-\ee) + (q+\ee) \iota(\ee,q)^2 \left(
      (q-\ee) / \iota(\ee,q) \right)^{-s} +(q-1)(q-\ee) q^{-s}.  \qedhere
  \end{split}
  \]
\end{proof}

\begin{proof}[Proof of Corollary~\ref{cor:D}]
  The corollary is obtained from formula~\eqref{equ:zeta.H.infin} in
  Theorem~\ref{thm:C} and the explicit formulae provided in
  Table~\ref{tab:shadows.SLSU}, Proposition~\ref{pro:zeta.shadows},
  and Corollary~\ref{cor:infinite.xi}.
\end{proof}

\subsection{Character degrees and Ennola
  duality} \label{subsec:ennola.duality} In this section we prove
Theorems~\ref{thm:H}~and~\ref{thm:I}.  Consider the finite groups
$G_\len = \aG(\lri_\len)$ and $H_\len = \aH(\lri_\len)$, for $\len \in
\N$. The conditions on $p$ in the two theorems ensure that the
Kirillov orbit method is available to describe the characters of the
finite principal congruence subgroups $G_\len^1$ and $H_\len^1$ and
that these characters extend to their respective stabilizers in
$G_\len$ and $H_\len$; see Theorem~\ref{thm:extension.to.radical},
Remark~\ref{rem:char-p-comments-5}, and compare with the proof of
Theorem~\ref{thm:C}.  An irreducible character $\chi$ of $G_\len$,
respectively~$H_\len$, therefore determines, and is determined by, the
following data:
\begin{itemize} \renewcommand{\labelitemi}{$\circ$}
\item a shadow $\sigma$ of type $\cS \in \T^{(\ee)}$,
\item a $G_\len$-orbit, respectively $H_\len$-orbit, of an
  irreducible character $\varphi_\sigma$ of $G^1_\len$, respectively
  $H^1_\len$, whose inertia subgroup in $G_\len$, respectively
  $H_\len$, gives rise to the shadow $\sigma$,
\item a choice of an extension $\hat \varphi_\sigma$ to its inertia
  subgroup in~$G_\len$, respectively $H_\len$, that we will not
  mention further,
\item an irreducible character $\psi_\sigma$ of $\sigma(\kk)$,
  respectively $\sigma'(\kk)$.
\end{itemize}
Moreover, to any such $\varphi_\sigma$ one associates
\begin{itemize} \renewcommand{\labelitemi}{$\circ$}
\item a unique path $\Delta(\varphi_\sigma) \in
  \mathrm{Path}^{\len-1}(\Gsha,\cS)$ of length $\len-1$ in the shadow
  graph $\Gamma^{(\ee)}$, see Figure~\ref{fig:shadow.graph.A2},
  starting at $\Gsha$ and ending at $\cS$.
\end{itemize}
By Corollary~\ref{cor:dim.as.func.shadows}, the degree of
$\varphi_\sigma$ is determined by the path
$\Delta=\Delta(\varphi_\sigma)$:
\begin{equation}\label{equ:dimension.path}
  \varphi_\sigma(1)=\prod_{(\tau,\upsilon) \in \Delta} q^{\frac{1}{2}
    \left(\dim \aG - \dim(\tau)\right)},
\end{equation}
feeding into the degree formula (cf.\ \eqref{equ:gen.sum})
\begin{equation}\label{equ:dimension.product}
  \chi(1) = \varphi_\sigma(1) \psi_\sigma(1) [\aG(\kk):\sigma(\kk)], \qquad
  \text{respectively} \quad
  \chi(1) = \varphi_{\sigma}(1) \psi_{\sigma}(1) [\aH(\kk):\sigma'(\kk)].
\end{equation}

\begin{proof}[Proof of Theorem \ref{thm:H}]
  We are required to give an Ennola-type description of the $p'$-part
  of the character degrees of $G_\len$.  Let $\sigma \in \Sh$ be of
  type~$\cS$.  We use~\eqref{equ:dimension.product} to control the
  character degrees of $\chi \in \Irr(G_\len)$ associated
  to~$\sigma$.

  Formula~\eqref{equ:dimension.path} shows that the contribution
  $\varphi_\sigma(1)$ to $\chi(1)$ is a $q$-power, depending only on
  $\Delta = \Delta(\varphi_\sigma)$.   

  The shadow graphs $\Gamma^{(1)}$ and $\Gamma^{(-1)}$ are almost
  identical: there is a natural correspondence between paths in
  $\Gamma^{(1)}$ not ending in $\Kasha,\Kbsha$ and paths
  in~$\Gamma^{(-1)}$.  Moreover, paths in $\Gamma^{(1)}$ of the same
  length $\len-1$ and containing at the same position one of the edges
  $(\Jsha,\Kasha), (\Jsha,\Kbsha), (\Jsha,\Nsha)$ lead to the same
  character degrees.  Finally, shadows $\sigma$ of types
  $\Kasha,\Kbsha,\Nsha$ yield isomorphic groups~$\sigma(\kk)$.  Thus,
  for our purposes, we may simply ignore $\Kasha, \Kbsha$.

  By Proposition~\ref{pro:zeta.shadows} the $p'$-parts of character
  degrees of $\sigma(\kk)$ are of the form $g(q)$, for polynomials $g
  \in \mathbb{Z}[t]$, involving $\ee$ as a parameter in such a way
  that the Ennola transform $g(t) \mapsto (-1)^{\deg g}g(-t)$
  translates between the cases $\aG = \GL_3$ and $\aG = \GU_3$.  From
  Table~\ref{tab:shadows.GLGU} we see that the same holds for the
  indices $[\aG(\kk):\sigma(\kk)]$.  Thus \eqref{equ:ennola} follows
  from~\eqref{equ:dimension.product} and the fact that the Ennola
  transform is multiplicative. 

  The explicit descriptions of the sets $\cd(\aG(\lri_{\len}))_{p'}$
  are easily obtained, using e.g.\ Proposition~\ref{pro:zeta.shadows}.
  We note that the two additional terms for $\len>1$ are owed to
  shadows of types $\Jsha$, $\Msha$, and $\Nsha$.
\end{proof}

\begin{defn}\label{def:level}
  The \emph{level} of a character $\chi$ of $H = \aH(\lri)$ or $H^1 =
  \aH^1(\lri)$ is equal to $\len-1$, where $\len \in \N$ is minimal
  such that $\chi$ is trivial, i.e.\ equal to the constant function
  $\chi(1)$, on the principal congruence subgroup~$H^\len$.  The
  terminology extends in a natural way to characters of~$H_\len$,
  respectively~$H^1_\len$, by implicitly lifting them to $H$,
  respectively~$H^1$.
\end{defn}

\begin{proof}[Proof of Theorem \ref{thm:I}]
  In the special case $\len=1$,
  Proposition~\ref{pro:zeta.shadows} provides the necessary
  information about character degrees of the group~$\aH(\kk)$.  We
  thus focus on the case $\len \geq 2$.
  
  As explained above, a character $\chi \in \Irr(H)$ of level $\len-1
  \geq 1$ can be connected with a shadow $\sigma$ of type
  $\cS\in\T^{(\ee)}$, a character $\phi_\sigma \in \Irr(H_\len)$ of
  level~$\len-1$ and a path $\Delta = \Delta(\varphi_\sigma) \in
  \mathrm{Path}^{\len-1}(\Gsha,\cS)$ of length $\len-1$ in the shadow
  graph~$\Gamma^{(\ee)}$.  Observe that $\Delta$ does not begin with a
  loop~$(\Gsha,\Gsha)$; in particular, $\cS \ne \Gsha$.  Furthermore,
  we have
  \[
  \chi(1) \ge \min_{\substack{\cS \in \T^{(\ee)},\\
      \text{$\sigma$ of type $\cS$}}} \; \min_{\substack{
      \varphi_\sigma \text{ such that } \\
      (\Gsha,\Gsha) \not\in \Delta(\varphi_\sigma) \in
      \text{Path}^{\len-1}(\Gsha,\cS)}} \; \min_{\psi_\sigma \in
    \Irr\left(\sigma'(\kk)\right)} \quad \phi_{\sigma}(1) \,
  \psi_\sigma(1) \,[\aH(\kk):\sigma'(\kk)],
  \]
  and similarly
  \[
  \chi(1) \le \max_{\substack{\cS \in \T^{(\ee)},\\
      \text{$\sigma$ of type $\cS$}}} \; \max_{\substack{
      \varphi_\sigma \text{ such that } \\
      (\Gsha,\Gsha) \not\in \Delta(\varphi_\sigma) \in
      \text{Path}^{\len-1}(\Gsha,\cS)}} \; \max_{\psi_\sigma \in
    \Irr\left(\sigma'(\kk)\right)} \quad \phi_{\sigma}(1) \,
  \psi_\sigma(1) \,[\aH(\kk):\sigma'(\kk)];
  \]
  cf.~\eqref{equ:dimension.product}.  To control the degree
  $\varphi_{\sigma}(1)$, given by \eqref{equ:dimension.path}, we argue
  as follow.  From~\eqref{equ:zeta.H^m_l} and
  Remark~\ref{rem:also-for-l=2m} we see that the Dirichlet polynomial
  \[
  \partial\xi^\sigma_{\len-1}(s) \coloneqq
  \xi^\sigma_{\len-1}(s) - \xi^\sigma_{\len-2}(s)
  \]
  enumerates the irreducible characters of $H^1_{\len}$ of level
  $\len-1$ and shadow~$\sigma$.  We set
  \[
  \XX = q^{(1-2s)(\len-2)}f^1_{\len-2}(q^{1-s});
  \]
  cf.\ \eqref{def:aux.count}.  Proposition~\ref{pro:xi.sigma.len}
  shows that, for $\sigma \in \Sh$ of type $\cS$, the function
  $\partial\xi^\sigma_{\len-1}(s)$ equals
  \begin{equation*}
    \begin{array}{ll}
      (q-1)(q^2+\ee q +1) \, q^{(1-2s)(\len-2)+2} & \text{if $\cS = \Lsha$,} \\
      (q^3-\ee)(q+\ee) \, q^{(1-2s)(\len-2)} & \text{if
        $\cS = \Jsha$,} \\
      \sixth (q-1)(q^2+\ee q +1)(q+\ee)q^3
      \left[ (q-2)q^{(2-3s)(\len-2)} + 3 (q-1) \XX \right] &
      \text{if $\cS = \Tasha$,} \\
      \hlf (q-1)(q^3-\ee)q^3 \left[q^{(2-3s)(\len-2)+1}  +(q-1)
        \XX \right] & \text{if $\cS = \Tbsha$,} \\
      \third(q^2-1)(q+\ee)(q-\ee)^2 \, q^{(2-3s)(\len-2)+3} &
      \text{if $\cS = \Tcsha$,} \\
      (q-1)(q^3-\ee)(q+\ee)q^2 \left[ q^{(2-3s)(\len-2)} +
        2 \XX \right] & \text{if $\cS = \Msha$,} \\
      (q^2-1)(q^3-\ee) \left[ q^{(2-3s)(\len-2)+1}  +(q-1)
        \XX \right] & \text{if $\cS = \Nsha$,} \\
      (q^2-1)(q^3-\ee) \, \XX & \text{if $\cS \in \{
        \Kasha, \Kbsha \}$.}
    \end{array}
  \end{equation*}
  These functions being Dirichlet polynomials in~$q^{-s}$, we define
  \[
  P_{\len-1}^\sigma = \{ m \in \N \mid \text{the coefficient of
    $q^{-ms}$ in $\partial\xi^\sigma_{\len-1}(s)$ is non-zero} \}.
  \]
  Clearly,
  \begin{equation*}
    \begin{array}{ll}
      P_{\len-1}^\sigma=\{2\len-4\} & \text{if $\cS \in \{\Lsha, \Jsha\}$,} \\
      P_{\len-1}^\sigma=\{3\len-6\} & \text{if $\cS = \Tcsha$,} \\
      P_{\len-1}^\sigma=\{2\len-4, 2\len-3, \ldots,3\len-7,
      3\len-6 \} &
      \text{if $\cS \in  \{\Tasha, \Tbsha, \Msha,\Nsha\}$,} \\
      P_{\len-1}^\sigma=\{2\len-4, 2\len-3, \ldots, 3\len-7\} &
      \text{if $\cS \in \{\Kasha, \Kbsha\}$},
    \end{array}
  \end{equation*} 
  Setting
  \[
  C_\sigma(q) \coloneqq \frac{[\aH(\kk):\sigma'(\kk)]}{q^{\dim \aG -
      \dim(\sigma)}}
  \]
  we see, using \eqref{equ:dimension.product}, that 
  \begin{equation}\label{equ:ineq}
    C_\sigma(q) \cdot q^{2\len-4 + \dim \aG -\dim(\sigma)} \le \chi(1)
    \le C_\sigma(q) \cdot q^{3\len-6+\dim \aG -\dim (\sigma)}
    \max_{\psi_\sigma \in \Irr \left(\sigma'(\kk)\right)} \psi_\sigma(1),
  \end{equation}
  Table~\ref{tab:shadows.SLSU} allows us to write $C_\sigma(q)$
  explicitly in terms of $\ee$ and~$q$; in particular, we see that
  $C_\sigma(q) = 1 + \mathrm{o}(q^{-1})$.  To obtain the bounds for
  $\chi(1)$ given in the theorem, it thus suffices to bound the
  remaining factors in~\eqref{equ:ineq}.  The minimum on the left-hand
  side is $q^{2\len}$, attained for shadows $\sigma$ of type $\Lsha$
  and $\Jsha$.  Inspecting the explicit formulae for the shadow zeta
  functions $\zeta_{\sigma'(\kk)}(s)$ given in
  Proposition~\ref{pro:zeta.shadows}, one deduces easily that the
  maximum on the right-hand side is $q^{3\len}$ and occurs, for
  example, for $\sigma$ of type $\Tasha$ for $\ee=1$ and $\Tcsha$ for
  $\ee=-1$, both necessarily with $\psi_\sigma(1) = 1$ as the
  respective groups $\sigma'(\kk)$ are abelian.
\end{proof}


\section{Ad\`elic zeta functions for type $\mathsf{A}_2$ and
  their analytic properties}\label{sec:abscissa}
\newcommand{\bruch}{5/6}

Theorem~\ref{thm:A} and Corollary~\ref{cor:B} are established in
Section~\ref{subsec:rep.global}.  Theorem~\ref{thm:G} is proved in
Section~\ref{subsec:sim.global}.

\subsection{Zeta functions of ad\`elic and arithmetic
  groups}\label{subsec:rep.global}
Let $\mathbf{H}(\widehat{\Gri_S})$ be an ad\`elic profinite group as
in Theorem~\ref{thm:A}.  This means that $\mathbf{H}$ is a connected,
simply-connected absolutely almost simple algebraic group of type
$\mathsf{A}_2$ defined over a number field~$k$, with
$S$-integers~$\Gri_S$ for a finite set $S \subset \cV_k$ of places
including all the archimedean ones.  Here $\cV_k$ denotes the
collection of all places of $k$, and we write $\cV_k^\infty$ for the
set of archimedean places.  The starting point for our study of the
analytic properties of the zeta function
$\zeta_{\mathbf{H}(\widehat{\Gri_S})}(s)$ is the Euler product
\begin{equation}\label{equ:euler.A2}
  \zeta_{\mathbf{H}(\widehat{\Gri_S})}(s) =  \prod_{v \in
    \cV_k \smallsetminus
    S} \zeta_{\mathbf{H}(\Gri_v)}(s),
\end{equation}
arising from the isomorphism $\mathbf{H}(\widehat{\Gri_S}) \simeq
\prod_{v \in \cV_k \smallsetminus S} \mathbf{H}(\Gri_v)$.

The classification of absolutely almost simple algebraic groups over
number fields implies that $\mathbf{H}$ is either an inner form, i.e.\
of type ${}^1\!\mathsf{A}_2$, arising from a matrix algebra over a
central division algebra over $k$, or an outer form, i.e.\ of type
${}^2\!\mathsf{A}_2$, arising from a matrix algebra over a central
division algebra over a quadratic extension $K$ of~$k$, equipped with
an involution and with reference to a suitable hermitian form; see
\cite[Propositions~2.17 and 2.18]{PlatonovRapinchuk/94} and the
summary in~\cite[Appendix~A]{AKOV1}.  The crucial point for us is that
there is a finite set $T \subset \cV_k$ with $S \subset T$
such that, for all $v$ in
\[
\cV_0 \coloneqq \cV_k \smallsetminus T,
\]
the completion $\mathbf{H}(\Gri_v)$ featuring in \eqref{equ:euler.A2},
is of the form $\SL_3(\Gri_v)$ or $\SU_3(\Gri_v)$ and, in the latter
case, $v$ is not dyadic and does not divide the (relative)
discriminant $\Delta_{K \,\vert\, k}$ of~$K \,\vert\, k$.  Set
\begin{equation}\label{def:V}
  \cV_{\SL} = \{v\in \cV_0 \mid \mathbf{H}(\Gri_v) \simeq
  \SL_3(\Gri_v)\} \quad \textrm{and} \quad \cV_{\SU}= \{v \in
  \cV_0 \mid \mathbf{H}(\Gri_v) \simeq \SU_3(\Gri_v)\}.
\end{equation}
We know, e.g.~from \cite[Theorem~B]{AKOV1}, that all of the finitely
many `exceptional' factors of \eqref{equ:euler.A2}, indexed by the
non-archimedean places in~$T$, converge to a holomorphic function on
the half-plane $\{s\in\C \mid \real(s) > 2/3\}$ without any zeros.
Hence the abscissa of convergence of
\begin{equation}\label{equ:euler.proof}
  Z(s) \coloneqq  \prod_{v \in \cV_0} \zeta_{\mathbf{H}(\Gri_v)}(s) = \prod_{v\in \cV_\SL} \zeta_{\SL_3(\Gri_v)}(s) \cdot
  \prod_{v\in \cV_\SU}\zeta_{\SU_3(\Gri_v)}(s),
\end{equation}
is equal to the abscissa of convergence of
$\zeta_{\mathbf{H}(\widehat{\Gri_S})}(s)$, which is known to be~$1$;
cf.~\cite[Theorem~C]{AKOV1}.  Moreover, it suffices to prove the first
statement of Theorem~\ref{thm:A} for~$Z(s)$ instead of
$\zeta_{\mathbf{H}(\widehat{\Gri_S})}(s)$.

The set $\cV_{\SU}$ is finite if and only if $\mathbf{H}$ is an inner
form.  If $\mathbf{H}$ is an outer form, then $\cV_{\SU}$ has positive
analytic density; see~\cite[Lemma~A.1]{AKOV1}.  In this case, the
distinction whether $\mathbf{H}(\Gri_v) \simeq \SL_3(\Gri_v)$ or
$\mathbf{H}(\Gri_v) \simeq \SU_3(\Gri_v)$ is, for all $v \in \cV_0$,
dictated by the decomposition behaviour of the prime ideal $\fp_v$ of
$\Gri$ associated to $v$ in the ring of integers~$\Gri_K$ of~$K$.
This behaviour, in turn, is described by the \emph{Artin symbol} of
the quadratic extension~$K \,\vert\, k$.  Indeed, the value of the
Artin symbol at a place $v \in \cV_k \smallsetminus \cV_k^\infty$ not
dividing the discriminant $\Delta_{K \,\vert\, k}$ is given by
\[
\ee(v) = \artin{K}{k}{v}=
\begin{cases}
  1  & \textrm{ if $\fp_v$ is decomposed in $\Gri_K$,} \\
  -1 & \textrm{ if $\fp_v$ is inert in $\Gri_K$};
\end{cases}
\]
cf., for instance, \cite[Chapter~VI, \S~7]{Neukirch/99}.  The Artin
symbol thus defines the key parameter~\eqref{equ:def.epsilon} in a
global setting.  For $v \in \cV_k \smallsetminus
\cV_k^\infty$, with residue field $\kk_v$ of cardinality
$q_v$, we write $\iota(v) \coloneqq \gcd(q_v -1,3) \in \{1,3\}$ for
the number of roots of unity in $\kk_v$, as in~\eqref{def:iota}.

Equation~\eqref{equ:zeta.H.infin} presents each factor
$\zeta_{\mathbf{H}(\Gri_v)}(s)$ of \eqref{equ:euler.proof} as a finite
sum of rational functions, indexed by shadow types and each depending
on the parameters $q_v$, $\ee(v)$, and~$\iota(v)$.  Furthermore,
$(1-q_v^{1-2s}) (1-q_v^{2-3s})$ is a common denominator for these
summands.  Informally speaking, we will show that clearing this common
denominator strictly improves the abscissa of convergence of the Euler
product defining $Z(s)$, from $1$ to at least $\bruch$.  More
precisely, we claim that
\begin{align*}
  \eta(s) & \coloneqq Z(s) \prod_{v \in \cV_0}
  (1-q_v^{\,1-2s})(1-q_v^{\,2-3s}) \\
  & \phantom{:}= \prod_{v\in \cV_\SL} (1-q_v^{\,1-2s})(1-q_v^{\,2-3s})
  \zeta_{\SL_3(\Gri_v)}(s) \cdot \prod_{v\in \cV_\SU}
  (1-q_v^{\,1-2s})(1-q_v^{\,2-3s}) \zeta_{\SU_3(\Gri_v)}(s)
\end{align*}
converges and does not vanish on the half-plane $\{s\in\C \mid
\real(s) > \bruch\}$.  As the Dedekind zeta function $\zeta_k(s) =
\prod_{v\in \cV_k \smallsetminus \cV_k^\infty}(1-q_v^{-s})^{-1}$
converges on $\{s\in\C \mid \real(s) > 1\}$, this yields a new proof
of the fact that the abscissa of convergence of
$\zeta_{\mathbf{H}(\widehat{\Gri_S})}(s)$ is equal to~$1$.  It also
shows that $\zeta_{\mathbf{H}(\widehat{\Gri_S})}(s)$ has meromorphic
continuation to at least $\{s \in \C \mid \real(s) > \bruch\}$ and
that the continued function has a unique singularity in this domain,
namely a double pole at~$s=1$; cf.~\cite[Chapter~VII,
Corollary~5.11]{Neukirch/99}. This will establish the first part of
Theorem~\ref{thm:A}.

\smallskip

We now prove the claim about the convergence of $\eta(s)$, using the
following well-known lemma.
\begin{lem}\label{lem:poly}
  Let $\mathcal{W} \subset \cV_k \smallsetminus \cV_k^\infty$.  Let
  $I$ be a finite index set and $f_i,g_i \in\Q[t]$, $i\in I$, be
  polynomials of degrees $\deg(f_i) \geq 0$ and $\deg(g_i) \geq 1$,
  respectively.  Then the Euler product
  \begin{equation}\label{equ:euler.poly}
    \prod_{v \in \mathcal{W}}\left( 1 + \sum_{i\in I}f_i(q_v) g_i(q_v)^{-s} \right)
  \end{equation}
  converges on $\left\{s\in\C \mid \real(s) > \max_{i \in I}
    \frac{\deg(f_i)+1}{\deg(g_i)} \right\}$.
\end{lem}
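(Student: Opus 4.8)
The plan is to prove convergence of the Euler product \eqref{equ:euler.poly} by the standard comparison with a Dedekind-type zeta function, exploiting that each local factor is $1$ plus a sum of terms each of which is bounded, for $\real(s)$ in the claimed range, by a constant times $q_v^{-\epsilon}$ for some fixed $\epsilon>0$. Concretely, set $\alpha = \max_{i\in I}\frac{\deg(f_i)+1}{\deg(g_i)}$ and fix $s$ with $\real(s) = \alpha + 2\delta$ for some $\delta>0$; it suffices to show convergence there, since the region is a half-plane and one may shrink $\delta$.

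First I would estimate a single term $f_i(q_v) g_i(q_v)^{-s}$. Writing $d_i = \deg f_i$ and $e_i = \deg g_i$, there is a constant $c_i$ (depending only on the coefficients of $f_i, g_i$, not on $v$) such that $\lvert f_i(q_v) \rvert \le c_i\, q_v^{d_i}$ and $\lvert g_i(q_v)^{-s}\rvert = \lvert g_i(q_v)\rvert^{-\real(s)} \le c_i\, q_v^{-e_i \real(s)}$ for all $v$ with $q_v$ large enough — here one uses $\deg g_i \ge 1$ so that $g_i(q_v)$ grows like $q_v^{e_i}$ and in particular is eventually nonzero. Hence $\lvert f_i(q_v)g_i(q_v)^{-s}\rvert \le c_i^2\, q_v^{\,d_i - e_i\real(s)}$. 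Since $\real(s) \ge \frac{d_i+1}{e_i} + 2\delta$, we get $d_i - e_i\real(s) \le -1 - 2e_i\delta \le -1-2\delta$, so each term is $\ll q_v^{-1-2\delta}$ uniformly in $i\in I$ and in $v$ (away from a finite set of small places, which only contribute finitely many harmless factors). Summing over the finite index set $I$, the `tail' $\sum_{i\in I} f_i(q_v)g_i(q_v)^{-s}$ is $\ll |I|\, q_v^{-1-2\delta}$, which tends to $0$ as $q_v\to\infty$.

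Next I would invoke the elementary criterion for convergence of an infinite product: $\prod_{v}(1+a_v)$ converges absolutely if and only if $\sum_v \lvert a_v\rvert$ converges. With $a_v = \sum_{i\in I}f_i(q_v)g_i(q_v)^{-s}$ we have $\sum_{v\in\mathcal W}\lvert a_v\rvert \ll \sum_{v\in\mathcal W} q_v^{-1-2\delta} \le \sum_{v\in\cV_k\smallsetminus\cV_k^\infty} q_v^{-1-2\delta}$, and the latter series converges: it is dominated by $\sum_{n\ge 2}(\text{number of primes of }\Gri\text{ of norm }n)\, n^{-1-2\delta}$, which in turn is $\le \lvert k:\Q\rvert \sum_{n\ge 2} n^{-1-2\delta} < \infty$ because each rational prime has at most $\lvert k:\Q\rvert$ primes of $\Gri$ above it and norms are $\ge$ the underlying rational prime. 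This proves absolute convergence of the product on the open half-plane $\{\real(s) > \alpha\}$, since $\delta>0$ was arbitrary.

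The only mild obstacle is bookkeeping: one must handle uniformly the finitely many places $v$ where $q_v$ is too small for the polynomial bounds $\lvert f_i(q_v)\rvert\le c_i q_v^{d_i}$ and $\lvert g_i(q_v)\rvert\ge c_i^{-1} q_v^{e_i}$ to hold, or where $g_i(q_v)$ might vanish. These finitely many local factors are just fixed meromorphic (indeed holomorphic, for $\real(s)$ large) functions and contribute nothing to the convergence question; one simply excises them and applies the argument to the cofinite remainder of $\mathcal W$. The constants $c_i$ can be made explicit from the leading coefficients and degrees of $f_i$ and $g_i$, but doing so is not necessary for the statement.
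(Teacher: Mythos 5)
Your argument is correct: the term-by-term bound $\lvert f_i(q_v)g_i(q_v)^{-s}\rvert \ll q_v^{-1-2\delta}$ for $\real(s)\geq \alpha+2\delta$, combined with $\sum_v q_v^{-1-2\delta} \leq \lvert k:\Q\rvert \sum_n n^{-1-2\delta} < \infty$ and the standard criterion for absolute convergence of infinite products, is exactly the standard justification. The paper states this lemma as well known and gives no proof, so there is nothing to compare against; your write-up (including the care taken with the finitely many small places where $g_i(q_v)$ could be small or vanish) supplies the intended argument.
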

Note that we make no assumption on the set $\mathcal{W}$ of places and
no statement about the precise value of the abscissa of convergence of
the product~\eqref{equ:euler.poly}. We refer to
$\frac{\deg(f_i)+1}{\deg(g_i)}$ as the \emph{degree ratio} of the
expression~$f_i(q_v) g_i(q_v)^{-s}$, for $i \in I$.  Fixing $\ee \in
\{1,-1\}$ and $\iota \in \{1,3\}$, we set
\[
\mathcal{W} = \mathcal{W}_{\ee,\iota} = \{v\in \cV_0 \mid
\ee(v)=\ee, \iota(v)=\iota\}.
\]
Let~$v\in \mathcal{W}$.  The factor of $\eta(s)$ indexed by $v$ is a
sum of terms of the form
\begin{equation}\label{equ:cleared}
  [\mathbf{H}(\kk_v):{\sigma'}(\kk_v)]^{-1-s} \,
  \zeta_{\sigma'(\kk_v)}(s) \, \xi^\sigma(s) \,
  (1-q_v^{\,1-2s})(1-q_v^{\,2-3s}),
\end{equation}
where $\sigma$ ranges over the shadow set $\Sh_{\aG(\Gri_v)}$ for $\aG
= \GL_3$ or $\aG = \GU_3$ according to~$\ee$;
cf.~\eqref{equ:zeta.H.infin}.  By construction of $\mathcal{W}$, we
may write these terms as sums of polynomial expressions in $q_v$ with
constant coefficients as in the factors of the Euler
product~\eqref{equ:euler.poly}. More precisely, there exist a finite
index set $I$ and non-constant polynomials $f_i, g_i \in \Q[t]$ such
that the sum over the expressions in~\eqref{equ:cleared} is of the
form $1 + \sum_{i\in I} f_i(q_v) g_i(q_v)^{-s}$.  By
Lemma~\ref{lem:poly}, it remains to analyse the degree ratios
occurring for each shadow $\sigma$ of type $\cS$, say, and to verify
that they are all bounded above by~$\bruch$. In the sequel we
occasionally write $q$ instead of~$q_v$.

If $\cS=\Gsha$, then \eqref{equ:cleared} equals
\[
\zeta_{\sigma'(\kk_v)}(s) \, (1-q_v^{\,1-2s})(1-q_v^{\,2-3s}),
\]
where $\sigma'(\kk_v)$ is the finite group of Lie type $\SL_3(\kk_v)$
if $\ee=1$ or $\SU_3(\kk_v)$ if $\ee=-1$.  Inspection of the formulae
for these zeta functions, given in~\eqref{equ:zeta.Lie.type.H}, shows
that
\begin{multline}\label{equ:1}
  \zeta_{\sigma'(\kk_v)}(s) \, (1-q^{\,1-2s})(1-q^{\,2-3s}) = 1 +
  \left( q(q^{\,2} + \ee q+1)^{-s} - q^{\,1-2s} \right)
  \\
  + \left( \tfrac{1}{6} q^{\,2} \left((q^{\,2} + \ee q +1)(q
      +\ee) \right)^{-s} + \tfrac{1}{2} q^{\,2}(q^{\,3} - \ee)^{-s}
    + \tfrac{1}{3} q^{\,2} ((q + \ee)(q - \ee)^2)^{-s} -
    q^{\,2-3s} \right) \\
  + \left(\textrm{terms of degree ratios at most }4/5\right).
\end{multline}
For $s\in\R_{>0}$, the binomial series expansion implies that, for a
suitable constant $C_1 \in \R_{>0}$ and $q$ sufficiently large,
\[
\left\vert q(q^{2} + \ee q+1)^{-s}-q^{1-2s} \right\vert \leq
C_1q^{-2s}
\]
so that the relevant term on the right-hand side of~\eqref{equ:1} may
be replaced by a polynomial expression of degree ratio $1/2$ without
worsening the abscissa of convergence of the Euler product
defining~$\eta(s)$. A similar argument shows that, for $s \in
\R_{>0}$, there is a constant $C_2\in\R_{>0}$ such that, for all
sufficiently large $q$,
\begin{equation}\label{equ:est.cancel}
  \left| \tfrac{1}{6}q^2\left((q^2+\ee q +1)(q+\ee)\right)^{-s} + \tfrac{1}{2}q^2(q^3-\ee)^{-s} +
    \tfrac{1}{3}q^2 \left( (q+\ee)(q-\ee)^2 \right)^{-s} -
    q^{2-3s} 
  \right| \leq C_2 q^{1-3s},
\end{equation}
so that the relevant term on the right-hand side of~\eqref{equ:1} may
be replaced by a polynomial expression of degree ratio~$2/3$ without
worsening the abscissa of convergence of the Euler product.

If $\cS=\Lsha$ then~\eqref{equ:cleared} takes the form
\[
\hlf (q-1)(q-\ee) \left(2 + 2q^{\,-s} + (q-2)(q+1)^{-s} + q
  (q-1)^{-s} \right) \left(q^{\,2} (q^{\,2} + \ee q + 1) \right)^{-s}
(1-q^{\,2-3s}).
\]
The degree ratios occurring are at most~$4/5$.  The forms taken by the
summand~\eqref{equ:cleared} in the remaining cases $\cS \in \{ \Jsha,
\Tasha, \Tbsha, \Tcsha, \Msha, \Nsha, \Kasha, \Kbsha \}$, together
with upper bounds for the occurring degree ratios, are listed in
Table~\ref{tab:deg-ratios}. Overall, the degree ratios occurring in
these cases are bounded above by~$\bruch$. 

\begin{table}[htb!]
  \centering
  \caption{Bounds for various degree ratios, where $q = q_v$}
  \label{tab:deg-ratios}
  \begin{tabular}{|c||l|c|}
    \hline
    Type &  Summand \eqref{equ:cleared} & Deg.\ ratios \\\hline
    $\Jsha$ & $\left( (q^3-\ee)(q+\ee) \right)^{-s} (1-q^{2-3s})$ & \\
    & \hfill $\cdot  \left( (q-\ee) + (q+\ee)\iota^{2}((q-\ee)/\iota)^{-s} +
      (q-1)(q-\ee)q^{-s} \right) $&  $\leq 5/8$ \\
    $\Tasha$ & $\frac{1}{6} \left( q^3(q^2+\ee q +
      1)(q+\ee)\right)^{-s}(q-\ee)^2(q-1) (q-2+2q^{2-2s} - q^{1-2s})$
    & $\leq 5/6$\\
    $\Tbsha$ & $  \frac{1}{2} \left( q^3(q^3-\ee)
    \right)^{-s}q(q-1)(q^2-1)(1-q^{-2s})$ & $\leq 5/6$\\
    $\Tcsha$ & $ \frac{1}{3} \left( q^3(q + \ee)(q-\ee)^2 \right)^{-s}
    (q^2+\ee q +1)(q^2-1)(1-q^{1-2s})$ & $\leq 5/6$\\
    $\Msha$ & $ \left( q^2(q^3 - \ee)(q+\ee) \right)^{-s}
    q(q-\ee)(q-1)(1+q^{1-2s})$ & $\leq 2/3$\\
    $\Nsha$ & $  \iota^2 \left(q(q^3-\ee)(q^2-1)\iota \right)^{-s} q^2
    (1-q^{-2s})$ & $\leq 1/2$ \\
    \hline
    $\Kasha, \Kbsha^*$ & $  \iota^2
    \left(q(q^3-\ee)(q^2-1)/\iota\right)^{-s} q^{2-2s}$ & $\leq
    3/8$\\
    \hline
    \multicolumn{3}{@{} c @{}}{${}^*$ Only applies if $\ee = 1$.}
  \end{tabular}
\end{table}

This establishes the claim about the convergence of $\eta(s)$, and
hence the first part of Theorem~\ref{thm:A}.  To prove the second part
we recall the following Tauberian theorem.

\begin{thm}[{\cite[Theorem~4.20]{duSG/00}}] \label{thm:tauber} Let the
  Dirichlet series $f(s) = \sum_{n=1}^\infty a_n n^{-s}$ with
  non-negative real coefficients be convergent for $\real(s) > \alpha
  > 0$.  Assume that in a neighbourhood of~$\alpha$, one has $f(s) =
  g(s)(s-\alpha)^{-\beta} + h(s)$, where $g(s), h(s)$ are holomorphic
  functions, $g(\alpha) \ne 0$ and $\beta > 0$.  Assume also that
  $f(s)$ can be holomorphically continued to the line $\real(s) =
  \alpha$ except for the pole at $s = \alpha$. Then 
  \[
  \frac{g(\alpha)}{\alpha \mathsf{\Gamma}(\beta)} = \lim_{N\rightarrow
    \infty} \frac{\sum_{n=1}^N a_n}{N^\alpha(\log N)^{\beta-1}}.
  \] 
\end{thm}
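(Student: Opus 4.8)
The plan is to deduce the asymptotics by a classical Mellin/Laplace-transform Tauberian argument of Ikehara--Delange type, with the non-negativity of the coefficients serving as the Tauberian side condition. Write $A(N) = \sum_{n=1}^N a_n$. First I would record, by Abel summation, the integral representation valid for $\real(s) > \alpha$,
\[
f(s) = s \int_1^\infty A(x)\, x^{-s-1}\, dx,
\]
so that $F(s) \coloneqq f(s)/s = \int_1^\infty A(x)\, x^{-s-1}\, dx$. The hypotheses on $f$ transfer to $F$: it is holomorphic on $\{\real(s) > \alpha\}$, continues holomorphically to the punctured line $\real(s) = \alpha$, $s \ne \alpha$, and near $s = \alpha$ satisfies $F(s) = \frac{g(s)}{s}(s-\alpha)^{-\beta} + \frac{h(s)}{s}$ with $\frac{g(\alpha)}{\alpha} \ne 0$.

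Substituting $x = e^t$ and setting $B(t) = A(e^t)$, the representation becomes a Laplace transform,
\[
F(\alpha + w) = \int_0^\infty \big( B(t)\, e^{-\alpha t} \big)\, e^{-wt}\, dt \qquad (\real(w) > 0),
\]
exhibiting $w \mapsto F(\alpha+w)$ as the Laplace transform of the non-negative function $\phi(t) \coloneqq B(t)e^{-\alpha t}$ (here $a_n \geq 0$ enters, making $B$ non-decreasing). Its singularity at $w=0$ is $\frac{g(\alpha)}{\alpha}\, w^{-\beta}$ modulo a function holomorphic across $w=0$, and it extends holomorphically across the punctured imaginary axis. The Ikehara--Delange Tauberian theorem for Laplace transforms then yields the integrated asymptotic
\[
\int_0^t \phi(\tau)\, d\tau \sim \frac{g(\alpha)/\alpha}{\mathsf{\Gamma}(\beta+1)}\, t^\beta \qquad (t \to \infty),
\]
and, upgrading to a pointwise estimate using that $B$ is non-decreasing,
\[
\phi(t) = B(t)\, e^{-\alpha t} \sim \frac{g(\alpha)/\alpha}{\mathsf{\Gamma}(\beta)}\, t^{\beta - 1} \qquad (t \to \infty).
\]
Undoing the substitution $t = \log N$ gives $A(N) \sim \frac{g(\alpha)}{\alpha\,\mathsf{\Gamma}(\beta)}\, N^\alpha (\log N)^{\beta - 1}$, which is the claim.

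To make the Tauberian input rigorous I would proceed in two steps. For the integrated asymptotic, I would express $\int_0^t \phi(\tau)\,d\tau$ via $\frac{1}{2\pi i}\int_{(c)} F(\alpha+w)\,\frac{e^{wt}}{w}\,dw$ and shift the contour past $\real(w)=0$, deforming around the branch cut of $w^{-\beta}$ along $(-\infty,0]$ by a Hankel contour; this picks up the residual contribution $\frac{1}{2\pi i}\oint w^{-\beta}\,\frac{e^{wt}}{w}\,dw = \frac{t^\beta}{\mathsf{\Gamma}(\beta+1)}$, while the integral over the shifted contour is controlled using holomorphicity on the punctured line together with the trivial bound $|F(\sigma + i\tau)| \le F(\sigma)$ coming again from $a_n \geq 0$. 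For the passage to the pointwise estimate I would sandwich $\phi(t)$ between difference quotients of $t \mapsto \int_0^t \phi$ over short intervals, using the monotonicity of $B$ (not of $\phi$ itself) and the regular variation of $t^{\beta-1}$.

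The main obstacle is precisely this Tauberian step. The non-integer exponent $\beta$ forces a branch-point rather than pole analysis in the contour shift, and the passage from the Cesàro-type asymptotic to the pointwise one is delicate because $\phi(t) = B(t)e^{-\alpha t}$ is not monotone; one must isolate the monotone factor $B$ and quote the appropriate Hardy--Littlewood--Karamata / de Haan regular-variation machinery. Alternatively, and more economically for the purposes of the paper, one may simply invoke the cited result \cite[Theorem~4.20]{duSG/00}, which packages exactly this version of Delange's theorem.
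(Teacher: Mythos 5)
The paper does not prove this statement at all: it is quoted verbatim, with attribution, from \cite[Theorem~4.20]{duSG/00}, which is a standard Tauberian theorem of Delange type, and the only role it plays here is as an imported tool (used with $\alpha=1$, $\beta=2$, etc.). So the route the paper takes is exactly the one you offer in your closing sentence -- invoke the citation -- and for the purposes of this paper that is all that is required.

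Taken as an actual proof, however, your sketch has one concrete gap. The hypotheses give holomorphic continuation of $f$ (hence of $F(s)=f(s)/s$) only to the punctured line $\real(s)=\alpha$, i.e.\ to some open neighbourhood of that line which may pinch as $\lvert \Im s\rvert\to\infty$; they do not give holomorphy in any half-plane $\real(s)>\alpha-\delta$. Consequently you are not entitled to ``shift the contour past $\real(w)=0$'' and run a Hankel-contour computation along $(-\infty,0]$: the branch-cut analysis is legitimate only in a small neighbourhood of $w=0$ where the decomposition $F(\alpha+w)=\tfrac{g(\alpha+w)}{\alpha+w}w^{-\beta}+\tfrac{h(\alpha+w)}{\alpha+w}$ is available, while far from $w=0$ no region to the left of the imaginary axis is provided. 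Moreover, the bound $\lvert F(\sigma+i\tau)\rvert\le F(\sigma)$ coming from $a_n\ge 0$ holds only for $\sigma>\alpha$ and says nothing about the continued function on or beyond the line, so it cannot control the shifted integral. The classical proofs (Delange's original argument, or Newman-style finite rectangular contours combined with Fej\'er-kernel smoothing) are designed precisely to work with the closed half-plane $\real(s)\ge\alpha$ only; if you want a self-contained proof you must use one of these in place of the naive contour shift. The remaining steps -- the Laplace-transform reformulation via Abel summation, and the upgrade from the integrated asymptotic to the pointwise one using the monotonicity of $B(t)=A(e^t)$ together with the regular variation of $t^{\beta-1}$ -- are sound and are indeed how the standard arguments conclude.
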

Here, $\mathsf{\Gamma}$ denotes the $\Gamma$-function. The second
claim of Theorem~\ref{thm:A} follows from the first, with $\alpha=1$,
$\beta=2$, and $c(\mathbf{H}(\widehat{\Gri_S})) = g(\alpha)/(\alpha
\mathsf{\Gamma}(\beta)) = g(1)$ for some holomorphic function $g$ as
in Theorem~\ref{thm:tauber}.

\begin{proof}[Proof of Corollary~\ref{cor:B}] The group
  $\mathbf{H}(\Gri_S)$ contains a subgroup $\Gamma$ of finite index
  such that $\wh{\Gamma} \simeq \prod_{v\not\in S} \Gamma_v$, where
  $\Gamma_v$ is an open subgroup of $\mathbf{H}(\Gri_v)$ for all
  places $v$, with equality for all but finitely many~$v$; if
  $\mathbf{H}(\Gri_S)$ has the sCSP we may take $\Gamma =
  \mathbf{H}(\Gri_S)$.  It follows that $\zeta_{\Gamma}(s) =
  \zeta_{\mathbf{H}(\mathbb{C})}(s)^{\lvert k : \Q \rvert}
  \prod_{v\not\in S} \zeta_{\Gamma_v}(s)$; cf.~\cite[Theorem~3.3]{LL}.
  The corollary is deduced from Theorem~\ref{thm:A}; its proof shows
  how to deal with the finitely many `exceptional' non-archimedean
  factors for which $\Gamma_v \ne \mathbf{H}(\Gri_v)$, and we only
  need to accommodate for the additional archimedean factors.  In
  fact, they can be dealt with in a similar way:
  by~\cite[Theorem~5.1]{LL}, each factor $\zeta_{\mathbf{H}(\C)}(s)$
  converges and does not vanish on the complex half-plane~$\{s\in\C
  \mid \real(s) > 2/3\}$.
\end{proof}

We add some remarks concerning the constant $c(\mathbf{H}(\Gri_S))$ in
Corollary~\ref{cor:B} in case $\mathbf{H}(\Gri_S)$ has the sCSP;
similar comments apply to $c(\mathbf{H}(\widehat{\Gri_S}))$ in
Theorem~\ref{thm:A}.  The invariant $c(\mathbf{H}(\Gri_S))$ is a
rational multiple of the product of the following factors:
\begin{itemize} \renewcommand{\labelitemi}{$\circ$}
\item the $\lvert k:\Q \rvert$-th power of the special value
  $\zeta_{\SL_3(\C)}(1)$,
\item the square of the residue $\lim_{s\rightarrow 1}(s-1)\zeta_k(s)$
  at $s=1$ of the Dedekind zeta function $\zeta_k(s)$,
\item an Euler product $\prod_{v\in \cV'}
  (1-q_v^{-1})^2 \zeta_{\mathbf{H}(\Gri_v)}(1)$ for a cofinite subset
  $\cV' \subset \cV_k \smallsetminus S$.
\end{itemize}
The residue $\lim_{s\rightarrow 1}(s-1)\zeta_k(s)$ is, of course, well
known and given by the classical class number formula;
see~\cite[Chapter~VII, Corollary~5.11]{Neukirch/99}.

For $\mathbf{H}(\Gri_S) = \SL_3(\Z)$, for instance, we obtain
\begin{equation}\label{equ:euler.constant}
  c(\SL_3(\Z)) =  \zeta_{\SL_3(\C)}(1) \prod_{p \textrm{ prime}} \left(
    (1-p^{-1})^2 \zeta_{\SL_3(\Z_p)}(1) \right).
\end{equation}
It is known that $\zeta_{\SL_3(\C)}(s)$ is equal to the
`Mordell-Tornheim double series'
\[
\zeta_{\textrm{MT},2}(s) = \sum_{(m_1,m_2)\in\N^2}
(m_1m_2(m_1+m_2))^{-s};
\]
see, for instance, \cite[p.~359]{KMT/11}. In \cite[p.~369]{M58},
Mordell shows that
\[
\zeta_{\SL_3(\C)}(1) = \zeta_{\textrm{MT},2}(1) = 2\zeta(3),
\]
where $\zeta(s) = \zeta_\Q(s)$ denotes the Riemann zeta function.
Furthermore, for $k = \Q$, the residue $\lim_{s\rightarrow
  1}(s-1)\zeta(s) = 1$, and it remains to deal with the third factor
listed above.  Unfortunately, we are unable to determine
$c(\SL_3(\Z))$ completely as we do not know $\zeta_{\SL_3(\Z_3)}(s)$
-- or even just the special value $\zeta_{\SL_3(\Z_3)}(1)$ --
explicitly; see~\cite[Theorem~1.4]{AKOVIII/11} for a formula for the
related zeta function $\zeta_{\SL_3^m(\lri)}(s)$ for unramified
extensions $\lri$ of~$\mathbb{Z}_3$.  But we arrive at the following
numerical fact regarding the third factors listed above.

\begin{prop}
  Suppose that $\lri$ is a compact discrete valuation ring of
  characteristic zero, satisfying the conditions hypotheses of
  Corollary~\ref{cor:D}, and that $\aH(\lri)$ is either $\SL_3(\lri)$,
  for $\ee=1$, or $\SU_3(\lri)$, for $\ee=-1$, and put $\iota =
  \gcd(q-\ee,3)$.  Then
  \begin{equation*}
    (1-q^{-1})^2 \zeta_{\aH(\lri)}(1) =
    \frac{W_{\ee,\iota}(q)}{(q^3-\ee)(q^2-1)q^5},
  \end{equation*}
  where 
  \begin{multline*}
    W_{\ee,\iota}(q) = q^{10} - (2\ee + 1) q^8 + (\iota^3 - \ee + 2)
    q^7 + 4\ee q^6 + ((\ee + 1)\iota^3 - (2\ee + 3)) q^5 \\
    - (2\iota^3-3) q^4 + (\iota^3 + \ee - 3) q^3 + (\ee + 1) q^2 -
    \ee(2q - 1).
  \end{multline*}
\end{prop}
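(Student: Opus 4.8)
The plan is to derive the identity by simply specialising the explicit formula for $\zeta_{\aH(\lri)}(s)$ from Corollary~\ref{cor:D} at $s=1$ and multiplying by $(1-q^{-1})^2$. Concretely, Corollary~\ref{cor:D} expresses $\zeta_{\aH(\lri)}(s)$ as $\zeta_{\aH(\kk)}(s)+\psi_{\ee,q}(s)$, where $\zeta_{\aH(\kk)}(s)$ is the finite-group-of-Lie-type zeta function given by the uniform formula \eqref{equ:zeta.Lie.type.H} and $\psi_{\ee,q}(s)$ is the eight-term sum of rational functions in $q$ and $q^{-s}$ displayed right after it, with common denominator factors $1-q^{1-2s}$ and $1-q^{2-3s}$. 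At $s=1$ these denominators specialise to $1-q^{-1}$ and $1-q^{-1}$, so each of the eight summands of $\psi_{\ee,q}(1)$ picks up exactly a factor $(1-q^{-1})^{-1}$ or $(1-q^{-1})^{-2}$ in its denominator; multiplying the whole expression by $(1-q^{-1})^2 = (q-1)^2q^{-2}$ therefore clears these and leaves a rational function whose denominator, after collecting, is a monomial in $q$ times $(q^3-\ee)(q^2-1)$ — matching the claimed shape $(q^3-\ee)(q^2-1)q^5$.

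The first step is to record $\zeta_{\aH(\kk)}(1)$: substitute $s=1$ into each of the eight lines of \eqref{equ:zeta.Lie.type.H}, obtaining a sum of rational functions in $q$ (and $\iota(\ee,q)$) each with a denominator dividing $(q^3-\ee)(q^2-1)(q^2+\ee q+1)(q+\ee)$; note that $\iota(\ee,q)^2$ appears only multiplied against a factor $\iota(\ee,q)^{s}$ in the denominator, so at $s=1$ the terms $\tfrac23\iota^2(\cdot/\iota)^{-1}$ and $\tfrac13\iota^2(\cdot/\iota)^{-1}$ contribute $\tfrac23\iota^3(\cdot)^{-1}$ and $\tfrac13\iota^3(\cdot)^{-1}$, which is where the $\iota^3$ in $W_{\ee,\iota}$ originates. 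The second step is to do the same for $\psi_{\ee,q}(1)$, using $1-q^{1-2s}\mapsto 1-q^{-1}$, $1-q^{2-3s}\mapsto 1-q^{-1}$, $q^{-s}\mapsto q^{-1}$, etc., term by term through the eight lines, again tracking the $\iota^2$-against-$\iota^s$ cancellations (the last line carries the factor $\ee+1$, vanishing for $\ee=-1$, which accounts for the asymmetry of certain coefficients of $W_{\ee,\iota}$ in $\ee$). The third step is to add the two contributions, multiply by $(1-q^{-1})^2$, bring everything over the common denominator $(q^3-\ee)(q^2-1)q^5$, and verify that the numerator is precisely $W_{\ee,\iota}(q)$ as stated.

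The verification that the numerator collapses to the displayed degree-$10$ polynomial $W_{\ee,\iota}(q)$ is the only substantive part, and it is the main obstacle: it is a bookkeeping computation involving roughly sixteen rational summands with denominators that are products of the irreducible factors $q$, $q+\ee$, $q^2-1$, $q^2+\ee q+1$, $q^3-\ee$, and one must check that after clearing denominators all the factors $q^2+\ee q+1$ and $q+\ee$ (and the surplus powers of $q$) cancel, leaving exactly $(q^3-\ee)(q^2-1)q^5$ downstairs. A clean way to organise this is to treat $\ee$ as a formal symbol with $\ee^2=1$ and $\iota$ as a symbol, reduce every denominator to the target denominator by explicit polynomial multipliers, sum the numerators, and then read off the coefficients; the factorisations $q^4-1=(q^2-1)(q^2+1)$, $q^6-1=(q^2-1)(q^2+q+1)(q^2-q+1)$ and the identities $(q^3-\ee)(q+\ee)=q^4+\ee q^3 - \ee q - 1$, $(q^2+\ee q+1)(q-\ee)=q^3-\ee$ will do most of the simplification. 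Since the formula is asserted in the proposition as a ``numerical fact'', I expect this to be carried out by direct expansion (and, if desired, cross-checked by plugging in a few small values of $q$ and both choices of $\ee,\iota$); no conceptual input beyond Corollary~\ref{cor:D} is needed.
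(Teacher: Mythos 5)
Your proposal is correct and is essentially the paper's own argument: the paper proves this proposition simply "by inspection of the explicit formulae" of Theorem~\ref{thm:C}, i.e.\ by substituting $s=1$ into the explicit expression for $\zeta_{\aH(\lri)}(s)$ (of which Corollary~\ref{cor:D} is the self-contained form you use) and carrying out the algebraic simplification. Your bookkeeping of the specialisations $1-q^{1-2s}\mapsto 1-q^{-1}$, $1-q^{2-3s}\mapsto 1-q^{-1}$, the origin of the $\iota^3$ terms, and the $(\ee+1)$ factor is accurate, so the remaining work is exactly the finite rational-function verification you describe.
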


\begin{proof}
  The claim follows by inspection of the explicit formulae given in
  Theorem~\ref{thm:C}.
\end{proof}

\subsection{Ad\`elic similarity class zeta functions}\label{subsec:sim.global}
We recall the setup of Theorem~\ref{thm:G}.  Let $k$ be a number field
and $\mathbf{G}$ one of the $k$-algebraic groups $\GL_3$ or
$\GU_3(K,f)$, where the unitary group is defined with respect to the
standard hermitian form $f$ associated to the non-trivial Galois
automorphism of a quadratic extension $K$ of~$k$.  We denote by $\fg$
the corresponding Lie algebra scheme $\gl_3$ or~$\gu(K,f)$, and we use
$\ee = \ee_\mathbf{G} \in \{1,-1\}$ to distinguish between the general
linear and unitary cases.

Let $S \subset \cV_k$ be a finite set of places, including all the
archimedean ones, and, if $\ee = -1$, suppose that $S$ includes all
dyadic places as well as those places that ramify in the quadratic
extension~$K \,\vert\, k$.  Put $\cV_0 = \cV_k \smallsetminus S$ and
consider the Euler product
\[
Z_{\fg(\Gri_S)}(s) = \prod_{v\in \cV_0} Z_{\fg(\Gri_v)}(s),
\qquad \text{where } Z_{\fg(\lri)}(s) \coloneqq \lim_{\len \rightarrow
  \infty} q^{\,-\len} \gamma_\len(s)
\]
for a compact discrete valuation ring $\lri$ of residue
cardinality~$q$; compare Definition~\ref{def:sim.class.zeta} and
Proposition~\ref{pro:sim.shadow.fin}.  In analogy with~\eqref{def:V},
we set
\[
\cV_{\GL} = \{v \in \cV_0 \mid
\mathbf{G}(\Gri_v)\simeq \GL_3(\Gri_v)\}\quad \textrm{ and } \quad
\cV_{\GU} = \{v\in \cV_0 \mid \mathbf{G}(\Gri_v)\simeq
\GU_3(\Gri_v)\}
\]
so that $\cV_0 = \cV_{\GL} \cup \cV_{\GU}$ and
hence
\[
Z_{\fg(\Gri_S)}(s) = \prod_{v\in \cV_{\GL}} Z_{\gl_3(\Gri_v)}(s)
\prod_{v\in \cV_{\GU}} Z_{\gu_3(\Gri_v)}(s).
\]

Similar to the proof of Theorem~\ref{thm:A}, it suffices to show that
the function
\begin{equation}\label{equ:eta.simi}
  \begin{split}
    \eta_{\simi}(s) & \coloneqq Z_{\fg(\lri)}(s) \prod_{v\in
      \cV_0}(1-q_v^{\,1-4s})(1-q_v^{\,2-6s}) \\
    & \phantom{:}= \prod_{v\in \cV_\GL}
    (1-q_v^{\,1-4s})(1-q_v^{\,2-6s}) Z_{\gl(\Gri_v)}(s) \cdot
    \prod_{v\in \cV_\GU} (1-q_v^{\,1-4s})(1-q_v^{\,2-6s})
    \zeta_{\gu_3(\Gri_v)}(s)
  \end{split}
\end{equation}
converges and does not vanish on the half-plane $\{s\in\C \mid
\real(s) > 2/5\}$.  This will establish the first two parts of
Theorem~\ref{thm:G}; the third part follows via the Tauberian
Theorem~\ref{thm:tauber}.

For each $v\in \cV_0$, we have
\begin{equation}\label{equ:Z} (1-q_v^{\,1-4s})(1-q_v^{\,2-6s})Z_{\fg(\Gri_v)}(s) =
  \sum_{\cS\in\T^{(\ee)}} (1-q_v^{\,1-4s})(1-q_v^{\,2-6s})\Gamma^{\cS}_{\ee(v),q_v}(s),
\end{equation}
where the functions $\Gamma^{\cS}_{\ee(v),q_v}(s)$ are given in
Corollary~\ref{cor:infinite.gamma}. Fix $v\in \cV_0$.  We analyse the
degree ratios occurring in each summand of~\eqref{equ:Z}. As in
Section~\ref{subsec:rep.global} we write $q$ for $q_v$ when we do not
want to stress the dependence on the place.

If $\cS=\Gsha$, then the relevant summand in \eqref{equ:Z} is just
\begin{equation}\label{equ:S=G} (1-q_v^{\,1-4s})(1-q_v^{\,2-6s}) = 1 -
  q_v^{\,1-4s} - q_v^{\,2-6s} + q_v^{\,3-10s}.
\end{equation}
The term $q_v^{\,3-10s}$ has degree ratio $2/5$.  We will show that
this is the maximal degree ratio occurring. In particular, the terms
$-q_v^{\,1-4s}$ and $-q_v^{\,2-6s}$ ``cancel'' with terms occurring in
types $\Lsha$ and $\Tasha$, $\Tbsha$, $\Tcsha$, respectively, in a way
we shall explain.

Indeed, if $\cS=\Lsha$, then the relevant summand in \eqref{equ:Z} is
\begin{multline*}
  (q-1) \left( (q^{\,2} + \ee q + 1) q^{\,2} \right)^{-s} (1-
  q^{\,2-6s}) \\ = q \left( (q^{\,2} + \ee q + 1) q^2
  \right)^{-s} + (\textrm{terms of degree ratios at most }3/10).
\end{multline*}
By arguments akin to those used in the proof of Theorem~\ref{thm:A},
for $s\in \R_{>0}$, there exists a constant $C_1 \in \R_{>0}$ such that
for sufficiently large $q$,
\[
\left\vert q \left( (q^2 + \ee q + 1)q^2 \right)^{-s} - q^{1-4s}
\right \vert \leq C_1 q^{-4s}.
\]
This shows that the term $-q^{1-4s}$, occurring for $\cS=\Gsha$, and
the term $q \left( (q^2 + \ee q + 1 ) q^2 \right)^{-s}$, occurring for
$\cS=\Lsha$, may be replaced by a polynomial expression of degree
ratio $1/4$, without worsening the abscissa of convergence of the
Euler product~\eqref{equ:eta.simi}.

If $\cS\in\{\Tasha,\Tbsha,\Tcsha\}$, then the summands in
\eqref{equ:Z} indexed by the relevant shadows are
\begin{align*}
  (\Tasha) \quad & \sixth (q-1) \left( (q+\ee)(q^2+\ee q + 1)q^3
  \right)^{-s} (q+2q^{2-4s}-2-q^{1-4s}) \\
  (\Tbsha) \quad &\hlf (q-1) \left( (q^3-\ee)q^3 \right)^{-s}(q-q^{1-4s}) \\
  (\Tcsha) \quad &\third (q^2-1) \left( (q+\ee)(q-\ee)^2q^3
  \right)^{-s}(1-q^{1-4s}).
\end{align*}
Modulo terms of degree ratios at most $3/10$, these read
\begin{align*}
  (\Tasha) \quad & \sixth q^2 \left( (q+\ee)(q^2+\ee q + 1)q^3 \right)^{-s}\\
  (\Tbsha) \quad &\hlf q^2 \left( (q^3-\ee)q^3 \right)^{-s} \\
  (\Tcsha) \quad &\third q^2 \left( (q+\ee)(q-\ee)^2q^3 \right)^{-s}.
\end{align*}
Similar to the proof of Theorem~\ref{thm:A}, we deduce that, for $s\in
\R_{>0}$, there exists a constant $C_2\in\R_{>0}$ such that for
sufficiently large $q$,
\begin{multline*}
  \left\vert \sixth q^2 \left((q+\ee)(q^2+\ee q + 1)q^3 \right)^{-s} +
    \hlf q^2 \left((q^3-\ee)q^3 \right)^{-s} + \third q^2 \left(
      (q+\ee)(q-\ee)^2q^3 \right)^{-s} -
    q^{2-6s} \right\vert \\
  \leq C_2 q^{1-6s}.
\end{multline*}
This ``cancels'' the term $-q^{2-6s}$ from~\eqref{equ:S=G}.  The forms
taken by the relevant summand in~\eqref{equ:Z} in the remaining cases
$\cS \in \{ \Jsha, \Msha, \Nsha, \Kasha, \Kbsha \}$, together with an
upper bound for the occurring degree ratios, is given in
Table~\ref{tab:deg-ratios-2}.
\begin{table}[htb!]
  \centering
  \caption{Bounds for various degree ratios, where $q = q_v$}
  \label{tab:deg-ratios-2}
  \begin{tabular}{|c||l|c|}
    \hline
    Type &  Summand in \eqref{equ:Z} & Degree ratios \\\hline
    $\Jsha$ & $\left( (q-\ee)^3(q+\ee) \right)^{-s} (1-q^{2-6s}) $&
    $\leq 3/10$ \\
    $\Msha$ & $(q-1) \left( (q-\ee)^3(q+\ee)q^2 \right)^{-s} (1 +
    q^{1-4s})$ & $\leq 1/3\phantom{0}$ \\
    $\Nsha$ & $\left( (q^2-1)(q^3-\ee)q \right)^{-s} (1 - q^{-4s})$ &
    $\leq 1/6\phantom{0}$\\\hline
    $\Kasha, \Kbsha^*$ & $\left( (q^2-1)(q^3-\ee)q^5 \right)^{-s}$ &
    $\leq 1/10$\\
    \hline
    \multicolumn{3}{@{} c @{}}{${}^*$ Only applies if $\ee = 1$.}
  \end{tabular}
\end{table}
This concludes the proof of Theorem~\ref{thm:G}.


\appendix
\part*{Appendix and References}


\section{A model version: groups of type $\mathsf{A}_1$}\label{sec:A1}

The main ideas of this paper may be applied to groups of
type~$\mathsf{A}_1$, such as groups of the form $\GL_2(\lri)$ or
$\GU_2(\lri)$, where $\lri$ is a compact discrete valuation ring, and
various subquotients of these groups. We record here -- mainly without
(detailed) proofs -- results on similarity classes and associated zeta
functions, as well as representation zeta functions of such
groups. This leads, on the one hand, to new, unified computations for
the known zeta functions of groups of the form $\SL_2(\lri)$ and
$\GL_2(\lri_\len)$; cf.\ \cite{JZ,Onn}, respectively. It also allows
us to compute new zeta functions, such as the ones of groups of the
form~$\GU_2(\lri_\len)$. Throughout we assume that $\lri$ is a compact
discrete valuation ring with residue field $\kk$ of cardinality $q$
and residue characteristic~$p$. Let $\aG$ be one of the $\lri$-group
schemes $\GL_2$ and $\GU_2$
and \begin{equation} \label{equ:def.epsilon.A1} \ee = \ee_\aG
  =
  \begin{cases}
    +1 & \text{if $\aG = \GL_2$}, \\
    -1 & \text{if $\aG = \GU_2$},
  \end{cases}
\end{equation}
analogous to \eqref{equ:def.epsilon}. The $\lri$-group scheme $\GU_2$
is defined with respect to the unramified quadratic extension of
$\lri$; see the discussion at the beginning of
Section~\ref{sec:sim.gu} for details. We exclude $p=2$ from our
considerations in the unitary case.  We write $\ag\in\{\gl_2,\gu_2\}$
for the $\lri$-Lie lattice scheme associated to $\aG$ and $\Sh$ for
the respective shadow set $\Sh_{\GL_2(\lri)}$ or~$\Sh_{\GU_2(\lri)}$.

\subsection{Similarity classes and their shadows}
As in type $\mathsf{A}_2$, similarity classes in $\ag(\lri_\len)$ are
controlled by shadows and branching rules. The -- rather simple --
classification of similarity classes in $\Ad(\GL_2(\lri)) \backslash
\gl_2(\lri_\len)$ is described in~\cite[Section~2.1]{APOV}. The
unitary case is analogous. It turns out that -- as in the
$\mathsf{A}_2$-case -- similarity classes and their lifting behaviour
are governed by branching rules and shadow graphs. The following
result is analogous to Theorems~\ref{thm:G.shad.graph}
and~\ref{thm:U.shad.graph}, formulated uniformly for both values of
the parameter~$\ee$.

\begin{thm}[Classification of shadows and branching
  rules] \label{thm:shadow.graph.A1}\quad
  \begin{enumerate}
  \item The shadow set $\Sh$ consists of four elements, classified by
    the types
    \[
    \Gsha', \,  \Tasha', \, \Tbsha', \, \Nsha'
    \]
    described in Table~\textup{\ref{tab:shadows.GLGU.A1}}.
  \item For all $\sigma, \tau \in \Sh$ there exists a polynomial
    $a_{\sigma, \tau} \in \Z[\hlf][t]$ such that the following holds:
    for every $\len \in \N$ and every $\cC \in \cQ_{\lri,\len}^{\ag}$
    with $\shG(\cC) = \sigma$ the number of classes $\cCtilde \in
    \cQ_{\lri,\len+1}^{\ag}$ with $\shG(\cCtilde) = \tau$ lying above
    $\cC$ is equal to $a_{\sigma, \tau}(q)$.
  \end{enumerate}
\end{thm}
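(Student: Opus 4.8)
The plan is to mimic, in the much simpler setting of $2\times 2$ matrices, the two-part strategy carried out for type $\mathsf{A}_2$ in Sections~\ref{subsec:branching}--\ref{subsec:proof.main.SL} and \ref{subsec:unitary-shadows}--\ref{subsec:proof.main.SU}. First I would recall from \cite[Section~2.1]{APOV} the explicit list of similarity-class representatives in $\gl_2(\lri_\len)$: every class contains either a scalar matrix $d\Id_2$, a matrix $d\Id_2 + \pi^i D(a,0)$ with $0\le i<\len$ and $a\in\lri_{\len-i}^\times$, or a matrix $d\Id_2 + \pi^i C$ with $C\in\gl_2(\lri_{\len-i})$ a companion matrix and $0\le i<\len$. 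For the unitary case one runs the analogue of Theorem~\ref{thm:representatives.for.GU3.action.gu3}: using Proposition~\ref{Dieudonne2} (and Lemma~\ref{cyclic.u}, Lemma~\ref{Dieudonne1}, all of which are stated in arbitrary degree $n$) one checks which of these $\GL_2(\Lri_\len)$-classes meet $\gu_2(\lri_\len)$ nontrivially, obtaining the same shape of list with the trace and companion-polynomial coefficients constrained by $c_i^\circ = (-1)^{n-i}c_i$. This immediately yields part (1): computing the Lie centraliser shadow of each representative via Lemma~\ref{lem:diagonal-form-shadow} (diagonal-block centralisers) and the companion-matrix centraliser description, one finds exactly four isomorphism types of centraliser—$\GL_2(\kk)$ or $\GU_2(\kk)$ from scalars (type $\Gsha'$), $\GL_1\times\GL_1$ / $\GU_1\times\GU_1$ from split-semisimple companion data (type $\Tasha'$), $\GL_1(\kk_2)$ / $\kk_2^\times$ from irreducible-quadratic companion data (type $\Tbsha'$), and $\GL_1(\kk[t]/(t^2))$ / $\GU_1\times\aG_{\mathrm a}$ from a scalar translate of a nilpotent (type $\Nsha'$)—and no analogue of the `shear' types $\Kasha,\Kbsha$ arises, since in degree $2$ there is no $\mathbb P^1$-valued invariant to produce them (the relevant $\mathcal E_\len^{\mathrm{III}}$-type matrices do not occur for $n=2$).

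For part (2), I would follow the case analysis of Section~\ref{subsec:proof.main.SL} verbatim. Starting from a representative $A_\len$ of shadow $\sigma$, I list all lifts $\wt A_{\len+1}\in\ag(\lri_{\len+1})$ up to similarity using the classification above (and, in the unitary case, intersecting with $\gu_2(\lri_{\len+1})$), sort them by the type $\tau$ of their shadow, and count: from a scalar ($\sigma$ of type $\Gsha'$) the lift $\varsigma(d)\Id_2 + \pi^\len X$ with $X\in\gl_2(\kk)$ (resp.\ $\gu_2(\kk_2)$) is classified by the shape of the minimal polynomial of $X$, giving $a_{\Gsha',\Gsha'}(q)=q$, $a_{\Gsha',\Tasha'}(q)=\tfrac12 q(q-1)$, $a_{\Gsha',\Tbsha'}(q)=\tfrac12 q(q-1)$ (resp.\ with $\ee$ appropriately inserted, e.g.\ $\tfrac12 q(q-\ee)$), and $a_{\Gsha',\Nsha'}(q)=q$ (the scalar-translate-of-nilpotent case); from the three non-scalar types, Table~\ref{tab:shadows.GLGU.A1} shows each is minimal—its Lie centraliser shadow cannot properly contain that of another type—so by Proposition~\ref{pro:class.quot.GL} (resp.\ Proposition~\ref{pro:class.quot.GU}) and the identity $a_{\sigma,\sigma}(q)=q^{\dim\ag}/b^{(\ee)}_{\sigma,\sigma}(q)=q^{\dim(\sigma)}=q^2$ the only outgoing edge is the loop with multiplicity $q^2$. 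One verifies directly that all these counts are independent of $\len$ and of $\ee$ (up to the mild $\ee$-dependence already visible in $\mathsf{A}_2$), giving polynomials $a_{\sigma,\tau}\in\Z[\tfrac12][t]$, and the needed case $q=2$ in the non-unitary setting is covered by the fact that the four shadows are already distinguished for all $q$ by their orders.

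The unitary half rests on Propositions~\ref{pro:GL.conj.GU.conj} and~\ref{pro:unitary.connected} to guarantee that $\Ad(\GU_2(\lri))$-orbits coincide with $\GL_2(\Lri)$-similarity classes intersected with $\gu_2(\lri_\len)$, and that the centralisers are connected algebraic groups (so the size formula of Proposition~\ref{pro:class.quot.GU} applies). I expect the main obstacle to be purely bookkeeping: carefully producing the irredundant list of anti-hermitian representatives in the nilpotent-translate case and confirming that the $\mathbb P^1$-branching that generates $\Kasha,\Kbsha$ in $\mathsf{A}_2$ genuinely has no shadow in $\mathsf{A}_1$—this amounts to checking that the centraliser of a regular nilpotent $2\times2$ matrix admits no further degeneration, which is immediate but must be stated. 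Once the representative lists and the four centraliser isomorphism types are pinned down, assembling the branching polynomials is a short finite computation, entirely parallel to (and strictly easier than) the $\mathsf{A}_2$ proof, so I would present it compactly, referring back to the $\mathsf{A}_2$ arguments for the repetitive parts.
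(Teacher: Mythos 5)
Your proposal is correct in substance, but it takes a genuinely different route from the paper. The paper does not redo any matrix analysis for $n=2$: it observes that a class in $\gl_3(\lri_\len)$ (resp.\ $\gu_3$) of shadow type $\Lsha$ is exactly a $2\times2$ class together with a spectator $\GL_1$ (resp.\ $\GU_1$) factor, so the $\mathsf{A}_1$ shadow graph is the full subgraph of Figure~\ref{fig:shadow.graph.A2} on the vertices $\Lsha,\Tasha,\Tbsha,\Nsha$ (edges $9$--$12$ plus loops), and the branching data are read off from Table~\ref{tab:branch.rules.A2} by dividing $a_{\sigma,\tau}$ by $q$ and $b^{(\ee)}_{\sigma,\tau}$ by $q^4$. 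Your plan instead replays the whole $\mathsf{A}_2$ machinery from scratch at $n=2$: classification of representatives (via \cite{APOV} and the degree-$n$ lemmas \ref{Dieudonne1}, \ref{Dieudonne2}, \ref{cyclic.u}, \ref{pro:GL.conj.GU.conj}), centraliser shadows, then lift counting, with the minimality argument handling the three non-scalar types. That is strictly more work but is self-contained and makes transparent why no analogue of $\Kasha,\Kbsha$ occurs (for $n=2$ every non-scalar reduction is already regular, so the troublesome family (v) of Theorem~\ref{thm:sim.class.ring} is empty); the paper's reduction buys brevity and explains why the $\mathsf{A}_1$ tables are literally sub-tables of the $\mathsf{A}_2$ ones. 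Two small corrections to your sketch: the family $d\Id_2+\pi^iD(a,0)$ with $a$ a unit is redundant (such a matrix is already cyclic in degree $2$), so it should be absorbed into the companion-matrix family if you want an irredundant list; and your tentative guess $a_{\Gsha',\Tbsha'}=\hlf q(q-\ee)$ in the unitary case is not what comes out --- counting anti-hermitian characteristic polynomials $t^2+c_1t+c_0$ with $c_1\in\gu_1(\kk)$, $c_0\in\kk$ gives $\hlf q(q-1)$ for each of $\Tasha'$ and $\Tbsha'$, so the $a_{\sigma,\tau}$ are independent of $\ee$ (exactly as in type $\mathsf{A}_2$, where only the $b^{(\ee)}_{\sigma,\tau}$ carry the $\ee$-dependence), consistent with Table~\ref{tab:branch.rules.A1}. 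Neither slip affects the validity of the statement you are proving, which only asserts existence of the polynomials.
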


Set $\T_{\mathsf{A}_1}= \{\Gsha',\Tasha',\Tbsha',\Nsha'\}$. As in type
$\mathsf{A}_2$, it is remarkable that there are $\kk$-forms of
algebraic groups $\aI^{\cS}_{\mathsf{A}_1,\ee}$, for
$\cS\in\T_{\mathsf{A}_1}$, whose $\kk$-rational points
$\aI^{\cS}_{\mathsf{A}_1,\ee}(\kk)$ represent the shadow sets
$\Sh_{\aG(\lri)}$. Similarly to the $\mathsf{A}_2$-situation, given
$\sigma\in\Sh_{\aG(\lri)}$ we set
$\sigma(\kk)=\aI^{\cS}_{\mathsf{A}_1,\ee}(\kk)$ and $\sigma'(\kk)=
\sigma(\kk)\cap\SL_2(\kk)$.

It is noteworthy that the quantities $b^{(\ee)}_{\sigma,\tau}$ defined
in Definitions~\ref{def:b.GL} and \ref{def:b.GU} are in fact
polynomials in~$q$. Together with the polynomials $a_{\sigma,\tau}$
they determine recursively the numbers and sizes of similarity classes
in $\ag(\lri_\len)$ for all $\len\in\N$. These \emph{branching rules}
in type $\mathsf{A}_1$ are collected in
Table~\ref{tab:branch.rules.A1}. In analogy with the
$\mathsf{A}_2$-situation (cf.\ Definition~\ref{def:shadow.graph.A2})
one may associate a \emph{shadow graph} with each of the scheme pairs
$(\ag,\aG)$; cf.\ Figure~\ref{fig:shadow.graph.A1}. In contrast to the
$\mathsf{A}_2$-situation, they coincide for $\ee=1$ and~$\ee=-1$.

\begin{proof}[Proof of Theorem~\ref{thm:shadow.graph.A1} (sketch)]
  Instead of giving a proof from scratch, we indicate how the shadows
  and the polynomials $a_{\sigma,\tau}$ -- and, in fact,
  $b_{\sigma,\tau}^{(\ee)}$ -- can be extracted from the
  $\mathsf{A}_2$-case. Indeed, shadow type $\Lsha$ corresponds to the
  groups $\GL_2 \times \GL_1$ and $\GU_2 \times \GU_1$. It follows
  that the shadow graph for groups of type $\mathsf{A}_1$ is the
  subgraph of the $\mathsf{A}_2$-shadow graph in
  Figure~\ref{fig:shadow.graph.A2} consisting of vertices $\Lsha$,
  $\Tasha,\Tbsha$, and $\Nsha$ together with edges $9$, $10$, $11$,
  $12$, and with a loop around each vertex. The transition quantities
  are given by dividing the data $a_{\sigma,\tau}(q)$ of
  Table~\ref{tab:branch.rules.A2} by $q$ to cancel the redundant
  $\GL_1$ or $\GU_1$ factor, and by dividing the data
  $b_{\sigma,\tau}^{(\ee)}(q)$ in that table by $q^4$ to get the
  correct dimension. In the theorem's statement we used the labels
  $\Gsha'$, $\Tasha'$, $\Tbsha'$, and $\Nsha'$ for the respective
  $\mathsf{A}_1$-analogues of $\Lsha$, $\Tasha$, $\Tbsha$, and
  $\Nsha$.
\end{proof}

\begin{table}[htb!]
  \centering
  \caption{Branching rules for $\cQ_\lri^{\gl_2}$ ($\ee=1$) and
    for $\cQ_\lri^{\gu_2}$ ($\ee=-1$)}
  \label{tab:branch.rules.A1}
  \begin{tabular}{|c|c|c||l|l|}
    \hline \#& Type of $\sigma$ & Type of $\tau$ & $a_{\sigma,\tau}(q)$
    & $b^{(\epsilon)}_{\sigma,\tau}(q)$ \\ \hline $1$ & $\Gsha'$ &
    $\Gsha'$ & $q$ & $1$ \\ $2$ & $\Gsha'$ & $\Tasha'$ & $\hlf q(q-1)$ &
    $(q+\ee)q$ \\ $3$ & $\Gsha'$& $\Tbsha'$ & $\hlf q(q-1)$ & $(q-\ee)q $
    \\ $4$ & $\Gsha'$& $\Nsha'$ & $q$ & $q^2-1$ \\ \hline\hline $5$ & other &
    same as $\sigma$ & $q^2$ & $q^2$ \\ \hline
  \end{tabular}
\end{table}

\begin{figure}[htb!]
\centering
 \caption{The shadow graph $\Gamma$ for
   $(\mathsf{gl}_2,\mathsf{GL}_2)$ and
   $(\mathsf{gu}_2,\mathsf{GU}_2)$}
  \label{fig:shadow.graph.A1}
  \begin{displaymath}
    \xymatrix{ & & \\ \Tasha' \ar@(ul,dl)[]|{5}& \Gsha'
      \ar@(ul,ur)[]|{1} \ar[l]|{2} \ar[r]|{3} \ar[d]|{4} & \Tbsha' \ar@(ur,dr)[]|{5}\\ & \Nsha'
      \ar@(dl,dr)[]|{5} & }
  \end{displaymath}

\end{figure}

\subsection{Similarity class zeta functions}
As in the $\mathsf{A}_2$-case, the classification of shadows and the
associated branching rules allows us to compute various similarity
class and representation zeta functions. Recall the definitions of the
similarity class zeta functions $\gamma^\sigma_\len(s)$
in~\eqref{def:gamma.sigma.fin} and the finite geometric progressions
$A_{q,\len}(s)$ in~\eqref{equ:aux.ABC}.

\begin{prop}\label{pro:gamma.sigma.fin.A1}
  For $\sigma\in\Sh$ of type $\cS\in\mathbb{T}_{\mathsf{A}_1}$ and
  $\len\in\N_0$,
  \[
  \gamma_\len^\sigma(s) = q^\len
  \Gamma^{\cS}_{\mathsf{A}_1,\epsilon,q,\len}(s),
  \]
  where the function $\Gamma^{\cS}_{\mathsf{A}_1,\epsilon,q,\len}(s)$ is
  defined as
  \begin{equation*}
    \begin{array}{ll}
      1 & \text{if $\cS = \Gsha'$,}
      \\ \hlf
      (q-1)\left(q(q+\ee)\right)^{-s}A_{q,\len}(s/2) & \text{if $\cS =
        \Tasha'$,} \\ \hlf
      (q-1)\left(q(q-\ee)\right)^{-s}A_{q,\len}(s/2) & \text{if $\cS =
        \Tbsha'$,} \\
      (q^2-1)^{-s}A_{q,\len}(s/2) & \text{if $\cS = \Nsha'$.}
    \end{array}
  \end{equation*}
\end{prop}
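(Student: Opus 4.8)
\textbf{Proof proposal for Proposition~\ref{pro:gamma.sigma.fin.A1}.}

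The plan is to argue by induction on $\len$, exactly in the style of the proof of Proposition~\ref{pro:sim.shadow.fin} in type $\mathsf{A}_2$, but with the simpler shadow graph of Figure~\ref{fig:shadow.graph.A1}. First I would record the $\mathsf{A}_1$-analogue of Lemma~\ref{recurrence.gamma}: for $\len\in\N_0$ and $\tau\in\Sh$,
\[
\gamma_{\len+1}^\tau(s) = \sum_{(\sigma,\tau)\in\dot E(\Gamma)}
a_{\sigma,\tau}(q)\, b^{(\ee)}_{\sigma,\tau}(q)^{-s}\,\gamma^\sigma_\len(s),
\]
which follows from Proposition~\ref{pro:class.quot.GL} (for $\ee=1$) and Proposition~\ref{pro:class.quot.GU} (for $\ee=-1$) together with Theorem~\ref{thm:shadow.graph.A1}, just as Lemma~\ref{recurrence.gamma} was deduced in the $\mathsf{A}_2$-setting. (Note that the quantities $b^{(\ee)}_{\sigma,\tau}$ are genuine polynomials in $q$, listed in Table~\ref{tab:branch.rules.A1}, since the shadow dimensions are available in Table~\ref{tab:shadows.GLGU.A1}.) The base case $\len=0$ is immediate because $\gamma^\sigma_0(s)=0$ unless $\sigma$ has type $\Gsha'$, in which case $\gamma^\sigma_0(s)=1=q^0\cdot\Gamma^{\Gsha'}_{\mathsf{A}_1,\ee,q,0}(s)$.

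For the induction step, I would treat the four types in turn, reading off the relevant $a_{\sigma,\tau}(q)$ and $b^{(\ee)}_{\sigma,\tau}(q)$ from Table~\ref{tab:branch.rules.A1} and the definition $A_{q,\len}(s)=f^1_\len(q^{1-4s})=\sum_{j=0}^{\len-1}q^{(1-4s)j}$ from~\eqref{equ:aux.ABC}. For $\sigma$ of type $\Gsha'$, the only incoming edge is the loop $(\Gsha',\Gsha')$ with $a=q$, $b^{(\ee)}=1$, so $\gamma^\sigma_{\len+1}(s)=q\,\gamma^\sigma_\len(s)=q^{\len+1}$, as claimed. For $\tau$ of type $\Tasha'$ there are two incoming edges: $(\Gsha',\Tasha')$ with $a=\tfrac12q(q-1)$, $b^{(\ee)}=(q+\ee)q$, and the loop $(\Tasha',\Tasha')$ with $a=q^2$, $b^{(\ee)}=q^2$. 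Plugging in the inductive formulas and using $\gamma^{\Gsha'}_\len(s)=q^\len$ gives
\[
\gamma^\tau_{\len+1}(s)
= q^{\len+1}\cdot\tfrac12(q-1)\bigl(q(q+\ee)\bigr)^{-s/2}
+ q^{2-2s}\cdot q^\len\tfrac12(q-1)\bigl(q(q+\ee)\bigr)^{-s/2}\sum_{j=0}^{\len-1}q^{(1-4s)j},
\]
and since the exponent of $q$ in $q^{2-2s}\cdot q^{(1-4s)j}$ is $(1-4s)(j+1)$ after accounting for the evaluation at $s/2$ in the definition of $\gamma^\sigma_\len$, the two contributions combine to $q^{\len+1}\tfrac12(q-1)\bigl(q(q+\ee)\bigr)^{-s/2}A_{q,\len+1}(s/2)$, which is the asserted value. (Here I must be a little careful: the recursion is stated in terms of $\gamma^\sigma_\len(s)$, while the closed form involves $A_{q,\len}(s/2)$, so I would apply the recursion with $s$ replaced by $s/2$ on both sides, as is done implicitly in Proposition~\ref{pro:sim.shadow.fin}.) The cases $\Tbsha'$ and $\Nsha'$ are identical in structure, with $b^{(\ee)}=(q-\ee)q$ and the prefactor $\tfrac12(q-1)$ for $\Tbsha'$, and $b^{(\ee)}=q^2-1$ with prefactor $1$ and no $(q-1)$ for $\Nsha'$ (edge $(\Gsha',\Nsha')$ has $a=q$).

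I do not anticipate any genuine obstacle: the argument is a routine bookkeeping induction, the only thing requiring attention being the consistent substitution $s\mapsto s/2$ between the transition recursion and the closed-form statement, and checking that the geometric-progression index shifts correctly from $A_{q,\len}$ to $A_{q,\len+1}$. In fact, as with the $\mathsf{A}_2$-case, one could alternatively bypass the induction entirely by passing to the finite recursion for the limit functions (the $\mathsf{A}_1$-analogue of Proposition~\ref{pro:recurrence.zeta}), but the direct induction above is the cleanest route to the stated finite-level formula.
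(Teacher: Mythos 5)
Your strategy coincides with the paper's own proof, which is literally the one-line remark that the argument is analogous to Proposition~\ref{pro:sim.shadow.fin}: induction on $\len$ via the $\mathsf{A}_1$-branching recursion together with the data of Table~\ref{tab:branch.rules.A1}, and your setup (recursion, base case, identification of the incoming edges for each type) is correct.

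The induction step, however, is mis-executed at exactly the point you flagged. No substitution $s\mapsto s/2$ is to be made in the recursion: the notation $A_{q,\len}(s/2)$ in the statement is purely notational, since $A_{q,\len}(s)=f^1_\len(q^{1-4s})$ was defined for the type-$\mathsf{A}_2$ ratio $q^{1-4s}$, so that $A_{q,\len}(s/2)=\sum_{j=0}^{\len-1}q^{(1-2s)j}$ is precisely the geometric progression produced by the loop, whose contribution is $a_{\tau,\tau}(q)\,b^{(\ee)}_{\tau,\tau}(q)^{-s}=q^{2}q^{-2s}=q\cdot q^{1-2s}$ when the recursion is applied at the argument $s$ itself; the genuine evaluation at $s/2$ only enters later, in Definition~\ref{def:xi}, when one passes from $\gamma^\sigma_\len$ to $\xi^\sigma_\len$. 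Because you substituted, your displayed computation carries the class-size factor $\bigl(q(q+\ee)\bigr)^{-s/2}$ and ratios $q^{(1-4s)j}$, the identity you invoke (that $q^{2-2s}q^{(1-4s)j}$ has exponent $(1-4s)(j+1)$) is false since $2-2s\neq 1-4s$, and your final expression $q^{\len+1}\hlf(q-1)\bigl(q(q+\ee)\bigr)^{-s/2}A_{q,\len+1}(s/2)$ does not agree with the proposition, which has exponent $-s$ on $q(q+\ee)$. The correct bookkeeping for $\tau$ of type $\Tasha'$ is $\gamma^\tau_{\len+1}(s)=\hlf q(q-1)\bigl(q(q+\ee)\bigr)^{-s}q^{\len}+q^{2-2s}\gamma^\tau_\len(s)$; by the inductive hypothesis and $q^{2-2s}=q\cdot q^{1-2s}$ the second summand equals $q^{\len+1}\hlf(q-1)\bigl(q(q+\ee)\bigr)^{-s}\sum_{j=1}^{\len}q^{(1-2s)j}$, and the first summand supplies the missing $j=0$ term, giving $q^{\len+1}\hlf(q-1)\bigl(q(q+\ee)\bigr)^{-s}A_{q,\len+1}(s/2)$ as required. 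With this repair the cases $\Tbsha'$ and $\Nsha'$ go through verbatim (with $b^{(\ee)}=(q-\ee)q$, respectively $a_{\Gsha',\Nsha'}=q$ and $b^{(\ee)}=q^2-1$), and your argument then matches the paper's.
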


\begin{proof} Analogous to Proposition \ref{pro:sim.shadow.fin}.
\end{proof}

Recall further Definition~\ref{def:xi} of the functions
$\xi^\sigma_\len(s)$ for $\sigma\in\Sh$ and their limits $\xi^\sigma$
as $\len\rightarrow\infty$.

\begin{prop}
  For $\sigma\in\Sh$ of type $\cS\in\T_{\mathsf{A}_1}$ and
  $\len\in\N_0$,
  \[
  \xi^\sigma_\len(s) = \Xi^{\cS}_{\mathsf{A}_1,\epsilon,q,\len}(s),
  \]
  where the function $\Xi^{\cS}_{\mathsf{A}_1,\epsilon,q,\len}(s)$ is
  defined as
  \begin{equation*}
    \begin{array}{ll}
      1 & \text{if $\cS = \Gsha'$,} \\
      \hlf q(q-1)(q+\ee)A_{q,\len}(s/4) &
      \text{if $\cS = \Tasha'$,} \\\hlf
      q(q-1)(q-\ee)A_{q,\len}(s/4) & \text{if $\cS = \Tbsha'$,}
      \\ (q^2-1)A_{q,\len}(s/4) & \text{if $\cS =
        \Nsha'$.}
    \end{array}
  \end{equation*}
\end{prop}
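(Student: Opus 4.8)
The statement to be proved expresses the Dirichlet polynomials $\xi^\sigma_\len(s)$ in type $\mathsf{A}_1$ in terms of the partial similarity class zeta functions $\gamma^\sigma_\len(s)$, exactly as Proposition~\ref{pro:xi.sigma.len} does in type $\mathsf{A}_2$. The approach is a direct translation of Definition~\ref{def:xi} combined with Proposition~\ref{pro:gamma.sigma.fin.A1}. First I would recall that, by Definition~\ref{def:xi}, for $\sigma\in\Sh$ of type $\cS$ one has
\[
\xi^\sigma_\len(s) = [\aG(\kk):\sigma(\kk)]^{1+s/2}\,q^{-\len}\,\gamma^\sigma_\len(s/2).
\]
Then I would substitute the explicit formula for $\gamma^\sigma_\len(s/2)$ supplied by Proposition~\ref{pro:gamma.sigma.fin.A1}, noting that the factor $q^\len$ there cancels the normalising $q^{-\len}$, and that the argument $s/2$ in $\xi^\sigma_\len$ forces the internal geometric progression $A_{q,\len}(\,\cdot\,)$ to be evaluated at $(s/2)/2 = s/4$, which matches the claimed answer.

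The only genuine computation is to check, for each of the four shadow types $\Gsha'$, $\Tasha'$, $\Tbsha'$, $\Nsha'$, that the product of the index $[\aG(\kk):\sigma(\kk)]^{1+s/2}$ with the prefactor of $\Gamma^{\cS}_{\mathsf{A}_1,\ee,q,\len}$ (evaluated at $s/2$) collapses to a polynomial prefactor with \emph{constant} exponent, i.e.\ the $s$-dependence in $[\aG(\kk):\sigma(\kk)]^{s/2}$ must cancel against the $(q\cdot(\text{centraliser order factor}))^{-s/2}$-type terms appearing inside $\Gamma^{\cS}_{\mathsf{A}_1,\ee,q,\len}(s/2)$. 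Concretely, I would read off the orders $\lvert\aG(\kk)\rvert$ and $\lvert\sigma(\kk)\rvert$ — these are the $\mathsf{A}_1$-analogues of the entries in Table~\ref{tab:shadows.GLGU}, obtainable directly from Theorem~\ref{thm:shadow.graph.A1} and the $\kk$-forms $\aI^{\cS}_{\mathsf{A}_1,\ee}$ — so that $[\GL_2(\kk):\sigma(\kk)] = q(q+\ee)$ for type $\Tasha'$, $= q(q-\ee)$ for $\Tbsha'$, $= q^2-1$ for $\Nsha'$, and $=1$ for $\Gsha'$ (up to the choice of $\aG$). Multiplying $[\aG(\kk):\sigma(\kk)]^{1+s/2}$ by $\hlf(q-1)(q(q+\ee))^{-s/2}$ in the case $\Tasha'$ yields $\hlf(q-1)\,q(q+\ee)\cdot(q(q+\ee))^{s/2}(q(q+\ee))^{-s/2} = \hlf q(q-1)(q+\ee)$, and similarly in the other cases; this is the crux.

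I would then assemble these case-by-case identities, together with the observation that $A_{q,\len}\big((s/2)/2\big) = A_{q,\len}(s/4) = f^1_\len(q^{1-s})$ by \eqref{equ:aux.ABC}, to obtain the asserted closed forms. The main (mild) obstacle is bookkeeping: one must be careful that the index in Definition~\ref{def:xi} is raised to the power $1+s/2$ and not $1+s$, and that the nested halving of the spectral variable is applied consistently; there is no conceptual difficulty, since — unlike type $\mathsf{A}_2$, where shadows of types $\Kasha,\Kbsha$ or $\Jsha$ force extra care — in type $\mathsf{A}_1$ every non-scalar shadow is abelian or a simple product, and the $\GL_1$/$\GU_1$-redundancy already accounted for in the proof of Theorem~\ref{thm:shadow.graph.A1} means the polynomial factors combine cleanly. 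Hence the proof is a one-line appeal to Definition~\ref{def:xi} and Proposition~\ref{pro:gamma.sigma.fin.A1}, followed by the four elementary index computations above.
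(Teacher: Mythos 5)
Your proposal is correct and follows exactly the route the paper intends: its one-line proof ("straightforward from the data collected in Tables~\ref{tab:shadows.GLGU.A1} and~\ref{tab:shadows.SLSU.A2}") is precisely your combination of Definition~\ref{def:xi} with Proposition~\ref{pro:gamma.sigma.fin.A1} and the indices $[\aG(\kk):\sigma(\kk)]$ read off from the shadow-order table, with the exponent $1+s/2$ cancelling the $(\cdot)^{-s/2}$ prefactors and the variable halving giving $A_{q,\len}(s/4)$. Your case-by-case index computations ($q(q+\ee)$, $q(q-\ee)$, $q^2-1$, $1$) are the correct ones, so nothing is missing.
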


\begin{proof}
 Straightforward from the data collected in
 Tables~\ref{tab:shadows.GLGU.A1} and \ref{tab:shadows.SLSU.A2}.
\end{proof}

\begin{cor}
  For $\sigma\in\Sh$ of type $\cS\in\T_{\mathsf{A}_1}$,
  \[
  \xi^\sigma(s) = \Xi^{\cS}_{\mathsf{A}_1,\epsilon,q}(s),
  \]
  where the function $\Xi^{\cS}_{\mathsf{A}_1,\epsilon,q}(s)$ is
  defined as
  \begin{equation*}
    \begin{array}{ll}
      1 & \text{if $\cS = \Gsha'$,} \\
      \hlf q(q-1)(q+\ee)(1-q^{1-s})^{-1} &
      \text{if $\cS = \Tasha'$,} \\\hlf
      q(q-1)(q-\ee)(1-q^{1-s})^{-1} & \text{if $\cS = \Tbsha'$,}
      \\ (q^2-1)(1-q^{1-s})^{-1} & \text{if $\cS =
        \Nsha'$.}
    \end{array}
  \end{equation*}
\end{cor}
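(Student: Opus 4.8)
The corollary is the $\mathsf{A}_1$-analogue of Corollary~\ref{cor:infinite.xi}, and its proof should mirror that of the latter. Recall that by definition $\xi^\sigma(s) = \lim_{\len\to\infty}\xi^\sigma_\len(s)$, and the preceding proposition gives the closed form $\xi^\sigma_\len(s) = \Xi^{\cS}_{\mathsf{A}_1,\epsilon,q,\len}(s)$ for $\sigma$ of type $\cS\in\T_{\mathsf{A}_1}$. Thus it suffices to take the formal limit of each of the four cases. The plan is: first, record that the only $\len$-dependence in $\Xi^{\cS}_{\mathsf{A}_1,\epsilon,q,\len}(s)$ enters through the single finite geometric progression $A_{q,\len}(s/4) = f^1_\len(q^{1-4\cdot(s/4)}) = f^1_\len(q^{1-s})$; second, invoke the rational-function identity $f^1_\len(a) = (1-a^\len)/(1-a)$ from just before Proposition~\ref{pro:sim.shadow.fin}, so that for $|q^{1-s}|<1$ (equivalently $\real(s)>1$) one has $A_{q,\len}(s/4)\to (1-q^{1-s})^{-1}$ as $\len\to\infty$; third, substitute this limit into the four formulae, noting that the prefactors $1$, $\hlf q(q-1)(q+\ee)$, $\hlf q(q-1)(q-\ee)$, $(q^2-1)$ are independent of $\len$ and pass through the limit unchanged.

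Concretely, I would structure the proof exactly as in the proof of Corollary~\ref{cor:infinite.xi}: write $\Xi^{\cS}_{\mathsf{A}_1,\epsilon,q}(s) = \lim_{\len\to\infty}\Xi^{\cS}_{\mathsf{A}_1,\epsilon,q,\len}(s)$ and observe that this amounts to performing the single substitution
\[
A_{q,\len}(s/4)\ \longrightarrow\ \frac{1}{1-q^{1-s}}
\]
in the formulae supplied by the previous proposition. For $\cS=\Gsha'$ the function is the constant $1$ and there is nothing to do; for $\cS=\Tasha'$ one gets $\hlf q(q-1)(q+\ee)(1-q^{1-s})^{-1}$, and symmetrically for $\cS=\Tbsha'$ with $q+\ee$ replaced by $q-\ee$; for $\cS=\Nsha'$ one gets $(q^2-1)(1-q^{1-s})^{-1}$. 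These are precisely the claimed expressions, which completes the proof.

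There is essentially no obstacle here: the result is a bookkeeping corollary obtained by a termwise limit of a finite geometric series, and the coefficientwise convergence (hence well-definedness of the limit) is guaranteed abstractly by Proposition~\ref{pro:limit.gamma} applied with $n=2$, which also ensures the limit has the stated rational shape. The only point worth a sentence of care is the region of convergence/formal-limit interpretation: as in the $\mathsf{A}_2$-case one may either regard the identity as one of formal power series in $q^{-s}$, or work in the half-plane $\real(s)>1$ where $A_{q,\len}(s/4)$ converges to its limiting rational function; both viewpoints yield the same closed form. I would phrase the proof in one short paragraph, explicitly citing Proposition~\ref{pro:limit.gamma}, the identity for $f^1_\len$, and the previous proposition, exactly parallel to the proof of Corollary~\ref{cor:infinite.xi}.
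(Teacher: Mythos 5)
Your proposal is correct and matches the paper's (implicit) argument: the corollary is obtained exactly as Corollary~\ref{cor:infinite.xi}, by taking the formal limit $\len\to\infty$ in the preceding proposition, which amounts to the single substitution $A_{q,\len}(s/4)=f^1_\len(q^{1-s})\to(1-q^{1-s})^{-1}$ while the $\len$-independent prefactors pass through unchanged. The appeal to Proposition~\ref{pro:limit.gamma} for coefficientwise convergence and the remark on the formal-series versus half-plane $\real(s)>1$ interpretation are exactly the right supporting points.
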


\begin{table}[htb!]
  \centering
  \caption{Shadows in $\GL_2(\kk)$ for $\epsilon=1$, and $\GU_2(\kk)$,
    for $\epsilon=-1$}
  \label{tab:shadows.GLGU.A1}
  \begin{tabular}{|c||l|l|l|l|l|}
    \hline Type & $\sigma(\kk) \subset \GL_2(\kk)$ &
    $\sigma(\kk)\subset\GU_2(\kk)$ & Order $\lvert \sigma(\kk)
    \rvert$ \\ \hline $\Gsha'$ & $\GL_2(\kk)$ & $\GU_2(\kk)$ &
    $q(q-\ee)(q^2-1)$ \\ $\Tasha'$& $\GL_1(\kk) \times \GL_1(\kk)$ &
    $\GU_1(\kk) \times \GU_1(\kk)$ & $(q-\ee)^2$\\ $\Tbsha'$ &
    $\GL_1(\kk_2) $ & $\GL_1(\kk_2)$ & $q^2-1$\\ $\Nsha'$ &
    $\GL_1(\kk) \times \aG_a(\kk)$ & $\GU_1(\kk) \times \aG_a(\kk)$ &
    $ q(q-\ee)$\\ \hline
  \end{tabular}
\end{table}

\begin{table}[htb!]
  \centering
  \caption{Shadows in $\SL_2(\kk)$ for $\epsilon=1$, and $\SU_2(\kk)$,
    for $\epsilon=-1$}
  \label{tab:shadows.SLSU.A2}
  \begin{tabular}{|c||l|l|l|l|l|}
    \hline Type & $\sigma'(\kk) = \sigma(\kk)\cap\SL_2(\kk)$ &
    $\sigma'(\kk) = \sigma(\kk) \cap \SU_2(\kk)$ & Order $\lvert
    \sigma(\kk) \rvert$ \\ \hline $\Gsha'$ & $\SL_2(\kk)$ &
    $\SU_2(\kk)$ & $q(q^2-1)$ \\ $\Tasha'$& $\GL_1(\kk)$ &
    $\GU_1(\kk)$ & $q-\ee$\\ $\Tbsha'$ & $\{a \in \kk_2^\times \mid
    a^\circ a =1 \}$ & $\{a \in \kk_2^\times \mid a^\circ = a \}$ &
    $q+\ee$\\ $\Nsha'$ & $\mathbb{Z}/2\mathbb{Z} \times \aG_a(\kk)$ &
    $\mathbb{Z}/2\mathbb{Z} \times \aG_a(\kk)$ & $ 2q$\\ \hline
  \end{tabular}
\end{table}

 For $\len \in \mathbb{N}_0$, let
\[
s_{\len}(\ag(\lri)) \coloneqq \gamma_\len(0) = \lvert \Ad \aG(\lri)
\backslash \ag(\lri_\len) \rvert
\]
denote the number of similarity classes in $\ag(\lri_\len)$. In
analogy with Theorem~\ref{thm:sim.zeta.local} we obtain the following
from our formulae for the functions $\gamma_\len(s)$.

\begin{thm}
 Let $\lri$, $\aG$, $\ag$ and $\ee = \ee_\aG$ be as above; if $\ee=-1$
 suppose that $\lri$ has odd residue characteristic.  Then
\begin{equation}\label{equ:sim.A1} \sum_{\len=0}^{\infty}
  s_{\len}(\ag(\lri)) t^\len = \frac{1}{(1-qt)(1-q^2t)}.
\end{equation}
\end{thm}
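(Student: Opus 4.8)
The plan is to derive the generating function identity \eqref{equ:sim.A1} by specialising the explicit formulae for the partial similarity class zeta functions at $s=0$ and summing over shadow types, in direct parallel with the proof of Theorem~\ref{thm:sim.zeta.local}. First I would record that $s_\len(\ag(\lri)) = \gamma_\len(0) = \sum_{\sigma \in \Sh} \gamma_\len^\sigma(0)$, so that
\[
\sum_{\len=0}^\infty s_\len(\ag(\lri)) t^\len = \sum_{\sigma \in \Sh} \sum_{\len=0}^\infty \gamma_\len^\sigma(0) t^\len.
\]
By Proposition~\ref{pro:gamma.sigma.fin.A1}, $\gamma_\len^\sigma(s) = q^\len \Gamma^{\cS}_{\mathsf{A}_1,\ee,q,\len}(s)$, and evaluating each $\Gamma^{\cS}_{\mathsf{A}_1,\ee,q,\len}$ at $s=0$ requires only knowing that $A_{q,\len}(0) = f^1_\len(q) = 1 + q + \cdots + q^{\len-1} = (q^\len-1)/(q-1)$, since the substitution $q^{1-4s} \mapsto q$ occurs at $s=0$. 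This turns every non-trivial type's contribution into an elementary closed form in $q$ and~$\len$.

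Next I would carry out the per-type bookkeeping. For type $\Gsha'$ one has $\gamma_\len^{\Gsha'}(0) = q^\len$, contributing $\sum_\len q^\len t^\len = (1-qt)^{-1}$. For the three remaining types $\Tasha', \Tbsha', \Nsha'$, the value at $s=0$ of the prefactor $\hlf(q-1)(q\pm\ee)^{-s}q^{-s}$, respectively $(q^2-1)^{-s}$, is simply $\hlf(q-1)$, $\hlf(q-1)$, and~$1$ respectively; multiplying by $A_{q,\len}(0) = (q^\len-1)/(q-1)$ and the $q^\len$ out front gives
\[
\gamma_\len^{\Tasha'}(0) = \gamma_\len^{\Tbsha'}(0) = \tfrac{1}{2} q^\len(q^\len - 1), \qquad \gamma_\len^{\Nsha'}(0) = q^\len(q^\len-1).
\]
Summing the three yields $2 q^\len(q^\len-1)$, so the total for the non-$\Gsha'$ types is $\sum_{\len \geq 0} 2q^\len(q^\len-1) t^\len = 2\bigl((1-q^2t)^{-1} - (1-qt)^{-1}\bigr)$. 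Adding the $\Gsha'$ contribution gives $(1-qt)^{-1} + 2(1-q^2t)^{-1} - 2(1-qt)^{-1} = 2(1-q^2t)^{-1} - (1-qt)^{-1}$, which I would then simplify to $\frac{1}{(1-qt)(1-q^2t)}$ by placing over the common denominator: $\frac{2(1-qt) - (1-q^2t)}{(1-qt)(1-q^2t)} = \frac{1 - 2qt + q^2 t}{(1-qt)(1-q^2t)}$ — at which point one sees $1 - 2qt + q^2t \neq 1$ in general, so I would instead recheck the arithmetic; the clean way is to note $2/(1-q^2t) - 1/(1-qt)$ does not obviously collapse, so the honest route is to verify directly that $\sum_\len s_\len t^\len$ equals the claimed product by comparing coefficients, i.e.\ that $s_\len = \sum_{j=0}^\len q^j q^{2(\len-j)} = q^\len \sum_{j=0}^\len q^{\len-j}$ matches $q^\len + 2q^\len(q^\len-1)/1$ — this coefficient comparison is the step I would actually write out.

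The main obstacle is precisely this final algebraic reconciliation: one must confirm that $s_\len = \sum_{\sigma} \gamma_\len^\sigma(0)$ and the coefficient extraction $[t^\len]\,(1-qt)^{-1}(1-q^2t)^{-1} = \sum_{j=0}^{\len} q^j q^{2(\len-j)}$ agree, which reduces to the identity $q^\len + 2q^\len(q^\len-1) = q^{2\len} + q^{2\len-1} + \cdots + q^\len$. Since the right side is $q^\len(q^\len + q^{\len-1} + \cdots + 1) = q^\len (q^{\len+1}-1)/(q-1)$, I expect a small discrepancy tracing back to an off-by-one in $A_{q,\len}$ or in whether the sum over lifts starts at $\len=0$ or $\len=1$; resolving that bookkeeping carefully (as is done implicitly in Theorem~\ref{thm:sim.zeta.local}) is the only genuinely delicate point. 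Everything else — Proposition~\ref{pro:gamma.sigma.fin.A1}, the evaluation $A_{q,\len}(0) = (q^\len-1)/(q-1)$, and the geometric-series summation over $\len$ — is routine. Once the coefficient identity is confirmed, the theorem follows immediately, and no appeal to Euler products or Tauberian input is needed since this is a purely local statement.
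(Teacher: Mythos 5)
Your overall strategy --- evaluate the partial similarity class zeta functions of Proposition~\ref{pro:gamma.sigma.fin.A1} at $s=0$, sum over the four shadow types, and compare with the coefficients of $(1-qt)^{-1}(1-q^2t)^{-1}$ --- is exactly the route the paper intends; it is the $\mathsf{A}_1$ analogue of the proof of Theorem~\ref{thm:sim.zeta.local}. However, your execution contains a concrete arithmetic error which you then misdiagnose, and the proof is left unfinished. The slip is in the type-$\Nsha'$ contribution: Proposition~\ref{pro:gamma.sigma.fin.A1} gives $\gamma^{\Nsha'}_\len(s) = q^\len (q^2-1)^{-s} A_{q,\len}(s/2)$, so at $s=0$ this equals $q^\len A_{q,\len}(0) = q^\len \tfrac{q^\len-1}{q-1}$, \emph{not} $q^\len(q^\len-1)$: unlike the types $\Tasha'$ and $\Tbsha'$, there is no prefactor $\hlf(q-1)$ to cancel the denominator of $A_{q,\len}(0)=(q^\len-1)/(q-1)$. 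That dropped factor of $(q-1)^{-1}$ is the sole source of the mismatch you run into; it has nothing to do with an off-by-one in $A_{q,\len}$ or with whether the sum over $\len$ starts at $0$ or $1$, which is where you propose to look.

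With the correct value the computation closes at once: $s_\len = q^\len + 2\cdot\hlf\, q^\len(q^\len-1) + q^\len\tfrac{q^\len-1}{q-1} = q^{2\len} + q^\len\tfrac{q^\len-1}{q-1} = q^\len + q^{\len+1} + \cdots + q^{2\len}$, which is precisely $[t^\len]\,(1-qt)^{-1}(1-q^2t)^{-1} = \sum_{j=0}^{\len} q^{\len+j}$. Equivalently, on the level of generating functions, $\sum_{\len\ge 0} q^{2\len}t^\len = (1-q^2t)^{-1}$ and $\sum_{\len\ge 0} q^\len\tfrac{q^\len-1}{q-1}\, t^\len = \tfrac{1}{q-1}\bigl((1-q^2t)^{-1}-(1-qt)^{-1}\bigr) = \tfrac{qt}{(1-qt)(1-q^2t)}$, so the total is $\tfrac{(1-qt)+qt}{(1-qt)(1-q^2t)} = \tfrac{1}{(1-qt)(1-q^2t)}$, as claimed. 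So the theorem does follow by your method once the $\Nsha'$ term is evaluated correctly; as written, though, your proposal ends at an unresolved ``discrepancy'' attributed to the wrong cause, and hence does not constitute a complete proof.
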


\begin{remark}
 For $\epsilon=1$ equation~\eqref{equ:sim.A1} was already computed in
 \cite[Section~2]{APOV}. It is remarkable that the same formula covers
 the case $\epsilon=-1$.
\end{remark}

As in the $\mathsf{A}_2$-case, these results may be put in an ad\`elic
respectively global context as follows. Let $k$ be a number field with
ring of integers $\Gri$.  Let $\mathbf{G}$ be one of the $k$-algebraic
groups $\GL_2$ or $\GU_2(K,f)$, where the unitary group $\GU_2(K,f)$
is defined with respect to the standard hermitian form $f$ associated
to the non-trivial Galois automorphism of a quadratic extension $K$
of~$k$.  Accordingly, let $\fg$ be one of the Lie algebra schemes
$\gl_2$ or $\gu_2(K,f)$.  Put $\ee_\mathbf{G} = 1$ if $\mathbf{G} =
\GL_2$, and $\ee_\mathbf{G} = -1$ if $\mathbf{G}$ is unitary.  For the
ring of $S$-integers $\Gri_S$, where $S$ is a finite set of places of
$k$ including all the archimedean ones, we consider the Dirichlet
series $\zeta^{\textup{sc}}_{\ag(\Gri_S)}(s)$ defined
in~\eqref{def:sim.global}.

\begin{cor}
 Let $\Gri_S \subset k$ and $\mathbf{G}$, $\fg$, $\ee_\mathbf{G}$ be
 as above; if $\ee_\mathbf{G} = -1$ suppose that $S$ includes all
 dyadic places of $k$ as well as those places which ramify in the
 quadratic extension $K$ of $k$ defining $\mathbf{G} = \GU_2(K,f)$.
 Then
 \begin{equation*}
  \zeta^{\textup{sc}}_{\ag(\Gri_S)}(s) = \zeta_{k,S}(s-1)\zeta_{k,S}(s-2).
\end{equation*}
In particular, there exists an invariant $\delta_1(\Gri_S)\in\R_{>0}$ such
that
\[
\delta_1(\Gri_S) = \lim_{N\rightarrow\infty} \frac{\sum_{n=1}^\N
  s_n(\ag(\Gri_S))}{N^3}.
\]
\end{cor}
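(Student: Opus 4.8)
The plan is to mimic exactly the structure of the proof of Corollary~\ref{cor:F}, which is the type $\mathsf{A}_2$ analogue of the statement we wish to prove. The key input is the local formula \eqref{equ:sim.A1}, whose left-hand side is a power series in $t$; substituting $t = q^{-s}$ (with $q = q_v$ the residue cardinality of $\Gri_v$) shows that for each non-archimedean place $v \notin S$ the local similarity class zeta function is
\[
\zeta^{\textup{sc}}_{\ag(\Gri_v)}(s) = \frac{1}{(1-q_v^{1-s})(1-q_v^{2-s})}.
\]
First I would observe that, by the Euler product \eqref{equ:euler.zeta.sc} (valid since $\Gri_S$ is a Dedekind domain), $\zeta^{\textup{sc}}_{\ag(\Gri_S)}(s) = \prod_{v \notin S} \zeta^{\textup{sc}}_{\fg(\Gri_v)}(s)$. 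The corollary is phrased precisely so that this Euler product ranges only over those places for which \eqref{equ:sim.A1} applies: in the unitary case $\ee_\mathbf{G} = -1$ the assumption that $S$ contains all dyadic places and all places ramifying in $K \,\vert\, k$ ensures that at every remaining place $v \notin S$ the local group is either $\GL_2(\Gri_v)$ or $\GU_2(\Gri_v)$ with $\Gri_v$ of odd residue characteristic, so the theorem giving \eqref{equ:sim.A1} applies uniformly; in the general linear case this is automatic.

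Next I would assemble the Euler product. Regardless of whether a given place $v$ is of $\GL_2$- or $\GU_2$-type, the local factor equals $\left((1-q_v^{1-s})(1-q_v^{2-s})\right)^{-1}$, which is the crucial uniformity that makes the final answer clean (this mirrors the observation in the remark after the type $\mathsf{A}_1$ theorem that $\epsilon$ drops out). Therefore
\[
\zeta^{\textup{sc}}_{\ag(\Gri_S)}(s) = \prod_{v \notin S} \frac{1}{1-q_v^{1-s}} \cdot \prod_{v \notin S} \frac{1}{1-q_v^{2-s}} = \zeta_{k,S}(s-1)\,\zeta_{k,S}(s-2),
\]
using the definition $\zeta_{k,S}(s) = \prod_{v \notin S}(1 - \mathrm{N}(\fp_v)^{-s})^{-1}$ recalled just before Corollary~\ref{cor:F}, together with $\mathrm{N}(\fp_v) = q_v$. (Here one also uses that the archimedean places carry no non-archimedean Euler factor, and that $S$ is finite, so omitting the finitely many factors indexed by non-archimedean places in $S$ is exactly what distinguishes $\zeta_{k,S}$ from the full Dedekind zeta function $\zeta_k$.)

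For the asymptotic statement I would invoke the Tauberian Theorem~\ref{thm:tauber} (stated in Section~\ref{sec:abscissa}), exactly as in the proof of Corollary~\ref{cor:F}. The Dedekind zeta function $\zeta_k(s)$ has abscissa of convergence $1$ and a simple pole at $s=1$, so $\zeta_{k,S}(s-2)$ has abscissa of convergence $3$ and a simple pole at $s=3$, while $\zeta_{k,S}(s-1)$ is holomorphic and non-vanishing at $s=3$ (its abscissa of convergence being $2$); thus $\zeta^{\textup{sc}}_{\ag(\Gri_S)}(s)$ has abscissa of convergence $3$ with a simple pole there, and it continues meromorphically past $\real(s)=3$ along the boundary line. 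Applying Theorem~\ref{thm:tauber} with $\alpha = 3$, $\beta = 1$ yields the existence of
\[
\delta_1(\Gri_S) = \lim_{N \to \infty} \frac{\sum_{n=1}^N s_n(\ag(\Gri_S))}{N^3} = \frac{g(3)}{3},
\]
where $g(s) = (s-3)\,\zeta^{\textup{sc}}_{\ag(\Gri_S)}(s)$ is holomorphic and non-zero at $s=3$; positivity of $g(3)$ follows since the coefficients $s_n(\ag(\Gri_S))$ are non-negative (indeed positive, e.g. $s_1 = 1$) and the limit exists, so $\delta_1(\Gri_S) \in \R_{>0}$. The main obstacle is essentially bookkeeping rather than substance: one must be careful that the place-by-place dichotomy between $\GL_2$ and $\GU_2$ local factors is genuinely covered by the hypotheses on $S$ (odd residue characteristic in the unitary case, and exclusion of ramified places so the quadratic extension stays unramified locally), so that \eqref{equ:sim.A1} is applicable at \emph{every} place outside $S$; once that is checked the rest is a direct transcription of the type $\mathsf{A}_2$ argument.
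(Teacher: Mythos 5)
Your proof is correct and follows essentially the same route the paper takes (and sketches for the type $\mathsf{A}_2$ analogue, Corollary~\ref{cor:F}): the uniform local factor $\bigl((1-q_v^{1-s})(1-q_v^{2-s})\bigr)^{-1}$ from \eqref{equ:sim.A1}, the Euler product \eqref{equ:euler.zeta.sc}, and the Tauberian Theorem~\ref{thm:tauber} applied at the simple pole $s=3$ of $\zeta_{k,S}(s-2)$. Your checks on the hypotheses about $S$ in the unitary case and on the non-vanishing of $\zeta_{k,S}(2)$ at $s=3$ are exactly the bookkeeping the paper leaves implicit.
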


We define $Z_{\fg(\Gri_S)}(s) = \sum_{n=1}^\infty \simi_n(\fg(\Gri_S))
n^{-s} \coloneqq \lim_{I \triangleleft \Gri_S} Z_{\fg(\Gri_S/I)}(s)$
as in~\eqref{equ:def.Z.global}. The formulae for the functions
$\gamma^\sigma_\len(s)$ provided in
Proposition~\ref{pro:gamma.sigma.fin.A1} yield the following result.

\begin{thm}
  Let $\Gri_S \subset k$ and $\mathbf{G}$, $\fg$, $\ee_\mathbf{G}$
  be as above; if $\ee_\mathbf{G} = -1$ suppose that $S$ includes all
  dyadic places of $k$ as well as those places which ramify in the
  quadratic extension $K$ of $k$ defining $\mathbf{G} = \GU_2(K,f)$.
  Then
  \[
  Z_{\fg(\Gri_S)}(s) = \prod_{v\not\in S} \left(1 +
    \frac{\hlf(q_v-1)((q_v(q_v+\epsilon_v))^{-s} +
      (q_v(q_v-\epsilon_v))^{-s})+(q_v^2-1)^{-s}}{1-q_v^{1-2s}}\right),
  \]
  where $q_v$ is the residue cardinality at $v$ and $\ee_v=-1$ if
  $\ee=-1$ and $\fg(\Gri_v)\simeq\gu_2(\Gri_v)$, and $\ee_v=1$
  otherwise. Consequently, the following hold.
  \begin{enumerate}
  \item The abscissa of convergence of $Z_{\fg(\Gri_S)}(s)$ is equal
    to $1$.
  \item The function $Z_{\fg(\Gri_S)}(s)$ has meromorphic continuation
    to $\{s\in\C \mid \real(s) > 1/2\}$. The only pole of
    $Z_{\fg(\Gri_S)}(s)$ in this domain is a single pole at $s=1$.
  \item There exists an invariant $\delta_2(\Gri_S)\in\R_{>0}$ such
    that
    \[
    \delta_2(\Gri_S) = \lim_{N\rightarrow\infty}\frac{\sum_{n=1}^N
      \simi_n(\fg(\Gri_S))}{N}.
    \]
  \end{enumerate}
\end{thm}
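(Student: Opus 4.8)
The plan is to derive the Euler product formula for $Z_{\fg(\Gri_S)}(s)$ directly from the local limit functions $\gamma^\sigma(s) = \lim_{\len\to\infty} q^{-\len}\gamma_\len^\sigma(s)$ computed in Proposition~\ref{pro:gamma.sigma.fin.A1}, and then to analyse the resulting infinite product by the same degree-ratio bookkeeping used in Section~\ref{sec:abscissa}. First I would recall that, by construction (see~\eqref{equ:euler.zeta.sim} and Proposition~\ref{pro:limit.gamma}), the Dirichlet series $Z_{\fg(\Gri_S)}(s)$ admits the Euler product $\prod_{v\not\in S} \lim_{\len\to\infty} q_v^{-\len}\gamma_\len(s)$, where for each non-archimedean place $v$ the local factor is $\sum_{\sigma\in\Sh} \gamma^\sigma(s)$ with $\Sh = \Sh_{\aG(\Gri_v)}$ and $\aG = \GL_2$ or $\GU_2$ according to whether $v$ splits in $K$ (so $\ee_v = 1$) or is inert (so $\ee_v = -1$); note $\ee_v = 1$ automatically if $\mathbf{G} = \GL_2$. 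Summing the four expressions from Proposition~\ref{pro:gamma.sigma.fin.A1} (with $s$ in place of $s/2$, since $\simi$ counts classes weighted by cardinality rather than cardinality$^{1/2}$ — I would double-check the normalisation conventions in Definition~\ref{def:sim.class.zeta} versus~\eqref{equ:def.Z.global} here), the type-$\Gsha'$ contribution is $1$ and the remaining three types each contribute a term with common denominator $1 - q_v^{1-2s}$, yielding exactly the stated local factor $1 + \bigl(\tfrac12(q_v-1)((q_v(q_v+\ee_v))^{-s} + (q_v(q_v-\ee_v))^{-s}) + (q_v^2-1)^{-s}\bigr)/(1-q_v^{1-2s})$.

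For part~(1), the abscissa of convergence, I would clear the denominator as in Section~\ref{subsec:rep.global}: set $\eta_{\simi}(s) = Z_{\fg(\Gri_S)}(s)\prod_{v\not\in S}(1 - q_v^{1-2s})$ and expand each local factor as $1 + \sum_i f_i(q_v) g_i(q_v)^{-s}$ with $f_i, g_i \in \Q[t]$ of positive degree. The term $-q_v^{1-2s}$ coming from the cleared denominator has degree ratio $1/2$, while the numerator terms (e.g.\ $\tfrac12 q_v^2 (q_v(q_v\pm\ee_v))^{-s}$, $q_v^2(q_v^2-1)^{-s}$) have degree ratio $3/2 \cdot \tfrac12 = $ at most $3/4$; after the now-familiar cancellation $\tfrac12(q(q+\ee))^{-s} + \tfrac12(q(q-\ee))^{-s} + \cdots \sim q^{-2s}$ against terms of lower degree, one finds that all surviving degree ratios are bounded by $1/2$, so $\eta_{\simi}(s)$ converges and is non-vanishing on $\{\real(s) > 1/2\}$ by Lemma~\ref{lem:poly}. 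Since $\prod_{v\not\in S}(1-q_v^{1-2s})^{-1} = \zeta_{k,S}(2s-1)$ has abscissa $1$, the abscissa of $Z_{\fg(\Gri_S)}(s)$ is $1$, with the singularity coming solely from the simple pole of $\zeta_{k,S}(2s-1)$ at $s=1$.

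Parts~(2) and~(3) then follow routinely. For~(2), meromorphic continuation of $Z_{\fg(\Gri_S)}(s)$ to $\{\real(s) > 1/2\}$ is inherited from the holomorphy and non-vanishing of $\eta_{\simi}(s)$ there together with the meromorphic continuation of $\zeta_{k,S}(2s-1)$ (Dedekind zeta extends to all of $\C$), and the only pole in this half-plane is the simple pole at $s=1$. For~(3), I would invoke the Tauberian Theorem~\ref{thm:tauber} with $\alpha = 1$, $\beta = 1$: writing $Z_{\fg(\Gri_S)}(s) = g(s)/(s-1) + h(s)$ near $s=1$ with $g$ holomorphic and $g(1) \neq 0$, one gets $\delta_2(\Gri_S) = g(1)/(1\cdot\mathsf{\Gamma}(1)) = g(1) > 0$ as the limit of $\sum_{n=1}^N \simi_n(\fg(\Gri_S))/N$.

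The main obstacle I anticipate is bookkeeping rather than conceptual: carefully verifying that the sum of the four local contributions from Proposition~\ref{pro:gamma.sigma.fin.A1} collapses to precisely the claimed closed form under the correct normalisation, and then tracking the degree-ratio cancellations (the analogue of the estimate~\eqref{equ:est.cancel}) cleanly enough to rule out any surviving term of degree ratio exceeding $1/2$. One subtlety worth flagging is the behaviour at the finitely many excluded places and at places where $\ee_v$ is not determined by a global Artin symbol in a uniform way; as in Corollary~\ref{cor:F} and Theorem~\ref{thm:G}, the statement is phrased so that the Euler product extends only over $v\not\in S$ where our local formulae apply, so this causes no difficulty.
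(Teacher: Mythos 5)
Your proposal is correct and follows essentially the same route as the paper's (largely implicit) proof: the local Euler factors are obtained by summing the four limit functions $\gamma^\sigma(s)=\lim_{\len\to\infty} q_v^{-\len}\gamma^\sigma_\len(s)$ from Proposition~\ref{pro:gamma.sigma.fin.A1}, and the analytic statements then follow by the denominator-clearing, degree-ratio and Tauberian arguments of Section~\ref{sec:abscissa} transported to type $\mathsf{A}_1$, with the pole now supplied by $\zeta_{k,S}(2s-1)$. Two small corrections that do not affect the argument: no substitution $s\mapsto s/2$ is needed (the $s/2$ already sits inside $A_{q_v,\len}(s/2)$, whose limit is $(1-q_v^{1-2s})^{-1}$, so the stated local factor comes out directly), and before cancellation the terms $q_v^{1-2s}$ and $\hlf(q_v-1)\left(q_v(q_v\pm\ee_v)\right)^{-s}$ all have degree ratio $1$ (not $1/2$ and $3/4$); the point is precisely that their sum cancels to a term of degree ratio $1/2$, as in the estimate~\eqref{equ:est.cancel}, after which Lemma~\ref{lem:poly} gives convergence and non-vanishing of the cleared product on $\{\real(s)>1/2\}$.
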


\subsection{Zeta functions of the shadows}
Recall that, for a shadow $\sigma\in\Sh$, we set
$\sigma'(\kk)\coloneqq \sigma(\kk)\cap \SL_2(\kk)$. See
Table~\ref{tab:shadows.SLSU.A2} for details on the groups occurring.

\begin{prop}
 Let $\sigma \in \Sh$ be of type $\cS\in\mathbb{T}_{\mathsf{A}_1}$.
 Then
\begin{align*}
  \zeta_{\sigma(\kk)}(s) &= (q-\epsilon)
  Z^{\cS}_{\mathsf{A}_1,\ee,1,q}(s) \text{ and } \\
  \zeta_{\sigma'(\kk)}(s) & = Z^{\cS}_{\mathsf{A}_1,\ee,2,q}(s),
\end{align*}
where, for $i\in\{1,2\}$,
\begin{equation*}
  Z^{\Gsha'}_{\mathsf{A}_1,\epsilon,i,q}(s) = 1+q^{-s}+\hlf (q-3)
  (q+1)^{-s}+\hlf (q-1)(q-1)^{-s} + {i^2}/{2}
  \left({(q+1)}/{i}\right)^{-s} + i^2/2((q-1)/i)^{-s}.
\end{equation*}
In the remaining cases, the function
$Z^{\cS}_{\mathsf{A}_1,\ee,i,q}(s)$ is defined as
\begin{equation*}
 \begin{array}{ll}
  q-\ee & \text{if $\cS = \Tasha'$,} \\
  q+\ee & \text{if $\cS = \Tbsha'$,} \\
  iq & \text{if $\cS = \Nsha'$.}
 \end{array}
\end{equation*}
\end{prop}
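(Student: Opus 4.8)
The statement to be proved is the final proposition computing $\zeta_{\sigma(\kk)}(s)$ and $\zeta_{\sigma'(\kk)}(s)$ for the four shadow types $\Gsha'$, $\Tasha'$, $\Tbsha'$, $\Nsha'$ in type $\mathsf{A}_1$. The plan is to proceed type by type, reading off the isomorphism types of the relevant finite groups from Tables~\ref{tab:shadows.GLGU.A1} and~\ref{tab:shadows.SLSU.A2}, and then computing each representation zeta function by elementary character theory. For three of the four types ($\Tasha'$, $\Tbsha'$, $\Nsha'$) the groups $\sigma(\kk)$ and $\sigma'(\kk)$ are abelian (direct products of groups of units of finite rings, or products with an additive group $\aG_{\text{a}}(\kk)$), so their representation zeta function is simply the constant polynomial equal to the group order; this immediately matches the claimed values $q-\ee$, $q+\ee$, and $iq$ (with $i\in\{1,3\}$ for $\sigma$, via $q-\ee$ versus the $\SL$/$\SU$-versions). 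Thus the bulk of the work is the case $\cS=\Gsha'$.

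For $\cS=\Gsha'$ the groups are $\GL_2(\kk)$ or $\GU_2(\kk)$ (for $\sigma(\kk)$) and $\SL_2(\kk)$ or $\SU_2(\kk)$ (for $\sigma'(\kk)$). First I would quote the well-known character tables: for $\GL_2(\F_q)$ one has one trivial character, $q-1$ characters factoring through the determinant paired with the Steinberg-type character of degree $q$, $\binom{q-1}{2}$ principal-series characters of degree $q+1$, and $\binom{q}{2}$ cuspidal characters of degree $q-1$; after dividing by the central action and collecting degrees one obtains the uniform expression $1 + q^{-s} + \tfrac12(q-3)(q+1)^{-s} + \tfrac12(q-1)(q-1)^{-s} + \cdots$. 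The Ennola principle (cf.\ the discussion in Section~\ref{subsec:ennola} and the way $\ee$ enters the $\mathsf{A}_2$-formulae) lets one treat $\GU_2(\F_q)$ simultaneously by the substitution $q \mapsto -q$ up to sign, so a single formula $Z^{\Gsha'}_{\mathsf{A}_1,\ee,1,q}(s)$ covers both; I would verify this directly against Ennola's character table for $\GU_2$ rather than invoke Ennola duality as a black box. For $\SL_2(\kk)$ and $\SU_2(\kk)$ the same bookkeeping applies, with the extra subtlety that when $q$ is odd there are the two `split' characters of degree $(q+1)/2$ and two of degree $(q-1)/2$; these produce exactly the terms $\tfrac{i^2}{2}((q+1)/i)^{-s}$ and $\tfrac{i^2}{2}((q-1)/i)^{-s}$ with $i=2$, accounting for the parameter $i\in\{1,2\}$ in $Z^{\Gsha'}_{\mathsf{A}_1,\ee,i,q}$. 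The relation $\zeta_{\sigma(\kk)}(s) = (q-\ee)\,Z^{\Gsha'}_{\mathsf{A}_1,\ee,1,q}(s)$ then follows because $\GL_2(\kk) = \SL_2(\kk)\rtimes$(scalars), with the scalar group of order $q-\ee$ contributing the overall factor and forcing $i=1$ there (since scalars make the `split' degeneracy disappear).

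The main obstacle, as usual in such computations, is the careful treatment of the small primes and the $\gcd(q-\ee,\cdot)$-dependence: one must check that $i=\gcd(q-\ee,2)$ (respectively the analogous quantity appearing implicitly in $\Nsha'$) enters the formulae exactly as stated, and that the degree-$(q\pm1)/2$ characters of $\SL_2/\SU_2$ are counted with the right multiplicities in odd characteristic while collapsing correctly in characteristic $2$ (which is excluded in the unitary case but allowed for $\GL_2,\SL_2$). I would handle this by treating the parity of $q$ explicitly and using the standard description of $\SL_2(\F_q)$'s character degrees; the identity $\tfrac12(q-3)+\tfrac{i^2}{2}= \tfrac12(q-1)$-type arithmetic checks then confirm that the abelianised/collapsed cases agree with the written polynomial. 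Once the $\cS=\Gsha'$ case is settled, the remaining three types are immediate, and assembling them gives the proposition.
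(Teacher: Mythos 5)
Your proposal is correct in substance and follows the route the paper itself takes: the appendix records this proposition without a detailed proof, and the analogous statement in type $\mathsf{A}_2$, Proposition~\ref{pro:zeta.shadows}, is proved exactly as you suggest, by quoting the character tables of the relevant finite groups of Lie type and observing that the remaining shadows are abelian, so that their zeta functions are constants equal to the group orders (Tables~\ref{tab:shadows.GLGU.A1} and~\ref{tab:shadows.SLSU.A2}). Your degree and multiplicity counts for $\GL_2(\kk)$, for $\GU_2(\kk)$ (checked against Ennola rather than invoked as a black box), and for $\SL_2(\kk)\simeq\SU_2(\kk)$ are the right ones and reproduce the stated polynomials, including the two pairs of characters of degrees $\hlf(q+1)$ and $\hlf(q-1)$ accounting for the case $i=2$.

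One step as written would fail and should be replaced. The relation $\zeta_{\sigma(\kk)}(s)=(q-\ee)\,Z^{\Gsha'}_{\mathsf{A}_1,\ee,1,q}(s)$ does \emph{not} follow from ``$\GL_2(\kk)=\SL_2(\kk)\rtimes(\text{scalars})$'': for odd $q$ the scalars meet $\SL_2(\kk)$ in $\{\pm\Id_2\}$, so no such (semi)direct decomposition exists, and the same objection applies in the unitary case. The factor $q-\ee$ is instead seen either directly from the multiplicities you already listed (they are all divisible by $q-\ee$), or conceptually because twisting by the $q-\ee$ characters of the quotient $\aG(\kk)/\aH(\kk)$, i.e.\ by characters of the determinant, acts freely on $\Irr(\aG(\kk))$; this is the correct Clifford-theoretic statement, and it is harmless to your argument only because you also compute from the character tables. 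Two further small slips: the parameter is $i\in\{1,2\}$, not $\{1,3\}$, and the arithmetic check should read, e.g., $\hlf(q-3)+\hlf=\hlf(q-2)$ and $\hlf(q-1)+\hlf=\hlf q$ for $i=1$, which is exactly how the $\GL_2$/$\GU_2$ multiplicities are recovered. Your caveat about even $q$ (possible only for $\ee=1$) is well taken: the stated formula with $i=2$, like the entry for $\Nsha'$ in Table~\ref{tab:shadows.SLSU.A2}, tacitly assumes $q$ odd, and your suggestion that the correct parameter is $\gcd(q-\ee,2)$ is consistent with the character theory of $\SL_2(\F_q)$ in characteristic $2$.
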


\subsection{Zeta functions of groups of type $\mathsf{A}_1$}
Let $\aH$ denote the $\lri$-group scheme~$\SL_2$ or $\SU_2$, according
to $\ee=\ee_\aH\in\{-1,1\}$ as above. Recall that the ramification
index of a compact discrete valuation ring $\lri$ of characteristic
$0$ is denoted by~$e=e(\lri,\mathbb{Z}_p)$.

\begin{thm}
  Let $\lri$ be a compact discrete valuation ring of residue
  characteristic $p=\cha(\kk)$. Let $\aG$, $\aH$ be either $\GL_2$,
  $\SL_2$ or $\GU_2$, $\SU_2$ as above and $\len\in\N$. Assume that $p
  \geq \min\{2\len,2e+2\}$ if $\cha(\lri) = 0$, and $p \geq 2\len$ if
  $\cha(\lri) = p$.  Then the following hold:
  \begin{align}
    \zeta_{\aG(\lri_\len)}(s) &= q^{\len-1}\sum_{\cS \in
      \T_{\mathsf{A}_1}}[\aG(\kk) :
      \aI^\cS_{\mathsf{A}_1,\epsilon}(\kk)]^{-1-s} \zeta_{
      \aI^\cS_{\mathsf{A}_1,\epsilon}(\kk)}(s)
    \Xi^\cS_{\mathsf{A}_1,\ee,q,\len-1}(s),\label{equ:zeta.G.fin.A1}\\ \zeta_{\aH(\lri_\len)}(s)
    &= \sum_{\cS\in\T_{\mathsf{A}_1}} [\aH(\kk) : (\aH(\kk)\cap
      \aI^{\cS}_{\mathsf{A}_1,\epsilon}(\kk))]^{-1-s}
    \zeta_{\aH(\kk)\cap \aI^{\cS}_{\mathsf{A}_1,\epsilon}(\kk)}(s)
    \Xi_{\mathsf{A}_1,\epsilon,q,\len-1}^{\cS}(s)\label{equ:zeta.H.fin.A1}.
  \end{align}

  Moreover, if $\cha(\lri)=0$ and $p > 2e+2$, then
  \begin{equation}\label{equ:zeta.H.infin.A1}
    \zeta_{\aH(\lri)}(s) =
    \sum_{\cS\in\T_{\mathsf{A}_1}} [\aH(\kk) : (\aH(\kk)\cap
    \aI^{\cS}_{\mathsf{A}_1,\epsilon}(\kk))]^{-1-s} \zeta_{\aH(\kk)\cap
      \aI^{\cS}_{\mathsf{A}_1,\epsilon}(\kk)}(s)
    \Xi_{\mathsf{A}_1,\epsilon,q}^{\cS}(s).
  \end{equation}
\end{thm}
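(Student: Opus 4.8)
The plan is to follow the proof of Theorem~\ref{thm:C}, specialised to $n=2$, feeding the classification of shadows and branching rules from Theorem~\ref{thm:shadow.graph.A1} into the machinery of Section~\ref{sec:kom.clifford}. First I would dispose of the case $\len=1$, which is immediate and independent of the extension theory: by construction $\Xi^{\cS}_{\mathsf{A}_1,\ee,q,0}(s)$ equals $1$ for $\cS=\Gsha'$ and vanishes otherwise (each of the remaining values is a multiple of $A_{q,0}(\cdot)=0$), while $\aI^{\Gsha'}_{\mathsf{A}_1,\ee}(\kk)=\aG(\kk)$ by Table~\ref{tab:shadows.GLGU.A1}, so $\aH(\kk)\cap\aI^{\Gsha'}_{\mathsf{A}_1,\ee}(\kk)=\aH(\kk)$; thus \eqref{equ:zeta.G.fin.A1} and \eqref{equ:zeta.H.fin.A1} collapse to $\zeta_{\aG(\kk)}(s)=\zeta_{\aG(\kk)}(s)$, respectively $\zeta_{\aH(\kk)}(s)=\zeta_{\aH(\kk)}(s)$. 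From now on I would assume $\len\geq 2$, which forces $p\geq 5$.

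Next I would check the hypotheses of Theorem~\ref{thm:extension.to.radical} (and of its positive-characteristic version, Remark~\ref{rem:char-p-comments-5}) for $N=\aG^1(\lri)$, respectively $N=\aH^1(\lri)$. Since $p>2e+2$ in characteristic $0$, respectively $p\geq 2\len$ in characteristic $p$, Proposition~\ref{prop:p.gt.n.e+1} and the remarks following it show that $N$ and the pro-$p$ subgroups of $G=\aG(\lri)$ containing it are potent and saturable, with $\log(N)$ identified with $\ag^1(\lri)$, respectively $\ah^1(\lri)$. Shadow-preserving lifts in the sense of Definition~\ref{def:shad.pres.lift} exist: for $\aG$ this is the $\mathsf{A}_1$-analogue of Lemma~\ref{lem:good.pair}, equivalent to the statement that the shadow graph of Figure~\ref{fig:shadow.graph.A1} carries a loop at every vertex, i.e.\ $a_{\sigma,\sigma}(q)\neq 0$ for all $\sigma\in\Sh$, which is read off rows~(1) and~(5) of Table~\ref{tab:branch.rules.A1}; for $\aH$ one subtracts a scalar and uses $p\neq 2$ to extract a square root of the determinant inside $\GL_1^1(\lri)$, respectively $\GU_1^1(\lri)$, exactly as in the proof of Lemma~\ref{lem:good.pair}. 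The relevant second cohomology groups vanish: by Tables~\ref{tab:shadows.GLGU.A1} and~\ref{tab:shadows.SLSU.A2}, the shadows of types $\Tasha'$ and $\Tbsha'$ are contained in maximal tori, so Lemma~\ref{lem:cohomology}(a) applies; the shadow of type $\Gsha'$ is $\aG(\kk)$, respectively $\aH(\kk)$, so Lemma~\ref{lem:cohomology}(d),(e), respectively (b),(c), applies (legitimately, as $p\geq 5$); and for the shadow of type $\Nsha'$ the maximal normal pro-$p$ subgroup of $S_\chi$ has image the additive part of $\sigma(\kk)$, so $S_\chi/R_\chi$ is cyclic and has trivial Schur multiplier. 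For $\aH$ these statements are transferred from $\aG$ by the $\mathsf{A}_1$-analogue of Lemma~\ref{lem:shadow.cap.H}, again using $p\neq 2$ in place of $p\neq 3$. By Theorem~\ref{thm:extension.to.radical}, every $\chi\in\Irr(N)$ then extends to its inertia group $S_\chi$, so Corollary~\ref{cor:gen.formula} applies.

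It remains to collect the summands of \eqref{equ:gen.sum} according to shadows. For a shadow $\sigma$ of type $\cS$, the characters $\chi\in\Irr(N)$ with $\sh(\chi)=\sigma$ contribute $[\aG(\kk):\sigma(\kk)]^{-1-s}$ times $\zeta_{\In_G(\chi)/N}(s)=\zeta_{\sigma(\kk)}(s)$ times a term enumerating their degrees; by the bookkeeping underlying Proposition~\ref{pro:zeta.H^m_l} (the case $m=1$) and Definition~\ref{def:xi}, the last term — once restricted to characters of level at most $\len-1$ in the finite case — equals $q^{\len-1}\,\xi^{\sigma}_{\len-1}(s)$ for $\aG$ and $\xi^{\sigma}_{\len-1}(s)$ for $\aH$, the extra power of $q$ reflecting scalar shifts of similarity classes. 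Substituting the formulae for $\xi^{\sigma}_{\len-1}(s)=\Xi^{\cS}_{\mathsf{A}_1,\ee,q,\len-1}(s)$ and, after letting $\len\to\infty$, for $\xi^{\sigma}(s)=\Xi^{\cS}_{\mathsf{A}_1,\ee,q}(s)$ established above (which in turn rest on Proposition~\ref{pro:gamma.sigma.fin.A1}), together with the shadow zeta functions $\zeta_{\sigma(\kk)}(s)$ and $\zeta_{\sigma'(\kk)}(s)$ computed above, yields \eqref{equ:zeta.G.fin.A1} for $\aG(\lri_\len)$, \eqref{equ:zeta.H.fin.A1} for $\aH(\lri_\len)$ (restricting the sum to characters trivial on $\aH^\len(\lri)$, and invoking the $\mathsf{A}_1$-analogue of Lemma~\ref{lem:shadow.cap.H} for the special groups), and, since every continuous character of $\aH(\lri)$ factors through some $\aH(\lri_\len)$, \eqref{equ:zeta.H.infin.A1} for $\aH(\lri)$. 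As a consistency check, for $\ee=1$ formula \eqref{equ:zeta.H.infin.A1} recovers Jaikin-Zapirain's formula for $\zeta_{\SL_2(\lri)}(s)$ and \eqref{equ:zeta.G.fin.A1} recovers Onn's formula for $\zeta_{\GL_2(\lri_\len)}(s)$.

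Since all the heavy lifting has been done in Section~\ref{sec:kom.clifford} and Theorem~\ref{thm:shadow.graph.A1}, the main work — and the only genuinely type-specific part — lies in re-establishing the $\mathsf{A}_1$-versions of Lemmas~\ref{lem:good.pair}, \ref{lem:cohomology} and \ref{lem:shadow.cap.H}. Of these the cohomological vanishing demands the most care: one must verify that for the $\Nsha'$ shadow the passage to $S_\chi/R_\chi$ really produces a cyclic group, and that for the $\Gsha'$ shadow the bound $p\geq 5$ (forced by $\len\geq 2$, or directly by $p>2e+2$) suffices to invoke the known triviality of the Schur multipliers of $\GL_2(\F_q)$, $\GU_2(\F_q)$, $\SL_2(\F_q)$ and $\SU_2(\F_q)$.
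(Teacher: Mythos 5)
Your proposal is correct and follows precisely the route the paper intends: the appendix records this theorem without detailed proof, relying on the type-$\mathsf{A}_2$ machinery, and your argument is exactly the specialisation of the proof of Theorem~\ref{thm:C} to $n=2$, with the $\mathsf{A}_1$-analogues of Lemmas~\ref{lem:good.pair}, \ref{lem:cohomology} and \ref{lem:shadow.cap.H} supplied as needed (including the correct observations that $\len\geq 2$ forces $p\geq 5$ and that the $\Nsha'$ inertia quotient modulo its normal $p$-part is cyclic). No gaps to report.
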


\begin{remark}
  Equation~\eqref{equ:zeta.H.infin.A1} confirms -- in the cases where
  it is applicable -- Jaikin-Zapirain's formula for the representation
  zeta function $\zeta_{\SL_2(\lri)}(s)$.  Recall the notational
  convention $\sigma(\kk) = \aI^{\cS}_{\mathsf{A}_1,\epsilon}(\kk)$
  and that the relevant information about these groups is recorded in
  Table~\ref{tab:shadows.GLGU.A1}.
\end{remark}

\begin{remark}
It is noteworthy that the special values of the zeta functions
$\zeta_{\aG(\lri_\len)}(s)$ -- at least as far as they are given by
\eqref{equ:zeta.G.fin.A1} -- at $s=-1$, i.e.\ the sums of character
degrees of the groups~$\aG(\lri_\len)$, coincide with the numbers of
symmetric matrices in $\aG(\lri_\len)$, viz.\
$$ \zeta_{\aG(\lri_\len)}(-1) = (1-\ee q^{-1}) q^{3\len}.$$ This is in
contrast to the situation in type $A_2$;
cf.\ Remark~\ref{rem:zeros.special.A2}.
\end{remark}

\begin{thm}
  Let $\lri$ and $\aG$, $\aH$, $\ee = \ee_\aG = \ee_\aH$ be as above.
  Let $\len,m \in \N$ with $\len \geq m$. Suppose that $p>2$; suppose
  further that $m \geq \min \{\len/p, e/(p-2) \}$ if $\cha(\lri) = 0$,
  and $m\geq \len/p$ if $\cha(\lri) = p$.
  \begin{align*}
    \zeta_{\aH^m(\lri)/\aH^\len(\lri)}(s) &= \begin{cases}
      q^{3(\len-m)} & \text{ if } \len \leq 2m, \\ q^{3(m-1)}\left( 1
        + (q^3-1)
        \frac{1-\left(q^{1-s}\right)^{\len-2m+1}}{1-q^{1-s}}\right) &
      \text{ if } \len > 2m. \end{cases}\\
    \zeta_{\aG^m(\lri)/\aG^\len(\lri)}(s) & =
    q^{\len-m}\zeta_{\aH^m(\lri)/\aH^\len(\lri)}(s).
  \end{align*}
  Moreover, if $\cha(\lri)=0$ then
  \begin{equation}\label{equ:SL2.princ}
    \zeta_{\aH^m(\lri)}(s) = q^{3(m-1)}
    \sum_{\cS \in \mathbb{T}_{\mathsf{A}_1}}\Xi^{\cS}_{\mathsf{A}_1,\epsilon,q}(s)
    =q^{3m}\frac{1-q^{-2-s}}{1-q^{1-s}}.
  \end{equation}
 \end{thm}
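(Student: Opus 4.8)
The plan is to run the proof of Theorem~\ref{thm:J} with $n=3$ replaced by $n=2$, feeding in the rank-one analogues of Propositions~\ref{pro:sim.shadow.fin} and~\ref{pro:xi.sigma.len} and of Corollary~\ref{cor:infinite.xi} recorded in this appendix. The first task is to confirm that, under the hypotheses stated, the Kirillov orbit method is available for the principal congruence subgroup $\aG^m(\lri)$ and its finite quotients. If $\cha(\lri)=0$ and $m\ge e/(p-2)$ this is the situation of Proposition~\ref{pro:zeta.H^m_l}; the remaining regimes --- $\cha(\lri)=p$ with $m\ge\len/p$, or $\cha(\lri)=0$ with $m\ge\len/p$ --- are handled by the free-nilpotent-cover argument of Remark~\ref{rem:char-p-comments-4}, whose hypothesis $p\ge\len/m$ is exactly what is assumed. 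Note that $p>2$ is precisely the condition $p\nmid 2n$ of Proposition~\ref{pro:zeta.H^m_l} for $n=2$; it is also the hypothesis that Lemmata~\ref{lem:filtration} and~\ref{p-sylow-Lie} require when $n=2$, and it ensures $p\nmid n$, which makes the splitting $\aG^m(\lri)/\aG^\len(\lri)=\bigl(\aH^m(\lri)/\aH^\len(\lri)\bigr)\times\textup{S}$ into a direct product work.

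With this in hand, Proposition~\ref{pro:zeta.H^m_l} (together with its positive-characteristic version, Remark~\ref{rem:char-p-comments-4}), applied with $n=2$ and $\dim\aH=\dim\fsl_2=\dim\su_2=3$, gives at once
\[
\zeta_{\aG^m(\lri)/\aG^\len(\lri)}(s)=q^{\len-m}\,\zeta_{\aH^m(\lri)/\aH^\len(\lri)}(s),
\]
\[
\zeta_{\aH^m(\lri)/\aH^\len(\lri)}(s)=
\begin{cases}
q^{3(\len-m)} & \text{if }\len\le 2m,\\[2pt]
q^{3(m-1)}\,\displaystyle\sum_{\sigma\in\Sh}\xi^\sigma_{\len-2m+1}(s) & \text{if }\len> 2m,
\end{cases}
\]
and, for $\cha(\lri)=0$ --- where one additionally has $m\ge e/(p-2)$, the saturability condition for $\aH^m(\lri)$, which is forced by the standing hypothesis once $\len$ is taken large ---
\[
\zeta_{\aH^m(\lri)}(s)=\lim_{\len\to\infty}\zeta_{\aH^m(\lri)/\aH^\len(\lri)}(s)=q^{3(m-1)}\sum_{\sigma\in\Sh}\xi^\sigma(s).
\]

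It then remains to collapse the sum over the four shadows. By Theorem~\ref{thm:shadow.graph.A1} the set $\Sh$ contains exactly one shadow of each of the types $\Gsha',\Tasha',\Tbsha',\Nsha'$, and by the rank-one analogue of Proposition~\ref{pro:xi.sigma.len} stated above, $\xi^\sigma_\nu(s)$ equals $1$, $\hlf q(q-1)(q+\ee)A_{q,\nu}(s/4)$, $\hlf q(q-1)(q-\ee)A_{q,\nu}(s/4)$, $(q^2-1)A_{q,\nu}(s/4)$ according as $\sigma$ has type $\Gsha',\Tasha',\Tbsha',\Nsha'$. Since $\hlf q(q-1)[(q+\ee)+(q-\ee)]=q^2(q-1)$ and $q^2(q-1)+(q^2-1)=q^3-1$, while $A_{q,\nu}(s/4)=f^1_\nu(q^{1-s})=(1-(q^{1-s})^\nu)/(1-q^{1-s})$ by \eqref{def:aux.count} and \eqref{equ:aux.ABC}, one obtains
\[
\sum_{\sigma\in\Sh}\xi^\sigma_\nu(s)=1+(q^3-1)\,\frac{1-(q^{1-s})^\nu}{1-q^{1-s}}.
\]
Putting $\nu=\len-2m+1$ yields the asserted formula for $\zeta_{\aH^m(\lri)/\aH^\len(\lri)}(s)$ in the range $\len>2m$; the boundary $\len=2m$ is consistent since then $\nu=1$ and the right-hand side equals $q^{3(m-1)}q^3$, in agreement with Remark~\ref{rem:also-for-l=2m}. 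Multiplying by $q^{\len-m}$ gives the $\aG$-formula. Finally, letting $\nu\to\infty$ replaces $A_{q,\nu}(s/4)$ by $(1-q^{1-s})^{-1}$, so $\sum_{\sigma\in\Sh}\xi^\sigma(s)=1+(q^3-1)/(1-q^{1-s})=(q^3-q^{1-s})/(1-q^{1-s})=q^3(1-q^{-2-s})/(1-q^{1-s})$; hence $\zeta_{\aH^m(\lri)}(s)=q^{3(m-1)}\cdot q^3(1-q^{-2-s})/(1-q^{1-s})=q^{3m}(1-q^{-2-s})/(1-q^{1-s})$, which also identifies $\sum_{\cS\in\T_{\mathsf{A}_1}}\Xi^\cS_{\mathsf{A}_1,\ee,q}(s)$ with the middle expression in \eqref{equ:SL2.princ}.

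Since every ingredient is either quoted from Section~\ref{sec:kom.clifford} or taken from the rank-one computations already assembled in this appendix, there is no genuine obstacle; the only points needing care are bookkeeping ones --- arranging the inequalities between $p$, $m$, $\len$ and $e$ so that one lands in the overlap of the regimes covered by Proposition~\ref{pro:zeta.H^m_l} and by the starred remarks of Section~\ref{sec:kom.clifford}, and keeping in mind that $\dim\aH=3$ in rank one, so that the powers of $q$ come out as $3(\len-m)$ and $3(m-1)$ rather than the values $8(\len-m)$ and $8(m-1)$ familiar from Theorem~\ref{thm:J}.
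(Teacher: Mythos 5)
Your proposal is correct and is exactly the argument the paper intends (the appendix records this theorem without a detailed proof, the expected route being Proposition~\ref{pro:zeta.H^m_l} and Remark~\ref{rem:char-p-comments-4} applied with $n=2$, $\dim\aH=3$, followed by summing the four rank-one $\Xi$-functions, whose $\ee$-dependence cancels to give $1+(q^3-1)A_{q,\nu}(s/4)$). Your bookkeeping of the hypotheses, the boundary case $\len=2m$, and the limit $\len\to\infty$ all check out, so there is nothing to add.
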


\begin{remark}
  Equation~\eqref{equ:SL2.princ} confirms
  \cite[Theorem~1.2]{AKOVIII/11}.  Despite appearance in
  \eqref{equ:zeta.H.fin.A1}, \eqref{equ:zeta.H.infin.A1}, and
  \eqref{equ:SL2.princ}, and in contrast to their analogues in
  type~$A_2$, the zeta functions $\zeta_{\aH(\lri_\len)}(s)$,
  $\zeta_{\aH(\lri)}(s)$, and $\zeta_{\aH^m(\lri)}(s)$ are independent
  of~$\ee$. This reflects the fact that the isomorphism $\SL_2(\F_q)
  \simeq \SU_2(\F_q)$ (cf.\ \cite[II.8.8]{Huppert/67}) generalises to
  $\SL_2(\lri) \simeq \SU_2(\lri)$.
\end{remark}






\def\cprime{$'$} \def\cprime{$'$}
\providecommand{\bysame}{\leavevmode\hbox to3em{\hrulefill}\thinspace}
\providecommand{\MR}{\relax\ifhmode\unskip\space\fi MR }
\providecommand{\MRhref}[2]{%
  \href{http://www.ams.org/mathscinet-getitem?mr=#1}{#2}
}
\providecommand{\href}[2]{#2}

\end{document}